\documentclass[oneside,english]{amsart}
\usepackage[T1]{fontenc}
\usepackage[latin9]{inputenc}
\usepackage{verbatim}
\usepackage{amstext}
\usepackage{amsthm}
\usepackage{amssymb}
\usepackage{xcolor}
\usepackage[shortlabels]{enumitem}
\makeatletter
\numberwithin{equation}{section}
\numberwithin{figure}{section}
\theoremstyle{plain}
\newtheorem{thm}{\protect\theoremname}[section]
\newtheorem*{thm*}{\protect\theoremname}
\theoremstyle{definition}
\newtheorem{defn}[thm]{\protect\definitionname}
\theoremstyle{remark}
\newtheorem{rem}[thm]{\protect\remarkname}
\theoremstyle{plain}
\newtheorem{lem}[thm]{\protect\lemmaname}
\theoremstyle{definition}
\newtheorem{problem}[thm]{\protect\problemname}
\newtheorem{claim}[thm]{\protect\claimname}
\theoremstyle{definition}
\newtheorem{example}[thm]{\protect\examplename}
\theoremstyle{plain}
\newtheorem{fact}[thm]{\protect\factname}
\theoremstyle{plain}
\newtheorem{prop}[thm]{\protect\propositionname}
\newenvironment{lyxlist}[1]
	{\begin{list}{}
		{\settowidth{\labelwidth}{#1}
		 \setlength{\leftmargin}{\labelwidth}
		 \addtolength{\leftmargin}{\labelsep}
		 }}
	{\end{list}}
\theoremstyle{plain}
\newtheorem{cor}[thm]{\protect\corollaryname}
\theoremstyle{plain}
\newtheorem{conjecture}[thm]{\protect\conjecturename}

\makeatother

\usepackage{babel}
\providecommand{\conjecturename}{Conjecture}
\providecommand{\corollaryname}{Corollary}
\providecommand{\definitionname}{Definition}
\providecommand{\examplename}{Example}
\providecommand{\factname}{Fact}
\providecommand{\lemmaname}{Lemma}
\providecommand{\problemname}{Problem}
\providecommand{\claimname}{Claim}
\providecommand{\propositionname}{Proposition}
\providecommand{\remarkname}{Remark}
\providecommand{\theoremname}{Theorem}

\begin{document}
\def\Ind#1#2{#1\setbox0=\hbox{$#1x$}\kern\wd0\hbox to 0pt{\hss$#1\mid$\hss}
\lower.9\ht0\hbox to 0pt{\hss$#1\smile$\hss}\kern\wd0}
\def\Notind#1#2{#1\setbox0=\hbox{$#1x$}\kern\wd0\hbox to 0pt{\mathchardef
\nn="3236\hss$#1\nn$\kern1.4\wd0\hss}\hbox to 0pt{\hss$#1\mid$\hss}\lower.9\ht0
\hbox to 0pt{\hss$#1\smile$\hss}\kern\wd0}
\def\indi{\mathop{\mathpalette\Ind{}}}
\def\nindi{\mathop{\mathpalette\Notind{}}}
\def\bdd {bdd}

\global\long\def\acl{\operatorname{acl}}%

\global\long\def\Avg{\operatorname{Avg}}%

\global\long\def\Sk{\operatorname{Sk}}%

\global\long\def\inp{\operatorname{inp}}%

\global\long\def\ch{\operatorname{char}}%

\global\long\def\dprk{\operatorname{dprk}}%

\global\long\def\ring{\operatorname{ring}}%

\global\long\def\ind{\operatorname{\indi}}%

\global\long\def\nind{\operatorname{\nindi}}%

\global\long\def\ist{\operatorname{ist}}%

\global\long\def\card{\operatorname{Card}^{\ast}}%

\global\long\def\Aut{\operatorname{Aut}}%
\global\long\def\Div{\operatorname{div}}%

\global\long\def\M{\operatorname{\mathbb{M}}}%

\global\long\def\NTP{\operatorname{NTP}}%
\global\long\def\eq{\operatorname{eq}}%

\global\long\def\NIP{\operatorname{NIP}}%
\global\long\def\mod{\operatorname{mod}}%

\global\long\def\IP{\operatorname{IP}}%

\global\long\def\PSL{\operatorname{PSL}}%

\global\long\def\TP{\operatorname{TP}}%

\global\long\def\SL{\operatorname{SL}}%

\global\long\def\fam{\operatorname{fam}}%

\global\long\def\tp{\operatorname{tp}}%

\global\long\def\tr{\operatorname{tr}}%

\global\long\def\st{\operatorname{st}}%

\global\long\def\NSOP{\operatorname{NSOP}}%

\global\long\def\bdn{\operatorname{bdn}}%

\global\long\def\dom{\operatorname{dom}}%

\global\long\def\sf{\operatorname{sf}}%

\global\long\def\Sh{\operatorname{Sh}}%

\global\long\def\ded{\operatorname{ded}}%

\global\long\def\Av{\operatorname{Av}}%

\global\long\def\Pr{\operatorname{Pr}}%

\global\long\def\UF{\operatorname{\mathfrak{U}}}%

\global\long\def\set{\operatorname{set}}%

\global\long\def\FH{\operatorname{FH}}%

\global\long\def\VFA{\operatorname{VFA}}%

\global\long\def\ACFA{\operatorname{ACFA}}%

\global\long\def\NT{\operatorname{NT}}%

\global\long\def\cof{\operatorname{cf}}%

\global\long\def\Th{\operatorname{Th}}%

\global\long\def\meas{\operatorname{meas}}%

\global\long\def\SU{\operatorname{SU}}%

\global\long\def\dcl{\operatorname{dcl}}%

\global\long\def\Cons{\operatorname{Cons}}%

\global\long\def\FHP{\operatorname{FHP}}%
\global\long\def\WFHP{\operatorname{WFHP}}%
\global\long\def\DFHP{\operatorname{DFHP}}%

\global\long\def\VC{\operatorname{VC}}%

\global\long\def\vc{\operatorname{vc}}%

\global\long\def\md{\operatorname{mod}}%

\global\long\def\Sqf{\operatorname{Sqf}}%

\global\long\def\ULCFS{\operatorname{ULCFS}}%

\global\long\def\rv{\operatorname{rv}}%
\global\long\def\ird{\operatorname{ird}}%

\global\long\def\ac{\operatorname{ac}}%

\global\long\def\val{\operatorname{v}}%
\global\long\def\opD{\operatorname{opD}}%

\global\long\def\RV{\operatorname{RV}}%

\global\long\def\WD{\operatorname{WD}}%

\global\long\def\Cons{\operatorname{Cons}}%

\global\long\def\fin{\operatorname{fin}}%

\global\long\def\fap{\operatorname{fap}}%

\global\long\def\cl{\operatorname{cl}}%

\title[Fractional Helly property and forking in $\NTP_{2}$
theories]{Fractional Helly property and combinatorics of forking in $\NTP_{2}$
theories}
\author{Artem Chernikov and Chuyin Jiang}
\begin{abstract}
We investigate the class of FHP theories, i.e.~theories of structures 
in which all definable families of sets satisfy the Fractional Helly
Property (and its variants) from combinatorics. FHP theories generalize
NIP and form a new subclass of low $\NTP_{2}$ theories. We give
many new examples (including ultraproducts of finite fields and 
of the $p$-adics) and establish some results about forking and $f$-generics
for amenable groups definable in $\FHP$ theories. We make several
conjectures about finitary combinatorial properties of forking in $\NTP_{2}$ theories
and establish some partial results, as well as investigate related two-cardinal type counting functions addressing a question of Adler.
\end{abstract}

\maketitle

\tableofcontents

\section{Introduction}

The \emph{fractional Helly theorem} is a basic compactness principle in discrete geometry: if a positive proportion of the small subfamilies of a finite family of sets have non-empty intersection, then one can find a large intersecting subfamily (see Section \ref{sec: basic FHP and Shelah classification}). It was initially established for convex subsets of $\mathbb{R}^d$ by \cite{katchalski1979problem} (with optimal bounds established in \cite{kalai1984intersection, eckhoff1985upper}), and since then extended to many other families of sets of geometric or combinatorial interest. We refer to excellent surveys \cite{amenta2017helly, de2019discrete, barany2022helly} for a detailed discussion and references.

In particular, for families of finite VC-dimension, Matoušek \cite{matousek2004bounded} showed that an analogous conclusion still holds, and this viewpoint has played an important role in the interaction between combinatorial geometry, VC-theory, and model theory of NIP structures (in particular through its application to the so called ``$(p,q)$-theorem'' for families of finite VC dimension - see Section \ref{sec: p,q theorem and NIP}). The aim of this paper is to study the model-theoretic content of the fractional Helly phenomenon beyond the $\NIP$ context and to use it as a tool in the study of forking in $\NTP_{2}$ theories.

Namely, we say that a partitioned formula $\varphi(x,y) \in L$ has the fractional Helly property (FHP) in a structure $M$ if the definable family $\{\varphi(M,b):b\in M^{y}\}$ of subsets of $M^x$ defined by its instances satisfies a fractional Helly theorem; and a complete theory $T = \Th(M)$ is $\FHP$ if every partitioned formula is. In Section \ref{sec: basic FHP and Shelah classification} we consider basic properties of $\FHP$ formulas and theories, along with some variants of the property). We place $\FHP$ theories inside Shelah's classification hierarchy: $\FHP$ theories form a proper subclass of low $\NTP_2$ theories containing NIP theories (Propositions  \ref{prop: FHP implies NTP2} and \ref{prop: local FHP implies low}); and show  that $\FHP$ for a theory reduces to checking that all formulas $\varphi(x,y)$ with $x$ singleton are $\FHP$ (Lemma \ref{lem: Basic operations preserving FH}).

In Section \ref{sec: FHP and measures} we formulate a relative version of the $\FHP$ property with respect to a class of measures (where the original property corresponds to the class of finitely supported measures, Proposition  \ref{rem: FHP iff FHP for finite measures}), and connect it with the theory of generically stable Keisler measures in NIP theories. In particular, in Section \ref{sec: FHP vs HrushPilSim} we demonstrate that Matoušek's theorem (FHP for families of finite VC dimension) implies a uniform/local version of the main result about generically stable measures from Hrushovski-Pillay-Simon \cite{hrushovski2012note}; and conversely its qualitative version under the global NIP assumption follows from \cite{hrushovski2012note} by compactness.

In Section \ref{subsec: Colorful-fractional-Helly and dp-rank} we establish a connection between the so-called ``colorful'' fractional Helly property from combinatorics and the notion of burden/dp-rank from the study of NIP/$\NTP_2$ theories.  In particular, generalizing and refining a measure theoretic characterization of strong NIP theories by Pillay \cite{pillay2013weight}, we get (which can also be viewed as a multi-measure generalization of \cite{hrushovski2012note}):
\begin{thm*}[Proposition \ref{prop: colorful FHP meas bdd by burden}]
Assume that $\bdn\left(\M^x\right)<k$ and let $\varphi_{i}\left(x,y_{i}\right) \in L$,
$i \in [k]$, satisfy $\FHP$ relatively to a class of definable measures $\mathfrak{M}_{i} \subseteq \mathfrak{M}_{y_i}(\M)$ (see Definition \ref{def: FHP for a class of measures}). Then for every $\alpha>0$ there is
$\gamma>0$ satisfying the following. Let $\mu_{i}\in\mathfrak{M}_{i}$ be such that $\mu_{1},\ldots,\mu_{k}$ are pairwise commuting (in particular each $\mu_i$ commutes with itself).
 Assume that $\mu_{1}\otimes\ldots\otimes\mu_{k}\left(\exists x\bigwedge_{i=1}^{k}\varphi_{i}\left(x,y_{i}\right)\right)\geq\alpha$.
Then there is some $i\in [k]$ and some $a\in \M^{x}$
such that $\mu_{i}\left(\varphi_{i}\left(a,y_{i}\right)\right)\geq\gamma$.
\end{thm*}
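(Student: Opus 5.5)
We argue by contradiction and compactness, producing an inp-pattern of depth $k$ in $x$, which contradicts $\bdn(\M^x)<k$. Suppose that for some $\alpha>0$ no $\gamma$ works. Then for each $n$ there are $\mu^n_i\in\mathfrak{M}_i$, pairwise commuting, with $\mu^n_1\otimes\ldots\otimes\mu^n_k(\exists x\bigwedge_i\varphi_i(x,y_i))\geq\alpha$, and yet $\mu^n_i(\varphi_i(a,y_i))<1/n$ for every $i$ and every $a\in\M^x$.

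The first step uses the definition of $\FHP$ relative to $\mathfrak{M}_i$ (Definition \ref{def: FHP for a class of measures}). If $\mu_i(\varphi_i(a,y_i))$ is uniformly small for all $a$, then for a fixed $p$, large $\mu_i^{\otimes p}$-measure of consistent $p$-tuples would force, through $\FHP$, a point $a$ of positive measure. So the contrapositive of $\FHP$ says: for every $\varepsilon>0$ and $p$ there is $N$ (independent of the particular measure in $\mathfrak{M}_i$) such that once $\mu_i(\varphi_i(a,y_i))<1/N$ for all $a$, we get $\mu_i^{\otimes p}(\exists x\bigwedge_{j\le p}\varphi_i(x,y_{i,j}))<\varepsilon$. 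Here I will use that each $\mu_i$ commutes with itself, so that $\mu_i^{\otimes p}$ is symmetric and well behaved. Applying this with the $\mu^n_i$, for $n$ large the $\mu_i^{n\otimes p}$-measure of $p$-consistent tuples for $\varphi_i$ is below $\varepsilon$, for each $i$.

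The second step builds the array. Consider the product measure $\nu=\bigotimes_{i\le k}(\mu^n_i)^{\otimes m}$ in the variables $(y_{i,j})_{i\le k,\,j\le m}$. Because the $\mu_i$ pairwise commute, the order of the factors is irrelevant, and the marginal of $\nu$ on any choice of columns $(y_{1,f(1)},\ldots,y_{k,f(k)})$ is $\mu_1\otimes\ldots\otimes\mu_k$. Hence each path $f:[k]\to[m]$ satisfies $\exists x\bigwedge_i\varphi_i(x,y_{i,f(i)})$ with probability at least $\alpha$. I want most paths to be consistent simultaneously, and this is the main obstacle. A union bound over $m^k$ paths with measure only $\alpha$ each does not give this, so the plan is instead to extract a finite combinatorial statement. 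By an averaging argument, for a $\nu$-positive set of arrays at least an $\alpha$-fraction of the $m^k$ paths are consistent, while by the first step and a union bound over the $\binom{m}{p}$ subsets of each row, each row $i$ has no $p$-consistent subset (taking $\varepsilon<1/(k\binom{m}{p})$). Such arrays therefore exist in $\M$.

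The third step converts the finite statement into an inp-pattern. Fix an array with an $\alpha$-fraction of consistent paths and $p$-inconsistent rows. By a hypergraph Ramsey/density argument in $k$-partite $k$-uniform hypergraphs (Erd\H{o}s's theorem on complete $k$-partite subhypergraphs), with $m$ large we find subsets $S_i\subseteq[m]$ of size $s$ with every path through $S_1\times\ldots\times S_k$ consistent. The rows restricted to $S_i$ remain $p$-inconsistent. Letting $s\to\infty$ and $n\to\infty$ with $p$ fixed, compactness gives an inp-pattern of depth $k$ in $x$ with the formulas $\varphi_i$ and inconsistency bound $p$, contradicting $\bdn(\M^x)<k$. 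The delicate points are the uniformity in $n$ in the first step and choosing $p$ independently of $m$; I expect the first step's exact form of relative $\FHP$ to dictate these quantifiers.
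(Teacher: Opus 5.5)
Your argument works after two small repairs (below), but it reaches the key step by a different route from the paper.

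\emph{The paper's route.} The paper argues directly. It fixes $D$ by compactness so that no $k\times D$ array has all paths consistent and $d$-inconsistent rows, where $d=\max_i d_i$. If every $\mu_i^{\otimes d}(\exists x\bigwedge_{j\le d}\varphi_i(x,y_{i,j}))$ is below a small $\beta$, it intersects two events: all rows $d$-inconsistent (measure $\ge 1-k\binom{D}{d}\beta$), and \emph{all} $D^k$ paths consistent. Lemma \ref{lem: Erdos-Stone} gives the second event measure $\ge\alpha^{Dk}$ by an iterated H\"older inequality, so no passage to a sub-array is needed.

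\emph{Your route.} Linearity of expectation only gives you a sampled $k\times m$ array in which a positive fraction of paths are consistent. You then extract a complete $k\times s$ sub-array with Erd\H{o}s's theorem on $K_{s,\ldots,s}$ (Fact \ref{fac: Zarankiewicz bound}); any such copy in a $k$-partite hypergraph automatically has its parts in distinct rows.

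\emph{Comparison.} The paper's route is shorter and gives explicit dependence: $k\binom{D}{d}\beta<\alpha^{Dk}$ and $\gamma=\min_i\gamma_i(\beta)$. Yours has far worse bounds, since $m$ must be large in terms of $s$ and $\varepsilon$ must beat $\binom{m}{p}$. On the other hand, yours only uses one fact about iterated tensor products of definable measures taken in a fixed order: the marginal on a subset of the variables is the product of the corresponding factors in the induced order. That follows from associativity alone. So, as far as I can see, your proof never actually needs the commuting hypothesis, although you invoke it. By contrast, the H\"older step in Lemma \ref{lem: Erdos-Stone} swaps the order of integration, which is exactly where commutation enters.

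\emph{The repairs.}
(i) The parameter $p$ must be at least $\max_i d_i$, the relative fractional Helly numbers. For smaller $p$ the contrapositive in your first step is false. For $p\ge d_i$ it follows from relative $\FHP_{d_i}$, because $\mu^{\otimes p}(\exists x\bigwedge_{t\le p}\varphi_i(x,y_t))\le\mu^{\otimes d_i}(\exists x\bigwedge_{t\le d_i}\varphi_i(x,y_t))$ (take the marginal on $d_i$ of the coordinates).

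(ii) Choosing $\varepsilon<1/(k\binom{m}{p})$ only makes the bad-row event have measure $<1$. That need not exceed the merely positive measure of your good event. Instead, let $X$ be the fraction of consistent paths; then $\int X\,d\nu\ge\alpha$ and $X\le 1$, so $\nu(X\ge\alpha/2)\ge\alpha/2$. Take $\varepsilon<\alpha/(2k\binom{m}{p})$ and apply Erd\H{o}s's theorem at density $\alpha/2$.
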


\noindent In particular, this implies that families of finite VC-dimension satisfy colorful fractional Helly property (Corollary \ref{cor: colorful Matousek}) and gives a bound on the fractional Helly number of formulas in terms of the burden/dp-rank (rather than the dual VC-density, as in Matoušek's theorem; see  the discussion after Remark \ref{rem: generalizing Pillay}):
\begin{thm*}[Corollary \ref{cor: FHPk bounded by burden}]
In any FHP (so e.g.~in NIP) theory $T$, the fractional Helly number of a formula $\varphi\left(x,y\right)$ is at
most $\bdn\left(\M^{x} \right) +1$.
\end{thm*}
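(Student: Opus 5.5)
We derive the statement from Proposition \ref{prop: colorful FHP meas bdd by burden} by taking all colours equal. Let $k=\bdn(\M^{x})+1$, so that $\bdn(\M^{x})<k$. Take $\varphi_{i}=\varphi$ and $y_{i}=y$ for all $i\in[k]$, and let $\mathfrak{M}_{i}$ be the class of finitely supported (average) measures on $\M^{y}$. By Proposition \ref{rem: FHP iff FHP for finite measures}, $\varphi$ satisfies $\FHP$ relative to this class, since $T$ is $\FHP$. The goal is the fractional Helly property with Helly number $k$: for every $\alpha>0$ there is $\beta>0$ such that for every finite family $b_{1},\ldots,b_{n}$ (with repetitions allowed), if at least $\alpha n^{k}$ of the $k$-tuples $(b_{j_{1}},\ldots,b_{j_{k}})$ have $\bigwedge_{t}\varphi(x,b_{j_{t}})$ consistent, then some $a$ lies in $\varphi(a,b_{j})$ for at least $\beta n$ indices $j$. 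Passing between ordered and unordered tuples, and handling repeated indices, costs only constants depending on $k$.

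Given such $b_{1},\ldots,b_{n}$, let $\mu=\Av(b_{1},\ldots,b_{n})$ and set $\mu_{1}=\ldots=\mu_{k}=\mu$. Finitely supported measures are definable and commute with every Borel-definable measure, so in particular they commute pairwise and each commutes with itself. Hence the hypothesis of the Proposition is satisfied. Moreover, $\mu^{\otimes k}\left(\exists x\bigwedge_{i}\varphi(x,y_{i})\right)$ is exactly the proportion of ordered $k$-tuples of indices whose corresponding instances have a common point, so this quantity is at least $\alpha$.

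The Proposition then gives $\gamma=\gamma(\alpha)$ and some $i$ and $a$ with $\mu(\varphi(a,y))\geq\gamma$. Since all the colours coincide, it does not matter which $i$ is returned, and this says precisely that $a$ lies in at least $\gamma n$ of the sets $\varphi(\M,b_{j})$. So $\beta=\gamma$ works, and the fractional Helly number of $\varphi$ is at most $k=\bdn(\M^{x})+1$.

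The main point needing care is that the Proposition is applied with the measures built from the given finite family. Its $\gamma$ must therefore depend only on $\alpha$, uniformly over $\mathfrak{M}_{i}$, and this is how the Proposition is stated. We should also check that the paper's definition of the fractional Helly number matches this $k$-tuple counting, whether by sampling with replacement or by using $k$-element subsets. If it uses subsets, the discrepancy comes from tuples with repeated indices, which make up an $O(1/n)$ fraction. For large $n$ this is absorbed by replacing $\alpha$ with $\alpha/2$, and for small $n$ one can take $a$ in a single set, which already meets a $1/n$ fraction of the family. Finally, if $\bdn(\M^{x})$ is infinite the statement is vacuous, so we may assume it is finite.
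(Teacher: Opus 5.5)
Your proposal is correct and follows the paper's proof: the paper applies Proposition~\ref{prop: colorful FHP meas bdd by burden} with $\varphi_1=\ldots=\varphi_k:=\varphi$ and $\mu_1=\ldots=\mu_k$ an arbitrary finitely supported measure, using Proposition~\ref{rem: FHP iff FHP for finite measures}. Your extra bookkeeping (ordered tuples versus $k$-subsets, and uniformity of $\gamma$ in $\alpha$) just makes explicit the conversion that Proposition~\ref{rem: FHP iff FHP for finite measures} already provides.
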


\noindent In Section \ref{sec: p,q theorem and NIP} we consider one of the main applications of the fractional Helly property, Matousek's $(p,q)$-theorem for families of finite VC-dimension (Fact \ref{fac: pq Matousek}, which is an analog of the Alon and Kleitman's $(p,q)$-theorem for convex sets in $\mathbb{R}^d$ \cite{alon1992piercing}). Matousek's result plays an important role in the study of $\NIP$ theories (see the references there). 
As we demonstrate in this paper, the class of FHP structures  is much wider than the class of NIP structures.  However, at the level of the theory, the
$\left(p,q\right)$-theorem characterizes NIP/finite VC dimension:

\begin{thm*}[Proposition \ref{prop: pierceable implies NIP}] Assume that the 
formula $\psi\left(x;y_{1}, y_{2}\right) := \varphi\left(x,y_{1}\right)\land\neg\varphi\left(x,y_{2}\right)$ satisfies the $(p,q)$-theorem. Then  $\varphi\left(x,y\right)$ is NIP (i.e.~its instances define a family of sets of finite VC-dimension).
\end{thm*}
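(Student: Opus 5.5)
We argue by contraposition: assuming $\varphi(x,y)$ has IP, we build finite families of instances of $\psi$ that satisfy the $(p,q)$-condition for suitable fixed $p\ge q$, yet cannot be pierced by any bounded number of points. Recall the $(p,q)$-theorem for $\psi$ says: for all $p\ge q$ (with $q$ at least some fixed $q_0$ depending on $\psi$) there is $N=N(p,q)$ such that every finite family $\mathcal F$ of instances $\psi(M,b_1,b_2)$ with the $(p,q)$-property (among any $p$ members some $q$ share a common point) can be pierced by $N$ points.

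\textbf{Step 1.} Since $\varphi$ has IP, by compactness, for every $n$ there are parameters $(c_i)_{i<n}$ and $(a_S)_{S\subseteq n}$ with $\models\varphi(a_S,c_i)\iff i\in S$. We use a finite set of parameters and work with the $\binom{n}{2}$ instances
\[
\psi(x;c_i,c_j),\qquad i\neq j<n.
\]
Each set $\psi(M;c_i,c_j)$ is a ``difference set'' $\varphi(M,c_i)\setminus\varphi(M,c_j)$.

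\textbf{Step 2 ($(p,q)$-property).} Fix $q$ and take $p$ large. Among any $p$ pairs $(i,j)$, we need $q$ pairs $(i_1,j_1),\dots,(i_q,j_q)$ with $\{i_1,\dots,i_q\}\cap\{j_1,\dots,j_q\}=\emptyset$. For such pairs, the point $a_S$ with $S=\{i_1,\dots,i_q\}$ lies in all $q$ sets. To find them, view the $p$ pairs as edges of a directed graph. If $p$ is large compared to $q$, either some vertex has $q$ out-edges (a star $i\to j_1,\dots,j_q$, which works) or some vertex has $q$ in-edges (also works), or else there is a large matching. A large matching can be thinned to $q$ edges with the required disjointness by a Ramsey/greedy argument, e.g.\ using a random bipartition of the vertices. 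So the family has the $(p,q)$-property for some explicit $p=p(q)$.

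\textbf{Step 3 (no bounded piercing).} A point $a$ pierces exactly the pairs $(i,j)$ with $i\in S_a:=\{k:\varphi(a,c_k)\}$ and $j\notin S_a$, i.e.\ the directed cut edges of $S_a$. So $N$ points pierce everything only if $N$ directed cuts cover all ordered pairs of the $n$ indices. But $N$ sets $S_1,\dots,S_N$ separate at most $2^N$ classes of indices; if $n>2^N$, two indices $i\neq j$ lie in the same class, and then the pair $(i,j)$ is cut by no $S_k$. Letting $n\to\infty$ contradicts the bound $N(p,q)$, so $\varphi$ is NIP.

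The main obstacle is Step 2: making the $(p,q)$-property hold with $q$ above the threshold $q_0$ demanded by the theorem, via a clean combinatorial lemma that from any $p(q)$ distinct ordered pairs extracts $q$ whose tails avoid all heads. If the star/matching case split gets messy, the fallback is to apply Ramsey's theorem to the pairs, colored by the equality pattern of their endpoints.
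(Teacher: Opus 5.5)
Your proof is correct. It uses the same construction as the paper: all instances $\psi(x;c_i,c_j)$ with $i\neq j$ over a large set shattered by the sets $\varphi(e,\M)$, followed by the same two claims, the $(p,q)$-property and unbounded piercing number. The combinatorics behind each claim differs, and yours is more elementary in both places.

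\textbf{The $(p,q)$-property.} The paper first proves the $(4,2)$-property: there are no four pairwise inconsistent instances, since pairwise inconsistency forces the head of one pair to equal the tail of another. It then applies Ramsey's theorem, using that over a shattered set a collection of instances is consistent as soon as it is pairwise consistent. This yields $p(q)$ of Ramsey size, roughly $R(4,q)$.

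Your out-star / in-star / matching split gives $p(q)=O(q^2)$ directly. No thinning or random bipartition is needed in the matching case. In a set of vertex-disjoint edges no tail can equal a head, so any $q$ edges of the matching already work. The step you flagged as the main obstacle is therefore immediate. The split also handles repeated sets in the $(p,q)$-tuple automatically, provided degrees are counted with multiplicity.

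\textbf{Unbounded piercing.} The paper assumes a transversal of size $k$, giving a partition into consistent classes $D_l$. Each class has disjoint sets of first and second coordinates. The paper colors pairs $i<j$ by the class containing $(c_i,c_j)$ and uses Ramsey to find $(c'_0,c'_1)$ and $(c'_1,c'_2)$ in the same class, a contradiction. Your pigeonhole argument instead looks at the at most $2^N$ possible $\varphi$-types of the $c_i$ over $N$ piercing points. Two indices in the same class give an unpierced pair. This is simpler and gives the explicit lower bound $\log_2 n$ on the piercing number.

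A minor slip: there are $n(n-1)$ ordered pairs, not ${n \choose 2}$. Either choice of family works for the argument.
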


In Section \ref{sec: f-gen and forking FHP} turn to definable groups and forking. Several notions of largeness/genericity for definable sets (and their equivariant versions in definable groups), coming from combinatorics, topological dynamics or measure theory, play an important role in the model-theoretic study of tame classes of structures. In definably amenable NIP groups all of these notions agree, giving a canonical notion of a large set (\cite{chernikov2018definably}, see Section \ref{sec: notions of genericity} for the details). Here we extend the connection of forking and invariant measures to amenable FHP groups:
\begin{thm*}[Theorem \ref{thm: generics in FHP}] Assume $T$ is  FHP and $G = G(\M)$ is a definable group so that $G(M)$ is amenable (as a discrete group) for some $M \models T$. Then for any $L(\M)$-definable set $X \subseteq G(\M)$, $X$ is $f$-generic if and only if $\mu(X) > 0$ for some $G$-invariant measure $\mu$.
\end{thm*}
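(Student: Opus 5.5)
We prove the two directions separately. Recall that $X$ is \emph{$f$-generic} if every left translate $gX$ ($g\in G(\M)$) does not fork over some (equivalently every) small model $M$ over which $X$ is defined; equivalently, no translate of $X$ divides over such $M$.

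\emph{($\Leftarrow$)} This direction uses only the $\NTP_2$ fact that forking equals dividing over models, which is available since FHP theories are $\NTP_2$ (Proposition \ref{prop: FHP implies NTP2}). Let $\mu$ be a $G$-invariant Keisler measure with $\mu(X)>0$. Then $\mu(gX)=\mu(X)>0$ for all $g$.
\begin{enumerate}[(i)]
\item Suppose some $gX$ divides over $M$. Take an $M$-indiscernible sequence $(g_i)_{i<\omega}$ with $\{g_iX\}$ $k$-inconsistent.
\item Every $g_iX$ has $\mu$-measure $\mu(X)>0$. A finitely additive probability measure cannot assign a fixed positive mass to infinitely many sets of which any $k$ have empty intersection: the sum of indicators is bounded by $k-1$, so $n\mu(X)\le k-1$ for all $n$.
\item This contradicts (ii), so no translate of $X$ divides over $M$, hence none forks, and $X$ is $f$-generic.
\end{enumerate}
Here $\mu$ must be a measure on $L(\M)$-definable sets. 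It is obtained from $G$-invariance together with the amenability hypothesis as set up in Section \ref{sec: notions of genericity}.

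\emph{($\Rightarrow$)} This is the substantive direction. Let $X=\varphi(\M,b)$ be $f$-generic, so no translate of $X$ forks over $M$, for $M$ containing $b$ with $G(M)$ amenable. We want a $G$-invariant measure charging $X$. Consider the formula $\psi(x;y,z):=\varphi(y^{-1}\cdot x,z)$, whose instances are the translates $gX$. By Lemma \ref{lem: Basic operations preserving FH} it is FHP. The plan is to work in $G(M)$ with a left-invariant mean $\nu$ on $G(M)$ and argue by contraposition. Suppose every $G(M)$-invariant mean assigns $X(M)$ density $0$.

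\textbf{Step 1: finite data.} Use the Følner-type/Hahn--Banach characterization of amenability: for every $\varepsilon>0$ there are finitely many $h_1,\dots,h_n\in G(M)$ such that no point of $G(M)$ lies in more than $\varepsilon n$ of the sets $h_jX$. This is the standard dual statement that $\inf$ over convex combinations of translates of $\sup$ of the averaged indicator equals the maximal invariant mean of $X$.

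\textbf{Step 2: FHP.} Take the uniform measure on the parameters $h_1,\dots,h_n$. The FHP for $\psi$, in the measure form of Proposition \ref{rem: FHP iff FHP for finite measures}, with the points in $G(\M)$, yields the following. If almost no point is in an $\varepsilon$-fraction of the translates, then the fraction of $k$-tuples $(h_{j_1},\dots,h_{j_k})$ with $\bigcap_l h_{j_l}X\ne\emptyset$ tends to $0$ as $\varepsilon\to0$. Here $k$ is the fractional Helly number.

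\textbf{Step 3: compactness.} Conversely, $f$-genericity plus $\NTP_2$/low-ness gives a positive lower bound. A non-forking global type $p\ni X$ that is $M$-invariant lets us build, for any finite sequence of translates chosen generically, that $k$-wise intersections are consistent with positive proportion. More precisely, if a fixed positive proportion of $k$-tuples of translates by elements of $G(M)$ were inconsistent for arbitrarily fine choices, then by compactness and Ramsey we would extract an $M$-indiscernible sequence of translates that is $k$-inconsistent, i.e.\ dividing, contradicting $f$-genericity.

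\textbf{Step 4: conclusion.} This forces $\nu(X(M))>0$ for some invariant mean $\nu$ on $G(M)$. We then transfer $\nu$ to a global $G$-invariant Keisler measure by the standard heir/coheir extension, taking a limit of averages over $G(M)$ realized in $\M$ and using $G(M)\prec$-density.

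\textbf{Main obstacle.} The hardest part is Step 3 together with the transfer in Step 4: relating translates by elements of $G(M)$ to translates by arbitrary $g\in G(\M)$, and extracting indiscernible dividing witnesses from counting failures. I would first try to do everything with $M$ sufficiently saturated only in the external sense, using Ramsey on the finite families to obtain indiscernibility over $M$.
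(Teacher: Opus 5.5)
Your direction ($\Leftarrow$) is correct and is essentially the paper's argument; the paper phrases your bound $n\mu(X)\le k-1$ via Kelley's intersection number.

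\textbf{The gap.} It is at the first line of ($\Rightarrow$), where you fix ``$M$ containing $b$ with $G(M)$ amenable''. The hypothesis only provides \emph{some} model with $G(M)$ amenable, while $b$ is an arbitrary parameter from $\M$. Amenability is not preserved under elementary extensions (cf.\ the paper's remark on uniform amenability). If $G(\M)$ is non-amenable, it contains a finitely generated non-amenable subgroup $\langle h_1,\dots,h_m\rangle$, and then no model containing $h_1,\dots,h_m$ has $G(M)$ amenable.

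Without $b\in M$ the set $X(M)$ is not even available. You can only work with the sets $\varphi(M,b_0)$ for $b_0\in M$, and a positive invariant mean for each individual $b_0$ says nothing about $X$. For instance, writing $b=[b_i]$, an ultralimit of means $\nu_i$ with $\nu_i(\varphi(M,b_i))>0$ can give $X$ measure $0$ unless the lower bound is uniform in $i$. For the same reason your Step 4, which transfers one fixed $\nu$, does not reach $X$. Heir/coheir extensions of an invariant measure also need not be invariant; what preserves invariance is an ultrapower-type construction.

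\textbf{The missing idea: uniformity in the parameter, plus a first-order transfer.}
\begin{enumerate}
\item Non-$G$-dividing gives, by compactness, a $p$ such that the translates of $X$ have the $(p,k)$-property. This is a first-order condition on $b$.
\item Inside $M$, show that for \emph{every} $b_0\in M$ satisfying this condition there is a $G(M)$-invariant mean $\nu$ with $\nu(\varphi(M,b_0))\ge\beta$, where $\beta=\beta(p,k,\varphi)$ does not depend on $b_0$. Double counting turns the $(p,k)$-property into at least $\alpha(p,k)\binom{n}{k}$ consistent $k$-subsets. FHP then gives a point lying in $\beta n$ of the translates. The paper finishes with Kelley's criterion and convolution with a bi-invariant mean. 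Your F\o lner/Hahn--Banach duality $\sup_\nu \nu(A)=\inf_F \sup_x \frac{1}{|F|}\sum_{h\in F}1_{hA}(x)$ is a valid substitute for this last step, provided you run it quantitatively rather than by contraposition.
\item The resulting uniform statement is a sentence in the structure $(M,P,\mathbb{R};E_\psi)$, whose sort $P$ consists of the $G(M)$-invariant Keisler measures (Construction $(*)$ of Hrushovski--Peterzil--Pillay). It passes to a saturated elementary extension and applies there to $b$. The standard part of the resulting nonstandard measure is a global $G(\M)$-invariant Keisler measure with $\mu(X)\ge\beta$.
\end{enumerate}

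\textbf{Step 3 is also misstated.} A fixed positive proportion of \emph{inconsistent} $k$-tuples does not yield long $k$-inconsistent subfamilies. You need the proportion of consistent $k$-tuples to tend to $0$, or better, use the $(p,k)$-property as above. The appeal to an $M$-invariant non-forking global type containing $X$ is unjustified and unnecessary.
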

\noindent In Section \ref{sec: fork iff null in FHP} we note a partial analog for the action of $\Aut(\M)$.

The so called fsg groups (groups with finitely satisfiable generics), arising in the work on Pillay's $o$-minimal groups conjecture \cite{hrushovski2008groups}, form a particularly nice class of definably amenable NIP groups capturing definable compactness in many natural settings (including $o$-minimal theories).  In \cite{chernikov2024definable}, a generalization of fsg groups from NIP theories to \emph{fim groups} (see Section \ref{sec: fim groups}) in arbitrary theories was proposed, demonstrating that part of the theory of fsg groups in NIP theories survives. Here we show that in FHP theories, the characterization of generic definable sets generalizes fully from the NIP case:
\begin{thm*}[Proposition \ref{prop : generics in fim}]
	Let $G$ be a definable fim group in an $\FHP$ theory. Then all notions of genericity (1)--(5) in Definition \ref{defn: notions of genericity} are equivalent for definable subsets of $G$.
\end{thm*}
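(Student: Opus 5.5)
The plan is to organize everything around the canonical measure of a fim group. By the results recalled in Section \ref{sec: fim groups} (from \cite{chernikov2024definable}), a definable fim group $G$ (say defined over a small model $M$) carries a unique left- and right-$G$-invariant fim, hence generically stable, Keisler measure $\mu$ over $M$. It is also known there, without any FHP assumption, that for definable $X\subseteq G$ the measure-theoretic and combinatorial notions (e.g.\ $X$ generic, i.e.\ finitely many translates cover $G$; $X$ weakly generic; $\mu(X)>0$; $X$ has positive measure for some $G$-invariant measure) coincide. So the genuinely new content is to bring $f$-genericity into this cycle. I would prove the two implications $\mu(X)>0\Rightarrow X$ is $f$-generic and $X$ is $f$-generic $\Rightarrow \mu(X)>0$, and then chain them with the known equivalences to cover all of (1)--(5) in Definition \ref{defn: notions of genericity}.

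For the first implication I would use only generic stability. Since $\mu$ is fim over $M$, it is $M$-invariant, and a definable set of positive measure for a global $M$-invariant measure does not fork over $M$. By bi-invariance, $\mu(gX)=\mu(X)>0$ for every $g\in G(\M)$. Hence no translate of $X$ forks over $M$, so $X$ is $f$-generic.

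For the converse I would argue contrapositively and use FHP in its measure form (Definition \ref{def: FHP for a class of measures}). Assume $\mu(X)=0$ and consider $\varphi(x;y):= x\in yX$. For every $a\in G$ we get $\mu_y(\varphi(a,y))=\mu(aX^{-1})$, or a similar translate, which by invariance equals $0$ after rewriting in terms of $X$ (adjusting $\varphi$ to $x\in yX$ versus $y\in xX^{-1}$ as needed). I then need FHP for $\varphi$ relative to the class $\{\mu\}$. A fim measure commutes with itself and is approximated by averages of finitely supported measures, so I expect the finitary FHP of $\varphi$ to transfer to $\mu$ via Proposition \ref{rem: FHP iff FHP for finite measures} and the approximation argument of Section \ref{sec: FHP and measures}. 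The contrapositive of FHP then yields some $k$ such that, for every $\alpha>0$, $\mu^{\otimes k}(\exists x\bigwedge_{i\le k} x\in y_iX)<\alpha$. Hence $\mu^{\otimes k}\bigl(\exists x\bigwedge_{i\le k} x\in y_iX\bigr)=0$.

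Now I would take a Morley sequence $(g_i)_{i<\omega}$ in $\mu$ over $M$. This is an $M$-indiscernible sequence, by generic stability, and each $k$-subtuple realizes a type in the support of $\mu^{\otimes k}$. The key claim is that $g_{1}X\cap\dots\cap g_{k}X=\emptyset$. The main obstacle is justifying this passage from a measure-zero statement to a statement about an actual Morley sequence. Measure zero alone does not exclude a supported type from a definable set, but here the set $\exists x\bigwedge x\in y_iX$ is definable of measure $0$, so no type in the support of $\mu^{\otimes k}$ contains it. The fim property, i.e.\ the uniform approximation of $\mu^{\otimes k}$ by averages along Morley sequences, should make this precise.

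Granting the key claim, $\{g_iX\}_{i<\omega}$ is $k$-inconsistent along an $M$-indiscernible sequence with $g_0\equiv_M g_i$. So $g_0X$ divides over $M$, hence forks, and $X$ is not $f$-generic, which completes the contrapositive. Combined with the known equivalences for fim groups, this gives the proposition.
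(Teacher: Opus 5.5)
Your proposal is essentially the paper's proof. The only use of $\FHP$ is Proposition \ref{prop: FHP for gen. stable}, applied to the translate formula and the invariant fim measure $\mu$, to show that $k$-tuples of translates with a common point form a $\mu^{\otimes k}$-null set; one then picks translates avoiding this set. Your derivation of ``$\mu(X)>0\Rightarrow X$ is $f$-generic'' directly from the invariance of $\mu$, instead of through $\NTP_2$ as in the paper, is a harmless and slightly more economical reorganization.

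One step needs repair. A sequence realizing a type in the support of $\mu^{(\omega)}|_M$ need not be $M$-indiscernible; the counting measure on a nontrivial finite group already gives a counterexample. So take such a sequence, or, as the paper does, a finite tuple in the measure-one complement of $\bigvee_{I\in{[n]\choose k}}\exists x\bigwedge_{i\in I}\varphi(x,y_i)$. Its increasing $k$-subtuples avoid the null set, and you can then extract an $M$-indiscernible sequence by Ramsey and compactness, which preserves $k$-inconsistency. This shows $X$ is $G$-dividing, and hence not $f$-generic over any small model, not only over $M$.
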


In Section \ref{sec: FHP in some expansions of Z} we consider the fractional Helly property for definable sets in two (simple, unstable) expansions of $(\mathbb{Z},+)$: by a predicate $\Pr$ for the primes and their inverses (studied in \cite{kaplan2017decidability}, see Section \ref{subsec: T_pr}), and by a predicate $\Sqf$  for the square-free integers (studied in \cite{tran}, see Section \ref{sec: square free is FHP}). They exhibit two quite different behaviors,  explained by the former set being of Banach density $0$, while the latter is of positive Banach density:
\begin{thm*}[Theorem \ref{thm: Sqf is FHP}] The   structure $\left(\mathbb{Z},+,\Sqf\right)$ is $\FHP$. Moreover, every formula $\varphi(x,y)$ with $|x| \leq d$ satisfies $\FHP_{d+1}$.
\end{thm*}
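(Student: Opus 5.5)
We plan to prove the theorem via a description of definable sets in $(\mathbb{Z},+,\Sqf)$ together with a density-based fractional Helly argument. By the results of \cite{tran}, the theory of $(\mathbb{Z},+,\Sqf)$ is simple and (after adding constants and the predicates for divisibility by each $n$) admits a relative quantifier elimination. Definable subsets of $\mathbb{Z}^d$ are then Boolean combinations of three kinds of sets: congruence conditions (sets defined modulo $n$, via the predicates $D_n$), linear (Presburger) conditions, and sets of the form $\Sqf(\ell(x)+b)$ with $\ell$ an integer linear form. The genericity axioms in \cite{tran} are local-global statements: a finite system $\bigwedge_j \Sqf(\ell_j(x)+b_j)\wedge\bigwedge_k\neg\Sqf(\ell'_k(x)+b'_k)$ has a solution in a given Presburger-definable region as soon as there is no local obstruction at any prime $p$. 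Here a local obstruction means that $p^2$ must divide some $\ell_j(x)+b_j$ for every residue of $x$ modulo $p^2$. Moreover, only finitely many primes, bounded in terms of the forms, can obstruct.

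The first step is to reduce the task. By Lemma \ref{lem: Basic operations preserving FH} it suffices in principle to treat $|x|=1$, but since we want the sharper bound $\FHP_{d+1}$ for $|x|\le d$, we instead work directly in dimension $d$. We fix $\varphi(x,y)$ and write it, uniformly in $y$, as a finite disjunction of ``cells''. Each cell is a conjunction of a Presburger condition $\pi(x,y)$, a congruence condition modulo a fixed $N$, and square-free literals $\Sqf(\ell_j(x)+t_j(y))$, $\neg\Sqf(\ell'_k(x)+t'_k(y))$, with the linear forms $\ell_j,\ell'_k$ from a fixed finite list. If a positive fraction of $(d+1)$-tuples from a family $\varphi(x,b_1),\dots,\varphi(x,b_n)$ intersect, then by pigeonhole a positive fraction of these tuples intersect through a fixed choice of cell pattern. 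We may therefore assume $\varphi$ is a single cell.

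The second step is to split $\varphi$ into its Presburger part $\pi(x,y)$ together with the congruences, and its square-free part. The Presburger part is NIP, with the dual VC-density of $\pi$ in $x$ at most $d$. By Matoušek's theorem in the sharp form (Presburger sets behave like polyhedra in $\mathbb{Z}^d$, and Corollary \ref{cor: FHPk bounded by burden} gives fractional Helly number at most $\bdn(\mathbb{Z}^d)+1=d+1$, since Presburger arithmetic is dp-minimal), a positive fraction of intersecting $(d+1)$-tuples yields a subfamily of linear size whose Presburger parts have a common point. We then need to upgrade this to a common point of the full formulas.

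The main obstacle is this upgrade: the common Presburger region must also contain a point satisfying all the square-free literals of the subfamily simultaneously. For this we use the positive-density local-global principle. We refine the subfamily further by pigeonholing on the residues of the parameters $t_j(b_i)$ modulo $p^2$, for the boundedly many small primes $p$ up to a bound depending only on $\varphi$, which loses only a constant factor. After this refinement, whether a local obstruction occurs at a small prime is determined by the residue pattern, and any tuple of the original family that intersected witnesses that this pattern is unobstructed. For large primes, an obstruction would force $p^2$ to divide a fixed nonzero linear combination of parameters. To handle this we would try to show, using the positive density of $\Sqf$, that the Presburger region contains a coset of a full-rank lattice of unbounded size. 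A sieve count then gives a solution, since the density of $\bigcap\Sqf$ on the region is $\prod_p(1-\text{local loss}_p)>0$, provided the number of constraints hitting each large $p$ is controlled. We expect the delicate part to be controlling the large primes uniformly in the size of the subfamily. If this fails, we would pass to a further linear-size subfamily on which the offending linear combinations of parameters vanish identically, using a Ramsey/pigeonhole argument over the finitely many forms.
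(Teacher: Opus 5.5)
There is a genuine gap, and it sits exactly where the content of the theorem lies. The upgrade from a common point of the linear/congruence parts to a common point of the full formulas is left as a hedge, and the mechanism you sketch does not work. Take $\varphi(x;z)=\Sqf(x+z)$ and the family $\Sqf(x+4i)$, $i\in[n]$. All parameters agree modulo $4$, so your pigeonhole on small primes keeps the whole family. Yet for every odd prime $p$ with $p^{2}\le n$ the residues $-4i \bmod p^{2}$ cover $\mathbb{Z}/p^{2}\mathbb{Z}$, so the intersection is empty. Such obstructions at intermediate primes come from many parameters jointly, not from $p^{2}$ dividing one fixed linear combination. No single intersecting tuple rules them out, and the sieve product over the whole subfamily is $0$. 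Nor can you pass to a subfamily on which differences of parameters vanish, since the sets here are pairwise distinct.

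The missing idea is to select the subfamily by measure rather than by residue patterns. For a \emph{positive} special formula $\psi$, the density of $\psi(\mathbb{Z},c,c')$ is bounded below by some $\varepsilon(\psi)>0$ \emph{independently of the parameters} (Lemma \ref{lem: density for positive special formulas}). Averaging against a Banach-density measure (Fact \ref{fact: Kelley}(1), Corollary \ref{cor: basic Sqf formulas have FHP}) then gives a point lying in a fixed positive proportion of the sets. This yields $\FHP_{1}$ for the infinite part outright, so your Matou\v{s}ek step on the ``Presburger'' part is superfluous. Note also that no order is definable in $(\mathbb{Z},+,\Sqf)$, since its theory is simple; the non-square-free atomic conditions are only linear (in)equations and congruence conditions.

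Negative literals $\neg\Sqf(kx+c')$ are not covered by this: there is no uniform density bound for them (see the remark following Lemma \ref{lem: density for positive special formulas}). Your proposal also does not address the real danger they pose. If a parameter occurring in a positive literal of one member occurs in a negative literal of another, those two members are already inconsistent, so your fallback of making combinations of parameters vanish points in the wrong direction.

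The paper works with $|x|=1$ and a single special formula, as follows.
\begin{enumerate}
\item It splits off the finite part using elimination of $\exists^{\infty}$; that part is $\FHP_{2}$ by Lemma \ref{lem: FHP for finite sets}, and in the infinite part the equational conditions reduce to inequations.
\item It uses Lemma \ref{lem: Furedi repetitions} to pass to a linear-size subfamily in which the positive parameters of all members lie in classes $X_{1},\dots,X_{r}$ and the negative ones lie in the remaining disjoint classes. This makes the combined $G$-system non-trivial.
\item A common point of the positive parts, from the density step, witnesses $p$-satisfiability for every $p$, since the associated $p$-conditions only involve $\theta_{p}$ and the positive literals.
\item Fact \ref{fact: G-system local to global consistency} then gives infinitely many solutions of the full system, and finitely many zeros of the inequations are discarded.
\end{enumerate}

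That local-global theorem is for one-variable systems, so working directly in $d$ variables would need a multivariable version you do not have. Moreover, pigeonholing on cell patterns does not reduce to a single cell without raising the Helly number, since members of one intersecting tuple may use different cells; Lemma \ref{lem: Basic operations preserving FH}(2) only gives $\sum_i k_i-t+1$. The sharp bound instead comes for free at the end. $\FHP$ for all formulas follows by Lemma \ref{lem: Basic operations preserving FH}(3). Then $\FHP_{d+1}$ follows by Corollary \ref{cor: FHPk bounded by burden}, since $T_{\Sqf}$ is supersimple of SU-rank $1$ and hence $\bdn(\M^{d})=d$.
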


\begin{thm*} The structure $\left(\mathbb{Z},+,\Pr \right)$ is not $\FHP$ (Proposition \ref{prop: Primes not FHP}), and  (assuming Dickson's conjecture) $T_{\Pr}$
is locally $\FHP$ (Theorem \ref{prop: Tpr has local FHP}). 
\end{thm*}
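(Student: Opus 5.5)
The statement has two parts of different natures: a \emph{negative} result about the standard model $\left(\mathbb{Z},+,\Pr\right)$, and a \emph{positive}, conditional result about the theory $T_{\Pr}$. I would treat them separately.

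\emph{Failure of $\FHP$ in $\left(\mathbb{Z},+,\Pr\right)$.} The idea is to exploit that $\Pr$ has Banach density $0$, in contrast with $\Sqf$ in Theorem \ref{thm: Sqf is FHP}. I would fix a formula built from finitely many atoms $\Pr(\ell(x)+y)$, with $\ell$ linear with fixed integer coefficients. A first candidate is $\varphi(x;y):=\Pr(x+y)$, or a variant in which $x$ is a tuple so that the relevant linear system has finite complexity. For each $k$ and $\varepsilon>0$ I would take the parameter family to be all of $B=[N]$, with $N$ large.

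The \emph{upper bound} is the robust step. If $S\subseteq B$ has $|S|\ge\varepsilon N$ and a common point $x$, then $x+S$ is a set of at least $\varepsilon N$ primes inside an interval of length $N$. By the Brun--Titchmarsh inequality such an interval contains at most about $2N/\log N$ primes, which is $<\varepsilon N$ for $N$ large. So no $\varepsilon$-fraction of the family intersects.

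The \emph{lower bound} is the main obstacle: I must show that a fixed positive fraction of the $k$-subsets of $B$ do have a common point. For a single variable $x$ this is a prime $k$-tuples statement, which is not known. I would therefore pass to $x=(x_{1},x_{2})$ and linear forms of finite complexity, for instance forms $x_{1}+jx_{2}+y$. Green--Tao--Ziegler then gives, unconditionally, a positive density of solutions for every admissible $k$-configuration. I would restrict $B$ so that a positive proportion of $k$-subsets are admissible, using a congruence condition modulo $k!$ or $\prod_{p\le k}p$. I would also recheck that the upper bound survives in this multi-variable setting, by bounding the number of primes along a line by Brun--Titchmarsh.

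\emph{Local $\FHP$ for $T_{\Pr}$ under Dickson's conjecture.} I would use the quantifier-elimination and axiomatization of $T_{\Pr}$ from \cite{kaplan2017decidability}, which holds under Dickson's conjecture. It reduces every formula $\varphi(x,y)$, with $x$ a singleton as permitted by Lemma \ref{lem: Basic operations preserving FH}, to a Boolean combination of congruences $x\equiv c \bmod m$, equalities, and atoms $\pm\Pr(nx+b)$.

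In a saturated model, Dickson's conjecture says that a finite system of such atoms is consistent exactly when it is consistent with the finitely many local congruence obstructions and with the equalities. The consistency of $\bigwedge_{i\in I}\varphi(x,b_{i})$ should therefore depend only on finite "type data" of the $b_{i}$: their residues modulo a fixed $m_{\varphi}$, together with coincidences among the linear terms.

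Given a finite family in which an $\alpha$-fraction of the $k$-subsets intersect, I would apply pigeonhole and Ramsey to these finitely many residue classes and equality patterns. This yields a positive fraction $\beta(\alpha,\varphi)$ of the family on which the local obstructions are jointly consistent, so the whole subfamily has a common point.

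I expect the delicate point here to be the equalities and negated atoms, that is, coincidences between the linear terms. I would handle them by first removing, via a further Ramsey step, the parameters involved in a bounded number of coincidences.
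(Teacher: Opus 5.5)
\textbf{The non-$\FHP$ part has a genuine gap in your lower bound.} You say the single-variable statement is unknown and switch to $x=(x_{1},x_{2})$ with Green--Tao--Ziegler. That route cannot work. In your own setup every atom is $\Pr(\ell(x)+y)$, where $\ell$ has \emph{fixed} integer coefficients and the parameter only shifts. So for any $k$-subfamily $\{\varphi(x,b_{1}),\dots,\varphi(x,b_{k})\}$ the linear forms come in parallel classes $\ell(x)+b_{1},\dots,\ell(x)+b_{k}$, which are pairwise affinely dependent. This is exactly the infinite-complexity (prime $k$-tuples) situation that Green--Tao--Ziegler excludes.

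Forms like $x_{1}+jx_{2}+y$ with $j$ varying across the family are not instances of a single formula in $\{+,\Pr\}$, since there is no multiplication. And a single formula fixed in advance is required, because you must violate $\FHP_{k}$ for every $k$.

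The missing ingredient is Maynard's theorem, which is precisely the known single-variable statement. For each $k$ there are $N(k)$ and $C(k)>0$ such that, among any $N(k)$ distinct integers, at least a $C(k)$-proportion of the $k$-subsets $\{h_{1},\dots,h_{k}\}$ admit infinitely many $n$ with all $n+h_{i}$ prime. Averaging over the $N(k)$-subsets of $[N]$ gives $|\Cons_{k}|\geq C(k)\binom{N}{k}$ for the family $\{\Pr(x+b):b\in[N]\}$. Your Brun--Titchmarsh (or just density-zero) upper bound then shows that $\Pr(x+y)$ fails $\FHP_{k}$ for every $k$.

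The paper also relies on Maynard, but packages the argument differently. Maynard shows $\Pr$ does not $G$-divide in $(\mathbb{Z},+)$, and Banach density $0$ makes $\Pr$ null for every invariant mean. Theorem~\ref{thm: generics in FHP} says these two facts are incompatible with $\FHP$. Once repaired, your direct count is a more elementary route that avoids Kelley's criterion, amenability and the lifting to the monster model.

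\textbf{For local $\FHP$, your outline points the right way, but two steps must be replaced.}

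First, a Ramsey step only yields homogeneous subfamilies of logarithmic size, not of size $\beta n$. The clashes also need not be bounded in number: a positive atom $\Pr(mx+c_{i,j})$ of one member may coincide with negated atoms $\neg\Pr(mx+c_{i',j'})$ of unboundedly many other members. Removing them requires a density argument. The paper uses the Erd\H{o}s $k$-partite lemma (Lemma~\ref{lem: Furedi repetitions}) followed by pigeonhole on signs. Alternatively, choose a random sign for each pair $(m,c)$ and keep the members consistent with it; this retains a $2^{-s}$ fraction.

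Second, consistency of $k$-subfamilies does not give Dickson admissibility of the union. Moreover, the relevant local obstructions live at all primes $r$ up to the size of the subfamily, not modulo a fixed $m_{\varphi}$. The paper handles this in two steps:
\begin{enumerate}
\item It first uses elimination of $\exists^{\infty}$ to split off the members with at most $D$ solutions; these satisfy $\FHP_{2}$ by Lemma~\ref{lem: FHP for finite sets}.
\item Each remaining member has infinitely many solutions, hence is admissible. Since corresponding parameters have the same type, they are congruent modulo every $r$, so admissibility of the union reduces to that of one member.
\end{enumerate}
The second step is exactly where locality is used, and it is also what makes your treatment of the equalities go through.
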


In Section \ref{sec: MS-meas struc FHP} we consider $\FHP$ in measurable structures in the sense of Macpherson and Steinhorn, or \emph{MS-measurable structures} \cite{macpherson2008one}. Main examples of MS-measurable structures are ultraproducts of finite fields, finite simple groups of bounded Lie rank, vector spaces, etc. (we refer to \cite[Example 2.4]{elwes2008survey} for further examples).
\begin{thm*}[Theorem \ref{thm: FHP in MS meas}]
	Let $M$ be an MS-measurable structure. Then every partitioned formula $\varphi(x,y) \in L$ with $|x| \leq d$ satisfies $\FHP_{d+1}$  with respect to the class of definable measures $\mathfrak{M}_y := \{\mu_{B}(y) : B \subseteq \M^y \textrm{ definable with parameters}\}$ (see Definition \ref{def: FHP for a class of measures}). In particular, $\varphi(x,y)$ satisfies $\FHP_{d+1}$.
\end{thm*}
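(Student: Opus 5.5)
The plan is to use the dimension part of the MS-measurable structure to turn a measure statement into a dimension count, and then use the measure part (with the definability clause) to make the bound uniform. I write $\dim$ and $\meas$ for the dimension and measure functions, so that $\mu_B(D)=\meas(D\cap B)/\meas(B)$ whenever $\dim(D\cap B)=\dim B$, and $\mu_B(D)=0$ if $\dim(D \cap B)<\dim B$. I normalize so that $\dim M = 1$, hence $\dim M^x\leq d$.

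\textbf{Step 1 (dimension count).} Fix a definable $B\subseteq \M^y$ with $\dim B = e$. Let $S=\{(b_0,\ldots,b_d)\in B^{d+1}: \exists x \bigwedge_{i\le d}\varphi(x,b_i)\}$. If $\mu_B^{\otimes(d+1)}(S)\geq\alpha>0$, then by Fubini and additivity for products of MS-measures, $\dim S=(d+1)e$. Now consider
\[
W=\Big\{(a,b_0,\ldots,b_d): a\in\M^x,\ \bigwedge_{i\le d} \big(b_i\in B\wedge\varphi(a,b_i)\big)\Big\}.
\]
The projection of $W$ onto the $\bar b$ coordinates is $S$. Suppose every fibre $\varphi(a,\M)\cap B$ has dimension $\le e-1$. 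The fibres of $W$ over $a$ then have dimension $\le (d+1)(e-1)$. By the fibration/additivity properties of $\dim$, $\dim W\le d+(d+1)(e-1)=(d+1)e-1$, and hence $\dim S\le \dim W<(d+1)e$. This is a contradiction. So some $a$ has $\dim(\varphi(a,\M)\cap B)=e$, i.e.\ $\mu_B(\varphi(a,y))>0$.

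The case $e=0$ is handled directly. There $B$ is finite, $\mu_B$ is counting measure, and a point in the intersection hitting $B$ already gives a fibre of positive measure. Exactly $d+1$ instances are needed here because $\dim M^x\le d$ absorbs one unit of codimension per coordinate.

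\textbf{Step 2 (uniformity of $\gamma$).} This is the main obstacle. Step 1 only shows that the measure is positive, whereas $\FHP_{d+1}$ demands a lower bound $\gamma=\gamma(\alpha,\varphi)$ independent of $B$. I would first fix a definable family $B=\psi(\M,c)$ and apply the definability clause of MS-measurability to the formula $\varphi(x,y)\wedge\psi(y,z)$ with parameter variables $(x,z)$. This gives only finitely many dimension-measure pairs $(e',\nu)$ for the fibres $\varphi(a,\M)\cap\psi(\M,c)$, and also for $\psi(\M,c)$ itself.

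Hence the values $\mu_{\psi(\M,c)}(\varphi(a,y))$ range over a finite set of ratios $\nu/\nu'$. Taking $\gamma$ to be the least positive such ratio, Step 1 yields an $a$ with $\mu_B(\varphi(a,y))\ge\gamma$. In fact, at this step we do not even use $\alpha$ beyond positivity.

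To remove the dependence on $\psi$, I would run the Step 1 argument in a quantitative Fubini form, as follows.
\begin{itemize}
\item Split $\M^x$ into the finitely many definable pieces $X_{(e',\nu)}$ on which the fibre type is constant.
\item If all pieces with $e'=e$ have $\nu/\meas(B)<\gamma$, estimate $\mu_B^{\otimes(d+1)}(S)$ through the measure of $W$ restricted to those pieces. The pieces with $e'<e$ contribute $0$ by Step 1's count.
\item This bounds $\mu_B^{\otimes(d+1)}(S)$ by a quantity of order $\gamma^{d+1}$ times a constant coming from the top-dimensional part of $\M^x$.
\end{itemize}
One then chooses $\gamma$ small relative to $\alpha$. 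The delicate point is controlling how many top-dimensional $a$-fibres can cover a single tuple in $S$, since $W\to S$ need not be finite-to-one. I would try to handle this by replacing $W$ with a definable section, i.e.\ choosing $a$ within a definable subset of $\M^x$ of dimension matching the generic fibre, before applying additivity.

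\textbf{Step 3 (the \emph{in particular}).} Finally, to get $\FHP_{d+1}$ in the original finitary sense, I would use Proposition \ref{rem: FHP iff FHP for finite measures}. By compactness, a failure of $\FHP_{d+1}$ for finitely supported measures would transfer to a failure for some definable $\mu_B$ in an elementary extension, which is again MS-measurable with the same dimension-measure data. This contradicts Steps 1 and 2.
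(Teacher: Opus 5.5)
Your Step~1 dimension count is correct for $e\geq 1$, and so is the first half of Step~2: for $B=\psi(\M,c)$ with $\psi$ fixed, the values $\mu_B(\varphi(a,y))$ lie in a finite set, which gives a bound $\gamma(\varphi,\psi)$.

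\textbf{The gap.} The theorem requires $\beta$ to depend only on $\varphi$ and $\alpha$, with $B$ ranging over \emph{all} definable sets. This uniformity is where all the content lies, and your proposal does not establish it.
\begin{itemize}
\item Finite sets are definable, so the ``in particular'' clause is just the uniform statement for finite $B$. Your compactness detour in Step~3 is therefore unnecessary. It would also fail: an ultraproduct of finite sets of unbounded size is internal, not definable.
\item For finite $B$ you have $e=0$, and Step~1 yields only $\mu_B(\varphi(a,y))\geq 1/|B|$.
\item The $y$-side ratios $\meas(\varphi(a,\M)\cap B)/\meas(B)$ take unboundedly many values as $B$ varies. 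So no uniform bound can come from them.
\item The proposed quantitative Fubini along $W\to S$ fails for exactly the reason you flag. The fibre over $\bar b\in S$ is $\bigcap_i\varphi(\M,b_i)$, which may be positive-dimensional. Then $\dim W$ can exceed $\dim S$, and $\meas(W)$ gives no bound on $\mu_B^{\otimes(d+1)}(S)$.
\item Your suggested repair via a definable section is not available, since such sections need not exist in an MS-measurable structure.
\end{itemize}

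\textbf{The missing idea.} Take the uniformity from the $x$-side. Let $X=\psi(\M,e)$ be the domain of $x$, with $\psi$ a fixed formula. The sets $\varphi(\M,b)\cap X$ take only finitely many $(\dim,\meas)$-values, and these depend on $\varphi$ and $\psi$ but not on $B$. The paper inducts on $\dim X$, starting from $X=\M^x$. Let $B_d$ be the set of $b\in B$ with $\dim(\varphi(\M,b)\cap X)=\dim X$.
\begin{itemize}
\item If $\mu_B(B_d)\geq\alpha/2$: these fibres have measure bounded below by a constant depending only on $\varphi,\psi$. Apply Fubini to both projections of $Z=\{(a,b)\in X\times B_d:\varphi(a,b)\}$. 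This gives $a\in X$ with $\meas(\varphi(a,\M)\cap B_d)\geq\gamma\,\meas(B_d)$, where $\gamma=\gamma(\varphi,\psi)$ (Lemma~\ref{lem: MS FHP max dim}). Here the multiplicity problem does not arise: $\dim Z=\dim X+\dim B_d$ is maximal, so both projections can be compared at top dimension.
\item If $\mu_B(B_d)<\alpha/2$: Fubini gives some $b^\ast\in B\setminus B_d$ such that the $d$-fold intersection condition inside $X'=X\cap\varphi(\M,b^\ast)$ still has $\mu_B^{\otimes d}$-measure at least $\alpha/2$. Since $\dim X'<\dim X$ and $X'$ is defined by the fixed formula $\psi\wedge\varphi$, one recurses with one fewer instance.
\end{itemize}
After at most $d$ steps only finitely many fixed formulas have been used, so $\beta$ depends only on $\varphi,\alpha,d$. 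This stratification by the dimension of the $x$-fibres is what replaces the uncontrolled multiplicities of $W\to S$ in your approach.
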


\noindent As an application, we get that definable families of sets of bounded description complexity in large finite fields  satisfy the fractional Helly property:
\begin{thm*}[see Corollary \ref{cor: FHP in finite fields} for the precise statement]
 For every $D \in \mathbb{N}$ and $\alpha > 0$ there exist $\beta = \beta(D, \alpha) >0$ so that: if $F$  is a sufficiently large finite field and $\mathcal{F} \subseteq F^d$ a definable family of sets of description complexity $\leq D$ so that
$\left \lvert \left\{ I \subseteq \mathcal{F} : |I| = d+1 \  \land \  \bigcap_{S \in I} S \neq \emptyset \right\} \right \rvert \geq \alpha {|\mathcal{F}| \choose d+1}$, then $\bigcap_{S \in J} S \neq \emptyset$ for some $J \subseteq \mathcal{F}$ with $|J| \geq \beta |\mathcal{F}|$.
\end{thm*}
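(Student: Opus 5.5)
The plan is to argue by contradiction, using an ultraproduct and Theorem \ref{thm: FHP in MS-meas}. Suppose the corollary fails for some $D$ and $\alpha>0$. Then for every $n$ there is a finite field $F_n$, with $|F_n|\to\infty$, and a definable family $\mathcal{F}_n\subseteq F_n^d$ of description complexity $\leq D$. This family satisfies the $\alpha$-hypothesis on $(d+1)$-subsets, but no subfamily of size $\geq |\mathcal{F}_n|/n$ has nonempty intersection. There are only finitely many ring formulas of complexity $\leq D$. So after passing to a subsequence we may assume all $\mathcal{F}_n$ come from a single pair of formulas: $\mathcal{F}_n=\{\varphi(F_n,b): b\in B_n\}$ with $B_n=\beta(F_n,c_n)$, where $\varphi(x,y)$ and $\beta(y,z)$ are fixed and $c_n$ are parameters. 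If $|\mathcal{F}_n|$ stays bounded by some $N$ along a subsequence, the conclusion is trivial: the hypothesis gives one intersecting $(d+1)$-subfamily, which has size $\geq |\mathcal{F}_n|/N$. So we may assume $|B_n|\to\infty$. We may also identify sets with parameters up to a bounded-to-one correction, which changes proportions only by $o(1)$.

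Next, take a nonprincipal ultrafilter $\mathcal{U}$ and the pseudofinite field $K=\prod F_n/\mathcal{U}$. We use that $K$ is MS-measurable, via the Chatzidakis--van den Dries--Macintyre estimates. These say that for each formula $\psi(x,w)$ there are finitely many pairs $(e,\mu)$ such that, for every finite field $F$ and every $w$, we have $\bigl||\psi(F,w)|-\mu|F|^e\bigr|=O(|F|^{e-1/2})$, with the choice of pair definable in $w$. Let $B=\beta(K,c)$ with $c=[c_n]$, and let $\mu_B$ be the associated normalized counting measure. By Łoś together with this uniformity, for every formula $\theta(y_1,\dots,y_m)$ the value $\mu_B^{\otimes m}(\theta)$ equals $\lim_{\mathcal{U}}|\theta(F_n)\cap B_n^m|/|B_n|^m$. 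This uses Fubini for the definable product measures, which is part of the MS-measurable framework.

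We then apply this with $\theta=\exists x\bigwedge_{i\le d+1}\varphi(x,y_i)$. Tuples with a repeated coordinate form a proportion $O(1/|B_n|)\to0$, so counting $(d+1)$-subsets and counting $(d+1)$-tuples agree in the limit. The hypothesis therefore gives $\mu_B^{\otimes(d+1)}(\theta)\geq\alpha$. Theorem \ref{thm: FHP in MS-meas} (with $|x|=d$) then yields $\gamma>0$, depending only on $\varphi$, $\beta$ and $\alpha$, and some $a\in K^d$ with $\mu_B(\varphi(a,y))\geq\gamma$. Since only finitely many formula pairs occur, we can take the minimum of these $\gamma$, which gives a uniform $\beta(D,\alpha)$. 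Applying the transfer once more, to the formula $\varphi(x,y)\wedge\beta(y,z)$ with parameters $(a,c)$, we get that for $\mathcal{U}$-most $n$ there is $a_n\in F_n^d$ lying in at least $(\gamma/2)|B_n|$ of the sets. The subfamily $J=\{\varphi(F_n,b):a_n\in\varphi(F_n,b)\}$ then has nonempty intersection and size $\geq(\gamma/2)|\mathcal{F}_n|$, which contradicts the choice of $\mathcal{F}_n$ once $n>2/\gamma$.

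The main obstacle is the transfer step: identifying $\mu_B^{\otimes m}$ on $K$ with the limit of normalized counts in the $F_n$, uniformly in parameters. This also requires checking that the product measure used in Theorem \ref{thm: FHP in MS-meas} is the iterated one computed by the dimension--measure clauses. I would handle this by an induction on $m$ using the definability of the measure clauses, invoking the uniform error term $O(|F|^{e-1/2})$ at each fibre.
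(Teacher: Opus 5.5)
Your argument is essentially the paper's proof of Corollary \ref{cor: FHP in finite fields}: pass to an ultraproduct $K$ of the $F_n$, use Fact \ref{fac: psf are MS-meas} and \L o\'s to turn the counting hypothesis into $(\mu_B)^{\otimes(d+1)}(\theta)\geq\alpha$, apply Theorem \ref{thm: FHP in MS meas}, and pull the resulting point back to $\mathcal{U}$-most $F_n$.

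The transfer step you single out as the main obstacle is exactly Lemma \ref{lem: MS meas Keisler meas}, which gives $(\mu_B)^{\otimes m}=\mu_{B^m}$. After that, the transfer is immediate from the definition of $(\dim,\meas)$ in $K$, applied to the single definable set $\theta\cap B^m$. No fibrewise induction with error terms is needed.

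The only difference in route is how the contradiction is set up. The paper first fixes $\beta=\beta(D_1,\alpha)$ uniformly over all ultraproducts of finite fields, via Remark \ref{rem: indep from MS-meas M}, and argues by contradiction only on $N$. You negate both quantifiers at once. This is valid and suffices for the statement as given. It avoids the remark because, inside the contradiction, it does not matter that the $\gamma$ of Theorem \ref{thm: FHP in MS meas} depends on $K$ and not only on the formulas and $\alpha$, as you claim. For the same reason, your closing ``minimum over formula pairs'' sentence is unnecessary, and would not by itself give a uniform $\beta$. Your $\beta$ also depends on the whole complexity bound, whereas the paper's is independent of the complexity of the formula defining $B$.

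Finally, drop the ``bounded-to-one correction''. It is false in general: for $\varphi(x;y_1y_2):=(x=y_1)$ and $B=F^2$, each set has $|F|$ parameters. The paper simply counts everything over $B$, i.e.\ it treats $\mathcal{F}$ as the family $(\varphi(F,b):b\in B)$ with repetitions allowed, as in Definition \ref{def: FHP}. With that reading your argument needs no correction at all.
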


In Section \ref{sec: ultraprod of Qp} we consider the fractional Helly property for definable families of sets  in valued fields (we note that for special definable families, namely convex sets in the sense of Monna, fractional Helly property was studied in \cite{chernikov2023combinatorial}). We prove an Ax-Kochen-Ershov style result for the FHP property in  henselian valued fields:
\begin{thm*}(Theorem \ref{thm: AKE for FHP})
	Let $K$ be an equi-characteristic $0$ henselian valued field. Then $K$ satisfies $\FHP$ if and only if both the residue field $k$ and the (ordered) value group $\Gamma$ satisfy $\FHP$.
\end{thm*}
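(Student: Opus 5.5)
The plan is to prove the two directions separately, with the nontrivial direction ``$k$ and $\Gamma$ FHP $\Rightarrow$ $K$ FHP''. For the easy direction, note that $k$ and $\Gamma$ are stably embedded and interpretable in $K$, as imaginary sorts (the residue field and the value group). Definable families in $k$ or $\Gamma$ are therefore definable families in $K^{\eq}$. FHP passes to interpretable sets: a quotient of a definable set by a definable equivalence relation can be handled by pulling each family back to the covering definable set. The only care needed is that FHP for families on imaginary sorts follows from FHP of the pulled-back formula in the home sort; intersections are preserved under preimages.

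For the main direction, I would first use Lemma \ref{lem: Basic operations preserving FH} to reduce to formulas $\varphi(x,y)$ with $x$ a single field variable. I would then use the relative quantifier elimination for equicharacteristic $0$ henselian valued fields in the RV language (Basarab/Kuhlmann/Flenner), together with the fact that RV-sort formulas reduce to $k$ and $\Gamma$ via the exact sequence $1\to k^\times\to \RV\to\Gamma\to 0$. Each $\varphi(x,y)$ with $x$ a singleton is then a Boolean combination of formulas of the form $\psi(\rv(x-c_1(y)),\dots,\rv(x-c_m(y)),\bar r(y))$, where $\psi$ is an RV-formula. By a standard one-variable analysis (Swiss cheese or ``$\rv$-of-differences'' decomposition), a definable subset of $K$ in the variable $x$ is determined by finitely many centers $c_j(y)$ and by the RV-data of $x-c_j$.

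The core step is to transfer an FHP configuration. Suppose a finite family $\varphi(K,b_i)$, $i\in[n]$, has an $\alpha$-fraction of intersecting $p$-tuples. I would try to find an intersecting $\beta n$-subfamily in three stages.

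\emph{Stage one (tree structure).} The centers $c_j(b_i)$ form a finite subset of the ultrametric tree. I would apply a Ramsey or pigeonhole argument to the tree of balls, in the style of the Monna-convex FHP result in \cite{chernikov2023combinatorial}. This should find a positive-proportion subfamily and a ball $B$ such that the intersecting witnesses can be taken in a region where, relative to the centers, the data $\rv(x-c_j(b_i))$ is controlled by a single ``generic point of $B$''. Concretely, $\rv(x-c)$ depends only on $\val(x-c)$ plus a residue when $x$ and $c$ lie in the same sub-ball.

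\emph{Stage two (reduction to $k$ and $\Gamma$).} Each condition then translates into a condition on $(\val(x-a),\operatorname{res}\text{-data})$ for a fixed center $a$. That is, it becomes a formula in $\Gamma$ and in $k$ (or a mixed condition in $\RV$) with parameters depending on $b_i$.

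\emph{Stage three (apply FHP in the sorts).} I would apply FHP in $\Gamma$ and in $k$, using that FHP is preserved under products and Boolean combinations of the relevant form. For $\RV$ itself this is where one should argue that $\RV$ is FHP when $k$ and $\Gamma$ are, via the split or non-split short exact sequence. I would first try to treat the case with an angular component, where $\RV\cong k^\times\times\Gamma$, and then remove that assumption by a saturation or expansion argument, since adding an angular component in an elementary extension should not destroy FHP of reducts.

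The main obstacle is stage one: controlling the tree of centers uniformly so that a positive fraction of intersecting tuples survive the localization to one ball with a positive-density bound independent of $n$. The FHP number may grow here (to $p'$ depending on the number of centers), which is acceptable since FHP only asks for some finite number. I would first try to handle it by induction on the number of centers $m$. In the inductive step, one picks the center configuration occurring most often among intersecting tuples, and uses the measure formulation (Proposition \ref{rem: FHP iff FHP for finite measures}) to pass to ultralimits, avoiding explicit bounds.
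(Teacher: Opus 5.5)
Your easy direction is fine (it is Lemma~\ref{lem: Basic operations preserving FH}(4), since $k$ and $\Gamma$ are interpretable in $K$). Stage three is essentially the paper's Proposition~\ref{prop: FHP for RV from k and Gamma}: in a saturated model the sequence $1\to k^{\times}\to\RV\to\Gamma\to 0$ splits, $\RV_{\ast}$ expanded by a section is bi-interpretable with the disjoint union of $k$ and $\Gamma$, and one passes to the reduct.

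The genuine gap is Stage one. It carries the whole content of the theorem, and you leave it as an acknowledged obstacle. The centers $b_i$ supplied by relative quantifier elimination vary with $i$, and the common witnesses $a_S$ of different intersecting $p$-sets $S$ sit in different positions relative to them (e.g.\ if the $b_i$ form a pseudo-convergent sequence, $a_S$ may branch off at any stage). To re-center you need a pivot such that:
(a) fixing it by pigeonhole costs no density;
(b) for a positive proportion of the intersecting tuples, every $\rv(x-b_i)$ with $i\in S$ becomes $\RV_{\ast}$-definable from $\rv(x-\text{pivot})$ and $\RV$-parameters depending only on $i$ and the pivot.

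Requirement (a) is the subtle one. If the pivot is chosen among $\Theta(n)$ external candidates (a ball of the tree of centers, a ``generic point of $B$''), conditioning on it loses a factor $n$ in the number of intersecting $p$-sets, and nothing in your sketch recovers that factor. Picking ``the most frequent center configuration'' only fixes a combinatorial shape and produces no such parameters. The ultralimit/measure reformulation only repackages the same quantitative statement by compactness, so it cannot replace this step.

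The missing idea (Lemma~\ref{lem: reduction to RV, key lemma}) is to take the pivot from $S$ itself. Consider first a single center. By Ramsey, each intersecting $k$-set contains a $\Delta$-indiscernible subset whose centers form a pseudo-convergent sequence, its reverse, or a fan (Lemma~\ref{lem: val on indisc seq}). Using Lemma~\ref{lem: val phase change}, or in the fan case comparing $\val(a_S-b_i)$ with the radius of the fan, you shrink $S$ to some size $k_2$. With $i_S$ its last element, one of the following then holds uniformly for all $i\in S\setminus\{i_S\}$:
(i) $\rv(a_S-b_i)=\rv(b_{i_S}-b_i)$, in which case $b_{i_S}$ itself is a common realization;
(ii) $\rv(a_S-b_i)=\rv(a_S-b_{i_S})$ and $\val(a_S-b_{i_S})<\val(b_{i_S}-b_i)$;
(iii) $\rv(a_S-b_i)=\rv(a_S-b_{i_S})+\rv(b_{i_S}-b_i)$, with the sum well defined and all three valuations equal.

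Now pigeonhole on $i^{*}=i_S$. Fixing $i^{*}$ only restricts to sets containing $i^{*}$, and deleting $i^{*}$ is injective. So $\ge\frac{c}{n}\binom{n}{k_2}$ such sets yield $\ge c'\binom{n}{k_2-1}$ consistent $(k_2-1)$-sets for a fixed $\RV_{\ast}$-formula $\psi'(x';c_i,d_i)$ with $d_i=\rv(b_{i^{*}}-b_i)$. $\FHP$ of $\RV_{\ast}$ gives $e\in\RV$ realizing $\ge\beta n$ of these, and any $a$ with $\rv(a-b_{i^{*}})=e$ realizes the corresponding members of the original family.

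With $m$ centers per instance, the induction on $m$ (Proposition~\ref{prop: FHP for K from RV}) needs no cross-instance pivot. Within each instance, compare $\val(a_S-b_{i,0})$, $\val(a_S-b_{i,m-1})$ and $\val(b_{i,m-1}-b_{i,0})$. This gives four cases. In each, one of $\rv(x-b_{i,0})$, $\rv(x-b_{i,m-1})$ is determined by the other together with the new $\RV$-parameter $\rv(b_{i,m-1}-b_{i,0})$ and an $\RV$-expressible valuation condition, so $m$ drops by one.
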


\noindent Combining this with the aforementioned result for FHP in pseudo-finite fields, the existing burden calculations \cite{chernikov2010indiscernible, chernikov2014theories, chernikov2014valued, chernikov2016henselian} and Corollary \ref{cor: FHPk bounded by burden}, we obtain the following explicit bounds for the ultraproducts of the $p$-adics (which are $\NTP_2$ but not NIP, so Matoušek's theorem does not apply):
\begin{thm*}(Corollary \ref{cor: FHP in ultraprod Qp})
Let $K$ be $\prod_{i \in \mathbb{N}} \mathbb{Q}_{p_i} / \mathcal{U}$ or $\prod_{i \in \mathbb{N}} \mathbb{F}_{p_i}((t)) / \mathcal{U}$   for some prime $p_i$ and a non-principal ultrafilter $\mathcal{U}$ on $\mathbb{N}$.  Then every partitioned formula $\varphi(x,y) \in L$ in $K$  with $|x| \leq d$ satisfies $\FHP_{2^d}$.
\end{thm*}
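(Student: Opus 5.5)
The plan is to combine three ingredients stated earlier in the introduction: the Ax--Kochen--Ershov transfer for $\FHP$ (Theorem \ref{thm: AKE for FHP}), the $\FHP$ of pseudo-finite fields (Theorem \ref{thm: FHP in MS meas}), and the burden bound on the fractional Helly number (Corollary \ref{cor: FHPk bounded by burden}). Concretely, for a formula $\varphi(x,y)$ with $|x|\le d$ we first show that $\Th(K)$ is $\FHP$. We then bound $\bdn(\M^x)$ by $2^d - 1$, and Corollary \ref{cor: FHPk bounded by burden} gives $\FHP_{2^d}$.

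\textbf{Step 1: $K$ is $\FHP$.} By \L o\'s, $K$ is a henselian valued field of equicharacteristic $0$. Its residue field is $\prod \mathbb{F}_{p_i}/\mathcal{U}$, which is pseudo-finite of characteristic $0$ when the $p_i$ are unbounded along $\mathcal{U}$. If the $p_i$ are constant $=p$ on a $\mathcal{U}$-large set, then $K\cong\mathbb{Q}_p$ or $\mathbb{F}_p((t))$. That case is a degenerate situation: one would handle it separately, e.g.\ $\mathbb{Q}_p$ is NIP of dp-rank $1$, or we interpret the statement with $p_i\to\infty$. The value group is an ultrapower of $\mathbb{Z}$, i.e.\ a $\mathbb{Z}$-group, and it is NIP. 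Pseudo-finite fields are MS-measurable (Chatzidakis--van den Dries--Macintyre), so by Theorem \ref{thm: FHP in MS meas} they are $\FHP$. The value group is NIP, hence $\FHP$ by Matou\v{s}ek's theorem. Theorem \ref{thm: AKE for FHP} then gives that $K$ is $\FHP$. One must check that the language used in that theorem (e.g.\ Denef--Pas with angular component, or the RV language) matches. Ultraproducts of $\mathbb{Q}_{p_i}$ admit angular components, and adding them only enlarges the family of definable sets, so $\FHP$ in the expanded language implies $\FHP$ in the reduct.

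\textbf{Step 2: burden.} I would invoke the existing burden computations cited in the introduction: in a henselian valued field of equicharacteristic $0$, $\bdn(K)$ is controlled by the burdens of $k$ and $\Gamma$ \cite{chernikov2016henselian}. Pseudo-finite fields have burden $1$, since they are supersimple of SU-rank $1$, and $\mathbb{Z}$-groups are dp-minimal. This gives $\bdn(K^1)\le 2$ for the home sort; I expect the known result to be exactly that $K$ has burden $2$. Sub-multiplicativity of burden \cite{chernikov2014theories} gives $\bdn(K^d)<2^d$, and this is where the bound $2^d$ comes from. Hence $\bdn(\M^x)\le 2^d-1$, so Corollary \ref{cor: FHPk bounded by burden} yields $\FHP_{\bdn(\M^x)+1}\subseteq\FHP_{2^d}$.

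\textbf{Main obstacle.} The hard part is pinning down the exact burden bounds. First, the one-variable burden of $K$ should be $\le 2$ rather than merely finite; this needs the precise form of the AKE-type burden theorem, i.e.\ burden of $K$ equal to the maximum or sum of the burdens of $k$ and $\Gamma$. Second, we need the multiplicative bound $\bdn(a_1\dots a_d)<(k+1)^d$-type, appropriately specialized to $2^d$. If the sub-multiplicativity result in \cite{chernikov2014theories} is only stated as $\bdn(ab)<(\bdn(a)+1)(\bdn(b)+1)$-style, I would instead go through dp-rank or inp-patterns directly. That route gives a mutually indiscernible array of depth $2^d$ and shows it is inconsistent by projecting coordinates.
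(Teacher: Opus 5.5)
Your Step 1 is the paper's argument: the pseudo-finite residue field is $\FHP$ by MS-measurability, the $\mathbb{Z}$-group $\Gamma$ is NIP and hence $\FHP$, and Theorem \ref{thm: AKE for FHP} transfers this to $K$. Finishing with Corollary \ref{cor: FHPk bounded by burden} plus sub-multiplicativity is also the paper's plan. The gap is in Step 2, and it affects the conclusion, not just the presentation. You expect $\bdn(K)=2$, i.e.\ burden as the sum of $\bdn(k)=1$ and $\bdn(\Gamma)=1$. You then assert that sub-multiplicativity gives $\bdn(\M^x)<2^d$, but that does not follow. Fact \ref{fac: submult of burden}(1) says that $\bdn(a)<\kappa$ and $\bdn(b/a)<\lambda$ imply $\bdn(ab)<\kappa\lambda$. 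Starting from $\bdn(K)<3$, this only gives $\bdn(\M^x)<3^d$, i.e.\ $\FHP_{3^d}$. Getting base $2$ requires $\bdn(K)<2$, i.e.\ that $K$ is inp-minimal.

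The missing input is Fact \ref{fac: bdn in hens val fields} \cite{chernikov2016henselian}. It says an equicharacteristic $0$ henselian valued field with $\bdn(k)=\bdn(\Gamma)\le 1$ and $k^{\times}/(k^{\times})^{p}$ finite for every prime $p$ has burden $1$. So here burden behaves like a maximum, not a sum. The extra hypothesis holds for your pseudo-finite $k$, since $[\mathbb{F}_q^{\times}:(\mathbb{F}_q^{\times})^{n}]=\gcd(n,q-1)\le n$ and this transfers by \L o\'s. Once $\bdn(K)=1$, induction on $d$ via sub-multiplicativity gives $\bdn(\M^x)+1\le 2^d$, and Corollary \ref{cor: FHPk bounded by burden} concludes. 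An $\RV$-coordinate has burden at most that of a $K$-coordinate, since inp-patterns pull back along $\rv$, so mixed-sort tuples are covered too.

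Your fallback through dp-rank cannot replace this: $K$ has IP (its pseudo-finite residue field already does), so the dp-rank of the home sort is infinite and burden is the invariant you need.

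Finally, the constant-$p$ case cannot be handled separately for $\mathbb{F}_p((t))$. An ultrapower is only elementarily equivalent to $\mathbb{F}_p((t))$, which has $\TP_2$ and so is not $\FHP$ by Proposition \ref{prop: FHP implies NTP2}. The statement must therefore be read with $p_i\to\infty$ along $\mathcal{U}$, which the paper's proof implicitly assumes when it calls $k$ pseudo-finite.
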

\noindent We expect the optimal bound to be $\FHP_{d+1}$ here (Conjecture \ref{conj: FHP in ultra p-adics}).

The \emph{UDTFS} property, or \emph{Uniform Definability of Types over Finite Sets} (conjectured by Laskowski, and established in \cite{chernikov2013externally, chernikov2015externally}), suggests that in NIP theories local types over finite sets behave similarly to stable theories (see Section \ref{sec: UCLFS in NIP}). It plays an important role in  the study of NIP theories and provides a model theoretic counterpart for the existence of compression schemes for families of finite VC-dimension in theoretical machine learning. In this section we propose a generalization of this conjecture for $\NTP_2$ theories, aiming to capture that, when restricted to finite sets in a uniform manner, dividing in $\NTP_2$ theories behaves similarly to dividing in simple theories. We call this property the \emph{Uniform Local Character (of dividing) over Finite Sets}, or \emph{ULCFS} (Section \ref{sec: ULCFS and counting over finite sets}). We show that both simple (Proposition \ref{lem: every type does not fork over a small set}) and NIP theories (Proposition \ref{prop: NIP implies ULCFS}) satisfy (strong) ULCFS, and that ULCFS implies $\NTP_2$ (Proposition \ref{prop: str ULCFS implies resilient}). We also discuss resilience in Section \ref{sec: resilience}, showing that it reduces to formulas in one variable (Corollary \ref{C:Resilience-One-Variable}) and is implied by strong ULCFS (Proposition \ref{prop: str ULCFS implies resilient}).  We conjecture that all $\NTP_2$ theories satisfy ULCFS (Conjecture \ref{conj: NTP2 implies ULCFS}).  In Section \ref{sec: fin type count}, inspired by the classical infinitary two-cardinal function considered in Section \ref{sec: two card count part types}, we consider a two-parameter function $f_{\varphi}(k,l)$ with $k \leq l \in \mathbb{N}$ counting the number of pairwise-inconsistent partial $\varphi$-types of size $k$ over a set of parameters of size $l$ (which in the case $k=l$ corresponds to the dual shatter function $\pi_{\varphi}^{\ast}$). 
In the same way as UDTFS is a strengthening of the Sauer-Shelah lemma, we view ULCFS as a strengthening of $f_{\varphi}(k,l)$ being bounded by a polynomial $p(l)$ of degree independent of $k$, for $l \gg k$ (Definition \ref{def: f_phi(k,l) and poly bdd}). We observe that indeed its polynomial boundedness implies $\NTP_2$ (Proposition \ref{prop: f poly bdd implies NTP2 and low}) and follows from $\ULCFS$ (Proposition \ref{prop: ULCF implies f is poly bdd}). Again, we conjecture that $\NTP_2$ implies polynomial boundedness of $f_{\varphi}(k,l)$ (Conjecture \ref{conj: NTP2 iff counting types}) and obtain a partial result that at least $\NTP_2$ implies $\varepsilon$-power saving (so  $\NTP_2$ is characterized by the non-maximality of $f_{\varphi}(k,l)$,  Proposition \ref{prop: NTP2 iff power saving})

In Section \ref{sec: two card count part types} we consider a related two-cardinal partial type counting function: for a theory $T$ and infinite cardinals $\kappa\leq\lambda$, we let $f_{T}\left(\kappa,\lambda\right)$
be the supremum of the cardinalities $\left|P\right|$, where $P$ is a
family of pairwise inconsistent partial types each of cardinality $\leq\kappa$, all  over the same fixed set of parameters of size $\lambda$. A celebrated result of Keisler \cite{keisler1976six}, refining earlier work of Shelah \cite{shelah1971stability} and Morley \cite{morley1965categoricity}, demonstrates that restricting to the case $\kappa=\lambda$, there are exactly six possibilities for $f_T$ when $T$ is a complete countable theory. Shelah also proved that simplicity of the theory $T$ is detected by $f_T(\kappa,\lambda)$. Following this, Adler \cite{AdlerBanff} conjectured (see Conjecture \ref{conj: Adler}) that $\NTP_2$ can be detected by $f_T$, and that there are only finitely many possibilities for $f_T$ when $T$ is countable. In Proposition \ref{prop: almost all counting functions} we significantly narrow down the possibilities for $f_T(\kappa,\lambda)$ (and conjecture that this is a complete list of possibilities). In particular, our result refutes the former conjecture of Adler (Corollary \ref{cor: counterex to Adler conj}), and confirms the latter one under the GCH assumption (Corollary \ref{cor: Adler under GCH}).

\subsection*{Acknowledgements}

We thank Gabe Conant, James Freitag, Martin Hils, Itay Kaplan, Dugald Macpherson, Alex Mennen, Daniel Palacin, Anand Pillay, Saharon Shelah, Pierre Simon, Chieu Minh Tran and Frank Wagner for very helpful conversations on the topics of this paper throughout the years. 

This work started a long time ago when Chernikov was a postdoc at the Hebrew
University of Jerusalem, supported by Hrushovski's European Research Council under the
European Unions Seventh Framework Programme (FP7/2007-2013)/ERC Grant agreement no.~291111. He was also supported by the NSF Research Grant DMS-2246598.

\section{Basic properties of $\protect\FHP$ formulas and relation to Shelah's
classification}\label{sec: basic FHP and Shelah classification}

Our notation is mostly standard. Given $n\in\mathbb{N}$, we write
$\left[n\right]$ to denote the set $\left\{ 1,2,\ldots,n\right\} $. We let $x,y,z, \ldots$ denote finite tuples of variables, and if $M$ is a first-order structure then $M^x$ denotes the corresponding sort of $M$. For $a,b \in M^x$ and $A \subseteq M$, we write $a \equiv_A b$ if $\tp(a/A) = \tp(b/A)$.

\subsection{Basic properties of $\FHP$ formulas}

	Given a set $X$ and $k\in\mathbb{N}$, we let ${X \choose k}$ denote
the set of all $k$-element subsets of $X$. Given a tuple $\bar{S}=\left(S_{i}:i\in I\right)$
of subsets of $X$ (possibly with repetitions) and $k\in\mathbb{N}$, let $\Cons_{k}\left(\bar{S}\right):=\left\{ J\in{I \choose k}:\bigcap_{i\in J}S_{i}\neq\emptyset\right\} $. We recall the fractional Helly property from combinatorics (see e.g.~\cite{alon2002transversal}):
\begin{defn}
\label{def: FHP}Let $X$ be a set and let $\mathcal{F}\subseteq\mathcal{P}\left(X\right)$
be a (possibly infinite) family of subsets of $X$.
\begin{enumerate}
\item For $k\in\mathbb{N}$ and $\alpha, \beta \in\mathbb{R}_{>0}$, the family $\mathcal{F}$ satisfies $\FHP(k,\alpha,\beta)$ (where FHP stands for \emph{Fractional Helly Property}) if the following holds: for any $n\in\mathbb{N}$ and any tuple  $\bar{S} = (S_{1},\ldots,S_{n})$ of sets in $\mathcal{F}$
(possibly with repetitions), if 
$\left|\Cons_{k}\left(\bar{S}\right) \right|\geq\alpha{n \choose k}$, 
then there is some $J\subseteq\left[n\right]$ with $\left|J\right|\geq\beta n$
such that $\bigcap_{i\in J}S_{i}\neq\emptyset$.
\item For $k\in\mathbb{N}$, we say that $\mathcal{F}$ satisfies $\FHP_k$
if for every
$\alpha\in\mathbb{R}_{>0}$ there is some $\beta\in\mathbb{R}_{>0}$ so that $\mathcal{F}$ satisfies $\FHP(k,\alpha,\beta)$.
\item We say that $\mathcal{F}$ satisfies $\FHP$ if it satisfies $\FHP_k$
for some $k\in\mathbb{N}$. The smallest $k$ so that $\mathcal{F}$ satisfies $\FHP_k$ is the \emph{fractional Helly number} of $\mathcal{F}$.
\end{enumerate}
\end{defn}

We will also consider the following weakening:
\begin{defn}\label{def: weak FHP comb}
	\begin{enumerate}
	\item 
A family of sets $\mathcal{F} \subseteq \mathcal{P}(X)$  satisfies the \emph{$(p,k)$-property}, where $p \geq k \in \mathbb{N}$, if for any tuple $\bar{S} = (S_1, \ldots, S_p)$ of sets in $\mathcal{F}$ there is some $I \subseteq [p], |I| = k$ so that $\bigcap_{i \in I}S_i \neq \emptyset$.

		\item For $k,p\in\mathbb{N}$ and $\beta \in\mathbb{R}_{>0}$, the family $\mathcal{F}$ satisfies $\WFHP(k,p,\beta)$ (\emph{Weak Fractional Helly Property}) if the following holds: for any $n\in\mathbb{N}$ and any tuple  $\bar{S} = (S_{1},\ldots,S_{n})$ of sets in $\mathcal{F}$, if $\{S_1, \ldots, S_n\}$ satisfies the $(p,k)$-property, 
then there is some $J\subseteq\left[n\right]$ with $\left|J\right|\geq\beta n$
such that $\bigcap_{i\in J}S_{i}\neq\emptyset$.
\item  For $k\in\mathbb{N}$, we say that $\mathcal{F}$ satisfies $\WFHP_k$
if there exist some $p_0  \geq k \in \mathbb{N}$ so that: for every $p \geq p_0$ there is  $\beta\in\mathbb{R}_{>0}$ so that $\mathcal{F}$ satisfies $\WFHP(k,p,\beta)$; and  $\mathcal{F}$ satisfies $\WFHP$ if it satisfies $\WFHP_k$ for some $k \in \mathbb{N}$.
	\end{enumerate}
\end{defn}

\noindent The following two remarks are (rephrased) from e.g.~\cite{alon1992piercing, alon2002transversal}:
\begin{prop}\label{prop: FHP implies WFHP}
	If $\mathcal{F}$ satisfies $\FHP_k$, then it also satisfies $\WFHP_k$.
\end{prop}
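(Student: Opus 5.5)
The plan is a standard double-counting argument: the $(p,k)$-property for a tuple forces a positive fraction of its $k$-subfamilies to intersect, with the fraction depending only on $p$ and $k$, and then $\FHP_k$ converts this into a large intersecting subfamily. We set $p_0 := k$. Fix $p \geq k$ and a tuple $\bar{S} = (S_1,\ldots,S_n)$ of sets in $\mathcal{F}$ such that $\{S_1,\ldots,S_n\}$ satisfies the $(p,k)$-property. The goal is to show $\left|\Cons_k(\bar{S})\right| \geq \alpha {n \choose k}$ with $\alpha = \alpha(p,k) := {p \choose k}^{-1}$. Once this holds, $\FHP_k$ supplies $\beta = \beta(\alpha) > 0$ with $\FHP(k,\alpha,\beta)$, and hence $\WFHP(k,p,\beta)$.

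First we dispose of small $n$. If $n < p$, we choose $\beta \leq 1/p$. If some $S_i$ is nonempty, then $J = \{i\}$ already satisfies $|J| \geq 1 \geq \beta n$. Since the $(p,k)$-property is stated for $p$-tuples and repetitions are allowed, we may apply it to a $p$-tuple that repeats entries of $\bar{S}$. This yields a nonempty intersection of some entries, so in particular some $S_i$ is nonempty. The same remark handles any degenerate case below. Alternatively, one can simply take $\beta$ to be the minimum of $1/p$ and the $\beta$ coming from $\FHP_k$.

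For $n \geq p$, the main step is the double count. Each $p$-element subset $P \subseteq [n]$, applied to the subtuple $(S_i : i \in P)$, contains at least one $k$-subset $J \subseteq P$ lying in $\Cons_k(\bar{S})$. A fixed $k$-subset $J$ is contained in exactly ${n-k \choose p-k}$ sets $P$. Therefore
\[
\left|\Cons_k(\bar{S})\right| \cdot {n-k \choose p-k} \geq {n \choose p}.
\]
Combining this with the identity ${n \choose p}{p \choose k} = {n \choose k}{n-k \choose p-k}$ gives $\left|\Cons_k(\bar{S})\right| \geq {p \choose k}^{-1}{n \choose k}$, which is the desired bound.

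Finally, we apply $\FHP(k,\alpha,\beta)$ to obtain $J \subseteq [n]$ with $|J| \geq \beta n$ and $\bigcap_{i\in J} S_i \neq \emptyset$. Taking the minimum with the small-$n$ choice of $\beta$ then shows that $\WFHP(k,p,\beta)$ holds for every $p \geq k$. The only subtle point is the convention on repetitions. The counting above is over index sets, and both $\Cons_k$ and the $(p,k)$-property are phrased in terms of indices, so the argument is consistent. I do not expect a genuine obstacle here.
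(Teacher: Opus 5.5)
Your proof is correct and follows the paper's argument: you double count the $p$-subsets of indices against the intersecting $k$-subsets, and then apply $\FHP_k$. Your version is also slightly sharper and more careful than the paper's. It makes explicit that $\alpha = {p \choose k}^{-1}$ and $p_0 = k$ suffice, where the paper only says ``$p_0$ sufficiently large''. It also handles the case $n < p$, which the paper passes over.
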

\begin{proof}
	Assume  $\mathcal{F}$ satisfies the $(p,k)$-property for some $p$, and let $\bar{S}=\left(S_{1},\ldots,S_{n}\right)$ be any finite tuple
of sets from $\mathcal{F}$.
As every $p$-tuple of sets from $\bar{S}$ contains at least one
$k$-tuple with a non-empty intersection, and each $k$-tuple is contained
in ${n-k \choose p-k}$ of the $p$-tuples, by double counting there
are at least 
$\frac{{n \choose p}}{{n-k \choose p-k}}$
 $k$-tuples with non-empty intersections. Taking $p_0 = p_0(k) \in \mathbb{N}$ sufficiently large, and assuming that $p \geq p_0$ and $\alpha=\alpha\left(p,k\right)>0$ sufficiently small, this gives at least $\alpha{n \choose k}$  $k$-tuples with non-empty intersections.
As $\mathcal{F}$ satisfies $\FHP_{k}$, there is some $\beta=\beta\left(\alpha\right) = \beta(p,k) >0$
and $J\subseteq\left[n\right]$ with $\left|J\right|\geq\beta n$
and $\bigcap_{i\in J}S_{i}\neq\emptyset$.
\end{proof}

\begin{rem}
	We could have defined the $(p,k)$-property requiring instead that the sets appearing in $\bar{S}$ in Definition \ref{def: weak FHP comb} are pairwise distinct --- this would not change the definition of $\WFHP_k$: if $\mathcal{F}$ satisfies the $(p,k)$-property without repetitions, then for $p':=k\left(p-1\right)+1$ it satisfies the $(p',k)$-property with repetitions.  Indeed, let $\bar{S}=\left(S_{1},\ldots,S_{n}\right)$ be any finite tuple 
of sets from $\mathcal{F}$, possibly with repetitions. Let
$p':=k\left(p-1\right)+1$. Then for any $I\subseteq\left[n\right],\left|I\right|=p'$
there is some $J\subseteq I$ with $\left|J\right|=k$ and $\bigcap_{i\in J}S_{i}\neq\emptyset$.
Indeed, either $J$ contains $k$ copies of the same (non-empty) set
which clearly intersect, or $p$ \emph{distinct} sets, in which case
some $k$ among them have a non-empty intersection by assumption.

\end{rem}

\noindent We also note that the assumption on the family in the $\WFHP$ property  is strictly stronger than the assumption in the $\FHP$ property:
\begin{lem}
\label{lem:robust-counterexample}
Fix integers $k\ge 2$ and $p'\ge k'\ge 2$, and reals $\alpha\in(0,1)$ and
$\gamma\in(0,1]$.
Then there exist arbitrarily large $n \in \mathbb{N}$ and 
$\{S_1,\dots,S_n\}\subseteq \mathcal{P}(X)$ such that:
\begin{enumerate}
  \item More than an $\alpha$-fraction of the $k$-subfamilies intersect, i.e.
  \[
    \bigl|\{\,I\subseteq [n]: |I|=k \text{ and } \bigcap_{i\in I} S_i\neq\emptyset\,\}\bigr|
    \;>\;\alpha\binom{n}{k}.
  \]
  \item For every $J\subseteq [n]$ with $|J|\ge \gamma n$, the induced subfamily
  $\{S_i:i\in J\}$ fails the $(p',k')$-property.
  \end{enumerate}
\end{lem}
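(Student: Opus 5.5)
We will use an explicit ``product'' construction. The sets intersect when their index vectors agree in some coordinate. Failure of the $(p',k')$-property inside a large subfamily will be forced by a cyclic-shift averaging argument.

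Since $k'\geq 2$, it suffices to show that every subfamily indexed by $J$ with $|J|\ge\gamma n$ contains $p'$ pairwise disjoint sets. Indeed, pairwise disjointness means no $k'$ of those $p'$ sets have a common point, so $\{S_i:i\in J\}$ fails the $(p',k')$-property.

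\textbf{Construction.} Fix $q\in\mathbb{N}$ with $q\gamma>p'-1$, and let $D\in\mathbb{N}$ be a parameter to be chosen large. Let $X:=[D]\times\mathbb{Z}_q$. The family is indexed by the vectors $v\in\mathbb{Z}_q^{D}$, with
\[
S_v:=\{(j,v_j): j\in[D]\}\subseteq X,
\]
and $n:=q^D$, which is arbitrarily large as $D$ grows. Then $\bigcap_{v\in I}S_v\neq\emptyset$ if and only if there is a coordinate $j$ on which all $v\in I$ take the same value. The sets $S_v$ are pairwise distinct.

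\textbf{Step 1: many intersecting $k$-subfamilies.} For $k$ independent uniform vectors in $\mathbb{Z}_q^D$, the probability that they agree in a fixed coordinate is $q^{1-k}$. The coordinates are independent, so the probability that they agree in some coordinate is $1-(1-q^{1-k})^{D}$. Since $q$ is fixed, this tends to $1$ as $D\to\infty$.

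Passing from independent $k$-tuples to $k$-element subsets of distinct vectors changes the count only by the proportion of tuples with a repeated vector. That proportion is at most $\binom{k}{2}/n\to 0$. Hence for $D$ large the proportion of $I\in\binom{[n]}{k}$ with nonempty intersection exceeds $\alpha$, which gives (1).

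\textbf{Step 2: every large subfamily contains $p'$ pairwise disjoint sets.} Let $J\subseteq\mathbb{Z}_q^D$ with $|J|\ge\gamma q^D$. Pick $w\in\mathbb{Z}_q^D$ uniformly at random and consider the $q$ vectors $w+c\mathbf{1}$ for $c\in\mathbb{Z}_q$, where $\mathbf{1}=(1,\ldots,1)$.

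Any two of these vectors differ in \emph{every} coordinate, so the corresponding sets are pairwise disjoint. Each $w+c\mathbf{1}$ is uniformly distributed, so the expected number of them lying in $J$ is $q\cdot|J|/q^D\ge q\gamma>p'-1$. Hence for some $w$, at least $p'$ of these vectors lie in $J$. The corresponding $p'$ sets are pairwise disjoint, which gives (2).

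\textbf{Main obstacle.} The main obstacle is the order of the quantifiers in choosing the parameters. The number $q$ must depend only on $\gamma$ and $p'$, so that the averaging in Step 2 works. Only afterwards is $D$ taken large, depending on $q$, $k$ and $\alpha$, to push the intersecting proportion above $\alpha$. The one delicate point is that a fixed $q$ is still compatible with a density of intersecting $k$-subsets tending to $1$. This is exactly what the growing number $D$ of independent coordinates provides.
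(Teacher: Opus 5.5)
Your proof is correct, but it takes a different route from the paper's.

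\textbf{The paper's construction.} The paper sets $n=rm$ and splits $[n]$ into $r$ blocks of size $m$. The ground set is the collection of $k$-subsets $E\subseteq[n]$ that meet each block at most once, and $S_i=\{E: i\in E\}$. Then $k$ of the sets intersect exactly when their indices lie in $k$ distinct blocks. This gives the exact count $\binom{r}{k}m^k>\alpha\binom{n}{k}$ once $r$ is large in terms of $(\alpha,k)$. For (2), any $J$ with $|J|\ge\gamma n$ has at least $\gamma m\ge p'$ indices in one block, and sets indexed by the same block are pairwise disjoint.

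\textbf{How your argument compares.} Your cosets of $\langle\mathbf{1}\rangle$ in $\mathbb{Z}_q^D$ play the role of the blocks; the averaging over $w$ in Step 2 is just pigeonhole over this coset partition. The parameters are swapped, however. Your block size $q$ is fixed by $(\gamma,p')$ and the number of blocks $q^{D-1}$ grows. As a result, sets from different cosets can still be disjoint. So you cannot count intersecting $k$-subsets exactly and instead need the probabilistic bound $1-(1-q^{1-k})^D-\binom{k}{2}q^{-D}$, which uses independence of the coordinates.

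\textbf{What each version gives.} The paper's version is a fully explicit count. It also has the extra feature that no $k+1$ of its sets share a point, so the family has no intersecting subfamily of size larger than $k$ at all. Your version lives on a ground set of size only $qD=O(\log n)$, but $n$ ranges only over powers of $q$. Your family also contains intersecting subfamilies of linear size $n/q$, for example $\{S_v: v_1=0\}$. This is harmless here, since $n/q<\gamma n$ and the lemma as stated does not ask for more.
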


\begin{proof}

Choose a sufficiently large integer $r\ge k$ so  that
\begin{equation}
\label{eq:choose-r}
  \prod_{j=0}^{k-1}\left(1-\frac{j}{r}\right) \;>\; \alpha.
\end{equation}
Next choose any integer $m\ge \lceil p'/\gamma\rceil$ and set $n:=rm$. Partition $[n]$ into $r$ disjoint ``blocks''
$B_1,\dots,B_r$, each of size $m$. Let
\[
  X
  \;:=\;
  \bigl\{\,E\subseteq [n]:
  |E|=k \text{ and } |E\cap B_t|\le 1 \text{ for every } t\in\{1,\dots,r\}\,\bigr\},
\]
and for each $i\in[n]$ define $S_i \;:=\; \{\,E\in X : i\in E\,\}$.

For any $I\subseteq[n]$ with $|I|=k$ we claim
\begin{equation}
\label{eq:intersection-criterion}
  \bigcap_{i\in I} S_i \neq \emptyset
  \quad\Longleftrightarrow\quad
  I \text{ meets $k$ distinct blocks.}
\end{equation}
Indeed, if $I$ has one element in each of $k$ distinct blocks, then $I\in X$
and $I\in S_i$ for all $i\in I$, so the intersection contains $I$.
Conversely, if $I$ contains two elements from the same block $B_t$, then no
$E\in X$ can contain both, hence $\bigcap_{i\in I}S_i=\emptyset$.

By \eqref{eq:intersection-criterion}, the intersecting $k$-tuples are obtained
by choosing $k$ blocks and then one element from each chosen block. Thus
\begin{gather*}
	\bigl|\{\,I\subseteq [n]: |I|=k,\ \bigcap_{i\in I}S_i\neq\emptyset\,\}\bigr|
  \;=\;
  \binom{r}{k}\,m^k, \textrm{ and using } n=rm, \ \frac{\binom{r}{k}m^k}{\binom{n}{k}} =\\
 = \frac{r(r-1)\cdots(r-k+1)\,m^k}{rm(rm-1)\cdots(rm-k+1)}
>
  \frac{r(r-1)\cdots(r-k+1)\,m^k}{(rm)^k}
  =
  \prod_{j=0}^{k-1}\left(1-\frac{j}{r}\right).
\end{gather*}

\noindent Together with \eqref{eq:choose-r} this gives (1).

Let now $J\subseteq[n]$ satisfy $|J|\ge \gamma n=\gamma rm$.
Then some block $B_t$ contains at least $|J\cap B_t|\ \ge\ \frac{|J|}{r}\ \ge\ \gamma m\ \ge\ p'$ 
elements of $J$. Choose distinct indices $i_1,\dots,i_{p'}\in J\cap B_t$. If $a\neq b \in [p']$, then $S_{i_a}\cap S_{i_b}=\emptyset$, since any
$E\in X$ contains at most one element from the block $B_t$. In particular, for
every $L\subseteq\{1,\dots,p'\}$ with $|L|=k'\ge 2$ we have
$\bigcap_{\ell\in L} S_{i_\ell}=\emptyset$. This witnesses that the induced
subfamily $\{S_i:i\in J\}$ fails the $(p',k')$-property, proving~(2).
\end{proof}

We now specialize these notions to definable families of sets:
\begin{defn}
	\begin{enumerate}
		\item Let $\mathcal{M} = (M, \ldots)$ be a first-order structure in a language $\mathcal{L}$.
We say that a partitioned formula $\varphi\left(x,y\right)\in\mathcal{L}$
(with $x,y$ arbitrary finite tuples of variables) satisfies $\FHP(k,\alpha,\beta)$
($\FHP_{k}$, $\FHP$, etc.) in $\mathcal{M}$ if the corresponding family of definable
sets $\mathcal{F}_{\varphi}:=\left\{ \varphi\left(M,b\right):b\in M^{y}\right\} \subseteq\mathcal{P}\left(M^{x}\right)$
does (where $\varphi\left(M,b\right) = \{ a \in M^x : M \models \varphi(a,b) \}$).

\item An $\mathcal{L}$-theory $T$ is $\FHP$ ($\WFHP$) if every partitioned formula $\varphi(x,y)$ with $x,y$ arbitrary finite tuples of variables satisfies $\FHP$ (respectively, $\WFHP$) in every model of $T$.
\end{enumerate}
\end{defn}
\begin{rem}
	Note that if $\varphi(x,y)$ satisfies $\FHP(k, \alpha, \beta)$ in $\mathcal{M}$ and $\mathcal{N} \equiv \mathcal{M}$, then $\varphi(x,y)$ satisfies $\FHP(k, \alpha, \beta)$ in $\mathcal{N}$, hence for a complete theory it suffices to verify FHP in a single model (and similarly for $\WFHP$).
\end{rem}

Recall that, for $A \subseteq \mathbb{M} \models T$ and $b \in \mathbb{M}^y$, a formula $\varphi(x,b)$ \emph{$k$-divides over $A$} if there is an infinite sequence $(b_i : i \in \mathbb{N})$ in $\mathbb{M}^y$ with $b_i \equiv_A b$ so that the set of formulas $\{\varphi(x,b_i) : i \in \mathbb{N}\}$ is $k$-inconsistent; and $\varphi(x,b)$ \emph{divides over $A$} if it $k$-divides over $A$ for some $k$. We refer to e.g.~\cite{chernikov2012forking} for the basic properties of dividing. The following was observed in the proof of  \cite[Proposition 25]{chernikov2015externally}, and is straightforward by Ramsey and compactness:
\begin{rem}\label{rem: non-div fam iff pk}
	Given a formula $\varphi(x,y)$, $k \in \mathbb{N}$ and a partial type $\pi(y)$ over a small set of parameters $A$, the family $\mathcal{F}_{\varphi, \pi} := \{\varphi(\mathbb{M},b) : b \models \pi(y)\}$ satisfies the $(p,k)$-property for some $p$ if and only if for every $b \models \pi$, $\varphi(x,b)$ does not $k$-divide.
\end{rem}

\begin{defn}\label{def: local FHP etc}
Let $T$ be a complete theory, $\mathbb{M} \models T$ a monster model and $A \subseteq \mathbb{M}$ a small set of parameters.
\begin{enumerate}
	\item  A partitioned formula $\varphi\left(x,y\right)$ satisfies \emph{local
$\FHP_k$ over $A$}   (respectively, \emph{local $\WFHP_k$ over $A$}) if for every complete type $q\left(y\right)\in S_{y}\left(A\right)$, the corresponding family
of definable sets $\mathcal{F}_{\varphi,q} := \left\{ \varphi\left(\M,b\right):b\in\M^{y},b\models q\right\} $
satisfies $\FHP_k$ (respectively, $\WFHP_k$). A theory $T$ is \emph{locally FHP} (\emph{locally $\WFHP$}) if every formula satisfies local $\FHP_k$ (local $\WFHP_k$) over $\emptyset$ for some $k = k(\varphi) \in \mathbb{N}$ (then $\FHP$ holds over all sets of parameters). 
\item A partitioned formula $\varphi\left(x,y\right)$ satisfies \emph{$\FHP_k$ for dividing}, or \emph{$\DFHP_k$}, over $A$ if the following holds: for any $q \in S_y(A)$ so that $\varphi(x,b)$ does not $k$-divide over $A$ for some/any $b \models q$, there is some $\beta = \beta(q) \in \mathbb{R}_{>0}$ so that for any finite tuple $B$ of realizations on $q$ there is some $B_0 \subseteq B, |B_0| \geq \beta |B|$ with $\left\{ \varphi(x,b) : b \in B_0 \right\}$ consistent. And $\varphi\left(x,y\right)$ satisfies $\DFHP$ over $A$ if it satisfies $\DFHP_k$ over $A$ for some $k$.
\end{enumerate}
\end{defn}

\begin{rem}
	\begin{enumerate}
	
	\item Immediately from the definitions we have: if $\varphi(x,y)$ satisfies $\FHP$ ($\WFHP$), then it also satisfies local $\FHP$ (respectively, local $\WFHP$) over all sets of parameters.  The converse
is true if $T$ is $\omega$-categorical (see Proposition \ref{prop: FHP is locFHP in omega cat}),
but not in general (see Example \ref{exa: fhp vs local fhp and wnfcp}).
	\item If $\varphi(x,y)$ satisfies local $\FHP$ over $A$, then it also satisfies local $\WFHP$ over $A$ (as in Proposition \ref{prop: FHP implies WFHP}).
		\item If $\varphi(x,y)$ satisfies local $\WFHP_k$ over $A$, then it also satisfies $\DFHP_k$ over $A$. Indeed, assume $\varphi(x,y)$ satisfies local $\WFHP_k$ over $A$. If $\varphi(x,b)$ does not $k$-divide over $A$, then by Remark \ref{rem: non-div fam iff pk} the family $\mathcal{F}_{\varphi, \tp(b/A)}$ satisfies the $(p,k)$-property for some $p \geq k$, without loss of generality $p \geq p_0$.  As by assumption $\varphi$ satisfies $\WFHP(k, p, \beta)$ over $A$ for some $\beta > 0$, we conclude.
		\item If $T$ is $\FHP$ ($\WFHP$), then any reduct of $T$ is also
$\FHP$ (respectively, $\WFHP$).
	\end{enumerate}
\end{rem}

\begin{lem}
\label{lem: Basic operations preserving FH}

\begin{enumerate}
\item If $k \leq k'$ and $\varphi\left(x,y\right)$ satisfies (local) $\FHP_k$,
then it satisfies (local) $\FHP_{k'}$ as well. And the same for (local) $\WFHP_k$ and $\DFHP_k$.
\item Both classes of formulas with $\FHP$ and with local $\FHP$ are closed
under disjunctions. More precisely, if $\varphi_{i}\left(x,y_{i}\right)$
satisfies (local) $\FHP_{k_{i}}$, for $1\leq i\leq t$, then $\varphi\left(x;y_{1},\ldots,y_{t}\right) := \bigvee_{i=1}^{t}\varphi_{i}\left(x,y_{i}\right)$
satisfies (local) $\FHP_k$ for  $k:=\sum_{1\leq i\leq t}k_{i}-t+1$.
\item Given a complete theory $T$, if every formula $\varphi\left(x,y\right)$
with $\left|x\right|=1$ singleton satisfies $\FHP$, then every formula
satisfies $\FHP$ (and if every formula $\varphi(x,y)$ with $|x|=1$ satisfies $\FHP_k$, then every formula $\psi(x',y)$ with $|x'| = n$ satisfies $\FHP_{k^n}$).
\item If $T$ satisfies $\FHP$, then any reduct of $T$ and $T^{\eq}$ satisfies $\FHP$.
\end{enumerate}
\end{lem}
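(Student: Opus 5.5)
The statement has four independent parts. Parts (1), (2) and (4) reduce to double counting or pigeonhole arguments. Part (3) needs an induction on $|x|$ in which the induction step is an auxiliary application of $\FHP$ to a formula in fewer variables.

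For (1), suppose $\bar S=(S_1,\dots,S_n)$ has $|\Cons_{k'}(\bar S)|\ge\alpha\binom{n}{k'}$. Every intersecting $k'$-set contains $\binom{k'}{k}$ intersecting $k$-sets. Every $k$-set lies in $\binom{n-k}{k'-k}$ sets of size $k'$. By double counting,
\[
|\Cons_k(\bar S)|\ \ge\ \alpha\binom{n}{k'}\binom{k'}{k}\Big/\binom{n-k}{k'-k}\ =\ \alpha\binom{n}{k},
\]
so $\FHP(k,\alpha,\beta)$ yields $\FHP(k',\alpha,\beta)$. The local version is identical, since the family $\mathcal{F}_{\varphi,q}$ is fixed.

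For $\WFHP$, note that the $(p,k')$-property implies the $(p,k)$-property, so we keep the same $p_0$ and $\beta$. For $\DFHP$, note that a $k$-inconsistent family is $k'$-inconsistent. Hence if $\varphi(x,b)$ does not $k'$-divide, it does not $k$-divide, and the $\beta(q)$ supplied by $\DFHP_k$ works for $\DFHP_{k'}$.

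For (2), let $S_j=\bigcup_{i\le t}\varphi_i(M,b_{j,i})$ and $k=\sum_i(k_i-1)+1$. Take an intersecting $k$-set $J$ with a common point $a$, and colour each $j\in J$ by some $i$ with $a\in\varphi_i(M,b_{j,i})$. By pigeonhole, some colour $i$ occurs at least $k_i$ times, giving a $k_i$-subset of $J$ that intersects in the $i$-th components. Each $k_i$-set is counted in at most $\binom{n-k_i}{k-k_i}$ sets $J$, and there are $t$ colours. So for some $i$ the family $(\varphi_i(M,b_{j,i}))_{j\le n}$ has at least $\alpha'\binom{n}{k_i}$ intersecting $k_i$-sets, where $\alpha'=\alpha'(\alpha,t,k)>0$. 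Applying $\FHP_{k_i}$ to $\varphi_i$ gives $\beta n$ indices whose $i$-th components, and hence whose disjunctions, share a point. For the local version, a type $q(y_1,\dots,y_t)$ restricts to types of each $y_i$, so the same argument applies.

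For (3), I induct on $n=|x'|$, and the base case $n=1$ is the hypothesis. Write $x'=(x_1,z)$ with $|z|=n-1$, let $K'=k^{n-1}$ and $K=kK'$. Consider the auxiliary formula
\[
\chi(z;y_1,\dots,y_k):=\exists x_1\bigwedge_{l\le k}\psi(x_1,z,y_l),
\]
which by induction satisfies $\FHP_{K'}$. Given $\psi(M,b_1),\dots,\psi(M,b_N)$ with at least $\alpha\binom{N}{K}$ intersecting $K$-sets, index a new family by all $k$-subsets $I\subseteq[N]$, with $T_I:=\chi(M;b_I)$.

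The key counting claim is the following. If $K'$ random $k$-sets are pairwise disjoint, which happens with probability tending to $1$, their union is a uniformly random $K$-set. If that $K$-set has a common point $(a,c)$, then $c\in T_I$ for every $I$ in the union. Hence a fraction $\alpha''>0$ of the $K'$-tuples of the $T_I$'s intersect. Induction then gives $c$ and at least $\beta\binom{N}{k}$ sets $I$ with $c\in T_I$. Equivalently, the sets $\psi(M,c,b_j)\subseteq M^{x_1}$ have a positive fraction of intersecting $k$-tuples. Applying $\FHP_k$ to $\psi(x_1;z,y)$, which has $x_1$ a singleton, gives $\beta'N$ indices with a common $a$, and then $(a,c)$ lies in all of them. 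This counting, including controlling the error from non-disjoint choices, is the main obstacle. The bound $k^n$ comes from $K=k\cdot k^{n-1}$.

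For (4), reducts are trivial. For $T^{\eq}$, replace each imaginary variable by a real tuple and choose representatives for the parameters. This pulls $\varphi(x_E,y_F)$ back to a formula $\varphi'$ on real sorts, whose instances are the preimages of the original sets. Intersections are nonempty exactly when the intersections of the preimages are nonempty. So $\FHP_k$ for $\varphi'$, which holds by (3) if needed, transfers to $\varphi$.
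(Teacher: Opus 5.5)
Your proposal is correct and follows essentially the same route as the paper. Parts (1) and (2) use the same double counting and pigeonhole. Part (3) uses the same induction through the auxiliary formula $\exists x_1\bigwedge_{l\le k}\psi(x_1,z,y_l)$ indexed by $k$-subsets, followed by an application of $\FHP_k$ to the singleton formula over the point $c$ found.

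The only differences are cosmetic. You transfer the density to the auxiliary family by a symmetry argument on random pairwise disjoint $k$-sets, which needs $N$ large, with small $N$ handled trivially. The paper instead fixes a partition of each $K$-set into $k$-blocks to get an injection $\Cons_K\to\Cons_{K'}$. For the non-uniform statement in (3), you just choose $k$ for $\psi(x_1;z,y)$ first and then $K'$ for the auxiliary formula.
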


\begin{proof}
(1) Let $\varphi\left(x,y\right)$ and $k<k'$ be given. Fix
an arbitrary $\alpha>0$. Let $B \subseteq \mathbb{M}^y$ be a finite set, $n=\left|B\right|$,
$\bar{S} := \left( \varphi\left(M,b\right):b\in B\right)$ such that
$\Cons_{k'}\left(\bar{S}\right)\geq\alpha{n \choose k'}$. Obviously
every $J \in\Cons_{k'}\left(\bar{S}\right)$ contains some $J'\subseteq J$
with $J'\in\Cons_{k}\left(\bar{S}\right)$, and for every $J'\in{B \choose k}$
there are at most ${n-k \choose k'-k}$ sets $J\in{B \choose k'}$
that contain it. Hence, by double counting and basic properties of
the binomial coefficients we have that 
\[
\left|\Cons_{k}\left(\bar{S}\right)\right|\geq\frac{\alpha{n \choose k'}}{{n-k \choose k'-k}}\geq\alpha'{n \choose k}
\]
for some $\alpha'=\alpha'\left(\alpha,k,k'\right)>0$ and all $n\in\mathbb{N}$.
Then taking $\beta>0$ so that $\varphi\left(x,y\right)$ satisfies  $\FHP(k, \alpha', \beta)$, we get that $\varphi(x,y)$ also satisfies $\FHP(k', \alpha, \beta)$. The proof for local $\FHP$ is identical, restricting to $B$ a tuple of realizations of a complete type $q(y)$. For (local) $\WFHP_k$, we only need to note that any family satisfying the $(p,k')$-property also satisfies the $(p,k)$-property. For $\DFHP$, note that if $\varphi(x,b)$ does not $k'$-divide over $A$, then it also does not $k$-divide over $A$.

(2) Let $\varphi\left(x,\bar{y}\right)$, $\bar{y} = (y_1, \ldots, y_t)$ and $k$ be as in the statement,
and fix some $\alpha>0$. Let $B$ be a finite set of tuples $\bar{b}=\left(b_{1},\ldots,b_{t}\right)$,
$n:=\left|B\right|$, and let $\bar{S}:=\left( \varphi\left(\mathbb{M},\bar{b}\right):\bar{b}\in B\right) $
be such that $\Cons_{k}\left(\bar{S}\right)\geq\alpha{n \choose k}$.
For $1\leq i\leq t$, let $\bar{S}_{i}:=\left( \varphi_{i}\left(\mathbb{M},b_{i}\right):\bar{b}\in B\right) $.
By the choice of $k$, for each $J\in\Cons_{k}\left(\bar{S}\right)$
there exists some $1\leq i_{J}\leq t$ and some $J'\in{J \choose k_{i_J}}$
such that $J'\in\Cons_{k_{i_J}}\left(\bar{S}_{i_J}\right)$. Hence
by pigeonhole there is some $1\leq i\leq t$ and some $\mathcal{D}\subseteq\Cons_{k}\left(\bar{S}\right),\left|\mathcal{D}\right|\geq\frac{1}{t}\left|\Cons_{k}\left(\bar{S}\right)\right|$
such that for every $J\in\mathcal{D}$, $i_{J}=i$. By double counting
as in (1), this implies that there is some $\alpha'_{i}=\alpha'\left(k,k_{i},\alpha\right)>0$
such that $\left|\Cons_{k_{i}}\left(\bar{S}_{i}\right)\right|\geq\frac{\alpha{n \choose k}}{{n-k_{i} \choose k-k_{i}}}\geq\alpha'_{i}{n \choose k_{i}}$
holds for all $n$. By $\FHP_{k_{i}}$ for $\varphi_{i}(x,y_i)$, there is some
$\beta_{i}=\beta_{i}\left(\alpha_{i}'\right)>0$ and $R\subseteq B,\left|R\right|\geq\beta_{i}n$
such that $\bigwedge_{\bar{b}\in R}\varphi_{i}\left(x,b_{i}\right)$
is consistent, hence $\bigwedge_{\bar{b}\in R}\varphi\left(x,\bar{b}\right)$
is consistent. Thus taking $\beta:=\min\left\{ \beta_{i}:1\leq i\leq t\right\} $
does the job. 

For local $\FHP_k$, note that if all $\bar{b}\in B$
have the same type, then for each $1\leq i\leq t$, all of the elements
in $\left\{ b_{i}:\bar{b}\in B\right\} $ also have the same type,
so the proof goes through.

(3) We prove it by induction on the length of $\left|x\right|$. Let
$\varphi\left(x_{1},x_{2};y\right)$ be given, and assume that $\FHP$
holds for all formulas $\psi\left(x,y\right)$ with $\left|x\right|<\left|x_{1}\right|+\left|x_{2}\right|$.
Fix some $\alpha>0$. Let $k_{1},k_{2}\in\mathbb{N}$ be arbitrary,
and let $k=k_{1}\times k_{2}$. Fix an arbitrary set $A\subseteq M_{y}$
of size $n$, let $\mathcal{F}:=\left\{ \varphi\left(x_{1},x_{2};a\right):a\in A\right\} $.
Let $B:={A \choose k_{2}}$, $\psi\left(x_{2};y_{1},\ldots,y_{k_{2}}\right):=\exists x_{1}\bigwedge_{1\leq i\leq k_{2}}\varphi\left(x_{1},x_{2};y_{i}\right)$
and let $\mathcal{F}'=\left\{ \psi\left(x_{1};\bar{a}\right):\bar{a}\in B\right\} \mbox{.}$ 

Assume that $\left|\Cons_{k}\left(\mathcal{F}\right)\right|\geq\alpha{n \choose k}$.
For every $S\in{A \choose k}$ pick some presentation of it as a disjoint
union of $k_{1}$-many subsets of $S$ of size $k_{2}$. This defines
an injection from the set $\Cons_{k}\left(\mathcal{F}\right)$ to
$\Cons_{k_{1}}\left(\mathcal{F}'\right)$, so $\left|\Cons_{k_{1}}\left(\mathcal{F}'\right)\right|\geq\alpha{n \choose k}\geq\frac{\alpha}{k^{k}}n^{k}$.
We also have $\left|\mathcal{F}'\right|={n \choose k_{2}}\leq\frac{n^{k_{2}}}{k_{2}!}$,
and so 
\[
\left|{\mathcal{F}' \choose k_{1}}\right|\leq{\frac{n^{k_{2}}}{k_{2}!} \choose k_{1}}\leq\frac{\left(\frac{n^{k_{2}k_{1}}}{\left(k_{2}!\right)^{k_{1}}}\right)}{k_{1}!}\leq\frac{1}{k_{1}!\left(k_{2}!\right)^{k_{1}}}n^{k}\leq\alpha'n^{k}
\]
for some $\alpha'=\alpha'\left(k_{1},k_{2}\right)>0$ holds for all
$n$. Combining we thus have that $\left|\Cons_{k_{1}}\left(\mathcal{F}'\right)\right|\geq\alpha''{\left|\mathcal{F}'\right| \choose k_{1}}$
holds for some $\alpha''=\alpha''\left(\alpha,k_{1},k_{2}\right)>0$
and all $n$. So, taking $k_{1}$ such that $\psi\left(x_{2},\bar{y}\right)$
satisfies $\FHP_{k_{1}}$ (exists by the inductive assumption), there
is some $\beta'=\beta\left(k_{1},\alpha''\right)>0$ such that there
is some $B_{0}\subseteq B,\left|B_{0}\right|\geq\beta'\left|B\right|$
such that $\bigwedge_{\bar{a}\in B}\psi\left(x_{2},\bar{a}\right)$
is consistent, say realized by some $b\in M_{x_{2}}$. Now consider
a new partitioned formula $\theta\left(x_{1};x_{2},y\right):=\varphi\left(x_{1},x_{2},y\right)$,
and a set of parameters $A'=\left\{ b\right\} \times A\subseteq M_{x_{2}y}$
of size $n$, and a family $\mathcal{F}''=\left\{ \varphi\left(x_{1};b,a\right):a\in A\right\} $.
By the choice of $b$ we have $\left|\Cons_{k_{2}}\left(\mathcal{F}''\right)\right|\geq\beta'\left|B\right|\geq\beta'{n \choose k_{2}}=\beta'\left|{\mathcal{F}'' \choose k_{2}}\right|$.
Taking $\alpha''':=\beta'>0$ and $k_{2}$ such that $\theta\left(x_{1};x_{2},y\right)$
satisfies $\FHP_{k_{2}}$ (again, exists by the inductive assumption),
there is some $\beta=\beta\left(\alpha''',k_{2}\right)>0$ and some
consistent subtuple $\mathcal{F}_{0}\subseteq\mathcal{F}''$ of size
$\geq\beta n$,  say it is realized by $c$. But then the tuple
$\left(c,b\right)\in M_{x_{1}x_{2}}$ realizes the family $\mathcal{F}^{*}=\left\{ \varphi\left(x_{1},x_{2};a\right):\varphi\left(x_{1},b,a\right)\in\mathcal{F}_{0}\right\} $
and $\left|\mathcal{F}*\right|\geq\beta n$. Unwinding we have that
the choice of $\beta$ only depended on $\varphi$, $\alpha$ and $k_{1},k_{2}$,
which shows that $\varphi\left(x_{1},x_{2};y\right)$ satisfies $\FHP_{k}$,
as wanted.

(4) Immediate from the definitions.
\end{proof}

\begin{rem}
	The bound in Lemma \ref{lem: Basic operations preserving FH}(2) is 
optimal. Consider the
model companion of the theory of two linear orders $\leq_{1},\leq_{2}$.
Let $\varphi_{i}\left(x;yy'\right):=y\leq_{i}x\leq_{i}y'$. Then $\varphi_{i}\left(x;yy'\right)$
satisfies $\FHP_{2}$ for $i\in\left\{ 1,2\right\} $, but it is not
hard to see that $\varphi\left(x;y_{1}y_{2}y_{1}'y_{2}'\right)=\bigvee_{i=1}^{2}\varphi_{i}\left(x,y_{i}y_{i}'\right)$
does not by taking two families of disjoint intervals for each of $\leq_{i}$
(it satisfies $\FHP_{3}$, however).
\end{rem}

We will use the following lemmas frequently.
\begin{lem}
\label{lem: FHP for finite sets}Let $\mathcal{F}$ be a family of
subsets of $X$, and assume that there is some $d\in\mathbb{N}$ such
that $\left|S\right|\leq d$ for all $S\in\mathcal{F}$. Then $\mathcal{F}$
satisfies $\FHP_2$.
\end{lem}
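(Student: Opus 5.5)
Fix $\alpha>0$ and a tuple $\bar{S}=(S_1,\ldots,S_n)$ from $\mathcal{F}$ with $|\Cons_2(\bar{S})|\geq\alpha{n \choose 2}$. Empty sets meet nothing, so we may assume every $S_i$ is non-empty. The plan is a double-counting argument through the points of $X$. For each pair $\{i,j\}\in\Cons_2(\bar{S})$ we choose a witness point $x_{ij}\in S_i\cap S_j$. For a point $x\in X$ let $\deg(x):=|\{i\in[n]:x\in S_i\}|$.

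The key step is to bound the number of pairs by degrees. Any pair $\{i,j\}$ whose chosen witness is $x$ satisfies $x\in S_i$ and $x\in S_j$, so at most ${\deg(x) \choose 2}$ pairs are witnessed by $x$. Suppose $\deg(x)<\beta n$ for every $x$. Then
\[
\alpha{n \choose 2}\leq\sum_{x}{\deg(x) \choose 2}\leq\frac{\beta n}{2}\sum_{x}\deg(x),
\]
where the sums range over the finite set $\bigcup_i S_i$. Since each $S_i$ has at most $d$ elements, $\sum_x\deg(x)=\sum_i|S_i|\leq dn$. This gives $\alpha{n \choose 2}\leq\beta d n^2/2$, that is, $\alpha(n-1)/n\leq\beta d$.

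It remains to choose $\beta$ and handle small $n$, which is the only mild obstacle. Take $\beta:=\alpha/(4d)$.
\begin{itemize}
\item For $n\geq 2$ we have $\alpha(n-1)/n\geq\alpha/2>\beta d$, which contradicts the inequality above. Hence some $x$ has $\deg(x)\geq\beta n$, and $J:=\{i:x\in S_i\}$ is the required subfamily with $\bigcap_{i\in J}S_i\ni x$.
\item For $n=1$ the single set is non-empty and $\beta<1$, so $J=\{1\}$ works.
\end{itemize}
This proves $\FHP(2,\alpha,\alpha/(4d))$, hence $\FHP_2$.

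The hypothesis that the sets have bounded size is used only in the bound $\sum_x\deg(x)\leq dn$. Repetitions in $\bar{S}$ cause no problem, because the argument counts indices rather than distinct sets.
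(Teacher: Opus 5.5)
Your proof is correct, and it takes a slightly different route from the paper. The paper argues locally with two successive pigeonholes. From $|\Cons_2(\bar S)|\geq\alpha{n \choose 2}$ it picks an index $i^*$ that lies in at least $\frac{\alpha}{2}n$ intersecting pairs. Since $S_{i^*}$ has at most $d$ points, some point of $S_{i^*}$ lies in at least $\frac{\alpha}{2d}n$ of the $S_j$, which gives $\beta=\frac{\alpha}{2d}$. You instead double count incidences globally, comparing $\sum_x{\deg(x) \choose 2}\geq|\Cons_2(\bar S)|$ with $\sum_x\deg(x)=\sum_i|S_i|\leq dn$. Both arguments are equally short and give $\beta$ of the same order $\alpha/d$. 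The difference lies in where the size bound enters. The paper needs $|S_{i^*}|\leq d$ for one particular set, singled out by the intersection pattern. Your count uses only the total $\sum_i|S_i|\leq dn$, so it proves the same conclusion for any tuple whose sets have \emph{average} size at most $d$.

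There is one harmless slip in your case $n=1$. The choice $\beta=\alpha/(4d)$ is not $<1$ when $\alpha\geq 4d$. This case genuinely arises: for $\alpha>1$ the hypothesis can only hold with $n\leq 1$, where it holds vacuously. So take $\beta:=\min\{\alpha/(4d),1\}$. Also, your main count never uses non-emptiness, and for $n=1$ with $S_1=\emptyset$ no $\beta>0$ works at all. So that assumption is really the implicit convention the definition needs, not a reduction. The paper's own proof simply does not address $n=1$.
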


\begin{proof}
Fix $\alpha>0$, and let $\left(S_{i}:1\leq i\leq n\right)$ be a
tuple of sets from $\mathcal{F}$. Let $C:=\left\{ I\subseteq\left[n\right]:\left|I\right|=2,\bigcap_{i\in I}S_{i}\neq\emptyset\right\} $
and assume that $\left|C\right|\geq\alpha{n \choose 2}$. The by pigeonhole
there must be some $i^{*}\in\left[n\right]$ such that $i^{*}\in I$
for at least $\frac{\alpha}{2}n$ of the sets $I\in C$. But then
there must be some element $a\in S_{i^{*}}$ belonging to at least
$\frac{\alpha}{2d}n$ of the sets $S_{i},i\in\left[n\right]$, so
take $\beta:=\frac{\alpha}{2d}>0$.
\end{proof}

We will use the following fact from combinatorics (see e.g.~the introduction
of \cite{furedi1983finite}).
\begin{fact}
\label{fact: furedi}\cite{erdos1968coloring} For each $k\in\mathbb{N}$,
there is $\gamma=\gamma\left(k\right)>0$ satisfying the following
(can take $\gamma=\frac{k!}{k^{k}}$).

If $\mathcal{F}$ is a set of $k$-element subsets of a set $X$,
then there exist sets $X_{1},\ldots,X_{k}\subseteq X$ and $\mathcal{F}'\subseteq\mathcal{F}$
such that:
\begin{enumerate}
\item $X_{i}\cap X_{j}=\emptyset$ for all $1\leq i<j\leq k$,
\item $\left|\mathcal{F}'\right|\geq\gamma\left|\mathcal{F}\right|$,
\item $\left|X_{i}\cap S\right|=1$ for all $S\in\mathcal{F}'$ and $1\leq i\leq k$.
\end{enumerate}
\end{fact}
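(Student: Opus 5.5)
The plan is the standard first-moment (random partition) argument of Erdős and Kleitman. I treat the case where $\mathcal{F}$ is finite, which is the only case used in the paper; $X$ may be assumed finite by replacing it with $\bigcup \mathcal{F}$. I will not handle infinite $\mathcal{F}$, where $\left|\mathcal{F}'\right|\geq\gamma\left|\mathcal{F}\right|$ would have to be read as a statement about cardinalities.

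\emph{Step 1: random colouring.} Choose a colouring $c\colon X\to[k]$ uniformly at random, giving each point of $X$ an independent uniform colour. Put $X_i:=c^{-1}(i)$. Condition (1) then holds automatically, since the $X_i$ are the fibres of a function.

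\emph{Step 2: probability that one edge is rainbow.} Fix $S\in\mathcal{F}$ with $|S|=k$. Condition (3) for $S$ says that $|X_i\cap S|=1$ for every $i\in[k]$, i.e.\ that $c\restriction S$ is a bijection $S\to[k]$. Of the $k^k$ equally likely colourings of $S$, exactly $k!$ are bijections. So this event has probability $k!/k^k$.

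\emph{Step 3: expectation and choice of a good colouring.} Let $\mathcal{F}'_c:=\{S\in\mathcal{F}: c\restriction S \text{ is a bijection onto } [k]\}$. By linearity of expectation,
\[
\mathbb{E}\left|\mathcal{F}'_c\right|=\frac{k!}{k^k}\left|\mathcal{F}\right|.
\]
Hence some colouring $c$ satisfies $\left|\mathcal{F}'_c\right|\geq \frac{k!}{k^k}\left|\mathcal{F}\right|$. For this $c$, take $X_i=c^{-1}(i)$ and $\mathcal{F}'=\mathcal{F}'_c$. Then (1)--(3) hold with $\gamma=\frac{k!}{k^k}$.

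No step is a real obstacle. The only point needing care is that (3) asks for exactly one element of $S$ in each $X_i$, which is precisely the condition that $c\restriction S$ is a bijection. This is what makes the count in Step 2 exactly $k!$ out of $k^k$.
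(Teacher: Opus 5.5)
Your argument is correct and complete. The paper gives no proof of this fact; it quotes it from Erd\H{o}s--Kleitman. Your uniform random $k$-colouring with linearity of expectation is the standard argument, and it yields exactly the stated constant $\gamma=\frac{k!}{k^{k}}$.

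One remark is worth making. Linearity of expectation does not care about repeated sets. So your argument, applied verbatim to a tuple $(S_1,\ldots,S_n)$, proves the multiset version (Lemma \ref{lem: Furedi repetitions}) directly with the same constant $\frac{k!}{k^{k}}$. That is simpler than the paper's reduction, which adjoins new points $b_i$ and invokes the fact for $k+1$, and it also gives a better constant.
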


We need a slightly more general version of it.
\begin{lem}
\label{lem: Furedi repetitions}The conclusion of Fact \ref{fact: furedi}
holds even if $\mathcal{F}$ is a multiset (i.e.~we allow repetitions
of $k$-subsets from $X$).
\end{lem}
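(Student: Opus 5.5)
Since the multiset $\mathcal{F}$ may contain repeated $k$-subsets of $X$, the plan is to reduce to Fact \ref{fact: furedi} by blowing up the ground set so that all copies become genuinely distinct $k$-element sets. Alternatively, the standard random-partition argument can be run directly on the multiset. The direct argument is cleanest and gives the same constant $\gamma=\frac{k!}{k^k}$, so I would take that route.

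First I fix the setup. Let $\mathcal{F}=(S_j : j\in J)$ be a finite family, indexed with repetitions, of $k$-element subsets of $X$; I may assume $X$ is finite by replacing it with $\bigcup_j S_j$. Next, choose a uniformly random map $c: X\to[k]$ and set $X_i:=c^{-1}(i)$. These sets are pairwise disjoint, so condition (1) of Fact \ref{fact: furedi} holds automatically.

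For each index $j$, condition (3) holds for $S_j$ exactly when $c$ restricted to $S_j$ is a bijection onto $[k]$. This happens with probability $\frac{k!}{k^k}$, since $|S_j|=k$ and the colours of its elements are independent. By linearity of expectation, the expected number of indices $j\in J$ (counted with multiplicity) whose set is rainbow is $\frac{k!}{k^k}|J|$. Hence some colouring $c$ achieves at least this many. Let $\mathcal{F}'$ be the sub-multiset of those indices; then (2) holds with $\gamma=\frac{k!}{k^k}$, and (3) holds by construction. This argument never uses distinctness of the $S_j$, so it proves the lemma.

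If one prefers to invoke Fact \ref{fact: furedi} as a black box, the alternative is to pass to $X\times J$ and replace each $S_j$ by $S_j\times\{j\}$. The resulting sets are distinct $k$-element sets, so the Fact applies. However, projecting the resulting $X_i'$ back to $X$ need not preserve disjointness, because the same point of $X$ can lie in different $X_i'$ via different copies. That step is the only real obstacle on this route, and it is why I would instead present the direct probabilistic argument, which is self-contained and short.
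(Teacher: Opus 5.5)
Your proof is correct, but it takes a different route from the paper. You reprove the Fact directly with the random $k$-colouring argument. Linearity of expectation is computed over indices, so it never notices whether two indices carry the same set, and the multiset version follows with the same constant $\frac{k!}{k^k}$.

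The paper instead treats Fact \ref{fact: furedi} as a black box:
\begin{itemize}
\item It adds one fresh point $b_i$ to each $S_i$, so the sets $S_i\cup\{b_i\}$ are pairwise distinct $(k+1)$-element sets.
\item It applies the Fact for $k+1$.
\item It pigeonholes to find a single part $X_{j^*}$ that contains $b_i$ for at least a $\frac{1}{k+1}$-fraction of the surviving indices.
\item It discards that part and intersects the remaining $k$ parts with $X$.
\end{itemize}
This is precisely a black-box reduction that avoids the obstacle you identified with the $X\times J$ blow-up. Only one new point is added per set and no point of $X$ is ever duplicated, so the remaining parts stay pairwise disjoint subsets of $X$. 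For each such $i$, each of those parts still meets $S_i$ in exactly one point, because $b_i$ has been confined to the discarded part.

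The trade-off is as follows. Your argument is self-contained and gives the better constant $\frac{k!}{k^k}$. The paper's argument loses a factor, since its constant is $\gamma(k+1)/(k+1)$ with $\gamma(\cdot)$ from Fact \ref{fact: furedi}. In exchange, it relies only on the statement of the Fact and not on how the Fact is proved.
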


\begin{proof}
Let $k$ be given. We take $\gamma>0$ to be as given by the fact
for $k+1$.

Now let $X$ be a set and let $\mathcal{F}=\left(S_{i}:1\leq i\leq n\right)$
be a tuple of $k$-subsets of $X$, possibly with repetitions. Let
$X'\supseteq X$ be a new set obtained from $X$ by adding a new element
$b_{i}$ for each $1\leq i\leq n$. Let $S'_{i}:=S_{i}\cup\left\{ b_{i}\right\} $
be a $\left(k+1\right)$-element subsets of $X'$, and let $\mathcal{F}'=\left\{ S_{i}':1\leq i\leq n\right\} $,
then $\left|\mathcal{F}'\right|=\left|\mathcal{F}\right|$ and all
sets in $\mathcal{F}'$ are pairwise distinct. By Fact \ref{fact: furedi},
we find some disjoint sets $X_{1},\ldots,X_{k+1}\subseteq X'$ and
some $I\subseteq\left[n\right]$ with $\left|I\right|\geq\gamma n$
such that for any $i\in I$ we have $\left|S_{i}'\cap X_{j}\right|=1$
for all $1\leq j\leq k+1$. By pigeonhole, there is some $I'\subseteq I,\left|I'\right|\geq\frac{\gamma}{k+1}n$
and $1\leq j^{*}\leq k+1$ such that $b_{i}\in X_{j^{*}}$ for all
$i\in I'$. Let $X_{j}':=X\cap X_{j}$. Then $\left(X_{j}':1\leq j\leq k+1,j\neq j^{*}\right)$,
$\gamma':=\frac{\gamma}{k+1}$ and $\mathcal{F}_{0}:=\left\{ S_{i}:i\in I'\right\} $
satisfy the conclusion.
\end{proof}

The following is easy to verify from the definition of $\FHP$:
\begin{rem}\label{rem: FHP pres by func/1-var conj}
	Assume $\psi(x,z)$ has $\FHP$, $g: \M^y \to \M^z$ is a definable function and $\rho(y)$ is an arbitrary formula. Then the formula $\varphi(x,y) := \psi(x, g(y)) \land \rho(y)$ is also $\FHP$.
\end{rem}

\subsection{$\FHP$ in Shelah's classification}

Next we discuss the position of $\FHP$ theories in Shelah's classification
hierarchy \cite{MR513226}. We recall the definition of some relevant tree properties, and refer to e.g.~\cite{chernikov2014theories} or \cite{chernikov2016model} 
for further details.

\begin{defn}\label{def: NTP2}
Suppose $T$ is a complete theory and $\varphi(x;y) \in L$ is a partitioned formula in the language of $T$ (with $x,y$ tuples of variables).
\begin{enumerate}
\item $\varphi(x;y)$ has the \emph{$k$-tree property} ($k$-$\TP$) if there is a tree of tuples $(a_{\eta})_{\eta \in \omega^{<\omega}}$ in $\M$ such that
\begin{itemize}
\item for all $\eta \in \omega^{\omega}$, $\{\varphi(x;a_{\eta | \alpha}) : \alpha < \omega\}$ is consistent,
\item for all $\eta \in \omega^{<\omega}$, $\{\varphi(x;a_{\eta \frown \langle i \rangle}) : i < \omega\}$ is $k$-inconsistent.  
\end{itemize}
We say that $\varphi(x,y)$ has the tree property ($\TP$) if it has the $k$-tree property for some $k \in \omega$; otherwise we say that $\varphi(x,y)$ is $\NTP$.
\item $\varphi(x;y)$ has the \emph{tree property of the first kind} ($\TP_1$) if there is a tree of tuples $(a_{\eta})_{\eta \in \omega^{<\omega}}$ in $\M$ such that
\begin{itemize}
\item for all $\eta \in \omega^{\omega}$, $\{\varphi(x;a_{\eta | \alpha}) : \alpha < \omega\}$ is consistent,
\item for all $\eta \perp \nu$ in $\omega^{<\omega}$, $\{\varphi(x;a_{\eta}),\varphi(x;a_{\nu})\}$ is inconsistent. 
\end{itemize}
Otherwise we say that $\varphi(x,y)$ is $\NTP_1$.
\item $\varphi(x;y)$ has the \emph{$k$-tree property of the second kind} ($k$-$\TP_{2}$) if there is an array of tuples $(a_{\alpha,i})_{\alpha < \omega, i < \omega}$ in $\M$ such that 
\begin{itemize}
\item for all functions $f: \omega \to \omega$, $\{\varphi(x;a_{\alpha, f(\alpha)}) : \alpha < \omega\}$ is consistent,
\item for all $\alpha$, $\{\varphi(x;a_{\alpha, i}) : i < \omega\}$ is $k$-inconsistent.  
\end{itemize}
We say that $\varphi(x,y)$ has the tree property of the second kind ($\TP_2$) if it has the $k$-$\TP_2$ for some $k \in \omega$; otherwise we say that $\varphi(x,y)$ is $\NTP_2$.
\item $T$ has one of the above properties if some formula does modulo $T$.  
\end{enumerate}
\end{defn}

\begin{fact}\cite[III.7.7, III.7.11]{MR513226} (see also \cite[Section 4]{adler2007strong} or \cite[Theorem 6.6]{kim2014tree}) \label{fac: TP dichotomy}
A complete theory $T$ has $\TP$ if and only if it has $\TP_1$ or $\TP_2$. 
\end{fact}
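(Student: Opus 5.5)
The direction ``$\TP_1$ or $\TP_2$ implies $\TP$'' is immediate, so the plan is to handle it by two direct constructions and spend the real effort on the converse: assuming $\TP$ and $\NTP_2$, derive $\TP_1$.

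\emph{Easy direction.} A $\TP_1$ tree for $\varphi$ already witnesses $2$-$\TP$. Distinct children $\eta\frown\langle i\rangle$, $\eta\frown\langle j\rangle$ are incomparable, so their instances are pairwise inconsistent, and the branch condition is the same. Given a $k$-$\TP_2$ array $(a_{\alpha,i})$, I will define a tree by $b_{\eta}:=a_{|\eta|-1,\eta(|\eta|-1)}$ for $\eta\neq\emptyset$, so a node reads off only its level and its last coordinate. Along any branch $\eta\in\omega^{\omega}$ we get $\{\varphi(x;a_{\alpha,\eta(\alpha)}):\alpha<\omega\}$, which is consistent with $f=\eta$. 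The children of any node form a row $\{a_{\alpha,i}:i<\omega\}$, which is $k$-inconsistent. Hence $\varphi$ has $k$-$\TP$.

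\emph{Main direction.} Fix $\varphi(x;y)$ with $k$-$\TP$, witnessed by $(a_{\eta})_{\eta\in\omega^{<\omega}}$.
\begin{enumerate}
\item By the modeling property for strongly indiscernible trees (Kim--Kim--Scow, Takeuchi--Tsuboi), replace the witness by a strongly indiscernible one. The $k$-$\TP$ conditions are preserved, because they are properties of the Ehrenfeucht--Mostowski type in the tree language with the meet, $\lhd$ and lexicographic order.
\item Strong indiscernibility gives a key consequence: for an antichain $\{\eta_1,\ldots,\eta_k\}$, consistency of $\{\varphi(x;a_{\eta_j})\}$ depends only on its quantifier-free shape in that language.
\item \emph{Case 1:} some $k$-element antichain gives an inconsistent set. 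By indiscernibility every antichain of that shape does, and I will use this to build a $k$-$\TP_1$ tree (paths consistent, $k$-antichains inconsistent) by embedding $\omega^{<\omega}$ into the original tree so that every $k$-antichain acquires the bad shape. Typically one uses descending combs, which can be arranged by passing to a subtree of the form $\eta\mapsto$ ``$0^{n}$-spread'' nodes. Then I will invoke the theorem of Kim--Kim that $k$-$\TP_1$ implies $\TP_1$.
\item \emph{Case 2:} every finite antichain gives a consistent set. I will build a $\TP_2$ array by $a'_{\alpha,i}:=a_{0^{\alpha}\frown\langle i\rangle}$.
\begin{itemize}
\item Each row consists of the children of $0^{\alpha}$, so it is $k$-inconsistent.
\item For any $f:\omega\to\omega$, the nodes $0^{\alpha}\frown\langle f(\alpha)\rangle$ are pairwise incomparable once $f(\alpha)\neq 0$. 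The case $f(\alpha)=0$ is avoided by shifting indices to use only $i\geq 1$.
\item Every finite subset is therefore an antichain, hence consistent, and compactness gives consistency of the whole path.
\end{itemize}
This contradicts $\NTP_2$.
\end{enumerate}

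The main obstacle is Case 1, namely getting genuine $\TP_1$ (with $2$-inconsistency) from the weaker $k$-antichain inconsistency. I would first try to reduce $k$ by a minimality argument on $k$, passing to conjunctions $\bigwedge_{j<m}\varphi(x;y_j)$ along comb-shaped tuples. The modeling property itself I would treat as a black box.
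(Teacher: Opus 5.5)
Your argument for the main direction has a genuine gap, and it is exactly where the content of this fact lies (the paper only cites it). The two cases are not a dichotomy. Any $k$ children of a node form a $k$-element antichain with inconsistent instances, so the hypothesis of Case~2 never holds; Case~2 itself uses that the rows, which are antichains, are $k$-inconsistent. Hence Case~1 would have to handle every $k$-$\TP$ witness, i.e.\ your plan would prove that $\TP$ alone implies $\TP_1$.

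That is false. The generic theory $T^*_{feq}$ of parametrized equivalence relations is $\NSOP_1$, hence $\NTP_1$. Yet $E_z(x,y)$ has $2$-$\TP_2$ there, hence $2$-$\TP$ by your own array-to-tree construction.

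Concretely, the only bad shape you really know is that of $k$ siblings, a fan with all pairwise meets equal. Already for $k=3$, no $\trianglelefteq$-preserving $h$ on $2^{<\omega}$ sends every $3$-antichain to a fan (write $01$ for $\langle 0,1\rangle$, etc.). The antichain $\{00,01,1\}$ forces $h(00)\perp h(01)$. Then in the image of $\{000,001,01\}$, the first two points meet at or above $h(00)$, while each meets $h(01)$ strictly below $h(00)$.

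The correct split concerns a single shape, the comb $\{0^{\alpha}\frown\langle 1\rangle:\alpha<\omega\}$. If it is consistent, your array $(a_{0^{\alpha}\frown\langle i\rangle})_{i\geq 1}$ does give $\TP_2$: for $f\geq 1$, each finite piece of $\{0^{\alpha}\frown\langle f(\alpha)\rangle\}$ has the shape of the corresponding sub-comb. If it is inconsistent, you only know that certain finite combs of one fixed lexicographic orientation are inconsistent. Getting $\TP_1$ out of that is the substantive part of the cited proofs, and your uniform-embedding idea cannot do it. The $3$-antichains $\{00,01,1\}$ and $\{0,10,11\}$ would force $h(1)>_{\mathrm{lex}}h(0)$ and $h(0)>_{\mathrm{lex}}h(1)$ respectively. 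The suggestion to ``reduce $k$ by minimality'' is not yet an argument.

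Step~1 is also false as stated. In $\{\trianglelefteq,\wedge,<_{\mathrm{lex}}\}$, a set of $k$ siblings has the same quantifier-free type as any $k$ pairwise incomparable nodes with a common pairwise meet. Those need not be inconsistent in the original witness, so the modeling property does not transfer the sibling condition. Indeed, a strongly indiscernible $2$-$\TP$ witness would give $\TP_1$ at once, since every incomparable pair has the type of two siblings; this contradicts $T^*_{feq}$ again. You need level predicates ($s$-indiscernibility), which also refines what counts as a ``shape''.

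Finally, in your $\TP_2\Rightarrow\TP$ tree the root is left undefined, although paths start at it. Shift the rows, e.g.\ $b_{\emptyset}:=a_{0,0}$ and $b_{\eta}:=a_{|\eta|,\eta(|\eta|-1)}$.
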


\begin{fact}\label{fac: 1-var 2-incons for TP_2}
\begin{enumerate}
	\item A theory $T$ is simple if and only if it is $\NTP$. If $T$  has $\TP$, then some partitioned formula $\varphi(x,y) \in L$ with $|x| = 1$ has $2$-$\TP$ (see e.g.~\cite{kim1997simple, wagner2000simple}).
	\item If $T$  has $\TP_2$, then some partitioned formula $\varphi(x,y) \in L$ with $|x| = 1$ has $\TP_2$  \cite[Theorem 2.9]{chernikov2014theories}. And if $\varphi(x,y)$ has $\TP_2$, then for some $t \in \omega$, the formula $\psi(x;y_0, \ldots, y_t) := \bigwedge_{i<t} \varphi(x,y_i)$ has $2$-$\TP_2$  \cite[Lemma 3.2]{chernikov2014theories}.
	\item \cite{chernikov2016model} If $T$  has $\TP_1$, then some partitioned formula $\varphi(x,y) \in L$ with $|x| = 1$ has $\TP_1$.
\end{enumerate}
\end{fact}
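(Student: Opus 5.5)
The statement collects three results from the literature, and the plan is to prove each part separately, working throughout with indiscernible witnesses obtained by Ramsey and compactness. In each part the reduction to one free variable is the hard step; the reduction to $2$-inconsistency is comparatively routine.

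For (1), "simple iff $\NTP$" is just the definition of simplicity. For the one-variable reduction I would use Kim's characterization: $T$ is simple iff dividing has local character. Assume $\varphi(x_1 x_2, y)$ has $k$-$\TP$, and extract from the tree a type $\tp(a_1a_2/B)$ dividing over every small subset of some large $B$. By the chain condition / transitivity of dividing (if $a_1a_2$ divides over $C$, then either $a_1$ divides over $C$ or $a_2$ divides over $Ca_1$), some single element divides over cofinally many subsets. This yields a failure of local character for a $1$-type, hence a one-variable formula with $\TP$.

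For the $2$-$\TP$ part of (1), I would take a strongly indiscernible tree witnessing $k$-$\TP$. Then I choose $t<k$ maximal such that the conjunction of $t$ sibling instances is still consistent. Replacing $\varphi$ by $\bigwedge_{j<t}\varphi(x,y_j)$ on blocks of $t$ siblings, and arranging the blocks along branches, makes distinct sibling blocks $2$-inconsistent. Path consistency is preserved by indiscernibility after re-indexing the tree.

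For (2), the conjunction step works the same way. Given a mutually indiscernible array witnessing $k$-$\TP_2$, let $t$ be maximal such that $\{\varphi(x,a_{\alpha,i}):i<t\}$ is consistent. Group each row into consecutive disjoint blocks of length $t$, and take $\psi(x;y_0,\ldots,y_{t-1}) := \bigwedge_{j<t}\varphi(x,y_j)$. Then:
\begin{itemize}
\item by maximality of $t$ and row indiscernibility, any two blocks in the same row are inconsistent;
\item any path that picks one block per row is consistent, using mutual indiscernibility and the consistency of paths in the original array.
\end{itemize}

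The one-variable reduction in (2) goes through inp-patterns and burden. If $x=x_1x_2$ admits an inp-pattern of infinite depth, make it mutually indiscernible and realize a path by $a_1a_2$. Then use submultiplicativity of burden: $\bdn(a_1a_2)$ is bounded in terms of $\bdn(a_1)$ and $\bdn(a_2/a_1)$. Since the array can be taken indiscernible over $a_1$ after re-selecting rows, this forces an infinite inp-pattern in a single variable. The delicate point is obtaining that indiscernibility over $a_1$, which is an "array modeling / extraction over a parameter" argument.

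For (3), I expect the main obstacle. Trees witnessing $\TP_1$ do not decompose coordinatewise like arrays do. I would first pass to a strongly indiscernible $\TP_1$ tree, using the modeling property for $\omega^{<\omega}$-indexed trees in the language of $\trianglelefteq,\wedge,<_{lex}$. I would then try to show that if every one-variable formula is $\NTP_1$ then $\NTP_1$ is preserved under adding a variable, via a path-collapse argument for strongly indiscernible trees in the style of Chernikov--Ramsey. This is the step I would rely on most heavily from the cited source.
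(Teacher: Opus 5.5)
Your one-variable reductions are fine in outline. In (1), left transitivity of non-dividing holds in any theory, and dividing over a larger base implies dividing over a smaller one. So your dichotomy does yield a $1$-type failing local character: either $\tp(a_1/B)$, or $\tp(a_2/Ba_1)$ once $a_1$ is added to the base. In (2) you are invoking sub-multiplicativity of burden, which is what the cited result provides. (3) is deferred to Chernikov--Ramsey, as in the paper. The genuine gap is in the two steps producing $2$-inconsistency, which are not routine.

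In (2), your second bullet fails. Maximality of $t$ for a single row says nothing about taking $t$ elements from \emph{every} row at once. Neither mutual indiscernibility nor consistency of one-element-per-row paths gives this.

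For a counterexample, let points be pairs $(f,e)$ with $f:\omega\to\omega$ and $e$ either empty or some $(\alpha,j)$, and put $(f,e)\in S_{\alpha,i}$ iff $f(\alpha)=i$ or $e=(\alpha,i)$. The structure is symmetric under permuting rows and entries within rows, so the array $(S_{\alpha,i})$ is indiscernible. Its rows are $3$-inconsistent, its paths are consistent, and $t=2$. Yet two sets from row $0$ together with two from row $1$ have empty intersection, so your block array has inconsistent paths.

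The fix is to maximize over all rows simultaneously:
\begin{enumerate}
\item Take $m$ maximal with $\{\varphi(x,a_{\alpha,i}):\alpha<\omega,\ i<m\}$ consistent.
\item By compactness, find $N$ with $\{\varphi(x,a_{\alpha,i}):\alpha<N,\ i\le m\}$ inconsistent.
\item Let $\psi$ be the conjunction over blocks $\{a_{\alpha,i}: N\beta\le\alpha<N\beta+N,\ mj\le i<mj+m\}$.
\end{enumerate}
In a strongly indiscernible array, two distinct blocks of the same new row contain $m+1$ entries from each of $N$ rows, hence are inconsistent. A path of blocks uses exactly $m$ entries per original row, hence is consistent.

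Your sibling-block argument for $2$-$\TP$ in (1) has the same flaw: a path of blocks needs $t$ siblings at every level to be jointly consistent. The row-grouping repair does not transfer directly to trees, because two sibling blocks continue through different nodes. The clean route is Fact \ref{fac: TP dichotomy} together with (2) and (3). $\TP_1$ already has pairwise inconsistent siblings. And $2$-$\TP_2$ for the one-variable conjunction $\psi$ yields $2$-$\TP$ by letting the nodes of level $n+1$ carry the entries of row $n$.
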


\begin{fact}
\label{fact: indiscernible witness to TP2}(see e.g.~\cite[Lemma 3.9]{chernikov2014valued})
If $\varphi\left(x,y\right)$ has $k-\TP_{2}$, then working in $\M$,
we can find an array $\left(a_{i,j}:i,j\in\mathbb{N}\right)$ as in Definition \ref{def: NTP2}(3) 
such that moreover $\tp\left(a_{i,j}\right)$ is constant for all
$i,j\in\mathbb{N}$.
\end{fact}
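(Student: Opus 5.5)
We prove Fact \ref{fact: indiscernible witness to TP2} by a standard Ramsey-and-compactness extraction. We start from an arbitrary array $(b_{\alpha,i})_{\alpha,i<\omega}$ witnessing $k$-$\TP_2$ for $\varphi(x,y)$ as in Definition \ref{def: NTP2}(3). We aim to replace it by an array whose entries all realize a single complete type. This property is type-definable: we will encode the witnessing properties of the array together with ``$a_{i,j}\equiv a_{0,0}$'' as a partial type in variables $(y_{i,j})_{i,j<\omega}$. It then suffices to show this type is finitely satisfiable, and compactness in $\M$ finishes.

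\textbf{Step 1: homogenize one type-symbol at a time.} Fix finitely many formulas $\psi_1(y),\ldots,\psi_m(y)$ over $\emptyset$. Colour each entry $b_{\alpha,i}$ by its $\{\psi_1,\ldots,\psi_m\}$-type, which takes finitely many ($\le 2^m$) values.
\begin{itemize}
\item In each row $\alpha$, infinitely many entries share a colour $c_\alpha$. We pass to that infinite subsequence of the row.
\item Among the colours $(c_\alpha)_{\alpha<\omega}$, infinitely many rows share the same colour $c$. We pass to those rows.
\end{itemize}
The resulting subarray still witnesses $k$-$\TP_2$, since both properties survive passing to subarrays. Consistency of paths only gets easier, as any path in the subarray is a path in the original. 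Likewise $k$-inconsistency of each row is inherited by its infinite subsets. In the subarray, every entry satisfies exactly the same formulas among $\psi_1,\ldots,\psi_m$.

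\textbf{Step 2: compactness.} Let $\Sigma$ consist of the following formulas in the variables $(y_{i,j})$:
\begin{itemize}
\item for all finite $n$ and all $f:n\to\omega$, the formula $\exists x\bigwedge_{i<n}\varphi(x,y_{i,f(i)})$;
\item for all $i$ and all $j_1<\dots<j_k$, the formula $\neg\exists x\bigwedge_{l\le k}\varphi(x,y_{i,j_l})$;
\item for all $\psi(y)\in L$ and all $i,j$, the formula $\psi(y_{i,j})\leftrightarrow\psi(y_{0,0})$.
\end{itemize}
A finite subset of $\Sigma$ mentions finitely many formulas $\psi_1,\ldots,\psi_m$ and finitely many indices. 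Step 1 produces a subarray that realizes this finite subset after re-indexing. Hence $\Sigma$ is consistent, and a realization in the monster $\M$ gives the required array.

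\textbf{Main obstacle.} The only point needing care is that the completeness-of-type condition involves infinitely many formulas. So we must homogenize only finitely many of them at a time and let compactness glue the pieces together. The order of the extractions in Step 1 also matters: we fix each row's colour before choosing the rows. Once these two points are handled, everything else is routine.
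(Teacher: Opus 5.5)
Your proof is correct. The paper gives no argument of its own for this Fact, only the pointer to \cite[Lemma 3.9]{chernikov2014valued}. The usual route behind such statements is to extract, by Ramsey and compactness, a witnessing array which is strongly indiscernible (rows mutually indiscernible and the sequence of rows indiscernible). Constancy of $\tp(a_{i,j})$ is then immediate: indiscernibility of row $i$ gives $a_{i,j}\equiv a_{i,0}$, and indiscernibility of the sequence of rows gives $a_{i,0}\equiv a_{0,0}$.

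You instead homogenize only finitely many formulas at a time, using nothing more than the infinite pigeonhole principle (so, despite your opening sentence, Ramsey's theorem is never needed). Compactness then assembles these finite homogenizations into a single complete type. This is more elementary and self-contained, since it avoids the array version of the indiscernible-extraction lemma. There one has to check that the $k$-$\TP_2$ pattern survives making the rows mutually indiscernible and then the sequence of rows indiscernible.

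What your argument does not give is any indiscernibility. The stronger extraction provides it, and the paper uses it elsewhere, e.g.\ the strongly indiscernible witness in the proof of Proposition \ref{prop: str ULCFS implies resilient}(1). For the application that follows this Fact, Proposition \ref{prop: FHP implies NTP2}, only the fact that all $a_{i,j}$ realize one type $q$ matters, so your version suffices there.
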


\begin{prop}\label{prop: FHP implies NTP2}
If $\varphi\left(x,y\right)$ is locally $\FHP$, then it is $\NTP_{2}$.
\end{prop}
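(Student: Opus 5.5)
We argue by contraposition: assuming $\varphi(x,y)$ has $\TP_2$, we produce a complete type $q(y)$ over $\emptyset$ such that $\mathcal{F}_{\varphi,q}$ fails $\FHP_k$ for every $k$. This contradicts local $\FHP$.

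First, suppose $\varphi$ has $m$-$\TP_2$. By Fact \ref{fact: indiscernible witness to TP2} there is an array $(a_{i,j} : i,j\in\mathbb{N})$ witnessing $m$-$\TP_2$ with $\tp(a_{i,j}) = q$ constant. Then all instances $\varphi(x,a_{i,j})$ lie in $\mathcal{F}_{\varphi,q}$.

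Now fix $k$ and, for a contradiction, suppose $\mathcal{F}_{\varphi,q}$ satisfies $\FHP_k$. Choose $r$ large, let $N$ be large, and take the finite family $\bar S = (\varphi(\M,a_{i,j}) : i<r,\ j<N)$ of size $n=rN$. This is essentially the block construction of Lemma \ref{lem:robust-counterexample}, with rows as blocks.

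The first step is to count consistent $k$-subsets. Any $k$ indices lying in $k$ distinct rows are consistent, since a choice of at most one element per row extends to a path $f$, and $\{\varphi(x,a_{i,f(i)})\}$ is consistent. As computed in the proof of Lemma \ref{lem:robust-counterexample}, the proportion of $k$-subsets meeting $k$ distinct blocks is $>\prod_{j<k}(1-j/r)$. Taking $r$ large makes this at least $1/2$, so $|\Cons_k(\bar S)|\geq \frac12\binom{n}{k}$ for all $N$.

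The second step applies $\FHP_k$ with $\alpha=1/2$ to get $\beta>0$ independent of $N$, together with $J\subseteq[n]$ with $|J|\geq \beta n$ and $\bigcap_{J}S\neq\emptyset$. By pigeonhole, some row $i$ contains at least $\beta N$ indices of $J$. Choosing $N$ with $\beta N\geq m$, we get $m$ instances from row $i$ with a common point, contradicting the $m$-inconsistency of that row. Since $k$ was arbitrary, $\mathcal{F}_{\varphi,q}$ fails $\FHP$, so $\varphi$ is not locally $\FHP$ over $\emptyset$.

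The main subtlety is that local $\FHP$ only constrains families of realizations of a single complete type, which is exactly why we need all $a_{i,j}$ to share one type. Fact \ref{fact: indiscernible witness to TP2} supplies this. The order of quantifiers also matters: $r$ depends only on $k$, $\beta$ depends on $\alpha$ and $k$, and only then is $N$ chosen. The uniform constant $\alpha=1/2$ is what makes this order legitimate.
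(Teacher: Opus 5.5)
Your proof is correct and follows essentially the same route as the paper. Both arguments take a $\TP_2$ array of constant type $q$ from Fact \ref{fact: indiscernible witness to TP2}, bound $|\Cons_k|$ from below by counting transversals of the rows, and use pigeonhole to force $m$ members of a large intersecting subfamily into a single $m$-inconsistent row. The only difference is cosmetic: the paper takes exactly $k$ rows of length $N$ and $\alpha = k^{-k}$, so that $|\Cons_k| \geq N^k = (n/k)^k \geq k^{-k}\binom{n}{k}$ immediately, without many rows or the product estimate from Lemma \ref{lem:robust-counterexample}. Likewise, any $\alpha$ depending only on $k$ would serve in place of your $\alpha = 1/2$.
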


\begin{proof}
Assume that $\varphi\left(x,y\right)$ has $d$-$\TP_{2}$ for some $d \in \mathbb{N}$. Let an
array $\left(a_{i,j}:i,j\in\mathbb{N}\right)$ in $\M^y$ witnessing this be as given
by Fact \ref{fact: indiscernible witness to TP2}, with $q := \tp\left(a_{i,j}\right)$
constant.

Let $k\in\mathbb{N}$ be arbitrary, we show that $\varphi\left(x,y\right)$
does not satisfy local $\FHP_k$ (on $q (\mathbb{M})$). Let $\alpha:=\frac{1}{k^{k}}>0$, fix some $m \in \mathbb{N}$, let 
$A:=\left\{ a_{i,j}:1\leq i\leq k,1\leq j\leq m\right\} $ and consider
the family $\mathcal{F}=\left\{ \varphi\left(x,a\right):a\in A\right\} $.
Let $n:=\left|A\right|=km$. As for every $f:[k] \to [m]$, $\left\{ \varphi\left(x,a_{i,f\left(i\right)}\right):1\leq i\leq k\right\} $
is consistent, we have that $\left|\Cons_{k}\left(\mathcal{F}\right)\right|\geq m^{k}\geq\left(\frac{n}{k}\right)^{k}\geq\alpha{n \choose k}$.
On the other hand, by pigeonhole for any $S\subseteq A$ of size $\geq\left(d-1\right)k+1$
there is some $1\leq i\leq k$ and some $1\leq j_{1}<\ldots<j_{d}\leq m$
such that $a_{i,j_{t}}\in S$ for all $1\leq t\leq d$, hence $\bigwedge_{a\in S}\varphi\left(x,a\right)$
is inconsistent by the choice of $A$.

This shows that for any $\beta>0$, if we take $m$ such that $\left(d-1\right)k+1<\beta n =\beta km$,
then there is no consistent subset of $\mathcal{F}$ of size $\geq\beta n$.
\end{proof}

The following theorem of Matousek is very important for our discussion.
\begin{fact}
\label{fact: FHP for counting}\cite{matousek2004bounded} Let $\mathcal{F}$
be a family of subsets of $X$, and assume that $d\in\mathbb{N}$
is such that $\pi_{\mathcal{F}}^{*}\left(n\right)=o\left(n^{d}\right)$
as $n\to\infty$ (e.g. if $\mbox{vc}^{*}\left(\mathcal{F}\right)<d$).
Then $\mathcal{F}$ satisfies $\FHP_{d}$.
\end{fact}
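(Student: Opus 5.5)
The plan is to reduce the statement to a supersaturation (hypergraph density) argument combined with a counting/pigeonhole step over the regions cut out by the sets. This is the standard route to Matoušek's theorem, so it is really a proof of a cited fact. Fix $\alpha>0$ and a tuple $\bar S=(S_1,\ldots,S_n)$ from $\mathcal F$ with $|\Cons_d(\bar S)|\ge\alpha\binom nd$. We must find $\beta=\beta(\alpha,d,\mathcal F)>0$ and a point lying in at least $\beta n$ of the $S_i$.

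\emph{Step 1: supersaturation.} View $\Cons_d(\bar S)$ as a $d$-uniform hypergraph on $[n]$ of density $\ge\alpha$. By the Erd\H{o}s supersaturation theorem for complete $d$-partite $d$-graphs, for every fixed $m$ and all large $n$ it contains at least $c(\alpha,d,m)\,n^{dm}$ copies of $K^{(d)}(m,\ldots,m)$. Lemma~\ref{lem: Furedi repetitions} (or the proof of Fact~\ref{fact: furedi}) lets us first pass to a $d$-partite subhypergraph with parts $X_1,\ldots,X_d$ and density $\ge\gamma(d)\alpha$, which keeps the bookkeeping clean.

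\emph{Step 2: counting against the dual shatter function.} Fix such a copy with parts $P_1,\ldots,P_d$, each of size $m$. Every transversal $d$-tuple has nonempty intersection. So the $dm$ sets $\{S_i : i\in P_1\cup\cdots\cup P_d\}$ realize at least $m^d$ distinct ``transversal cells'' $\bigcap_{j} S_{i_j}$, one for each choice of $(i_1,\ldots,i_d)$. The aim is to turn this into distinct atoms of the Venn diagram. If no point lay in more than $\beta n$ of the sets, a random or greedy argument over the many copies from Step 1 should let us select copies in which distinct transversals are witnessed by points with distinct traces on $\bigcup_j P_j$. That gives at least $\varepsilon m^d$ atoms for a family of $dm$ sets, which contradicts $\pi^*_{\mathcal F}(dm)=o(m^d)$ once $m$ is large enough.

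\emph{Step 3 (the main obstacle): making the contradiction quantitative.} One cannot just use a single copy, because one point may lie in many transversal intersections. The cleaner route I would try first is Matoušek's. Take a random sample $R\subseteq[n]$ of size $s$, with $s$ large but fixed. Count the atoms of the Venn diagram of $\{S_i:i\in R\}$ that arise as ``leftmost'' witnesses of intersecting $d$-tuples. Under a fixed linear order on points, assign to each intersecting $d$-tuple $J\subseteq R$ a canonical point of $\bigcap_{i\in J}S_i$. If every point is in fewer than $\beta n$ sets, then a typical canonical point lies in few sets of $R$, about $\beta s$. Hence each atom is canonical for at most $\binom{\beta s}{d}$ tuples. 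So the number of atoms is at least $\alpha\binom sd/\binom{\beta s}{d}\approx\alpha\beta^{-d}$. Comparing this with $\pi^*(s)$ naively only gives polynomial bounds, and a $d$-level Clarkson--Shor-type counting is needed. This is where $o(n^d)$ rather than $O(n^d)$ is used. One fixes $\beta$ small, then $s$ large with $\pi^*(s)\le\eta s^d$, and derives that the expected number of intersecting $d$-tuples in $R$ is at most roughly $\pi^*(s)\cdot O(1)+\beta$-error $<\alpha\binom sd$, a contradiction.

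In summary, I would first run the sampling argument, with an expectation bound relating intersecting $d$-tuples in $R$ to atoms of depth below $d$. I would keep supersaturation as a backup.
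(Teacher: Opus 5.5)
Your proposal has a genuine gap: neither route actually gets from many intersecting $d$-tuples to one point lying in $\beta n$ of the sets. That step is only asserted. (The paper gives no proof of this Fact; it is quoted from Matou\v{s}ek.)

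In Step 2, one copy of $K^{(d)}(m,\ldots,m)$ with $\pi^*_{\mathcal F}(dm)<m^d$ forces, by pigeonhole, only two distinct transversals whose witnesses lie in the same atom. That is one intersecting $(d+1)$-tuple. The hypothesis ``no point lies in $\beta n$ sets'' says nothing about a configuration of $dm=O(1)$ sets (all of them may share a point). So there is no mechanism for ``selecting copies in which distinct transversals have distinct traces''. Double counting over copies gives $\Omega(n^{d+1})$ intersecting $(d+1)$-tuples, and iterating gives $\Omega_t(n^t)$ intersecting $t$-tuples for each fixed $t$. The constants degrade with $t$, so you never reach $t=\beta n$ without a further ingredient, such as an a priori, non-sharp fractional Helly number for families of finite dual VC-dimension, which this step would then lower to $d$.

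In Step 3, the bound ``each atom is canonical for at most $\binom{\beta s}{d}$ tuples'' needs every atom of the sample to have depth at most $\beta s$, but you only control the typical depth. Moreover, $\alpha\beta^{-d}$ atoms contradict $\pi^*_{\mathcal F}(s)=o(s^d)$ only when $s=O(1/\beta)$. In that regime the typical depth is $O(1)$ and nothing bounds the depth of every atom. So ``fix $\beta$, then take $s$ large'' is the wrong coupling, and your closing inequality is stated rather than derived.

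The missing idea is the Clarkson--Shor restriction to \emph{exposed} tuples, which removes the depth problem entirely. Assume every point lies in fewer than $\beta n$ of the $S_i$. For $J\in\Cons_d(\bar S)$ fix $x_J\in\bigcap_{i\in J}S_i$ and let $D_J=\{i\in[n]:x_J\in S_i\}$, so $|D_J|<\beta n$. Form $R\subseteq[n]$ by keeping each index independently with probability $p=s/n$, and call $J$ exposed if $D_J\cap R=J$. Let $N$ be the number of exposed $J$. Distinct exposed tuples give points with distinct traces $\{i\in R:x_J\in S_i\}=J$, so $N\le\pi^*_{\mathcal F}(|R|)$. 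Each $J$ is exposed with probability $p^d(1-p)^{|D_J|-d}$. Taking $\beta=1/s$ and $n\ge 2s$ gives
\[
\frac{\alpha}{4}\Big(\frac{s}{d}\Big)^{d}\le\alpha\binom{n}{d}p^{d}(1-p)^{n/s}\le\mathbb{E}\,N\le\mathbb{E}\,\pi^*_{\mathcal F}(|R|).
\]
Since $\pi^*_{\mathcal F}(m)\le\eta m^d+C_\eta$ for every $\eta>0$ and $\mathbb{E}|R|^d=O_d(s^d)$, the right-hand side is $o(s^d)$ uniformly in $n$. Choosing $s=s(\alpha,d,\mathcal F)$ large therefore gives a contradiction, and $\beta=1/(2s)$ works. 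For $n<2s$, any point of an intersecting $d$-tuple already lies in at least $1>n/(2s)$ of the sets. This is exactly where $o(m^d)$ rather than $O(m^d)$ is used. With this fix your Step 3 becomes a complete proof and needs no supersaturation.
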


Recall that a formula $\varphi\left(x,y\right)$ has IP (independence property) if there is an infinite set $A$ of $|x|$-tuples and for any $I \subseteq A$, there is a $|y|$-tuple $b_I$ such that $\models \varphi(a,b_I)$ if and only if $a\in I$ for all $a\in A$. A formula is $\NIP$ if it does not have IP, and a theory $T$ is $\NIP$ if all formulas are $\NIP$. Hence:
\begin{fact}\label{fac: NIP implies FHP}\cite{matousek2004bounded}
If $\varphi\left(x,y\right)$ is $\NIP$ (in particular, if $\varphi\left(x,y\right)$
is stable) then it is $\FHP$.
\end{fact}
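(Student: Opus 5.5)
The plan is to reduce the statement to Fact \ref{fact: FHP for counting}: for a formula with finite dual VC-dimension, the dual shatter function is polynomially bounded, with a degree that depends only on the formula. The first step is to observe that $\varphi(x,y)$ is NIP if and only if the dual formula $\varphi^{*}(y;x):=\varphi(x,y)$ is NIP. This is the standard fact that a family has finite VC-dimension if and only if its dual family does, via the bound $\vc^{*}(\mathcal{F})<2^{\vc(\mathcal{F})+1}$. Hence if $\varphi$ is NIP, the family $\mathcal{F}_{\varphi}=\{\varphi(M,b):b\in M^{y}\}$ has finite dual VC-dimension $\vc^{*}(\mathcal{F}_{\varphi})=:d_0<\infty$.

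Next, apply the Sauer--Shelah lemma to the dual family. For any $b_1,\ldots,b_n$, the number of atoms of the Boolean algebra generated by $\varphi(M,b_1),\ldots,\varphi(M,b_n)$ is at most $\sum_{i\leq d_0}\binom{n}{i}=O(n^{d_0})$. So $\pi^{*}_{\mathcal{F}_{\varphi}}(n)=O(n^{d_0})=o(n^{d_0+1})$. Fact \ref{fact: FHP for counting} with $d:=d_0+1$ then shows that $\mathcal{F}_{\varphi}$ satisfies $\FHP_{d_0+1}$, so $\varphi(x,y)$ is $\FHP$ in $M$. Since $\vc^{*}$ depends only on the theory, the same $k$ works in every model, in line with the earlier remark that $\FHP(k,\alpha,\beta)$ transfers between elementarily equivalent structures.

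Stable formulas are NIP, so the parenthetical case follows immediately. I expect no step to be a real obstacle, since the content lies in Matoušek's theorem, which we may assume. The only point needing care is the duality step: we must check that it is the \emph{dual} shatter function of $\mathcal{F}_{\varphi}$ that is bounded, which requires finiteness of $\vc^{*}$, not of $\vc$. This is exactly where the equivalence of NIP for $\varphi$ and for $\varphi^{*}$ is used.
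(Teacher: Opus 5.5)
Your proposal is correct and follows the same route as the paper, which states this fact as an immediate consequence of Matoušek's theorem (Fact \ref{fact: FHP for counting}) applied to $\mathcal{F}_{\varphi}$. NIP gives finite dual VC-dimension via the primal/dual bound, Sauer--Shelah then makes $\pi^{*}_{\mathcal{F}_{\varphi}}(n)=o(n^{d_0+1})$, and you correctly single out this duality step, which the paper leaves implicit.
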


The class of \emph{low} theories, and in particular of low simple
theories, is investigated in \cite{buechler1999lascar,shami2000definability}. \begin{defn}
\label{def: low formula}A formula $\varphi\left(x,y\right)$ is \emph{low
}if there is some $k\in\mathbb{N}$ such that for any indiscernible
sequence $\left(a_{i}:i\in\mathbb{N}\right)$, $\left\{ \varphi\left(x,a_{i}\right):i\in\omega\right\} $
is consistent if and only if all of its subsets of size $k$ are consistent.
A theory is low if it implies that every formula is low.
\end{defn}
It was observed that NIP formulas are low in \cite[Remark 3.33]{chernikov2012forking}. More generally we have:

\begin{prop}
\label{prop: local FHP implies low}If $\varphi\left(x,y\right)$ is
locally $\WFHP$, then it is low.
\end{prop}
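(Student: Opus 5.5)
The plan is to take as the lowness bound the number $k=k(\varphi)$ witnessing local $\WFHP_k$ over $\emptyset$. Let $\left(a_{i}:i\in\mathbb{N}\right)$ be an indiscernible sequence. All $a_i$ realize the same complete type $q:=\tp(a_0)\in S_y(\emptyset)$, so every $\varphi(\M,a_i)$ lies in the family $\mathcal{F}_{\varphi,q}$. One direction of the biconditional in Definition \ref{def: low formula} is trivial: if $\{\varphi(x,a_i):i\in\omega\}$ is consistent, so is each of its $k$-subsets. So assume every $k$-subset is consistent; we show the whole set is consistent.

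The first step is to check the $(p,k)$-property for the finite tuples $\bar{S}_n:=\left(\varphi(\M,a_i):i<n\right)$. Fix $p\geq p_0$, where $p_0$ is from the definition of $\WFHP_k$ for $\mathcal{F}_{\varphi,q}$, and let $\beta>0$ witness $\WFHP(k,p,\beta)$. Take any $p$-tuple of sets from $\{S_1,\ldots,S_n\}$, possibly with repetitions. Any $k$ of its entries involve at most $k$ distinct indices $i$. By hypothesis every set of $k$ distinct instances is consistent, and hence so is every set of at most $k$ distinct instances (extend by further indices of the infinite sequence and pass to a subset). So any $k$-subfamily has nonempty intersection, and the $(p,k)$-property holds, even in the version allowing repetitions.

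Next we apply $\WFHP(k,p,\beta)$ to $\bar{S}_n$. This gives $J_n\subseteq[n]$ with $|J_n|\geq\beta n$ such that $\{\varphi(x,a_i):i\in J_n\}$ is consistent. Indiscernibility is used once more here. Any two increasing tuples of the same length from the sequence have the same type, so consistency of $\bigwedge_{i\in J_n}\varphi(x,a_i)$ implies consistency of $\bigwedge_{i\in J}\varphi(x,a_i)$ for every $J\subseteq\mathbb{N}$ with $|J|\leq\lceil\beta n\rceil$.

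Finally, since $\beta$ does not depend on $n$ and $n$ is arbitrary, every finite subset of $\{\varphi(x,a_i):i\in\omega\}$ is consistent, so the whole set is consistent by compactness. I do not expect a serious obstacle. The only points needing care are that $\beta$ must be chosen for a single fixed $p\geq p_0$, independent of $n$, and that the $(p,k)$-property is verified for the tuple $\bar{S}_n$ itself, as Definition \ref{def: weak FHP comb} requires, including repeated entries. Both are handled above, and the same $k$ works for every indiscernible sequence, as lowness demands.
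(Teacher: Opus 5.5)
Your proposal is correct and follows essentially the same route as the paper: take the same $k$, apply $\WFHP(k,p,\beta)$ to the first $n$ instances of the indiscernible sequence to get a consistent subfamily of size $\geq \beta n$, and conclude by indiscernibility and compactness. You also explicitly verify the $(p,k)$-property when the $p$-tuple has repeated entries, a detail the paper's proof passes over with ``by assumption''.
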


\begin{proof}
Assume that $\varphi\left(x,y\right)$ satisfies local $\WFHP_{k}$, we
show that then $\varphi\left(x,y\right)$ is low with the same $k$ in
Definition \ref{def: low formula}. Let $(a_{i}:i\in\mathbb{N})$
be an indiscernible sequence, and assume that every subset of $\mathcal{F} := \{\varphi(x,a_{i}):i\in\mathbb{N}\}$
of size $k$ is consistent. By indiscernibility, it is enough to show
that for every $N\in\mathbb{N}$ there is a strictly increasing subsequence
$(i_{j}\in\mathbb{N}:j<N)$ such that $\{\varphi(x,a_{i_{j}}):1\leq j\leq N\}$
is consistent. Let $p_0$ and $\beta = \beta(p_0)$ be as given for $\varphi(x,y)$ by local $\WFHP_k$ (see Definition \ref{def: local FHP etc}). Note that $\mathcal{F}$ satisfies the $(p_0,k)$-property by assumption. Let $n\in\mathbb{N}$ be such that $\beta n\geq N$.
Hence  some subset of $\{\varphi(x,a_{i}):i<n\}$
of size $\geq\beta n\geq N$ is consistent --- as wanted.
\end{proof}

\begin{rem}
	$\DFHP_k$ for $\varphi(x,y)$ implies a slightly weaker condition than lowness: if $\varphi(x,b)$ does not $k$-divide, then it does not divide. This is equivalent to lowness in simple (or even resilient) theories, but is not known to be equivalent to lowness in $\NTP_2$ (see Proposition 4.13 and Question 4.14 in \cite{yaacov2014independence}).
\end{rem}

Recall that \emph{wnfcp}, or \emph{weak nfcp}, is a strengthening
of lowness which characterizes elementarity of lovely pairs of simple
theories \cite{ben2003lovely,vassiliev2005weak}.

\begin{defn}
\label{def: wnfcp}We say that $T$ is \emph{wnfcp }if:

\begin{enumerate}
\item $T$ is low, i.e. for every $\varphi\left(x,y\right)\in L$ there is
some $k_{\varphi}$ such that for any sequence $\left(b_{i}:i\in\omega\right)$,
if $\left\{ \varphi\left(x,b_{i}\right):i\in\omega\right\} $ is $k_{\varphi}$-consistent,
then it is not $n$-inconsistent for any $n\in\omega$.
\item For any $\varphi\left(x,y\right)$ and $\psi\left(y,z\right)$ in $L$
there is a number $n=n\left(\varphi,\psi\right)$ such that for any $c\in \M^{z}$,
if there is a $k_{\varphi}$-inconsistent family $\left\{ \varphi\left(x,b_{i}\right):i<n\right\} $
with $b_{i}\models\psi\left(x,c\right)$, then there is an infinite
such family.
\end{enumerate}
\end{defn}

\begin{fact}
\label{fact: wnfcp}
\begin{enumerate}
\item \cite[Proposition 2.8]{vassiliev2005weak} If $T$ is supersimple,
of SU-rank $1$, then $T$ is wnfcp.
\item \cite[Corollary 3.10]{vassiliev2005weak} If $T$ is stable, then
$T$ is wnfcp if and only if $T$ is nfcp.
\end{enumerate}
\end{fact}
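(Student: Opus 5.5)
Both parts are quoted from Vassiliev, so the plan is to reconstruct his arguments. The common thread is that condition (2) of Definition \ref{def: wnfcp} is a uniform finiteness statement. It should follow by compactness once the relevant dividing condition is shown to be \emph{definable} in the parameter $c$.

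\textbf{Part (1).} First, lowness. Supersimple theories of finite SU-rank are low (Buechler; see \cite{buechler1999lascar}), so condition (1) of Definition \ref{def: wnfcp} holds. For condition (2) I use the structure of SU-rank $1$.
\begin{enumerate}
\item $\acl$ is a pregeometry on $\M$, and $\SU(a/A)=\dim(a/A)$ for finite tuples $a$.
\item $T$ eliminates $\exists^{\infty}$. This needs $\exists^\infty$ in one variable: $\varphi(x,b)$ with $|x|=1$ is infinite iff it is non-algebraic iff it does not divide over $\emptyset$. Lowness makes this uniform: $\varphi(x,b)$ divides iff it $k_\varphi$-divides, which is type-definable, so infiniteness is also type-definable. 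Its complement ``finite'' is ind-definable, and compactness then gives a uniform finite bound.
\item By the standard induction on $|x|$, ``$\dim\varphi(\M,b)=m$'' is a definable condition on $b$.
\end{enumerate}

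Now fix $\varphi(x,y)$ and $\psi(y,z)$, and argue by contradiction. Suppose there are $c_n$ and $k_\varphi$-inconsistent families $\{\varphi(x,b_i):i<n\}$ with $b_i\models\psi(y,c_n)$, but no infinite such family over $c_n$. Compactness gives a $c$ with an infinite such family, and Ramsey then gives a $c$-indiscernible one. I then want to show that the set of $c$ admitting such an indiscernible family is definable. Via lowness this should reduce to: some $b\models\psi(y,c)$ has $\varphi(x,b)$ dividing over $c$, i.e. $\dim\varphi(\M,b)<\dim$ of the type of a generic realization in $\psi(\M,c)$.

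The main obstacle is this reduction, because the $b_i$ in a finite $k_\varphi$-inconsistent family need not have the same type over $c$. I would first pigeonhole over the finitely many dimension values, which are definable by item (3). Within one dimension class I would apply Erd\H{o}s--Rado/Ramsey to extract an indiscernible subfamily. A dimension count should then show that $k_\varphi$-inconsistency forces dividing of $\varphi(x,b)$ over $c$, for some $b$ in that class. Once that set of $c$ is definable, the infinite and finite versions coincide, which yields the uniform bound $n(\varphi,\psi)$.

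\textbf{Part (2).} Stable theories are low (stable formulas are NIP, hence low by \cite[Remark 3.33]{chernikov2012forking}), so only condition (2) is at stake.

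\emph{nfcp $\Rightarrow$ wnfcp.} nfcp means uniform elimination of $\exists^\infty$ in $T^{\eq}$. I would apply it to the $c$-definable family of $k_\varphi$-inconsistent $n$-sets inside $\psi(\M,c)$, encoded through the finite-cover/$\varphi$-definitions of types. This should bound $n$ uniformly in $c$.

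\emph{wnfcp $\Rightarrow$ nfcp.} I would use Shelah's characterization of nfcp by the failure of the finite cover property for each $\varphi(x,y)$. Given a witness to fcp, i.e. arbitrarily large $k$-inconsistent but $(k-1)$-consistent families, lowness lets me recast these as $k_\varphi$-inconsistent families of realizations of a single formula $\psi(y,c)$, with $c$ coding an equivalence relation. That contradicts the bound $n(\varphi,\psi)$. This recasting is where I expect the real work to lie, and I would follow \cite[Corollary 3.10]{vassiliev2005weak}.
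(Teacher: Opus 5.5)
The paper gives no proof of this Fact; it is quoted from Vassiliev. So the question is whether your reconstruction stands on its own. As written it does not: the decisive steps are either deferred or rest on an incorrect criterion for dividing.

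\emph{Part (1).} Your reduction of condition (2) of Definition \ref{def: wnfcp} is correct: it amounts to definability of $D_{\varphi,\psi}:=\{c:\exists b\models\psi(y,c),\ \varphi(x,b)\text{ divides over }c\}$. By lowness $D_{\varphi,\psi}$ is type-definable, and what is needed is that its complement is too. You never prove this.

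Your proposed criterion, ``$\dim\varphi(\M,b)$ is smaller than the dimension of a generic realization of $\psi(\M,c)$'', compares a dimension in the $x$-variables with one in the $y$-variables. It does not characterize dividing. In SU-rank $1$ the correct statement for $|x|=1$ is: $\varphi(x,b)$ divides over $c$ iff $\varphi(\M,b)$ is finite and disjoint from $\acl(c)$.

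Consequently, no pigeonholing over the values of $\dim\varphi(\M,b)$ can detect dividing even in the $0$-dimensional case, where everything hinges on whether a finite fibre meets $\acl(c)$. Ramsey applied to finite families only yields finite $\Delta$-indiscernible pieces, so it cannot by itself supply the uniformity in $c$ that is at stake.

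The same oversight breaks your proof of elimination of $\exists^{\infty}$. The claim ``$\varphi(x,b)$ is infinite iff it does not divide over $\emptyset$'' is false as soon as some finite fibre meets $\acl(\emptyset)$, since such a formula does not divide over $\emptyset$. So type-definability of $k_\varphi$-dividing gives no bound. Even granting the equivalence, the useful point would be that ``finite'' coincides with the type-definable set ``$k_\varphi$-divides''. That ``infinite'' is type-definable and ``finite'' is ind-definable is automatic and yields nothing.

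Passing to $\varphi(x,b)\wedge\neg\varphi(x,b')$, with $b'\equiv_{\acl(\emptyset)}b$ independent from $b$, handles fibres with many non-algebraic points. Fibres concentrated in $\acl(\emptyset)$ still require a genuine argument.

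\emph{Part (2).} This is likewise only gestured at.

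For wnfcp $\Rightarrow$ nfcp there is a short argument close to your idea, and it needs no lowness manipulation. By Shelah, a stable theory with fcp has a formula $E(x,x';z)$ such that each $E(x,x';c)$ is an equivalence relation, with finitely but unboundedly many classes as $c$ varies. Apply condition (2) to $\varphi(x;y,z):=E(x,y;z)$ and $\psi(y,z;w):=(z=w)$. Representatives of distinct classes give a $2$-inconsistent, hence $k_\varphi$-inconsistent, family. So $E(x,x';c)$ has infinitely many classes as soon as it has $n(\varphi,\psi)$ of them, a contradiction.

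For nfcp $\Rightarrow$ wnfcp, ``apply elimination of $\exists^{\infty}$ in $T^{\eq}$ to the family of $k_\varphi$-inconsistent $n$-sets'' does not specify any definable set whose finiteness is equivalent to the absence of an infinite family. What is needed is exactly definability of $D_{\varphi,\psi}$ in nfcp theories. One route is through pairs. For stable $T$, lovely pairs coincide with Poizat's beautiful pairs, which form an elementary class iff $T$ is nfcp. By Ben Yaacov--Pillay--Vassiliev \cite{ben2003lovely}, the lovely pairs of a simple theory form an elementary class iff $T$ is wnfcp.
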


\begin{rem} \label{rem: around wnfcp}

\begin{enumerate}
\item In a simple theory $T$, the following are equivalent (and (a) implies (b)
is true in any theory):
\begin{enumerate}
\item $T$ is wnfcp;
\item $T$ is low and $Q_{\varphi,\psi}\left(z\right)$ is (type-)definable
for all $\varphi\left(x,y\right),\psi\left(y,z\right)\in L$, where $Q_{\varphi,\psi}\left(c\right)$
hold if $\varphi\left(x,b\right)$ does not divide over $c$ for all
$b\models\psi\left(y,c\right)$.
\end{enumerate}
\item Wnfcp implies elimination of $\exists^{\infty}$, namely $Q_{x=y,\psi\left(y,z\right)}\left(c\right)$
holds if and only if $\psi\left(y,c\right)$ defines a finite set.
\item Definition \ref{def: wnfcp}(2) can be rephrased as saying that given
$\varphi\left(x,y\right),\psi\left(y,z\right)$ there is some $n$ such
that for any $c\in \M^{z}$, if the family $\left\{ \varphi\left(x,b\right):b\models\psi\left(y,c\right)\right\} $
satisfies the $\left(n',k\right)$-property for some $n'\in\omega$,
then it already satisfies the $\left(n,k\right)$-property.
\end{enumerate}
\end{rem}

\begin{example}
\label{exa: fhp vs local fhp and wnfcp}Consider the theory $T_{\Pr}$
of the group $\left(\mathbb{Z},+\right)$ expanded by a predicate
for the primes. Assuming Dickson's conjecture in number theory, $T_{\Pr}$
is supersimple SU-rank $1$, hence wnfcp by Fact \ref{fact: wnfcp},
locally FHP theory which is not $\FHP$ (see Section \ref{subsec: T_pr}
for the details). Note also that by Fact \ref{fact: wnfcp}(2), any
stable theory with fcp satisfies FHP and not wnfcp, so there is no
implication between $\FHP$ and wnfcp in general.
\end{example}

\begin{prop}
\label{prop: FHP is locFHP in omega cat} If $T$ is $\omega$-categorical
and locally $\FHP$, then $T$ is $\FHP$ and wnfcp.
\end{prop}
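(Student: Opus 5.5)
The idea is that in an $\omega$-categorical theory there are only finitely many complete types $q(y)\in S_y(\emptyset)$, and each is isolated by a formula. I would use this finiteness to glue the local statements into a global one, and to bound the number $n$ in the definition of wnfcp.

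\emph{FHP.} Fix $\varphi(x,y)$ and let $q_1,\ldots,q_m$ enumerate $S_y(\emptyset)$, each isolated by a formula $\rho_j(y)$. By local $\FHP$, the family $\mathcal{F}_{\varphi,q_j}$ satisfies $\FHP_{k_j}$; let $k:=\max_j k_j$. By Lemma \ref{lem: Basic operations preserving FH}(1), which is a purely combinatorial argument on families, each $\mathcal{F}_{\varphi,q_j}$ then satisfies $\FHP_k$.

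Now take $\alpha>0$ and a tuple $\bar S=(\varphi(\M,b_i):i\in[n])$ with $|\Cons_k(\bar S)|\geq\alpha{n\choose k}$. Partition $[n]$ into classes $I_j:=\{i: b_i\models q_j\}$. Each consistent $k$-set $J$ determines a colour pattern $(|J\cap I_j|)_j$, and there are boundedly many patterns (depending on $k,m$). Rather than work with patterns directly, I would argue via disjunction, as in Lemma \ref{lem: Basic operations preserving FH}(2). Each $\varphi(x,b)$ equals $\bigvee_j (\varphi(x,b)\wedge\rho_j(b))$, and the formula $\varphi(x,y)\wedge\rho_j(y)$ satisfies $\FHP_k$ globally: its nonempty instances are exactly $\mathcal{F}_{\varphi,q_j}$, and an empty instance only hurts consistency counts. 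The one check needed is that the proof of (2) runs verbatim when the disjuncts share the same parameter $y$. It does, since it only uses that for each consistent $J$ some disjunct has a consistent $k_i$-subset; this is the same situation as Remark \ref{rem: FHP pres by func/1-var conj}. The result is $\FHP_{m(k-1)+1}$ for $\varphi$, with $\beta$ depending only on $\alpha$, $\varphi$ and $m$. Since $\varphi$ was arbitrary, $T$ is $\FHP$.

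\emph{wnfcp.} Lowness follows from Proposition \ref{prop: local FHP implies low}, because local $\FHP$ implies local $\WFHP$. For clause (2), fix $\varphi(x,y)$ and $\psi(y,z)$. By $\omega$-categoricity there are finitely many types $r(z)\in S_z(\emptyset)$, and automorphism invariance means the family $\{\varphi(x,b):b\models\psi(y,c)\}$ depends only on $\tp(c)$ up to automorphism. So it suffices to find, for each single $c$, a finite $n_c$ such that the existence of a $k_\varphi$-inconsistent subfamily of size $n_c$ implies the existence of an infinite one; then set $n:=\max_c n_c$.

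Fix $c$. If there is no infinite $k_\varphi$-inconsistent family of instances with parameters in $\psi(\M,c)$, then by compactness there is a finite bound on the size of such families, and $n_c$ is one more than this bound. Here compactness is applied over $c$ to the type in variables $(y_i)_{i<\omega}$ expressing $\psi(y_i,c)$ together with $k_\varphi$-inconsistency of $\{\varphi(x,y_i)\}$; the latter is a first-order condition for each $k_\varphi$-subset. If an infinite family does exist, any $n_c$ works. Either way $n_c$ exists, which gives clause (2).

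The main obstacle I expect is not clause (2), which is almost trivial once finitely many types are available, but the gluing step for $\FHP$. There the requirement is that the pigeonholing into types costs only a constant factor in $\alpha$, and this needs the careful double-counting from Lemma \ref{lem: Basic operations preserving FH}(2) applied to disjuncts that share a parameter.
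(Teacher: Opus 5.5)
Your proposal is correct and follows the paper's route. For $\FHP$, the paper likewise writes $\varphi(x,y)$ as $\bigvee_j(\varphi(x,y)\wedge\rho_j(y))$ over the finitely many isolated types and applies closure under disjunctions, and it gets lowness from the same proposition; the one difference is that you verify clause (2) of wnfcp directly from finiteness of $S_z(\emptyset)$ and compactness, whereas the paper cites the general fact that low $\omega$-categorical theories are wnfcp, so your argument is essentially an unpacking of that fact.
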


\begin{proof}
Fix $\varphi\left(x,y\right) \in L$. By $\omega$-categoricity, there are
only finitely many types $p_{1},\ldots,p_{m}$ in $S_{y}\left(\emptyset\right)$,
and all of them are isolated, say by $\psi_{1}\left(y\right),\ldots,\psi_{m}\left(y\right)$.
Then $\varphi\left(x,y\right)$ is equivalent to $\bigvee_{1\leq i\leq m}\left(\varphi\left(x,y\right)\land\psi_{i}\left(y\right)\right)$.
As $\varphi\left(x,y\right)$ is locally $\FHP$ by assumption, we have
that $\varphi\left(x,y\right)\land\psi_{i}\left(y\right)$ is $\FHP$
for each $1\leq i\leq m$, and as the class of $\FHP$ formulas is
closed under disjunctions by Lemma \ref{lem: Basic operations preserving FH}(2),
we conclude that $\varphi\left(x,y\right)$ is $\FHP$. Also, it is a general fact that if an $\omega$-categorical theory
is low, then it is wnfcp (as any invariant set over a finite number
of parameters is definable, see e.g. \cite{palacin2012omega}). Hence
local $\FHP$ implies wnfcp by Proposition \ref{prop: local FHP implies low}.
\end{proof}
%

%

\section{$\protect\FHP$ relatively to a class of measures}\label{sec: FHP and measures}

\subsection{Keisler measures}

Let $T$ be a complete $L$-theory. We will work in $M \models T$, and $\M \succ M$ is a saturated elementary extension. For any set $A \subseteq \mathcal{U}$, a \emph{Keisler measure} over $A$ in variables $x$ is a finitely additive probability measure on the Boolean algebra $L_{x}(A)$ of $A$-definable subsets of $\M^x$. We denote the space of Keisler measures over $A$ (in variables $x$) as $\mathfrak{M}_x(A)$. Every element of $\mathfrak{M}_x(A)$ is in unique correspondence with a regular Borel probability measure on the space of types $S_{x}(A)$, and we will routinely use this correspondence.  We recall some notions from \cite{Keisl, hrushovski2008groups, hrushovski2011nip, hrushovski2013generically, chernikov2016definable, Gannon}, and refer to \cite{StBourb, CheSurv} for a survey.

\begin{defn}\label{fac: types of measures} Let $\mu \in \mathfrak{M}_{x}(\M)$ and $A \subseteq \M$ a small subset. 
\begin{enumerate}
\item  $\mu$ is \emph{$A$-invariant} if for any partitioned $L(M)$-formula $\varphi(x;y) \in L$ and any $b,b'\in \M^y$, if $b\equiv_A b'$ then $\mu(\varphi(x;b))=\mu(\varphi(x;b'))$.
\item Assume that $\mu$ is $A$-invariant and $\varphi(x;y) \in L(A)$. We define the map $F_{\mu,A}^{\varphi}:S_{y}(A) \to [0,1]$ by $F_{\mu,A}^{\varphi}(q)=\mu(\varphi(x;b))$, where $b\models q$ (this is well-defined by $A$-invariance of $\mu$). 

\item $\mu$ is \emph{Borel-definable} (respectively, \emph{definable}) over $A$ if  $\mu$ is $A$-invariant and for any partitioned $L(A)$-formula $\varphi(x;y)$, the map $F_{\mu, A}^{\varphi}$ is Borel-measurable (respectively, continuous). 
\item $\mu$ is \emph{finitely satisfiable in $A$}  if for any $L(\M)$-formula $\varphi(x)$, if $\mu(\varphi(x))>0$ then $\M \models\varphi(a)$ for some tuple $a$ in $A$.  
\item $\mu$ is \emph{dfs} over $A$ if $\mu$ is both definable over $A$ and finitely satisfiable in $A$.

\item Given $\overline{a} \in (\M^{x})^{<\omega}$, with $\overline{a} = (a_1, \ldots, a_n)$, the associated \emph{average measure} $\Av_{\overline{a}} \in \mathfrak{M}_{x}(\M)$ is defined by
\begin{equation*} \Av_{\overline{a}}(\varphi(x)) := \frac{|\{i \in [n]: \M \models \varphi(a_i)\}| }{n}
\end{equation*}
for any $\varphi(x) \in L_{x}(\M)$.

\item $\mu$ is \emph{finitely approximated}, or \emph{fam}, over $A$ if for any  $\varphi(x;y) \in L$ and any $\varepsilon \in \mathbb{R}_{>0}$, there exists a finite  tuple   $\bar{a}$ from $A$ such that for any $b\in \M^{y}$, $\mu(\varphi(x;b)) \approx_{\varepsilon} \Av_{\bar{a}}(\varphi(x;b))$. In this case, we call $\bar{a}$ a \emph{$(\varphi,\varepsilon)$-approximation for $\mu$}. 
\end{enumerate}  
We say that $\mu$ is \emph{invariant} if it is invariant over some small model $M \prec \M$, and similarly for the other properties.
\end{defn} 

\begin{rem}\label{rem: fam for fin many formulas}
	Note that if $\mu$ is fam and $\Delta(x,y)$ is a finite set of partitioned formulas, then for every $\varepsilon > 0$ there exists a finite tuple $\bar{a}$ so that $\bar{a}$ is a $(\varphi(x,y), \varepsilon)$-approximation for $\mu$ for all $\varphi(x,y) \in \Delta$ simultaneously (by coding finitely many formulas into one using additional variables).
\end{rem}

\begin{defn}\label{def: tensor prod of meas} Assume $\mu \in \mathfrak{M}_{x}(\M)$ is Borel-definable and 
 $\nu \in \mathfrak{M}_{y}(\M)$ arbitrary. We let $\mu \otimes \nu$ be the unique measure in $\mathfrak{M}_{xy}(\M)$ such that for any $\varphi(x,y) \in L_{xy}(\M)$, we have 
\begin{equation*} 
(\mu \otimes \nu)(\varphi(x,y)) = \int_{S_{y}(A)} F_{\mu,A}^{\varphi} d(\widehat{\nu|_{A}}), 
\end{equation*} 
where $\mu$ is $A$-invariant and $A$ contains all the parameters from $\varphi$, and  $\widehat{\nu|_{A}}$ is the unique regular Borel probability measure on $S_{y}(A)$ extending the Keisler measure $\nu|_{A}$. 
\end{defn} 
\noindent See e.g.~\cite[Section 3.1]{chernikov2022definable} for an explanation why this product is well-defined and its basic properties. We will often abuse the notation slightly and replace $\widehat{\nu|_{A}}$ with either $\nu|_{A}$ or simply $\nu$ when it is clear from the context, and sometimes write $F_{\mu,A}^{\varphi}$ as $F_{\mu}^{\varphi}$. In general, $\otimes$ need not be commutative/associative on Borel definable measures in arbitrary theories.
\begin{fact}\label{fac: otimes props}
	\begin{enumerate}
		\item \cite[Theorem 2.18]{conant2023keisler}  Suppose $\mu, \nu$ are definable and $\lambda$ arbitrary. Then $\mu \otimes \nu$ is definable, and  $(\mu \otimes \nu) \otimes \lambda = \mu \otimes (\nu \otimes \lambda)$.
		\item \cite[Theorem 5.16]{conant2023keisler} If $\mu$ is fim and $\nu$ is Borel definable, or $\mu$ is fam and $\nu$ is definable. Then $\mu \otimes \nu = \nu \otimes \mu$.
	\end{enumerate}
\end{fact}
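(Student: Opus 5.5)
The statement collects two results from \cite{conant2023keisler}. I would prove both by approximating the continuous (or Borel) functions $F^{\varphi}_{\mu}$ by finite step functions built from formulas. Fix a small model $M$ over which all the measures involved are invariant (respectively definable or fam), and which contains the parameters of the formula under consideration. I will use the basic fact about Stone spaces: a continuous $f\colon S_y(M)\to[0,1]$ is a uniform limit of simple functions $\sum_{i} r_i\mathbf{1}_{[\psi_i]}$ with $\psi_i(y)\in L(M)$. This holds because clopen sets form a basis, and one can partition $[0,1]$ into small intervals and cover each compact preimage by clopens.

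For (1), first show that $\mu\otimes\nu$ is definable. Fix $\varphi(x,y;z)$. By definability of $\mu$, the map $F^{\varphi}_{\mu}$ on $S_{yz}(M)$ is continuous, so there are $\psi_i(y,z)\in L(M)$ and reals $r_i$ with $\bigl|F^{\varphi}_{\mu}-\sum_i r_i\mathbf{1}_{\psi_i}\bigr|<\varepsilon$ uniformly. Then for every $c$ we get $F^{\varphi}_{\mu\otimes\nu}(\tp(c/M))=\int F^{\varphi}_{\mu}(y,c)\,d\nu\approx_{\varepsilon}\sum_i r_i\,\nu(\psi_i(y,c))$. Each map $c\mapsto\nu(\psi_i(y,c))$ is continuous by definability of $\nu$, so $F^{\varphi}_{\mu\otimes\nu}$ is a uniform limit of continuous functions and is therefore continuous. $M$-invariance is clear.

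For associativity, evaluate both sides at $\varphi(x,y,z)$ using the same approximation $\sum_i r_i\mathbf{1}_{\psi_i(y,z)}$. On the one hand, $\mu\otimes(\nu\otimes\lambda)(\varphi)=\int F^{\varphi}_{\mu}\,d(\nu\otimes\lambda)\approx_\varepsilon\sum_i r_i(\nu\otimes\lambda)(\psi_i)$. On the other hand, $((\mu\otimes\nu)\otimes\lambda)(\varphi)=\int_z\int_y F^{\varphi}_{\mu}(y,z)\,d\nu\,d\lambda\approx_\varepsilon\sum_i r_i\int_z\nu(\psi_i(y,z))\,d\lambda=\sum_i r_i(\nu\otimes\lambda)(\psi_i)$. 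Letting $\varepsilon\to0$ gives equality. The only care needed is to take integrals over $S(M)$ for a single $M$ that works for all three measures.

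For (2) in the case where $\mu$ is fam and $\nu$ is definable, fix $\varphi(x,y)$. By definability of $\nu$, the map $F^{\varphi^{\mathrm{opp}}}_{\nu}$ on $S_x(M)$ is continuous, so approximate it uniformly within $\varepsilon$ by $\sum_j s_j\mathbf{1}_{\theta_j(x)}$. By Remark \ref{rem: fam for fin many formulas}, choose a single tuple $\bar a=(a_1,\dots,a_n)$ that is simultaneously a $(\varphi,\varepsilon)$-approximation and a $(\theta_j,\varepsilon)$-approximation for $\mu$. Then we get the chain
\[
(\mu\otimes\nu)(\varphi)=\int\mu(\varphi(x,b))\,d\nu\approx_\varepsilon\int\Av_{\bar a}(\varphi(x,b))\,d\nu=\tfrac1n\textstyle\sum_k\nu(\varphi(a_k,y)),
\]
and the last term equals $\Av_{\bar a}(F_\nu)$, which is within $\varepsilon$ of $\sum_j s_j\Av_{\bar a}(\theta_j)$, which in turn approximates $\sum_j s_j\mu(\theta_j)$. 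Finally, $\sum_j s_j\mu(\theta_j)\approx_\varepsilon\int F_\nu\,d\mu=(\nu\otimes\mu)(\varphi)$. Together these give $(\mu\otimes\nu)(\varphi)\approx_{O(\varepsilon)}(\nu\otimes\mu)(\varphi)$, and letting $\varepsilon\to0$ proves the claim.

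The case where $\mu$ is fim and $\nu$ only Borel definable is the main obstacle, because $F_\nu$ is no longer continuous and cannot be approximated by clopen step functions. Here I would use the fim formulas $\theta_n(x_1,\dots,x_n)$: every realization of $\theta_n$ is a $(\varphi,\varepsilon_n)$-approximation, and $\mu^{(n)}(\theta_n)\to1$. Since $\mu^{(n)}$ has all its one-variable marginals equal to $\mu$ (using the fact that fim measures are definable and self-commuting), linearity gives $\int F_\nu\,d\mu=\int\Av_{\bar x}(F_\nu)\,d\mu^{(n)}$. On $\theta_n$ we have $\Av_{\bar x}(F_\nu)=\int\Av_{\bar x}\varphi(\bar x,b)\,d\nu(b)\approx_{\varepsilon_n}\int\mu(\varphi(x,b))\,d\nu$, while the complement of $\theta_n$ contributes at most $1-\mu^{(n)}(\theta_n)$. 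The delicate point is justifying this Borel-measurable integration against $\mu^{(n)}$ and checking the marginal identity without already assuming the commutativity being proved. I would first try to handle this by working with the Borel extension of $\mu^{(n)}$ on $S_{\bar x}(M)$ directly, so that only the definable measures $\mu^{(n)}$ and $\mu$ need to commute.
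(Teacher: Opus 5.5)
The paper gives no proof of this Fact. Both items are imported from \cite{conant2023keisler} (Theorems 2.18 and 5.16), so your argument is a self-contained replacement for a citation, and it is correct. It also shows where each hypothesis enters:
\begin{itemize}
\item continuity of $F^{\varphi}_{\mu}$ alone gives definability of $\mu\otimes\nu$ and associativity in (1);
\item continuity of $F_\nu$ together with finite approximations from $M$ gives the fam/definable case;
\item when $F_\nu$ is merely Borel, averaging it against $\mu^{(n)}$ on the fim formulas $\theta_n$ replaces a single approximating tuple by a $\mu^{(n)}$-random one.
\end{itemize}

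Two points should be tightened; neither is a real gap.

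\textbf{Tolerance in the fam case.} Take the $\theta_j$ to be a clopen partition into $r$ pieces with $s_j\in[0,1]$. Passing from $\sum_j s_j\Av_{\bar a}(\theta_j)$ to $\sum_j s_j\mu(\theta_j)$ then costs up to $r\varepsilon$, not $O(\varepsilon)$, because $r$ grows as $\varepsilon\to0$. Since the $\theta_j$ are fixed before $\bar a$ is chosen, simply take $\bar a$ to be a $(\theta_j,\varepsilon/r)$-approximation.

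\textbf{The fim case.} The step you call delicate involves no commutativity at all.
\begin{itemize}
\item Directly from Definition \ref{def: tensor prod of meas}, every one-variable marginal of an iterated Morley product is the corresponding factor. For a formula $\chi(x_k)$, the integrand at each stage is either the constant $\mu(\chi)$ or the indicator of $\chi$. So self-commutation of $\mu$ is not needed, and there is no circularity.
\item For the Borel function $G:=F_\nu$ on $S_x(M)$ we have $\int G\circ\pi_k\,d\widehat{\mu^{(n)}|_M}=\int G\,d\widehat{\mu|_M}$. This holds because the pushforward of a regular Borel measure under the continuous projection $\pi_k$ is again regular and agrees with $\mu|_M$ on clopen sets, hence equals $\widehat{\mu|_M}$.
\item For $n\geq n_\varepsilon$ and any $\bar a\models\theta_n$, the quantity $\frac1n\sum_k\nu(\varphi(a_k,y))$ is within $\varepsilon$ of $(\mu\otimes\nu)(\varphi)$. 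Here the product is computed over some small model containing $M\bar a$, which is allowed since it does not depend on the parameter set.
\item Splitting the integral over $[\theta_n]$ and its complement gives $\bigl|(\nu\otimes\mu)(\varphi)-(\mu\otimes\nu)(\varphi)\bigr|\leq\varepsilon+2\bigl(1-\mu^{(n)}(\theta_n)\bigr)$. Letting $n\to\infty$ and then $\varepsilon\to0$ finishes the proof.
\end{itemize}
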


\subsection{$\protect\FHP$ relative to a class of measures}

We define a generalization of the FHP (Definition \ref{def: FHP}) relatively to a class
of definable Keisler measures.

\begin{defn}
\label{def: FHP for a class of measures}Let $\mathcal{M}$ be a first-order
$\mathcal{L}$-structure, $\M \succ M$ a saturated model. Let $\varphi\left(x,y\right) \in L$ be a partitioned formula, and let $\mathfrak{M} \subseteq \mathfrak{M}_y(\M)$ be a class of definable Keisler measures. We say that $\varphi\left(x,y\right)$ satisfies $\FHP(d,\alpha,\beta)$  \emph{relatively
to $\mathfrak{M}$} if for any $\mu\in\mathfrak{M}$, if $\mu^{\otimes d}\left(\exists x\bigwedge_{i \in [d]}\varphi\left(x,y_{i}\right)\right)>\alpha$
then $\mu\left(\varphi\left(a,y\right)\right)>\beta$ for some $a\in \M^{x}$.
And $\varphi(x,y)$ satisfies $\FHP_d$ \emph{relatively
to $\mathfrak{M}$} if for
every $\alpha\in\mathbb{R}_{>0}$ there is some $\beta\in\mathbb{R}_{>0}$
so that $\varphi\left(x,y\right)$ satisfies $\FHP(d,\alpha,\beta)$  relatively
to $\mathfrak{M}$. And $\varphi\left(x,y\right)$ satisfies \emph{$\FHP$ relatively to
$\mathfrak{M}$ }if it satisfies $\FHP_{d}$ relatively to $\mathfrak{M}$
for some $d\in\mathbb{N}$.
\end{defn}

We note that the usual FHP property is equivalent to the FHP property with respect to the class of finitely supported measures:
\begin{prop}
\label{rem: FHP iff FHP for finite measures}The formula $\varphi\left(x,y\right)$
satisfies $\FHP_{d}$ (as in Definition \ref{def: FHP}) if and only
if it satisfies $\FHP_{d}$ relatively to the class $\mathfrak{M}_{y}^{\fin}(\M)$
of all Keisler measures supported on finite subsets of $\M^{y}$.
\end{prop}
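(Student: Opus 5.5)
We translate between finite tuples and finitely supported measures in both directions, paying attention to the difference between $k$-subsets and $d$-tuples with repetitions.

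\emph{($\Rightarrow$)} Assume $\varphi(x,y)$ satisfies $\FHP_d$, and fix $\alpha>0$. Let $\mu\in\mathfrak{M}^{\fin}_y(\M)$ be supported on a finite set, with weights $\mu(\{b\})$ for $b$ in the support. First reduce to rational weights. The quantities $\mu^{\otimes d}(\exists x\bigwedge_{i}\varphi(x,y_i))$ and $\mu(\varphi(a,y))$ depend continuously on the finitely many weights, and there are only finitely many possible sets $\{b:\models\varphi(a,b)\}$ within the support. So we may perturb $\mu$ to a measure with rational weights while keeping the hypothesis $>\alpha/2$ and changing the conclusion by an arbitrarily small amount.

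With rational weights, $\mu=\Av_{\bar b}$ for some tuple $\bar b=(b_1,\dots,b_n)$ with repetitions. Replacing $\bar b$ by $N$ concatenated copies of itself, we may assume $n$ is as large as we like. Now $\mu^{\otimes d}(\exists x\bigwedge_i\varphi(x,y_i))$ is exactly the proportion of $d$-tuples $(j_1,\dots,j_d)\in[n]^d$ for which $\{\varphi(x,b_{j_t})\}_t$ is consistent. The tuples with non-distinct indices make up at most a $\binom d2/n$ fraction, so for large $n$ at least an $\alpha/4$ fraction of the injective index tuples are consistent. Hence $|\Cons_d(\bar S)|\geq(\alpha/4)\binom nd$, where $\bar S=(\varphi(\M,b_j))_j$.

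By $\FHP(d,\alpha/4,\beta)$ there is $J\subseteq[n]$ with $|J|\geq\beta n$ and some $a\in\bigcap_{j\in J}\varphi(\M,b_j)$. Then $\mu(\varphi(a,y))\geq\beta$. Absorbing the perturbation and the strict/non-strict inequality changes into the constants gives $\beta/2$ as the required bound.

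\emph{($\Leftarrow$)} Assume $\FHP_d$ relative to $\mathfrak{M}^{\fin}_y$, and let $\bar S=(\varphi(\M,b_1),\dots,\varphi(\M,b_n))$ satisfy $|\Cons_d(\bar S)|\geq\alpha\binom nd$. Set $\mu=\Av_{\bar b}$. Each consistent $d$-subset gives $d!$ ordered tuples in $[n]^d$, so
\[
\mu^{\otimes d}\Bigl(\exists x\bigwedge_i\varphi(x,y_i)\Bigr)\geq \frac{d!\,\alpha\binom nd}{n^d}.
\]
This is at least $\alpha/2$ for $n\geq n_0(d)$. Relative $\FHP$ then gives $a$ with $\mu(\varphi(a,y))>\beta$, that is, $J=\{j:\models\varphi(a,b_j)\}$ has $|J|>\beta n$, which is what we need.

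For $n<n_0$, take $\beta'=1/n_0$. A single set suffices here, since some $S_j$ is nonempty as soon as $\Cons_d(\bar S)\neq\emptyset$. This handles small $n$, and the final constant is $\min(\beta,1/n_0)$.

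Both directions are mostly bookkeeping. The main obstacle is the rational-approximation step in ($\Rightarrow$), because the conclusion quantifies over $a$. It is resolved by noting that $\mu(\varphi(a,y))$ takes only finitely many forms as $a$ varies, one for each trace $\varphi(a,\M)\cap\mathrm{supp}(\mu)$, so a sufficiently fine perturbation of the weights controls all of them simultaneously.
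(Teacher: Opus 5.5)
Your argument is correct and is essentially the paper's proof. In ($\Rightarrow$) you perturb to rational weights, replicate each set according to its weight, discard the $O(1/n)$ fraction of non-injective index tuples and apply $\FHP_d$; in ($\Leftarrow$) you pass to the averaging measure of the parameter tuple. You are also slightly more careful than the paper, which ends ($\Rightarrow$) with a bound for the perturbed measure $\nu$ rather than for $\mu$, and does not treat small $n$ separately in ($\Leftarrow$).
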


\begin{proof}
Assume that $\varphi\left(x,y\right)$ satisfies $\FHP_{d}$, and fix
some $\alpha>0$. Let $\mu \in \mathfrak{M}_y(\M)$ be concentrated on a finite
$B\subseteq \M^{y}$. Say $B=\left\{ b_{1},\ldots,b_{n}\right\} $
and $\mu\left(\left\{ b_{i}\right\} \right)=r_{i}\in\left[0,1\right]$
with $\sum_{i=1}^{n}r_{i}=1$.  Assume that $\mu^{\otimes d}\left(\exists x\bigwedge_{i \in [d]}\varphi\left(x,y_{i}\right)\right)>\alpha$,
and let $\varepsilon\in\mathbb{R}_{>0}$ be arbitrarily small. Then,
wiggling the weights of the points a little bit one by one, we can choose
a measure $\nu$ concentrated on $B$ such that $\nu\left( \{b_{i}\}\right)=s_{i}$
with $s_{i}$ \emph{rational}, such that $\nu^{\otimes e}\left(C\right)\approx^{\varepsilon}\mu^{\otimes e}\left(C\right)$
for all $1\leq e\leq d$ and all $C\subseteq (\M^y)^{e}$, in particular
$\nu^{\otimes d}\left(\exists x\bigwedge_{i \in [d]}\varphi\left(x,y_{i}\right)\right)>\alpha-\varepsilon$.

Let $s_{i}=\frac{t_{i}}{D}$, where $t_{i},D\in\mathbb{N}$
and $D$ is a common denominator for the $s_{i}$'s (so $\sum_{i=1}^{n}t_{i}=D$).
Then we choose a tuple of sets $\mathcal{S}=\left(S_{i}:1\leq i\leq D\right)$
such that it contains $t_{i}$ repetitions of the set $\varphi\left(M,b_{i}\right)$,
for each $1\leq i\leq n$. Now from the assumption on the measure
we have $\left|\left\{ \left(i_{1},\ldots,i_{d}\right)\in\left[D\right]^{d}:\bigcap_{1\leq j\leq d}S_{i_{j}}\neq\emptyset\right\} \right|\geq\left(\alpha-\varepsilon\right)D^{d}$.
Note that $\left|\left\{ \left(i_{1},\ldots,i_{d}\right)\in\left[D\right]^{d}:\bigvee_{1\leq j<j'\leq d}i_{j}=i_{j'}\right\} \right|\leq{d \choose 2}D^{d-1}$, so $\Cons_{d}\left(\mathcal{S}\right)\geq\frac{1}{d^{d}}\left(\alpha-\varepsilon\right)D^{d}-{d \choose 2}D^{d-1}$
(the factor $\frac{1}{d^{d}}$ is there since every set in ${D \choose d}$
is counted $d^{d}$ times). Hence, taking $\varepsilon := \frac{\alpha}{2}$,
we have $\Cons_{d}\left(\mathcal{S}\right)\geq\frac{1}{2}\frac{1}{d^{d}}\frac{\alpha}{2}D^{d}$
for all $D$ sufficiently large, so $\Cons_{d}\left(\mathcal{S}\right)\geq\alpha'{D \choose d}$
for $\alpha'=\alpha'\left(\alpha,d\right):=\frac{1}{2}\frac{1}{d^{d}}\frac{1}{d^{d}}\frac{\alpha}{2}>0$.
As $\varphi(x,y)$ satisfies $\FHP_{d}$, there is some $\beta=\beta\left(\alpha'\right)$
and $a\in \M^{x}$ such that, taking $C:=\left\{ i \in [D]: a\in S_{i}\right\} $,
and $J:=\left\{j \in [n] :\models\varphi\left(a,b_{j}\right)\right\} $,
we have $\left|C\right|\geq\beta D$ (this is where we use that in
the definition of $\FHP$ repetitions of sets are allowed). Note that
if $a\in S_{i}$, then $a\in S_{j}$ for every copy of $S_{i}$ appearing
in $\mathcal{S}$ as well, hence $\sum_{j\in J}t_{j}\geq\beta D$,
and so $\nu\left(\varphi\left(a,y\right)\right)\geq\frac{\beta D}{D}\geq\beta$,
as wanted.

Conversely, assume $\varphi$ satisfies $\FHP_{d}$ relatively to $\mathfrak{M}_{y}^{\fin}$
and we are given a tuple of sets $\mathcal{S}=\left\{ \varphi\left(x,b_{i}\right):1\leq i\leq n\right\} $
such that $\Cons_{d}\left(S\right)\geq\alpha{n \choose d}$. Define
a finitely supported Keisler measure $\mu \in \mathfrak{M}_y^{\fin}$ via $\mu\left(Y\right):=\frac{\left|\left\{ i \in [n] : b_{i}\in Y\right\} \right|}{n}$.
Then 
\begin{gather*}
	\mu^{\otimes d}\left(\exists x\bigwedge_{i \in [d]}\varphi\left(x,y_{i}\right)\right)=\frac{\left\{ \left(i_{1},\ldots,i_{d}\right)\in\left[n\right]^{d}:\models\exists x\bigwedge_{j \in [d]}\varphi\left(x,b_{i_{j}}\right)\right\} }{n^{d}}\\
	\geq\frac{\alpha{n \choose d}}{n^{d}}\geq\alpha'
\end{gather*}
for $\alpha':=\frac{\alpha}{d^{d}}>0$. Taking $\beta>0$ as given
for $\alpha'$ by $\FHP_{d}$ relatively to $\mathfrak{M}_{y}^{\fin}$,
we have that $\mu\left(\varphi\left(a,y\right)\right)\geq\beta$ for
some $a\in \M^{x}$. Then $\left|\left\{ i:\models\varphi\left(a,b_{i}\right)\right\} \right|\geq\beta n$,
hence $\varphi$ satisfies $\FHP_{d}$.
\end{proof}

This equivalence lifts further from finitely supported measures to fam measures (see Definition \ref{fac: types of measures}(7)).
Namely, we have (by a repeated application of \cite[Proposition 2.14(2)]{chernikov2016definable}; see also \cite[Proposition 2.10]{conant2020remarks}):
\begin{lem}
\label{lem: e-approx for product measure}Let $\psi\left(x_{1},\ldots,x_{d};z\right)\in L$
and $\varepsilon>0$ be arbitrary. Assume that $\mu_{i}$ is a fam 
measure on $M_{x_{i}}$ and $\bar{b}_{i}= (b_{i,1},\ldots, b_{i,n_{i}}) \in (\M^{x_{i}})^{n_i}$
is a $(\psi_{i}\left(x_{i};z_{i}\right), \varepsilon)$-approximation for $\mu_{i}$, where $\psi_{i}\left(x_{i};z_{i}\right)$ is obtained from $\psi\left(x_{1},\ldots,x_{d},z\right)$ by partitioning
its variables into two groups $x_{i}$ and $z_{i}=x_{1}\ldots x_{i-1}x_{i+1}\ldots x_{d}z$,
for $i=1,\ldots,d$. Then $\bar{b} := \left( b_{i,j} : i \in [d], j \in [n_i] \right)$ is a
$\left( \psi\left(x_{1},\ldots,x_{d};z\right), 2^{d}\varepsilon \right)$-approximation for $\mu := \mu_{1}\otimes\ldots\otimes\mu_{d}$.
\end{lem}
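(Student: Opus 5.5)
We argue by induction on $d$, using the cited fact \cite[Proposition 2.14(2)]{chernikov2016definable} as the one-step case. That fact says: if $\mu$ is fam with a $(\varphi(x;yz),\varepsilon)$-approximation $\bar a$, and $\nu$ is fam with a $(\varphi(y;xz),\varepsilon)$-approximation $\bar c$, then $(\bar a,\bar c)$, read as the product average $\Av_{\bar a}\otimes\Av_{\bar c}$, is a $(\varphi(x,y;z),2\varepsilon)$-approximation for $\mu\otimes\nu$. Here the tuple $\bar b$ in the statement should be understood as giving the product average measure $\Av_{\bar b_1}\otimes\cdots\otimes\Av_{\bar b_d}$ on tuples $(b_{1,j_1},\ldots,b_{d,j_d})$. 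Also, fam measures are definable, so by Fact \ref{fac: otimes props} the products are well defined and associative.

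For the induction step, write $\mu=\mu'\otimes\mu_d$ with $\mu':=\mu_1\otimes\cdots\otimes\mu_{d-1}$. We use the triangle inequality, passing through the intermediate measure $\Av_{\bar b'}\otimes\mu_d$, where $\bar b'=(\bar b_1,\ldots,\bar b_{d-1})$.

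First we compare $\mu'\otimes\mu_d$ with $\Av_{\bar b'}\otimes\mu_d$. Fix $c$. By definition of $\otimes$ (integrating out $x_d$ last), $\mu(\psi(\bar x;c))=\int F^{\psi(x_1..x_{d-1};x_d c)}_{\mu'}\,d\mu_d$, and the analogous identity holds for $\Av_{\bar b'}$. For each fixed value of $x_d$, the integrands differ by at most $2^{d-1}\varepsilon$, by the inductive hypothesis applied to $\psi$ with $x_d$ moved into the parameters. That hypothesis requires the individual approximations to work for the partitions $\psi_i(x_i;z_i)$, and these are exactly the given ones, since $x_d$ already lies in $z_i$ for $i<d$. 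Integrating gives an error of at most $2^{d-1}\varepsilon$.

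Next we compare $\Av_{\bar b'}\otimes\mu_d$ with $\Av_{\bar b'}\otimes\Av_{\bar b_d}$. The first measure is an average over tuples $\bar b'_{\bar j}$ of the values $\mu_d(\psi(\bar b'_{\bar j},x_d,c))$. Each of these lies within $\varepsilon$ of $\Av_{\bar b_d}(\psi(\bar b'_{\bar j},x_d,c))$, since $\bar b_d$ is a $(\psi_d,\varepsilon)$-approximation. So the error here is at most $\varepsilon$.

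The total error is then $2^{d-1}\varepsilon+\varepsilon\le 2^d\varepsilon$. The main care point is the first step: we must check that commutativity or associativity is not needed beyond Fact \ref{fac: otimes props}(1), and that fam measures are definable, so that $F_{\mu'}$ is integrable against $\mu_d$. This holds because fam implies dfs.
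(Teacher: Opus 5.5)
Your proof is correct and follows the paper's route. The paper only cites repeated application of \cite[Proposition 2.14(2)]{chernikov2016definable}, and your induction step, which passes through the hybrid measure $\Av_{\bar b'}\otimes\mu_d$ and uses definability of $\mu_1\otimes\cdots\otimes\mu_{d-1}$, is that two-factor argument written out (your errors add, so you in fact get the sharper bound $d\varepsilon$).
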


In view of this, we can talk about $\FHP$ relatively to the class
$\mathfrak{M}_{y}^{\fap}$ of all fap measures on $M_{y}$ with the
product defined above. Note that $\mathfrak{M}_{y}^{\fin}\subseteq\mathfrak{M}_{y}^{\fap}$,
and the products defined above coincide on $\mathfrak{M}_{y}^{\fin}$.

\begin{prop}
\label{prop: FHP for gen. stable}A partitioned formula $\varphi\left(x,y\right) \in L$
satisfies $\FHP_{d}$ if and only
if it satisfies $\FHP_{d}$ relatively to the class $\mathfrak{M}_{y}^{\fam} \subseteq \mathfrak{M}_{y}(\M)$ of all fam Keisler measures on $\M^{y}$.
\end{prop}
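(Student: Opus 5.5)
One direction is formal. Every finitely supported measure is fam, so $\mathfrak{M}_{y}^{\fin}(\M)\subseteq\mathfrak{M}_{y}^{\fam}(\M)$. Hence $\FHP_d$ relatively to $\mathfrak{M}_y^{\fam}$ implies $\FHP_d$ relatively to $\mathfrak{M}_y^{\fin}$, and this is equivalent to $\FHP_d$ by Proposition \ref{rem: FHP iff FHP for finite measures}. Here one only has to check that the two notions of product agree on finitely supported measures, as noted after Lemma \ref{lem: e-approx for product measure}.

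For the converse, assume $\varphi(x,y)$ satisfies $\FHP_d$. By Proposition \ref{rem: FHP iff FHP for finite measures}, for every $\alpha'>0$ there is $\beta'>0$ such that $\FHP(d,\alpha',\beta')$ holds relatively to all finitely supported measures. Given $\alpha>0$, take $\alpha':=\alpha/2$ with its $\beta'$, and set $\beta:=\beta'/2$. Let $\mu\in\mathfrak{M}^{\fam}_y(\M)$ with $\mu^{\otimes d}(\exists x\bigwedge_{i\in[d]}\varphi(x,y_i))>\alpha$. The plan is to replace $\mu$ by an average measure $\Av_{\bar b}$ that approximates $\mu$ well on two things simultaneously: the $d$-fold consistency formula, and the instances $\varphi(a,y)$.

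\begin{enumerate}
\item Set $\psi(y_1,\ldots,y_d):=\exists x\bigwedge_{i\in[d]}\varphi(x,y_i)$, with an empty parameter variable, and let $\psi_i(y_i;z_i)$ be its repartitions as in Lemma \ref{lem: e-approx for product measure}.
\item Fix $\varepsilon>0$ small, say $\varepsilon<\min\{\alpha/2^{d+1},\beta'/2\}$. By Remark \ref{rem: fam for fin many formulas}, choose a single finite tuple $\bar b$ that is simultaneously a $(\psi_i(y;z_i),\varepsilon)$-approximation for $\mu$ for every $i\in[d]$, and a $(\varphi^{\mathrm{opp}}(y;x),\varepsilon)$-approximation for $\mu$.
\item Apply Lemma \ref{lem: e-approx for product measure} with $\mu_1=\ldots=\mu_d=\mu$ and $\bar b_i=\bar b$. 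Assuming the concatenated tuple from the lemma (which is $\bar b$ repeated $d$ times) can be read as the product $\Av_{\bar b}^{\otimes d}$, this gives $\Av_{\bar b}^{\otimes d}(\psi)\geq\mu^{\otimes d}(\psi)-2^d\varepsilon>\alpha/2$.
\item Now $\FHP(d,\alpha/2,\beta')$ for the finitely supported measure $\Av_{\bar b}$ yields $a\in\M^x$ with $\Av_{\bar b}(\varphi(a,y))>\beta'$.
\item By the choice of $\bar b$ in step 2, $\mu(\varphi(a,y))>\beta'-\varepsilon\geq\beta$.
\end{enumerate}

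Since $\beta$ depends only on $\alpha$, this proves $\FHP_d$ relatively to $\mathfrak{M}_y^{\fam}$.

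The main obstacle is step 3: making sure that what Lemma \ref{lem: e-approx for product measure} approximates really is $\mu^{\otimes d}$ by the $d$-fold product of average measures on $\bar b$. The fallback is to bypass the lemma and argue by induction on $d$ directly from the definition of $\otimes$. Fam measures are definable, so $\mu^{\otimes d}$ is defined, and Fact \ref{fac: otimes props} makes the iterated product unambiguous. One then replaces one coordinate at a time by $\Av_{\bar b}$. At each step the relevant fiber function is a $\psi_i$-instance in the remaining variables, and the error is uniformly at most $\varepsilon$ by the choice of $\bar b$; integrating against the other factors gives a total error of at most $d\varepsilon$. The only real care needed is to fix the order of the product so that each replacement integrates a fiber function of the correct formula.
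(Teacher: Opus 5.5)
Your proof is correct and follows the paper's argument. Choose one finite tuple $\bar b$ that simultaneously approximates the fam measure $\mu$ on the repartitions $\psi_i$ of $\exists x\bigwedge_{i\in[d]}\varphi(x,y_i)$ and on $\varphi$ (with $y$ as the object variable), transfer the hypothesis to $\Av_{\bar b}^{\otimes d}$ via Lemma \ref{lem: e-approx for product measure}, apply $\FHP_d$ to the finitely supported measure $\Av_{\bar b}$, and transfer back; the converse comes from $\mathfrak{M}_y^{\fin}\subseteq\mathfrak{M}_y^{\fam}$. The paper resolves your worry in step 3 by reading the lemma's approximating tuple as the grid $\bigl((b_{i_1},\ldots,b_{i_d}) : (i_1,\ldots,i_d)\in[n]^d\bigr)$, whose average measure is exactly $\Av_{\bar b}^{\otimes d}$, so your fallback induction is not needed.
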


\begin{proof}

Assume that $\varphi\left(x;y\right) \in L$
satisfies $\FHP_{d}$, and let $\alpha>0$ be arbitrary.  Let $\beta'>0$
be as given for $\alpha' := \frac{\alpha}{2}$ by $\FHP_{d}$ relatively
to $\mathfrak{M}_{y}^{\fin}$ (using Proposition \ref{rem: FHP iff FHP for finite measures}).

Let $\varepsilon$ be arbitrary with
$0<\varepsilon<\min\left\{ \frac{\alpha}{2},\beta'\right\} $ and
let $\beta := \beta'-\varepsilon>0$ --- can be chosen depending only
on $\alpha$. Let $\psi\left(y_{1},\ldots,y_{d}\right) := \exists x\bigwedge_{i \in [d]}\varphi\left(x,y_{i}\right)$.
For $i \in [d]$, define $\psi_{i}\left(y_{i};z_{i}\right):=\exists x\bigwedge_{i \in [d]}\varphi\left(x,y_{i}\right)$,
with $z_{i}=y_{1},\ldots,y_{i-1},y_{i+1},\ldots,y_{d}$. Let 
\[
\Delta\left(y;z\right):=\left\{ \psi_{i}\left(y_{i},z_{i}\right):1\leq i\leq d\right\} \cup\left\{ \varphi\left(x;y\right)\right\} \mbox{,}
\]
\noindent where $z:=x^{\wedge}z_{i}$. Now let $\mu \in \mathfrak{M}^{\fam}_y$ be arbitrary with $\mu^{\otimes d}\left(\exists x\bigwedge_{1\leq i\leq d}\varphi\left(x,y_{i}\right)\right)\geq\alpha$. By Remark \ref{rem: fam for fin many formulas}, let $\bar{b} = (b_1, \ldots, b_n) \in ( \M^{y})^n$ be an
$\frac{\varepsilon}{2^{d}}$-approximation for $\mu$ on all formulas in $\Delta$ simultaneously. 
Then, by Lemma \ref{lem: e-approx for product measure}, $\bar{b}' := ((b_{i_1}, \ldots, b_{i_d}) : (i_1, \ldots, i_d) \in [n]^d)$
is an $\varepsilon$-approximation for $\mu^{\otimes d}$ on
$\psi(y_1, \ldots, y_d)$. As $\Av_{\bar{b}}^{\otimes d} = \Av_{\bar{b}'}$, we thus have $\Av_{\bar{b}}^{\otimes d}\left(\exists x\bigwedge_{1\leq i\leq d}\varphi\left(x,y_{i}\right)\right) \geq \alpha - \varepsilon \geq \alpha'$. And $\Av_{\bar{b}} \in \mathfrak{M}_y^{\fin}(\M)$, so by the choice of $\beta'$ there is some $a \in \M^x$ so that $\Av_{\bar{b}}(\varphi(a,y)) \geq \beta'$. As $\bar{b}$ is also a $(\varphi, \varepsilon)$-approximation for $\mu$, this implies $\mu(\varphi(a,y)) \geq \beta' - \varepsilon \geq \beta$.
%

The converse follows by Proposition \ref{rem: FHP iff FHP for finite measures} as $\mathfrak{M}_{y}^{\fin} \subseteq \mathfrak{M}_{y}^{\fam}$.
\end{proof}

\subsection{$\protect\FHP$ for generically stable measures in $\protect\NIP$
structures}\label{sec: FHP vs HrushPilSim}

Matousek's result in Fact \ref{fact: FHP for counting} implies that
every $\NIP$ formula $\varphi\left(x,y\right)$ satisfies FHP. In NIP, fam is one of the equivalent
characterizations of \emph{generically stable} measures (see Remark \ref{rem: fim in NIP is gen stab}). Hence
Proposition \ref{prop: FHP for gen. stable} combined with Matousek's
theorem immediately implies the main theorem of \cite{hrushovski2012note}
(by taking the contrapositives and exchanging the roles of the variables
in the statement).
\begin{fact}\cite[Proposition 2.1]{hrushovski2012note}
\label{fact: HPS}Let $T$ be NIP and let $\mu \in \mathfrak{M}_x(\M)$ be a generically
stable measure. For any formula $\varphi(x,y)$, if $\mu\left(\varphi\left(x,b\right)\right)=0$ for all
$b$, then there is some $d \in \mathbb{N}$ such that $\mu^{\otimes d}\left(\exists y\left(\varphi\left(x_{1},y\right)\land\ldots\land\varphi\left(x_{n},y\right)\right)\right)=0$. Moreover, $d$ depends only on $\varphi$.
\end{fact}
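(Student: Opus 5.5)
We plan to derive Fact \ref{fact: HPS} from Matoušek's theorem (Fact \ref{fact: FHP for counting}) together with Proposition \ref{prop: FHP for gen. stable}, applied to the formula with its variables exchanged. Write $\varphi^{*}(y;x) := \varphi(x,y)$. Since $\varphi(x,y)$ is NIP, so is $\varphi^{*}(y;x)$, i.e.\ the family $\{\varphi(\M,b)^{*}\}$ has finite dual VC-dimension. Fact \ref{fact: FHP for counting} then gives some $d\in\mathbb{N}$, depending only on $\varphi$ (for instance $d$ exceeding the dual VC-dimension of $\varphi^{*}$), such that $\varphi^{*}(y;x)$ satisfies $\FHP_{d}$.

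In NIP, a generically stable measure $\mu\in\mathfrak{M}_{x}(\M)$ is fam (Remark \ref{rem: fim in NIP is gen stab}). Applying Proposition \ref{prop: FHP for gen. stable} to $\varphi^{*}(y;x)$, with the parameter variable now $x$, shows that $\varphi^{*}$ satisfies $\FHP_{d}$ relatively to the class $\mathfrak{M}^{\fam}_{x}(\M)$. Concretely, for every $\alpha>0$ there is $\beta>0$ such that for every fam $\mu$ the following holds: if
\[
\mu^{\otimes d}\Bigl(\exists y\bigwedge_{i\in[d]}\varphi(x_{i},y)\Bigr)>\alpha,
\]
then $\mu(\varphi(x,b))>\beta$ for some $b\in\M^{y}$.

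Now take the contrapositive. Suppose $\mu(\varphi(x,b))=0$ for all $b$. If $\mu^{\otimes d}(\exists y\bigwedge_{i\in[d]}\varphi(x_{i},y))$ were positive, say greater than some $\alpha>0$, then the previous paragraph would produce $b$ with $\mu(\varphi(x,b))>\beta>0$, a contradiction. Hence $\mu^{\otimes d}(\exists y\bigwedge_{i\in[d]}\varphi(x_{i},y))=0$, with $d$ depending only on $\varphi$.

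Only two points need care. First, we must confirm the identification of generically stable with fam in NIP, which we cite from the literature referenced in Remark \ref{rem: fim in NIP is gen stab}. Second, we must check that the product $\mu^{\otimes d}$ used in Proposition \ref{prop: FHP for gen. stable} is the same as the one in the statement of Fact \ref{fact: HPS}. This holds because fam measures are definable, so $\mu^{\otimes d}$ is well-defined and, by Fact \ref{fac: otimes props}, associative and commutative. We expect this second verification to be the only mild obstacle; everything else is a direct chaining of the cited results.
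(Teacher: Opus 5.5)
Your proposal is correct and follows exactly the paper's route: Matou\v{s}ek's theorem gives $\FHP_d$ for $\varphi^{*}(y;x)$ with $d$ depending only on $\varphi$, Proposition \ref{prop: FHP for gen. stable} transfers this to fam measures (which are the generically stable measures in NIP), and taking the contrapositive with the variables exchanged gives $\mu^{\otimes d}\bigl(\exists y\bigwedge_{i\in[d]}\varphi(x_i,y)\bigr)=0$. Your check on the product is harmless, since both statements use the same Morley product of a definable measure.
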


Conversely, under the global NIP assumption on the theory, we can quickly deduce from Fact \ref{fact: HPS} using compactness that every formula satisfies $\FHP$. Indeed, fix $\varphi(x,y) \in L$ and let $d$ be as given by Fact \ref{fact: HPS} for $\varphi(y,x)^{\ast} := \varphi(x,y)$. Assume $\varphi(x,y)$ does not satisfy $\FHP_d$ relative to $\mathfrak{M}_y^{\fam}(\M)$. Then there exists $\alpha >0$ and for every $i \in \omega$ some $\mu_i \in \mathfrak{M}_y^{\fam}(\M)$ with $\mu^{\otimes d}_i\left(\exists x\bigwedge_{t \in [d]}\varphi\left(x,y_{t}\right)\right)\geq\alpha$ but $\mu_i(\varphi(a,y)) \leq \frac{1}{i}$ for all $a \in \M^x$. Following \cite[Section 3.4]{chernikov2025externally}, in an NIP theory $T$ we identify the set of global fam measures $\mathfrak{M}_y(\M)$ with a hyperdefinable set $\widetilde{\mathcal{M}}_y$, let $\mu \in \mathfrak{M}_y^{\fam}(\M) \mapsto [\mu] \in \widetilde{\mathcal{M}}_y$ denote the bijection. The sets
\begin{gather*}
	 X_i := \left\{[\mu] \in \widetilde{\mathcal{M}}_y: \forall a \in \M^x \mu(\varphi(a,y)) \leq \frac{1}{i} \right\},\\
	  Y := \left\{ [\mu] \in \widetilde{\mathcal{M}}_y: \mu^{\otimes d}\left(\exists x\bigwedge_{t \in [d]}\varphi\left(x,y_{t}\right)\right)\geq\alpha \right\}
\end{gather*}
\noindent are type-definable  (by \cite[Remark 3.27, Proposition 3.29]{chernikov2025externally}). Note that $X_{i+1} \subseteq X_{i}$ and $[\mu_i]  \in X_i \cap Y$. It follows by saturation of $\M$ that there exists some $\mu \in \mathfrak{M}_x^{\fam}(\M)$ with $[\mu] \in Y \cap \bigcap_{i \in \omega} X_i$. That is, $\mu^{\otimes d}\left(\exists x\bigwedge_{t \in [d]}\varphi\left(x,y_{t}\right)\right)\geq\alpha $ and $\mu(\varphi(a,y)) = 0$ for all $a \in \M^x$. But this contradicts the choice of $d$.

\begin{rem}
	We could  instead use directly the folklore facts that, under NIP, ultralimits of fam  measures are fam  (as measures on the ultraproduct) and ultralimits commute with $\otimes$ (see e.g.~\cite{gannon2025concerning}).
\end{rem}

\section{Colorful fractional Helly property, 
burden and $\left(p,q\right)$-theorems}\label{subsec: Colorful-fractional-Helly and dp-rank}

In this section we consider a more general (``colorful'') version of the $\FHP$
property relatively to products of different measures and its relation to the model theoretic notion of  burden (in particular
strengthening and generalizing some of the results of Pillay from
\cite{pillay2013weight}). We also observe that, at the level of the theory, the conclusion of the $(p,q)$-theorem is equivalent to NIP (equivalently, finite VC-dimension).

\subsection{Colorful FHP, burden, VC-density}

The following lemma is very similar to \cite[Lemma 4.6]{chernikov2020hypergraph} but in a slightly different setting:
\begin{lem}
\label{lem: Erdos-Stone} Let $k \in \mathbb{N}$ and Keisler measures $\mu_i \in \mathfrak{M}_{x_i}(\M)$ for $i \in [k]$ be definable and pairwise commuting (in particular each $\mu_i$ commutes with itself). 
Assume that $R \subseteq \M^{x_1} \times \ldots \times \M^{x_k}$
is definable and such that $\bigotimes_{i \in [k]} \mu_i\left(R\right)\geq\alpha>0$. 
For $d\in\mathbb{N}$, consider the definable set 
\begin{gather*}
	\Sigma_{d}:=\Big\{ \left(\bar{x}_{1},\ldots,\bar{x}_{k}\right)\in \left(\M^{x_1}\right)^{d} \times \ldots \times \left(\M^{x_k}\right)^d :\\\left(x_{1,i_{1}},\ldots,x_{k,i_{k}}\right)\in R\text{ for all } (i_{1},\ldots,i_{k}) \in\left[d\right]^k \Big\} \text{.}
\end{gather*}

Then $\mu_{1}^{\otimes d}\otimes\ldots\otimes\mu_{k}^{\otimes d}\left(\Sigma_{d}\right)\geq\alpha^{dk}$.
\end{lem}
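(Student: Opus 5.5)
The plan is to argue by induction on $k$, with the Fubini-type formula for products of definable measures (Definition \ref{def: tensor prod of meas}) as the main tool. Pairwise commutativity will be used, together with associativity from Fact \ref{fac: otimes props}(1), to reorder factors freely.

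For $k=1$ the set $\Sigma_d$ is just $R^d$. Since $\mu_1$ commutes with itself and $\otimes$ is associative, $\mu_1^{\otimes d}(R^d)=\mu_1(R)^d\ge\alpha^d$, which is exactly $\alpha^{dk}$ for $k=1$.

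For the inductive step I will write $\nu:=\mu_2\otimes\ldots\otimes\mu_k$ and rewrite the left-hand side, after reordering, as $\int \Phi(\bar x_1)\,d\mu_1^{\otimes d}(\bar x_1)$. Here $\Phi(\bar x_1)$ is the $\mu_2^{\otimes d}\otimes\ldots\otimes\mu_k^{\otimes d}$-measure of the $\Sigma_d$-set attached to the $(k-1)$-ary relation
\[
R_{\bar x_1}:=\bigcap_{i\in[d]}R(x_{1,i},\cdot).
\]
The inductive hypothesis, applied to $R_{\bar x_1}$, gives $\Phi(\bar x_1)\ge \nu(R_{\bar x_1})^{d(k-1)}$. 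Then $\nu(R_{\bar x_1})$ is itself the measure of a $d$-fold intersection of fibres, so Fubini in the other order followed by Jensen (or H\"older) gives
\[
\int\nu(R_{\bar x_1})\,d\mu_1^{\otimes d}=\int \mu_1(R_{y})^d\,d\nu(y)\ge\alpha^d .
\]
Convexity of $t\mapsto t^{d(k-1)}$ then finishes the step. Definability of all the intermediate sets, and continuity of the functions being integrated, come from definability of the $\mu_i$ and Fact \ref{fac: otimes props}(1).

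The main obstacle is making the exponents combine to $dk$. Done honestly, the two convexity steps \emph{multiply} exponents rather than add them: one gets $(\alpha^d)^{d(k-1)}$, which iterates to $\alpha^{d^k}$, not $\alpha^{dk}$. I do not see how to get $\alpha^{dk}$ from this argument. Moreover, for $k=2$, $d=3$ a random bipartite graph of density $\alpha$, viewed through uniform measures, has $K_{3,3}$-density about $\alpha^9<\alpha^6$. That suggests the exponent $dk$ as printed may be a typo for $d^k$, as in \cite[Lemma 4.6]{chernikov2020hypergraph}. My first move would be to check this example carefully. If it holds up, I would carry out the proof above with the exponent $d^k$ and note that the later applications only need some positive lower bound depending on $\alpha,d,k$.
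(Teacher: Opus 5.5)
You are right: the printed bound $\alpha^{dk}$ is false whenever $d^k>dk$ (for instance $k=2,d=3$ or $k=3,d=2$), and the correct bound is $\alpha^{d^k}$. Your random-graph example works, and here is an explicit version of it. In the random graph, take $\mu_1,\mu_2$ to be the uniform measures on finite sets $W_1,W_2$ that induce the bipartite graph $\{(a,b): a\cdot b=0\}$ on $\mathbb{F}_2^m\times\mathbb{F}_2^m$. These measures are definable and commute, and $\mu_1\otimes\mu_2(R)\geq \frac12=:\alpha$. The $K_{3,3}$-density is the average of $2^{-3r}$, where $r=\dim\mathrm{span}(a_1,a_2,a_3)$. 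Since $r=3$ outside a set of proportion at most $7\cdot 2^{-m}$, this density is at most $2^{-9}+7\cdot 2^{-m}$, which is $<2^{-6}=\alpha^{dk}$ for $m\geq 10$.

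The paper's proof fails at the iteration step. Its Claim (H\"older) correctly gives $\mu\otimes\nu^{\otimes d}(\Gamma)\geq(\mu\otimes\nu(E))^d$. The proof then asserts $\mu_{i+1}\otimes\nu_{i+1}(E_{i+1})\geq\alpha^d\cdot\mu_i\otimes\nu_i(E_i)$, whereas the Claim only yields $\geq(\mu_i\otimes\nu_i(E_i))^d$. Iterating over the $k$ coordinates therefore gives $\alpha^{d^k}$, not $\alpha^{dk}$.

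Your induction on $k$ is the paper's coordinate-by-coordinate replication argument, carried out correctly. It replicates $x_1$ by a Fubini swap plus Jensen, applies the inductive hypothesis fibrewise, and then uses Jensen again. One correction: state the inductive hypothesis with exponent $d^{k-1}$, not $d(k-1)$, so that the step gives $(\alpha^d)^{d^{k-1}}=\alpha^{d^k}$ and is self-consistent.

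As you note, the only downstream use, in Proposition \ref{prop: colorful FHP meas bdd by burden}, survives. There one replaces $\alpha^{Dk}$ by $\alpha^{D^k}$ and chooses $\beta$ with $k{D\choose d}\beta<\alpha^{D^k}$, which still depends only on $\alpha,k,D$.
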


\begin{proof}
We will use Fact \ref{fac: otimes props} and the pairwise commuting assumption freely.	 We have the following:
\begin{claim}
	Assume $\mu \in \mathfrak{M}_{x}(\M), \nu \in  \mathfrak{M}_{y}(\M)$ are definable and pairwise commuting, and  $E \subseteq \M^{x} \times \M^y$ is definable with $\mu\otimes\nu\left(E\right)\geq\alpha>0$.
Fix $d\in\mathbb{N}$ and let
\[
\Gamma:=\left\{ \left(a;b_{1},\ldots,b_{d}\right)\in \M^x \times (\M^y)^{d}:\left(a,b_{i}\right)\in E\text{ for all } i \in [d]\right\} \text{.}
\]
Then $\mu\otimes\nu^{\otimes d}\left(\Gamma \right)\geq\alpha^{d}>0$.
\end{claim}
\begin{proof}
Assume $M \prec \M$ is small so that  $\mu, \nu, E$ (and hence also $\Gamma_d$) are definable over $M$. Then, by Definition \ref{def: tensor prod of meas}, 
$\mu\otimes\nu^{\otimes d}\left(\Gamma \right) = \int_{r \in S_x(M)} \nu^{\otimes d}(\Gamma_a) d\mu|_{M}(r)$, where $a \in \M^x$ with $a \models r$ is arbitrary and $\Gamma_a$ denotes the fiber of $\Gamma$ at $a$. Note that for each fixed $a \in \M^x$, $\Gamma_a = E(a,y_1) \land \ldots \land E(a, y_d)$ belongs to the product Boolean algebra $L_{x_1}(\M) \times \ldots \times L_{x_d}(\M)$. As $\nu^{\otimes d}$ extends the product measure $\nu^{\times d}$, we have $\nu^{\otimes d}(\Gamma_a) = \left( \nu(E_a) \right)^d$ for all $a \in \M^x$.
Then, using H\" older inequality with $p=d, q=d/(d-1)$, we have $\mu\otimes\nu^{\otimes d}\left(\Gamma\right)=\int_{r \in S_x(M)}\left(\nu\left(E_{a}\right)\right)^{d}d\mu=\int_{r \in S_x(M)}\left(\nu\left(E_{a}\right)\right)^{d}d\mu\cdot\int_{r \in S_x(M)}1^{\frac{d}{d-1}}d\mu\geq\left(\int_{r \in S_x(M)}\nu\left(E_{a}\right)d\mu\right)^{d}=\left(\mu\otimes\nu\left(E\right)\right)^{d}\geq\alpha^{d}>0$. 
\end{proof}

Now let $R \subseteq \M^{x_1} \times \ldots \times \M^{x_k}$ be as given by
assumption. For $i \in [k]$, let $V_{i}:=\M^{x_i}$ and $U_{i}:=\prod_{j<i}(\M^{x_j})^{d}\times\prod_{j>i}\M^{x_j}$, and consider the definable 
binary relation $E_{i}\subseteq U_{i}\times V_{i}$ given
by $\left(\bar{x}_{1},\ldots,\bar{x}_{i-1},x_{i+1},\ldots,x_{k};x_{i}\right)\in E_{i}\iff$$\left(x_{1,j_{1}},\ldots,x_{i-1,j_{i-1}},x_{i},x_{i+1},\ldots,x_{k}\right)\in R$
for all $j_{1},\ldots,j_{i-1}\in\left[d\right]$. Let $\nu_{i}:=\mu_{1}^{ \otimes d}\otimes\ldots\otimes\mu_{i-1}^{\otimes d}\otimes\mu_{i+1}\otimes\ldots\otimes\mu_{k}$.

In particular $\mu_{i},\nu_{i}$ are pairwise commuting for every $i \in [k]$. 
Note that
$E_{1}$ is equal to $R$, $E_{i+1}$ is equal to $\Gamma_{i}$ as
obtained from $E_{i}$ by the claim, that $\Gamma_{k}=\Sigma_{d}$
and $\mu_{k}\otimes\nu_{k}=\mu_{1}^{\otimes d}\otimes\ldots\otimes\mu_{k}^{\otimes d}$
(up to a permutation and repartition of the variables). Hence, applying
the claim repeatedly and using that all products of the measures involved
commute, we have $\mu_{i+1}\otimes\nu_{i+1}\left(E_{i+1}\right)\geq\alpha^{d}\cdot\mu_{i}\otimes\nu_{i}\left(E_{i}\right)$
for all $i$. As $\mu_{1}\otimes\nu_{1}=\mu_{1}\otimes\ldots\otimes\mu_{k}$
and $\mu_{1}\otimes\nu_{1}\left(E_{1}\right)\geq\alpha$, we conclude
that $\mu_{1}^{\otimes d}\otimes\ldots\otimes\mu_{k}^{\otimes d}\left(\Sigma_{d}\right)\geq\alpha^{dk}$.
\end{proof}

Adler   \cite{adler2007strong} introduced \emph{burden}, a notion
based on the invariant $\kappa_{\inp}$ of Shelah \cite{MR513226}
which generalizes simultaneously dp-rank in NIP theories and weight
in simple theories. For notational convenience we consider an extension $\card$ of the
linear order on cardinals by adding a new maximal element $\infty$
and replacing every limit cardinal $\kappa$ by two new elements $\kappa_{-}$
and $\kappa_{+}$. The standard embedding of cardinals into $\card$
identifies $\kappa$ with $\kappa_{+}$. In the following, whenever
we take a supremum of a set of cardinals, we will be computing it
in $\card$.

\begin{defn}\label{def: burden}
  \cite{adler2007strong} Let $p\left(x\right)$ be a (partial) type.
  \begin{enumerate}
  \item An $\inp$-pattern of depth $\kappa$ in $p(x)$ consists of $\left(\bar{a}_{i},\varphi_{i}(x,y_{i}),k_{i}\right)_{i\in\kappa}$
    with $\bar{a}_{i}=\left(a_{ij}\right)_{j\in\omega}$ and $k_{i}\in\omega$
    such that:

    \begin{itemize}
    \item $\left\{ \varphi_{i}(x,a_{ij})\right\} _{j\in\omega}$ is $k_{i}$-inconsistent
      for every $i\in\kappa$,
    \item $p(x)\cup\left\{ \varphi_{i}(x,a_{if(i)})\right\} _{i\in\kappa}$
      is consistent for every $f:\,\kappa\to\omega$.
    \end{itemize}
  \item The \emph{burden} of a partial type $p(x)$ is the supremum (in Card$^{*}$)
    of the depths of $\inp$-patterns in it. We denote the burden of $p$
    as $\bdn(p)$ and we write $\bdn(a/A)$ for $\bdn(\tp(a/A))$.
  \item By compactness, $T$ is $\NTP_2$ if and only
    if $\bdn\left(\mbox{"}x=x\mbox{"}\right)<\infty$, if and only if
    $\bdn\left(\mbox{"}x=x\mbox{"}\right)<\left|T\right|^{+}$.
  \item A theory $T$ is called \emph{strong} if $\bdn\left(p\right)\leq\left(\aleph_{0}\right)_{-}$
    for every finitary type $p$ (equivalently, there is no $\inp$-pattern
    of infinite depth). Of course, if $T$ is strong then it is $\NTP_2$.
  \end{enumerate}
\end{defn}
\begin{fact}
  \label{fct: burden in simple and NIP}\cite{adler2007strong} \begin{enumerate}
  \item Let $T$ be $\NIP$. Then $\bdn(p)=\dprk(p)$ for any $p$.
  \item Let $T$ be simple. Then the burden of $p$ is the supremum of the weights
    of its complete extensions.
  \end{enumerate}
\end{fact}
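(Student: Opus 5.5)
The two parts are proved separately. In each we compare inp-patterns with the pattern defining the other invariant: ict-patterns for dp-rank, forking-independent witnesses for weight. Throughout we pass to patterns whose rows are mutually indiscernible over the base. This is possible by the standard array-extraction argument (Ramsey plus compactness, as in Fact \ref{fact: indiscernible witness to TP2}), which preserves both the $k_i$-inconsistency of the rows and the consistency of every path together with $p$.

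\textbf{Part (1), burden $\le$ dp-rank (no NIP needed).} Take an inp-pattern $\left(\bar{a}_i,\varphi_i,k_i\right)_{i<\kappa}$ in $p$ with mutually indiscernible rows, and let $c\models p$ realize the path $\{\varphi_i(x,a_{i0})\}_{i<\kappa}$. By $k_i$-inconsistency, $c$ satisfies $\varphi_i(x,a_{ij})$ for at most $k_i-1$ indices $j$. Hence no row $\bar{a}_i$ is indiscernible over $c$. Since the rows are mutually indiscernible, this is exactly the definition of dp-rank of $p$ being at least $\kappa$, with the cut convention in $\card$ handled by taking suprema. So $\bdn(p)\le\dprk(p)$ in any theory.

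\textbf{Part (1), dp-rank $\le$ burden (uses NIP).} Take mutually indiscernible sequences $(\bar{a}_i)_{i<\kappa}$ and $c\models p$ such that no $\bar{a}_i$ is indiscernible over $c$. After shrinking and re-indexing, each row contains a formula $\psi_i(x,y_i)$ and an indiscernible sequence $(b_{ij})_j$, built from pairs of consecutive tuples of the row, such that $c$ realizes $\psi_i(x,b_{i0})$ and its negation on a neighbouring member of the row. Define
\[\theta_i(x;b_{i,2j},b_{i,2j+1}):=\psi_i(x,b_{i,2j})\land\neg\psi_i(x,b_{i,2j+1}).\]
Since $\psi_i$ is NIP, it has a uniform bound $N_i$ on the number of alternations along an indiscernible sequence, so the family $\{\theta_i\}_j$ is $k_i$-inconsistent for some $k_i$ depending on $N_i$. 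For consistency of an arbitrary path, mutual indiscernibility lets us move each row by an automorphism fixing the other rows. This sends the pair realized by $c$ to an arbitrary pair $(2f(i),2f(i)+1)$ simultaneously in all rows. For infinitely many rows, first do this for finitely many rows and then apply compactness. The result is an inp-pattern of depth $\kappa$.

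\textbf{Part (2), burden $\ge$ weight.} Assume $T$ is simple and fix a complete extension $q\supseteq p$ over $B\supseteq A$. Let $c\models q$ and let $(d_i)_{i<\kappa}$ be $B$-independent with $c\nind_B d_i$ for each $i$. In a simple theory forking equals dividing, so choose $\varphi_i(x,d_i)\in\tp(c/Bd_i)$ that $k_i$-divides over $B$. Take Morley sequences $(d_{ij})_j$ in $\tp(d_i/B)$ with $d_{i0}=d_i$. By independence of the $d_i$ and standard properties of Morley sequences, these can be chosen mutually indiscernible over $B$ with all entries independent. Then $\{\varphi_i(x,d_{ij})\}_j$ is $k_i$-inconsistent, and each path has the same type as the diagonal path $\{d_{i0}\}_{i<\kappa}$ over $B$, which $c$ realizes. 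This gives an inp-pattern of depth $\kappa$ in $q$, hence in $p$.

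\textbf{Part (2), burden $\le$ weight.} Given an inp-pattern in $p$ with mutually indiscernible rows over $A$, realize the first column by $c$ and enlarge the base so that $c$ is a complete extension of $p$. Each $\varphi_i(x,a_{i0})$ divides over $A\cup\{\bar{a}_{i'}:i'\neq i\}$, since row $i$ stays indiscernible over that set. This is where I expect the main difficulty: we need an independent family $(e_i)_{i<\kappa}$ such that $c$ forks with each $e_i$. To get it, take $e_i$ to be a suitable initial segment of row $i$ together with a base $B$ built from the tails of all rows. Mutual indiscernibility should then make the $e_i$ independent over $B$, with $c\nind_B e_i$ because $\varphi_i(x,a_{i0})$ divides over $B$. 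The delicate part is checking this independence claim, using symmetry and local character in simple theories. Once it holds, the pattern gives pre-weight at least $\kappa$ for $\tp(c/B)$, which extends $p$.
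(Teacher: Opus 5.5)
The paper does not prove this fact; it is quoted from Adler, so the comparison is with the standard argument. Your Part (1) is that argument. An inp-pattern makes each row non-indiscernible over $c$ in any theory, and the converse uses the NIP alternation bound. To make the extraction of $\psi_i$ and $(b_{ij})_j$ precise, join the two increasing tuples witnessing non-indiscernibility by a chain of tuples differing in one coordinate, and absorb the remaining coordinates into the parameters; this keeps the rows mutually indiscernible.

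\textbf{The gap.} The genuine gap is in burden $\le$ weight, exactly at the step you flag. Mutual indiscernibility does not by itself give forking-independence. This is the only place where simplicity must be used, so without it that direction is unproved. Two facts supply it.
\begin{enumerate}
\item[(a)] Suppose $I+J$ is $D$-indiscernible, $I$ is infinite without a last element, and $J=(b_n)_{n<\omega}$. Then $\tp(b_{<n}/DIb_n)$ is finitely satisfiable in $I$: replace $b_{<n}$ by elements of $I$ lying above the finitely many parameters from $I$. Hence $b_{<n}\ind_{DI}b_n$, and by symmetry $J$ is a Morley sequence over $DI$.
\item[(b)] If $J$ is Morley over $C$ and indiscernible over $Ce$, then $e\ind_C J$. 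Indeed, a formula of $\tp(e/CJ)$ forking over $C$ divides over $C$. By Kim's lemma it then divides along the Morley sequence of consecutive blocks of $J$, yet $e$ realizes all of its instances.
\end{enumerate}
Now index the rows by $\omega+\omega$. Put $B:=A\cup\{a_{ij}:i<\kappa,\,j<\omega\}$ and $e_i:=a_{i\omega}$, and let $c\models p\cup\{\varphi_i(x,e_i):i<\kappa\}$.
\begin{itemize}
\item Applying (a) to row $i$ over $A\cup\{a_{i'j}:i'\neq i,\,j<\omega\}$ shows that $J_i:=(a_{i,\omega+n})_{n<\omega}$ is Morley over $B$.
\item By mutual indiscernibility, $J_i$ is indiscernible over $B\cup\{e_{i'}:i'\neq i\}$. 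So (b) and symmetry give $e_i\ind_B\{e_{i'}:i'\neq i\}$.
\item $J_i$ also witnesses that $\varphi_i(x,e_i)$ divides over $B$, so $c\nind_B e_i$.
\end{itemize}
Thus $\tp(c/B)\supseteq p$ has pre-weight at least $\kappa$. Your orientation, with the base made of tails, also works after reversing the rows, provided the tails have no first element.

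\textbf{A smaller issue.} There is also an unjustified step in burden $\ge$ weight. $B$-independence of the $d_i$ does not by itself make every path have the type of the diagonal: independent tuples whose coordinates have the same types can have different joint types. So ``these can be chosen mutually indiscernible'' needs an argument. In fact mutual indiscernibility is not needed.
\begin{itemize}
\item By induction on $i<\kappa$, let $I_i$ be a Morley sequence in $\tp(d_i/BI_{<i}d_{>i})$ over $BI_{<i}d_{>i}$ starting with $d_i$. This is possible because inductively $I_{<i}\ind_B d_{\geq i}$, hence $d_i\ind_B I_{<i}d_{>i}$.
\item Each $I_i$ is then Morley over $B$. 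Kim's lemma makes $\{\varphi_i(x,d_{ij})\}_j$ inconsistent, hence $k'_i$-inconsistent for some $k'_i$, not necessarily your $k_i$.
\item Given a finite path, replace its entries by the corresponding $d_i$ one row at a time, starting from the largest row index. This shows every path has the same type over $B$ as $(d_i)_{i<\kappa}$.
\end{itemize}
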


\begin{fact}\label{fac: submult of burden}
	\begin{enumerate}
		\item \cite{chernikov2014theories} If $T$ is any theory, then ``burden $+1$'' is \emph{sub-multiplicative}, i.e.~for any tuples $a,b$ in $\M$ and cardinals $\kappa,\lambda$  we have that $\bdn(a) < \kappa, \bdn(b/a) < \lambda$ implies $\bdn(a,b) < \kappa \times \lambda$.
		\item \cite{kaplan2013additivity} If $T$ is NIP, then burden (which is equal to dp-rank in this case) is sub-additive: for any tuples $a,b$, $\bdn(a,b) \leq \bdn(a) + \bdn(b/a)$. 
		By \cite{takahashi2025consequences}, the same is true only assuming that $T$ satisfies the dependent dividing conjecture (from \cite{chernikov2014theories}).
		\item If $T$ is simple, then burden is also sub-additive (by sub-additivity of weight in simple theories and Fact \ref{fct: burden in simple and NIP}(2)). 
	\end{enumerate}
\end{fact}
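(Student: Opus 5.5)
The statement collects three results from the literature, so the plan is to reduce (2) and (3) to the cited sources and to reconstruct the argument for (1), since that is the item used most heavily later.

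\emph{Item (2).} This is the sub-additivity theorem of Kaplan--Onshuus--Usvyatsov \cite{kaplan2013additivity} for dp-rank. Together with Fact \ref{fct: burden in simple and NIP}(1) it gives the statement for burden in NIP theories. The strengthening under the dependent dividing conjecture is taken from \cite{takahashi2025consequences}.

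\emph{Item (3).} This follows from Fact \ref{fct: burden in simple and NIP}(2) together with the classical sub-additivity of (pre)weight in simple theories, $w(ab)\le w(a)+w(b/a)$. Since the burden is the supremum of weights of complete extensions, one takes complete extensions of $\tp(a)$ and $\tp(b/a)$ and passes to suprema.

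\emph{Item (1): the criterion.} I would work over a base set $A$ and first establish the following characterization of burden. We have $\bdn(a/A)<\kappa$ if and only if the following holds for every array $(c_{i,j})_{i<\kappa,j<\omega}$ that is mutually indiscernible over $A$ and whose first column (in the pattern sense) is consistent with $\tp(a/A)$: there are some $a'\equiv_A a$ and some row $i$ such that the row $(c_{i,j})_j$ is indiscernible over $Aa'$. Moreover, $a'$ can be chosen to satisfy the relevant formulas with the column-$0$ parameters.

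The forward direction is the usual argument. Start from a putative inp-pattern, extract a mutually indiscernible array by Ramsey and compactness, and realize a path so that, if no row is indiscernible, one can ``stretch'' and build a pattern of depth $\kappa$. In the proof I only need the direction ``small burden gives an indiscernible row''.

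A key point is that the criterion applies equally to arrays whose rows are themselves blocks of rows, concatenated into single tuples. This works because mutual indiscernibility is preserved under grouping.

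\emph{Item (1): the main argument.} Suppose for contradiction that there is an inp-pattern $(\varphi_i(x_a x_b, y_i), (c_{i,j})_j, k_i)_{i\in\kappa\times\lambda}$ in $\tp(ab)$. I would carry out the following steps in order.
\begin{enumerate}
\item Make the array mutually indiscernible (Fact \ref{fact: indiscernible witness to TP2}-style Ramsey argument), and conjugate so that $ab$ realizes all formulas in column $0$.
\item Group the rows into $\kappa$ blocks $B_\alpha$, $\alpha<\kappa$, each consisting of $\lambda$ rows. View each block as a single row whose $j$-th entry is the tuple $(c_{i,j})_{i\in B_\alpha}$.
\item Apply the criterion to $a$ with $\bdn(a)<\kappa$. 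This gives $a'\equiv a$ and a block $B_\alpha$ whose ``row'' is indiscernible over $a'$.
\item Conclude that the rows of $B_\alpha$ are mutually indiscernible over $a'$.
\item Transport $b$ along with the conjugation to $b'$, so that $a'b'$ still satisfies the column-$0$ formulas.
\item Apply the criterion again, now to $b'$ over $a'$ with $\bdn(b/a)<\lambda$. This gives $b''\equiv_{a'} b'$ and a row $i\in B_\alpha$ indiscernible over $a'b''$.
\item Since $\models\varphi_i(a'b'',c_{i,0})$, indiscernibility yields $\models\varphi_i(a'b'',c_{i,j})$ for all $j$. This contradicts $k_i$-inconsistency of the row.
\end{enumerate}

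\emph{Main obstacle.} The delicate step is making the two applications of the criterion compatible. After replacing $a$ by $a'$, one must keep $b'$ satisfying the column-$0$ instances jointly with $a'$. One must also ensure that the block's rows remain mutually indiscernible over $a'$, which is needed to feed the second application. I would handle this by working throughout with the ``column-$0$ realized'' version of the criterion and building $a'$ by an automorphism fixing the first column. I would also take care with limit cardinals, using the $\kappa_-/\kappa_+$ convention of $\card$, where I expect some bookkeeping but no new idea.
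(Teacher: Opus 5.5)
Items (2) and (3) match the paper, which only gives the citations and the weight argument. The problem is your reconstruction of (1): Step 4 is a genuine gap.

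\textbf{Why Step 4 fails.} Indiscernibility over $a'$ of the concatenated sequence $(d_j)_{j<\omega}$, $d_j=(c_{i,j})_{i\in B_\alpha}$, only controls configurations in which every row of the block is sampled at the same column indices. Mutual indiscernibility over $a'$ needs more: one must be able to shift a single row while the others stay fixed, and this does not follow.

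For a concrete counterexample, work in the random $3$-hypergraph. Take rows $(u_j)_j,(v_j)_j$ of distinct vertices with no hyperedges among them, so they are mutually indiscernible over $\emptyset$. Take $a'$ with $R(a',u_j,v_{j'})\leftrightarrow j=j'$ and no other hyperedges through $a'$. Then $((u_j,v_j))_j$ is indiscernible over $a'$, but $(u_j)_j$ is not indiscernible over $a'v_0$.

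The criterion applied in Step 3 only yields consistency of $\bigcup_j q(x,d_j)$, where $q(x,d_0)=\tp(a/d_0)$, i.e.\ along the diagonal. Feeding the block into the $\bdn(b/a)<\lambda$ criterion requires control over all paths of the block over $a'$. Your proposed remedy, choosing $a'$ by an automorphism fixing column $0$, does not address this. So the one-shot ``$\kappa$ blocks of $\lambda$ rows'' argument does not go through as written.

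Separately, your criterion should deliver $a'\equiv_{A\,c_{i,0}}a$, the type over the first element of the chosen row, not merely $a'\equiv_A a$.

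\textbf{A route that works.} Transpose the grouping and apply the $a$-criterion $\lambda$ times in succession. Split the rows into $\lambda$ groups $G_\beta$ of $\kappa$ rows each, and put $a_0=a$. At stage $\beta$, let $C_\beta$ be $C$ together with all rows outside $G_\beta$. The rows of $G_\beta$ are mutually indiscernible over $C_\beta$, and $\bdn(a_\beta/C_\beta)\le\bdn(a/C)<\kappa$. The criterion therefore gives a row $R_\beta\in G_\beta$ and $a_{\beta+1}\equiv_{C_\beta\, c_{R_\beta,0}}a_\beta$ such that $R_\beta$ is indiscernible over $a_{\beta+1}C_\beta$. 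At limit stages, realize the union of the coherent types by compactness.

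Stage $\gamma$ only changes the type over $G_\gamma$ minus $c_{R_\gamma,0}$. Hence the final $a^*$ satisfies $a^*\equiv_{C\,(c_{R_\beta,0})_\beta}a$, and each $R_\beta$ is indiscernible over $a^*C\cup\bigcup_{\gamma<\beta}R_\gamma\cup\{c_{R_\gamma,0}:\gamma>\beta\}$. This almost mutual indiscernibility over $a^*C$ suffices to move the first column to any path.

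Now take $b^*$ with $a^*b^*\equiv_{C\,(c_{R_\beta,0})_\beta}ab$. The rows $R_\beta$, with the formulas $\varphi_{R_\beta}(a^*,y,z)$, form an inp-pattern of depth $\lambda$ in $\tp(b^*/a^*C)$. This contradicts $\bdn(b/aC)<\lambda$. The point is that you never need a whole block to be mutually indiscernible over a conjugate of $a$; a transversal, one row from each group, is enough.
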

\noindent It is open if burden is sub-additive in general (or in $\NTP_2$) theories \cite[Conjecture 2.7]{chernikov2014theories}.

\begin{prop}\label{prop: colorful FHP meas bdd by burden}
Assume that $\bdn\left(x\right)<k$ and let $\varphi_{i}\left(x,y_{i}\right) \in L$,
$i \in [k]$, satisfiy $\FHP$ relatively to a class of definable measures $\mathfrak{M}_{i} \subseteq \mathfrak{M}_{y_i}(\M)$ (see Definition \ref{def: FHP for a class of measures}). Then for every $\alpha>0$ there is
$\gamma>0$ satisfying the following. 

Let $\mu_{i}\in\mathfrak{M}_{i}$ be such that $\mu_{1},\ldots,\mu_{k}$ are pairwise commuting (in particular each $\mu_i$ commutes with itself).
 Assume that $\mu_{1}\otimes\ldots\otimes\mu_{k}\left(\exists x\bigwedge_{i=1}^{k}\varphi_{i}\left(x,y_{i}\right)\right)\geq\alpha$.
Then there is some $i\in [k]$ and some $a\in \M^{x}$
such that $\mu_{i}\left(\varphi_{i}\left(a,y_{i}\right)\right)\geq\gamma$.
\end{prop}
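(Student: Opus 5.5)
The plan is to argue directly, extracting an $\inp$-pattern of depth $k$ in $x=x$ from a failure of the conclusion; this would contradict $\bdn(x)<k$. For each $i\in[k]$ fix $m_i$ such that $\varphi_i(x,y_i)$ satisfies $\FHP_{m_i}$ relatively to $\mathfrak{M}_i$.

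\emph{Step 1: choose $d$ by compactness.} Since $\bdn(x)<k$, there is no $\inp$-pattern of depth $k$ using the formulas $\varphi_1,\ldots,\varphi_k$ with $k_i=m_i$. By compactness there is a finite $d\in\mathbb{N}$ such that there is no finite array $(a_{i,j}: i\in[k], j\in[d])$ with both of the following properties:
\begin{itemize}
\item for every $f:[k]\to[d]$, the set $\{\varphi_i(x,a_{i,f(i)}): i\in[k]\}$ is consistent;
\item for every $i$, the row $\{\varphi_i(x,a_{i,j}): j\in[d]\}$ is $m_i$-inconsistent.
\end{itemize}
Indeed, if such finite arrays existed for every $d$, compactness would yield an infinite one. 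This $d$ depends only on the $\varphi_i$, not on the measures.

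\emph{Step 2: choose $\gamma$.} Let $\delta>0$ satisfy $k\binom{d}{\max_i m_i}\delta<\alpha^{dk}$. For each $i$, let $\beta_i$ be given by $\FHP_{m_i}$ relative to $\mathfrak{M}_i$ for $\delta$, and set $\gamma:=\min_i\beta_i$.

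\emph{Step 3: derive the contradiction.} Suppose that $\mu_i(\varphi_i(a,y_i))<\gamma$ for all $i$ and all $a$. By the contrapositive of $\FHP$, each $\mu_i^{\otimes m_i}\bigl(\exists x\bigwedge_{j\le m_i}\varphi_i(x,y_{i,j})\bigr)\le\delta$.

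Apply Lemma \ref{lem: Erdos-Stone} to $R:=\exists x\bigwedge_i\varphi_i(x,y_i)$. It gives $\mu_1^{\otimes d}\otimes\cdots\otimes\mu_k^{\otimes d}(\Sigma_d)\ge\alpha^{dk}$. A tuple in $\Sigma_d$ is exactly an array whose paths are all consistent.

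Now consider the set $B$ of tuples in which some row $i$ has some $m_i$-subset of indices $J\subseteq[d]$ with $\exists x\bigwedge_{j\in J}\varphi_i(x,y_{i,j})$. Using associativity and pairwise commutativity (Fact \ref{fac: otimes props}), we may permute the coordinates so that the variables $(y_{i,j})_{j\in J}$ come first. The measure of each such event is then its $\mu_i^{\otimes m_i}$-measure, which is at most $\delta$. A union bound gives that the product measure of $B$ is at most $k\binom{d}{\max m_i}\delta<\alpha^{dk}$.

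Hence $\Sigma_d\setminus B$ has positive measure, so it is nonempty. Any element of it is an array forbidden by the choice of $d$, which is a contradiction.

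The main obstacle I expect is making the marginal computation rigorous. We need that the product measure of a set depending only on some coordinates equals the product of the corresponding marginals. This is where pairwise commutativity and associativity of $\otimes$ for definable measures are essential; it may need a short lemma showing that $\mu\otimes\nu(X\times \M^y)=\mu(X)$ and that coordinates can be reordered.
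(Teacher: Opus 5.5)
Your argument is the paper's proof: compactness applied to $\bdn(x)<k$ gives a finite width with no forbidden array. The contrapositive of relative $\FHP$ plus a union bound makes every row $m_i$-inconsistent with measure close to $1$. Lemma \ref{lem: Erdos-Stone} makes all paths consistent with measure at least $\alpha^{dk}$. The paper uses one common arity $d=\max_i d_i$ for all rows instead of your separate $m_i$, and it relies on the same marginal computation you flag (self-commutation and associativity).

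One small arithmetic fix is needed. The inequality ${d \choose m_i}\le{d \choose \max_j m_j}$ is false in general, since binomial coefficients are not monotone in the lower index. Choose $\delta$ with $\sum_{i\in[k]}{d \choose m_i}\,\delta<\alpha^{dk}$ instead. Alternatively, pass to a common arity $m=\max_i m_i$ as the paper does; this is legitimate because relative $\FHP_m$ is monotone in $m$, as the $m'$-fold intersection event projects into the $m$-fold one.
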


\begin{proof}
Let $\varphi_{i}\left(x,y_{i}\right) \in L$ and $\mu_i \in \mathfrak{M}_i$ for $i \in [k]$ be given,
and let $\alpha>0$ be arbitrary.

By Definition \ref{def: burden} and compactness, for every $d\in\mathbb{N}$
there is some $D=D\left(d\right)\in\mathbb{N}$ such that there is 
\textbf{no} rectangular array $\left(b_{i,j}\in \M^{y_{i}}: i \in [k], j \in [D]\right)$
satisfying
\begin{lyxlist}{00.00.0000}
\item [{$\left(*\right)$}] $\left\{ \varphi_{i}\left(x,b_{i,f\left(i\right)}\right): i \in [k] \right\} $
is consistent for any $f:\left[k\right]\to\left[D\right]$,
\item [{~}] $\left\{ \varphi_{i}\left(x,b_{i,j}\right): j \in [D] \right\} $
is $d$-inconsistent for any $1\leq i\leq k$.
\end{lyxlist}
By assumption $\varphi_{i}$ satisfies $\FHP_{d_{i}}$ for some $d_{i}\in\mathbb{N}$,
let $d:=\max\left\{ d_{i}:1\leq i\leq k\right\} $, and let $D=D\left(d\right)$
be as above.

For each $i \in [k]$, let $\psi_{i}\left(\bar{y}_{i}\right) := \bigwedge_{s\in{[D] \choose d}}\left(\neg\exists x\bigwedge_{j\in s}\varphi_{i}\left(x,y_{i,j}\right)\right)$,
where $\bar{y}_{i}=y_{i,1}\ldots y_{i,D}$ with all of $y_{i,j}$
of the same sort as $y_i$. Let $\psi\left(\bar{y}\right):=\bigwedge_{1\leq i\leq k}\psi_{i}\left(\bar{y}_{i}\right)$,
where $\bar{y}=\bar{y}_{1}\ldots\bar{y}_{k}$.

Fix $\beta = \beta(\alpha, k, D)>0$, to be determined later.

If $\mu_{i}^{\otimes d }\left(\exists x\bigwedge_{j \in [d]}\varphi_{i}\left(x,y_{i,j}\right)\right)\geq\beta$
for some $i \in [k]$, then, as $\varphi_{i}(x,y_i)$ satisfies $\FHP_{d}$ relatively
to $\mathfrak{M}_{i}$, there is some $\gamma_{i}=\gamma_{i}\left(\beta\right)>0$
and some $a\in \M^{x}$ such that $\mu_{i}\left(\varphi\left(a,y_{i}\right)\right)\geq\gamma_{i}$.
Then taking $\gamma:=\min\left\{ \gamma_{i}: i \in [k]\right\} $
we would be done.

So assume that $\mu_{i}^{\otimes d}\left(\exists x\bigwedge_{j \in [d]}\varphi_{i}\left(x,y_{i,j}\right)\right)<\beta$
for all $i \in [k]$. Then, by the union bound, 
$$\mu_{i}^{\otimes D}\left(\neg\psi_{i}\left(\bar{y}_{i}\right)\right)\leq{D \choose d}\mu_{i}^{\otimes d}\left(\exists x\bigwedge_{j \in [d]}\varphi_{i}\left(x,y_{i,j}\right)\right)\leq{D \choose d}\beta,$$
hence $\mu_{1}^{\otimes D}\otimes\ldots\otimes\mu_{k}^{\otimes D}\left(\neg\psi\left(\bar{y}\right)\right)\leq k{D \choose d}\beta$,
and so $\mu_{1}^{\otimes D}\otimes\ldots\otimes\mu_{k}^{\otimes D}\left(\psi\left(\bar{y}\right)\right)\geq1-k{D \choose d}\beta$.

On the other hand, we are assuming that $\mu_{1}\otimes\ldots\otimes\mu_{k}\left(\exists x\bigwedge_{i=1}^{k}\varphi_{i}\left(x,y_{i}\right)\right)\geq\alpha$.
Let $\theta\left(\bar{y}\right):=\bigwedge_{\left(j_{1},\ldots,j_{k}\right)\in [D]^{k}}\left(\exists x\bigwedge_{1\leq i\leq k}\varphi_{i}\left(x,y_{i,j_{i}}\right)\right)$.
Then applying Lemma \ref{lem: Erdos-Stone} to $R\left(y_{1},\ldots,y_{k}\right):=\exists x\bigwedge_{i=1}^{k}\varphi_{i}\left(x,y_{i}\right)$
(noting that $\Sigma_{d}\left(\bar{y}\right)=\theta\left(\bar{y}\right)$
in this case), we have that $\mu_{1}^{\otimes D}\otimes\ldots\otimes\mu_k^{\otimes D}\left(\theta\left(\bar{y}\right)\right)\geq\alpha^{Dk}$.

Let $\mu:=\mu_{1}^{\otimes D}\otimes\ldots\otimes\mu_{k}^{\otimes D}$.
Then, if $k{D \choose d}\beta<\alpha^{Dk}$, we have $\mu\left(\psi\left(\bar{y}\right)\land\theta\left(\bar{y}\right)\right)>0$.
But any realization $\left(b_{i,j}: i \in [k], j \in [D] \right)$
of $\psi\left(\bar{y}\right)\land\theta\left(\bar{y}\right)$ would
satisfy $\left(*\right)$ --- contradicting the choice of $D$.

Hence taking $\beta>0$ sufficiently small compared to $\alpha,k$
and $D$, we see that $\gamma>0$ above can be chosen depending only
on $\varphi_{1},\ldots,\varphi_{k}$ and $\alpha$, as wanted.
\end{proof}

\begin{rem}
	Conversely, assume $\bdn(\M^x) \geq k$, witnessed by $\varphi_i(x,y_i)$, $k_i$ and $\bar{a}=(a_{i,j})_{j \in \omega}$ for $i \in [k]$. Then for every $\gamma > 0$, taking $\mu_i \in \mathfrak{M}^{\fin}_{y_i}(\M)$ to be the measure supported on the finite set $\{b_{i,j} : j \in [n]\}$ with sufficiently large $n$ so that $\gamma > k_i/n$, we have $\bigotimes_{i \in [k]}\mu_i(\exists x\bigwedge_{i=1}^{k}\varphi_{i}\left(x,y_{i}\right)) = 1$, but $\mu_i(\varphi(a,y_i)) < \gamma$ for all $i \in [k]$ and $a \in \M^x$. Hence Proposition \ref{prop: colorful FHP meas bdd by burden} gives a measure-theoretic characterization of burden in $\FHP$ theories.
\end{rem}
\begin{rem}\label{rem: generalizing Pillay}
	Proposition \ref{prop: colorful FHP meas bdd by burden} generalizes and refines \cite{pillay2013weight} applied with $\mathfrak{M}_i = \mathfrak{M}^{\fam}$, which in an NIP theory corresponds to the class of generically stable measures. But also, by Theorem \ref{thm: FHP in MS meas}, applies to pseudofinite fields with the class $\mathfrak{M}_i$ given by localizing the ultralimit of counting measures to arbitrary definable sets.
\end{rem}

A partitioned formula $\varphi(x,y) \in L$ has (dual) \emph{$\VC$-density} $\leq \ell \in \mathbb{R}$ if there exists $K \in \mathbb{R}$ such that, for all finite $B \subseteq \M^y$, $|S_{\varphi}(B)| \leq K |B|^{\ell}$ (note that $|S_{\varphi}(B)|$ is equal to the dual shatter function $\pi^{\ast}_{\mathcal{F}_{\varphi}}$ for the family $\mathcal{F}_{\varphi} = \{\varphi(\M,b) : b \in \M^y\}$ of subsets of $\M^x$). And we let its $\VC$-density $\vc^{\ast}(\varphi)$ be the infimum of all such $\ell \in \mathbb{R}$. For a complete theory $T$ and a tuple of variables $x$, we let $\vc^{\ast}_T(x)$ denote the supremum of $\vc^{\ast}(\varphi)$ over all formulas $\varphi(x,y) \in L$ (with $x$ fixed and $y$ arbitrary).  The following observation appears in \cite{aschenbrenner2016vapnik, kaplan2013additivity, guingona2014vapnik}:
\begin{fact}\label{fac: vc-dens bounds burden}
For any theory $T$ and tuple of variables $x$, $\bdn(\M^x) \leq \vc^{\ast}_T(x)$.
\end{fact}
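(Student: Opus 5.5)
We argue by contraposition: assuming an $\inp$-pattern of depth $k$ in $\M^x$, we bound the VC-density of some formula $\varphi(x,y)$ from below by $k$, which gives $\vc^{\ast}_T(x)\geq k$. If $\bdn(\M^x)$ is infinite, the same argument shows $\vc^{\ast}_T(x)\geq k$ for every $k$, so the inequality in $\card$ follows.

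First we simplify the pattern. Fix an $\inp$-pattern $(\bar a_i,\varphi_i(x,y_i),k_i)_{i<k}$ in $x=x$. We combine the $\varphi_i$ into a single formula
\[\varphi(x;y_0\ldots y_{k-1},z_0\ldots z_{k-1}):=\bigvee_{i<k}\Big(\varphi_i(x,y_i)\land z_i=z_0'\Big)\]
with a selector coding, or simply use a finite family $\Delta=\{\varphi_i\}$, since VC-density of a finite set of formulas is bounded by that of a single coded formula. Using the usual coding with equality on two constants, we work with $\Delta$-types. We take $k_i\leq K$ for all $i$.

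Next we count types over a finite set. For $n\in\mathbb{N}$, let $B_n:=\{a_{i,j}: i<k,\ j<n\}$, so $|B_n|=kn$. For each $f:[k]\to[n]$, choose a realization $c_f$ of $\{\varphi_i(x,a_{i,f(i)}):i<k\}$. Its $\Delta$-type over $B_n$ is determined by the sets $J_i(f):=\{j<n:\models\varphi_i(c_f,a_{i,j})\}$. Each $J_i(f)$ has size $<k_i$ by $k_i$-inconsistency, and it contains $f(i)$. Hence a single $\Delta$-type arises from at most $\prod_i (k_i-1)\leq K^k$ functions $f$. Therefore the number of $\Delta$-types over $B_n$ is at least $n^k/K^k=c\,|B_n|^k$ for a constant $c>0$.

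This rules out $\vc^{\ast}\leq \ell$ for any $\ell<k$. It shows only $\vc^{\ast}\geq k$ rather than $>k$, which matches the stated inequality $\bdn<\dots$ fails / $\bdn\leq\vc^{\ast}$. Taking suprema over depths, and handling the $\kappa_\pm$ conventions trivially since VC-density is real-valued, gives $\bdn(\M^x)\leq\vc^{\ast}_T(x)$.

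The only delicate point is the passage from the finite family $\Delta$ to a single partitioned formula without increasing the VC-density. This is standard: the number of $\varphi$-types over a parameter set built from $B_n$ and a few constants is at least the number of $\Delta$-types, and the parameter set is only a constant factor larger.
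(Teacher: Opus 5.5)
Your argument is correct and is the standard counting argument behind this fact; the paper gives no proof of its own and only cites \cite{aschenbrenner2016vapnik, kaplan2013additivity, guingona2014vapnik}. An $\inp$-pattern of depth $k$ gives at least $n^k/\prod_{i<k}(k_i-1)$ types of a single coded formula over at most $kn$ parameters, which forces $\vc^{\ast}\geq k$. Two small slips to fix: what your multiplicity count uses is that the $\Delta$-type of $c_f$ \emph{determines} the sets $J_i(f)$ (not that it is determined by them), and the selector variable $z_0'$ must be listed among the parameter variables of the coded formula.
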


\noindent It is a well-known open problem (stated in various variants in \cite{aschenbrenner2016vapnik, kaplan2013additivity, guingona2014vapnik}) if there exists a function $f: \mathbb{N} \to \mathbb{N}$ so that for any NIP theory $T$ we have $ \vc^{\ast}_T(x) \leq f \left( \bdn(\M^x) \right)$ (one can take $f$ to be linear in all known examples). Matousek's theorem (Fact \ref{fact: FHP for counting}) demonstrates that the fractional Helly number of a formula $\varphi(x,y)$
is bounded by its dual VC-density $\vc^{\ast}(\varphi)$. Proposition \ref{prop: colorful FHP meas bdd by burden} implies that in fact it is bounded by the burden/dp-rank of $\M^x$: 
\begin{cor}\label{cor: FHPk bounded by burden}
In any FHP (so e.g.~in NIP) theory $T$, the fractional Helly number of a formula $\varphi\left(x,y\right)$ is at
most $\bdn\left(\M^{x} \right) +1$.
\end{cor}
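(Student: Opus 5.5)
Write $k := \bdn(\M^x)+1$; we may assume $\bdn(\M^x)$ is finite, since otherwise there is nothing to prove. The plan is to apply Proposition \ref{prop: colorful FHP meas bdd by burden} in its ``monochromatic'' form. We take all $k$ colors equal: $\varphi_i(x,y_i) := \varphi(x,y_i)$ for $i \in [k]$. For the class of measures we take $\mathfrak{M}_i := \mathfrak{M}^{\fin}_y(\M)$, the finitely supported measures. Since $T$ is FHP, $\varphi$ satisfies $\FHP_{d}$ for some $d$. By Proposition \ref{rem: FHP iff FHP for finite measures}, it then satisfies $\FHP_{d}$ relatively to $\mathfrak{M}^{\fin}_y(\M)$, so the hypothesis of Proposition \ref{prop: colorful FHP meas bdd by burden} is met. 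We also have $\bdn(\M^x) < k$.

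Next we show that $\varphi$ satisfies $\FHP_k$ relatively to $\mathfrak{M}^{\fin}_y$; by the other direction of Proposition \ref{rem: FHP iff FHP for finite measures}, this gives $\FHP_k$ in the sense of Definition \ref{def: FHP}. Fix $\alpha>0$ and let $\gamma = \gamma(\alpha)$ be given by Proposition \ref{prop: colorful FHP meas bdd by burden}. Let $\mu \in \mathfrak{M}^{\fin}_y$ satisfy $\mu^{\otimes k}\bigl(\exists x \bigwedge_{i\in[k]}\varphi(x,y_i)\bigr) > \alpha$. Put $\mu_1 = \cdots = \mu_k := \mu$.

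We must check that finitely supported measures are definable and commute with each other, in particular with themselves. This holds because for $\mu$ supported on a finite set $B$, the quantity $\mu\otimes\nu(\theta)$ is a finite weighted sum over points of $B$. Directly, or via Fact \ref{fac: otimes props}(2) since such measures are fam/fim, the products commute, and $\mu_1\otimes\cdots\otimes\mu_k = \mu^{\otimes k}$. Proposition \ref{prop: colorful FHP meas bdd by burden} then yields some $i$ and some $a \in \M^x$ with $\mu(\varphi(a,y)) = \mu_i(\varphi(a,y_i)) \ge \gamma$. Taking $\beta$ slightly below $\gamma$ (or $\gamma$ itself, adjusting the strict versus non-strict inequalities in Definition \ref{def: FHP for a class of measures}) gives $\FHP(k,\alpha,\beta)$ relatively to $\mathfrak{M}^{\fin}_y$.

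Hence the fractional Helly number of $\varphi$ is at most $k = \bdn(\M^x)+1$. The NIP case follows because NIP formulas are FHP by Fact \ref{fac: NIP implies FHP}. The only step needing care is the verification that the measures fed into the proposition are definable and pairwise commuting; for finitely supported measures this is immediate. No other obstacle is expected, since the burden bound is already built into Proposition \ref{prop: colorful FHP meas bdd by burden}.
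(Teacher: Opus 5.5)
Your proposal is correct and is the paper's proof: you apply Proposition \ref{prop: colorful FHP meas bdd by burden} with $\varphi_1=\dots=\varphi_k:=\varphi$ and all $\mu_i$ equal to one finitely supported measure, using Proposition \ref{rem: FHP iff FHP for finite measures} in both directions to pass between combinatorial and measure-theoretic $\FHP$. Your check that finitely supported measures are definable and (self-)commuting, and your adjustment of the strict versus non-strict inequalities, only fill in details the paper leaves implicit.
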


\begin{proof}
Applying Proposition \ref{prop: colorful FHP meas bdd by burden} with  $\varphi_1 = \ldots = \varphi_k := \varphi$ and arbitrary $\mu_1 = \ldots = \mu_k \in \mathfrak{M}^{\fin}_y$ (using Proposition \ref{rem: FHP iff FHP for finite measures}).
\end{proof}

The following ``colorful'' version of the fractional Helly property
was proved for convex sets in \cite{barany2014colourful}
(see also \cite{kim2015note}):
\begin{thm}
Let $\mathcal{F}_{1},\ldots,\mathcal{F}_{d+1}$ be finite non-empty
families of convex sets in $\mathbb{R}^{d}$, and assume that $\alpha\in\left(0,1\right]$.
If at least $\alpha\left|\mathcal{F}_{1}\right|\ldots\left|\mathcal{F}_{d+1}\right|$
tuples of the form $\left(S_{1},\ldots,S_{d+1}\right)$, $S_{i}\in\mathcal{F}_{i}$,
have a non-empty intersection, then some $\mathcal{F}_{i}$ contains
an intersecting subfamily of size $\beta\left|\mathcal{F}_{i}\right|$,
with $\beta=\frac{\alpha}{d+1}$.
\end{thm}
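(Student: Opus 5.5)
The plan is to use the \emph{lexicographic minimum} argument (as in \cite{kim2015note}) and reduce everything to a single combinatorial pigeonhole.

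\textbf{Reduction to compact sets.} For each intersecting colorful tuple $(S_1,\ldots,S_{d+1})$, choose a witness point in $\bigcap S_i$. Replace every set $S\in\mathcal{F}_i$ by the convex hull of the finitely many chosen witness points lying in $S$. This only shrinks the sets, so intersecting subfamilies of the new families are intersecting in the old ones. It also keeps every originally intersecting colorful tuple intersecting. Hence we may assume all sets are compact convex polytopes (possibly empty; empty sets play no role). For a nonempty compact convex $K$, let $\mathrm{lexmin}(K)$ be its lexicographically smallest point, which exists by compactness, minimizing coordinates one at a time.

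\textbf{Key lemma.} If $K_1,\ldots,K_m$ are compact convex sets with $m\ge d+1$ and nonempty intersection, then there are $d$ indices $j_1,\ldots,j_d$ with $\mathrm{lexmin}(K_{j_1}\cap\cdots\cap K_{j_d})=\mathrm{lexmin}(\bigcap_j K_j)$.

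To prove it, let $p$ be the lexmin of the full intersection and $L:=\{z\in\mathbb{R}^d: z<_{\mathrm{lex}}p\}$. The set $L$ is convex, and by the choice of $p$ the family $\{K_1,\ldots,K_m,L\}$ has empty intersection. By Helly's theorem for finite families of arbitrary (not necessarily closed) convex sets, some $d+1$ members have empty intersection. These must include $L$, since the $K_j$ alone intersect. So some $d$ of the $K_j$ have an intersection disjoint from $L$. That intersection contains $p$ and no point below $p$, so its lexmin is $p$.

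\textbf{Counting.} Apply the lemma to each intersecting colorful tuple $\tau=(S_1,\ldots,S_{d+1})$. This gives a missing color $i(\tau)$ such that the lexmin of $\bigcap_{j\ne i(\tau)}S_j$ lies in $S_{i(\tau)}$. By pigeonhole over the $d+1$ colors, some color $i$ satisfies $i(\tau)=i$ for at least $\frac{\alpha}{d+1}|\mathcal{F}_1|\cdots|\mathcal{F}_{d+1}|$ tuples. Group these tuples by their $d$-tuple $\sigma\in\prod_{j\ne i}\mathcal{F}_j$. Averaging over the $\prod_{j\ne i}|\mathcal{F}_j|$ choices of $\sigma$, some $\sigma$ has at least $\frac{\alpha}{d+1}|\mathcal{F}_i|$ sets $S\in\mathcal{F}_i$ with $\mathrm{lexmin}(\bigcap\sigma)\in S$. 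These sets all share that point, so they form an intersecting subfamily of $\mathcal{F}_i$ of size at least $\beta|\mathcal{F}_i|$ with $\beta=\frac{\alpha}{d+1}$.

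\textbf{Main obstacle.} The delicate step is the key lemma, specifically applying Helly to the non-closed set $L$. I will either invoke Helly's theorem in its form for arbitrary convex sets in a finite family (its Radon-based proof never uses closedness), or check directly that $L$ is convex: it is a union of nested half-space-like pieces, and convexity follows coordinate by coordinate. The reduction to polytopes must also preserve the count of intersecting colorful tuples, which is why witness points are chosen before the convex hulls are taken.
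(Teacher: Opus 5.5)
The paper gives no proof of this theorem. It is quoted from \cite{barany2014colourful} (see also \cite{kim2015note}) only as background for Corollary~\ref{cor: colorful Matousek}, so there is no in-paper argument to compare against.

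Your proof is correct and complete. It is the standard lexicographic-minimum argument (the colorful version of the classical proof of the fractional Helly theorem), and it yields exactly the constant $\beta=\frac{\alpha}{d+1}$. Three small points are worth stating explicitly:
\begin{enumerate}
\item Treat the families as indexed lists, so that replacing sets by polytopes cannot merge two members or lose repetitions.
\item If Helly's theorem produces $L$ together with fewer than $d$ of the $K_j$, pad with arbitrary further $K_j$ to get exactly one missing color.
\item $L$ is convex because the lexicographically positive vectors form a cone closed under positive linear combinations.
\end{enumerate}
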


\noindent Here ``colors'' correspond to several families of sets instead of
one, and the usual FHP follows by taking all of these families to
be equal to each other. Proposition \ref{prop: colorful FHP meas bdd by burden} (combined with Fact \ref{fac: vc-dens bounds burden}) implies a generalization
of Matousek's theorem (Fact \ref{fact: FHP for counting}) to a colorful version:
\begin{cor}
\label{cor: colorful Matousek}Assume that $\mathcal{F}$ is a family
of sets with $\pi^{\ast}_{\mathcal{F}}\left(n\right)=o\left(n^{d}\right)$.
Let $\alpha\in\left(0,1\right]$. Then there is some $\beta$ satisfying
the following. Let $\mathcal{F}_{1},\ldots,\mathcal{F}_{d}\subseteq\mathcal{F}$
be finite non-empty families. If at least $\alpha\left|\mathcal{F}_{1}\right|\ldots\left|\mathcal{F}_{d}\right|$
tuples of the form $\left(S_{1},\ldots,S_{d}\right)$, $S_{i}\in\mathcal{F}_{i}$,
have a non-empty intersection, then some $\mathcal{F}_{i}$ contains
an intersecting subfamily of size $\beta\left|\mathcal{F}_{i}\right|$.
\end{cor}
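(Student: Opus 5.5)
We plan to deduce Corollary \ref{cor: colorful Matousek} from Proposition \ref{prop: colorful FHP meas bdd by burden}, applied with finitely supported measures, after encoding the abstract family $\mathcal{F}$ as a definable family in a suitable structure. Let $X$ be the ground set of $\mathcal{F}$. Consider the two-sorted structure $\mathcal{M} = (X, \mathcal{F}; \in)$ with the membership relation, and the formula $\varphi(x,y) := x \in y$. Then $\mathcal{F}_\varphi = \mathcal{F}$, so the dual shatter function of $\varphi$ in $\mathcal{M}$ is $\pi^*_{\mathcal{F}}$, which by assumption is $o(n^d)$.

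We must pass to $T = \Th(\mathcal{M})$ and its monster model, and here lies the main subtlety: $\vc^\ast_T(x)$ is a supremum over \emph{all} formulas in the variable $x$, not only over $\varphi$. To avoid needing a bound on every formula, we would instead run the proof of Proposition \ref{prop: colorful FHP meas bdd by burden} directly with the single formula $\varphi$ (so $\varphi_1=\dots=\varphi_d=\varphi$ and $k=d$). The burden hypothesis there is used only to produce, for the given $d'$, a $D$ such that no $d\times D$ array of $\varphi$-instances has consistent paths and $d'$-inconsistent rows. We obtain this from the dual shatter bound by the argument behind Fact \ref{fac: vc-dens bounds burden}. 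An array with $D$ columns in each of $d$ rows has $dD$ parameters. Every path $f\colon[d]\to[D]$ is realized by some $a$, and distinct paths give distinct $\varphi$-types over these parameters, up to a bounded multiplicity: each type contains fewer than $d'$ positive instances from each row, so it is shared by at most $(d'-1)^d$ paths. Hence $\pi^*_{\mathcal{F}}(dD) \geq D^d/(d'-1)^d$, contradicting $\pi^*_{\mathcal{F}}(n)=o(n^d)$ for large $D$. This is a statement about finite configurations of sets in $\mathcal{F}$, so it holds in $\mathcal{M}$ and transfers to the monster model.

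We also need $\FHP$ for $\varphi$ relative to $\mathfrak{M}^{\fin}_y$. This follows from Matou\v{s}ek's theorem (Fact \ref{fact: FHP for counting}) together with Proposition \ref{rem: FHP iff FHP for finite measures}, giving $\FHP_{d}$, so we may take $d'=d$. Finitely supported measures are definable and commute pairwise, since products of averages are plain product counting measures. Given $\mathcal{F}_1,\dots,\mathcal{F}_d$, let $\mu_i$ be the uniform measure on the parameters $b\in\mathcal{F}$ indexing $\mathcal{F}_i$. The hypothesis then reads $\mu_1\otimes\dots\otimes\mu_d(\exists x\bigwedge_i \varphi(x,y_i))\ge\alpha$. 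The proof yields $\gamma=\gamma(\alpha)$, an index $i$, and a point $a$ with $\mu_i(\varphi(a,y))\ge\gamma$, that is, $a$ lies in at least $\gamma|\mathcal{F}_i|$ members of $\mathcal{F}_i$. Set $\beta:=\gamma$.

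The main obstacle is uniformity. The constant $\beta$ must depend only on $\mathcal{F}$ and $\alpha$, and we must check that the proof of Proposition \ref{prop: colorful FHP meas bdd by burden} used burden solely through the nonexistence of the array $(*)$. Both points hold: $\gamma$ depends only on $\alpha$, $D$, and the $\FHP$ function of $\varphi$, which are all determined by $\mathcal{F}$. The witness $a$ found in the monster model can be pulled back to $X$ by elementarity, because the sets in question are finitely many instances with parameters from $\mathcal{M}$.
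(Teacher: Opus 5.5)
Your proposal is correct and is essentially the paper's argument: the corollary follows from Proposition~\ref{prop: colorful FHP meas bdd by burden} applied to finitely supported measures, with Matou\v{s}ek's theorem supplying $\FHP_d$ and Fact~\ref{fac: vc-dens bounds burden} supplying the burden bound. Your choice to run the counting argument behind that fact only for the single formula $x \in y$ is what makes the paper's one-line deduction rigorous, for two reasons: a global bound on $\bdn$ of $x$ in $\Th(X,\mathcal{F};\in)$ is not controlled by $\pi^{\ast}_{\mathcal{F}}$ alone, and going through vc-density would only give $\bdn \leq d$ rather than the needed $\bdn < d$.
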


\subsection{(Colorful) $\left(p,q\right)$-theorem and finite VC-dimension}\label{sec: p,q theorem and NIP}

A celebrated result of Alon and Kleitman  established a long standing conjecture of  Hadwiger and Debrunner:
\begin{fact}\cite{alon1992piercing}\label{fac: convex pq}
	Let $p,q,d$ be integers with $p \geq q \geq d+1$. Then there exists an integer  $N = N(d,p,q)$ such that: if $\mathcal{F}$ is a finite family of convex subsets of $\mathbb{R}^{d}$ satisfying the $(p,q)$-property (Definition \ref{def: weak FHP comb}), then there exists a set $A \subseteq \mathbb{R}^d$ with $|A| \leq N$ so that $A \cap S \neq \emptyset$ for every $S \in \mathcal{F}$.
\end{fact}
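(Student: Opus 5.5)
We follow the Alon--Kleitman strategy: a fractional Helly theorem, then LP duality, then weak $\varepsilon$-nets. Fix $p\geq q\geq d+1$ and a finite family $\mathcal{F}=\{S_1,\ldots,S_n\}$ of convex sets in $\mathbb{R}^d$ with the $(p,q)$-property. By the argument of Proposition \ref{prop: FHP implies WFHP} (passing from $q$ to $d+1$ via Lemma \ref{lem: Basic operations preserving FH}(1)-type double counting), $\mathcal{F}$ also has the $(p,d+1)$-property.

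\textbf{Step 1 (fractional Helly).} Double counting as in Proposition \ref{prop: FHP implies WFHP} shows: for every multiset $\bar{S}$ of sets from $\mathcal{F}$, at least an $\alpha=\alpha(p,d)>0$ fraction of its $(d+1)$-subtuples intersect. Repetitions are harmless, since repeated copies of a nonempty set intersect. The fractional Helly theorem for convex sets \cite{katchalski1979problem,kalai1984intersection} then gives $\beta=\beta(p,d)>0$ and a point lying in at least $\beta|\bar{S}|$ members of $\bar{S}$.

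\textbf{Step 2 (LP duality).} Let $P$ be a finite set with one point chosen in each nonempty cell of the Venn diagram of $\mathcal{F}$. Consider the finite LP $\tau^{\ast}=\min\sum_{x\in P}w_x$ subject to $\sum_{x\in S}w_x\geq 1$ for every $S\in\mathcal{F}$, with $w\geq 0$. Its dual is $\nu^{\ast}=\max\sum_{S}u_S$ subject to $\sum_{S\ni x}u_S\leq 1$ for every $x\in P$, with $u\geq 0$.
\begin{itemize}
\item Take an optimal dual solution $u$, which can be taken rational, say $u_S=t_S/D$.
\item Form the multiset containing $t_S$ copies of each $S$.
\item Step 1 gives a point $x$, which we may move into $P$, lying in at least $\beta\sum_S t_S$ of these copies.
\item Hence $1\geq\sum_{S\ni x}u_S\geq\beta\nu^{\ast}$.
\end{itemize}
By LP duality, $\tau^{\ast}=\nu^{\ast}\leq 1/\beta$. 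Normalizing an optimal $w$ gives a probability measure $\mu$ on the finite set $P$ with $\mu(S)\geq\beta$ for every $S\in\mathcal{F}$.

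\textbf{Step 3 (weak $\varepsilon$-nets).} Apply the weak $\varepsilon$-net theorem for convex sets (Alon--B\'ar\'any--F\"uredi--Kleitman) to the finitely supported measure $\mu$ with $\varepsilon=\beta$. It yields a set $A\subseteq\mathbb{R}^d$ with $|A|\leq N(d,\beta)$ meeting every convex set of $\mu$-measure at least $\beta$, and in particular every $S\in\mathcal{F}$. Since $\beta$ depends only on $p,d$, we may take $N=N(d,p,q):=N(d,\beta(p,d))$.

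\textbf{Main obstacle.} The genuinely deep inputs are the fractional Helly theorem for convex sets and the existence of weak $\varepsilon$-nets with size independent of $|P|$. Both would be quoted rather than reproved. The bookkeeping in Steps 1--2 is routine, and the only point to watch is that repetitions (rational weights) are allowed in fractional Helly, exactly as in Proposition \ref{rem: FHP iff FHP for finite measures}.
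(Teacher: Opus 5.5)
Your proposal is correct and follows the same route the paper indicates. The paper does not reprove this fact: it cites \cite{alon1992piercing} and notes that the proof combines the fractional Helly property \cite{katchalski1979problem} with weak $\varepsilon$-nets \cite{alon1992point}, and your LP-duality step is the finite analogue of the Kelley-criterion/fractional-transversal argument the paper uses elsewhere (cf.\ Fact \ref{fact: bdd frac transversal}). The one point to state explicitly is that Step 3 needs the weak $\varepsilon$-net theorem for weighted point sets rather than uniform ones. This version is standard, and choosing a rational optimal $w$ in Step 2 reduces it to finite multisets.
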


\noindent Their proof combines two earlier results about convex sets: the fractional Helly property \cite{katchalski1979problem} and the existence of (weak) $\varepsilon$-nets \cite{alon1992point}. Using his result on the fractional Helly property for families of sets of finite VC-dimension and the existence of $\varepsilon$-nets \cite{haussler1986epsilon}, Matousek obtained  an analog for families of finite VC-dimension:

\begin{fact}\cite[Theorem 4]{matousek2004bounded}\label{fac: pq Matousek}
	Let $p,q,d$ be integers with $p \geq q \geq d$. Then there exists an integer  $N = N(d,p,q)$ such that: if $\mathcal{F}$ is a finite family of sets with 
	$\pi_{\mathcal{F}}^{\ast}(n) = o(n^{d})$ satisfying the $(p,q)$-property, then there exists a set $A \subseteq \mathbb{R}^d$ with $|A| \leq N$ so that $A \cap S \neq \emptyset$ for every $S \in \mathcal{F}$.
\end{fact}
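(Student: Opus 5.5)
The statement is Matoušek's $(p,q)$-theorem for families of bounded dual shatter function, and I would follow the Alon--Kleitman scheme. It has three ingredients: fractional Helly, LP duality to get a fractional transversal, and $\varepsilon$-nets to round it.

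\emph{Step 1: fractional Helly.} By Proposition \ref{prop: FHP implies WFHP} together with Fact \ref{fact: FHP for counting}, the family $\mathcal{F}$ satisfies $\FHP_d$, hence $\WFHP_d$. Concretely, since $q \geq d$, the $(p,q)$-property implies the $(p,d)$-property. The double counting in Proposition \ref{prop: FHP implies WFHP} then gives, for every finite tuple $\bar{S}$ from $\mathcal{F}$ (repetitions allowed), at least $\alpha(p,d)\binom{n}{d}$ intersecting $d$-subtuples. Hence some point lies in at least $\beta n$ of the sets, with $\beta=\beta(p,q,d)>0$.

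\emph{Step 2: fractional transversal.} Any repetition of sets of $\mathcal{F}$ still satisfies the $(p,q)$-property. To see this, the points of a $(p,q)$-witness may be taken among the sets themselves: a $p$-tuple with repetitions either contains $q$ copies of one nonempty set, or we apply the Remark following Proposition \ref{prop: FHP implies WFHP}, enlarging $p$ to $q(p-1)+1$. So for every weighting $w:\mathcal{F}\to\mathbb{Q}_{\geq 0}$, realized by repeating each set proportionally to its weight, Step 1 yields a point $x$ with $\sum_{S\ni x} w(S) \geq \beta \sum_S w(S)$. This is exactly the statement that the fractional packing number satisfies $\nu^{*}(\mathcal{F})\leq 1/\beta$. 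By LP duality (finite $\mathcal{F}$, with the finitely many relevant points given by the atoms of the Venn diagram), the fractional transversal number satisfies $\tau^{*}(\mathcal{F})\leq 1/\beta$. Thus there is a probability measure $\mu$ on finitely many points with $\mu(S)\geq\beta$ for every $S\in\mathcal{F}$.

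\emph{Step 3: $\varepsilon$-net rounding.} Consider the dual set system, whose ground set is the support of $\mu$ and whose ranges are the traces of the sets $S\in\mathcal{F}$. Its shatter function is $\pi^{*}_{\mathcal{F}}$, which is $o(n^{d})$, so it has bounded VC-dimension $D=D(d)$; indeed, polynomial growth forces finite VC-dimension by Sauer--Shelah. The Haussler--Welzl $\varepsilon$-net theorem \cite{haussler1986epsilon} with $\varepsilon=\beta$ gives a set $A$ of size $N\leq C(D)\frac{1}{\beta}\log\frac{1}{\beta}$ meeting every range of $\mu$-measure at least $\beta$. In particular $A$ meets every $S\in\mathcal{F}$, and $N$ depends only on $(d,p,q)$. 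The point set $A$ lies in the ground set of the family; the ``$\mathbb{R}^d$'' in the statement should be read that way.

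The main obstacle is Step 2, specifically making the LP duality step rigorous. This requires passing from rational weights to arbitrary real optimal solutions and checking that repetitions preserve the $(p,q)$-property uniformly. It also requires that the $\beta$ from Step 1 does not depend on the number of repetitions, which holds because $\FHP$ is stated for tuples with repetitions. A further point to verify is that the $o(n^{d})$ hypothesis is stable under the passage to the dual system used in Step 3.
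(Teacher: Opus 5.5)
Your three steps (fractional Helly, then LP duality to get a finitely supported $\mu$ with $\mu(S)\ge\beta$, then $\varepsilon$-net rounding) are the Alon--Kleitman scheme that Matoušek uses. The paper only cites the result, and its surrounding text points to exactly this combination. For the LP step you can also apply Kelley's criterion (Fact \ref{fact: Kelley}(2)) directly to $i(\mathcal G)\ge\beta$ on the finite algebra generated by the family, which avoids the rational-to-real issue.

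\textbf{The gap in Step 3.} The system you feed to the $\varepsilon$-net theorem has ground set $\operatorname{supp}(\mu)$ and ranges the traces $S\cap\operatorname{supp}(\mu)$. That is the \emph{primal} system, so it is controlled by $\pi_{\mathcal F}$ and $\VC(\mathcal F)$, not by $\pi^{*}_{\mathcal F}$. The hypothesis $\pi^{*}_{\mathcal F}(n)=o(n^d)$ only says that some $n$ sets are not dually shattered, i.e.\ that the \emph{dual} VC-dimension is finite. To conclude $\VC(\mathcal F)<\infty$ you need Assouad's dual lemma, $\VC(\mathcal F)<2^{\VC^{*}(\mathcal F)+1}$. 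This is the combinatorial form of the fact that $\varphi(x,y)$ is NIP iff it is NIP with the roles of $x$ and $y$ swapped. It is the missing idea behind the worry you flag at the end.

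\textbf{The uniformity claims.} Your bounds $\beta=\beta(p,q,d)$ and $D=D(d)$ are false, and no argument can give $N=N(d,p,q)$ for the statement as literally written. The condition $o(n^d)$ carries no quantitative information. The power set of an $m$-element set has $\pi^{*}(n)\le m=o(n)$ but VC-dimension $m$. Every finite family has bounded $\pi^{*}$, and $\binom{[2k-1]}{k}$ is pairwise intersecting (so it has the $(2,2)$-property with $d=1$) yet has piercing number $k$. Applying Fact \ref{fact: FHP for counting} to a finite family is also vacuous: it gives $\FHP_d$ with $\beta$ depending on that family, essentially $1/|\mathcal F|$.

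\textbf{The correct reading.} The intended statement is Matoušek's Theorem 4. Fix a possibly infinite family $\mathcal F$ with $\pi^{*}_{\mathcal F}(n)=o(n^d)$. Then for $p\ge q\ge d$ there is $N=N(\mathcal F,p,q)$ such that every finite subfamily $\mathcal G\subseteq\mathcal F$ with nonempty members and the $(p,q)$-property has a transversal of size at most $N$ in the ground set. You are right that ``$A\subseteq\mathbb R^d$'' is a slip.

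\textbf{How your proof goes through under that reading.} Take $\beta$ from $\FHP_d$ of the ambient $\mathcal F$; this $\beta$ applies uniformly to tuples from any such $\mathcal G$. In Step 3 use $\VC(\mathcal G)\le\VC(\mathcal F)<\infty$ via Assouad. The resulting $N$ then depends only on $\beta$ and $\VC(\mathcal F)$, hence only on $\mathcal F$, $p$ and $q$.
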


Matousek's $(p,q)$-theorem (Fact \ref{fact: FHP for counting})  played an important role in the study of $\NIP$
theories, starting with the proof of the uniform definability
of types over finite sets (UDTFS) in NIP theories \cite{chernikov2013externally, chernikov2015externally} (also in the study of definably amenable NIP groups \cite{chernikov2018definably}; and more recently in the proof of the \emph{definable $(p,q)$-conjecture} of Chernikov and Simon in \cite{kaplan2024definable}). In \cite{barany2014colourful} the authors obtain  a colorful $(p,q)$-theorem for convex sets, relying on their colorful fractional Helly property. Similarly, using the colorful fractional Helly property (Corollary \ref{cor: colorful Matousek}), one can obtain a colorful $(p,q)$-theorem for families of sets of finite $\VC$-dimension.

As we demonstrate in this paper, the class of structures in which all definable families of sets satisfy the fractional Helly property is much wider than the class of NIP structures.  Here we point out however that at the level of the theory, the
$\left(p,q\right)$-theorem characterizes NIP.
%
%
%
%
%
%
\begin{defn}
For $d \in \omega$, we say that a partitioned formula $\varphi\left(x,y\right) \in L$ is \emph{$d$-pierceable}
if for any $ p \geq q \geq d$ there is some $N=N\left(p,q\right)\in\omega$
such that, taking  $\mathcal{F}=\left\{ \varphi\left(\M,b\right):b\in \M^{y}\right\} $,  if a finite subfamily $\mathcal{F}'\subseteq\mathcal{F}$
satisfies the $\left(p,q\right)$-property, then it admits a transversal
of size at most $N$. A formula is pierceable if it is $d$-pierceable for some $d \in \omega$. A theory $T$ is pierceable if every formula is pierceable.
\end{defn}

So by Fact \ref{fac: pq Matousek}, every NIP theory is pierceable. Conversely, we have:
\begin{prop}\label{prop: pierceable implies NIP}
Assume that the formula $\varphi\left(x,y\right)$ is not NIP (i.e.~it defines a family of sets of infinite VC-dimension). Then the
formula $\psi\left(x;y_{1}, y_{2}\right) := \varphi\left(x,y_{1}\right)\land\neg\varphi\left(x,y_{2}\right)$
is not pierceable.
\end{prop}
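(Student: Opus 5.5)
Since $\varphi(x,y)$ has IP, by compactness there is, for every $n$, a set $A_n=\{a_1,\ldots,a_n\}\subseteq \M^x$ shattered by $\varphi$: for every $I\subseteq[n]$ there is $b_I$ with $\models\varphi(a_i,b_I)\iff i\in I$. Then $\psi(x;b_I,b_{I'})$ cuts out exactly $\{a_i : i\in I\setminus I'\}$ on $A_n$. So every subset $I\subseteq A_n$ is realized on $A_n$ by an instance of $\psi$ of the form $\psi(x;b_I,b_{[n]\setminus I})$, and that instance is \emph{disjoint} from $A_n\setminus I$ and contains no points of $A_n$ outside $I$. The extra role of the negated conjunct is the key point: we need instances whose traces are exactly prescribed sets but which contain nothing of $A_n$ outside the prescribed set. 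Plain $\varphi$ may not suffice, since instances could be covered by a single point outside $A_n$. For $\psi$ this is still an issue, so we need to control points outside $A_n$ as well; we handle it by choosing the family cleverly.

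Fix $d$ and take $p=q\ge d$ (say $p=q=2$ suffices; note $d$-pierceability with $p\ge q\ge d$ requires it for all such pairs, so it is enough to fail for one pair with $q\ge d$; take $p=q=\max(d,2)$). Consider the family $\mathcal{F}_n$ of instances $S_I:=\psi(\M;b_I,b_{[n]\setminus I})$ for all $I\subseteq[n]$ with $|I|=n/2$ (or a suitable large subfamily). The $(p,q)$-property with $p=q$ just says any $q$ of the sets intersect; any $q$ half-sized subsets of $[n]$ chosen from a family with the property that every $q$ have common intersection (e.g.\ all $I$ of size $>n(1-1/q)$) intersect in a common $a_i$, which lies in all corresponding $S_I$. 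So use $\mathcal{F}_n=\{S_I : |I|\ge n-m\}$ with $qm<n$; then any $q$ members meet inside $A_n$.

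Now the main step: show $\mathcal{F}_n$ has no transversal of size $N$ once $n$ is large (with $m$ chosen so that $\binom{n}{m}$ sets exist and $m\ge N$, $n>qm$). Given any candidate transversal $C=\{c_1,\ldots,c_N\}\subseteq \M^x$, I want some $I$ with $|[n]\setminus I|\le m$ and $S_I\cap C=\emptyset$. Points of $C$ inside $A_n$ are killed by putting them into $[n]\setminus I$. The obstacle is points $c\notin A_n$: whether $c\in S_I$ depends on the parameters $b_I$, which were chosen arbitrarily. To fix this, I will strengthen the shattering at the outset by compactness: choose $A_n$ and the witnesses so that the realization is uniform, e.g.\ take $b_I$ realizing $\{\varphi(a_i,y)\}_{i\in I}\cup\{\neg\varphi(a_i,y)\}_{i\notin I}$ and note that for $c\notin A_n$, $c\in S_I$ requires $\varphi(c,b_I)\wedge\neg\varphi(c,b_{[n]\setminus I})$. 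Alternatively, and more robustly, I will instead extend the shattered set: shatter $A_{n}\cup C$-style sets is impossible since $C$ is chosen after. So the plan is to use an indiscernible/independence argument: take $(a_i)_{i\in\omega}$ witnessing IP indiscernibly with $b_I$ for all $I\subseteq\omega$ so that $\varphi(a_i,b_I)\iff i\in I$, and then use a Ramsey/pigeonhole argument on the $N$ points: each $c_j$ determines the set $T_j=\{I : c_j\in S_I\}$; since $c_j\in S_I$ forces $c_j\notin S_{I'}$ whenever $[n]\setminus I\subseteq I'$... Specifically, $c\in S_I$ implies $\varphi(c,b_I)$ and $\neg\varphi(c,b_{[n]\setminus I})$; choosing the family so that complements pair up, one shows each $c_j$ misses a positive proportion, and a counting/probabilistic choice of $I$ avoids all $N$ points. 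I expect this handling of points outside $A_n$ to be the crux; if direct counting fails, I will replace $b_{[n]\setminus I}$ by a parameter making $\neg\varphi$ exclude everything except $A_n$-style points, using the full IP over a larger shattered set to absorb $C$ via compactness (choosing $C$'s types and re-shattering). Conclude that no $N(p,q)$ works, so $\psi$ is not $d$-pierceable for any $d$.
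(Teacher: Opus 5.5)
There is a genuine gap, and the construction as set up cannot be completed. The choice $p=q$ is already fatal. Take $\varphi(x,y)$ to be the edge relation $xRy$ of the random graph, which has IP. There a finite set of instances $\psi(x;b_i,b_i')$ is consistent if and only if $\{b_i\}_i\cap\{b'_j\}_j=\emptyset$. This is a pairwise condition, so every finite subfamily with the $(q,q)$-property has a transversal of size $1$, since for $q\ge 2$ that property makes any two members meet.

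Your family $\{S_I : |[n]\setminus I|\le m\}$ fails concretely in the same way. It is pierced by a single vertex adjacent to every $b_I$ with $|I|\ge n-m$ and to no $b_J$ with $|J|\le m$; these two parameter sets are disjoint because $n-m>m$. So points outside $A_n$ are not a nuisance that a counting argument, a probabilistic choice of $I$, or re-shattering could remove: one such point can lie in every member. The monotonicity you invoke is also false. From $c\in S_I$ you only get $\neg\varphi(c,b_{[n]\setminus I})$, which excludes $c$ from $S_{[n]\setminus I}$ and nothing else, and $S_{[n]\setminus I}$ is not even in your family.

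What is missing is an inconsistency that holds at \emph{every} point of $\M^x$, and this forces $p>q$. The paper obtains it by shattering in the dual direction and indexing the family by ordered pairs from a single parameter set. Take $S\subseteq\M^y$ with $|S|=m$ such that every $S'\subseteq S$ equals $\{s\in S:\models\varphi(e,s)\}$ for some $e\in\M^x$. Then consider the family $\{\psi(\M;a,b):(a,b)\in S^2,\ a\neq b\}$.

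For an arbitrary point $e$, let $T_e:=\{s\in S:\models\varphi(e,s)\}$. Then $e$ lies in $\psi(\M;a,b)$ if and only if $a\in T_e$ and $b\notin T_e$. Consequently $\psi(x;a,b)\wedge\psi(x;b,c)$ is inconsistent outright. By dual shattering, a subfamily is consistent if and only if no element of $S$ occurs both as a first and as a second coordinate. This gives the following:
\begin{enumerate}
\item the family has the $(4,2)$-property;
\item by Ramsey it has the $(p,q)$-property for some $p=p(q)$, since an inconsistent indiscernible subfamily of size $\max(q,4)$ would be $2$-inconsistent;
\item it has no bounded transversal: a transversal of size $k$ would split the off-diagonal pairs into $k$ classes, in none of which an element of $S$ is both a first and a second coordinate. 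A Ramsey-monochromatic triple $s_0,s_1,s_2$ in $S$ puts $(s_0,s_1)$ and $(s_1,s_2)$ in the same class, a contradiction.
\end{enumerate}
The transversal points may lie anywhere in $\M^x$, because they act only through their traces $T_e$ on $S$. This is exactly the control your construction lacks.
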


\begin{proof}
As $\varphi(x,y)$ has $\IP$, for any $m \in \omega$ there is  $S\subseteq \M^{y}$ with $|S| = m$ so that: for any $S'\subseteq S$,
there is some  $e\in \M^{x}$ such that for all $c\in S$ we
have $\models\varphi\left(e,c\right)\iff c\in S'$. Let $\left(\left(a_{i},b_{i}\right):i<n\right)$, for $n = m^2 - m$, list \textbf{all}
pairs in $S^{2}\setminus\Delta$ (where $\Delta = \{(a,b) : a = b\}$ denotes the diagonal).

\medskip

\noindent \textbf{Claim 1.} The family $\left\{ \psi\left(\M;a_{i},b_{i}\right):i<n\right\} $
satisfies the $\left(4,2\right)$-property.

Let $\left\{ \left(a_{i}', b_{i}'\right):i<4\right\} $ be arbitrary
pairs from $S^{2}\setminus\Delta$ witnessing failure of the $\left(4,2\right)$-property. From pairwise inconsistency and assumption on $S$,
for any $i\neq j < 4$ we must have $a_{i}=b_{j}$ or $a_{j}=b_{i}$.
But this is easily seen to contradict the assumption $a_{i}\neq b_{i}$
for all $i < 4$.

\medskip

\noindent \textbf{Claim 2. }For any $2 \leq q\in\omega$, there is some $p\in\omega$
such that the family $\left\{ \psi\left(\M;a_{i},b_{i}\right):i<n\right\} $
satisfies the $\left(p,q\right)$-property (as long as $n$ is sufficiently large with respect to $p,q$).

Fix $q \geq 2$. Let $\Gamma\left(\bar{y},\bar{y}'\right) := \left\{ \exists x\left(\psi\left(x,\bar{y}\right)\land\psi\left(x,\bar{y}'\right)\right)\right\} $. By Ramsey's theorem we can choose $p = p(q)$ large enough, such that every
sequence $\left(\bar{d}_{i}\right)$ from $S^{2}\setminus\Delta$
of length $p$ contains a $\Gamma$-indiscernible subsequence of
length $q' := \max\left\{ q,4\right\} $.

Assume now that the family $\left\{ \psi\left( \M ; (a_i,b_i)\right):i<n\right\} $
fails the $\left(p,q\right)$-property. That is, there is some $I \subseteq n$ with $|I| = p$ so that the family $\left\{ \psi\left(x; a_i,b_i \right):i \in I \right\} $  is $q$-inconsistent.
By the choice of $p$, there is some $J \subseteq I$ with $|J| = q'$ so that the sequence $(a_i,b_i)_{i \in J}$ is $\Gamma$-indiscernible, and we still have that $\{\psi(x;a_i, b_i) : i \in J\}$ is inconsistent.

But by definition of $\psi$ and assumption on $S$, this can only happen if already
$\{ \psi\left(x; a_i,b_i\right), \psi\left(x;a_j,b_j\right) \}$
is inconsistent for some $i \neq j \in J$. But then, by $\Gamma$-indiscernibility of $\left(a_i,b_i : i \in J\right)$, 
the set $\left\{ \psi\left(x; a_i, b_i \right):i \in J\right\} $
is $2$-inconsistent. As $q' \geq 4$, it follows that  $\left\{ \psi\left(x,\bar{d}_{i}\right):i<n\right\} $
fails the $\left(2,4\right)$-property -- contradicting Claim 1.

\medskip

\noindent \textbf{Claim 3. }The family $\left\{ \psi\left(\M;a_{i},b_{i}\right):i<n\right\} $
does not admit any transversal of size independent of $n$.

Fix $k$, and let $m$ be sufficiently large, to be determined later. Assume that the family admits a transversal of size $k$. That is,
we can choose a partition $D_{0},\ldots,D_{k-1}$ of $S^{2}\setminus\Delta$
such that each of the families $\left\{ \psi\left(x;a_{i},b_{i}\right):\left(a_{i},b_{i}\right)\in D_{l}\right\} $,
$l<k$, is consistent.

This implies in particular that $\pi_{y_1}(D_{l}) \cap \pi_{y_2}(D_{l})=\emptyset$
for each $l<k$, where $\pi_{y_i}$ denotes the projection onto the corresponding coordinate (as otherwise we have some $\left(a_{i},b_{i}\right),\left(a_{j},b_{j}\right)\in D_{l}$
with $i\neq j$, such that $a_{i}=b_{j}$ --- so $\psi\left(x;a_{i},b_{i}\right)\land\psi\left(x;a_{j}b_{j}\right)$
is inconsistent). We show that this is impossible.

Say $S=\left\{ c_{i}:i<m\right\} $.
For every $i < j < m$, $\left(c_{i},c_{j}\right)\in D_{l}$ for some $l<k$. 
By Ramsey (assuming $m\gg k$) there is a subsequence $\left(c'_{i}:i<m'\right)$
for $m' \gg k$, and some $l<k$ such that $\left(c_{i}',c_{j}'\right)\in D_{l}$
for all $i<j<m'$. In particular $\left(c_{0}',c_{1}'\right)$ and
$\left(c_{1}',c_{2}'\right)$ are both in $D_{l}$, thus $\pi_{y_1}(D_{l}) \cap \pi_{y_2}(D_{l}) \neq \emptyset$.

\medskip

Combining Claims 2 and 3, we see that $\psi(x;y_1,y_2)$ is not $d$-pierceable for any $d \in \omega$.
\end{proof}

\begin{cor}
Let $T$ be a complete first-order theory. Then $T$ is pierceable
if and only if $T$ is $\NIP$.
\end{cor}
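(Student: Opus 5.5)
Both directions reduce to results already established, so the plan is to assemble them.

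\emph{NIP implies pierceable.} Assume $T$ is $\NIP$ and fix a partitioned formula $\varphi(x,y)$. Since $\varphi$ is NIP, the family $\mathcal{F}_\varphi=\{\varphi(\M,b):b\in\M^y\}$ has finite VC-dimension. By the Sauer--Shelah lemma applied to the dual family, the dual shatter function is polynomially bounded: $\pi^{\ast}_{\mathcal{F}_\varphi}(n)=O(n^{d_0})$, where $d_0$ is the dual VC-dimension. Set $d:=d_0+1$, so that $\pi^{\ast}_{\mathcal{F}_\varphi}(n)=o(n^{d})$. Every finite subfamily $\mathcal{F}'\subseteq\mathcal{F}_\varphi$ has $\pi^{\ast}_{\mathcal{F}'}\leq\pi^{\ast}_{\mathcal{F}_\varphi}$, so Fact \ref{fac: pq Matousek} applies to it. Hence for all $p\geq q\geq d$ there is $N=N(d,p,q)$ such that any finite $\mathcal{F}'\subseteq\mathcal{F}_\varphi$ with the $(p,q)$-property has a transversal of size $\leq N$. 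The transversal is a finite subset of $\M^x$; the ``$\mathbb{R}^d$'' in the statement of the Fact is a typo for the ground set. Crucially, $N$ does not depend on $\mathcal{F}'$. So $\varphi$ is $d$-pierceable, and since $\varphi$ was arbitrary, $T$ is pierceable.

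\emph{Pierceable implies NIP.} We argue by contraposition. If $T$ is not NIP, some $\varphi(x,y)$ has $\IP$. By Proposition \ref{prop: pierceable implies NIP}, the formula $\psi(x;y_1,y_2):=\varphi(x,y_1)\land\neg\varphi(x,y_2)$ is then not pierceable. It is still a partitioned $L$-formula, so $T$ is not pierceable.

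The only point needing care is the first direction. One must check that the uniformity in Matoušek's $(p,q)$-theorem (the bound $N$ depending only on $d,p,q$) matches the uniformity required in the definition of $d$-pierceable, and that the dual shatter function bound passes to finite subfamilies. Both are immediate.
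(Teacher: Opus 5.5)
Your proof is correct and follows the paper's argument. NIP implies pierceable by Matou\v{s}ek's $(p,q)$-theorem (Fact \ref{fac: pq Matousek}), applied uniformly to the finite subfamilies of $\mathcal{F}_\varphi$, and the converse is the contrapositive of Proposition \ref{prop: pierceable implies NIP} applied to $\varphi(x,y_1)\land\neg\varphi(x,y_2)$. Your remarks on the uniformity of $N$ and on the ``$\mathbb{R}^d$'' typo are apt: in Matou\v{s}ek's original theorem $N$ depends on $p,q$ and the ambient family, i.e.\ on $\varphi$, which is exactly the uniformity that $d$-pierceability requires.
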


\begin{rem}
Note that this corollary cannot hold at the level of a formula. Indeed, let $M=(\mathbb{R}^2, P; E)$ with two sorts $\mathbb{R}^2$ and $P$ consisting of all convex subsets of $\mathbb{R}^2$, and $E \subseteq \mathbb{R}^2 \times P$ the membership relation.

 It is well-known that the family of convex subsets of $\mathbb{R}^2$ has infinite VC-dimension (e.g.~it shatter
any finite subset of the unit circle), hence $E\left(x,y\right)$ is not $\NIP$.  However, by Fact \ref{fac: convex pq},  $E\left(x,y\right)$ is $3$-pierceable. 
\end{rem}

\begin{rem}
Also, $E(x,y)$ in this structure is $\FHP$ (by the fractional Helly property for convex sets \cite{katchalski1979problem}), hence in particular $E(x,y)$ is $\NTP_2$ (by Proposition \ref{prop: FHP implies NTP2}). 
 We observe however that the formula $\varphi(x;y_1,y_2) := E(x,y_1) \land \neg E(x,y_2)$ in this structure has $2$-$\TP_2$.
\end{rem}

\providecommand{\R}{\mathbb{R}}
\providecommand{\N}{\mathbb{N}}

\begin{proof}
Let $\mathcal{F}$ denote the family of  subsets of $\mathbb{R}^2$ of the form $A \cap (\mathbb{R}^2 \setminus B)$ with $A,B$ convex. Let $D\subseteq \R^2$ be the (convex) closed unit disk. 
 Since $\N^{<\N}$ (the set of all finite sequences of natural numbers) is countable and the border $\partial D$ is a circle, we may choose a family of pairwise disjoint closed arcs
$\bigl(I_s\bigr)_{s\in \N^{<\N}}$ on $\partial D$. 
For each $s\in\N^{<\N}$, let $\ell_s$ be the chord line joining the endpoints of $I_s$, and let $H_s$ be the \emph{closed} halfplane bounded by $\ell_s$ that contains the center of $D$.
Define the corresponding (convex) cap $C_s \ :=\ D\cap(\R^2\setminus H_s)$.
By choosing each chord $\ell_s$ sufficiently close to $\partial D$ (equivalently, choosing each arc $I_s$ sufficiently small), we may assume the caps $(C_s)_{s\in\N^{<\N}}$ are pairwise disjoint.

For $i,j\in\N$, define
\[
F_{i,j} \ :=\ \bigcup \bigl\{\, C_s \ :\ |s|> i \ \text{and}\ s(i)=j \,\bigr\},
\]
where $|s|$ is the length of $s$ and $s(i)$ is its $i$th entry (defined only when $i<|s|$).
We claim that each $F_{i,j}$ belongs to $\mathcal{F}$. Indeed, let
\[
B_{i,j} \ :=\ D\ \cap\ \bigcap \bigl\{\, H_s \ :\ |s|> i \ \text{and}\ s(i)=j \,\bigr\}.
\]
Then $B_{i,j}$ is convex, being an intersection of convex sets (the disk $D$ and halfplanes). And we have $F_{i,j}=D\cap(\R^2\setminus B_{i,j})\in\mathcal{F}$.
We will use the parameters $y_{i,j}:=F_{i,j}$ in the set sort $P$.

Fix $i\in\N$ and let $j\neq j' \in \mathbb{N}$.
No finite sequence $s$ can satisfy both $s(i)=j$ and $s(i)=j'$.
Hence $F_{i,j}$ and $F_{i,j'}$ are unions of \emph{disjoint} subfamilies of the pairwise disjoint caps $(C_s)$, therefore $F_{i,j}\cap F_{i,j'}=\emptyset$. 
Equivalently, the row $\{\varphi(x;y_{i,j}) : j\in\N\}$ is $2$--inconsistent.

Let $f:\N\to\N$ be any function, and for each $n\in\N$ let $s_n \ :=\ (f(0),f(1),\dots,f(n-1))\ \in\ \N^{<\N}$ 
be its  initial segment of length $n$. 
Then for every $i<n$ we have $|s_n|=n>i$ and $s_n(i)=f(i)$, so by construction $C_{s_n}\subseteq F_{i,f(i)}$, hence $
\bigcap_{i<n} F_{i,f(i)}\ \supseteq\ C_{s_n}\ \neq\ \emptyset$ for all $n\in\N$. 
Thus  $\{\varphi(x;y_{i,f(i)}) : i\in\N\}$ is finitely satisfiable in $M$. \end{proof}

%
%

\section{$f$-generics and forking in FHP theories}\label{sec: f-gen and forking FHP}

Several notions of largeness for definable sets (and their equivariant versions in definable group) play an important role in the model-theoretic study of tame classes of structures.  In this section we recall several notions of ``large'', or ``generic'' sets for definable groups studied in the literature, and discuss their relationship in $\FHP$ theories.

\subsection{Notions of genericity}\label{sec: notions of genericity}
\begin{defn}
An (abstract) group $G$ is \emph{amenable} if there is a left $G$-invariant
finitely additive probability measure on the Boolean
algebra of \emph{all} subsets of $G$.
\end{defn}

\begin{fact}
\label{fact: bi-inv measure}(see e.g.~\cite{Garrido}) 
\begin{enumerate}
\item If $G$ is
amenable, then there is a bi-invariant finitely additive probability measure on $\mathcal{P}\left(G\right)$.
	\item  All (virtually) solvable groups are amenable. Non-abelian free groups are not amenable.
\end{enumerate}
\end{fact}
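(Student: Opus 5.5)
Part (1) follows by symmetrizing a left-invariant mean with a right-invariant one. Part (2) uses the standard closure properties of amenability for the positive direction and a paradoxical decomposition for the free group. Throughout, I identify finitely additive probability measures on $\mathcal{P}(G)$ with \emph{means}, i.e.\ positive normalized linear functionals on $\ell^{\infty}(G)$. Integrating bounded functions against a finitely additive measure, via uniform approximation by simple functions, gives a bijection that preserves left and right invariance.

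For (1), let $m$ be a left-invariant mean. Define $m'(f):=m(\check f)$, where $\check f(g):=f(g^{-1})$; then $m'$ is right-invariant. For $A\subseteq G$ set $F_A(g):=m'\bigl(h\mapsto 1_A(gh)\bigr)$ and $\lambda(A):=m(F_A)$. Finite additivity and normalization are inherited from $m,m'$.

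\begin{itemize}
\item \emph{Left invariance:} $F_{xA}(g)=F_A(x^{-1}g)$, so $\lambda(xA)=\lambda(A)$ by left invariance of $m$.
\item \emph{Right invariance:} $1_{Ax}(gh)=1_A(ghx^{-1})$, so right invariance of $m'$ gives $F_{Ax}=F_A$ pointwise, hence $\lambda(Ax)=\lambda(A)$.
\end{itemize}

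For (2), I would first show that abelian groups are amenable. Apply the Markov--Kakutani fixed point theorem to the commuting affine action of $G$ by translations on the weak$^{*}$-compact convex set of means. Alternatively, for countable abelian groups one can use F\o lner sets and take an ultralimit of normalized counting measures. Finite groups are amenable via the counting measure.

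Next I would verify the closure properties. If $N\trianglelefteq G$ and both $N$ and $G/N$ are amenable, define $\lambda(A):=m_{G/N}\bigl(gN\mapsto m_N(N\cap g^{-1}A)\bigr)$. This is well defined on cosets, since changing the representative $g$ to $gn$ is absorbed by left invariance of $m_N$, and it is left $G$-invariant.
\begin{itemize}
\item Solvable groups are amenable by induction along the derived series, all of whose factors are abelian.
\item Virtually solvable groups are amenable: the normal core of a finite-index solvable subgroup is a normal solvable subgroup of finite index, so the group is an extension of a solvable group by a finite group.
\end{itemize}

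For non-amenability of $F_2=\langle a,b\rangle$, let $W(s)$ be the set of reduced words beginning with the letter $s$. Then $F_2=\{e\}\cup W(a)\cup W(a^{-1})\cup W(b)\cup W(b^{-1})$ and $F_2=W(a)\cup aW(a^{-1})=W(b)\cup bW(b^{-1})$. A left-invariant mean $\lambda$ would satisfy $1\ge \lambda(W(a))+\lambda(W(a^{-1}))+\lambda(W(b))+\lambda(W(b^{-1}))\ge 2$, a contradiction. Any non-abelian free group contains a copy of $F_2$, and amenability passes to subgroups: restrict the mean to sets of the form $\bigcup_{t\in T}tA$ for a fixed set $T$ of right coset representatives. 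So those groups are not amenable either.

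The main obstacle is the bookkeeping showing that the iterated means are well defined and invariant: the coset-independence in the extension step and the passage between finitely additive measures and means. The fixed point theorem for abelian groups I would simply cite.
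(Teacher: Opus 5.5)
The paper gives no proof of this fact; it is quoted from the literature (Garrido's notes), and your argument is the standard textbook one. The following steps are all correct as written: part (1), the abelian case via Markov--Kakutani, the extension lemma with its coset-independence check, the induction along the derived series, the normal-core reduction for virtually solvable groups, and the paradoxical decomposition of $F_2$.

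The one step that fails as stated is closure under subgroups. You need it for non-abelian free groups other than $F_2$, where $\langle a,b\rangle$ is a proper free factor and so has infinite index. There are two readings of your recipe, and neither works:
\begin{enumerate}
\item If $T$ represents the right cosets, $G=\bigsqcup_{t\in T}Ht$, then $T$ need not represent the cosets $tH$ (it is $T^{-1}$ that does). So the sets $\bigcup_{t\in T}tA$ can overlap for disjoint $A,B\subseteq H$, and they need not cover $G$ when $A=H$.
\item If instead $G=\bigsqcup_{t\in T}tH$, the sets $TA$ do partition correctly. But $ThA=\bigcup_{t\in T}(tht^{-1})\,tA$ moves each piece by a different element. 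A merely finitely additive left-invariant $\mu$ only lets you compare piece by piece, which requires $T$ to be finite.
\end{enumerate}

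The fix is to put the transversal on the other side. With $G=\bigsqcup_{t\in T}Ht$, set $\nu(A):=\mu(AT)$ for $A\subseteq H$. Then $AT\cap BT=\emptyset$ whenever $A\cap B=\emptyset$, and $HT=G$. Moreover $\nu(hA)=\mu\bigl(h(AT)\bigr)=\mu(AT)=\nu(A)$.

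Alternatively, avoid subgroups entirely. In a free group $F(S)$ with distinct $a,b\in S$, the sets $W(a),W(a^{-1}),W(b),W(b^{-1})$ are still pairwise disjoint, and $F(S)=W(a)\sqcup aW(a^{-1})=W(b)\sqcup bW(b^{-1})$. So your counting argument runs verbatim in any non-abelian free group.
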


\begin{defn}  A definable group $G = G(\M)$ is \emph{definably amenable} if there is a left-$G(\M)$-invariant Keisler measure $\mu \in \mathfrak{M}_{G}(\M)$ supported on $G$.
	
\end{defn}

\begin{fact} 
\begin{enumerate}
\item If $G$ is a definable group and $T$ admits a model $M$ so that $G(M)$ is amenable (as an abstract group), then $G$ is definably amenable. (For (1)--(4) see \cite[Section 5]{hrushovski2008groups}.)
	\item If $T$ is stable, then every definable group is definably amenable.
	\item Definably compact groups in $o$-minimal theories or in the $p$-adics (e.g.~$\textrm{SO}_3(\mathbb{R})$) are definably amenable (and satisfy a stronger condition \emph{fsg}, see also \cite{chernikov2025externally} for a discussion and references).
	\item The following NIP groups are not definably amenable: $\PSL_2(\mathbb{R}), \SL_2(\mathbb{Q}_p)$.
	\item If $G$ is dp-minimal, then it is definably amenable (\cite{stonestrom2023f}, answering a question from \cite{chernikov2014external}).
	\item If $G$ is pseudo-finite, then it is definably amenable (witnessed by the ultralimit of counting measures on finite groups whose ultraproduct is elementarily equivalent to $G$).
	\item If $T$ is small then every definable group is definably amenable (see \cite[Corollary 4.14]{chernikov2023invariant}).
	\item There exist definable groups in simple theories that are not definably amenable (\cite{chernikov2023invariant}).
\end{enumerate}
\end{fact}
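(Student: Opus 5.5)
Since this Fact collects results from the literature, the plan is to recall the mechanism behind each item and reduce it to the cited sources rather than reprove it in full.

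\textbf{Item (1).} Start from a left-invariant finitely additive probability measure $m$ on $\mathcal{P}(G(M))$.
\begin{enumerate}
\item Restricting $m$ to traces $X(M)$ of $L(M)$-definable sets gives a $G(M)$-invariant Keisler measure over $M$.
\item To pass to a global $G(\M)$-invariant measure, use compactness in the form used in \cite[Section 5]{hrushovski2008groups}. A $G(\M)$-invariant Keisler measure exists iff, for every formula $\varphi(x,y)$ and all finitely many instances and translates, a system of finitely many linear constraints (positivity, additivity, invariance under the chosen translates) has a solution.
\item Each such finite system is a first-order statement about the parameters, and it is witnessed in $M$ by $m$.
\item Since every finite configuration in $\M$ has a copy in $M$ with the same type, the constraints are satisfiable. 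A Tychonoff-compactness argument in $[0,1]^{L_x(\M)}$ then produces a global invariant measure.
\end{enumerate}
The main obstacle is exactly this transfer: we must make sure the finite linear systems are definable in the parameters. That is why we quantify over the Boolean combinations of the relevant translates of instances of a single formula.

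\textbf{Items (2), (6), (3).} Here the plan is to exhibit the measure directly.
\begin{itemize}
\item For (2), in a stable theory generic types exist and $G$ has finitely many, or boundedly many, generic types over the connected component. We average the generics, or take the Haar measure on $G/G^{0}$ pulled back along generic types, to get a left-invariant measure.
\item For (6), write $G\equiv \prod_i G_i/\mathcal{U}$ with $G_i$ finite and set $\mu(\varphi(x,b)):=\lim_{\mathcal{U}} |\varphi(G_i,b_i)|/|G_i|$. Invariance follows because counting measure on a finite group is translation invariant, and \L o\'s's theorem transfers it. We then move to an elementarily equivalent $G$ using item (1)'s compactness step.
\item For (3), fsg gives a generic type finitely satisfiable in a small model. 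Its $G$-orbit yields an invariant measure via \cite{hrushovski2008groups} and the references in \cite{chernikov2025externally}.
\end{itemize}

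\textbf{Items (4), (5), (7), (8).} For the negative item (4), the plan is to show that any invariant Keisler measure leads to a contradiction. For $\PSL_2(\mathbb{R})$, use the Iwasawa decomposition and the action on $\mathbb{P}^1(\mathbb{R})$. An invariant measure on $G$ pushes forward to a measure on definable subsets of $\mathbb{P}^1$ invariant under $G$. This is impossible because there are definable sets (intervals) $I$ and group elements $g$ with $gI$ strictly shrinking $I$ into arbitrarily many disjoint translates, as in a definable paradoxical decomposition. $\SL_2(\mathbb{Q}_p)$ is handled the same way, using balls in $\mathbb{P}^1(\mathbb{Q}_p)$. The remaining items (5), (7), (8) are deep results, and we simply invoke \cite{stonestrom2023f} and \cite{chernikov2023invariant}.
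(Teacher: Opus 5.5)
Your outline is correct. The paper gives no argument for this Fact beyond the citations. The one exception is the parenthetical in (6), which is exactly your ultralimit-of-counting-measures construction. You rightly observe that the ultraproduct supplies an invariant Keisler measure only on sets definable over the ultraproduct, so the transfer step of (1) is all that is needed afterwards.

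That transfer step is the one place where your mechanism differs from what the paper itself does. You argue by Tychonoff compactness in $[0,1]^{L_x(\M)}$, reducing to finite linear feasibility systems. When the paper needs the same transfer from $M$ to $\M$, in the proof of Theorem \ref{thm: generics in FHP}, it uses Construction $(*)$ of \cite{hrushovski2008groups} instead. There one adds a sort of $G(M)$-invariant Keisler measures with real-valued evaluation maps $E_\psi$, passes to a saturated elementary extension, and takes standard parts. Both work. Yours is more elementary, while Construction $(*)$ carries quantitative statements such as $E_\varphi(b,m)\geq\beta$ across in one step, which is what that theorem needs.

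Three things to tighten.

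In (1), it is not true that every finite configuration in $\M$ has a copy in $M$ \emph{with the same type}, since $M$ may omit types over $\emptyset$. What you need, and what $M\prec\M$ provides, is a tuple in $M$ satisfying the single formula that records which atoms of the finite Boolean algebra generated by the relevant sets and translates are non-empty. Solvability of the linear system depends only on that pattern.

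In (3), the construction of the invariant measure from the fsg generic type in \cite{hrushovski2008groups} is carried out under NIP. That assumption is available in the $o$-minimal and $p$-adic settings, but you should say so.

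In (4), make both halves of the paradox explicit. An arc (resp.\ ball) $J$ containing arbitrarily many pairwise disjoint translates of itself forces $\nu(J)=0$. Finitely many translates of $J$ covering $\mathbb{P}^1$ then forces $\nu(\mathbb{P}^1)=0$. For $\SL_2(\mathbb{Q}_p)$, keep in mind that the diagonal torus acts by squares. The maps $t\mapsto p^2t+a$ with $0\leq a<p^2$ (given by $\mathrm{diag}(p,p^{-1})$ followed by a unipotent translation) fit $p^2$ disjoint translates of $\mathbb{Z}_p$ into $\mathbb{Z}_p$. The map $t\mapsto -1/t$ sends $\mathbb{P}^1\setminus\mathbb{Z}_p$ into $p\mathbb{Z}_p$. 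The Iwasawa decomposition plays no role.
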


\begin{defn}\label{defn: notions of genericity}
Let $G = G(\M)$ be a definable group, and $X = \varphi(\M) \subseteq G$ a definable subset, where $\varphi(x) \in L(\M)$.
\begin{enumerate}
	\item $X$ is \emph{generic} if there exist some finite $A \subseteq G$ so that $G = A \cdot X$.
	\item $X$ is \emph{weakly generic} if there is some non-generic definable set $Y \subseteq G$ so that $X \cup Y$ is generic (equivalently, for some finite $A \subseteq G$, $G \setminus \left( A \cdot X \right)$ is not generic) \cite{newelski2009topological}.
	\item $X$ is \emph{non null}, i.e.~there exists a left-$G$-invariant global Keisler measure $\mu \in \mathfrak{M}_{G}(\M)$ supported on $G$ so that $\mu(X) > 0$.
	\item $X$ is  \emph{$f$-generic} if there exists a small model $M \prec \M$ so that $g \cdot X$ does not fork over $M$ for all $g \in G(\M)$ \cite{chernikov2018definably}.
	\item $X$ \emph{does not $G$-divide} if there is no infinite sequence $(g_i)_{i < \omega}$ with $g_i \in G(\M)$ and $k \in \mathbb{N}$ so that the family of sets $\{g_i \cdot X : i < \omega\}$ is $k$-inconsistent \cite{chernikov2018definably}.
\end{enumerate}	
We considered the action of $G$ on the left in each of the cases above. Similarly, we can consider the action on the right, or discuss bi-generics of different kinds.
\end{defn}

We note that ``generic'' corresponds to ``syndetic'', and ``weak generic'' corresponds to ``piecewise syndetic'' in the ergodic theory terminology. 
We summarize what is known about the relation between these notions of large definable sets, in general and in particular classes of theories (some examples below are translations of the standard examples/facts in the ergodic theory literature).

\begin{fact}\label{fac: relation between diff generics} Let $G = G(\M)$ be a definable group in $\M \models T$ and $X = \varphi(\M) \subseteq G$ for $\varphi(x) \in L$.
	\begin{enumerate}
	\item 	Any $T$.
	\begin{enumerate}
		\item 	Complements of weakly generic sets and null sets are ideals in the Boolean algebra of definable sets (immediate from the definitions). 
		\item $X$ non-null $\Rightarrow$ $X$ does not $G$-divide (see Theorem \ref{thm: generics in FHP}); generic $\Rightarrow$ weak generic. If forking equals dividing over models (e.g.~$T$ is $\NTP_2$), then $f$-generic is equivalent to non-$G$-dividing (see e.g.~\cite[Lemma 3.7]{montenegro2016groups}).
		\item  If $G$ is amenable (as a discrete group), then weak generic $\Rightarrow$ non-null (if $X$ is weak generic, then there is some finite set $A\subseteq G$ so that: for every finite $B \subseteq G$, there is $g \in G$ with $B g \subseteq AX$; then, given a right-almost-invariant Følner net $(B_i)_{i \in I}$ for $G$, can choose $g_i \in G$ with  $B_i \cdot g_i \subseteq AX$; taking an ultralimit, we thus find a bi-$G$-invariant measure $\mu$ with $\mu(AX) = 1$, hence $\mu(X) \geq \frac{1}{|A|} > 0$). 
		\end{enumerate}
	
		\item Assume $T$ is NIP.
		\begin{enumerate}
		\item $G$ is fsg \cite{hrushovski2008groups, hrushovski2011nip, hrushovski2012note} (so e.g.~$T$ is stable and $G$ is arbitrary), or more generally $G$ is definably amenable and admits a global generic type (see \cite[Section 3.4]{chernikov2018definably}). Then $G$ is definably amenable, and all notions (1)--(5) in Definition \ref{defn: notions of genericity} are equivalent, and they are also equivalent to their counterpart under the right action of $G$. 
		\item $G$ is definably amenable if and only if $G$-dividing subsets of $G$ form an ideal, if and only if non-$f$-generic subsets of $G$ form an ideal (so if and only if weak genericity is equivalent to either $f$-genericity or non-$G$-dividing) \cite{stonestrom2023f}.
		\item If $G$ is definably amenable, then (2)--(5) in Definition \ref{defn: notions of genericity}  are equivalent \cite[Theorem 1.2]{chernikov2018definably}; but depend on which side $G$ acts, and (1) is strictly stronger in general (see \cite[Proposition 6.3, Example 6.4]{chernikov2018definably}. 
		\item Without assuming definable amenability, $X$ generic $\not \Rightarrow$ $X$ is non-$G$-dividing, even when $T$ is NIP (consider the group $G=\SL_2(\mathbb{R})$, then $X := \left\{ 	\begin{bmatrix}
    a & b  \\
   c & d 
\end{bmatrix} : |a| \geq |c| \right\}$ is generic and $G$-dividing witnessed e.g.~by $g_i = \begin{bmatrix}
    1 & 0  \\
   3i & 1 
\end{bmatrix}$ for $i \in \mathbb{N}$; see the discussion after Theorem 1.1 in \cite{stonestrom2023f}).

		\end{enumerate}
		\item Assume $T$ is NTP$_2$ and $G$ admits a global strongly $f$-generic type (e.g.~$T$ is simple and $G$ arbitrary; or $G$ is definably amenable \cite[Proposition 3.20]{montenegro2016groups}; or $G$ admits a global type with a bounded orbit, see the proof of  \cite[Theorem 3.12]{chernikov2018definably} in the case of types). 
		\begin{enumerate}
		\item $f$-generic sets form an ideal, 
	(4) and (5) are equivalent (see \cite[Proposition 3.10]{montenegro2016groups}), and weak generic $\Rightarrow$ $f$-generic (see the proof of \cite[Proposition 3.30]{chernikov2018definably}).
	
	\item Even for $T$ simple and $G$ amenable (as a discrete group), non $G$-dividing $\not \Rightarrow$ non-null (assuming Dickson's conjecture, see Proposition \ref{prop: Primes not FHP}); and non-null $\not \Rightarrow$ weak generic (even assuming $\FHP$ additionally, see Remark \ref{rem: square free positive density gen}).
		\end{enumerate}
 	\end{enumerate}
\end{fact}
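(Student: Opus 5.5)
Most items of Fact \ref{fac: relation between diff generics} are quoted from the cited literature: (2)(a)--(c), the equivalences in (1)(b) and (3)(a), and the forward-referenced counterexamples in (3)(b). For these I would only check that the hypotheses match the cited statements. I would give short direct arguments for the remaining items, in the order below.

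For (1)(a), the null case is a union bound. If $\mu$ is left-invariant and $\mu(X\cup Y)>0$, then $\mu(X)>0$ or $\mu(Y)>0$, so the null sets are closed under finite unions; closure under definable subsets is monotonicity. For non-weakly-generic sets I would follow Newelski's argument: if $X\cup Y$ is weakly generic, witnessed by a finite $A$ with $G\setminus A(X\cup Y)$ non-generic, then either $X$ or $Y$ is weakly generic. For (1)(b), genericity implies weak genericity by taking $Y=\emptyset$, which is non-generic since $G$ is infinite; if $G$ is finite, every non-empty set is generic and the claim is trivial. The main self-contained step is non-null $\Rightarrow$ non-$G$-dividing. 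Suppose $\mu$ is left-invariant with $\mu(X)=\varepsilon>0$, and the family $\{g_iX:i<\omega\}$ is $k$-inconsistent. Then for every $N$ the function $\sum_{i<N}\mathbf 1_{g_iX}$ is pointwise at most $k-1$. Integrating over the associated Borel measure on types and using invariance gives $N\varepsilon\le k-1$, which is false for $N$ large. The final clause of (1)(b) is cited from \cite{montenegro2016groups}.

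For (1)(c) I would fill in the Følner sketch. Weak genericity gives a finite $A$ such that $AX$ contains a right translate $Bg$ of every finite $B\subseteq G$. Take a right Følner net $(B_i)$ and $g_i$ with $B_ig_i\subseteq AX$. An ultralimit of the normalized counting measures on $B_ig_i$ is then right-invariant, by the Følner property. By Fact \ref{fact: bi-inv measure} I would upgrade this to a bi-invariant measure concentrating on $AX$. Left invariance and subadditivity then give $\mu(X)\ge 1/|A|$. I expect the delicate point to be obtaining a single measure that is both bi-invariant and concentrated on $AX$. First I would average the right-invariant limit over a left-invariant mean in the usual way, and check that the averaging preserves $\mu(AX)=1$ after passing to the left-invariant side.

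For (2)(d) I would verify the example by direct computation. Write $h=\begin{bmatrix}a&b\\c&d\end{bmatrix}$. Since $g_i^{-1}h$ has first column $(a,\,c-3ia)$, we have $h\in g_iX$ iff $|a|\ge|c-3ia|$. Here $a\neq 0$, because $a=0$ would force $c=0$, contradicting $\det h=1$. So the condition says $c/a\in[3i-1,3i+1]$. These intervals are pairwise disjoint, hence the family $\{g_iX\}$ is $2$-inconsistent and $X$ $G$-divides. On the other hand $X\cup wX=G$ for the row swap $w=\begin{bmatrix}0&1\\-1&0\end{bmatrix}$, so $X$ is generic. The forward references in (3)(b) are established later, in Proposition \ref{prop: Primes not FHP} and Remark \ref{rem: square free positive density gen}.
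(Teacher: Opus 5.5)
Apart from (1)(c), your plan works. The counting argument for non-null $\Rightarrow$ non-$G$-dividing is a direct form of the Kelley-criterion argument in the proof of Theorem~\ref{thm: generics in FHP}, and it needs no indiscernibility. Your $\SL_2(\mathbb{R})$ computation is also correct.

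The gap in (1)(c) is the side of invariance. Weak genericity gives right translates $Bg\subseteq AX$. Right-translating by $h$ gives $B_ig_i h=B_i(g_ihg_i^{-1})g_i$. A Følner condition $|B_ik\triangle B_i|=o(|B_i|)$ for each fixed $k$ says nothing about the moving elements $g_ihg_i^{-1}$. So the ultralimit of the uniform measures on $B_ig_i$ need not be right-invariant, and your step ``is then right-invariant, by the Følner property'' fails.

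Your proposed repair fails too. Fact~\ref{fact: bi-inv measure} gives some bi-invariant mean, but not one concentrated on $AX$. Averaging a right-invariant $\nu$ with $\nu(AX)=1$ against a left-invariant mean $m$, via $Y\mapsto m(g\mapsto \nu(g^{-1}Y))$, gives $AX$ the $m$-average of $\nu(g^{-1}AX)$. A merely right-invariant $\nu$ does not control these values, so $\mu(AX)=1$ is lost.

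The fix is to take the net almost invariant under \emph{left} multiplication: $|hB_i\triangle B_i|=o(|B_i|)$ for each $h\in G$. Since $hB_ig_i\triangle B_ig_i=(hB_i\triangle B_i)g_i$, the ultralimit $\mu$ of $Y\mapsto |Y\cap B_ig_i|/|B_i|$ is left-invariant with $\mu(AX)=1$. Hence $1\le\sum_{a\in A}\mu(aX)=|A|\,\mu(X)$. Restricting $\mu$ to definable subsets of $G$ gives the left-invariant Keisler measure required by Definition~\ref{defn: notions of genericity}(3). Left invariance is all that non-nullness asks for, so no bi-invariance is needed. Whatever convention the sketch in the statement intends by ``right-almost-invariant'', the working choice is the one that controls left translates.
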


\begin{problem}
	Relaxing amenability: is it true that if $G$ is a definably amenable group in an arbitrary theory and $X$ is weakly generic, then  $X$ is non null? 
\end{problem}

\subsection{Generics in amenable $\FHP$ groups}\label{sec: gen in amenable FHP groups}
We would like to connect forking and $f$-generics with
measures in $\FHP$ theories.  We will use some standard facts about
finitely additive probability measures.
\begin{fact}
\cite{los1949extensions} Let $S$ be a set and $\mathcal{B}_{0}\subseteq\mathcal{B}_{1}\subseteq\mathcal{P}\left(S\right)$
be Boolean subalgebras. Let $\mu$ be a finitely additive probability
measure on $\mathcal{B}_{0}$. Then there is a finitely additive probability
measure $\nu$ on $\mathcal{B}_{1}$ extending $\mu$. Moreover, for
any $X\in\mathcal{B}_{1}$ we can choose $\nu$ with $\nu\left(X\right)=r$
for any $r$ satisfying 
\[
\sup\left\{ \mu\left(L\right):L\in\mathcal{B}_{0},L\subseteq X\right\} \leq r\leq\inf\left\{ \mu\left(U\right):U\in\mathcal{B}_{0},X\subseteq U\right\} \mbox{.}
\]
\end{fact}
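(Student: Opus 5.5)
The plan is to pass from measures to positive linear functionals and obtain the extension from the Hahn--Banach theorem in its dominated-extension (Kantorovich) form. Let $V_{0}$ (respectively $V_{1}$) be the real vector space of $\mathcal{B}_{0}$-simple (respectively $\mathcal{B}_{1}$-simple) functions on $S$, that is, finite linear combinations of indicator functions $1_{L}$ with $L\in\mathcal{B}_{0}$ (respectively $\mathcal{B}_{1}$). Then $\mu$ induces a positive linear functional $I(g)=\int g\,d\mu$ on $V_{0}$, well defined by finite additivity. For $f\in V_{1}$ define
\[
p(f):=\inf\left\{ I(h):h\in V_{0},\ h\geq f\right\} .
\]
Since $f$ is bounded and constants lie in $V_{0}$, this infimum is over a non-empty set and is finite. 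One checks routinely that $p$ is sublinear and monotone, and that $p=I$ on $V_{0}$ by positivity of $I$. Conversely, any linear $\ell$ on $V_{1}$ with $\ell\leq p$ and $\ell|_{V_{0}}=I$ gives the required $\nu(Y):=\ell(1_{Y})$. Indeed, if $f\leq0$ then $\ell(f)\leq p(f)\leq0$, so $\ell$ is positive. Hence $\nu$ is $[0,1]$-valued and finitely additive, with $\nu(S)=I(1)=1$, and it extends $\mu$.

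To prescribe $\nu(X)=r$, I first extend $I$ to $W:=V_{0}+\mathbb{R}1_{X}$ by $\ell_{0}(g+t1_{X}):=I(g)+tr$. The key step, and the one I expect to require care, is to verify $\ell_{0}\leq p$ on $W$; this also shows $\ell_{0}$ is well defined, since $p(0)=0$ forces $\ell_{0}$ to vanish on any representation of $0$.

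For $t>0$, let $h\in V_{0}$ with $h\geq g+t1_{X}$, and put $f:=(h-g)/t\in V_{0}$. Then $f\geq1_{X}$, so $f\geq0$ everywhere and $f\geq1$ on $X$. Hence $U:=\{f\geq1\}\in\mathcal{B}_{0}$ contains $X$ and $f\geq1_{U}$, giving $I(f)\geq\mu(U)\geq r$ by the upper bound on $r$. Taking the infimum over $h$ yields $p(g+t1_{X})\geq I(g)+tr$.

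For $t<0$, write $t=-s$ and let $h\geq g-s1_{X}$. Then $f:=(g-h)/s\in V_{0}$ satisfies $f\leq1_{X}$, so $L:=\{f>0\}\subseteq X$. Since $f\leq1$ everywhere and $f\leq0$ off $L$, we get $f\leq1_{L}$. Hence $I(f)\leq\mu(L)\leq r$ by the lower bound on $r$, which again gives $I(h)\geq I(g)-sr$. The case $t=0$ is just $p|_{V_{0}}=I$.

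Finally, the Hahn--Banach theorem (dominated extension for a sublinear functional) extends $\ell_{0}$ from $W$ to a linear $\ell$ on $V_{1}$ with $\ell\leq p$. By the first paragraph, $\nu(Y):=\ell(1_{Y})$ is a finitely additive probability measure on $\mathcal{B}_{1}$ extending $\mu$, with $\nu(X)=\ell_{0}(1_{X})=r$. The first assertion of the Fact is the special case $X=S$, $r=1$; alternatively it is the same argument with $W=V_{0}$.
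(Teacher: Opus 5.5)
Your proof is correct. The functional $p$ is the upper $\mu$-integral, and your two-sided check of $I(g)+tr\le p(g+t1_X)$, via the level sets $\{f\geq 1\}\supseteq X$ and $\{f>0\}\subseteq X$, is exactly where the bounds on $r$ enter. It also correctly gives well-definedness of $\ell_0$, including the degenerate case $X\in\mathcal{B}_0$, where the bounds force $r=\mu(X)$.

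The paper gives no proof of this Fact; it simply quotes it from \L{}o\'s--Marczewski. The classical argument for it is measure-theoretic rather than functional-analytic. Write $\mu_*(X)$ and $\mu^*(X)$ for the supremum and infimum in the statement. One first extends $\mu$ to the algebra generated by $\mathcal{B}_0\cup\{X\}$ by an explicit formula. For instance, $(A\cap X)\cup(B\setminus X)\mapsto\mu^*(A\cap X)+\mu_*(B\setminus X)$ realizes $\nu(X)=\mu^*(X)$, the symmetric formula realizes $\mu_*(X)$, and convex combinations give every $r$ in between. One then reaches all of $\mathcal{B}_1$ by adjoining sets one at a time with Zorn's lemma, or by a compactness argument in $[0,1]^{\mathcal{B}_1}$.

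Your route absorbs that transfinite step into the Hahn--Banach theorem and handles the prescribed value by a one-dimensional pre-extension. This is shorter and makes the role of the hypothesis on $r$ transparent. The classical route is more elementary and yields an explicit description of the one-step extension. Both need a choice principle to get to all of $\mathcal{B}_1$.
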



We will use the following criterion for the existence of
finitely additive probability measures due to Kelley (alternatively, we could use a version
of this criterion for finitely supported probability measures: 

\begin{defn}\label{def: inters numb}
	Let $\mathcal{B}$ be a Boolean algebra
of subsets of $X$, and let $\mathcal{F}\subseteq\mathcal{B}\setminus\left\{ \emptyset\right\} $.
Given a finite sequence $\bar{S}=\left(S_{1},\ldots,S_{n}\right)$
of sets from $\mathcal{F}$, possibly with repetitions, let $i\left(\bar{S}\right):=\frac{k}{n}$,
where $k$ is the largest size of a subset $J\subseteq\left[n\right]$
such that $\bigcap_{i\in J}S_{i}\neq\emptyset$. We define the \emph{intersection
number of $\mathcal{F}$ as 
\[
i\left(\mathcal{F}\right):=\inf\left\{ i\left(\bar{S}\right):\bar{S}\text{ is a finite sequence of sets from }\mathcal{F}\right\} \text{.}
\]
}
\end{defn}

\begin{fact}
\cite{kelley1959measures} \label{fact: Kelley} Let $\mathcal{F}\subseteq\mathcal{B}\setminus\left\{ \emptyset \right\} $
be given.
\begin{enumerate}
\item Let $\mu$ be a finitely additive probability measure on $\mathcal{B}$ and $\alpha >0$ such that $\mu\left(S\right)\geq\alpha$
for all $S\in\mathcal{F}$. Then $i\left(\mathcal{F}\right)\geq\alpha$.
\item Conversely, if $i\left(\mathcal{F}\right)=\alpha>0$, then there exists
a finitely additive probability measure $\mu$ on $\mathcal{B}$ such that $\mu\left(S\right)\geq\alpha$
for all $S\in\mathcal{F}$.
\end{enumerate}
\end{fact}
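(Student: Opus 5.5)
The plan for part (1) is a finite averaging argument, and for part (2) a Hahn--Banach separation in the space of $\mathcal{B}$-simple functions. The step needing most care is checking that the separating functional is positive and normalizable.

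\emph{Part (1).} Fix a finite sequence $\bar{S}=(S_1,\ldots,S_n)$ from $\mathcal{F}$. Let $\mathcal{B}_0\subseteq\mathcal{B}$ be the finite Boolean algebra generated by $S_1,\ldots,S_n$, and let $A_1,\ldots,A_m$ be its atoms. Consider the simple function $f:=\sum_{i=1}^n \mathbf{1}_{S_i}$. It is constant on each atom, say equal to $c_j$ on $A_j$, where $c_j$ is the number of $i$ with $A_j\subseteq S_i$. By finite additivity,
\[
\sum_j c_j\,\mu(A_j)=\sum_{i=1}^n\mu(S_i)\geq \alpha n .
\]
Since $\sum_j\mu(A_j)=1$, some atom $A_j$ with $\mu(A_j)>0$ has $c_j\geq\alpha n$. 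Such an atom is nonempty, so any point of it lies in at least $\alpha n$ of the sets $S_i$. Hence $i(\bar S)\geq\alpha$, and taking the infimum over all $\bar S$ gives $i(\mathcal{F})\geq\alpha$.

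\emph{Part (2), first step: a bound for real convex combinations.} Let $V$ be the real vector space of $\mathcal{B}$-simple functions on $X$ with the sup norm. I first show that for every convex combination $g=\sum_{i}\lambda_i\mathbf{1}_{S_i}$ with $S_i\in\mathcal{F}$ we have $\sup_X g\geq\alpha$.
\begin{itemize}
\item For rational weights $\lambda_i=t_i/D$, form the sequence repeating $S_i$ exactly $t_i$ times. By $i(\mathcal{F})\geq\alpha$, some point lies in at least $\alpha D$ terms of this sequence, and at that point $g\geq\alpha$.
\item The map $\lambda\mapsto\sup_X\sum_i\lambda_i\mathbf{1}_{S_i}$ is $1$-Lipschitz in the $\ell^1$ norm, and rational weights are dense. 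So the bound passes to arbitrary real convex combinations.
\end{itemize}

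\emph{Part (2), separation.} The convex set $C:=\mathrm{conv}\{\mathbf{1}_S:S\in\mathcal{F}\}$ is therefore disjoint from the convex set $U:=\{f\in V:\sup_X f<\alpha\}$. The set $U$ is open in the sup norm and nonempty, since it contains the constant $0$. By the Hahn--Banach separation theorem there are a nonzero continuous linear functional $L$ on $V$ and a constant $c$ with
\[
L(f)\leq c\leq L(g)\quad\text{for all } f\in U,\ g\in C .
\]

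\emph{Part (2), positivity and normalization (the main obstacle).}
\begin{itemize}
\item \emph{$L$ is positive.} If $h\geq0$ had $L(h)<0$, then $-th\in U$ for every $t>0$ while $L(-th)=-tL(h)\to\infty$, contradicting $L\leq c$ on $U$.
\item \emph{$L(\mathbf{1})>0$.} Since $L$ is nonzero and positive, we get $L(\mathbf{1})>0$: every $f\in V$ satisfies $-\|f\|\mathbf{1}\leq f\leq\|f\|\mathbf{1}$, so $L(\mathbf{1})=0$ would force $L=0$.
\item \emph{Normalize.} Replace $L$ by $L/L(\mathbf{1})$ and $c$ by $c/L(\mathbf{1})$, so that $L(\mathbf{1})=1$.
\item \emph{Lower bound on $c$.} For each $\varepsilon\in(0,\alpha)$ the constant $(\alpha-\varepsilon)\mathbf{1}$ lies in $U$, so $c\geq\alpha-\varepsilon$. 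Hence $c\geq\alpha$.
\end{itemize}

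\emph{Part (2), conclusion.} Define $\mu(Y):=L(\mathbf{1}_Y)$ for $Y\in\mathcal{B}$. Linearity of $L$ makes $\mu$ finitely additive, positivity makes it nonnegative, and $\mu(X)=L(\mathbf{1})=1$, so $\mu$ is a finitely additive probability measure on $\mathcal{B}$. Finally, for every $S\in\mathcal{F}$ we have $\mathbf{1}_S\in C$, so $\mu(S)=L(\mathbf{1}_S)\geq c\geq\alpha$.
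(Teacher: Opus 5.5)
This Fact has no proof in the paper; it is quoted from Kelley. So there is nothing internal to match, and your argument is correct and complete.

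Part~(1) is the right averaging over the atoms of the algebra generated by $S_1,\ldots,S_n$. In part~(2) every step checks out:
\begin{enumerate}
\item passing from rational to real weights by the $\ell^1$-Lipschitz estimate;
\item openness and convexity of $U$;
\item positivity of $L$ via $-th\in U$;
\item $L(\mathbf{1})>0$ from $-\|f\|\mathbf{1}\leq f\leq\|f\|\mathbf{1}$;
\item $c\geq\alpha$ via the constants $(\alpha-\varepsilon)\mathbf{1}\in U$.
\end{enumerate}
This is the standard Hahn--Banach proof of Kelley's criterion. Separating inside the normed space of $\mathcal{B}$-simple functions is fine; separation needs no completeness. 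It also yields a finitely additive measure on $\mathcal{B}$ directly, with no detour through the Stone space and a Riesz representation.

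The route the paper itself hints at is different. It appears in the parenthetical just before the Fact and in the remark containing Fact~\ref{fact: bdd frac transversal}, and has two steps:
\begin{enumerate}
\item For each finite $\mathcal{F}_0\subseteq\mathcal{F}$, apply linear programming duality. Equivalently, use von Neumann's minimax theorem for the finite game whose rows are the atoms generated by $\mathcal{F}_0$ and whose columns are the members of $\mathcal{F}_0$. This gives a finitely supported probability measure with $\mu(S)\geq\alpha$ for all $S\in\mathcal{F}_0$.
\item View the finitely additive probability measures as a compact closed subset of $[0,1]^{\mathcal{B}}$. The finite intersection property there gives a single $\mu$ that works for all of $\mathcal{F}$.
\end{enumerate}

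Your approach handles the infinite family in one separation step, at the cost of the positivity and normalization bookkeeping. The finite route instead isolates the quantitative statement for finite families, which is the form relevant to fractional transversals. It pays for the infinite case with a Tychonoff compactness argument.
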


\begin{thm}
\label{thm: generics in FHP} Assume $T$ is weakly FHP (Definition \ref{def: weak FHP comb}) and $\NTP_2$ (so for example if $T$ is FHP, by Proposition \ref{prop: FHP implies NTP2}) and $G = G(\M)$ is an ($\emptyset$-)definable group. Assume that there exists a model $M$ of $T$ so that $G(M)$ is amenable (as a discrete group). Then for any $L(\M)$-definable set $X = \varphi(\M) \subseteq G(\M)$, the following are equivalent:
\begin{enumerate}
\item $X$ is non-$G$-dividing;
\item $X$ is $f$-generic;
\item $X$ is non null.
\end{enumerate}
\end{thm}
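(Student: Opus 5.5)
The plan is to prove the cycle (1)$\Leftrightarrow$(2), (3)$\Rightarrow$(1), and (1)$\Rightarrow$(3). The equivalence (1)$\Leftrightarrow$(2) is Fact \ref{fac: relation between diff generics}(1)(b): since $T$ is $\NTP_2$, forking equals dividing over models, so $f$-generic is the same as non-$G$-dividing.

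For (3)$\Rightarrow$(1), suppose $\mu$ is a left-$G$-invariant Keisler measure with $\mu(X)=\varepsilon>0$. Suppose also that $(g_i)_{i<\omega}$ witnesses $G$-dividing with $k$-inconsistency. Every $g_iX$ has measure $\varepsilon$. For $n$ large the sets $g_0X,\ldots,g_{n-1}X$ are $k$-inconsistent, so each point lies in at most $k-1$ of them. Hence $\sum_{i<n}\mathbf{1}_{g_iX}\le k-1$ pointwise, which gives $n\varepsilon\le k-1$. This is a contradiction for $n$ large. (Alternatively, apply Fact \ref{fact: Kelley}(1) to the family $\{g_iX\}$: its intersection number would be $0$, yet every member has measure $\geq\varepsilon$.)

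The main direction is (1)$\Rightarrow$(3). Assume $X=\varphi(\M,c)$ does not $G$-divide. Let $\psi(x;y,z):=\varphi(y^{-1}x,z)\wedge y\in G$, so that $gX=\psi(\M;g,c)$. Consider the family $\mathcal{F}:=\{gX: g\in G(\M)\}$. Non-$G$-dividing, together with Remark \ref{rem: non-div fam iff pk} (Ramsey and compactness, applied to translates $(g,c)$ with $c$ fixed), shows that for each $k$ there is $p$ such that $\mathcal{F}$ has the $(p,k)$-property. Indeed, if no such $p$ existed, compactness would give an infinite $k$-inconsistent sequence of translates. By $\WFHP_k$ for $\psi$ (choosing $k$ from $\WFHP$ for $\psi$ and $p\ge p_0$), there is $\beta>0$ such that every finite subsequence $g_1X,\ldots,g_nX$ has at least $\beta n$ members with a common point. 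Note that $\beta$ depends only on $\psi,k,p$, so the same bound holds for families of translates in any model, in particular in $G(M)$ for the amenable model $M$. For this we first replace $M$ by a model containing $c$ in which $G(M)$ is still amenable. That is the main obstacle, addressed below. In any case the intersection number satisfies $i(\mathcal{F}_M)\ge\beta$, where $\mathcal{F}_M:=\{gX(M):g\in G(M)\}$, since the property "some $\beta n$ of the $n$ translates intersect" is first order in the parameters.

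Now produce an invariant measure. By Kelley (Fact \ref{fact: Kelley}(2)) there is a finitely additive probability measure $\nu$ on $\mathcal{P}(G(M))$ with $\nu(gX(M))\ge\beta$ for all $g\in G(M)$. To make it invariant, average using amenability. Let $m$ be a left-invariant mean on $G(M)$, and set $\mu_0(Y):=\int \nu(h^{-1}Y)\,dm(h)$. Then $\mu_0$ is left-invariant and satisfies $\mu_0(X(M))\ge\beta$, because $\nu(h^{-1}X(M))\geq\beta$ for every $h$. Next restrict $\mu_0$ to the $M$-definable sets and extend it to a global left-$G(\M)$-invariant Keisler measure. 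This extension is the standard step from \cite[Section 5]{hrushovski2008groups}, used in Fact 5.x(1): take a suitable limit over a saturated elementary extension (an ultrapower of $M$, where the measure is defined by ultralimit), which transfers invariance. The resulting measure gives $X$ measure $\ge\beta>0$.

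The hard step is handling the parameter $c$ of $X$, which need not lie in $M$. Amenability of $G(N)$ is not obviously preserved in elementary extensions $N$ containing $c$. To avoid this, I would work in $M$ with \emph{all} instances $\varphi(M,c')$ simultaneously. The $\beta$ obtained from $\WFHP$ is uniform in the parameter, so for every $c'\in M$ with $\varphi(\M,c')$ non-$G$-dividing in the same uniform sense (a type-definable condition on $c'$, coming from the fixed $p$), the averaged measures give $\varphi(M,c')$ measure $\ge\beta$. Then take an ultrapower $M^{\mathcal U}$ realizing $\tp(c)$ over $\emptyset$, and use the ultralimit of the invariant means. The catch is that one must choose a single invariant measure that works for all such $c'$ at once. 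I would try Kelley applied to the family of all translates of all such instances together: the uniform $\WFHP$ bound, applied after a pigeonhole step on instances, should still give a positive intersection number. Failing that, I would transfer via the ultrapower, on which $\prod G(M)/\mathcal U$ carries the ultralimit of means, and realize $c$ in a saturated extension by compactness on the closed set of invariant measures.
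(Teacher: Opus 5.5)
Your arguments for (1)$\Leftrightarrow$(2), for (3)$\Rightarrow$(1), and for the construction inside the amenable model $M$ are correct and match the paper. The multiplicity count in (3)$\Rightarrow$(1) is just Fact~\ref{fact: Kelley}(1) done by hand, and the construction in $M$ (uniform $\beta$ from WFHP, Kelley's criterion, averaging against a left-invariant mean) is the paper's. The genuine gap is the final transfer from $M$ to $\mathbb{M}$ when the parameter $c$ of $X$ is not in $M$. You flag this step but do not resolve it.

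Extending one measure $\mu_0$ over $M$ to a global invariant measure only controls sets with parameters in $M$. Your proposed remedy is a single invariant measure (or a single Kelley measure for the family of all translates of all admissible instances) giving every admissible $\varphi(x,c')$ measure at least some $\beta'>0$. This cannot work in general, even in the nicest setting. Take $T=\mathrm{RCF}$ and $G=(\mathbb{R}^2,+)$, which is abelian, with $T$ NIP and hence FHP. Let $\varphi(x;p,u,v)$ define the open cone $p+\mathbb{R}_{>0}u+\mathbb{R}_{>0}v$ when $u,v$ are linearly independent, and $\emptyset$ otherwise. Finitely many translates of a fixed cone always intersect, so all nonempty instances satisfy the same $(p,k)$-property. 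Yet for every $N$ there are $N$ pairwise disjoint instances: narrow cones at the origin in disjoint angular sectors. So the union family has intersection number $0$, no finitely additive probability measure gives all instances measure $\geq\beta'$, and no pigeonholing on instances helps.

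The missing idea is that you never need one measure for all parameters. The quantifier order $\forall c'\,\exists\mu$, which is exactly what your work in $M$ establishes, suffices, and it is first order in a suitable expansion. The paper takes $N=(M,P,\mathbb{R};(E_\psi)_{\psi\in L})$, where $P$ is the set of all $G(M)$-invariant Keisler measures over $M$ and $E_\psi(b,m)=m(\psi(x,b))$. Then $N$ satisfies the sentence ``for every $b'$ such that the translates of $\varphi(x,b')$ have the $(p,k)$-property, some $m\in P$ has $E_\varphi(b',m)\geq\beta$''. In a saturated $N^*\succ N$ the home sort is a monster model of $T$. Every $m\in P^*$ gives a $G(\mathbb{M})$-invariant global Keisler measure $\mu_m(\psi(x,b)):=\operatorname{st}(E_\psi(b,m))$. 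Applying the transferred sentence to $b'=c$ yields $\mu_m(\varphi(x,c))\geq\beta$.

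Your ultrapower fallback could be repaired along the same lines. For $c'=(c'_i)_i/\mathcal{U}$ realizing $\operatorname{tp}(c)$, take the ultralimit of the \emph{varying} measures $\mu_{c'_i}$ rather than of a single mean. Then extend from $M^{\mathcal{U}}$ to $\mathbb{M}$ by the Hrushovski--Peterzil--Pillay construction, and move $c'$ to $c$ by an automorphism. As written, however, the proposal does not contain this step.
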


\begin{rem}
	We note that if $G$ is an amenable group and $H \equiv G$ is elementarily equivalent to it (in the pure group language), then $H$ is not necessarily amenable. Groups for which this holds are called \emph{uniformly amenable} (see e.g.~\cite[Section 16.8]{dructu2018geometric} and references there; in particular \cite{wysoczanski1988uniformly} gives an example of an amenable group that is not uniformly amenable).
\end{rem}

\begin{proof}

(1) is equivalent to (2) by Fact \ref{fac: relation between diff generics}(1b) and Fact \ref{fact: forking in NTP2}, as $T$ is $\NTP_2$. And (3) implies (1) is well known: if $\mu \in \mathfrak{M}_{G}(\M)$ is left-$G(\M)$-invariant and $\mu(X) = \alpha >0$, then $\mu(g \cdot X) = \alpha$ for all $g \in G(\M)$ by left-invariance of $\mu$, hence $i(\mathcal{F}) \geq \alpha$ for $\mathcal{F} := \{g \cdot X : g \in G(\M)\}$ by Fact \ref{fact: Kelley}(1). Hence, if $(g_i)_{i \in \omega}$ is any indiscernible sequence in $G(\M)$ and $n \in \omega$ is arbitrary, we get that $\bigcap_{i \in I}g_i \cdot X \neq \emptyset$ for some $I \subseteq [n]$ with $|I| \geq \alpha n$. As $n$ was arbitrary, by indiscernibility this implies that $\{g_i \cdot X : i \in \omega \}$ is consistent, so $X$ does not $G$-divide.
So it remains to prove that (1) implies (3).

Assume $X = \varphi(\M,b)$, where $\varphi(x,y) \in L$ and $b$ in $\M$, does not $G$-divide (fixed for the rest of the proof). Let $\varphi'(x;y,y') := \varphi((y')^{-1} \cdot x; y)$, and let $k$ be such that $\varphi'(x;y,y')$ satisfies $\WFHP_{k}$ in $T$. Let $M \prec \M$ be small so that $G(M)$ is amenable, as a discrete group (we may assume that $M$ is small by L\"owenheim-Skolem, as subgroups of amenable groups are amenable).

 It follows by saturation of $\M$ that there is some $p\in\mathbb{N}$ (which we fix for the rest of the proof) 
such that the family of sets $\mathcal{F}:=\left\{ g \cdot X:g\in G(\M)\right\} $
satisfies the $\left(p, k\right)$-property, meaning that for any $g_1, \ldots, g_p \in G(\M)$ (possibly with repetitions), there is some $I \subseteq [p]$ with $|I| = k$ so that $\bigcap_{i \in I} g_i \cdot X \neq \emptyset$. Indeed, if no such $p$ existed, by saturation of $\M$ we could extract
an infinite indiscernible sequence $\left(g_{i}:i\in \omega\right)$ in $G(\M)$ so that $\left\{ g_{i} \cdot  X:i\in \omega\right\} $ is $k$-inconsistent --- contradicting non-$G$-dividing.

Now we work in $M$, and assume that $b_0$ in $M$ is such that the family $\mathcal{F}_0 := \{g \cdot \varphi(M, b_0) : g \in G(M)\}$ satisfies the $(p,k)$-property. 
Let $\bar{S}=\left(S_{1},\ldots,S_{n}\right)$ be any finite sequence
of sets from $\mathcal{F}_0$, possibly with repetitions. 
%
As $\mathcal{F}_0$ satisfies $\WFHP_{k}$ (by the choice of $k$), and without loss of generality $p$ is sufficiently large with respect to $k$, there is $\beta = \beta(p,k) >0$
and $J\subseteq\left[n\right]$ with $\left|J\right|\geq\beta n$
and $\bigcap_{i\in J}S_{i}\neq\emptyset$.
Hence $i\left(\bar{S}\right)\geq\beta$ (Definition \ref{def: inters numb}). 
As $\beta$ does not depend on the choice of $\bar{S}$, it follows
that $i\left(\mathcal{F}_0\right)\geq\beta$.

Applying Fact \ref{fact: Kelley}(2) we find some finitely additive probability  measure $\nu$
on $\mathcal{P}\left(G(M)\right)$ such that $\nu\left(g \cdot \varphi(M,b_0) \right)\geq\beta$
for all $g\in G(M)$.

Now let $\omega$ be a bi-invariant finitely additive probability measure on $\mathcal{P}\left(G(M)\right)$
given by amenability of $G(M)$ and Fact \ref{fact: bi-inv measure}. For any $X\subseteq G(M)$,
we define $\mu'\left(X\right):=\int_{G(M)}f_{X}\left(g\right)d\omega$,
where $f_{X}\left(g\right)=\nu\left(g \cdot X\right)$. (Formally, we can work in $M'$  the expansion of $M$ naming all subsets of all of its powers, where $\nu, \omega \in \mathfrak{M}_{G}(M')$ are definable Keisler measures over $\emptyset$, and let $\mu'$ be the definable convolution $\nu \ast \omega$, as studied in \cite{chernikov2022definable, chernikov2023definable, chernikov2024definable}.) It follows that $\mu'$ is a $G(M)$-invariant finitely additive probability measure on $\mathcal{P}\left(G(M)\right)$ and $\mu'\left( \varphi(M,b_0) \right)\geq\beta$. Letting $\mu \in \mathfrak{M}_{G}(M)$ be the restriction of $\mu'$ to definable sets, we get that $\mu$ is $G(M)$-invariant and $\mu\left( \varphi(M,b_0) \right)\geq\beta$.

Now we want to lift this implication on the existence of appropriate measures from $M$ to $\M$. We consider the following generalization of Construction $(*)$ from \cite[Section 2]{hrushovski2008groups}. Let  $N = (M, P, \mathbb{R}; (E_{\psi})_{\psi \in L})$ be the structure with sorts $M$ (equipped with its full $L$-structure), $P$ whose elements are the $G(M)$-invariant Keisler measures in $\mathfrak{M}_{G}(M)$ (with no additional structure on it) and $\mathbb{R}$ (equipped with the full structure of a real closed field), and for each $\psi(x,y) \in L$ the map $E_{\psi}: M^y \times P \to \mathbb{R}$ defined by $E_{\psi}(b,\mu) := \mu(\psi(x,b))$. Let $N^{\ast} = (M^{\ast}, P^{\ast}, \mathbb{R}^{\ast})$ be a saturated elementary extension of $N$. In particular $M^{\ast}$, with its induced $L$-structure, is a saturated model of $T$ --- so we may identify it with $\M$; and we have the standard part map $\st: \mathbb{R}^{\ast} \cap [0,1] \to \mathbb{R} \cap [0,1]$.  For any $m \in P^{\ast}$, $\psi(x,y) \in L$ and $b \in (M^{\ast})^{y}$, we define $\mu_{m}(\psi(x,b)) := \st \left( E_{\psi}(b,m)^{N^{\ast}} \right)$. By definition of $N$ and $N^{\ast} \succ N$, it follows that for every $m \in P^{\ast}$, $\mu_m \in \mathfrak{M}_{G}^{L}(M^{\ast})$ is a $G(M^{\ast})$-invariant Keisler measure (but of course not all such measures have to appear as $\mu_m$ for some $m \in P^{\ast}$).
 
 By the above we have that $N$ satisfies the following sentence (with parameter $\beta$ in $M$): 
 \begin{gather*}
 \forall b' \in M^y \left( \left( \forall g_1, \ldots g_p \in G    \bigvee_{I \in {[p] \choose k}} \exists x \bigwedge_{i \in I} \varphi'(x; b', g_i) \right) \rightarrow \exists m \in P (E_{\varphi}(b',m) \geq \beta) \right).
 \end{gather*}
 
 Then, as $N^{\ast} \succ N$, the same sentence holds in $N^{\ast}$. As $\mathcal{F}$ satisfies the $(p,k)$-property, it follows that $N^{\ast} \models \exists m \in P \left(  E_{\varphi}(b,m) \geq \beta \right)$. This implies $\mu_m(\varphi(x,b)) \geq \beta >0$, and $\mu_m \in \mathfrak{M}_{G}(\M)$ is $G(\M)$-invariant (using the identification of $M^{\ast}$ and $\M$) --- as wanted.
\end{proof}
\begin{problem}
Does the equivalence in Theorem \ref{thm: generics in FHP} hold only assuming that $G$ is \emph{definably} amenable?
\end{problem}

\begin{rem}
Let $\left(X,\mathcal{F}\right)$ be a set
system with $X$ finite. A \emph{fractional transversal} for $\mathcal{F}$
is a function $\varphi:X\to\left[0,1\right]$ such that for each $S\in\mathcal{F}$
we have $\sum_{x\in S}\varphi\left(x\right)\geq1$. The \emph{size} of a fractional transversal $\varphi$ is $\sum_{x\in X}\varphi\left(x\right)$,
and the \emph{fractional transversal number} $\tau^{*}\left(\mathcal{F}\right)$
is the infimum of the sizes of fractional transversals for $\mathcal{F}$.

We note that in our proof of Theorem \ref{thm: generics in FHP} we could instead use  the following result for finite families (which explicitly uses the duality of linear programming essentially equivalent to  Kelley's criterion in the finite case) plus compactness:
\begin{fact}
\label{fact: bdd frac transversal}\cite{alon2002transversal} For
every $d,p$ there exists $\alpha>0$ such that: for any finite family $\mathcal{F}$ satisfying $\FH\left(d+1,\alpha,\beta\right)$
with some $\beta>0$ and the $\left(p,d+1\right)$-property, we have
$\tau^{*}\left(\mathcal{F}\right)\leq T$ for some $T=T\left(p,d,\beta\right)$.
\end{fact}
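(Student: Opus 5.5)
The plan is to reduce the bound on $\tau^{*}$ to a statement about weighted subfamilies via linear programming duality, and then get that statement from the fractional Helly hypothesis applied to tuples with repetitions. The main ingredients are the double counting from Proposition \ref{prop: FHP implies WFHP} and the repetition trick from the remark after it.

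\textbf{Step 1 (duality).} Let $X$ be the finite ground set and $\mathcal{F}$ the finite family. By LP duality, $\tau^{*}(\mathcal{F})$ equals the fractional matching number
\[
\nu^{*}(\mathcal{F})=\max\Big\{\sum_{S\in\mathcal{F}}w(S)\ :\ w\geq0,\ \sum_{S\ni x}w(S)\leq 1\text{ for all }x\in X\Big\}.
\]
Normalizing $w$ to a probability distribution shows that $\tau^{*}(\mathcal{F})\leq T$ is equivalent to the following: for every probability distribution $w$ on $\mathcal{F}$ there is a point $x\in X$ with $\sum_{S\ni x}w(S)\geq 1/T$. This is the finite analogue of Kelley's criterion (Fact \ref{fact: Kelley}). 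Since the maximum is attained and depends continuously on $w$, it is enough to treat rational $w$, say $w(S)=t_S/D$.

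\textbf{Step 2 (reduction to a tuple with repetitions).} Form the tuple $\bar{S}$ of length $D$ that contains $t_S$ copies of each $S\in\mathcal{F}$. The repetition remark after Proposition \ref{prop: FHP implies WFHP} shows that the $(p,d+1)$-property of $\mathcal{F}$ gives the $(p',d+1)$-property for tuples with repetitions, where $p':=(d+1)(p-1)+1$. Indeed, $d+1$ copies of one nonempty set meet, and otherwise there are $p$ distinct sets among the chosen ones. Applying the double counting from the proof of Proposition \ref{prop: FHP implies WFHP}, at least
\[
\binom{D}{p'}\Big/\binom{D-d-1}{p'-d-1}\ \geq\ \alpha_0\binom{D}{d+1}
\]
of the $(d+1)$-subsets of $[D]$ are intersecting, where $\alpha_0=\alpha_0(p,d)>0$. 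This holds for $D\geq p'$. If $D<p'$, rescale the weights by multiplying all $t_S$ by a common factor. Set $\alpha:=\alpha_0(p,d)$; this is where the quantifier order ``for every $d,p$ there exists $\alpha$'' is used.

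\textbf{Step 3 (conclusion).} The hypothesis $\FHP(d+1,\alpha,\beta)$ refers to tuples with repetitions (Definition \ref{def: FHP}). It therefore yields $J\subseteq[D]$ with $|J|\geq\beta D$ and some point $x\in\bigcap_{j\in J}S_j$. Every copy of a set containing $x$ also contains $x$, so $\sum_{S\ni x}w(S)\geq|J|/D\geq\beta$. By Step 1 this gives $\tau^{*}(\mathcal{F})\leq T:=1/\beta$, which depends only on $\beta$; in particular it is of the form $T(p,d,\beta)$.

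The only delicate point is making the constants come out in the required dependence. This means checking that the double-counting constant $\alpha_0$ is uniform in $D$ once $D$ is large, and handling small $D$ by rescaling the weights. The rest is routine.
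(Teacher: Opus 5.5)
Your argument is correct and is essentially the standard proof from \cite{alon2002transversal}. The paper only cites that result, remarking that it rests on LP duality, i.e.\ the finite form of Kelley's criterion. Your proof follows this route: replicate sets according to a rational fractional packing, then apply the fractional Helly hypothesis to the resulting tuple with repetitions, just as in the paper's proofs of Proposition \ref{rem: FHP iff FHP for finite measures} and Theorem \ref{thm: generics in FHP}.

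Two cosmetic points. The double-counting constant is exactly $\alpha_0=\binom{p'}{d+1}^{-1}$ for every $D\geq p'$. Also, since the paper's $(p,k)$-property already allows repetitions, your passage to $p'$ is harmless but unnecessary.
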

\end{rem}

\subsection{Generics in fim FHP groups}\label{sec: fim groups}

\begin{defn}\cite{hrushovski2008groups}
	A definable group $G$ is \emph{fsg} (\emph{finitely satisfiable generics}) if there is some $p \in S_{G}(\M)$ and small $M \prec \M$ such that for every $g \in G(\M)$, $g \cdot p$ is finitely satisfiable in $G(M)$.
\end{defn}

\begin{fact}\label{fac: generic type fsg}\cite[Proposition 4.2]{hrushovski2008groups}
	Let $T$ be any theory and $G = G(\M)$ a definable fsg group, witnessed by some $p \in S_{G}(\M)$. Then $p$ is a (two-sided) generic type (i.e.~$\varphi(x) \in p$ implies $\varphi(\M)$ is both left and right generic in $G$).
\end{fact}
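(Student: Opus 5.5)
The statement to prove is Fact \ref{fac: generic type fsg}: if $p\in S_G(\M)$ witnesses fsg over a small $M\prec\M$ (so every left translate $g\cdot p$ is finitely satisfiable in $G(M)$), then every $\varphi(x)\in p$ defines a set that is both left and right generic. The plan is to argue by contradiction. Suppose $X=\varphi(\M)\in p$ is not left generic. Then we build a large family of translates of $X$ that no single translate of $p$ can contain, and this will contradict finite satisfiability of the translates of $p$ in $G(M)$.

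\textbf{Step 1 (left genericity).} Since $X$ is not left generic, $G(\M)\neq A\cdot X$ for every finite $A\subseteq G(\M)$. In particular this holds for every finite $A\subseteq G(M)$. By saturation of $\M$ over the small set $G(M)$, the partial type $\{x\in G\}\cup\{x\notin a\cdot X : a\in G(M)\}$ is consistent. Let $h\in G(\M)$ realize it, so $a^{-1}h\notin X$ for all $a\in G(M)$. Now consider the translate $h^{-1}\cdot p$. It contains the formula defining $h^{-1}\cdot X$, and by the fsg assumption it is finitely satisfiable in $G(M)$. Hence there is $a\in G(M)$ with $a\in h^{-1}X$, i.e.\ $ha\in X$. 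This does not yet contradict $a^{-1}h\notin X$, because the two products are on opposite sides. So the choice of $h$ must be adjusted to the side on which $G$ acts on $p$.

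\textbf{Step 2 (fixing the side).} I would first check what $g\cdot p$ means: $\psi(x)\in g\cdot p$ iff $\psi(gx)\in p$, so $g\cdot p$ contains $g\cdot X$ whenever $X\in p$. Choose instead $h$ realizing $\{x\notin X\cdot a^{-1} : a\in G(M)\}$, i.e.\ $ha\notin X$ for all $a\in G(M)$. Finite satisfiability of $h^{-1}\cdot p$ gives some $a\in G(M)\cap h^{-1}X$, i.e.\ $ha\in X$, which is a contradiction. For this $h$ to exist we need that $X\cdot A\neq G$ for every finite $A$, which is the failure of \emph{right} genericity. So Steps 1--2 prove right genericity from the left-translate hypothesis.

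\textbf{Step 3 (the other side).} For left genericity I would show that the right translates $p\cdot g$ are also finitely satisfiable in $G(M)$; then the symmetric argument applies. The standard route goes through uniqueness. A global type $q$ whose left translates are all finitely satisfiable in $M$ has the property that every formula in it is right generic (Step 2). A formula $\psi$ whose negation is right generic cannot be in any such $q$, since two right generic sets whose union is not all of $G$ cannot both survive. A cleaner route is to use the inverse map. Let $p^{-1}$ be the type of $x^{-1}$. Then $(p^{-1})\cdot g=(g^{-1}\cdot p)^{-1}$, and finite satisfiability in $G(M)$ is preserved under inversion, because $G(M)$ is closed under inverses. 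So all right translates of $p^{-1}$ are finitely satisfiable. Applying the mirror of Step 2 shows that every formula of $p^{-1}$ is left generic. Finally, to transfer this back to $p$ itself, I expect to show that $p^{-1}$ also has all its left translates finitely satisfiable.

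\textbf{Main obstacle.} This last point is the main obstacle. I would try the argument of \cite{hrushovski2008groups}. First, using Step 2, show that the set of non-right-generic definable sets is an ideal disjoint from $p$. Then show that every generic type is finitely satisfiable in $M$, and deduce that right genericity and left genericity of a formula coincide. The key tool would be: if $Y$ is right generic, say $Y\cdot A=G$, then $A^{-1}\cdot\ldots$ covers, and translating $p$ shows that some translate of $Y^{-1}$ lies in a translate of $p$.
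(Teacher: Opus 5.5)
Your Step 2 is correct and gives half of the statement: every $X\in p$ is right generic. The gap is left genericity, which is never established.

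The route through $p^{-1}$ in Step 3 is circular. We have $Y\in p^{-1}$ iff $Y^{-1}\in p$, and $Y$ is left generic iff $Y^{-1}$ is right generic (since $(Ya)^{-1}=a^{-1}Y^{-1}$). So ``every formula of $p^{-1}$ is left generic'' is just Step 2 rewritten. The step you would then need, that all left translates of $p^{-1}$ are finitely satisfiable in $G(M)$, is not proved. Its natural proof already uses that left and right genericity coincide, which is the point at issue.

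The side claim that a formula whose negation is right generic cannot lie in such a $q$ is false as written. In a nontrivial finite group every nonempty subset is generic. The realized type of $e$ witnesses fsg while containing $x=e$, whose negation defines a nonempty, hence generic, set.

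Your final paragraph breaks off exactly where an argument is needed. The tools it proposes, namely the ideal of non-generic sets and finite satisfiability of all generic types, are normally obtained as consequences of the equivalence of left and right genericity, not used to prove it.

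The missing idea is a one-line use of the completeness of $p$. Suppose a definable $Y\subseteq G$ is left generic, say $G=g_1Y\cup\dots\cup g_nY$. Then $g_iY\in p$ for some $i$, so $g_iY$ is right generic by your Step 2. Hence so is $Y$: if $g_iY\cdot A=G$ then $Y\cdot A=g_i^{-1}G=G$. Thus left generic implies right generic for \emph{every} definable subset of $G$.

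Now take $X\in p$. By Step 2, $X$ is right generic, so $X^{-1}$ is left generic. By what was just shown, $X^{-1}$ is then right generic, hence $X=(X^{-1})^{-1}$ is left generic. This finishes the proof; the paper itself only cites \cite[Proposition 4.2]{hrushovski2008groups} for this fact. No detour through $p^{-1}$ or through ideals is required.
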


In \cite[Section 3.6]{chernikov2024definable}, a generalization of fsg groups from NIP theories to fim groups in arbitrary theories was proposed, demonstrating that part of the theory of fsg groups in NIP theories survives:

\begin{defn}\label{def:fim} \cite{hrushovski2013generically}
Let $\mu \in \mathfrak{M}_x(\M)$ and $M \prec \M$  a small model. A Borel-definable measure $\mu$ is \emph{fim} (a \emph{frequency interpretation measure}) over $M$ if $\mu$ is $M$-invariant and for any $\mathcal{L}$-formula $\varphi(x,y)$ there exists a sequence of formulas $(\theta_{n}(x_1,\ldots,x_n))_{1 \leq n < \omega}$ in $\mathcal{L}(M)$ such that: 
	\begin{enumerate}
		\item for any $\varepsilon > 0$, there exists some $n_{\varepsilon} \in \omega$ satisfying: for any $k \geq n_{\varepsilon}$, if $\M \models \theta_{k}(\bar{a})$ then 
		\begin{equation*} 
		\sup_{b \in \mathcal{U}^{y}} |\Av(\bar{a})(\varphi(x,b)) - \mu(\varphi(x,b))| < \varepsilon;
		\end{equation*} 
		\item $\lim_{n \to \infty} \mu^{(n)} \left( \theta_n \left(\bar{x} \right) \right) = 1$.
	\end{enumerate}
	
We say that $\mu$ is fim  if $\mu$ is fim over some small $M \prec \M$. 
\end{defn} 

\begin{rem}\label{rem: fim in NIP is gen stab}
In NIP theories, fim is equivalent to 	
	each of the following two properties for measures: \emph{dfs} (definable and finitely satisfiable) and fam, recovering the usual notion of generic stability for Keisler measures  \cite{hrushovski2013generically}. Outside of the NIP context, fim (properly) implies fam over a model, which in turn (properly)  implies dfs (see \cite{conant2020remarks, conant2023keisler}; and \cite[Section 3]{chernikov2024definable} for further properties of fim measures). 
\end{rem}

\begin{fact}\cite{conant2023keisler}\label{fac: fim self commute}
	In any theory, if $\mu$ is fim,  $n \in \omega$  and $\sigma: [n] \to [n]$ is a permutation, then $\bigotimes_{i \in [n]} \mu_{x_{i}} = \bigotimes_{i \in [n]} \mu_{x_{\sigma(i)}}$.
\end{fact}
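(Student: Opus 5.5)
The plan is to reduce to adjacent transpositions and then to the commutativity statement in Fact \ref{fac: otimes props}(2). Every permutation $\sigma$ of $[n]$ is a product of adjacent transpositions $(i\ i{+}1)$. So it suffices to show that for every $n$ and every $1\leq i<n$, swapping the positions of $x_i$ and $x_{i+1}$ does not change $\bigotimes_{j\in[n]}\mu_{x_j}$. Then I would induct on the length of a word in adjacent transpositions representing $\sigma$.

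First I would record the regularity properties of $\mu$ needed to apply the earlier facts. By Remark \ref{rem: fim in NIP is gen stab}, in any theory a fim measure is fam over a model, and fam implies dfs, so in particular $\mu$ is definable. By Fact \ref{fac: otimes props}(1), finite $\otimes$-products of definable measures are definable, and $\otimes$ is associative on them. Hence every partial product $\mu_{x_1}\otimes\cdots\otimes\mu_{x_m}$ is a definable measure, and $\bigotimes_{j\in[n]}\mu_{x_j}$ is well defined independently of bracketing. Using associativity, I would rewrite the product as
\[
\Bigl(\bigotimes_{j<i}\mu_{x_j}\Bigr)\otimes\bigl(\mu_{x_i}\otimes\mu_{x_{i+1}}\bigr)\otimes\Bigl(\bigotimes_{j>i+1}\mu_{x_j}\Bigr).
\]
This reduces the adjacent-transposition case to the identity $\mu_{x_i}\otimes\mu_{x_{i+1}}=\mu_{x_{i+1}}\otimes\mu_{x_i}$, read as measures in the variables $x_ix_{i+1}$. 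That identity is exactly Fact \ref{fac: otimes props}(2), applied with the first factor $\mu$ fim (or fam) and the second factor $\mu$ definable. Substituting equal measures into the middle factor and using that $\otimes$ depends only on the factors as measures gives the adjacent case. The induction on word length then finishes the proof.

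I expect the main obstacle to be bookkeeping rather than mathematics. One must make sure that the associativity in Fact \ref{fac: otimes props}(1) genuinely applies at every regrouping, which is why I check definability of all partial products first. One must also make sure that permuting variable names is compatible with the definition of $\otimes$ via the integrals in Definition \ref{def: tensor prod of meas}, so that "$\bigotimes_{i}\mu_{x_{\sigma(i)}}$" is interpreted as a measure on $\M^{x_1\ldots x_n}$ after reordering coordinates. If the implication fim $\Rightarrow$ definable were not available in the needed generality, the fallback is the following. Fim implies Borel-definable by definition, and Fact \ref{fac: otimes props}(2) already covers the pair (fim, Borel-definable). In that case I would prove associativity for the relevant products directly from the fim approximations $\theta_n$, approximating by average measures, for which all identities are finite combinatorics.
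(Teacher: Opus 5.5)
Your argument is correct. The paper gives no proof of this Fact; it is quoted from \cite{conant2023keisler}. Your derivation uses only results the paper records: fim $\Rightarrow$ fam $\Rightarrow$ dfs (Remark \ref{rem: fim in NIP is gen stab}), associativity of $\otimes$ on definable measures (Fact \ref{fac: otimes props}(1)), and the commutation $\mu_{x_i}\otimes\mu_{x_{i+1}}=\mu_{x_{i+1}}\otimes\mu_{x_i}$ (Fact \ref{fac: otimes props}(2)). Generating $\sigma$ by adjacent swaps, applied at any position of any ordering of the variables by renaming, is then the natural way to obtain it. The fallback in your last paragraph is unnecessary, since fim implies definable in every theory.
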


\begin{defn}\label{def: fim group}\cite[Definition 3.32]{chernikov2024definable}
	An ($\emptyset$-)definable group $G = G(\M)$ is \emph{fim} if there exists a left $G$-invariant fim  measure $\mu \in \frak{M}_{G}(\M)$.
\end{defn}

\begin{rem}
In any theory, if $G$ is fim then it is both definably amenable and   fsg. If $T$ is NIP, $G$ is fsg if and only if is fim (\cite{hrushovski2008groups, hrushovski2013generically}).
\end{rem}

\begin{fact}\label{prop:unique}\cite[Proposition 3.33]{chernikov2024definable} Suppose that $G = G(\M)$ is a $\emptyset$-definable fim  group, witnessed by a left-$G$-invariant fim measure $\mu \in \frak{M}_{G}(\M)$. Then $\mu$ is both the unique left $G$-invariant measure in $\mathfrak{M}_{G}(\M)$ and the unique right $G$-invariant measure in $\mathfrak{M}_{G}(\M)$.
\end{fact}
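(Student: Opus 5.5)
The plan is to prove a slightly stronger intermediate statement: \emph{every} right $G$-invariant measure $\lambda \in \mathfrak{M}_{G}(\M)$ coincides with the given left-invariant fim measure $\mu$. Both parts of the statement then follow formally by passing to inverse measures. Throughout, for $\nu \in \mathfrak{M}_G(\M)$ let $\nu^{-1}$ denote its pushforward under inversion, $\nu^{-1}(Y) := \nu(Y^{-1})$. This is again a Keisler measure on $G$, and it is right-invariant if and only if $\nu$ is left-invariant.

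\emph{Step 1 (uniform approximation of translates).} Fix a definable $X = \psi(\M, c) \subseteq G$, and consider the $L$-formula $\varphi(x; y, z) := x \in G \land y \cdot x \in \psi(\M,z)$. For $b \in G$ we have $\varphi(\M; b, c) = b^{-1} X$, so left invariance of $\mu$ gives $\mu(\varphi(x;b,c)) = \mu(X)$ for all $b \in G$. Fix $\varepsilon > 0$. By Definition~\ref{def:fim}, take the formulas $\theta_n$ for $\varphi$ and choose $n \ge n_\varepsilon$ with $\mu^{(n)}(\theta_n) > 0$; then $\theta_n$ is consistent, so some $\bar a = (a_1,\dots,a_n)$ realizes it. We may assume $a_i \in G$, after conjoining $\bigwedge_i x_i \in G$ to $\theta_n$, which does not change its $\mu^{(n)}$-measure since $\mu$ is supported on $G$. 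Then for every $b \in G$,
\[
\Bigl| \tfrac{1}{n}\bigl|\{ i \in [n] : b a_i \in X\}\bigr| - \mu(X) \Bigr| < \varepsilon .
\]

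\emph{Step 2 (integrating against $\lambda$).} Let $\lambda$ be any right-invariant Keisler measure on $G$. The function $f(b) := \frac1n \sum_{i} \mathbf{1}_{X a_i^{-1}}(b)$ is a finite combination of indicators of definable sets, and it lies within $\varepsilon$ of the constant $\mu(X)$ everywhere on $G$. Decompose $G$ into the finitely many definable atoms of the Boolean algebra generated by the sets $X a_i^{-1}$, and use finite additivity of $\lambda$. This gives $\bigl|\frac1n \sum_i \lambda(X a_i^{-1}) - \mu(X)\bigr| \le \varepsilon$. By right invariance $\lambda(X a_i^{-1}) = \lambda(X)$, hence $|\lambda(X) - \mu(X)| \le \varepsilon$. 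Since $\varepsilon$ was arbitrary, $\lambda = \mu$.

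\emph{Step 3 (conclusion).} Since $\mu$ is left-invariant, $\mu^{-1}$ is right-invariant, so Step 2 gives $\mu^{-1} = \mu$. Thus $\mu$ is itself right-invariant, and by Step 2 it is the unique right-invariant measure. If $\nu$ is any left-invariant measure, then $\nu^{-1}$ is right-invariant, so $\nu^{-1} = \mu$ and $\nu = \mu^{-1} = \mu$. This gives uniqueness on the left as well.

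The only delicate point is Step 1. The uniform approximation must be taken for a formula whose instances are \emph{all} the relevant translates, so that the bound holds for every $b$ simultaneously; only then can it be integrated against an arbitrary, possibly non-definable, $\lambda$. This is exactly what the fim property supplies, via existence of a realization of $\theta_n$, whereas mere definability or finite satisfiability of $\mu$ would not suffice. The parameters of $X$ are absorbed into the parameter variables $z$ so that Definition~\ref{def:fim} applies to an $L$-formula.
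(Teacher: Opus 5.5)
The paper gives no proof of this statement. It is imported as a Fact from \cite{chernikov2024definable}, so there is nothing internal to compare against, and your argument has to stand on its own. It does: the proof is correct and complete.

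Step~1 is the observation that fim implies a uniform finite approximation (fam, cf.\ Remark~\ref{rem: fim in NIP is gen stab}). You apply it to a formula whose instances are exactly the left translates $b^{-1}X$, and your adjustment forcing the $a_i$ into $G$ is harmless.

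Step~2 amounts to evaluating $(\mu_x\otimes\lambda_y)(\psi(y\cdot x,c))$ in two ways:
\begin{itemize}
\item integrate $b\mapsto\mu(b^{-1}X)$, which is constantly $\mu(X)$ by left invariance;
\item integrate instead the uniformly $\varepsilon$-close simple function $\frac1n\sum_i\mathbf{1}_{Xa_i^{-1}}$, whose $\lambda$-integral is $\lambda(X)$ by right invariance.
\end{itemize}
Your decomposition into atoms makes this legitimate for an arbitrary finitely additive $\lambda$. Step~3 then correctly extracts right invariance of $\mu$ and both uniqueness statements via $\nu\mapsto\nu^{-1}$.

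This route buys two things. First, it only uses the approximation property of $\mu$ and never anything about $\lambda$. So, after discarding approximating coordinates outside $G$ (negligible since $\mu(G)=1$), it proves the same conclusion for any left-invariant fam measure on $G$. Second, it sidesteps the commutation results of Fact~\ref{fac: otimes props}(2). Those would only let you compare $\mu$ directly with Borel-definable invariant measures, since for a general $\lambda$ the product $\lambda\otimes\mu$ is not even defined.
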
 

\begin{lem}\label{lem: fim generic measure}
	If $G = G(\M)$ is fim, witnessed by $\mu \in \mathfrak{M}_{G}(\M)$, then a definable set $X \subseteq G$ is generic if and only if $\mu(X) > 0$.
\end{lem}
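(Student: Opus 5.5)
We prove the two directions separately.

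For the direction ``generic $\Rightarrow$ $\mu(X)>0$'', left-invariance of $\mu$ suffices. Suppose $G = A\cdot X$ with $A = \{a_1,\ldots,a_n\}\subseteq G$ finite. Then
\[
1=\mu(G)\leq \sum_{i\in[n]}\mu(a_i\cdot X)=n\,\mu(X),
\]
so $\mu(X)\geq 1/n>0$.

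For the converse, assume $\mu(X)=\alpha>0$ and that $\mu$ is fim over a small model $M$. The plan is to produce a finite set of left translates of $X$ covering $G$ by a maximal-disjointness argument for a suitable definable family, combined with a uniform lower bound on the $\mu$-measure of its members. The natural route is via the difference set $X X^{-1}$: take a maximal family $g_1,\ldots,g_m\in G$ with the translates $g_i X$ pairwise disjoint. Left-invariance gives $m\alpha\leq 1$, so $m\leq 1/\alpha$. Maximality gives, for every $g\in G$, some $i$ with $gX\cap g_iX\neq\emptyset$, i.e.~$g\in g_iXX^{-1}$. Hence $G=\{g_1,\ldots,g_m\}\cdot XX^{-1}$. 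This only shows that $XX^{-1}$ is generic, not $X$.

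To reach $X$ itself, I would use the fsg/fim machinery. Since $\mu$ is fim, it is in particular finitely satisfiable in $M$ and definable. By Fact~\ref{prop:unique}, $\mu$ is also right-invariant. The key step is then to show that the support of $\mu$ consists of generic types. Let $p\in S_G(\M)$ be in the support of $\mu$. For every $g\in G(\M)$, $g\mu=\mu$, so $gp$ lies in the support of $\mu$, and every type in that support is finitely satisfiable in $M$ because $\mu$ is. Thus $p$ witnesses that $G$ is fsg, and Fact~\ref{fac: generic type fsg} shows that $p$ is a (two-sided) generic type.

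This would give that every type in $\mathrm{supp}(\mu)$ is generic. Now, if $\mu(X)>0$, then $X$ belongs to some type $p\in\mathrm{supp}(\mu)$, since the clopen set $[X]$ has positive measure for the regular Borel measure on $S_G(\M)$ and therefore meets its support. So $X\in p$, and $X$ is generic, as required.

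The main obstacle is to verify carefully that Fact~\ref{fac: generic type fsg} applies to every type in the support, not just to some fsg witness. If there is doubt, I would rerun the standard proof of \cite[Proposition 4.2]{hrushovski2008groups} directly. Suppose $X\in p$ were not generic. Then the non-generic sets form an ideal, using that the fsg property of $p$ yields the usual ``finitely many translates'' compactness argument in $G(M)$. So for each finite $A\subseteq G$ we can find $g$ with $g\notin AX$. A compactness argument over $M$ then produces $g$ such that $g^{-1}X\cap G(M)=\emptyset$. But $g^{-1}p$ contains $g^{-1}X$ and is finitely satisfiable in $G(M)$, which is a contradiction.
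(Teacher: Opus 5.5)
Your proof is correct and is essentially the paper's. One direction uses left-invariance plus subadditivity. For the converse you take a type $p$ in the support of $\mu$ containing $X$; its left translates stay in the support and so are finitely satisfiable in $M$, hence $p$ witnesses fsg and Fact~\ref{fac: generic type fsg} makes $X$ generic.

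The $XX^{-1}$ detour and the fallback paragraph are unnecessary, since $p$ is itself an fsg witness and the Fact applies to it directly. As written, the fallback also confuses sides. Compactness applied to the non-genericity of $X$ gives a right translate $Xg^{-1}$ missing $G(M)$, not a left translate $g^{-1}X$. That only yields a contradiction via the right-invariance of $\mu$ from Fact~\ref{prop:unique}.
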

\begin{proof}
	If $X$ is generic then $A \cdot X = G$ for some finite $A \subseteq G$, hence $\mu(a \cdot X) \geq \frac{1}{|A|} > 0$ for some $a \in A$, so $\mu(X) > 0$ by $G$-invariance of $\mu$.
	
	Conversely, assume $\mu(X) > 0$, and say $X = \varphi(\M)$ for some $\varphi(x) \in L(\M)$. Then there exists some type $p \in S(\mu)$ in the support of $\mu$ so that $\varphi(x) \in p$. As $\mu$ is fim over some small $M \prec \M$, it is in particular finitely satisfiable in $M$, hence $g \cdot p$ is also finitely satisfiable in $M$ for every $g \in G(\M)$. So $p$ witnesses that $G$ is fsg, hence $p$ is a (two-sided) generic in $G$ by Fact \ref{fac: generic type fsg}, and so is $\varphi(x)$.
\end{proof}

Here we show that under the FHP assumption, all notions of genericity agree in fim groups (generalizing Fact \ref{fac: relation between diff generics}(2a)):
\begin{prop}\label{prop : generics in fim}
	Let $G = G(\M)$ be a definable fim group in an $\FHP$ theory. Then all notions of genericity (1)--(5) in Definition \ref{defn: notions of genericity} are equivalent for definable subsets of $G$.
\end{prop}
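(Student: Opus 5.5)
The plan is to show that, for a fim group $G$ with left-invariant fim measure $\mu$ in an $\FHP$ theory, each of the five notions is equivalent to $\mu(X)>0$.

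\textbf{Generic, non-null and weakly generic.} By Lemma \ref{lem: fim generic measure}, $X$ is generic if and only if $\mu(X)>0$. By Fact \ref{prop:unique}, $\mu$ is the unique left-invariant measure, so $X$ is non-null if and only if $\mu(X)>0$. Hence (1) is equivalent to (3). For (2), generic implies weakly generic is immediate. Conversely, suppose $X$ is weakly generic, so $X\cup Y$ is generic for some non-generic $Y$. Then $\mu(X\cup Y)>0$ and $\mu(Y)=0$ by the lemma, so $\mu(X)>0$.

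\textbf{$f$-generic and non-$G$-dividing.} Since $T$ is $\FHP$, it is $\NTP_2$ by Proposition \ref{prop: FHP implies NTP2}. Fact \ref{fac: relation between diff generics}(1b) then gives (4)$\iff$(5). Also (3)$\Rightarrow$(5) holds in any theory by the Kelley argument from the proof of Theorem \ref{thm: generics in FHP}. It remains to prove (5)$\Rightarrow$(3). Theorem \ref{thm: generics in FHP} does not apply here, since it needs amenability of $G(M)$ as a discrete group. Instead I will use $\FHP$ relative to fam measures (Proposition \ref{prop: FHP for gen. stable}), applied to $\mu$ viewed as a measure on the parameter space.

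\textbf{Key step: (5)$\Rightarrow$(3).} Write $X=\varphi(\M,b)$ and set $\psi(x;g):=\varphi(g^{-1}x,b)$, so that $\psi(\M,g)=g\cdot X$. Suppose $X$ does not $G$-divide and pick $k$ such that $\psi$ satisfies $\FHP_k$. As in the proof of Theorem \ref{thm: generics in FHP}, saturation gives $p$ such that $\{g\cdot X: g\in G\}$ has the $(p,k)$-property. Now $\mu$ is fim, hence fam (Remark \ref{rem: fim in NIP is gen stab}), and it self-commutes by Fact \ref{fac: fim self commute}. The $(p,k)$-property yields
\[
\mu^{\otimes p}\Bigl(\bigvee_{I\in{[p]\choose k}}\exists x\bigwedge_{i\in I}\psi(x,g_i)\Bigr)=1 .
\]
By a union bound over the ${p\choose k}$ index sets together with permutation invariance of $\mu^{\otimes p}$, this gives
\[
\mu^{\otimes k}\Bigl(\exists x\bigwedge_{i\in[k]}\psi(x,g_i)\Bigr)\ge {p\choose k}^{-1}=:\alpha .
\]
Proposition \ref{prop: FHP for gen. stable} then provides $a\in G$ with $\mu(\{g: a\in gX\})=\mu(a X^{-1})>0$. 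By left invariance $\mu(X^{-1})>0$, so $X^{-1}$ is generic by Lemma \ref{lem: fim generic measure}.

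\textbf{Main obstacle.} The argument so far only yields genericity of $X^{-1}$, and I need to pass back to $X$. The cleanest route is to use that $\mu$ is also the unique right-invariant measure (Fact \ref{prop:unique}), hence bi-invariant. Then $\mu^{-1}(Z):=\mu(Z^{-1})$ is again a left-invariant measure; since $\mu$ is unique, $\mu^{-1}=\mu$, and so $\mu(X)=\mu(X^{-1})>0$. The remaining checks are routine: that $\mu^{-1}$ is a Keisler measure, which is immediate, and that $\{g:a\in gX\}=aX^{-1}$ is handled with the correct sides.
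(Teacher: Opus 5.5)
Your proof is correct and follows essentially the same route as the paper: the key implication comes from applying Proposition \ref{prop: FHP for gen. stable} to the fim invariant measure $\mu$ placed on the parameter variable of the family of translates, together with self-commutation of $\mu^{\otimes n}$ and a union bound over $k$-subsets. The differences are minor. The paper runs this step contrapositively (non-generic $\Rightarrow$ the consistent $k$-tuples have $\mu^{\otimes k}$-measure $0$ $\Rightarrow$ $G$-dividing) and gets weak generic $\Rightarrow$ $f$-generic from Fact \ref{fac: relation between diff generics}(3a); you argue directly, handle weak genericity via $\mu$, and justify the passage between $X^{-1}$ and $X$ explicitly through bi-invariance and uniqueness of $\mu$, which the paper uses only implicitly.
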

\begin{proof}

Every $\FHP$ theory is $\NTP_2$ by Proposition \ref{prop: FHP implies NTP2}, so $f$-generic is equivalent to non-$G$-dividing by Fact \ref{fac: relation between diff generics}(1b), and weak generic implies  $f$-generic by definable amenability, $\NTP_2$ and Fact \ref{fac: relation between diff generics}(3a). In view of Lemma \ref{lem: fim generic measure},  it remains to show that if $X \subseteq G$ is non-generic, then $X$ is $G$-dividing. 

We follow the proof of \cite[Proposition 3.2]{hrushovski2012note}, with some modifications. Let $M \prec \M$ be a small model so that $X$ is $M$-definable.  Let $\mu \in \mathfrak{M}_{G}(\M)$ be the fim $G$-invariant global Keisler measure, fim over $M$. Let $\varphi(x,y) \in L(M)$ denote the formula defining the set $\{(x,y) \in G \times G : y \in x \cdot X\}$, so for $b \in G$, $\varphi(x,b)$ defines the set $b \cdot X^{-1}$.

	As $X$ is non-generic, also $b \cdot X^{-1}$ is non-generic for all $b \in G$. By Lemma \ref{lem: fim generic measure}, $\mu(\varphi(x,b)) = 0$ for all $b \in \M^y$ (using that $\mu$ is supported on $G$). As $T$ is $\FHP$ and $\mu$ is fim (so in particular fam),  by Proposition \ref{prop: FHP for gen. stable} (with the roles of the variables exchanged) there exists some $k \in \omega$ so that $\mu^{\otimes k}_{x_1, \ldots, x_k} \left(\exists y \bigwedge_{i \in [k]} \varphi(x_i,y) \right) = 0$. Then, for an arbitrary $n \in \omega$, we have $\mu^{\otimes n}_{x_1, \ldots, x_n}\left( \bigvee_{I \in {n \choose k} } \exists y  \bigwedge_{i \in I} \varphi(x_i,y) \right) = 0$ (as $\left( \mu^{\otimes n} \right) |_{(x_i : i \in I)} = \bigotimes_{i \in I} \mu_{x_i}$ by Fact \ref{fac: fim self commute} and $\mu^{\otimes n}$ extends the product measure on the product Boolean algebra, so we are taking the measure of a union of finitely many sets of measure $0$). Then, taking an arbitrary tuple in the measure $1$ complement of this definable set, we find some $(g_i : i \in [n])$ in $G(\M)$ so that $\{ \varphi(g_i, y) : i \in [n] \}$ is $k$-inconsistent, and $\varphi(g_i,y)$ defines the set $g_i \cdot X$. By saturation of $\M$, we can thus find an infinite $M$-indiscernible sequence $(g_i)_{i \in \omega}$ in $G(\M)$ so that $\{g_i \cdot X : i \in \omega\}$ is $k$-inconsistent, showing that $X$ is $G$-dividing.
\end{proof}

\subsection{Forking in $\FHP$ theories}\label{sec: fork iff null in FHP}
In this section we make some remarks on an analogous question, connecting measures and forking in $\FHP$ theories, with respect to the action of the automorphism group $\Aut(\M)$ rather than a definable group.

\begin{defn}
$T$ is \emph{amenable} if every type over $\emptyset$ extends to a global $\Aut(\M/\emptyset)$-invariant measure \cite{hrushovski2022amenability, hrushovski2020first}. And a set $A \subseteq \M$ is an \emph{extension base} if every type $p \in S(A)$ extends to a global type non-forking over $A$. Note that if $T$ is amenable, then $\emptyset$ is an extension base (if $p \in S(\emptyset)$ and $\mu \in \mathfrak{M}(\M)$ extends it and is $\emptyset$-invariant, then any $p' \in S(\mu)$ in the support of $\mu$ gives a non-forking extension of $p$; and the converse holds when $T$ is NIP).
\end{defn}

\begin{fact} (See the introduction of \cite{chernikov2023invariant} and references there.)
	\begin{enumerate}
		\item In any theory, for any small set of parameters $A$, if $\mu(\varphi(x,b))>0$ for some $\Aut(\M/A)$-invariant Keisler measure $\mu \in \mathfrak{M}_x(\M)$, then $\varphi(x,b)$ does not fork over $A$.
		\item If $T$ is stable, the converse holds as well (i.e.~the forking and the  universal measure $0$ ideals coincide).
	\end{enumerate}
\end{fact}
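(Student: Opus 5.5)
Part (1) is immediate: if $\mu$ is $\Aut(\M/A)$-invariant and $\mu(\varphi(x,b))=\alpha>0$, I will first show that $\varphi(x,b)$ does not divide over $A$. Let $(b_i)_{i\in\omega}$ be any $A$-indiscernible sequence with $b_0=b$. By invariance, $\mu(\varphi(x,b_i))=\alpha$ for all $i$. Fact \ref{fact: Kelley}(1), applied to the family $\{\varphi(\M,b_i):i<n\}$, then gives that for every $n$ some subset of $[n]$ of size $\geq\alpha n$ has $\bigwedge\varphi(x,b_i)$ consistent. By indiscernibility, every finite subset of the sequence is then consistent, so $\{\varphi(x,b_i)\}$ is consistent and $\varphi(x,b)$ does not divide over $A$.

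To upgrade this to non-forking, suppose $\varphi(x,b)\vdash\bigvee_{j<m}\psi_j(x,c_j)$ with each $\psi_j(x,c_j)$ dividing over $A$. Then $\mu(\varphi(x,b))\le\sum_j\mu(\psi_j(x,c_j))$, so some $\psi_j(x,c_j)$ has positive $\mu$-measure. By the previous paragraph that $\psi_j(x,c_j)$ does not divide over $A$, a contradiction. So the positive-measure sets form an ideal-complement, and the dividing sets are all null; finite additivity does the rest.

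Part (2) is the substantive direction: in a stable $T$, if $\varphi(x,b)$ does not fork over $A$, we need an $A$-invariant measure giving it positive measure. My plan is to use the standard stable machinery. First, pick a global nonforking extension $p\in S_x(\M)$ of some type over $A$ containing $\varphi(x,b)$. Such a $p$ exists because forking formulas form an ideal and every set is an extension base in stable theories. In a stable theory $p$ does not fork over $A$, hence is definable over $\acl^{\eq}(A)$. Its conjugates under $\Aut(\M/A)$ are therefore finite in number, say $p_1,\dots,p_r$, indexed by the $A$-conjugates of the defining scheme. Then I will define $\mu:=\frac1r\sum_{i}p_i$. This $\mu$ is $\Aut(\M/A)$-invariant because automorphisms permute the $p_i$, and $\mu(\varphi(x,b))\geq\frac1r>0$.

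The main obstacle is justifying that the orbit is finite: a nonforking global type over $A$ has only boundedly many, indeed finitely many relevant, conjugates. I expect to handle this via definability over $\acl^{\eq}(A)$ of each $\varphi$-definition. In the measure, only finitely many formulas matter at a time, but to get a genuine measure one needs the whole orbit finite. Because the orbit may be infinite (profinite) over all formulas, I would instead average using the Haar measure on the profinite group $\Aut(\acl^{\eq}(A)/A)$, which acts transitively on the orbit. Concretely, I set $\mu(\psi(x,c))$ to be the Haar measure of $\{\sigma:\psi(x,c)\in\sigma(p)\}$. This set is clopen, so it is measurable and the construction is well-defined. The measure is $A$-invariant, and $\varphi(x,b)$ receives measure equal to the Haar measure of an open subgroup coset, which is positive.
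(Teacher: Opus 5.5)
The paper gives no proof of this Fact; it only cites \cite{chernikov2023invariant}. Your argument, in its final form, is correct and is the standard one. Part (1) is the same invariance-plus-Fact~\ref{fact: Kelley}(1) argument the paper uses for (3)$\Rightarrow$(1) in Theorem~\ref{thm: generics in FHP}. Indiscernibility is not even needed there: any $k$-inconsistent sequence of $A$-conjugates of $b$ is already contradicted once $\alpha n\geq k$. Part (2) is the push-forward of normalized Haar measure on the profinite group $G:=\Aut(\acl^{\eq}(A)/A)$ along $\sigma\mapsto\sigma(p)$.

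The one thing to remove is the intermediate claim that the $\Aut(\M/A)$-orbit of $p$ is finite, together with the average $\frac1r\sum_i p_i$. This is false in general. Take infinitely many refining equivalence relations $E_n$, with each $E_n$-class splitting into two $E_{n+1}$-classes. The unique $1$-type over $\emptyset$ then has $2^{\aleph_0}$ global non-forking extensions, all conjugate. So only the Haar construction should remain, and you should make explicit the points you leave implicit:
\begin{itemize}
\item \emph{The action.} An element $\sigma\in G$ acts on $p$ through any lift $\tilde\sigma\in\Aut(\M/A)$. This is well defined because $p$ is definable over $\acl^{\eq}(A)$, hence $\Aut(\M/\acl^{\eq}(A))$-invariant.
\item \emph{Clopenness.} If $d_p\psi(y)=\theta(y,e)$ with $e\in\acl^{\eq}(A)$, then $\psi(x,c)\in\sigma(p)$ iff $\models\theta(c,\sigma(e))$. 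Hence $\{\sigma:\psi(x,c)\in\sigma(p)\}$ is a finite union of cosets of the open finite-index subgroup $\mathrm{Stab}_G(e)$, so it is clopen.
\item \emph{Finite additivity.} This holds because each $\sigma(p)$ is a complete type.
\item \emph{$A$-invariance.} For $\tau\in\Aut(\M/A)$ one has $\{\sigma:\psi(x,\tau(c))\in\sigma(p)\}=\tau|_{\acl^{\eq}(A)}\cdot\{\sigma:\psi(x,c)\in\sigma(p)\}$, and Haar measure is left invariant.
\item \emph{Positivity.} $\varphi(x,b)$ receives measure at least $1/[G:\mathrm{Stab}_G(e)]>0$, because the corresponding set contains $\mathrm{Stab}_G(e)$. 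It is a union of cosets, not a single coset as you wrote.
\end{itemize}
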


\begin{fact}\cite{chernikov2023invariant}
	There exist a simple (SU rank $1$) amenable theory $T$ in which there is a formula $\varphi(x,b) \in L(\M)$ so that $\varphi(x,b)$ divides over $\emptyset$, but $\mu(\varphi(x,b))=0$ for every $\Aut(\M/\emptyset)$-invariant measure $\mu \in \mathfrak{M}_x(\M)$.
\end{fact}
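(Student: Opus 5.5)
The plan is to build the example by hand as a Fraïssé limit with free amalgamation. The structure is chosen so that conjugates of a single instance $\varphi(x,b)$ can realize any finite pattern of pairwise inconsistencies. Kelley's criterion (Fact \ref{fact: Kelley}(1)) will then force every invariant measure to vanish on $\varphi(x,b)$. We work in a two-sorted language with sorts $O$ and $P$, a relation $R\subseteq O\times P$, and a symmetric irreflexive relation $I\subseteq P\times P$, read as ``incompatibility''. Let $\mathcal{K}$ be the class of finite structures omitting the configuration $R(a,p)\wedge R(a,p')\wedge I(p,p')$. This class has free amalgamation: amalgamate over the common substructure without adding any new $R$- or $I$-edges. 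Let $T$ be the theory of its Fraïssé limit. Then $T$ is $\aleph_0$-categorical, eliminates quantifiers, and has trivial $\acl$.

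The formula will be $\varphi(x;y):=R(x,y)$ with $x$ in $O$ and $y$ in $P$, and $b$ any element of $P$.

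\textbf{Step 1 (dividing).} By extension, there are pairwise $I$-related $b_i\equiv b$, and they may be chosen indiscernible. The family $\{R(x,b_i)\}$ is then $2$-inconsistent, so $R(x,b)$ divides over $\emptyset$.

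\textbf{Step 2 (measure zero).} Let $\mu$ be $\Aut(\M)$-invariant. All elements of $P$ have the same type over $\emptyset$, so $\mu(R(x,b'))=\mu(R(x,b))$ for every $b'\in P$. Fix $\varepsilon>0$ and a finite graph $H$ on $n$ vertices with independence number $<\varepsilon n$; for instance a large disjoint union of cliques of size $\lceil 2/\varepsilon\rceil$ works. By the extension property we realize $H$ by elements $b_1,\dots,b_n\in P$ under $I$.

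A subset $J\subseteq[n]$ makes $\{R(x,b_j):j\in J\}$ consistent if and only if $J$ is $I$-independent. Indeed, if $J$ is independent, then adding a new point $a$ with $R(a,b_j)$ for $j\in J$ and nothing else stays in $\mathcal{K}$. Hence the intersection number of $\{R(\M,b_j)\}$ is $<\varepsilon$. Fact \ref{fact: Kelley}(1) then gives $\mu(R(x,b))<\varepsilon$. Since $\varepsilon$ was arbitrary, $\mu(R(x,b))=0$.

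\textbf{Step 3 (amenability).} For the sort $O$, take the global type $q_O$ saying $x\neq c$ and $\neg R(x,p)$ for every $c\in O(\M)$ and every $p\in P(\M)$. Similarly take $q_P$ saying $y\neq c$, $\neg R(a,y)$ and $\neg I(y,p)$ for all parameters. These types are consistent because free extensions stay in $\mathcal{K}$, and they are $\emptyset$-invariant. By quantifier elimination each sort carries a unique $1$-type over $\emptyset$, so the Dirac measures at $q_O$ and $q_P$ handle all $1$-types.

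For $n$-types I would take the analogous free type: realize the given finite configuration, then add no relations to any parameter outside it. This is again invariant and consistent. I expect this to be the routine part of the step, but it still has to be checked against the paper's definition of amenability, which asks about all types over $\emptyset$ in all arities.

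\textbf{Step 4 (simplicity, SU-rank $1$).} This is where I expect the main difficulty. First, forking should be equivalent to ``$a\in\acl(Bc)\setminus\acl(c)$'', i.e.\ to being algebraic. I would verify the independence theorem over models directly, by free amalgamation of the three relevant types, checking that no forbidden configuration is created. Candidate triangles must contain a new $R$-edge to the amalgamated point, and the only new edges are those already present in the two given types, so none of them lies in a forbidden triangle.

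Symmetry and local character then follow from triviality of $\acl$, which gives SU-rank $1$. Alternatively, I would try to invoke Conant's analysis of free amalgamation theories: these are simple precisely when the forbidden configurations behave as they do here. The delicate point in either route is checking that a nonalgebraic formula never divides. One must show that any $k$-inconsistency of $R(x,b_i)$ along an indiscernible sequence forces $I(b_i,b_j)$, so that dividing is witnessed exactly by $I$.

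Note the interaction with Steps 1 and 2: $R(x,b)$ itself divides over $\emptyset$, so its dividing must be compatible with SU-rank $1$. This holds because $\emptyset$ is not a model here. Over models the witnessing sequence cannot be chosen, since $b\in M$. If this causes problems with the precise formulation, I would add the parameter set to the base and run Steps 1–3 over it.
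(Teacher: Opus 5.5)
\textbf{Step 4 cannot be carried out: the theory you build is not simple.} In fact $R(x;y)$ already has $2$-$\TP_2$.
\begin{itemize}
\item By the extension property and compactness, choose $b_{i,j}\in P$ ($i,j<\omega$) so that each row $\{b_{i,j}:j<\omega\}$ is an $I$-clique and there are no $I$-edges between distinct rows.
\item Each row $\{R(x,b_{i,j}):j<\omega\}$ is $2$-inconsistent, because of the forbidden triangle.
\item For every $f:\omega\to\omega$ the set $\{b_{i,f(i)}:i<\omega\}$ is $I$-independent. So a new point $R$-related exactly to it stays in $\mathcal{K}$, and $\{R(x,b_{i,f(i)}):i<\omega\}$ is consistent.
\end{itemize}
Independently of this, your Steps 1 and 4 contradict each other. $R(x,b)$ has infinitely many solutions. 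In a theory of SU-rank $1$, a non-algebraic formula in one variable divides over no set: a non-algebraic completion over $b$ has rank $1$, the rank of the sort, so it cannot fork over $\emptyset$.

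The remark that ``$\emptyset$ is not a model'' does not help, since SU-rank controls forking over arbitrary base sets. Moreover, $R(x,b)$ also divides over every model $M$ in which $b$ has no $R$-neighbour, witnessed by an infinite $I$-clique of $M$-conjugates of $b$. Your independence-theorem check breaks exactly here. An edge $I(b,c)$ with $b\in B\setminus M$ and $c\in C\setminus M$ is part of $\tp(BC/M)$ even when $B\cap C\subseteq M$. Amalgamating $p\ni R(x,b)$ with $p'\ni R(x,c)$ therefore creates the forbidden triangle, even though each $R$-edge was ``already present'' in one of the given types.

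\textbf{The construction also aims at the wrong target.} As printed, the statement says ``divides''. But then the measure conclusion is automatic from the preceding Fact (1): positive measure under an $\emptyset$-invariant $\mu$ forces non-forking. So your Step 2 is redundant, and $x=b$ in the theory of an infinite set would already witness the literal claim. The cited result, as the word ``but'' and the NIP analogue that follows indicate, is that $\varphi(x,b)$ does \emph{not} divide (equivalently, by simplicity, does not fork) over $\emptyset$, and yet every invariant measure vanishes on it.

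Your mechanism cannot give this. If there are arbitrarily large finite families of pairwise inconsistent conjugates of $\varphi(x,b)$, then compactness and Ramsey produce a $2$-inconsistent indiscernible sequence of conjugates, so $\varphi(x,b)$ divides. Consider the natural repair of also forbidding $I$-triangles.
\begin{itemize}
\item It does give non-dividing over $\emptyset$, since an indiscernible sequence in $P$ must then be $I$-independent.
\item It gives universal measure zero, via triangle-free graphs of arbitrarily small independence ratio and Kelley's criterion.
\item But then $(P,I)$ is the generic triangle-free graph, which has $\TP_2$.
\end{itemize}
What is needed is a structure in which finite families of conjugates have intersection numbers tending to $0$, no indiscernible sequence of conjugates is $k$-inconsistent, and the theory is still simple. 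That requires a genuinely different construction, as in \cite{chernikov2023invariant} or the $\aleph_0$-categorical examples of \cite{marimon2025invariant}.
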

\noindent Further examples with the same property which are additionally $\aleph_0$-categorical 
were found in \cite{marimon2025invariant}.

\begin{fact}\cite{pillay2023forking}
\begin{enumerate}
	\item There exist an NIP theory $T$ and formula $\varphi(x,b) \in L(\M)$ so that $\varphi(x,b)$ does not fork over $\emptyset$, but $\mu(\varphi(x,b))=0$ for every global $\emptyset$-invariant Keisler measure $\mu$.
	\item If $T$ is amenable, the formula $\varphi(x,b)$ forks over $\emptyset$ if and only if $\mu(\varphi(x,b))=0$ for all $\emptyset$-invariant global Keisler measures.
\end{enumerate}
\end{fact}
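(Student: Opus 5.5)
The direction ``$\mu(\varphi(x,b))>0$ for some $\emptyset$-invariant $\mu$ $\Rightarrow$ $\varphi(x,b)$ does not fork over $\emptyset$'' is the previous Fact~(1) with $A=\emptyset$, and it needs no amenability. So the plan concentrates on the converse: assuming $T$ is amenable and $\varphi(x,b)$ does not fork over $\emptyset$, produce a global $\emptyset$-invariant $\mu\in\mathfrak{M}_x(\M)$ with $\mu(\varphi(x,b))>0$.

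\emph{Step 1.} Pass to an invariant ideal. Let $I$ be the set of $L(\M)$-formulas $\psi(x,c)$ with $\mu(\psi(x,c))=0$ for every $\emptyset$-invariant $\mu\in\mathfrak{M}_x(\M)$. Then $I$ is an $\Aut(\M)$-invariant ideal. Contrapositively, it suffices to show that every formula in $I$ forks over $\emptyset$, i.e.\ that $I$ is contained in the forking ideal.

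\emph{Step 2.} Use compactness and convexity to identify $I$. The $\emptyset$-invariant measures form a closed convex subset of $\mathfrak{M}_x(\M)$. Take $\psi(x,c)\in I$ and let $q:=\tp(c/\emptyset)$; by invariance, every conjugate $\psi(x,c')$, $c'\models q$, is also null for all invariant measures. I would then argue through Kelley's criterion (Fact~\ref{fact: Kelley}) applied to the family $\mathcal{F}_{\psi,q}=\{\psi(\M,c'):c'\models q\}$.

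\begin{itemize}
\item Suppose first that $i(\mathcal{F}_{\psi,q})=\alpha>0$. Then there is a finitely additive probability measure $\nu$ on definable sets with $\nu(\psi(x,c'))\ge\alpha$ for all $c'\models q$.
\item Next, average $\nu$ over $\Aut(\M)$ to obtain an $\emptyset$-invariant measure still giving $\psi(x,c)$ mass $\ge\alpha$. This contradicts $\psi\in I$.
\item Hence $i(\mathcal{F}_{\psi,q})=0$. By Ramsey and compactness, as in Remark~\ref{rem: non-div fam iff pk}, this should yield an indiscernible sequence of conjugates of $c$ along which $\psi$ is inconsistent. So $\psi(x,c)$ divides, and in particular forks, over $\emptyset$.
\end{itemize}

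\emph{Step 3.} Carry out the averaging. This is where amenability of $T$ enters, and I expect it to be the main obstacle. Averaging over $\Aut(\M)$ is not available directly, since $\Aut(\M)$ need not be amenable. My first attempt is to use the Hrushovski--Krupi\'nski--Pillay reformulation of amenability. Let $m$ enumerate a small model containing $c$, and let $\lambda$ be a global $\emptyset$-invariant measure extending $\tp(m/\emptyset)$, which exists because $T$ is amenable. Define
\[
\mu(\theta(x)) := \int \nu^{\sigma}(\theta(x))\,d\lambda ,
\]
where for $m'\models\tp(m)$ the measure $\nu^{\sigma}$ is the image of $\nu$ under an automorphism $\sigma$ with $\sigma(m)=m'$.

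Two things must be checked.
\begin{itemize}
\item Well-definedness and measurability. Here $\nu^{\sigma}$ depends on the choice of $\sigma$ and not only on $m'$. I would handle this by first replacing $\nu$ with a measure that is invariant over $m$, for instance by choosing $\nu$ in Kelley's criterion inside the Boolean algebra over an $|m|^+$-saturated base and applying Fact~(\cite{los1949extensions}). Alternatively, I would replace the integral by an ultralimit of finite averages $\frac1n\sum_j\nu^{\sigma_j}$ over $\lambda$-approximating tuples. Either way, $\emptyset$-invariance of $\mu$ is inherited from that of $\lambda$.
\item The lower bound $\mu(\psi(x,c))\ge\alpha$. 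This follows because $\nu^{\sigma}(\psi(x,c))=\nu(\psi(x,\sigma^{-1}c))\ge\alpha$ for every $\sigma$, since $\sigma^{-1}c\models q$.
\end{itemize}

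If the integral construction fails, the fallback is to cite the Hrushovski--Krupi\'nski--Pillay characterization: $T$ is amenable if and only if the $\Aut(\M)$-flow on suitable type spaces admits an invariant mean on the relevant algebra of functions. With that mean, the averaging in Step~2 is immediate.
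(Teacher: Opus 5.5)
You leave item (1) untouched. It is an existence claim, so it needs an actual NIP theory containing a formula that does not fork over $\emptyset$ but is null for every $\emptyset$-invariant measure. In view of item (2), that theory is necessarily non-amenable, i.e.\ $\emptyset$ is not an extension base.

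For item (2), the decisive gap is the last bullet of Step 2: $i(\mathcal{F}_{\psi,q})=0$ does \emph{not} yield an indiscernible sequence of conjugates along which $\psi$ is inconsistent. Remark \ref{rem: non-div fam iff pk} only identifies non-$k$-dividing with the $(p,k)$-property for some $p$. A family can have the $(p_k,k)$-property for every $k$ and still have intersection number $0$. Passing from one to the other is precisely a dividing form of the fractional Helly property ($\DFHP$, Definition \ref{def: local FHP etc}). The group analogue of your implication fails in $T_{\Pr}$: $\Pr$ does not $G$-divide, yet every translation-invariant measure on $\mathbb{Z}$ kills it, so by Kelley and amenability of $\mathbb{Z}$ its translates have intersection number $0$ (Proposition \ref{prop: Primes not FHP}).

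Since your argument never uses NIP, it would prove (2) for every amenable theory. That contradicts the simple amenable example of \cite{chernikov2023invariant} recalled just before this Fact, which has a formula not forking over $\emptyset$ yet null for all $\emptyset$-invariant measures. This is also why the paper leaves the FHP-plus-amenable case as an open problem. So (2) must be read in the NIP setting of (1), and NIP enters exactly here. By Matou\v{s}ek (Fact \ref{fac: NIP implies FHP}, Proposition \ref{prop: FHP implies WFHP}), $\psi$ satisfies $\WFHP_d$, hence $\DFHP_d$ over $\emptyset$. So if $\psi(x,c)$ does not fork over $\emptyset$, then $i(\mathcal{F}_{\psi,q})\geq\beta>0$.

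Step 3 has a second genuine gap. To integrate against $\lambda$, the map $\sigma\mapsto\nu^{\sigma}(\theta)$ must be a Borel function of $\tp(\sigma(m)/N)$, for $N$ containing the parameters of $\theta$. Well-definedness alone forces $\nu$ to be $\Aut(\M/m)$-invariant, and measurability needs Borel-definability over $m$. None of your remedies provides this:
\begin{itemize}
\item a \L o\'s extension from a saturated base carries no invariance;
\item approximating $\lambda$ by finite averages needs $\lambda$ to be finitely approximated, which is not given, and the averages still depend on the chosen $\sigma_j$;
\item the invariant means in the Hrushovski--Krupi\'nski--Pillay characterization act only on functions on $\Aut(\M)$ that factor through types of $\sigma(m)$ over small sets, so they presuppose the same invariance.
\end{itemize}
Producing a $\nu$ with both the uniform bound on conjugates and invariance over $m$ is the actual content.

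In the NIP case, one way to get it is as follows. By Proposition \ref{prop: forking in DFHP} there is a global $\nu$, non-forking over $\emptyset$, with $\nu(\varphi(x,b'))\geq\varepsilon$ for all $b'\equiv b$. This applies because NIP theories are $\NTP_2$ and $\DFHP$, and $\emptyset$ is an extension base when $T$ is amenable. Under NIP, such a $\nu$ is Lascar-invariant over $\emptyset$: if $b_0,b_1$ begin an $\emptyset$-indiscernible sequence, then $\psi(x,b_0)\wedge\neg\psi(x,b_1)$ divides. Hence $\nu$ is invariant over the model $m$, and by NIP it is Borel-definable over $m$. Then your integral $\int\nu^{\sigma}\,d\lambda$ is a well-defined $\emptyset$-invariant measure giving $\varphi(x,b)$ mass at least $\varepsilon$.
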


Combined with the results of Section \ref{sec: gen in amenable FHP groups}, these motivate the following question:
\begin{problem}
	Assume $T$ is $\FHP$ and amenable, is forking over $\emptyset$ equivalent to universal measure $0$? In particular, do examples in \cite{chernikov2023invariant, marimon2025invariant} have $\FHP$?
\end{problem}

We observe a weaker result in this direction.  First we recall some facts about dividing and forking in $\NTP_{2}$
theories. We write $a\ind_{C}b$ to denote that $\tp\left(a/bC\right)$
does not fork over $C$. 
\begin{fact}
\cite{chernikov2012forking}\label{fact: forking in NTP2} Let $T$
be an $\NTP_{2}$ theory, and let $A$ be an extension base.
\begin{enumerate}
\item $\varphi\left(x,b\right)$ forks over $A$ if and only if it divides
over $A$.
\item For any $b$ in $\M$ and a small model $M$, there is an $M$-indiscernible
sequence $\left(b_{i}:i\in\omega\right)$ with $b_{0}=b$ such that
for any formula $\varphi\left(x,y\right) \in L(M)$, $\varphi\left(x,b\right)$ divides
over $M$ if and only if $\left\{ \varphi\left(x,b_{i}\right):i\in\omega\right\} $
is inconsistent.
\end{enumerate}
\end{fact}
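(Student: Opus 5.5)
This fact is cited from \cite{chernikov2012forking}. If I were to reprove it, I would organize everything around a \emph{Kim's lemma for $\NTP_2$}. Call a global type $q \supseteq \tp(b/A)$ \emph{strictly non-forking} over $A$ if it does not fork over $A$ and, for every $B \supseteq A$ and $c \models q|_B$, we also have $B \ind_A c$. Call $(b_i)_{i\in\omega}$ a \emph{strict Morley sequence} in $q$ over $A$ if $b_i \models q|_{A b_{<i}}$. The core claim is that if $\varphi(x,b)$ divides over $A$, then $\{\varphi(x,b_i): i\in\omega\}$ is inconsistent for \emph{every} strict Morley sequence in $\tp(b/A)$.

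I would prove this claim by contradiction. Suppose $(c_j)$ witnesses $k$-dividing while some strict Morley sequence $(b_i)$ leaves $\{\varphi(x,b_i)\}$ consistent. Using strictness, I would inductively build an array whose rows are conjugates of $(c_j)$ over $A$ starting at $b_i$: the independence in the \emph{reverse} direction, $B \ind_A c$, lets one move each new row over the previously built rows while preserving consistency along the strict Morley sequence. The rows are then $k$-inconsistent, and every choice function gives a consistent path, which is $\TP_2$.

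The second ingredient is existence: over an extension base $A$, every type $p \in S(A)$ has a global strictly non-forking extension. This is the step I expect to be the main obstacle. My first attempt would be to take a global non-forking extension of $p$ and close it under the ``reverse'' independence condition via compactness. The type to realize is
\[
p(y) \cup \{\neg \psi(y,c) : \psi(y,c) \text{ forks over } A\} \cup \{\text{``}\tp(B/Ac)\text{ does not fork over }A\text{''} : B \subseteq \M\},
\]
and its consistency needs a ``broom lemma''. The broom lemma says that in $\NTP_2$, if $\varphi(x,b)$ implies a disjunction of a formula $q$-dividing (along a strictly invariant sequence) with forking formulas, then $\varphi(x,b)$ already forks. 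This combinatorial lemma is where $\NTP_2$ is used a second time. It is also the reason $A$ must be an extension base: we need one non-forking global extension to start the construction.

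With these in hand, (1) follows. Suppose $\varphi(x,b) \vdash \bigvee_{i<n}\psi_i(x,c_i)$ with each $\psi_i(x,c_i)$ dividing over $A$. Take a strict Morley sequence $(b_j c_j)$ in a strictly non-forking extension of $\tp(bc/A)$, with $c = (c_i)_{i<n}$. If $\varphi(x,b)$ did not divide, then by Kim's lemma and indiscernibility $\{\varphi(x,b_j)\}$ would be consistent; in fact it suffices that it is non-dividing along some indiscernible sequence, and a strict Morley sequence is such a sequence. Pick $a$ realizing all the $\varphi(x,b_j)$. By pigeonhole, $a$ satisfies some fixed $\psi_i(x,c_{i,j})$ for infinitely many $j$. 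But $\{\psi_i(x,c_{i,j})\}_j$ is inconsistent by Kim's lemma, a contradiction.

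For (2), $M$ is a model, hence an extension base. I would take a global strictly non-forking extension of $\tp(b/M)$ and let $(b_i)$ be a Morley sequence in it with $b_0 = b$; it is $M$-indiscernible. By the core claim, any $\varphi(x,b)$ dividing over $M$ is inconsistent along $(b_i)$. Conversely, inconsistency along an $M$-indiscernible sequence starting with $b$ is, by definition, dividing. So the same sequence works uniformly for all $\varphi \in L(M)$.
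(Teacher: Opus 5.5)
Your architecture (Kim's lemma via a $\TP_2$ array, existence of strict extensions via a broom lemma, then (1) and (2)) is the right one. But the notion you work with is missing the ingredient that makes it run: invariance.

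\textbf{The array argument.} With $q$ only \emph{strictly non-forking}, the step ``every choice function gives a consistent path'' does not follow. In your forward construction each path $(c_{i,f(i)})_i$ is again a sequence with $c_{i,f(i)}\models q|_{A c_{<i,f(<i)}}$. For a non-invariant $q$, however, the type over $A$ of such a sequence is not determined by $q$: how $q$ treats $b_{<i}$ depends on more than $\tp(b_{<i}/A)$. So consistency of $\{\varphi(x,b_i)\}$ says nothing about the paths; equivalently, nothing keeps an earlier row indiscernible over a later base point. Chernikov--Kaplan prove Kim's lemma for \emph{strictly invariant} types, i.e.\ (Lascar) invariant over $A$ and strict. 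Then every path is a Morley sequence in the same invariant type and has the type of the diagonal.

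\textbf{Parts (1) and (2).} The same omission breaks both. A Morley sequence in a non-invariant $q$ need not be $A$-indiscernible. So in (1) the inference ``$\varphi(x,b)$ does not divide $\Rightarrow$ $\{\varphi(x,b_j)\}$ consistent'' is unjustified, and the $M$-indiscernibility required in (2) can fail. This is not a hypothetical worry. In the random graph over a model $M$, every non-algebraic global extension of a type is strictly non-forking (forking is symmetric and trivial there), and most of them are not $M$-invariant.

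\textbf{What you actually need.} For (2), you need a global extension of $\tp(b/M)$ that is both $M$-invariant and strict. The partial type you propose to realize does not encode invariance, so realizing it only yields a strictly non-forking type. Adding invariance is where the real work, and the broom lemma, has to go.

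\textbf{Extension bases.} Over a general extension base $A$, invariant extensions need not exist, so do not run Kim's lemma there directly. Reduce (1) to the model case instead:
\begin{enumerate}
\item Choose a model $M\supseteq A$ with $M\ind_A bc$; this is possible because $A$ is an extension base.
\item If $\psi_i(x,c_i)$ divides over $A$, it forks over $M$. Otherwise pick $a\models\psi_i(x,c_i)$ with $a\ind_M c_i$; left transitivity then gives $aM\ind_A c_i$, contradicting dividing over $A$.
\item Hence $\varphi(x,b)$ forks over $M$, so it divides over $M$ by the model case.
\item It therefore divides over $A$, since $M$-indiscernible sequences are $A$-indiscernible.
\end{enumerate}
The paper uses essentially this reduction in the proof of Proposition~\ref{prop: forking in DFHP}.
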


\begin{prop}\label{prop: forking in DFHP}
Let $T$ be $\NTP_2$ and   $\DFHP$  (i.e.~$\FHP$ for dividing, see Definition \ref{def: local FHP etc}(2)). Then the following are equivalent for any
small extension base $A$ and $b$ from $\M$:
\begin{enumerate}
\item $\varphi\left(x,b\right)$ does not fork/divide over $A$,
\item there is some global Keisler measure $\mu \in \mathfrak{M}_x(\M)$ non-forking over $A$ (i.e.~for any formula $\psi(x,b) \in L(\M)$, if $\mu(\psi(x,b))>0$ then $\psi(x,b)$ does not fork over $A$) 
and some $\varepsilon>0$ such that $\mu\left(\varphi\left(x,b'\right)\right)\geq\varepsilon$
for all $b'\equiv_{A}b$.
\end{enumerate}
\end{prop}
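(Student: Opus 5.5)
For (2)$\Rightarrow$(1) I will argue directly, as in the proof of (3)$\Rightarrow$(1) in Theorem \ref{thm: generics in FHP}. Suppose $\mu$ does not fork over $A$ and $\mu(\varphi(x,b'))\geq\varepsilon$ for all $b'\equiv_A b$. If $\varphi(x,b)$ divided over $A$, I would pick an $A$-indiscernible sequence $(b_i)_{i\in\omega}$ with $b_0=b$ such that $\{\varphi(x,b_i)\}$ is $k$-inconsistent. Take $n$ with $\varepsilon n>k-1$. Then $\sum_{i<n}\mu(\varphi(x,b_i))\geq\varepsilon n$, so some $k$ of the sets overlap in positive measure, i.e.\ $\mu(\bigwedge_{i\in I}\varphi(x,b_i))>0$ for some $|I|=k$. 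This contradicts $k$-inconsistency, since that conjunction is empty. In fact this direction does not even use that $\mu$ is non-forking. Forking equals dividing over the extension base $A$ by Fact \ref{fact: forking in NTP2}(1).

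For (1)$\Rightarrow$(2), assume $\varphi(x,b)$ does not divide over $A$. Let $q=\tp(b/A)$. Since $\varphi(x,b)$ does not divide, it in particular does not $k$-divide, where $k$ is given by $\DFHP_k$ for $\varphi$ over $A$. So $\DFHP$ provides $\beta>0$ such that every finite tuple $B$ of realizations of $q$ has a subtuple of size $\geq\beta|B|$ whose $\varphi$-instances are consistent. Consider the Boolean algebra $\mathcal{B}$ of $\M$-definable subsets of $\M^x$ and the family $\mathcal{F}=\{\varphi(\M,b'):b'\models q\}$. The above says $i(\mathcal{F})\geq\beta$. Kelley's criterion (Fact \ref{fact: Kelley}(2)) then yields a finitely additive probability measure, that is, a Keisler measure $\mu$, with $\mu(\varphi(x,b'))\geq\beta$ for all $b'\equiv_A b$.

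The main obstacle is additionally making $\mu$ non-forking over $A$. My plan is to apply Kelley's criterion not on all of $\mathcal{B}$, but relative to the ideal of formulas forking over $A$. Forking formulas over $A$ form an ideal $\mathcal{I}$, and $A$ being an extension base means $\mathcal{I}$ is proper. I would pass to the quotient Boolean algebra $\mathcal{B}/\mathcal{I}$ and compute the intersection number there. A finite subtuple of $\mathcal{F}$ then counts as ``intersecting'' only if its conjunction is non-forking over $A$. So I need the strengthened counting statement: for any finite $B\subseteq q(\M)$, some subtuple $B_0$ of proportional size has $\bigwedge_{b'\in B_0}\varphi(x,b')$ non-forking over $A$.

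To get this, I would choose the consistent subtuple cleverly using $\NTP_2$. First, by the standard argument, non-dividing of $\varphi(x,b)$ over $A$ gives a global type $p\ni\varphi(x,b)$ non-forking over $A$, using the extension base and forking $=$ dividing. Second, I would use compactness together with Fact \ref{fact: forking in NTP2}(2), via strictly invariant or Kim-style sequences, to reduce to the case where $B$ is a finite piece of an appropriate indiscernible configuration. There the $\DFHP$-consistent subfamily can be realized by $c$ with $c\ind_A B_0$, so the conjunction does not divide, hence does not fork.

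If this reduction fails, my fallback is to replace the measure by an average: take Keisler measures $\mu_B$ obtained from realizations $c_B\ind_A B$ of the large consistent subfamilies, and take an ultralimit. Since each $c_B$ is independent from its finite parameter set, every formula of positive limit measure is satisfied by some point that is non-forking over $A$ together with its parameters, so the limit measure does not fork over $A$. I expect getting these independent witnesses uniformly to be the crux.
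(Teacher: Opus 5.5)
Your (2)$\Rightarrow$(1) agrees with the paper, and so does the first half of (1)$\Rightarrow$(2), where you get some $\mu$ with $\mu(\varphi(x,b'))\geq\beta$ for all $b'\equiv_A b$ from $\DFHP$ and Fact \ref{fact: Kelley}(2). The gap is everything after that, which is the real content of the proposition.

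Applying Kelley's criterion in $\mathcal{B}/\mathcal{I}$ is legitimate. However, it only translates (2) into an equivalent statement: every finite tuple $B$ of realizations of $q$ contains a subtuple $B_0$ with $|B_0|\geq\beta'|B|$ and $\bigwedge_{b'\in B_0}\varphi(x,b')$ non-forking over $A$. You give no proof of this statement.

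The suggested reduction does not work as described. An arbitrary finite $B$ is not a piece of an indiscernible configuration, and Ramsey only yields indiscernible subsequences much smaller than $|B|$, so proportionality is lost. Even for indiscernible $B$, nothing makes the subfamily produced by $\DFHP$ realizable by some $c\ind_A B_0$. That sentence just restates the claim, since consistency versus non-forking is precisely the issue.

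The fallback presupposes the same independent witnesses. In addition, an ultralimit of Dirac measures at points $c_B$ need not give each individual $\varphi(x,b')$ measure $\geq\varepsilon$; you would need convex combinations. Also, $c_B\ind_A B$ gives no control over formulas with parameters outside $B$.

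The missing idea is to pass to a model $M\supseteq A$ that is independent over $A$ from the relevant parameters. There, left transitivity turns forking over $A$ into dividing over $M$, and you can use the single witness sequence of Fact \ref{fact: forking in NTP2}(2).

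The paper does this in the space of measures. The sets of measures satisfying $(*)$ and of measures non-forking over $A$ are both closed. If they were disjoint, compactness would give $\psi_1(x,c),\ldots,\psi_n(x,c)$ dividing over $A$ and $\alpha>0$ with $\max_i\nu(\psi_i(x,c))>\alpha$ for every $\nu$ satisfying $(*)$. Now choose $M\ind_A c$ and $(c_j)_{j\in\omega}$ as in Fact \ref{fact: forking in NTP2}(2). The $\Aut(\M/A)$-invariance of $(*)$ gives $\max_i\mu(\psi_i(x,c_j))>\alpha$ for all $j$. Fact \ref{fact: Kelley}(1) plus indiscernibility then make some $\{\psi_i(x,c_j)\}_j$ consistent, contradicting that $\psi_i(x,c)$ divides over $M$.

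Your counting claim can also be proved with the same tools:
\begin{enumerate}
\item Take $M\supseteq A$ with $M\ind_A B$, and $(B^j)_{j\in\omega}$ with $B^0=B$ as in Fact \ref{fact: forking in NTP2}(2).
\item Apply $\DFHP$ to the concatenation $B^0\cdots B^{N-1}$.
\item Pigeonhole over blocks, over the $2^{|B|}$ index sets, and over $N$. This gives a fixed $S$ with $|S|\geq\frac{\beta}{2}|B|$ such that $\{\bigwedge_{i\in S}\varphi(x,b^j_i): j\in J_N\}$ is consistent for sets $J_N$ of unbounded size.
\item Indiscernibility then shows $\bigwedge_{i\in S}\varphi(x,b_i)$ does not divide over $M$, hence does not fork over $M$. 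By $M\ind_A B$ and left transitivity, it does not fork over $A$.
\end{enumerate}
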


\begin{proof}
(2) $\Rightarrow$ (1) is by Fact \ref{fact: Kelley}(1). 

(1) $\Rightarrow$ (2): Assume that $\varphi\left(x,b\right)$ does not
divide over $A$. Assume $\varphi(x,y)$ satisfies $\DFHP_k$ for $k \in \omega$.  Then, as in particular $\varphi(x,b)$ does not $k$-divide, using $\DFHP_k$ and Kelley's criterion (Fact \ref{fact: Kelley})  
as in the proof of Theorem \ref{thm: generics in FHP}, there exist $\varepsilon > 0$ and a global Keisler measure
$\mu \in \mathfrak{M}_x(\M)$ such that:
\begin{lyxlist}{00.00.0000}
\item [{$\left(*\right)$}] $\mu\left(\varphi\left(x,b'\right)\right)\geq\varepsilon$
for all $b' \in \M^y$ with $b' \equiv_{A}b$.
\end{lyxlist}
We want to find a global Keisler measure $\nu$ satisfying $\left(*\right)$
and non-forking over $A$. Assume no such $\nu$ exists. As both of
these sets of measures are closed in $\mathfrak{M}_x(\M)$, and the space of global Keisler measures
is compact, there must exist some finitely many formulas $\psi_{i}\left(x,c\right)\in\mathcal{L}\left(\M\right),1\leq i\leq n$
each dividing over $A$ and some $\alpha>0$ such that for any $\nu$
satisfying $\left(*\right)$ we must have $\bigvee_{1\leq i\leq n}\nu\left(\psi_{i}\left(x,c\right)\right)>\alpha$. 

As $A$ is an extension base, let $M\supseteq A$ be a small model
such that $M\ind_{A}c$, and let $\left(c_{i}:i\in\omega\right)$
with $c_{0}=c$ be as given by Fact \ref{fact: forking in NTP2}(2).
Then for any $j\in\omega$ we must have $\bigvee_{1\leq i\leq n}\mu\left(\psi_{i}\left(x,c_{j}\right)\right)>\alpha$
(if $\bigvee_{1\leq i\leq n}\mu\left(\psi_{i}\left(x,c_{j}\right)\right)\leq\alpha$
for some $j\in\omega$, then as $\left(*\right)$ is an $\Aut\left(\M/A\right)$-invariant
condition, hence also $\Aut\left(\M/M\right)$-invariant, taking an
$M$-automorphism of $\M$ sending $c_{j}$ to $c$ and applying it
to $\mu$, we would get some measure $\nu$ still satisfying $\left(*\right)$
and violating the assumption above).

Note that by the choice of $M$ and left transitivity of forking (see
Lemma \cite{chernikov2012forking}), $\psi_{i}\left(x,c\right)$ divides
over $M$ for all $1\leq i\leq n$. On the other hand, there is some
$1\leq i\leq n$ and an infinite subsequence $I\subseteq\omega$ such
that $\bigwedge_{j\in I} \left( \mu\left(\psi_{i}\left(x,c_{j}\right)\right)>\alpha \right)$ holds. By Fact \ref{fact: Kelley}(1), compactness and indiscernibility this implies that $\{\psi_i(x,c_j): j \in \omega \}$ is consistent, hence $\psi_i(x,c)$ does not divide over $M$ (by the choice of $(c_j)_{j \in \omega}$) --- a contradiction.
\end{proof}

\begin{problem}
	Can (2) be strengthened to: there exists a \emph{weakly invariant} over $A$ measure $\mu \in \mathfrak{M}_x(\M)$ with $\mu(\varphi(x,b)) > 0$, where weakly invariant means that for every $\psi(x,c) \in L(\M)$, if $\mu(\psi(x,c)) >0$ then there exists some $\alpha >0$ so that $\mu(\varphi(x,c')) \geq \alpha$ for all $c' \equiv_{A} c$.
\end{problem}

\begin{rem}
	We note that conversely, any low theory satisfying the implication (1)$\Rightarrow$(2) in Proposition \ref{prop: forking in DFHP} is $\DFHP$. Indeed, assume $\varphi(x,y)$ is low, so that dividing for any instance of $\varphi$ implies $k$-dividing. Then if $\varphi(x,b)$ does not $k$-divide, it does not divide, so there exists $\varepsilon = \varepsilon (\tp(b))>0$ and measure $\mu$ with $\mu(\varphi(x,b'))\geq \varepsilon$ for all $b' \equiv b$. By Fact \ref{fact: Kelley}(1) this implies that $i(\{\varphi(x,b') : b' \equiv b \}) \geq \varepsilon$ --- hence $\DFHP_k$ holds.
\end{rem}

\section{FHP in some generic expansions of $\left(\mathbb{Z},+\right)$}\label{sec: FHP in some expansions of Z}

\subsection{$\left(\mathbb{Z},+,\protect\Sqf\right)$}\label{sec: square free is FHP}

In this section we consider the structure $\left(\mathbb{Z},+,\Sqf\right)$,
where $\Sqf$ is a unary predicate for the integers not divisible by a square
of any prime number, and show that $T_{\Sqf}$ satisfies $\FHP$.
This theory is studied in \cite{tran}, and we recall some notation
and results from there.

\begin{defn}\cite[Section 2]{tran}\label{def: basic square free notation}
Below $x$ be a single variable and $z = (z_i : i \in [n]), z' = (z'_i : i \in [n'])$ denote arbitrary tuples of variables, and for $a \in \mathbb{Z}$ and $p$ prime, let $v_p(n)$ denote the $p$-adic valuation.
\begin{enumerate}
\item For $m\in\mathbb{N}_{>0}$ let $P_{m}^{\mathbb{Z}}:=\left\{ a\in\mathbb{Z}:v_{p}\left(a\right)<2+v_{p}\left(m\right)\text{ for all }p\right\} $,
so $P^{\mathbb{Z}}_{1}=\Sqf$. And for $p$ prime and $l\in\mathbb{Z}$,
let $U^{\mathbb{Z}}_{p.l}:=\left\{ a\in\mathbb{Z}:v_{p}\left(a\right)\geq l\right\} =p^{l}\mathbb{Z}$.
\item We consider the expansion $M$ of  $\left(\mathbb{Z},+,\Sqf\right)$ in the language  
$$\mathcal{L}=\left\{ +,-,1,\left(P_{m}\right)_{m\in\mathbb{N}_{>0}},\left(U_{p,l}\right)_{p\text{ prime, }l\in\mathbb{Z}}\right\} ,$$
and let $T_{\Sqf}:=\Th_{\mathcal{L}}\left( M \right)$. Let $\M \succ M$ be a monster model.
\item An $L$-formula $\theta\left(z\right)$ is a \emph{$p$-condition, }for
$p$ prime, if it is a Boolean combination of formulas of the form
$t\left(z\right)\notin U_{p,l}$ for $t\left(z\right)$ an arbitrary
$L$-term (i.e. a $\mathbb{Z}$-linear combination of the variables from $z$) and $l\in\mathbb{N}$ (a $p$-condition is trivial if the Boolean combination is the empty conjunction). An $L$-formula $\theta(z)$ which is a Boolean combination of formulas of the form $t(z) = 0$ with $t\left(z\right)$ an arbitrary
$L$-term is called an \emph{equational condition}.
\item An $L$-formula $\psi\left(x,z,z'\right)$ is a \emph{special formula} if it is of
the form 
\[
\bigwedge_{p \in S}\theta_{p}\left(x,z,z'\right)\land\bigwedge_{i=1}^{n}\left(kx+z_{i}\in P_{m}\right)\land\bigwedge_{i'=1}^{n'}\left(kx+z_{i}'\notin P_{m}\right)\text{,}
\]
where $k \in \mathbb{Z} \setminus \{0\}$, $m\in\mathbb{N}_{\geq 1}$ and $\theta_{p}(x;z,z')$ are $p$-conditions for some finite set of primes $S$. And $\psi\left(x,z,z'\right)$ is a \emph{positive special formula}  if it is of the form $\bigwedge_{p \in S}\theta_{p}\left(x,z,z'\right)\land\bigwedge_{i=1}^{n}\left(kx+z_{i}\in P_{m}\right)$.
\item Given a (positive) special formula $\psi\left(x,z,z'\right)$ as in (4) and prime $p$, its \emph{associated
$p$-condition} $\psi_{p}\left(x,z,z'\right)$ is the formula 
$$\theta_{p}\left(x,z,z'\right)\land\bigwedge_{i=1}^{n}\left(kx+z_{i}\notin U_{p,2+v_{p}\left(m\right)}\right)$$
 (where we let $\theta_p$ be the trivial $p$-condition for $p \notin S$). 
Then $\psi\left(x,z,z'\right)$ implies $\psi_{p}\left(x,z,z'\right)$ for all $p$  (in $T_{\Sqf}$).
\item A (positive) \emph{$G$-system} is a (positive) formula $\psi\left(x,c,c'\right)$ with $\psi\left(x,z,z'\right)$ a (positive) special formula and $c \in \M^z, c' \in \M^{z'}$. It is \emph{non-trivial} if, assuming $c = (c_i : i \in [n]), c' = (c'_j : j \in [n'])$, we have $c_i \neq c'_j$ for all $(i,j) \in [n]\times [n']$.
\item For a prime $p$, a (positive) $G$-system $\psi\left(x,c,c'\right)$ is \emph{$p$-satisfiable
}if $\psi_{p}\left(x,c,c'\right)$ is satisfiable for each prime $p$.
By (5), if $\psi\left(x,c,c'\right)$ is satisfiable, then it is
$p$-satisfiable for all $p$.
\end{enumerate}
\end{defn}

\begin{fact}
\cite[Theorem 2.14]{tran}\label{fact: G-system local to global consistency}
If a  non-trivial $G$-system $\psi\left(x,c,c'\right)$ is $p$-satisfiable for
all prime $p$, then it has infinitely many solutions in $\M$.
\end{fact}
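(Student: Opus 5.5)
The plan is to prove the statement first in the standard model $M = (\mathbb{Z}, +, \ldots)$, uniformly in the parameters, and then transfer it to $\M$ by elementarity. To make the transfer possible, I first reduce $p$-satisfiability for all primes to finitely many primes. Fix a special formula $\psi(x,z,z')$ with data $k, m, n, n', S$. Let $P_0$ exceed every prime in $S$, every prime dividing $km$, and $\sqrt{n}$.

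For $p > P_0$ the condition $\theta_p$ is trivial, $v_p(m) = 0$, and $p \nmid k$. So $\psi_p(x,c,c')$ only asks that $kx + c_i \notin p^2\mathbb{Z}$ for all $i \in [n]$. This forbids at most $n < p^2$ residues of $x$ modulo $p^2$, so $\psi_p$ is automatically satisfiable. Hence ``$\psi(x,c,c')$ is $p$-satisfiable for all $p$'' is equivalent to the single $L$-formula $\bigwedge_{p \le P_0} \exists x\, \psi_p(x,z,z')$. Each $\psi_p$ is a Boolean combination of atoms $U_{p,l}$, so this is first order. Non-triviality, $\bigwedge_{i,j} z_i \neq z'_j$, is also first order. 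Therefore, for each $N$, the assertion
\[
\forall z z' \Bigl( \bigwedge_{i,j} z_i \neq z'_j \land \bigwedge_{p\le P_0}\exists x\,\psi_p \rightarrow \exists^{\geq N} x\, \psi(x,z,z') \Bigr)
\]
is an $L$-sentence. It suffices to show it holds in $M$; it then holds in $\M \succ M$, and letting $N$ vary gives infinitely many solutions.

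In $\mathbb{Z}$, fix non-trivial $c, c'$ that are $p$-satisfiable for all $p$. The negative conditions $kx + c'_j \notin P_m$ are handled by choosing distinct large primes $q_1, \ldots, q_{n'} > P_0$. These are chosen so that no $q_j$ divides any $c_i - c'_j$ (possible by non-triviality) or any $c_i - c_{i'}$ with $c_i \neq c_{i'}$. We then impose $kx + c'_j \equiv 0 \pmod{q_j^2}$, which forces $kx + c'_j \notin P_m$. Since $q_j^2 \nmid c_i - c'_j$, this congruence is compatible with $kx + c_i \notin q_j^2\mathbb{Z}$. For the finitely many $p \le P_0$ we fix a residue class of $x$ modulo a suitable power of $p$ realizing $\psi_p$. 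By the Chinese remainder theorem these combine into one arithmetic progression $x \equiv r \pmod{Q}$ on which all $p$-conditions for $p \le P_0$, the negative conditions, and the local conditions at each $q_j$ hold.

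It then remains to show that infinitely many $x$ in this progression have $kx + c_i \notin p^2\mathbb{Z}$ for every $i$ and every prime $p \notin \{q_j\}$ with $p > P_0$. This is where I expect the main work to lie: a square-free sieve for the polynomial $\prod_i (k(r+Qt) + c_i)$. I would count $t \in [1,T]$ and sieve out $p^2$ for $p \le y$ via CRT. This gives main term
\[
T \prod_{p\le y}\Bigl(1 - \frac{\rho(p)}{p^2}\Bigr), \qquad \rho(p) \le n .
\]
The product converges to a positive constant, since each factor is positive by $p$-satisfiability and $\sum_p n/p^2 < \infty$. For the tail, the primes $y < p \le \sqrt{|kQT| + \max|c_i|}$ remove at most
\[
\sum_{p>y} n\Bigl(\frac{T}{p^2} + 1\Bigr) = O\bigl(T/y + \sqrt{T}\bigr)
\]
values of $t$. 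Choosing $y$ large but fixed, and then $T \to \infty$, the count tends to infinity. The care needed is in the error terms and in the uniformity of the tail estimate in $c$. Since $c$ is fixed for each instance in $\mathbb{Z}$, this is only a standard density argument, and the elementary transfer of the first paragraph does the rest.
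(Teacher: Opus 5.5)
Your proof is correct. The paper gives no proof of this statement: it cites it from \cite{tran}, and in the proof of Theorem \ref{thm: Sqf is FHP} it applies it only to parameters from $\mathbb{Z}$. So you are supplying a self-contained argument where the paper simply imports one.

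Your route has three parts. First, the Chinese remainder theorem handles the negative conditions by forcing $kx+c'_j\equiv 0 \pmod{q_j^2}$ at fresh large primes. Non-triviality is exactly what makes this compatible with $kx+c_i\notin q_j^2\mathbb{Z}$. Second, the remaining positive conditions are handled by the same square-free sieve the paper runs in Lemma \ref{lem: density for positive special formulas} (there via the sets $\Psi_M(t)$). Third, you pass from $\mathbb{Z}$ to $\M$ by observing that $\psi_p$ is automatically satisfiable once $p$ exceeds a bound $P_0$ depending only on $\psi$. This turns the local-to-global principle into a first-order scheme and is what justifies the phrase ``in $\M$'' in the statement. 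The citation is shorter; your argument buys self-containment and makes this uniformity explicit.

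Two cosmetic points. For $p>P_0$, positivity of the Euler factors comes from $\rho(p)\le n<p^2$, not from $p$-satisfiability. Also, the extra requirement $q_j\nmid c_i-c_{i'}$ is not needed anywhere.
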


\begin{fact}
\label{fact: QE for Sqf}\cite[Theorem 1.1 and Theorem 3.4]{tran} The $L$-theory   $T_{\Sqf}$ eliminates
quantifiers and is supersimple, of SU-rank $1$.
\end{fact}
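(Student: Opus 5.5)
This is a cited result from \cite{tran}. If I were to reprove it, I would first establish quantifier elimination by the standard one-variable criterion. It suffices to show that $\exists x\,\chi(x,z)$ is equivalent modulo $T_{\Sqf}$ to a quantifier-free formula, where $\chi$ is a conjunction of atomic and negated atomic $\mathcal{L}$-formulas with $x$ a single variable.

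\emph{Reduction to $G$-systems.} I would first handle equational conjuncts. A conjunct of the form $kx = t(z)$ pins down $x$. After multiplying through by a common factor, $\exists x$ becomes the divisibility condition ``$t(z) \in k\mathbb{Z}$''. This condition is quantifier-free, being expressible via the predicates $U_{p,v_p(k)}$. The remaining atomic formulas can then be rewritten as quantifier-free formulas in $t(z)$, which eliminates $x$ outright. In the remaining case I would bring all occurrences of $x$ to a common coefficient $k$, replacing $P_m(k'x+t)$ by an equivalent $P_{m'}(kx+t')$ up to $p$-conditions. Then $\chi$ is a disjunction of special formulas conjoined with inequations $x \neq t(z)$. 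By Fact \ref{fact: G-system local to global consistency}, the inequations are harmless as soon as the system has infinitely many solutions.

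\emph{Deciding existence of a solution.} I would case-split on equational conditions among the parameters. If $c_i = c'_j$ for some $i,j$, the system is trivially inconsistent. Otherwise the system is a non-trivial $G$-system, and by Fact \ref{fact: G-system local to global consistency} it is consistent if and only if it is $p$-satisfiable for every prime $p$. The key counting step is the following. For $p$ larger than some bound depending only on $k$, $m$, $n$ and the terms occurring in the $\theta_p$, the associated $p$-condition $\psi_p$ forbids at most $n$ residue classes of $x$ modulo $p^2$ (and the $\theta_p$ are trivial). Since there are $p^2$ classes, $\psi_p$ is automatically satisfiable. For each of the finitely many small primes, $p$-satisfiability depends only on the residues of the $c_i, c'_j$ modulo a bounded power $p^{l}$. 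That is a finite disjunction of quantifier-free $U_{p,l}$-conditions. Hence $\exists x\,\chi$ is quantifier-free definable.

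\emph{SU-rank $1$.} By quantifier elimination, every $\mathcal{L}(\M)$-definable subset of $\M$ is a finite Boolean combination of solution sets of $G$-systems and finite sets. My plan is to show that algebraic independence (on singletons: $a \ind_C b$ iff $a \notin \acl(bC)\setminus\acl(C)$) satisfies the Kim--Pillay axioms, in particular the independence theorem over models. Simplicity then follows, forking coincides with this independence, and every non-algebraic type in one variable is non-forking, which gives SU-rank $1$. Concretely, to amalgamate non-algebraic $p_1(x) \in S(Mb)$ and $p_2(x) \in S(Mc)$ with $b \ind_M c$, I would write the union as a single $G$-system and verify the two hypotheses of Fact \ref{fact: G-system local to global consistency}. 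Non-triviality should follow from independence: a coincidence $c_i = c'_j$ across $b$ and $c$ would place a term in $\acl(Mb)\cap\acl(Mc) = \acl(M)$, and that case is already decided inside $M$. The $p$-satisfiability, I expect, is the main obstacle. For large $p$ the residue-counting argument above again works. For each small $p$, I would work in $\mathbb{Z}_p$: the $p$-conditions define open subsets, determined by finitely many residues modulo $p^{l}$. Independence of $b$ and $c$ over the model $M$ should let me choose the relevant residue classes of $b$- and $c$-terms generically relative to each other. One then finds a common residue class for $x$ by the Chinese-remainder style patching within $\mathbb{Z}/p^{l}$, using that $M$ already realizes every residue pattern. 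Compactness then reduces to finitely many primes at a time.
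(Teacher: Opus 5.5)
The paper only imports this statement from \cite{tran}.

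\textbf{Quantifier elimination.} Your half on quantifier elimination is sound. It follows the $G$-system route of \cite{tran} that the paper records as Facts \ref{fac: QF reduces to G-systems} and \ref{fact: G-system local to global consistency}. A $G$-system has either no solutions or infinitely many. Its $p$-satisfiability is automatic for $p\nmid km$, $p\notin S$, $p^2>n$, and is a residue condition modulo a bounded $p^l$ otherwise. Since $T_{\Sqf}$ is the theory of the standard model, it suffices to check the resulting equivalences in $\mathbb{Z}$.

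\textbf{Supersimplicity: the gap.} The supersimplicity half has a genuine gap.
\begin{enumerate}
\item Kim--Pillay characterizes simplicity by an independence relation between arbitrary small tuples. The independence theorem must be verified for $p_1(x)\in S(Mb)$, $p_2(x)\in S(Mc)$ with $x$ an arbitrary finite tuple. Your relation is defined only for singletons, so even symmetry cannot be stated. Your amalgamation runs through Fact \ref{fact: G-system local to global consistency}, which concerns one-variable systems, and nothing in the proposal passes from $1$-types to $n$-types.
\item The step you single out as the main obstacle is misdiagnosed. The residues of the $b$- and $c$-terms modulo $p^l$ are fixed by $\tp(b)$ and $\tp(c)$. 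There is nothing to choose ``generically'', and independence over $M$ plays no role here.
\item What actually gives $p$-satisfiability of the combined system is $p_1|_M=p_2|_M$. Both complete types contain the same $\emptyset$-formula $x-r\in U_{p,l}$ with $r\in\mathbb{Z}$. Each associated $p$-condition depends only on residues modulo $p^l$ and is implied by its type. So this one class satisfies all of them at once, for every prime $p$. Independence is needed only for non-triviality.
\end{enumerate}

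\textbf{A repair.} The cleanest repair is to bypass Kim--Pillay. Show directly that a non-algebraic $\varphi(x,b)$ with $|x|=1$ does not divide over $\emptyset$, hence over any set.
\begin{enumerate}
\item By quantifier elimination you may assume $\varphi(x,b)$ is a non-trivial $G$-system $\psi(x,t(b),t'(b))$ together with finitely many inequations. Let $(b_i)_{i<\omega}$ be indiscernible with $b_0=b$.
\item The system $\bigwedge_{i<N}\psi(x,t(b_i),t'(b_i))$ is non-trivial. Suppose $t_s(b_i)=t'_u(b_j)$ with $i<j$. By indiscernibility, $t_s(b_1)=t'_u(b_2)=t_s(b_0)=t'_u(b_1)$. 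This contradicts non-triviality of $\psi(x,t(b_1),t'(b_1))$. The case $i>j$ is symmetric.
\item The system is $p$-satisfiable for every $p$, because all $b_i$ have the same residues modulo every $p^l$.
\item Fact \ref{fact: G-system local to global consistency} gives infinitely many solutions, so the inequations are harmless. By compactness $\{\varphi(x,b_i):i<\omega\}$ is consistent.
\end{enumerate}
Hence every dividing formula in one variable is algebraic. So dividing chains for $1$-types have length at most $1$, and no formula with $|x|=1$ has $\TP$. Then $T_{\Sqf}$ is simple by Fact \ref{fac: 1-var 2-incons for TP_2}(1), and $\SU(x=x)=1$.

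If you prefer to keep Kim--Pillay, you must add an induction on $|x|$. That induction uses your one-variable amalgamation over arbitrary bases $Ma'$, not just over models. Your non-triviality argument does go through there, since $\acl=\dcl$ in $T_{\Sqf}$ by quantifier elimination.
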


\begin{fact}
\label{fac: QF reduces to G-systems} \cite[Lemma 3.1]{tran} Let $\varphi\left(x,y\right)$ be
a quantifier-free $L$-formula, with $x$ a singleton and $y$ a tuple of variables. The $\varphi(x,y)$ is equivalent, modulo $T_{\Sqf}$, to a disjunction of quantifier-free $L$-formulas of the form 
\begin{gather*}
	\rho(y) \land \varepsilon(x,y) \land \psi(x, t(y), t'(y)),
\end{gather*}
where $\rho(y)$ is a quantifier-free $L$-formula, $\varepsilon(x,y)$ an equational condition, $\psi(x,z,z')$ a special $L$-formula with $n := |z|, n' := |z'|$, and $t(y),t'(y)$ are tuples of $L$-terms of length $n$ and $n'$, respectively.
\end{fact}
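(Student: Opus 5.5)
We would prove this by a direct syntactic normalization. First we put $\varphi(x,y)$ into disjunctive normal form, so that it is a disjunction of conjunctions of literals. Since $x$ is a single variable, every $\mathcal{L}$-term $t(x,y)$ has the form $kx+s(y)$ with $k\in\mathbb{Z}$ and $s(y)$ an $\mathcal{L}$-term. Every atom is therefore of one of the forms $kx+s(y)=0$, $kx+s(y)\in P_m$ or $kx+s(y)\in U_{p,l}$ (an atom like $t_1=t_2$ is first rewritten as $t_1-t_2=0$). We sort the literals of each conjunct as follows. Literals with $k=0$ do not involve $x$, so their conjunction is the quantifier-free formula $\rho(y)$. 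Equational literals with $k\neq 0$, together with their negations, form $\varepsilon(x,y)$. The remaining literals are of the form $(\neg)\,kx+s(y)\in P_m$ or $(\neg)\,kx+s(y)\in U_{p,l}$ with $k\neq 0$, and these have to be assembled into a single special formula $\psi(x,t(y),t'(y))$.

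\textbf{Step 1: common coefficient of $x$.} We let $K$ be the lcm of the nonzero coefficients $k$ occurring in the conjunct, and we use that the following equivalences hold in $\mathbb{Z}$ for every $c\in\mathbb{N}_{>0}$, hence in $T_{\Sqf}$:
\begin{gather*}
a\in P_m \iff ca\in P_{mc}, \\
a\in U_{p,l}\iff ca\in U_{p,\,l+v_p(c)}.
\end{gather*}
Both follow from $v_p(ca)=v_p(a)+v_p(c)$ and $v_p(mc)=v_p(m)+v_p(c)$. Applying them with $c=K/k$ replaces each term $kx+s(y)$ by $Kx+(K/k)s(y)$.

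\textbf{Step 2: common index $m$.} We let $m$ be the lcm of the indices $m_i$ now occurring. For each $m_i\mid m$ we use
\[
a\in P_{m_i}\iff a\in P_m\land\bigwedge_{p\mid m/m_i}\bigl(a\notin U_{p,\,2+v_p(m_i)}\bigr).
\]
This holds because for $p\nmid m/m_i$ the bounds $2+v_p(m_i)$ and $2+v_p(m)$ coincide, while for $p\mid m/m_i$ the conjunct $a\notin U_{p,2+v_p(m_i)}$ says exactly $v_p(a)<2+v_p(m_i)$, which implies $v_p(a)<2+v_p(m)$. The extra conjuncts are $p$-conditions for the finitely many primes dividing $m$. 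A negated literal $a\notin P_{m_i}$ becomes a finite disjunction, which we distribute, producing a disjunction of formulas of the same shape. Each $U$-literal is likewise a $p$-literal on a term of the form $Kx+s'(y)$.

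\textbf{Step 3: assembling the special formula.} The main obstacle is that in a special formula the $p$-conditions $\theta_p(x,z,z')$ may only use terms in $x,z,z'$, whereas a $U$-literal (or a $p$-condition produced in Step 2) may mention a term $Kx+s'(y)$ where $s'(y)$ is not among the $t_i(y),t'_j(y)$. We plan to remove this by a further case split. For each such $s'(y)$ we take the disjunction of two cases: one in which $Kx+s'(y)\in P_m$ is added and $s'(y)$ becomes a new coordinate of $t(y)$, and one in which $Kx+s'(y)\notin P_m$ is added and $s'(y)$ becomes a new coordinate of $t'(y)$. Since the disjunction of the two cases is a tautology, this does not change the formula. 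After the split every relevant term is of the form $Kx+z_i$ or $Kx+z'_j$, so each $p$-literal is a $p$-condition in $x,z,z'$, and grouping these literals by prime yields the $\theta_p$ for a finite set $S$ of primes. The required shape also has $K\neq 0$ and a single common $m$, which is guaranteed by Steps 1 and 2. If a disjunct has no $P$- or $U$-literals involving $x$ at all, we take $\psi$ to be the trivial special formula. Collecting all disjuncts gives the claimed representation. We expect the bookkeeping in Step 3 to be the only delicate point, since Steps 1 and 2 are elementary valuation identities.
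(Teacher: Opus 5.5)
Your argument is correct. The paper gives no proof of this statement; it is imported from Tran's Lemma 3.1. Your direct syntactic normalization is a valid self-contained proof: sort the literals, give all $x$-coefficients a common value $K$, give all $P$-indices a common value $m$ at the cost of extra $p$-literals, then case-split on $P_m$-membership.

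The equivalences you use are right. In particular $a\in P_{m_i}\iff a\in P_m\land\bigwedge_{p\mid m/m_i}a\notin U_{p,2+v_p(m_i)}$ also holds at $a=0$, since $0\notin P_m$ for every $m$. The Step 3 split is genuinely needed. The $p$-conditions in a special formula may only mention $x,z,z'$, and a new coordinate of $z$ or $z'$ can only enter a special formula together with a literal $Kx+z_i\in P_m$ or $Kx+z'_j\notin P_m$. Splitting on $Kx+s'(y)\in P_m$ versus $\notin P_m$ supplies exactly that.

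Three small points need a sentence each.
\begin{itemize}
\item If some $k<0$, then $K/k$ is negative. You therefore also need that $P_m$ and $U_{p,l}$ are closed under $a\mapsto -a$, which is immediate from $v_p(-a)=v_p(a)$.
\item The language allows $U_{p,l}$ with $l\in\mathbb{Z}$, but $p$-conditions only use $l\in\mathbb{N}$. Atoms $U_{p,l}(t)$ with $l\le 0$ define all of $\mathbb{Z}$ and should simply be replaced by $\top$.
\item A disjunct that has $U$-literals in $x$ but no $P$-literals still needs some index before the split. Taking $m=1$ works.
\end{itemize}
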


We will need a refinement of \cite[Lemma 2.11]{tran} in the special
case of positive special formulas:
\begin{lem}
\label{lem: density for positive special formulas}Let $\psi\left(x,z,z'\right)$
be a positive special formula. Then there is some $\varepsilon=\varepsilon\left(\psi\right) \in \mathbb{R}_{>0}$
satisfying the following.

For any tuples $c,c'$  in $\mathbb{Z}$ such that $\psi\left(x,c,c'\right)$ is a positive $G$-system
which is $p$-satisfiable for all $p$, let 
$$\Psi\left(t\right):=\left\{ a\in\mathbb{Z}:\psi\left(a,c,c'\right)\text{ holds and }0<a<t\right\}.$$
Then 
\[
\left|\Psi\left(t\right)\right|\geq\varepsilon t-\sum_{i=1}^{n}\left(\sqrt{\left|c_{i}\right|}+\sqrt{\left|kt+c_{i}\right|}\right) - 1 
\]
for all $t\in\mathbb{N}$ (where $n = |z|$ and $k \in \mathbb{Z} \setminus \{0\}$ is as in Definition \ref{def: basic square free notation} for $\psi$).
\end{lem}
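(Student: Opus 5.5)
Since $\psi$ is positive, it consists of finitely many $p$-conditions $\theta_p$ for $p\in S$ together with the atoms $kx+c_i\in P_m$. The plan is a sieve: first find a residue class of positive density on which all the local conditions hold at the small primes, and then bound the failures at the large primes by a union bound. The error term $\sqrt{|c_i|}+\sqrt{|kt+c_i|}$ will come from primes $p$ with $p^2>$ the size of $kx+c_i$.

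\emph{Small primes.} Fix $Q$ larger than every prime in $S$, larger than every prime dividing $m$ or $k$, and large enough that $\sum_{p>Q} n/p^2 \le \frac12\prod_{p\le Q}(1-n/p^2)$. For $p\le Q$, the associated condition $\psi_p(x,c,c')$ is satisfiable by hypothesis. It is a Boolean combination of conditions ``linear term $\notin p^{l}\mathbb{Z}$'' with $l$ bounded in terms of $\psi$. Hence it is determined by the residue of $x$ modulo $p^{L}$, where $L=L(\psi)$ is independent of $c,c'$.

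Fix one solution class $r_p \bmod p^L$. By CRT this gives a class $r \bmod N:=\prod_{p\le Q}p^L$, and $N$ depends only on $\psi$. Every $a\equiv r \pmod N$ satisfies all $\theta_p$ and also $v_p(ka+c_i)<2+v_p(m)$ for each $p\le Q$.

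\emph{Large primes.} For $p>Q$ we have $p\nmid m$ and $p\nmid k$, so we only need $p^2\nmid ka+c_i$ for each $i$. For a fixed $i$ and $p$, the bad $a$ form a single class modulo $p^2$ (since $k$ is invertible mod $p$). Within the progression $a\equiv r \pmod N$, $0<a<t$, the number of such $a$ is at most $t/(Np^2)+1$. However, a nonzero integer $ka+c_i$ can be divisible by $p^2$ only if $p^2\le |ka+c_i|\le \max(|c_i|,|kt+c_i|)$. Here we use monotonicity in $a$; the expression is linear in $a$, so its maximum absolute value on $[0,t]$ is attained at an endpoint. The degenerate case $ka+c_i=0$ contributes at most one $a$.

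So only primes $p\le \sqrt{|c_i|}+\sqrt{|kt+c_i|}$ contribute the ``$+1$'' terms, giving at most that many extra elements. Summing over $i$ and over the primes $p>Q$, the number of bad $a$ is at most
\[
\frac{t}{N}\sum_{p>Q}\frac{n}{p^2}+\sum_{i}\Bigl(\sqrt{|c_i|}+\sqrt{|kt+c_i|}\Bigr)+(\text{degenerate terms}).
\]
The progression itself contains at least $t/N-1$ elements.

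Set $\varepsilon:=1/(2N)$, or use the product bound if $\sum_{p>Q} n/p^2\le 1/2$ by the choice of $Q$. This gives $|\Psi(t)|\ge \varepsilon t-\sum_i(\sqrt{|c_i|}+\sqrt{|kt+c_i|})-1$. The bookkeeping of the constants $-1$ and the degenerate zeros needs care; if necessary, each degenerate zero can be absorbed into the square-root terms, except when those terms vanish. I expect this care with constants to be the main nuisance.

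The real point to check is that $L$ and the needed bound on $Q$ depend only on $\psi$ (through $k$, $m$, $S$, $n$ and the exponents appearing in the $\theta_p$) and not on $c,c'$. This holds because $p$-satisfiability is witnessed modulo $p^L$ uniformly.
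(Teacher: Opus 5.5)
Your proof is correct after a small repair (below), but it organizes the sieve differently from the paper.

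The paper uses three ranges of primes:
\begin{enumerate}
\item a residue class modulo $D=\prod_{p\le B}p^{l'_p}$, with $B=\max(|k|,n,B_0)+1$, for the conditions $\theta_p$;
\item a Chinese-remainder count over $B<p\le M$, giving density $\frac1D\prod_{B<p}\bigl(1-\frac{n}{p^{l_p}}\bigr)$, which is positive since $B>n$;
\item for $p>M$, a count of multiples of $p^{l_p}$ in the whole interval between $c_i$ and $kt+c_i$.
\end{enumerate}
In the paper the tail $t\sum_{p>M}nk/p^2$ is not scaled by $1/D$, so $M$ must be large compared to the main term.

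You instead absorb all primes $p\le Q$ into the initial residue class. This realizes the full associated conditions $\psi_p$, including $v_p(ka+c_i)<2+v_p(m)$. You then count the bad $a$ \emph{inside} the progression, where coprimality of $p^2$ and $N$ makes the tail relative: $\frac tN\sum_{p>Q}n/p^2$. So the intermediate CRT range and the Euler-type product collapse into a single union bound.

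What each route buys:
\begin{itemize}
\item The paper's route gives a much larger $\varepsilon$, since its modulus involves only primes up to $\max(|k|,n,B_0)$. Yours gives a tiny $\varepsilon=1/(2N)$, which is harmless because only some $\varepsilon(\psi)>0$ is needed.
\item Your route is more elementary and lands exactly on the stated constants. The progression count $\ge t/N-1$ supplies the $-1$; the paper's CRT count carries a floor, and removing it costs an additive constant depending on $\psi$.
\item In your route the $P_m$-conditions at the small primes are visibly enforced by the residue class.
\end{itemize}

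The repair: take the condition on $Q$ to be simply $\sum_{p>Q}n/p^2\le\frac12$, together with $Q\ge 2$. The version with $\frac12\prod_{p\le Q}(1-n/p^2)$ can be unsatisfiable; for $n=4$ the factor at $p=2$ vanishes. You never use it anyway.

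Also, the degenerate case you flag as the main nuisance cannot occur. For $a\equiv r\pmod N$ you have $v_p(ka+c_i)<2+v_p(m)$ at every $p\le Q$, so $ka+c_i\neq 0$. The count is then directly
\[
|\Psi(t)|\ge \Bigl(\tfrac tN-1\Bigr)-\tfrac{t}{2N}-\sum_i\bigl(\sqrt{|c_i|}+\sqrt{|kt+c_i|}\bigr).
\]
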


\begin{rem}
So the refinement is that the choice of $\varepsilon$ depends only
on the formula $\psi(x,z,z')$, and not on the parameters $c,c'$. We remark
that no such choice is possible for general, not necessarily positive,
special formulas --- as the following example demonstrates\footnote{We thank  Chieu Minh Tran for pointing this out.}.
Take $\psi(x,z,z') := x+z\in P_{1}\land x+z'\notin P_{1}$. If an element
$a$ satisfies $\psi\left(x,c,c'\right)$, then in particular $a+c'$
must be divisible by $p^{2}$ for some prime $p$. Suppose $c'-c$
is a multiple of all squares of primes $<N$. Then such a $p$ as
above must be $>N$. But then the density of $\left\{ a \in \mathbb{Z} :\models\psi\left(a,c,c'\right)\right\} $
is at most $\sum_{p>N}\frac{1}{p^{2}}$, which can be arbitrarily
close to $0$ when $N$ is large.
\end{rem}

\begin{proof}
We follow closely the proof of \cite[Lemma 2.11]{tran} with $h=1$, $s=0$. Let a positive special formula 
$$\psi\left(x,z,z'\right) = \bigwedge_{p\leq B_{0}}\theta_{p}\left(x,z,z'\right)\land\bigwedge_{i=1}^{n}\left(kx+z_{i}\in P_{m}\right)$$
be fixed, and let $B:=\max\left(\left\{ \left|k\right|,n,B_{0}\right\} \right)+1$.
Let $l_{p}'$ be the largest value of $l$ such that the predicate
$U_{p,l}$ occurs in $\theta_{p}$. Let $D := \prod_{p\leq B}p^{l_{p}'}$.
Note that both $B$ and $D$ depend only on $\psi$. 

By the proof of \cite[Lemma 2.10]{tran} for any $c,c'$ such that
$\theta_{p}\left(\mathbb{Z},c,c'\right)$ is non-empty for all $p\leq B$,
there is some $r\in\left\{ 0,1,\ldots,D-1\right\} $ such that $a\equiv_{D}r$
implies $\models\bigwedge_{p\leq B}\theta_{p}\left(a,c,c'\right)$
for all $a\in\mathbb{Z}$.

For a prime $p$, we let $l_{p}:=2+v_{p}\left(m\right)$, and fix $M>B$ to be determined
later. 

Given some $c,c'$ as in the statement of the lemma, let $r$ be as
given by the previous paragraph. Define
\[
\Psi_{M}\left(t\right):=\left\{ a\in\mathbb{Z}:\left(0<a<t\right)\,\land\,\left(a\equiv_{D}r\right)\,\land\,\left(\bigwedge_{B<p\leq M}\bigwedge_{i=1}^{n}ka+c_{i}\not\equiv_{p^{l_{p}}}0\right)\right\} \text{.}
\]

First we establish a lower bound on $\left|\Psi_{M}\left(t\right)\right|$.
For a prime $p>B$, we have that $p>\left|k\right|$, so $k$ is invertible
$\md p^{l_{p}}$. Hence  there are at least $p^{l_{p}}-n$ choices
of $r_{p}\in\left\{ 1,\ldots,p^{l_{p}}\right\} $ such that: for all $a \in \mathbb{Z}$, if $a\equiv_{p^{l_{p}}}r_{p}$ 
then $\models\bigwedge_{i=1}^{n}\left(ka+c_{i}\not\equiv_{p^{l_{p}}}0\right)$.
As $\left\{ p^{l_{p}}:B<p\leq M\right\} \cup\left\{ D\right\} $ are
pairwise coprime, by the Chinese remainder theorem we have 
\[
\left|\Psi_{M}\left(t\right)\right|\geq\prod_{B<p\leq M}\left(p^{l_{p}}-n\right)\left\lfloor \frac{t}{D\prod_{B<p\leq M}p^{l_{p}}}\right\rfloor. 
\]
Namely, first we choose some $\left(r_{p}:B<p\leq M\right)$ as above, and
independently for each such fixed choice count the elements in 
$$\left\{ a\in\mathbb{Z}: 0 < a < t \land a\equiv_{D}r \land  \left(\bigwedge_{B<p\leq M}a\equiv_{p^{l_{p}}}r_{p}\right) \right\}.$$
This implies 
\[
\left|\Psi_{M}\left(t\right)\right|\geq\frac{t}{D}\prod_{B<p\leq M}\frac{p^{l_{p}}-n}{p^{l_{p}}}\geq\frac{t}{D}\prod_{B<p\leq M}\left(1-\frac{n}{p^{l_{p}}}\right)\geq\frac{t}{D}\prod_{B<p}\left(1-\frac{n}{p^{l_{p}}}\right)\text{.}
\]
Set $\varepsilon:=\frac{1}{2D}\prod_{B<p}\left(1-\frac{n}{p^{l_{p}}}\right) > \prod_{B < p} \left( 1 - \frac{1}{p^{3/2}} \right) >0$ (using Euler's product formula, as $l_p \geq 2$ for all $p$ by definition).
Then $\left|\Psi_{M}\left(t\right)\right|\geq2\varepsilon t$. 

We may assume $k>0$ (replacing $c$ by $-c$ and $c'$ by $-c'$ if necessary). If $a\in\Psi_{M}\left(t\right)\setminus\Psi\left(t\right)$,
then $0<a<t$, hence $c_{i}<ka+c_{i}<kt+c_{i}$ for all $i \in [n]$; and we must have $ka+c_{i} \notin P_m$ for some $i \in [n]$ (in particular $n \geq 1$), hence $ka+c_{i}$  is
a multiple of $p^{l_{p}}$ for some $p$ by definition of $P_m$, so necessarily for some $p>M$ by definition of $\Psi_M(t)$.

For each $p$ and $i \in [n]$, the number of non-zero multiples of $p^{l_p}$ in the interval $(c_i, kt+c_i)$ is $\lfloor ktp^{-l_p} \rfloor + \gamma$ for some $\gamma \in \{0,1,-1,-2\}$. If $\gamma = 1$, as $l_p \geq 2$ we moreover have $p^2 \leq |c_i|$ or $p^2 \leq |kt + c_i|$, so $p \leq \sqrt{|c_i|} + \sqrt{|kt + c_i|}$. As $l_p \geq 2$, we have $\lfloor ktp^{-l_p} \rfloor \leq k t p^{-2}$. Hence  we get 
\[
\left|\Psi_{M}\left(t\right)\setminus\Psi\left(t\right)\right|\leq t\sum_{p>M}\frac{nk}{p^{2}}+\sum_{i=1}^{n}\left(\sqrt{\left|c_{i}\right|}+\sqrt{\left|kt+c_{i}\right|}\right) + 1\text{.}
\]

Note that taking $M$ sufficiently large, depending only on $D,B,n$, and
hence only on $\psi$, we get $\sum_{p>M}\frac{nk}{p^{2}}<\varepsilon$.
Combining this with the lower bound estimate for $\left|\Psi_{M}\left(t\right)\right|$,
we get the desired conclusion.
\end{proof}

This gives a lower bound on the asymptotic density of the set of solutions of $\psi(x, c, c')$ which is independent of $c$ and $c'$. Using this we have:
\begin{cor}
\label{cor: basic Sqf formulas have FHP}Let $\psi\left(x,z,z'\right)$
be a positive special formula. Then there is some $\delta=\delta\left(\psi\right)>0$
such that: for any $n \in \mathbb{N}$ and any tuples $\left(c_{i},c_{i}': i \in [n] \right)$ with
$\psi\left(x,c_{i},c_{i}'\right)$ consistent for all $i$, there
is some $I\subseteq\left[n\right],\left|I\right|\geq\delta n$ such
that $\left\{ \psi\left(x,c_{i},c_{i}'\right):i\in I\right\} $ is
consistent.
\end{cor}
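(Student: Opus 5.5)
The plan is a simple averaging (pigeonhole) argument over an initial segment of $\mathbb{N}$, using the parameter-independent density bound of Lemma \ref{lem: density for positive special formulas}. I would first work in the standard model $M=(\mathbb{Z},\ldots)$, and at the end transfer the result to $\M$ by elementarity.

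\textbf{Step 1: density bound for each instance.} Let $\varepsilon=\varepsilon(\psi)>0$ be as given by Lemma \ref{lem: density for positive special formulas}, and set $\delta:=\varepsilon/2$. Fix $n$ and tuples $(c_i,c_i')_{i\in[n]}$ in $\mathbb{Z}$ such that each $\psi(x,c_i,c_i')$ is consistent. Since $M\prec\M$, each $\psi(x,c_i,c_i')$ then has a solution, so it is $p$-satisfiable for every prime $p$ by Definition \ref{def: basic square free notation}(5),(7). Hence Lemma \ref{lem: density for positive special formulas} applies to every $i$. It gives
\[
|\Psi_i(t)|\geq \varepsilon t-\sum_{j}\Bigl(\sqrt{|c_{i,j}|}+\sqrt{|kt+c_{i,j}|}\Bigr)-1 .
\]
Here $k$ and the length of $z$ depend only on $\psi$. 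For fixed parameters the error term is $O(\sqrt t)$. Since there are only finitely many $i$, we can choose $t$ large enough (depending on all the $c_i$) that $|\Psi_i(t)|\geq \tfrac{\varepsilon}{2}\cdot t$ holds for every $i\in[n]$ simultaneously.

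\textbf{Step 2: double counting.} Consider the set of pairs $(a,i)$ with $0<a<t$, $i\in[n]$ and $\models\psi(a,c_i,c_i')$. By Step 1 it has size at least $n\cdot\tfrac{\varepsilon}{2}t$. There are fewer than $t$ possible values of $a$, so by pigeonhole some $a\in(0,t)$ satisfies $\psi(a,c_i,c_i')$ for at least $\tfrac{\varepsilon}{2}n=\delta n$ indices $i$. Let $I$ be this set of indices. Then $a$ witnesses that $\{\psi(x,c_i,c_i'):i\in I\}$ is consistent.

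\textbf{Step 3: transfer to $\M$.} For each fixed $n$, the conclusion is expressible by one first-order sentence. It says: for all $z_1z_1'\ldots z_nz_n'$, if $\bigwedge_i\exists x\,\psi(x,z_i,z_i')$ holds, then the finite disjunction over $I\subseteq[n]$ with $|I|\geq\delta n$ of $\exists x\bigwedge_{i\in I}\psi(x,z_i,z_i')$ holds. This sentence is true in $M$ by Steps 1 and 2, so it is true in $\M$ as well.

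The only delicate point is Step 1. The argument needs a $\delta$ that is uniform in the parameters, and this is exactly what the refinement in Lemma \ref{lem: density for positive special formulas} provides. By contrast, the size of $t$ may depend on the parameters, since it is just a device for computing densities. No other obstacle is expected.
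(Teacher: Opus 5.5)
Your proof is correct and is essentially the paper's argument. The paper takes the ultralimit density measure $\mu(X)=\lim_{n\to\mathcal{U}}|X\cap[-n,n]|/(2n+1)$ and uses Lemma \ref{lem: density for positive special formulas} to get $\mu(\psi(\mathbb{Z};c_i,c_i'))\geq\delta=\varepsilon/2$ for each $i$. It then concludes by Kelley's criterion (Fact \ref{fact: Kelley}(1)), which is exactly your double count on $(0,t)$ in measure-theoretic form. Your explicit transfer to $\M$ in Step 3 is a correct addition that the paper leaves implicit, since it works with parameters in $\mathbb{Z}$.
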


\begin{proof}
Let $\delta:=\frac{\varepsilon}{2}>0$ with $\varepsilon$ given by
Lemma \ref{lem: density for positive special formulas} for $\psi$.
Let $\mu$ be a finitely additive probability measure on $\mathcal{P}\left(\mathbb{Z}\right)$
defined, for all $X \subseteq \mathbb{Z}$, via $\mu\left(X\right):=\lim_{n \to \mathcal{U}}\frac{\left|X\cap\left[-n,n\right]\right|}{2n+1}$
for some fixed non-principal ultrafilter $\mathcal{U}$ on $\mathbb{N}$. Let $n$ and $\left(c_{i},c_{i}': i \in [n] \right)$
be as in the statement. By Lemma \ref{lem: density for positive special formulas},
we have $\mu\left(\psi\left(\mathbb{Z};c_{i},c_{i}'\right)\right)\geq\delta$
for each $i \in [n]$. Then, by Fact \ref{fact: Kelley}(1), there exists $I \subseteq [n]$ with $|I| \geq \delta n$ so that $\bigcap_{i \in I}  \psi\left(\mathbb{Z};c_{i},c_{i}'\right) \neq \emptyset$.
\end{proof}

\begin{thm}
\label{thm: Sqf is FHP}The theory $T_{\Sqf}$ is $\FHP$ (in particular $\left(\mathbb{Z},+,\Sqf\right)$ is $\FHP$, as a reduct). Moreover, every formula $\varphi(x,y)$ with $|x| \leq d$ satisfies $\FHP_{d+1}$.
\end{thm}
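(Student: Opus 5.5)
The plan is to reduce, via quantifier elimination (Fact \ref{fact: QE for Sqf}) and Fact \ref{fac: QF reduces to G-systems}, to families defined by $G$-systems. Consistency of finitely many such systems is then decided by the local-to-global principle (Fact \ref{fact: G-system local to global consistency}), and the uniform density bound for positive special formulas (Lemma \ref{lem: density for positive special formulas}, Corollary \ref{cor: basic Sqf formulas have FHP}) supplies the large consistent subfamily through Kelley's criterion. The first claim follows from the second, so it suffices to prove $\FHP_{d+1}$ for $|x| \leq d$, and the reduct statement is Lemma \ref{lem: Basic operations preserving FH}(4). I would not use Lemma \ref{lem: Basic operations preserving FH}(3), since it only gives $\FHP_{2^d}$. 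I would also avoid the disjunction bound of Lemma \ref{lem: Basic operations preserving FH}(2) wherever possible, since it loses in the Helly number.

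First I treat $|x|=1$, aiming at $\FHP_2$. Write $\varphi(x,y)$ as a finite disjunction of $\rho(y)\land\varepsilon(x,y)\land\psi(x,t(y),t'(y))$. Given $b_1,\dots,b_n$ with $\alpha\binom n2$ consistent pairs, I pigeonhole twice. First, by Lemma \ref{lem: Furedi repetitions}, for a positive fraction of the consistent pairs the witnessing disjuncts have a fixed pair of indices. Second, I split according to whether the equational part contains a genuine equation $kx=s(y)$.

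If it does, the corresponding set has at most one element, and the argument of Lemma \ref{lem: FHP for finite sets} finds a point lying in $\gtrsim\alpha n$ of the sets.

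Otherwise each relevant $\varphi(\M,b_i)$ contains an infinite set defined by a satisfiable nontrivial $G$-system $\psi(x,c_i,c_i')$ minus finitely many points. I then argue as follows.
\begin{enumerate}
\item The positive parts $\psi^+(x,c_i)$ all have Banach density $\geq\varepsilon(\psi)$, so by Corollary \ref{cor: basic Sqf formulas have FHP} a $\delta$-fraction $I$ of them is jointly consistent.
\item By the argument of Lemma \ref{lem: density for positive special formulas}, the combined positive system $\bigwedge_{i\in I}\psi^+(x,c_i)$ is $p$-satisfiable for all $p$.
\item I then add back the negative conjuncts $kx+c'_j\notin P_m$ and check that the full conjunction is still $p$-satisfiable and nontrivial, so that Fact \ref{fact: G-system local to global consistency} gives a common solution. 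Here $p$-satisfiability can be obtained at a single large prime for each negative conjunct. Nontriviality ($c_i\neq c'_j$ across different indices) is a combinatorial constraint. I would enforce it by a further pigeonhole or Turán-type thinning of $I$, using that each $c'_j$ can collide with at most $|z|$ coordinates of a given $c_i$. The excluded finitely many points are harmless because the solution set is infinite.
\end{enumerate}

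For $|x|=d$, I would induct on $d$ in the spirit of Lemma \ref{lem: Basic operations preserving FH}(3), but with a sharper bookkeeping. The point is to show that, after fixing a generic value of the first coordinate, the fiber family is again of the singleton type just handled, and that the "projection" family only needs the $(d)$-wise condition. The target would be to trade one unit of Helly number per coordinate, so that the numbers add ($d$ from the projection plus $1$ from the fiber) rather than multiply. Concretely, from $\alpha\binom{n}{d+1}$ consistent $(d+1)$-tuples I would extract via Fact \ref{fact: furedi} a positive fraction of $d$-tuples whose projections are consistent. I would then use the SU-rank-$1$ picture (each coordinate either algebraic or generic over the others) to see that the fiber over a generic point is a one-variable family satisfying the density hypotheses uniformly.

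The main obstacle is exactly this last step: obtaining $d+1$ rather than $2^d$. It requires a multi-variable analogue of Lemma \ref{lem: density for positive special formulas}, namely a uniform lower bound on the density of solution sets of positive systems in $d$ variables, together with a case split on which coordinates satisfy linear equations over the others. What I would try first is to prove such a $d$-variable density lemma by iterating the one-variable Chinese-remainder count coordinatewise. With that in hand, I would run the measure/Kelley argument directly in $\mathbb{Z}^d$ with the product Banach-density measure. That would give $\FHP_2$ for the nondegenerate parts, and the Helly number $d+1$ would come only from the degenerate pieces contained in affine subspaces, handled by induction on dimension.
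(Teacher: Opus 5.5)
\textbf{Main gap: the several-variable case.} The gap is exactly the step you flag as the main obstacle. Because you set aside Lemma \ref{lem: Basic operations preserving FH}(3), your proposal as written proves neither $\FHP_{d+1}$ nor even qualitative $\FHP$ for $|x|\geq 2$. In its place you outline a program: a $d$-variable density lemma, a product Banach-density measure on $\mathbb{Z}^d$, and induction over affine pieces. Nothing available here supports it. The local-to-global principle (Fact \ref{fact: G-system local to global consistency}) that handles the negative conjuncts is a statement about $G$-systems in a single variable. The remark after Lemma \ref{lem: density for positive special formulas} shows that solution sets of non-positive systems have no uniform lower density bound, so a pure measure argument in $\mathbb{Z}^d$ cannot work.

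The missing idea is that no such bookkeeping is needed, because the Helly number comes for free from burden:
\begin{enumerate}
\item Get qualitative $\FHP$ for all formulas from the one-variable case via Lemma \ref{lem: Basic operations preserving FH}(3). Any Helly number will do, including the lossy one from Lemma \ref{lem: Basic operations preserving FH}(2).
\item Apply Corollary \ref{cor: FHPk bounded by burden}: in an $\FHP$ theory, the fractional Helly number of $\varphi(x,y)$ is at most $\bdn(\M^x)+1$.
\item Since $T_{\Sqf}$ is supersimple of SU-rank $1$ (Fact \ref{fact: QE for Sqf}), burden is sub-additive (Fact \ref{fac: submult of burden}(3)), so $\bdn(\M^d)=d$. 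This gives $\FHP_{d+1}$.
\end{enumerate}
This is the paper's route. It also means you do not need to fight for $\FHP_2$ by hand in one variable, nor avoid the loss in Lemma \ref{lem: Basic operations preserving FH}(2).

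\textbf{Two points in the one-variable argument.} Your argument there is essentially the paper's: positive parts via Corollary \ref{cor: basic Sqf formulas have FHP}, then local-to-global, with the finitely many excluded points harmless. Two points need fixing.

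First, the nontriviality step is not justified as stated. Write $A_i$ and $B_i$ for the sets of entries of $c_i$ and $c'_i$. The bound ``each $c'_j$ collides with at most $|z|$ coordinates of $c_i$'' does not make the conflict graph sparse. For example, one element may occur in $c_i$ for half the indices and in $c'_j$ for the other half. The conflict graph is then dense, and Tur\'an-type thinning gives nothing.

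What works is the paper's argument:
\begin{enumerate}
\item Consistency of $\psi(x,c_i,c'_i)$ forces $A_i\cap B_i=\emptyset$.
\item Apply Lemma \ref{lem: Furedi repetitions} to $C_i=A_i\sqcup B_i$.
\item Pigeonhole on which of the parts $X_1,\dots,X_k$ carry $A_i$.
\end{enumerate}
This yields a positive fraction of indices with $A_i\cap B_j=\emptyset$ for all $i,j$ in it. A random red/blue colouring of elements does the same job.

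Second, the negative conjuncts need no prime-by-prime treatment. By definition, the associated $p$-condition $\psi_p$ involves only $\theta_p$ and the positive conjuncts. So $p$-satisfiability of the full system follows immediately from the joint consistency of the positive parts.
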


\begin{proof}
To show $\FHP$, by Lemma \ref{lem: Basic operations preserving FH} it suffices to show that every partitioned formula $\varphi(x,y) \in L$ with $|x|=1$ has $\FHP$. 
By quantifier elimination  (Fact \ref{fact: QE for Sqf}) we may assume $\varphi(x,y)$ is quantifier-free. Then, using that
the set of formulas satisfying $\FHP$ is closed under disjunctions (Lemma \ref{lem: Basic operations preserving FH}(2)), by Fact \ref{fac: QF reduces to G-systems} we may assume that $\varphi(x,y)$ is of the form 
\begin{gather*}
	\rho(y) \land \varepsilon(x,y) \land \psi(x, t(y), t'(y)),
\end{gather*}
where $\rho(y)$ is a quantifier-free $L$-formula, $\varepsilon(x,y)$ an equational condition, $\psi(x,z,z')$ a special $L$-formula with $s := |z|, s' := |z'|$, i.e.
$$\psi\left(x,z,z'\right)=\bigwedge_{p \in S}\theta_{p}\left(x,z,z'\right)\land\bigwedge_{i=1}^{s}\left(kx+z_{i}\in P_{m}\right)\land\bigwedge_{i'=1}^{s'}\left(kx+z_{i}'\notin P_{m}\right)$$
where $S$ is a finite set of primes, and $t(y),t'(y)$ are tuples of $L$-terms of length $s$ and $s'$, respectively.

Then, by Remark \ref{rem: FHP pres by func/1-var conj}, it is  sufficient to show that every formula of the form 
$$ \varphi(x; \bar{z}) = \varepsilon(x,z'') \land \psi(x, z, z') $$
with $\bar{z} = (z, z', z'')$ is $\FHP$. Again using that $\FHP$ is closed under disjunctions, we may assume that $\varepsilon(x,z'') = \bigwedge_{i=1}^{v}  \left( t_i(x,z'') \square_i 0 \right)$ with $v \in \mathbb{N}$, $\square_i \in \{=, \neq\}$ and $t_i$ an $L$-term, i.e.~a fixed $\mathbb{Z}$-linear combination of the variables from $x,z''$.

By Fact \ref{fact: QE for Sqf}, Fact \ref{fact: wnfcp} and Remark \ref{rem: around wnfcp}, $T_{\Sqf}$ eliminates $\exists^{\infty}$. That is, there is some $d = d( \varphi(x, \bar{z})) \in\mathbb{N}$  such that for any
$\bar{c} \in M^{\bar{z}}$, $\varphi\left(M,\bar{c}\right)$ is finite if and only if
$\left|\varphi\left(M,\bar{c}\right)\right|\leq d$. As the set of formulas
with $\FHP$ is closed under disjunctions, and $\varphi\left(x,\bar{z}\right)=\left(\varphi\left(x,\bar{z}\right)\land\left|\varphi\left(x,\bar{z}\right)\right|\leq d\right)\lor\left(\varphi\left(x,\bar{z}\right)\land\left|\varphi\left(x,\bar{z}\right)\right|=\infty\right)$,
we may treat each of these two disjuncts separately. Now the formula 
$\varphi\left(x,\bar{z}\right)\land\left|\varphi\left(x,\bar{z}\right)\right|\leq d$
satisfies $\FHP_{2}$ by Lemma \ref{lem: FHP for finite sets}, and we show
that $\varphi\left(x,\bar{z}\right)\land\left|\varphi\left(x,\bar{z}\right)\right|=\infty$
satisfies $\FHP_{1}$.

So fix $\alpha>0$, and assume that we are given some $\left(\bar{c}_{i}:i \in [n] \right)$,
$\bar{c}_{i}=c_{i}c_{i}'c_{i}''$ such that $\varphi\left(x,\bar{c}_{i}\right)\land\left|\varphi\left(x,\bar{c}_{i}\right)\right|=\infty$
holds for all $i\in I_{0}$ for some $I_{0}\subseteq\left[n\right]$
with $\left|I_{0}\right|\geq\alpha n$. In particular, $\varphi\left(M,\bar{c}_{i}\right)$
is infinite for all $i\in I_{0}$.

For each $i \in [n]$, let $A_i$ be the set of all elements
appearing in the tuple $c_{i}$ and $B_i$ the set of all elements
appearing in the tuple $c_{i}'$. Let $C_i := A_i \cup B_i$. Note that for $i \in I_0$,  an element of $C_{i}$ cannot simultaneously occur in both tuples $c_{i}$
and $c_{i}'$ as $\psi\left(x,c_{i},c_{i}'\right)$ is consistent, so $C_i = A_i \sqcup B_i$. As $|A_i| \leq s$ and $|B_i| \leq s'$ for all $i \in I_0$,  there is some $I_{1}\subseteq I_{0}$ with $\left|I_{1}\right|\geq\frac{1}{s s'}\left|I_{0}\right|\geq\frac{\alpha}{s s'}n$ and $r \leq s, r' \leq s'$ so that $|A_i| = r, |B_i|=r', |C_i| = k := r + r'$ for all $i \in I_1$.

%
%

%
%

Applying Lemma \ref{lem: Furedi repetitions} to the sequence 
$\left(C_{i}:i\in I_{1}\right)$ of $k$-element subsets of $M$, there is some $\gamma=\gamma\left( k \right) = \gamma(s,s')>0$,
$I_{2}\subseteq I_{1}$ with $\left|I_{2}\right|\geq\gamma\left|I_{1}\right|\geq\frac{\alpha\gamma}{ss'}n$
and some pairwise disjoint subsets  $X_{1},\ldots,X_{k}$ of $M$ such
that $\left|C_{i}\cap X_{j}\right|=1$ for all $i\in I_{2}, j \in [k]$. In particular, there is some $J_i \subseteq [k]$ with $|J_i| = r_1$ so that $A_i \subseteq \bigcup_{j \in J_i} X_j$ and $B_i \subseteq \bigcup_{j \in [k] \setminus J_i} X_j$. By pigeonhole we can fix $J_i$, and possibly permuting the index sets $[k]$, 
we find some $I_{3}\subseteq I_{2}$ with $\left|I_{3}\right|\geq\frac{1}{{k \choose r_1}}\left|I_{2}\right|\geq\frac{\alpha\gamma}{ss'(s+s')^{s}}n$
such that for each $i\in I_{3}$, we have $A_i \subseteq X_1 \cup \ldots \cup X_{r_1}$ and $B_i \subseteq X_{r_1 + 1} \cup \ldots \cup X_{k}$. In particular, as $X_1, \ldots, X_k$ are pairwise disjoint, for all for all $i,j \in I_3$ we have $A_i \cap B_j = \emptyset$, so the tuples  $c_i$ and $c'_j$ do not have any  common elements.

Now consider the positive special formula 
\[
\psi'\left(x,z,z'\right) := \bigwedge_{p \in S }\theta_{p}\left(x,z,z'\right)\land\bigwedge_{i=1}^{s}\left(kx+z_{i}\in P_{m}\right)\text{.}
\]
As $\psi'\left(x,c_{i},c_{i}'\right)$ is consistent for all $i\in I_{3}$,
by Corollary \ref{cor: basic Sqf formulas have FHP} there is some
$\delta=\delta\left(\psi'\right) = \delta( \psi)>0$ and some $J\subseteq I_{3},\left|J\right|\geq\delta\left|I_{3}\right|\geq \frac{\alpha\gamma \delta}{ss'(s+s')^{s}} n$
such that the set $\left\{ \psi'\left(x,c_{i},c_{i}'\right):i\in J\right\} $
is consistent.

Now consider the $G$-system $\widetilde{\psi}(x; \widetilde{c}, \widetilde{c}') := \bigwedge_{i\in J}\psi\left(x,c_{i},c'_{i}\right)$, where $\widetilde{c} = (c_i : i \in J)$, $\widetilde{c}' = (c'_i : i \in J)$ (and $c_i = (c_{i,j} : j \in [s])$, $c'_i = (c'_{i,j} : j \in [s'])$). This $G$-system is non-trivial by the choice of $I_{3}$. For any prime $p$, its associated $p$-condition $\widetilde{\psi}_p$
is 
\[
\bigwedge_{i\in J}\left(\theta_{p}\left(x,c_{i},c_{i}'\right)\land\bigwedge_{j=1}^{s}\left(kx+c_{i,j}\notin U_{p,2+v_{p}\left(m\right)}\right)\right),
\]
and it is consistent as $\left\{ \psi'\left(x,c_{i},c_{i}'\right):i\in J\right\} $
is consistent. Hence the whole $G$-system $\bigwedge_{i\in J}\psi\left(x,c_{i},c'_{i}\right)$
has infinitely many solutions by Fact \ref{fact: G-system local to global consistency}. 

As $\varphi\left(x,\bar{c}_{j}\right)$ is infinite for all $j\in J$ by assumption,
$\varepsilon \left(x,c''_{j}\right)$ is also infinite. As each formula $t_i(x,c''_j) = 0$ has at most one realization, we must have  $\square_i =$``$\neq$'' for all $i \in [v]$.

As $\bigwedge_{i\in J}\psi\left(x,c_{i},c'_{i}\right)$
is infinite, it contains infinitely many realizations outside of the finite set $\bigcup_{i \in [v]}\bigcup_{j\in J}\left\{ a\in\mathbb{Z}:t_i\left(a,c_{j}''\right)=0\right\} $.
Hence $\left\{ \varphi\left(x,\bar{c}_{j}\right):j\in J\right\} $ has infinitely many realizations. Thus we have shown that the formula $\varphi\left(x,\bar{z}\right)\land\left|\varphi\left(x,\bar{z}\right)\right|=\infty$
satisfies $\FHP(1, \alpha,\beta)$ with $\beta:=\frac{\alpha\gamma \delta}{ss'(s+s')^{s}}>0$ chosen depending only on $\varphi$ and $\alpha$.

For the moreover part, we have that $T_{\Sqf}$ is simple, of SU rank $1$ by Fact  \ref{fact: QE for Sqf}. Hence, using Fact \ref{fac: submult of burden}(3), $\bdn(\M^1) = 1$ and $\bdn(\M^d) = d$ for all $d \in \mathbb{N}$, and we conclude by Corollary \ref{cor: FHPk bounded by burden}.
\end{proof}

\begin{problem}
We expect that using the results in \cite{tran} this proof generalizes
to show that $\left(\mathbb{Q},+,\Sqf,<\right)$ is FHP.
\end{problem}

\begin{problem}
Is $T_{\Sqf}$ MS-measurable? (See Definition \ref{def: MS meas structure}.)
\end{problem}

\subsection{$\left(\mathbb{Z},+,\Pr\right)$}\label{subsec: T_pr}

Let $T_{\Pr}:=\Th_{\mathcal{L}}\left(\mathbb{Z},+,\Pr\right)$ in the language 
$$\mathcal{L}=\left(+,-,0,1,\Pr,\Pr_{n},P_{n}:2\leq n<\omega\right),$$
where $\Pr$ is a predicate for the primes and their additive inverses, $P_{n}\left(x\right)\iff x\equiv0\left(\md n\right)$,
$\Pr_{n}\left(x\right)\iff P_{n}\left(x\right)\land\Pr\left(\frac{x}{n}\right)$. 

\begin{prop}\label{prop: Primes not FHP}
$T_{\Pr}$ is not $\FHP$.
\end{prop}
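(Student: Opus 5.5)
The plan is to exhibit a single formula with $|x|=1$ that fails $\FHP_k$ for every $k$. I would take $\varphi(x;y):=\Pr(x+y)$, so $\varphi(\mathbb{Z},h)=\Pr-h$ is a translate of the primes. This is a reduct-level statement, and by the Remark after the definition of $\FHP$ for formulas it suffices to work in $(\mathbb{Z},+,\Pr)$ itself. Given $k$, I will find $\alpha>0$ and, for every $\beta>0$, a finite tuple of shifts $h_1,\dots,h_n$ with two properties. First, a proportion $\geq\alpha$ of the $k$-subsets $\{h_i:i\in I\}$ admit some $a$ with all $a+h_i$ prime. Second, no single $a$ makes $a+h_i$ prime for $\geq\beta n$ indices $i$. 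Heuristically, the primes have density $0$, so a single translate point can catch only $o(n)$ of the shifts, while local-to-global results for prime tuples supply many intersecting $k$-subfamilies.

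For the lower bound (many consistent $k$-subsets) I would avoid Dickson's conjecture and use the Maynard--Tao theorem. For every $m$ there is $K=K(m)$ such that every admissible $K$-tuple of shifts has at least $m$ of its translates simultaneously prime for infinitely many $a$. Taking $m=k$, if $\{h_1,\dots,h_n\}$ is admissible then every $K$-subset is admissible. Hence every $K$-subfamily of $(\Pr-h_i)_{i\in[n]}$ contains an intersecting $k$-subfamily, i.e.\ the family has the $(K,k)$-property. The double counting from the proof of Proposition~\ref{prop: FHP implies WFHP} then yields $|\Cons_k|\geq\alpha\binom{n}{k}$ with $\alpha=\alpha(K,k)>0$, independent of $n$. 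To get an admissible set of arbitrary size $n$, I would take the $h_i$ to be the $n$ primes in an interval $(H,H']$ with $H>n$. For any prime $p\leq n$ these shifts avoid the residue $0\bmod p$, and for $p>n$ there are too few of them to cover all residues, so the set is admissible.

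For the upper bound I need that for every $a$, $|\{i:a+h_i\in\Pr\}|=o(n)$ uniformly in $a$. This is the main obstacle. With the shifts above (primes in an interval of length $\sim H$), Brun--Titchmarsh only gives a bound of order $H/\log H$, which is comparable to $n$ and not $o(n)$. So I would refine the choice of shifts. I would pick them sparse inside a fixed residue class modulo a large primorial-type modulus $Q$, for instance $h_i\equiv 1\bmod Q$ with $h_i$ prime, which keeps the set admissible. I would then bound the number of primes among $a+h_i$ by a Selberg upper-bound sieve applied to the pairs $(h_i,a+h_i)$ simultaneously prime. The target is a saving of an extra factor $1/\log$ compared with the count $n$ of the $h_i$. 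If this two-dimensional sieve bound turns out to be unavailable uniformly in $a$, my fallback is to pass to a conjunction formula such as $\Pr(x+y_1)\land\Pr(x+y_2)$ and encode the shifts so that joint primality forces an additional independent sieve condition, or to accept Dickson's conjecture, as is done for the local result. Either way, the conclusion is that no $\beta n$-subfamily is consistent for large $n$, so $\varphi$ is not $\FHP_k$. Since $k$ was arbitrary, $\varphi$ is not $\FHP$, and so $T_{\Pr}$ is not $\FHP$.
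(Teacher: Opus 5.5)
Your lower-bound step is fine, but the upper-bound step fails for a structural reason, not a technical one.

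Your derivation of the $(K,k)$-property needs the whole shift set $\{h_1,\dots,h_n\}$ to be admissible. Both of your choices have this property: primes in $(H,H']$ with $H>n$, and primes $\equiv 1 \bmod Q$. But for an admissible set, Dickson's conjecture (Conjecture \ref{conj: Dickson} with $f_i(x)=x+h_i$) predicts infinitely many $a$ with \emph{all} $a+h_i$ prime. Then the entire family $(\Pr-h_i)_{i\in[n]}$ has a common point and witnesses nothing. So the bound ``no $a$ lies in $\geq\beta n$ of the translates'' cannot be proved unconditionally for such shifts. Your fallback of assuming Dickson's conjecture refutes that bound rather than proving it.

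The sieve plan also breaks on its own terms. If $a$ is divisible by every prime up to the sieving level, then $h$ and $a+h$ have the same small prime divisors. The two-dimensional sieve then gives no saving over the one-dimensional count, so no bound uniform in $a$ can come out of it. The density-zero heuristic only yields $o(n)$ hits when the $n$ shifts fill a positive proportion of a window. Your shifts have density about $1/\log H$ in theirs, which is exactly the failure you noticed.

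The missing idea is to decouple the lower bound from global admissibility and take a maximally non-admissible shift set: consecutive integers $h_i=i$, $i\in[n]$.

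\textbf{Upper bound.} This is now just upper Banach density $0$ of $\Pr$. Any interval of length $n$ contains $o(n)$ elements of $\Pr$, uniformly in its position (Brun--Titchmarsh, or an elementary sieve, applied to $\pm$ primes). So no $a$ lies in $\beta n$ of the translates once $n$ is large.

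\textbf{Lower bound.} Either quote Maynard's positive-proportion theorem for arbitrary sets of distinct integers, as the paper does, or derive it from your Maynard--Tao input. With $Q:=\prod_{p\leq K}p$, any $KQ$ distinct integers contain $K$ in a single residue class mod $Q$, and these form an admissible $K$-tuple. So the translates have the $(KQ,k)$-property, and double counting gives $|\Cons_k|\geq\binom{n}{k}/\binom{KQ}{k}$ for any $n$ distinct shifts.

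This is essentially the paper's proof unwound. The paper uses Maynard to see that $\Pr$ does not $G$-divide in $G=(\mathbb{Z},+)$. It then applies Theorem \ref{thm: generics in FHP}, where FHP plus amenability turns non-$G$-dividing into positive mass for some $G$-invariant measure. Banach density $0$ contradicts this. The averaging over $\mathbb{Z}$ in that argument is where long intervals, rather than admissible sets, implicitly enter.
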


\begin{proof}
The $\emptyset$-definable set $\Pr$ does not $G$-divide in the group $G := (\mathbb{Z},+)$. This is a direct consequence of the following theorem of Maynard:
\begin{fact}\cite[Theorem 1.2]{maynard2015small}
	For every $k \in \mathbb{N}$ there exist $N = N(k) \in \mathbb{N}$ and $C = C(k) >0$ satisfying the following. If $A = \{a_1, \ldots, a_N\} \subseteq \mathbb{Z}$ is any set of $N$ distinct integers, then 
	\begin{gather*}
		\left \lvert \left\{ \{h_1, \ldots, h_k\} \subseteq A : \exists^{\infty} n \in \mathbb{Z} \bigwedge_{i=1}^k (n+h_i \textrm{ is prime}) \right\} \right \rvert \geq C \left \lvert \left\{ \{h_1, \ldots, h_k\} \subseteq A \right\} \right \rvert
	\end{gather*}
\end{fact}

\noindent This implies that for any $k \in \mathbb{N}$, there is no (indiscernible) sequence $(g_i)_{i < \omega}$ with $g_i \in G(\M)$ so that $\{g_i + \Pr(\M) : i \in \omega\}$ is $k$-inconsistent.

Assume that $T_{\Pr}$ was FHP. The group $G:=\left(\mathbb{Z},+\right)$
is amenable. It is well-known that the set of primes has
upper Banach density $0$, which implies that $\mu\left(\Pr\right)=0$
for any $G$-invariant finitely additive probability measure $\mu$ on $\mathcal{P}\left(G\right)$
(e.g. see \cite[Section 2.2]{MR3361013}). This contradicts Theorem
\ref{thm: generics in FHP}.
\end{proof}

\begin{rem}\label{rem: square free positive density gen}
	We note the crucial difference with the case of square-free integers. Let $M =\left(\mathbb{Z},+,\Sqf\right)$ be as in Section \ref{sec: square free is FHP}, let $G(M) := (\mathbb{Z},+)$.  By Fact \ref{fact: QE for Sqf} its theory $\Th(M)$ is supersimple, of SU rank $1$, and by Theorem \ref{thm: Sqf is FHP} it is $\FHP$. As classically $\Sqf$ has upper Banach density $\frac{\pi}{6}$, we have $\mu(\Sqf(M)) \geq \frac{\pi}{6}$ for some  left-$G(M)$-invariant finitely additive probability measure on $\mathcal{P}(\mathbb{Z})$, hence the definable set $\Sqf$ is non-null in $G = G(\M)$ (see Section \ref{sec: notions of genericity}).

	But $\Sqf$ is
not weakly generic. We use the standard argument that it is not piecewise syndetic. For  $(p_n)_{n \in \omega}$ an increasing enumeration of the primes, let $r_n := p_0^2 \cdot \ldots \cdot p_n^2$. For any $A = \{a_0, \ldots, a_n\} \subset \mathbb{Z}$, by the Chinese Remainder Theorem every interval in $\mathbb{Z}$ of length $r_n$ contains a solution to the system $\{x \equiv a_i (\mod p_i^2) : 0 \leq i \leq n \}$, so $G(M) \setminus (A \cdot \Sqf(M))$ is generic. As $M \prec \M$, it follows that for every finite $A \subseteq G(\M)$, the set $G(\M) \setminus (A \cdot \Sqf(\M))$ is generic in $G(\M)$ --- hence $\Sqf(\M)$ is not weakly generic.
\end{rem}

The following is a very strong number theoretic conjecture of Dickson:
\begin{conjecture}(Dickson, 1904 \cite{dickson1904new})\label{conj: Dickson}
	Let $k \in \mathbb{N}_{\geq 1}$ and $\bar{f} = (f_i : i < k)$ where $f_i(x) = a_i x + b_i$ with $a_i, b_i$ non-negative integers and $a_i \geq 1$ for all $i<k$. Assume that there does not exist any integer $n \geq 1$ simultaneously dividing all of the products $\prod_{i < k} f_i(s)$ for every non-negative integer $s$. Then there exist infinitely many natural numbers $m$ such that $f_i(m)$ is prime for all $i<k$.
\end{conjecture}

\begin{fact}
\cite{kaplan2017decidability}\label{fact: Primes} Assuming Dickson's
conjecture, $T_{\Pr}$ eliminates quantifiers and is supersimple,
of $\SU$-rank $1$.
\end{fact}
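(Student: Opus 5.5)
The statement is quoted as a Fact from \cite{kaplan2017decidability}, but if I had to prove it, I would first prove quantifier elimination conditional on Dickson's conjecture. Then I would read off SU-rank $1$ by showing that in one variable, forking is the same as algebraicity. Supersimplicity of rank $1$ then follows by the standard rank computation.

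\textbf{Quantifier elimination.} It suffices to eliminate $\exists x$ from a conjunction of literals in a single variable $x$. After clearing coefficients (using the predicates $P_n$ and $\Pr_n$ to absorb divisibility), each literal has one of the forms $kx=t(y)$, $kx+t(y)\neq 0$, $P_n(kx+t(y))$ or its negation, $\Pr(k_ix+t_i(y))$, and $\neg\Pr(k'_jx+t'_j(y))$.
\begin{enumerate}
\item If an equation $kx=t(y)$ occurs, I substitute for $x$; the condition ``$t(y)$ is divisible by $k$'' is $P_k(t(y))$, and dividing by $k$ is handled by $\Pr_k$.
\item Otherwise, the congruences and finitely many disequations define a union of residue classes modulo some $N$ with finitely many points removed. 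The main claim is that the positive primality conditions have infinitely many solutions inside a given residue class exactly when there is no local obstruction.
\item A local obstruction at a prime $p$ means that every residue of $x$ mod $p$ makes some $k_ix+t_i\equiv 0 \pmod p$ (except where the value itself is $\pm p$, which happens for only finitely many $x$).
\item For $p$ larger than the number of literals and not dividing any $k_i$ or $N$, each literal kills at most one residue, so there is no obstruction. Hence only a finite, formula-determined set of primes matters.
\item At those primes, the obstruction depends only on the residues of the $t_i(y)$ modulo a bounded modulus, which is quantifier-free expressible using the $P_n$.
\item Given no obstruction, Dickson's conjecture (after an affine change of variable to make coefficients nonnegative) yields infinitely many $x$ satisfying the positive part.
\end{enumerate}

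\textbf{Negative primality literals.} To satisfy each $\neg\Pr(k'_jx+t'_j)$ as well, I would pick a fresh large prime $q_j$ and impose the additional congruence $k'_jx+t'_j\equiv 0 \pmod{q_j}$. If $k'_jx+t'_j$ is not a rational multiple of some positive term, a large enough $q_j$ creates no new local obstruction. If it is a rational multiple, the literal is either trivially inconsistent or automatically satisfied, which is again quantifier-free in $y$. Dickson then gives infinitely many solutions, and all but finitely many have $k'_jx+t'_j\neq\pm q_j$, hence non-prime. This proves the equivalence in $\mathbb{Z}$ uniformly, and therefore in $T_{\Pr}$.

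\textbf{SU-rank $1$.} Let $\varphi(x,b)$ be infinite and quantifier-free with $|x|=1$. By the above, it contains a non-obstructed system $\sigma(x,b)$, which I may assume has no equations. Let $(b_i)$ be an $A$-indiscernible sequence with $b_0=b$. The residues of the $t(b_i)$ modulo each fixed bounded modulus are part of $\tp(b_i/A)$, so they are constant in $i$. Consequently any finite conjunction $\bigwedge_i\sigma(x,b_i)$ has the same local data at the finitely many small primes as $\sigma(x,b)$. At large primes it is unobstructed by the counting in step (4), with the number of literals now larger but still finite. So Dickson gives a solution, and $\varphi(x,b)$ does not divide over $A$. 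Since forking equals dividing here, forking in one variable is the same as algebraicity. This gives SU-rank $1$ and supersimplicity.

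\textbf{Main obstacle.} I expect the hardest part to be the bookkeeping in the local-obstruction analysis. One must check that the set of relevant primes and moduli is bounded by the formula alone, uniformly in $y$ and in the number of conjugates. One must also handle degenerate coincidences: terms that are proportional, or terms whose values equal the small primes themselves.
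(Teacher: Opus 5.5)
The paper gives no proof of this statement. It is quoted from Kaplan--Shelah \cite{kaplan2017decidability}, and your outline follows essentially their route:
\begin{itemize}
\item Dickson's conjecture serves as a local--global principle for the positive literals. Only finitely many formula-determined primes can obstruct, and an obstruction at $p$ forces $k_ix+t_i=\pm p$.
\item Auxiliary large primes and the Chinese remainder theorem make the negative literals composite.
\item Infinite one-variable sets do not divide, because the residues of $t(b_i)$ modulo every $n$ are constant along an indiscernible sequence.
\end{itemize}
This is also how the paper uses \cite{kaplan2017decidability} in the proof of Claim \ref{cla: primes loc FHP 1}.

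There is, however, a missing step in your rank computation. You check that $\bigwedge_{i<n}\sigma(x,b_i)$ is unobstructed and conclude ``Dickson gives a solution''. But Dickson only handles the positive literals. The one case your QE argument cannot absorb is a negative literal $\neg\Pr(k'x+t'(b_i))$ with $(k',t'(b_i))=\epsilon\,(k,t(b_{i'}))$, $\epsilon=\pm1$, for a positive literal $\Pr(kx+t(b_{i'}))$. In the conjunction, $i'$ may be a \emph{different} conjugate from $i$.

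Constancy of residues says nothing about this. For tuples that merely have the same type such a swap can really occur, which is why the proof of Claim \ref{cla: primes loc FHP 1} sorts the parameters with Lemma \ref{lem: Furedi repetitions}. You must use indiscernibility instead:
\begin{itemize}
\item If the coincidence happens for some $i<i'$, it happens for all increasing pairs. Then $t(b_1)=t(b_2)$, hence $t(b_0)=t(b_1)$ and $t'(b_0)=\epsilon t(b_0)$. So $\sigma(x,b_0)$ is already inconsistent.
\item The case $i>i'$ is symmetric, using constancy of $t'(b_i)$.
\item Non-integer ratios are excluded by unobstructedness, and integer ratios with $|r|\ge2$ are harmless, as in your QE step.
\end{itemize}

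Two smaller points also need filling in.

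First, ``this gives supersimplicity'' needs an argument for simplicity; information about $1$-types alone is not enough.
\begin{itemize}
\item By Fact \ref{fac: 1-var 2-incons for TP_2}(1), a witness to the tree property can be taken with $|x|=1$.
\item Make the first-level siblings indiscernible. Then $\varphi(x,a_{\langle 0\rangle})$ divides and is therefore finite.
\item Each of its infinitely many children meets this finite set, since branches are consistent. By pigeonhole this contradicts $k$-inconsistency.
\end{itemize}
Only then do $\SU$-rank $1$ for $1$-types and the Lascar inequalities give supersimplicity. Your remark that forking equals dividing is fine in one variable: a formula implying a finite disjunction of finite formulas is finite.

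Second, the QE step should state the case split explicitly. Under the quantifier-free condition ``obstructed at $p$'', for one of the finitely many relevant $p$, $\exists x$ is equivalent to $\bigvee_{i,\pm}\exists x\,(k_ix=\pm p-t_i(y)\wedge\ldots)$, which reduces to your equation case. In the unobstructed case you get infinitely many solutions, unless some negative literal is $\pm$ a positive one.
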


In the rest of the section, we will prove the following theorem: 
\begin{thm}
\label{prop: Tpr has local FHP} (Assuming Dickson's conjecture) $T_{\Pr}$
is locally $\FHP$. 

In fact, we only require the types of singletons in the parameter tuples to be fixed along the sequence (as opposed to the types of whole parameter tuples).  Namely, for every partitioned formula $\varphi(x,y) \in L$ there exists $d = d(\varphi)$ satisfying the following. Let $s := |y|$. For every $\alpha>0$ there exists $\beta = \beta(\varphi,\alpha) >0$ so that: for any $n \in \mathbb{N}$ and tuples $c_i = (c_{i,j} : j \in [s] ), i \in [n]$ so that for each fixed $j \in [s]$, $\tp(c_{i,j}) = \tp(c_{i',j})$ for all $i,i' \in [n]$, if $|\{ I  \in {n \choose d} : \bigcap_{i \in I} \varphi(\mathbb{Z}, c_i) \neq \emptyset\}| \geq \alpha {n \choose d}$, then there exists $J \subseteq [n]$ with $|J| \geq \beta n$ so that $\bigcap_{i \in J} \varphi(\mathbb{Z}, c_i) \neq \emptyset$.
\end{thm}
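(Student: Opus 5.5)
We follow the structure of the proof of Theorem \ref{thm: Sqf is FHP}. First we reduce to simple formulas. By quantifier elimination (Fact \ref{fact: Primes}) we may take $\varphi(x,y)$ quantifier-free. Lemma \ref{lem: Basic operations preserving FH}(3) reduces $\FHP$ to $|x|=1$; we must check that its induction also works for the local version where only the types of the coordinates of the parameter tuples are fixed. This is plausible: in the inductive step the new parameters are $k_2$-tuples of old parameters, together with the tuple $(b,a)$ in which $b$ is fixed, so coordinatewise types are still constant. The same holds for closure under disjunctions (Lemma \ref{lem: Basic operations preserving FH}(2)), since the relevant parameter coordinates keep constant types.

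With $x$ a singleton, we put $\varphi$ in disjunctive normal form. Each disjunct is a conjunction of:
\begin{itemize}
\item equations and inequations $kx+t(y)\,\square\, 0$;
\item congruence conditions $P_n(kx+t(y))$ and their negations;
\item prime conditions $\Pr_n(kx+t(y))$ and their negations.
\end{itemize}
By Remark \ref{rem: FHP pres by func/1-var conj} we may replace the terms $t(y)$ by new parameter variables $z$. Here there is a subtlety: the type of $t(c_i)$ is not determined by the types of the individual $c_{i,j}$. We handle this as follows. Each $\Pr_n$ or $P_n$ condition on $t(c)$ alone, as well as the congruence class of $t(c)$ modulo any fixed $N$, is a disjunctive condition on $y$ with finitely many relevant cases. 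We split $\varphi$ according to the residues mod $N$ of the terms, taking $N$ divisible by all moduli occurring in $\varphi$ and by $k!$ for the relevant $k$. This uses $\rho(y)$ in Remark \ref{rem: FHP pres by func/1-var conj} with finitely many cases, so we may assume all residues are fixed.

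As in the $\Sqf$ case, elimination of $\exists^\infty$ (SU-rank $1$ with Fact \ref{fact: wnfcp}) splits off the finite instances. These satisfy $\FHP_2$ by Lemma \ref{lem: FHP for finite sets}, so it remains to show that the infinite instances satisfy local $\FHP_1$.

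For the infinite instances, suppose $\varphi(x,c_i)$ is infinite for at least $\alpha n$ indices. After Lemma \ref{lem: Furedi repetitions} and pigeonhole, as in the proof of Theorem \ref{thm: Sqf is FHP}, we obtain $I_3$ of proportional size. On $I_3$ the positive-prime parameters of one instance never coincide with the negative-prime parameters of another, and equations do not occur, since an infinite instance only contains inequations. A finite conjunction $\bigwedge_{i\in J}\varphi(x,c_i)$ then has the following shape:
\begin{itemize}
\item congruence conditions, all compatible, because the residues of all parameters mod $N$ are fixed and a single $x$ mod $N$ works for every $i$ simultaneously;
\item a positive system $\bigwedge \Pr_n(kx+c)$;
\item finitely many negated prime conditions and inequations.
\end{itemize}
By Dickson's conjecture, in the form used in \cite{kaplan2017decidability}, such a system has infinitely many solutions provided it satisfies the local (mod $p$) admissibility condition for every prime $p$. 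The negative conditions are cofinitely avoidable in the standard way: a linear form that is prime on a Dickson set can be forced composite by an extra congruence, once the positive and negative parameters are separated.

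The main obstacle is local admissibility for large primes $p$. For $p\le N$ it is guaranteed by the fixed residues. For $p>N$ it can fail only if the forms $kx+c$, over all $i\in J$, cover every residue class mod $p$, which requires at least $p$ distinct values $c \bmod p$. Here we use that the coordinatewise types are constant. In a monster model, $\tp(c_{i,j})$ fixes $c_{i,j}\bmod p$ for every $p$, so each coordinate contributes a single residue class mod every prime. The positive forms for all $i\in J$ therefore occupy at most $s$ residues mod each $p$, and admissibility holds for every $p>s\cdot(\text{max }k)$. This is exactly why only the types of singletons need to be fixed, and it gives $\FHP_1$ with $\beta$ depending only on $\varphi$ and $\alpha$. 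The overall Helly number is $d=2$, pushed to a larger $d$ by the variable reduction. The delicate points to verify are the interaction of the $k$ and $n$ coefficients in $\Pr_n$ with this residue bookkeeping, and that the reduction steps preserve the constancy of the singleton types.
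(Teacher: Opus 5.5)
Your proposal is correct in outline and follows essentially the same route as the paper. That route is: quantifier elimination and reduction to $|x|=1$; splitting off finite instances by elimination of $\exists^{\infty}$; an F\"uredi-type separation of positive and negative prime conditions across instances; and Dickson's conjecture, with local admissibility coming from the fact that each coordinate's type fixes its residue modulo every prime, so the combined system has at every prime the same local obstruction as a single infinite instance (your separate counting argument for large primes is therefore unnecessary).

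The bookkeeping you flag is handled in the paper by first substituting $x=Kx'+r$, which turns the $P_k$ and $\Pr_k$ conditions into pure $\Pr$-conjunctions with positive coefficients, and then applying F\"uredi to (coefficient, parameter) pairs rather than to parameters as in the $\Sqf$ case. Note also that after replacing the terms $t(y)$ by new variables, only constancy of residues modulo every prime survives (not constancy of types), but that is all the Dickson step uses.
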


Given a partitioned formula $\varphi(x,y) \in L$, let $\beta^{\ast}(\varphi) > 0$ be a real number satisfying the following (assuming that it exists):
\begin{itemize}
	\item[$(\ast)$] If $c_i = (c_{i,j} : j \in [s] ), i \in [n]$ are tuples from $\mathbb{Z}$  so that for each fixed $j \in [s]$, $\tp(c_{i,j}) = \tp(c_{i',j})$ for all $i,i' \in [n]$ and $\varphi\left(\mathbb{Z},c_{i}\right)$ is infinite for
all $i \in [n]$, then there is some $J\subseteq\left[n\right],\left|J\right|\geq\beta^{*}n$
such that $\bigcap_{i\in J}\varphi\left(\mathbb{Z},c_{i}\right)$ is infinite.
\end{itemize}

In the following claims, we establish this property for increasing classes of formulas with $|x| = 1$.

\begin{claim}\label{cla: primes loc FHP 1}
	Let $\varphi\left(x,y\right)\in\mathcal{L}$, where $|x|=1$ and $y = (y_j)_{j \in [s]}$, be of the form 
$$\bigwedge_{j=1}^{s}\Pr\left(m_{j}x+y_{j}\right)^{t_{j}}$$ 
for some $m_j \in \mathbb{Z}$ and truth values $t_{j}\in\left\{ 0,1\right\} $. Then there exists $\beta^{*}\left(\varphi\right) = \beta^{\ast}(s) >0$ satisfying $(\ast)$.
\end{claim}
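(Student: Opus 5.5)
The plan is to mimic the proof of Theorem \ref{thm: Sqf is FHP}. I would use the fixed types of the coordinates $c_{i,j}$ to control local (congruence) obstructions, and a Füredi-type combinatorial selection (Lemma \ref{lem: Furedi repetitions}) to rule out the only remaining global obstruction: a linear form required to be prime for one index and non-prime for another. The resulting $\beta^{*}$ will depend only on $s$.

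\emph{Step 1 (reductions).} Coordinates $j$ with $m_j=0$ contribute the condition $\Pr(c_{i,j})^{t_j}$, which is determined by $\tp(c_{i,j})$ and hence is the same for all $i$. Since each $\varphi(\mathbb{Z},c_i)$ is non-empty, these conjuncts hold for every $i$ and can be dropped. Replacing $x$ by $-x$ where convenient, and using that $\Pr$ is closed under $a\mapsto -a$, we may assume all remaining $m_j>0$. Let $P:=\{j: t_j=1\}$ and $N:=\{j:t_j=0\}$.

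\emph{Step 2 (local conditions are uniform in $i$).} Since $\tp(c_{i,j})$ is independent of $i$, the residue of $c_{i,j}$ modulo every $n$ (via the predicates $P_n$) is independent of $i$. Hence for each prime $p$, the set of $x \bmod p$ for which some form $m_jx+c_{i,j}$, $j\in P$, is divisible by $p$ does not depend on $i$. Fix $i_0$. Because $\varphi(\mathbb{Z},c_{i_0})$ is infinite, the positive forms for $i_0$ have no fixed prime divisor. Consequently, for any $J\subseteq[n]$, the system $\{m_jx+c_{i,j}: i\in J, j\in P\}$ has no fixed prime divisor either. The mod-$p$ condition, and the finer conditions coming from the $\Pr_n$, are literally the same as for $i_0$.

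\emph{Step 3 (removing conflicts).} By the quantifier elimination analysis of \cite{kaplan2017decidability} under Dickson's conjecture (Fact \ref{fact: Primes}), a finite system $\bigwedge \Pr(\ell_u(x))\wedge\bigwedge\neg\Pr(\ell'_v(x))$ of linear forms has infinitely many solutions provided two things hold. First, the positive part has no local obstruction. Second, no negative form coincides with a positive form up to sign. Two forms $m_jx+c_{i,j}$ and $m_{j'}x+c_{i',j'}$ with $j\in P$, $j'\in N$ can coincide only if $m_j=m_{j'}$ and $c_{i,j}=c_{i',j'}$. The negative side is where I expect the main obstacle, since the number of negative forms grows with $|J|$. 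If the cited local-to-global statement is not available in exactly this form, I would reprove it. I would choose $x$ in an arithmetic progression, found by the Chinese remainder theorem, that forces every negative form to be divisible by some prime while keeping the positive forms locally unobstructed. Then I would apply Dickson's conjecture inside that progression.

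To avoid coincidences, I would apply Lemma \ref{lem: Furedi repetitions} to the sets $C_i:=\{c_{i,j}: j\in[s]\}$, after pigeonholing on $|C_i|$ and on the pattern of repeated coordinates. This gives $I'\subseteq[n]$ with $|I'|\geq\gamma(s)n/s^{s}$ and pairwise disjoint sets $X_1,\dots,X_k$ meeting each $C_i$ in exactly one point. Pigeonholing once more on which $X_l$ contains which coordinate, we obtain $J\subseteq I'$ with $|J|\geq \gamma'(s)n$ such that positive coordinates of all $i\in J$ lie in $\bigcup_{l\in L}X_l$ and negative coordinates lie in the complement. A coordinate index $j$ could be both positive and negative only if the same value $c_{i,j}$ occurs at a $P$-position and an $N$-position within one tuple. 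Such $i$ have $\varphi(\mathbb{Z},c_i)$ empty whenever the multipliers agree, so this case does not occur. Hence no positive form for one $i\in J$ equals a negative form for another $i'\in J$.

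\emph{Step 4 (conclusion).} By Steps 2 and 3, the system $\bigcup_{i\in J}\varphi(x,c_i)$ satisfies the hypotheses of the Dickson-based local-to-global principle. Therefore $\bigcap_{i\in J}\varphi(\mathbb{Z},c_i)$ is infinite, and $\beta^{*}(s):=\gamma'(s)$ satisfies $(\ast)$.
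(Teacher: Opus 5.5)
Your proposal is correct and follows the paper's route. You reduce to $m_j\geq 1$ and use Lemma \ref{lem: Furedi repetitions} plus pigeonhole to pass to a subfamily of size $\geq\gamma'(s)n$ with no positive/negative clashes across indices. You then use $c_{i,j}\equiv c_{i^*,j}$ modulo every prime to transfer the absence of a fixed prime divisor from one $c_{i^*}$ (where it holds because $\varphi(\mathbb{Z},c_{i^*})$ is infinite) to the whole subfamily. Finally you conclude by the Dickson-based criterion \cite[Lemma 2.3]{kaplan2017decidability}.

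One step needs correcting. You apply Lemma \ref{lem: Furedi repetitions} to the values $c_{i,j}$, whereas the paper uses the pairs $(m_j,c_{i,j})$. With values only, a block $X_l$ can legitimately contain both a positive and a negative coordinate of the same tuple; for example, $\Pr(x+c)\wedge\neg\Pr(2x+c)$ is consistent. So the split into $\bigcup_{l\in L}X_l$ and its complement need not exist, and ``this case does not occur'' is not justified.

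The conclusion survives, because you did fix the full assignment $j\mapsto l$. Suppose $(m_j,c_{i,j})=(m_{j'},c_{i',j'})$ with $t_j=1$ and $t_{j'}=0$. Then $j$ and $j'$ are sent to the same block, so $c_{i,j}=c_{i,j'}$ and $m_j=m_{j'}$ inside the single tuple $c_i$. This contradicts the consistency of $\varphi(x,c_i)$.

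A minor point: the sign reduction should be done conjunct by conjunct, via $\Pr(m_jx+c_{i,j})\leftrightarrow\Pr(-m_jx-c_{i,j})$ and replacing $c_{i,j}$ by $-c_{i,j}$, which preserves the type hypothesis. A global substitution $x\mapsto -x$ flips all the $m_j$ at once and does not achieve this.
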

\begin{proof}

Let $n \in \mathbb{N}$ and tuples $\left(c_{i}: i \in [n] \right)$ as in the assumption of $(\ast)$ be given, with $c_i = (c_{i,j} :  j \in [s] )$. 

We may assume that $m_j \geq 1$ for all $j \in [s]$. Indeed, if $m_j = 0$ then $\Pr(c_{i,j})^{t_j}$ holds for all $i \in [n]$ by assumption, so we can ignore such conjuncts. Note that $a \models \Pr(m_j x + c_{i,j})^{t_j} \Leftrightarrow  a \models \Pr((-m_j) x + (-c_{i,j}))^{t_j}$ for any $a, m_j, c_{i,j} \in \mathbb{Z}$. Hence, letting $J := \{j \in [s] : m_j < 0\}$, we could consider the formula $\varphi'(x,y)$ obtained from $\varphi(x,y)$ by  replacing $m_j$ by $-m_j$ for all $j \in J$,  and the sequence $(c'_i : i \in [n])$ with $c'_i$ obtained from $c_i$ by replacing  $c_{i,j}$ by $-c_{i,j}$ for all $j \in J$ (we still have that for every $j \in [n]$, $\tp(c'_{i,j}) = \tp (c'_{i',j})$ for all $i,i' \in [n]$). So if the assumption holds for $\varphi, c_i$ then it also holds for $\varphi', c'_i$; and if $\beta^{\ast}$ satisfies the conclusion for $\varphi'$, then it also satisfies the conclusion for $\varphi$.

For
each $i\in\left[n\right]$, let $C_{i}$ be the set of pairs $\left\{ \left(m_{j},c_{i,j}\right): j \in [s]\right\} $. By pigeonhole, there exists some $I_0 \subseteq [n]$ with $|I_0| \geq \frac{1}{s} n$ so that  $\left|C_{i}\right|$ is constant for all $i \in I_0$, denote
it by $k$ (so $k \leq s$).

By Lemma \ref{lem: Furedi repetitions}, we find some $\gamma=\gamma\left(s\right)>0$,
some $I_{1}\subseteq I_0, \left|I_{1}\right|\geq\gamma n$
and some disjoint sets $X_{1},\ldots,X_{k}$ such that $\left|C_{i}\cap X_{j}\right|=1$
for all $i\in I_{1}$ and $1\leq j\leq k$. Note that if for some $i \in [n]$ we have $\left(m_{j},c_{i,j}\right)=\left(m_{j'},c_{i,j'}\right)$
for some $1\leq j\neq j'\leq s$, then necessarily $t_{j}=t_{j'}$
as $\varphi\left(x,c_{i}\right)$ is consistent. Set $I_{2}^{0}:=I_{1}$,
and by pigeonhole and induction on $1\leq l\leq k$ we can choose
$u_{l}\in\left\{ 0,1\right\} $ and $I_{2}^{l}$ such that 
\begin{itemize}
\item $I_{2}^{l}\subseteq I_{2}^{l-1}$,
\item $\left|I_{2}^{l}\right|\geq\frac{1}{2}\left|I_{2}^{l-1}\right|$,
\item for any $i\in I_{2}^{l}$, if $\left(m,c\right)\in C_{i}\cap X_{l}$
and $\left(m,c\right)=\left(m_{j},c_{i,j}\right)$, then $t_{j}=u_{l}$
--- i.e.~either all occurrences of $\Pr\left(mx+c\right)$ in $\varphi\left(x,c_{i}\right),i\in I_{2}^{l}$
are positive, or all are negative.
\end{itemize}
Let $I_{2}:=I_{2}^{k}$. Then $\left|I_{2}\right|\geq\frac{1}{2^{k}}\left|I_{1}\right|\geq\frac{\gamma}{2^{s}}n$
and, as the sets $X_{j}$'s are pairwise disjoint, we have that for any $i\neq i'\in I_{2}$
and $1\leq j\neq j'\leq k$, if $\Pr\left(m_{j}x+c_{i,j}\right)\in\varphi\left(x,c_{i}\right)$
and $\neg\Pr\left(m_{j'}x+c_{i',j'}\right)\in\varphi\left(x,c_{i'}\right)$,
then $\left(m_{j},c_{i,j}\right)\neq\left(m_{j'},c_{i',j'}\right)$.

Now Dickson's conjecture implies that the set $\bigcap_{i\in I_{2}}\varphi\left(\mathbb{Z},\bar{c}_{i}\right)$
is infinite, hence taking $\beta^{*}\left(\varphi\right):=\frac{\gamma}{2^{s}}>0$
shows that $\varphi(x,y)$ satisfies $(\ast)$. Namely, letting $J^+ := \{j \in [s] : t_j = 1\}$, by \cite[Lemma 2.3]{kaplan2017decidability} and the choice of $I_2$, as $m_j \geq 1$ for all $j \in [s]$ by assumption, it is sufficient to show that the assumption of Conjecture \ref{conj: Dickson} holds for $\bar{f} = (f_{i,j} : i \in I_2, j \in J^{+})$ where $f_{i,j}(x) = m_{j}x + c_{i,j}$. By \cite[Remark 2.1]{kaplan2017decidability} it suffices to show that for any fixed $N \in \mathbb{N}$, for every prime $r <N$, $r$ does not divide $\prod_{(i,j) \in I_2 \times J^{+}} f_{i,j}(s)$ for all $s \in \mathbb{Z}$ simultaneously. That is, for every prime $r < N$, for some $0 \leq t < r$ we have $\bigwedge_{i \in I_2, j \in J^{+}} m_j t + c_{i,j} \not \equiv 0  \ (\mod r) $. Assume this does not happen for $r$, and fix any $i^{\ast} \in I_2$.  Then, as $c_{i,j} \equiv c_{i^{\ast},j}  \ (\mod r)$ for all $i \in I_2, j \in J^{+}$ (since $\tp(c_{i,j}) = \tp(c_{i^{\ast},j})$ by assumption), we get that for all $0 \leq t < r$, for some $j \in J^{+}$, $m_j t + c_{i^{\ast},j} \equiv 0  \ (\mod r) $.

But this implies that $\varphi(\mathbb{Z},c_{i^{\ast}})$ is finite by \cite[Remark 2.6]{kaplan2017decidability}, contradicting the assumption on $c_{i^{\ast}}$.
\end{proof}

\begin{claim}\label{cla: primes loc FHP 2}
Assume $\varphi(x,y) \in L$, with $|x| = 1$ and $y = (y_j : j \in [s])$, is of the form 
$$\bigwedge_{j \in S}\Pr\left(m_{j}x+y_{j}\right)^{t_{j}}\land\bigwedge_{j \in S'}P_{k_{j}}\left(m_{j}'x+y_{j}\right)^{t'_{j}}.$$
for some $S,S' \subseteq [s]$, $m_j, m'_j \in \mathbb{Z}$, $k_j \in \mathbb{N}_{\geq 2}$ and truth values $t_{j}, t'_j \in\left\{ 0,1\right\} $.  Then there exists $\beta^{*}\left(\varphi\right) = \beta^{\ast}(s) >0$ satisfying $(\ast)$.
\end{claim}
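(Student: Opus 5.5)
The plan is to reduce to Claim \ref{cla: primes loc FHP 1} by noting that the congruence conjuncts are controlled entirely by the types of the individual parameters. The key observation is that for each $j \in S'$ the truth value of $P_{k_j}(m'_j a + c_{i,j})$ depends only on $a \bmod k_j$ and on $c_{i,j} \bmod k_j$. By the hypothesis of $(\ast)$, $\tp(c_{i,j}) = \tp(c_{i',j})$, and the formulas $P_{k_j}(y - r)$ for $0 \le r < k_j$ are in the language. Hence $c_{i,j} \equiv c_{i',j} \pmod{k_j}$ for all $i,i' \in [n]$. Set $K := \prod_{j \in S'} k_j$. Then the set
\[
R := \Big\{ r \in \{0,\ldots,K-1\} : a \equiv_K r \Rightarrow \textstyle\bigwedge_{j \in S'} P_{k_j}(m'_j a + c_{i,j})^{t'_j} \Big\}
\]
is the same for every $i \in [n]$. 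In other words, the congruence part $\chi(x, c_i) := \bigwedge_{j \in S'} P_{k_j}(m'_j x + c_{i,j})^{t'_j}$ defines one fixed union of residue classes mod $K$, independent of $i$.

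Write $\varphi(\mathbb{Z},c_i) = \bigcup_{r \in R} \big( \varphi_0(\mathbb{Z},c_i) \cap (K\mathbb{Z}+r) \big)$, where $\varphi_0(x,y) := \bigwedge_{j \in S}\Pr(m_j x + y_j)^{t_j}$. Since $|R| \le K$ and each $\varphi(\mathbb{Z},c_i)$ is infinite, for each $i$ some $r_i \in R$ has $\varphi_0(\mathbb{Z},c_i) \cap (K\mathbb{Z}+r_i)$ infinite. By pigeonhole there are $r$ and $I_0 \subseteq [n]$ with $|I_0| \ge n/K$ and $r_i = r$ for all $i \in I_0$. Substituting $x = Kx' + r$ turns this into the formula
\[
\varphi_0'(x', c_i) := \bigwedge_{j \in S} \Pr\big( (m_j K) x' + (m_j r + c_{i,j}) \big)^{t_j},
\]
which has exactly the shape required by Claim \ref{cla: primes loc FHP 1}, with new parameters $c'_{i,j} := m_j r + c_{i,j}$. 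Since $c'_{i,j}$ is $\emptyset$-definable from $c_{i,j}$ (as $r, m_j$ are fixed integers), we still have $\tp(c'_{i,j}) = \tp(c'_{i',j})$. Moreover $\varphi_0'(\mathbb{Z},c'_i)$ is infinite for $i \in I_0$. Claim \ref{cla: primes loc FHP 1} then gives $J \subseteq I_0$ with $|J| \ge \beta^*(s)|I_0|$ and $\bigcap_{i \in J}\varphi_0'(\mathbb{Z},c'_i)$ infinite. Pulling back via $x = Kx'+r$, we get infinitely many $a \equiv_K r$ in $\bigcap_{i\in J}\varphi_0(\mathbb{Z},c_i)$, and these lie in $\bigcap_{i \in J} \varphi(\mathbb{Z},c_i)$ since $r \in R$. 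So $\beta^*(\varphi) := \beta^*(s)/K$ works.

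The main obstacle is the claimed dependence $\beta^{\ast}(\varphi) = \beta^{\ast}(s)$, since $K$ depends on the $k_j$, not just on $s$. Two things need checking. First, I need to confirm that the constant in Claim \ref{cla: primes loc FHP 1} really is independent of the coefficients $m_j$, so that the rescaled coefficients $m_jK$ cause no trouble. Second, I would avoid the factor $1/K$ entirely: instead of pigeonholing on a single $r$, I would apply the Dickson-based argument inside the proof of Claim \ref{cla: primes loc FHP 1} directly with the extra congruence restriction. The finiteness obstruction there (\cite[Remark 2.6]{kaplan2017decidability}) is local at each prime and uniform in $i$, so a common infinite residue class should exist for all $i$ at once. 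If that fails, the bound depending on $\varphi$ still suffices for $(\ast)$ as stated.
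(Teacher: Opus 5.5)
Your proof is correct and follows the paper's argument. You pigeonhole on a residue class $r$ modulo $K=\prod_{j\in S'}k_j$, substitute $x=Kx'+r$ with new parameters $m_jr+c_{i,j}$ (which still have constant type in each coordinate), and apply Claim \ref{cla: primes loc FHP 1}. This gives $\beta^*(\varphi)=\beta^*(s)/K$. The paper's own proof also only yields this $K$-dependent constant, so the dependence you flag is shared with the paper, and it is harmless because $(\ast)$ only asks for a constant depending on $\varphi$. Separately, your use of the type hypothesis to make $R$ independent of $i$ is not needed: each $i$ in the pigeonholed set already has a solution in the class $r$.
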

\begin{proof}
	
Let $K=K\left(\varphi\right):=\prod\left\{ k_{j}: j \in S' \right\} $, and let tuples $\left(c_{i}: i \in [n] \right)$ as in the assumption of $(\ast)$ be given, with $c_i = (c_{i,j} :  j \in [s] )$.

By pigeonhole, there are some  $0 \leq r < K-1$ and $I\subseteq\left[n\right],\left|I\right|\geq\frac{1}{K}$
such that for each $i \in I$, $\varphi\left(x, c_{i}\right)\land \left( x\equiv r\left(\md K\right) \right)$ 
has infinitely many solutions in $\mathbb{Z}$. Note also that for
any $a\in\mathbb{Z}$, if $a\equiv r\left(\md K\right)$, then $a\models\bigwedge_{j \in S'}P_{k_{j}}\left(m_{j}'x+c_{i,j}\right)^{t'_{j}}$
for all $i\in I$.

Let $\varphi'\left(x',y'\right)$ be the formula $\bigwedge_{j \in S}\Pr\left(\tilde{m}_{j}x'+y_{j}'\right)^{t_j}$,
with $\tilde{m}_{j} := m_{j}K$, and let $c'_{i,j}:=m_{j}r+c_{i,j}$.
Then $\varphi'\left(x', c_{i}'\right)$ has infinitely many solutions
for each $i\in I$ (as any $a$ satisfying $\varphi\left(x, c_{i}\right)\land x\equiv r\left(\md K\right)$
gives $a' := \frac{a-r}{K}$ satisfying $\varphi'\left(x',c_{i}'\right)$, and $a_1 \neq a_2$ implies $a'_1 \neq a'_2$). And for any $i\in I$ and $a'\in\mathbb{Z}$ satisfying $\varphi'\left(x', c_{i}'\right)$,
$a:=Ka'+r$ satisfies $\varphi\left(x,c_{i}\right)$ by the above. Note that for each $j \in S$ and assumption on $c_i$,  all $\left(c_{i,j}': i \in I\right)$ all have the same type.  Hence
we can take $\beta^{*}\left(\varphi\right):=\frac{1}{K}\cdot\beta^{*}\left(\varphi'\right)$ satisfying $(\ast)$, where 
$\beta^{*}\left(\varphi'\right)$ exists by Claim \ref{cla: primes loc FHP 1}.
\end{proof}

\begin{claim}\label{cla: primes loc FHP 3}
Assume that $\varphi\left(x,y\right) \in L$ with $|x|=1$ and $y = (y_j : j \in [s])$ is an arbitrary finite conjunction
of formulas of the following form: $\Pr\left(mx+y_i\right)^{t}$, $\text{Pr}_{k}\left(mx+y_i\right)^{t}$,
$P_{k}\left(mx+y\right)^{t}$ where $m\in\mathbb{Z}$, $k \in \mathbb{N}_{\geq 2}$ and $t\in\left\{ 0,1\right\} $
is a truth value. Then there is some $\beta^{*}\left(\varphi\right)>0$
satisfying $(\ast)$.

\end{claim}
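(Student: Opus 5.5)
The plan is to reduce Claim \ref{cla: primes loc FHP 3} to Claim \ref{cla: primes loc FHP 2} by eliminating the predicates $\Pr_k$. Recall that $\Pr_k(u)$ is equivalent to $P_k(u) \land \Pr(u/k)$.

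\textbf{Step 1: positive $\Pr_k$-atoms.} Take a conjunct $\Pr_k(mx+y_i)$. It holds of $a$ exactly when $mx+y_i \equiv 0 \pmod k$ and $\Pr\left(\frac{ma+c_{i}}{k}\right)$. To linearize the division by $k$, let $K$ be the product of all moduli $k$ occurring in $\varphi$, including the $P_k$ and $\Pr_k$ atoms. As in the proof of Claim \ref{cla: primes loc FHP 2}, apply pigeonhole over residues $r \bmod K$. This gives some $r$ and a set $I \subseteq [n]$ with $|I| \geq \frac{1}{K}n$ such that, for every $i \in I$, the formula $\varphi(x,c_i) \land x \equiv r \pmod K$ has infinitely many solutions.

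Now substitute $x = Kx' + r$. Every $P_k$-conjunct then becomes constant in $x'$, so it is simply true on this residue class, since it is satisfied by some solution. A positive $\Pr_k(mx+c_{i,j})$ becomes $\Pr\left(\frac{mK}{k}x' + \frac{mr + c_{i,j}}{k}\right)$. The quotient $\frac{mr+c_{i,j}}{k}$ is an integer, because consistency forces $mr + c_{i,j} \equiv 0 \pmod k$.

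\textbf{Step 2: negative $\Pr_k$-atoms.} A negated $\neg\Pr_k(mx+c)$ is the disjunction $\neg P_k(mx+c) \lor \neg\Pr\left(\frac{mx+c}{k}\right)$. After restricting to the residue class $r$, the first disjunct is constant, so on the class the atom is either identically true (and can be dropped) or equivalent to $\neg\Pr$ of the corresponding linear term. Which case occurs depends on $c_{i,j} \bmod k$. That residue is part of $\tp(c_{i,j})$, and $\tp(c_{i,j})$ is constant in $i$ by hypothesis, so the same case occurs uniformly for all $i \in I$. This is where the hypothesis on types of singletons is used. The same type assumption ensures that the new parameters $c'_{i,j} := \frac{m r + c_{i,j}}{k}$ have constant type in $i$, because the type of $c_{i,j}$ determines the type of $(mr+c_{i,j})/k$ (the map is definable).

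\textbf{Step 3: reduce to Claim \ref{cla: primes loc FHP 1}.} After Steps 1 and 2 we obtain a formula $\varphi'(x',y')$ that is a conjunction of $\Pr^{\pm}$-atoms with linear terms, and that depends only on $\varphi$ and $r$. Its instances $\varphi'(x',c'_i)$ are infinite for $i \in I$, and solutions transfer back via $a = Ka'+r$, injectively. Applying Claim \ref{cla: primes loc FHP 1} to $\varphi'$ gives $\beta^*(\varphi')$. Taking the minimum over the finitely many $r$, we set $\beta^*(\varphi) := \frac{1}{K}\min_r \beta^*(\varphi'_r)$.

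\textbf{Main obstacle.} The delicate point is the bookkeeping in Step 2: verifying that the case split for negative $\Pr_k$-atoms is uniform in $i$, and that the transformed parameter tuples still satisfy the singleton-type hypothesis of $(\ast)$. I would check both directly from the fact that the congruence classes of $c_{i,j}$ modulo every $k$ are recorded in $\tp(c_{i,j})$.
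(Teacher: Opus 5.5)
Your proof is correct. It is essentially the paper's argument: pigeonhole onto a residue class, then the substitution $x = Kx'+r$ turns each $\Pr_k$-atom into a $\Pr$-atom, and the new parameters are $\emptyset$-definable functions of single coordinates, so the singleton-type hypothesis of $(\ast)$ is preserved. The paper instead removes one positive $\Pr_k$-atom at a time modulo that $k$, appeals to Claim~\ref{cla: primes loc FHP 2} for the $P_k$-atoms, and only sketches the negative $\Pr_k$ case, while you work modulo the product $K$ of all moduli at once, go directly to Claim~\ref{cla: primes loc FHP 1}, and handle the negative case explicitly via the uniformity of $c_{i,j} \bmod k$.
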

\begin{proof}
	 Assume first that $\varphi(x,y)$ contains
a conjunct $\Pr_{k}\left(mx+y_j\right)$.  Let tuples $\left(c_{i}: i \in [n] \right)$ as in the assumption of $(\ast)$ be given, with $c_i = (c_{i,j} :  j \in [s] )$.

For $r\in\left\{ 0,\ldots,k-1\right\} $, let $\varphi_{r}\left(x,y \right):=\varphi\left(x,y\right)\land x\equiv r\left(\md k\right)$.
By assumption and pigeonhole, there is some $r$ and some $I\subseteq\left[n\right],\left|I\right|\geq\frac{1}{k}n$
such that $\varphi_{r}\left(x, c_{i}\right)$ is infinite for all
$i\in I$.

We let $\varphi'_{r}\left(x';y,y'\right)$ be  obtained from $\varphi\left(x,y\right)$
by replacing $\Pr_{k}\left(mx+y_j\right)$ with $\Pr\left(mx'+y'\right)$
and replacing $x$ by $kx'+r$ everywhere else.

Assume that $a\models\varphi_{r}\left(x, c_{i}\right)$. Then $a\equiv r\left(\md k\right)$,
so $a=ka'+r$ for some $a'$. As $\models\Pr_{k}\left(ma+c_{i,j}\right)\iff\models\Pr_{k}\left(mka'+\left(mr+c_{i,j}\right)\right)$,
in particular $mr+c_{i,j}$ is divisible by $k$. Let $c'_{i}:=\frac{mr+c_{i,j}}{k}$.
Then 
$$\models \Pr_{k}\left(ma+c_{i,j}\right)\iff \models \Pr_{k}\left(kma'+kc'_{i}\right)\iff \models \Pr\left(ma'+c'_{i}\right).$$

Hence $\varphi'_{r}\left(x';c_{i},c_{i}'\right)$ has infinitely
many solutions for all $i\in I$ (as $\varphi_{r}\left(x, c_{i}\right)$
has infinitely many solutions by assumption and $a'$ above is uniquely determined
by $a$), and for any $a'\in\mathbb{Z}$ and $i\in I$, if $a'$ satisfies
$\varphi'_{r}\left(x'; c_{i},c_{i}'\right)$, then $a:=ka'+r$
satisfies $\varphi\left(x, c_{i}\right)$. Finally, note that still all elements in any fixed coordinate of $\left(c_{i}^{\frown}c_{i}':i\in I\right)$ have the same type over $\emptyset$. Thus we can take $\beta^{*}\left(\varphi\right):=\frac{1}{k}\cdot\min\left\{ \beta^{*}\left(\varphi'_{r}\right):r\in\left\{ 0,\ldots,k-1\right\} \right\} > 0 $.

Iterating this procedure for at most $s$ steps, we reduce to the case with no  conjuncts of the form $\Pr_{k}\left(mx+y_j\right)$. A similar analysis allows us to get rid of all conjuncts of the form $\neg\Pr_{k}\left(mx+y\right)$, thus reducing to the case in Claim \ref{cla: primes loc FHP 2}.
\end{proof}

\begin{claim}
	Every formula $\varphi(x,y) \in L$ with $|x|=1$ satisfies local $\FHP$, in the stronger form stated in Theorem \ref{prop: Tpr has local FHP}.
\end{claim}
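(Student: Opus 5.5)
We follow the same pattern as the proof of Theorem \ref{thm: Sqf is FHP}, using quantifier elimination for $T_{\Pr}$ (Fact \ref{fact: Primes}, assuming Dickson's conjecture). The goal is to reduce an arbitrary $\varphi(x,y)$ with $|x|=1$ to the conjunctions handled in Claim \ref{cla: primes loc FHP 3}. By quantifier elimination we may assume $\varphi$ is quantifier-free. We put it in disjunctive normal form, so that each disjunct is a conjunction of literals. These literals are (in)equations $t(x,y)\,\square\,0$, literals $\Pr(mx+t(y))^{t}$, $\Pr_k(mx+t(y))^{t}$, $P_k(mx+t(y))^{t}$, and literals in $y$ alone. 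Literals in $y$ alone and terms $t(y)$ are absorbed via Remark \ref{rem: FHP pres by func/1-var conj}, reparametrising by new variables $y_j:=t_j(y)$. Passing to these terms may break the hypothesis that coordinates share a type. To handle this we use the stronger ``singleton types fixed'' form and pigeonhole on the finitely many relevant residues, which is all that Claim \ref{cla: primes loc FHP 1} actually uses. The closure under disjunctions (Lemma \ref{lem: Basic operations preserving FH}(2)) is stated for FHP but holds verbatim for the restricted families, since its proof only passes to coordinates. Hence it suffices to treat a single conjunction $\varphi=\varepsilon(x,y)\wedge\chi(x,y)$, with $\varepsilon$ an equational condition and $\chi$ as in Claim \ref{cla: primes loc FHP 3}.

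Next we split finite versus infinite instances. $T_{\Pr}$ is supersimple of SU-rank $1$, hence wnfcp by Fact \ref{fact: wnfcp}, and so it eliminates $\exists^\infty$ (Remark \ref{rem: around wnfcp}). So there is $d_0$ with $\varphi(\M,c)$ finite iff $|\varphi(\M,c)|\le d_0$. We write $\varphi$ as the disjunction of $\varphi\wedge|\varphi|\le d_0$ and $\varphi\wedge|\varphi|=\infty$. The first disjunct has $\FHP_2$ by Lemma \ref{lem: FHP for finite sets}. For the second we show $\FHP_1$ on our restricted families. Suppose $\alpha n$ of the instances are infinite. If $\varepsilon$ contains a genuine equation (one with nonzero $x$-coefficient), the instance has at most one solution, so on infinite instances every such literal must be an inequation. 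By Claim \ref{cla: primes loc FHP 3} applied to $\chi$, we get $J$ with $|J|\ge\beta^*(\chi)\alpha n$ and $\bigcap_{i\in J}\chi(\mathbb{Z},c_i)$ infinite. Removing the finitely many points excluded by the inequations leaves a nonempty set, even infinite. So $\beta:=\alpha\beta^*(\chi)$ works. Combining the disjuncts gives $\FHP_d$ with $d=d(\varphi)$ as in Lemma \ref{lem: Basic operations preserving FH}(2).

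One further point needs checking when applying $(\ast)$. The restriction to infinite instances is only a subsequence of the given one, so the same-type hypothesis must survive this. It does, because we only discard indices. We also need the claims to be formulated for subsequences of the original tuple, which they are.

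Finally, we pass from $|x|=1$ to arbitrary $x$. This is the main obstacle, since Lemma \ref{lem: Basic operations preserving FH}(3) is proved for full FHP, and its induction introduces the auxiliary formula $\psi(x_2;\bar y)=\exists x_1\bigwedge\varphi$ whose parameters are $k_2$-tuples of the original ones. These still have fixed singleton types coordinatewise, so the restricted hypothesis is preserved. The second step uses parameters $(b,a_i)$ with a fixed $b$, which again keeps singleton types constant coordinatewise. Thus we will redo the induction of Lemma \ref{lem: Basic operations preserving FH}(3) verbatim within the class of families with coordinatewise constant singleton types. This yields local $\FHP$ for all formulas, and in particular for families of realizations of a complete type $q\in S_y(\emptyset)$.
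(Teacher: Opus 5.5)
Your outline is the paper's proof: quantifier elimination, replacing the terms $f(y)$ by new parameter variables, closure under disjunctions, and elimination of $\exists^\infty$ to split off a finite part with $\FHP_2$. The infinite part then gets $\FHP_1$ from Claim \ref{cla: primes loc FHP 3} after discarding the inequations. Those steps are fine. The problem is the one step where you depart from the paper.

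You correctly notice that replacing $f(y)$ by a new variable can destroy the hypothesis: $\tp(f(c_i))$ need not be independent of $i$ when only the $\tp(c_{i,j})$ are. The paper passes over this with ``it is easy to see''. But your repair, ``pigeonhole on the finitely many relevant residues'', does not work, because Claim \ref{cla: primes loc FHP 1} is sensitive to unboundedly many moduli. To check Dickson's hypothesis for the $|I_2|\cdot|J^+|$ polynomials $m_jx+c_{i,j}$, one needs, for every prime $r$, that their roots modulo $r$ do not exhaust $\mathbb{Z}/r\mathbb{Z}$. This is a real constraint for all primes up to roughly $s|I_2|$, a bound that grows with $n$. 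Agreement modulo such $r$ is genuinely needed: if the $c_i$, $i\in J$, hit every residue class modulo a prime $r$, then $\bigcap_{i\in J}\Pr(x+c_i)$ has only finitely many solutions. Forcing agreement modulo all these primes by pigeonhole would cost a factor depending on $n$, and that destroys the bound $|J|\geq\beta n$.

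The correct repair needs no pigeonhole at all:
\begin{itemize}
\item Equality of the $\tp(c_{i,j})$ implies, via the predicates $P_N$, that $c_{i,j} \bmod N$ is independent of $i$ for every $N\geq 2$.
\item This property is preserved by $\mathbb{Z}$-linear combinations, so each new coordinate $f(c_i)$ again has residues modulo every $N$ independent of $i$.
\item Claims \ref{cla: primes loc FHP 1}--\ref{cla: primes loc FHP 3} use the type hypothesis only through these residues. In Claim \ref{cla: primes loc FHP 1} only the congruences $c_{i,j}\equiv c_{i^*,j} \pmod r$ enter. The substitutions $c_{i,j}\mapsto -c_{i,j}$, $c_{i,j}\mapsto m_jr+c_{i,j}$ and $c_{i,j}\mapsto (mr+c_{i,j})/k$ in the reductions all preserve the property.
\end{itemize}
So state $(\ast)$ and the three claims for the wider class of parameter sequences in which every coordinate has residues modulo every $N$ independent of $i$. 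This class contains the one in Theorem \ref{prop: Tpr has local FHP}, and it is closed under term substitution, the disjunction step and passing to subsequences. With that change your argument goes through. Your last paragraph, on arbitrary $x$, goes beyond this claim; it matches the paper's closing remark.
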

\begin{proof}
Let $y = (y_i : i \in [s])$. 
The proof of Lemma \ref{lem: Basic operations preserving FH}(2) shows that the class of formulas satisfying local FHP in the strong form stated in Theorem \ref{prop: Tpr has local FHP} is closed under disjunctions. Using this and quantifier elimination in $T_{\Pr}$ (Fact \ref{fact: Primes}), we
may assume that $\varphi\left(x,y\right)$ is a  conjunction
of formulas of the form $\Pr\left(mx+f\left(y\right)\right)^{t}$,
$\text{Pr}_{k}\left(mx+f\left(y\right)\right)^{t}$, $P_{k}\left(mx+f\left(y\right)\right)^{t}$
and $\left(mx+f\left(y\right)=0\right)^{t}$ where $m\in\mathbb{Z}$, $k \in \mathbb{N}_{\geq 2}$, 
$t\in\left\{ 0,1\right\} $ is a truth value and $f$ is an $L$-term (i.e.~a $\mathbb{Z}$-linear combination of the $y_j, j \in [s]$).
Let $\varphi'\left(x;y,y'\right)$ be obtained from $\varphi\left(x,y\right)$
by replacing each occurrence of $f\left(y\right)$ above with
a new variable $y'$. As in Remark \ref{rem: FHP pres by func/1-var conj}, it is easy to see that local $\FHP$ in the strong form stated in Theorem \ref{prop: Tpr has local FHP} for $\varphi'\left(x,y'\right)$
implies the same for $\varphi\left(x,y\right)$. So we may
assume that $\varphi\left(x,y\right)$ is a finite conjunction
of formulas of the form $\Pr\left(mx+y\right)^{t}$, $\text{Pr}_{k}\left(mx+y\right)^{t}$,
$P_{k}\left(mx+y\right)^{t}$ and $\left(mx+y=0\right)^{t}$.

By Fact \ref{fact: Primes}, Fact \ref{fact: wnfcp} and Remark \ref{rem: around wnfcp}, $T_{\Pr}$ eliminates $\exists^{\infty}$. That is, there is some $D = D( \varphi) \in\mathbb{N}$  such that for any tuple 
$c$, $\varphi\left(\mathbb{Z},c\right)$ is finite if and only if
$\left|\varphi\left(\mathbb{Z},c\right)\right|\leq D$. 

The formula 
$\varphi\left(x,y\right)\land \left( \neg \exists^{>D} x \  \varphi\left(x,y\right) \right)$
satisfies $\FHP_{2}$ by Lemma \ref{lem: FHP for finite sets}. Hence it suffices to show the Theorem \ref{prop: Tpr has local FHP} holds for the formula $\psi(x,y) := \varphi(x,y) \land \left( \exists^{\infty} x \  \varphi\left(x,y\right) \right)$ with $d=1$.

Fix $\alpha>0$, and let tuples $\left(c_{i}: i \in [n] \right)$  with $c_i = (c_{i,j} :  j \in [s] )$ be so that for each $j$, $\tp(c_{i,j}) = \tp(c_{i',j})$ for all $i,i' \in [n]$ and $\psi(x,c_i)$ is consistent, hence $\varphi(\mathbb{Z},c_i)$ is infinite, for all $i \in [n]$.

This implies in particular that $\varphi\left(x, y\right)$ cannot 
contain any conjuncts of the form $mx+y = 0$. Let $\varphi'\left(x,y\right)$
be obtained from $\varphi\left(x,y\right)$ by forgetting all of
the conjuncts of the form $mx+y \neq 0$, then $\varphi'\left(x,y\right)$ is of the form considered in Claim \ref{cla: primes loc FHP 3}.

Hence there is some $\beta^{*} = \beta^{\ast}\left(\varphi'\right)>0$
and $I \subseteq [n],\left|I\right|\geq\beta^{*}n$
such that the set $\bigcap_{i\in I}\varphi'\left(\mathbb{Z},c_{i}\right)$ is infinite. In particular, it contains infinitely many elements outside 
of the \emph{finite} set 
$$\bigcup_{i\in I, j \in [s]}\left\{ a\in\mathbb{Z}:ma+c_{i,j}=0\text{ for some conjunct }mx+y_{j}\neq0\text{ occuring in }\varphi\left(x,y\right)\right\}.$$
Then every such element satisfies $\bigwedge_{i\in I}\varphi\left(x,c_{i}\right)$,
hence $\beta (\varphi):= \beta^{*} >0$ shows that
$\psi(x,y)$ satisfies local $\FHP_{1}$ in the strong form stated in Theorem \ref{prop: Tpr has local FHP}.
\end{proof}

Finally it remains to observe that the proof of Lemma \ref{lem: Basic operations preserving FH}(3) goes through to show that in any theory, if every formula $\varphi(x,y)$ with $|x|=1$  satisfies local $\FHP$ in the strong form stated in Theorem \ref{prop: Tpr has local FHP} (i.e.~only singleton coordinates in the parameter tuples are required to have the same complete type), then every formula does.

\begin{problem}
	We do not know if this is true for local $\FHP$ however (i.e.~when the type of full parameter tuples is required to be the same).
\end{problem}

\section{MS-measurable structures and large finite fields satisfy $\protect\FHP$}\label{sec: MS-meas struc FHP}

We recall the notion of an MS-measurable structure and some of its
basic properties (see \cite{macpherson2008one, elwes2008survey}). 
\begin{defn}
\label{def: MS meas structure}An $L$-structure $M$ is \emph{MS-measurable} 
if for every non-empty set $X\subseteq M^{n}$ definable (with parameters),
we have a pair $\left(\dim\left(X\right),\meas\left(X\right)\right) \in\mathbb{N} \times \mathbb{R}_{>0} \cup \{(0,0)\}$ satisfying the following
properties:

\begin{enumerate}
\item For any partitioned $L$-formula $\varphi\left(x,y\right)\in L$ with $\left|x\right|=1$,
there is a finite set $D_{\varphi}\subseteq\mathbb{N}\times\mathbb{R}_{>0} \cup \{(0,0)\}$
and finitely many $L$-formulas $\left\{ \psi_{d,\mu}\left(y\right):\left(d,\mu\right)\in D_{\varphi}\right\} $
partitioning $M^{y}$ and such that for any $b\in M^{y}$, $\models\psi_{d,\mu}\left(b\right)$
if and only if $\dim\left(\varphi\left(M,b\right)\right)=d$ and $\meas\left(\varphi\left(M,b\right)\right)=\mu$.
\item If $X$ is finite, then $\dim\left(X\right)=0$ and $\meas\left(X\right)=\left|X\right|$.
\item If $X,Y\subseteq M^{n}$ are disjoint definable sets, then: 
\[
\dim\left(X\cup Y\right)=\max\left\{ \dim\left(X\right),\dim\left(Y\right)\right\} ,
\]
\[
\meas\left(X\cup Y\right)=\begin{cases}
\meas\left(X\right)+\meas\left(Y\right) & \mbox{if }\dim\left(X\right)=\dim\left(Y\right)\mbox{,}\\
\meas\left(X\right) & \mbox{if }\dim\left(X\right)>\dim\left(Y\right)\mbox{,}\\
\meas\left(Y\right) & \mbox{if }\dim\left(X\right)<\dim\left(Y\right)\mbox{.}
\end{cases}
\]
\item (``Fubini'') Let $f:X\to Y$ be a definable surjection such that $\dim\left(f^{-1}\left(a\right)\right)=d,\meas\left(f^{-1}\left(a\right)\right)=r$
for all $a \in Y$. Then $\dim\left(X\right)=\dim\left(Y\right)+d$
and $\meas\left(X\right)=r\meas\left(Y\right)$.
\end{enumerate}
\end{defn}

Some of the main examples of MS-measurable structures are ultraproducts of finite fields (see below), finite simple groups of bounded Lie rank, vector spaces, etc. --- we refer to \cite[Example 2.4]{elwes2008survey} for further examples.

\begin{fact}
\label{fact: basic properties of MS measurable structures-1}\cite{macpherson2008one}
Let $M$ be an MS-measurable $L$-structure.

\begin{enumerate}
\item The condition (1) in Definition \ref{def: MS meas structure} holds for $L$-formulas $\varphi\left(x,y\right)$
with $\left|x\right|$ arbitrary \cite[Proposition 5.7]{macpherson2008one}.
\item Any $M' \equiv M$, in particular $\M \succ M$, is also MS-measurable.
\item A definable set $X$ is finite if and only if $\dim\left(X\right)=0$
(in which case $\meas\left(X\right)=\left|X\right|$). Moreover, for
every formula $\varphi\left(x,y\right)\in L$ there is some $k_{\varphi}\in\mathbb{N}$
such that: for any $b \in M^y$, $\varphi\left(M,b\right)$ is finite if and only if $\left|\varphi\left(M,b\right)\right|\leq k_{\varphi}$
(follows from the finiteness of the set $D_{\varphi}$ above).
\item Suppose that $\dim\left(M\right)=e$ and $\meas\left(M\right)=\nu$.
Then we can define a normalized measuring function by taking $\meas'\left(X\right)=\frac{\meas\left(X\right)}{\nu^{\dim\left(X\right)/e}}$
for each definable $X\subseteq M$. Then $\dim,\meas'$ also satisfy
all the properties above, but in addition $\meas'\left(M\right)=1$ (and hence also $\meas'\left(M^n\right)=1$ for all $n \in \mathbb{N}$).
\item For any definable (with parameters) set $B\subseteq \M^{y}$, we have a global Keisler measure
$\mu_{B}$ (working in $\M \succ M$) with $\mu_{B}(B)=1$ defined by 
\[
\mu_{B}\left(X\right)=\begin{cases}
\frac{\meas\left(X\cap B\right)}{\meas\left(B\right)} & \mbox{if }\dim\left(X\cap B\right)=\dim\left(B\right)\mbox{,}\\
0 & \mbox{if }\dim\left(X\cap B\right)<\dim\left(B\right)\mbox{}
\end{cases}
\]
for any definable $X\subseteq M^{y}$.
\end{enumerate}
\end{fact}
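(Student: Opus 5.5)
The plan is to derive each item of Fact \ref{fact: basic properties of MS measurable structures-1} from the axioms in Definition \ref{def: MS meas structure}, proving (1) first since the later items depend on uniform definability in several variables. For (1) I will argue by induction on $|x|$, the case $|x|=1$ being axiom (1). Write $x=(x',x_n)$ and take $\varphi(x',x_n;y)$. Applying the one-variable axiom to $\varphi(x';x_n,y)$ with $x'$ regarded as part of the parameter (or, symmetrically, to $x_n$ with $x'y$ as parameters) gives a finite partition $\{\psi_{d,\mu}(x',y)\}$ of the set of $(x',y)$ according to the dimension and measure of the fibre in $x_n$. For fixed $b$, the set $\varphi(M,b)$ is the disjoint union over $(d,\mu)\in D$ of the sets $X_{d,\mu}=\{(a',a_n)\in\varphi(M,b):\models\psi_{d,\mu}(a',b)\}$. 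The projection of $X_{d,\mu}$ onto $x'$ has constant fibre data $(d,\mu)$, so Fubini (axiom (4)) gives $\dim X_{d,\mu}=\dim\pi(X_{d,\mu})+d$ and $\meas X_{d,\mu}=\mu\cdot\meas\pi(X_{d,\mu})$. Since $\pi(X_{d,\mu})$ is defined by $\exists x_n\varphi\wedge\psi_{d,\mu}$ in $|x|-1$ variables, the induction hypothesis gives finitely many values and definable pieces for it. Axiom (3) then combines the finitely many disjoint pieces: the total dimension is the maximum, and the total measure is the sum of the measures of the pieces of top dimension. Each resulting combination of values is cut out by a Boolean combination of the finitely many defining formulas, which gives (1) in general.

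The main obstacle is (2), the transfer to $M'\equiv M$. Here the plan is to define $\dim$ and $\meas$ on $M'$ directly by the formulas: put $(\dim,\meas)(\varphi(M',b)):=(d,\mu)$ when $M'\models\psi_{d,\mu}(b)$. This is well defined because the statement that the $\psi_{d,\mu}$ partition $y$-space is first-order. One then checks that this assignment is independent of the chosen presentation $\varphi(x,b)=\varphi'(x,b')$, and that axioms (2)--(4) hold. Each such check, for fixed formulas, is a sentence true in $M$ and hence in $M'$. For example, Fubini for a definable $f$ given by $\chi(x,z,c)$, with fibres in a fixed piece $\psi_{d,r}$, becomes a finite implication among the relevant $\psi$'s, obtained by quantifying over the parameters. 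The care needed is in organising these quantifications over all parameters at once; I expect this to be routine but notationally heavy, and I would use Morleyisation to keep it manageable.

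For (3), induct on the dimension using Fubini applied to the coordinate projections. A set of dimension $0$ with fibres of dimension $0$ has finite fibres and finite image, since the measures are positive reals and finite additivity forces finite cardinality; conversely, finite sets have dimension $0$ by axiom (2). Uniform finiteness follows because $D_\varphi$ is finite: a finite fibre has $\meas=|X|$, which is a value in $D_\varphi$, so $k_\varphi=\max\{\mu:(0,\mu)\in D_\varphi\}$ works. For (4), the rescaling $\meas'(X)=\meas(X)\nu^{-\dim X/e}$ preserves axioms (1) and (3) because it depends only on $\dim$. It preserves Fubini because the exponents add, and $\meas'(M^n)=1$ follows from Fubini applied to $M^n\to M^{n-1}$. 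For (5), finite additivity of $\mu_B$ comes from axiom (3): only the pieces of full dimension $\dim B$ contribute, their measures add, and $\mu_B(B)=1$. Monotonicity and the value $0$ on lower-dimensional sets then make $\mu_B$ a Keisler measure on $\M$, and by (2) this is defined uniformly.
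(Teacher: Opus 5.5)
Your proposal is correct and matches what the paper relies on. The paper gives no proof of this Fact beyond citing Macpherson--Steinhorn (Proposition 5.7 for (1)) and remarking that the uniform bound in (3) comes from the finiteness of $D_\varphi$. That remark is exactly your choice $k_\varphi=\max\{\mu:(0,\mu)\in D_\varphi\}$, and your verifications of (1), (2), (4), (5) are the standard ones.

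Two small remarks. First, in (3) no induction or Fubini is needed: if $\dim(X)=0$ and $X$ is infinite, splitting off a finite $F\subseteq X$ with $|F|=m$ and applying axiom (3) gives $\meas(X)\geq m$ for every $m$, in any arity. Second, in (1) and (4) you tacitly identify the data of a fibre $\{a'\}\times F\subseteq M^{n}$ with that of $F\subseteq M$. This follows from Fubini applied to the projection $\{a'\}\times F\to F$, whose fibres are singletons of data $(0,1)$.
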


From now on, for simplicity of exposition we will assume that $\dim(X) \leq n$ and that $\meas$ is normalized.

\begin{fact}\label{fac: psf are MS-meas}
	
By \cite{chatzidakis1992definable} (generalizing the classical Lang-Weil estimates using partial quantifier elimination of Ax), if $\varphi(x;y) \in L_{\ring}$ with $|x| = n, |y| = m$ then there exist $C \in \mathbb{N}$ and a finite set $H_{\varphi}$ of pairs $(d,\mu) \in \{0, 1, \ldots, n\} \times \mathbb{Q}_{>0}$ so that for any finite field $\mathbb{F}_q$ and $\bar{a} \in \mathbb{F}_q^m$, 
$$\left \lvert \left \lvert \varphi(\mathbb{F}_q^n, \bar{a}) \right \rvert -  \mu q^d \right \rvert \leq C q^{d - \frac{1}{2}}$$
for some $(d,\mu) \in H_{\varphi}$. Moreover, for each $(d,\mu) \in H_{\varphi}$ there is a formula $\psi_{d,\mu}(y) \in L_{\ring}$ so that for all finite fields $\mathbb{F}_q$, $\psi_{d,\mu}(\mathbb{F}_q^m)$ defines the set of all tuples $\bar{a} \in \mathbb{F}_q^m$ satisfying this.  This implies that any infinite ultraproduct of finite fields $F := \prod_{i \in \mathbb{N}}\mathbb{F}_{q_i} / \mathcal{U}$ is MS-measurable, where given $\varphi(x,y) \in L_{\ring}$ and $\bar{a} \in F^{m}$, where $\bar{a} = (\bar{a}_i)_{i \in \mathbb{N}}/\mathcal{U}$ with $\bar{a}_i \in \mathbb{F}_{q_i}^m$, one sets $(\dim, \meas)(\varphi(F,\bar{a})) := (d, \mu)$ for $(d,\mu)$ as above (see \cite[Proposition 3.9]{elwes2008survey}).
 \end{fact}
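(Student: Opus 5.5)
The plan is to define $(\dim,\meas)$ on $F$ by pulling back, via \L{}o\'s's theorem, the asymptotic point counts of \cite{chatzidakis1992definable} along the ultrafilter. Then each clause of Definition \ref{def: MS meas structure} is checked by reducing it to an identity between counts in $\mathbb{F}_{q_i}$ for $\mathcal{U}$-most $i$. Throughout I use that $F$ is infinite, so $q_i\to\infty$ along $\mathcal{U}$. Otherwise $F$ would be a finite field, since there are only finitely many fields of each bounded size.

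\textbf{Definition and well-definedness.} Let $X=\varphi(F,\bar a)\neq\emptyset$ with $\bar a=(\bar a_i)_i/\mathcal{U}$. For each $i$ the estimate gives some $(d,\mu)\in H_\varphi$ with $\bar a_i\models\psi_{d,\mu}$. Since $H_\varphi$ is finite, a single pair $(d,\mu)$ occurs for $\mathcal{U}$-most $i$, and by \L{}o\'s this is exactly the pair with $F\models\psi_{d,\mu}(\bar a)$. I set $(\dim,\meas)(X):=(d,\mu)$. The key point, which I expect to be the main obstacle, is \emph{uniqueness}: two distinct pairs $(d,\mu)\neq(d',\mu')$ cannot both satisfy $\bigl||S|-\mu q^d\bigr|\le Cq^{d-1/2}$ and $\bigl||S|-\mu' q^{d'}\bigr|\le C'q^{d'-1/2}$ for arbitrarily large $q$. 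Indeed, comparing the leading terms forces $d=d'$, and dividing by $q^d$ and letting $q\to\infty$ forces $\mu=\mu'$. Hence the pair is determined by the sequence of cardinalities $|X_i|$ for $\mathcal{U}$-most $i$.

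Independence of the defining formula follows from this. If $\varphi(F,\bar a)=\varphi'(F,\bar a')$, then by \L{}o\'s the two sets coincide in $\mathcal{U}$-most $\mathbb{F}_{q_i}$, so their counts coincide and uniqueness applies. I also need $\mu>0$ for non-empty sets. This holds because $H_\varphi\subseteq\{0,\dots,n\}\times\mathbb{Q}_{>0}$, and the empty set can be treated separately as $(0,0)$. With this, clause (1) of Definition \ref{def: MS meas structure} follows directly: the formulas $\psi_{d,\mu}(y)$ partition $F^y$ by \L{}o\'s and define the required level sets.

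\textbf{Clauses (2) and (3).} For (2), let $X$ be finite of size $N$. Then $|X_i|=N$ for $\mathcal{U}$-most $i$, and the pair $(0,N)$ fits the estimate with zero error, so uniqueness gives $(\dim,\meas)(X)=(0,N)$. For (3), let $X,Y$ be disjoint. Then $|X_i\cup Y_i|=|X_i|+|Y_i|$, which equals
\[
\mu_Xq^{d_X}+\mu_Yq^{d_Y}+O\bigl(q^{\max(d_X,d_Y)-1/2}\bigr).
\]
The candidate pair read off from the leading term is exactly the one prescribed in (3). Uniqueness identifies it with the pair assigned to $X\cup Y$, which is definable by a single formula, so the formula-independence step applies.

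\textbf{Clause (4), Fubini.} Let $f\colon X\to Y$ be a definable surjection whose fibres all have $(\dim,\meas)=(d,r)$. This uniformity is expressed by $\psi_{d,r}$ applied to the fibre formula, so by \L{}o\'s, for $\mathcal{U}$-most $i$ every fibre $f_i^{-1}(a)$ with $a\in Y_i$ has size $rq_i^d+O(q_i^{d-1/2})$, with a constant $C$ uniform in $a$. Summing over $a\in Y_i$, where $|Y_i|=\mu_Yq_i^{e}+O(q_i^{e-1/2})$, gives
\[
|X_i|=r\mu_Yq_i^{d+e}+O(q_i^{d+e-1/2}).
\]
Uniqueness then yields $\dim X=d+e$ and $\meas X=r\,\meas Y$.

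The only real care needed throughout is the uniformity of the error constant $C$ across fibres, which is what the uniform definability of the sets $\psi_{d,\mu}$ provides.
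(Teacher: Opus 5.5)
Your proposal is correct and is essentially the route the paper relies on: the paper gives no proof of its own, citing \cite{chatzidakis1992definable} for the uniform estimates and \cite[Proposition 3.9]{elwes2008survey} for MS-measurability of the ultraproduct, and your \L{}o\'s-plus-counting verification is the standard argument behind that citation (uniqueness of the asymptotic pair once $q_i\to\infty$ along $\mathcal{U}$, uniform fibre estimates summed for Fubini). Assigning $(0,0)$ to the empty set separately is also the right fix, since the estimate as quoted in the statement can only hold for non-empty fibres when $q$ is large.
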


\begin{lem}\label{lem: MS meas Keisler meas}
Let $M$ be MS-measurable. For each definable $B \subseteq \M^x$, the Keisler measure $\mu_{B}$ is definable (over the same parameters as $B$), in a strong form (by Fact \ref{fact: basic properties of MS measurable structures-1}). And for any definable sets $B_{i}\subseteq M^{x_{i}}$ and permutation $\sigma: [k] \to [k]$, we have $\mu_{B_{1} \times\ldots\times B_{k}}= \mu_{B_1} \otimes \ldots \otimes \mu_{B_k} = \mu_{B_{\sigma(1)}} \otimes \ldots \otimes \mu_{B_{\sigma(k)}}$. 
\end{lem}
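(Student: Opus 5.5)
Definability of $\mu_B$ comes first. Say $B=\beta(\M,c)$. Fix a partitioned formula $\varphi(x,z)$. We have $\mu_B(\varphi(x,b))=\meas(\varphi(\M,b)\cap B)/\meas(B)$ when $\dim(\varphi(\M,b)\cap B)=\dim B$, and $0$ otherwise. Apply Fact \ref{fact: basic properties of MS measurable structures-1}(1) to the formula $\varphi(x,z)\land\beta(x,w)$ with parameter variables $zw$. This produces finitely many formulas $\psi_{d,r}(z,w)$, $(d,r)\in D$, partitioning the parameter space according to $(\dim,\meas)$. Substituting $w=c$ gives a finite partition of $\M^z$ into $c$-definable sets, and $F^{\varphi}_{\mu_B}$ is constant on each piece. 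Hence $\mu_B$ is $c$-invariant, and $F^\varphi_{\mu_B}$ is a finite-valued function whose fibers are clopen sets $[\psi_{d,r}(z,c)]$, so it is continuous. This is the ``strong form'' of definability: all values lie in a finite set, and the level sets are definable.

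Next I prove the product formula for two factors, then induct using associativity (Fact \ref{fac: otimes props}(1)). Let $\mu_{B_1}$ be in variables $x_1$ and $\mu_{B_2}$ in variables $x_2$, and take a definable $X\subseteq\M^{x_1x_2}$, which may be assumed to lie inside $B_1\times B_2$. By Definition \ref{def: tensor prod of meas},
\[
(\mu_{B_1}\otimes\mu_{B_2})(X)=\int F^{X}_{\mu_{B_1}}\,d\mu_{B_2}.
\]
Here $F^{X}_{\mu_{B_1}}(b)=\mu_{B_1}(X_b)$ takes finitely many values $v_{d,r}$, one for each definable piece $P_{d,r}=\{b\in B_2:(\dim,\meas)(X_b)=(d,r)\}$. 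So the integral equals $\sum_{d,r} v_{d,r}\,\mu_{B_2}(P_{d,r})$. On the other side, $X$ is the disjoint union of the sets $X\cap(B_1\times P_{d,r})$. Each of these projects onto $P_{d,r}$ with fibers of constant $(\dim,\meas)=(d,r)$, so Fubini (Definition \ref{def: MS meas structure}(4)) gives $\dim=\dim P_{d,r}+d$ and $\meas=r\cdot\meas(P_{d,r})$. Additivity (3) then computes $(\dim,\meas)(X)$: only the pieces with $\dim P_{d,r}+d=\dim B_1+\dim B_2$ contribute. Fubini applied to $B_1\times B_2$ itself gives $\dim=\dim B_1+\dim B_2$ and $\meas=\meas B_1\meas B_2$. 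Finally, a piece contributes exactly when $d=\dim B_1$ (so $v_{d,r}=r/\meas B_1$) and $\dim P_{d,r}=\dim B_2$ (so $\mu_{B_2}(P_{d,r})=\meas P_{d,r}/\meas B_2$). With this, the two expressions match term by term, and $\mu_{B_1\times B_2}(X)=\mu_{B_1}\otimes\mu_{B_2}(X)$.

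The main obstacle is this bookkeeping. It requires checking that pieces with $d<\dim B_1$ or $\dim P<\dim B_2$ contribute $0$ on both sides; since dimensions are bounded by $\dim B_1$ and $\dim B_2$ respectively, such pieces are exactly those with total dimension below the maximum. The case where all of $X$ has lower dimension also needs handling, and it gives $0=0$. One further point: the integral over $S_{x_2}(A)$ reduces to a finite sum because the level sets are clopen.

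Permutation invariance then follows formally. The measure $\mu_{B_1\times\cdots\times B_k}$ depends only on the set $B_1\times\cdots\times B_k$, and $\dim$ and $\meas$ are preserved by the definable bijection that permutes coordinates; this follows from Fubini with singleton fibers. So $\mu_{B_{\sigma(1)}\times\cdots\times B_{\sigma(k)}}$ corresponds to $\mu_{B_1\times\cdots\times B_k}$ under the permutation of variables. Combining this with the product formula for each ordering yields the claimed equalities.
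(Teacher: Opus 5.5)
Your proposal is correct and is essentially the paper's argument. Both partition the parameter space of the second coordinate by the $(\dim,\meas)$ of the fibers, compute both sides via Fubini and additivity, and observe that only the pieces of top dimension in both coordinates contribute. Both then induct using associativity.

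The differences are minor. You spell out the definability of $\mu_B$, which the paper leaves implicit in its appeal to the Fact. You also obtain the permutation equality from invariance of $(\dim,\meas)$ under the coordinate-permuting definable bijection (Fubini with singleton fibers), whereas the paper re-runs the computation integrating in the other order and projecting onto the first coordinate.
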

\begin{proof}
	Consider $k = 2$. Let $\varphi(x_1,x_2) \in L(\M)$ be arbitrary, and let $M \prec \M$ be a small model containing the parameters of $B_1, B_2, \varphi$.  As both $\mu_{B_1} \otimes \mu_{B_2}$ and $\mu_{B_1 \times B_2}$ give measure one to $B_1 \times B_2$, we may replace $\varphi(x_1, x_2)$ by $\varphi(x_1,x_2) \land B_1(x_1) \land B_2(x_2)$  and assume $\varphi(x_1,x_2)$ defines a subset of $B_1 \times B_2$.

	Let $Y$ be the ($M$-definable) projection onto the second coordinate of the set defined by $\varphi(x_1,x_2)$. Then there are $r \in \mathbb{N}$ and $(d_i, m_i) \in (\mathbb{N} \times \mathbb{R}^{>0}) \cup \{(0,0)\}$ so that, taking $Y_i := \{ b \in Y : (\dim, \meas)(\varphi(x_1,b)) = (d_i, m_i) \}$, $Y = Y_1 \sqcup \ldots \sqcup Y_r$ is a partition of $Y$ into non-empty disjoint $M$-definable sets. Note $Y_i \subseteq B_2$. Let $(\dim,\meas)(Y_i) = (e_i, \nu_i)$. Let $c := \max \{d_i + e_i : 1 \leq i \leq r\}$, and let $I := \{1 \leq i \leq r : d_i + e_i = c\}$. Then, by ``Fubini'' (see \cite[Proposition 5.7]{macpherson2008one}), $(\dim, \meas)(\varphi(x_1,x_2)) = (c, \sum_{i \in I} m_i \nu_i )$.

	By definition of $\otimes$, additivity of Lebesgue integral and ``Fubini'' in a measurable structure we have (where $[Y_i] = \{p_2 \in S_{x_2}(M) : p_2 \vdash Y_i \}$):
	
	\begin{gather*}
		\mu_{B_1} \otimes \mu_{B_2} (\varphi(x_1,x_2)) = \int_{p_2 \in S_{x_2}(M)} \mu_{B_1} \left( \varphi(x_1, p_2)  \right) d (\mu_{B_2})|_{M}(p_2) =\\
		\sum_{i =1}^{r} \int_{p_2 \in [Y_i]} \mu_{B_1} \left( \varphi(x_1, p_2)  \right) d (\mu_{B_2})|_{M}(p_2) =
		\sum_{\{i : d_i = \dim(B_1)\}} \frac{m_i}{\meas(B_1)} \cdot \mu_{B_2}(Y_i) =   \\
		\sum_{\{i : d_i = \dim(B_1) \land e_i = \dim(B_2) \}} \frac{m_i}{\meas(B_1)} \cdot \frac{\nu_i}{\meas(B_2)}  =\\
		 \frac{ \sum_{\{i : d_i = \dim(B_1) \land e_i = \dim(B_2) \}}m_i \cdot \nu_i}{\meas(B_1 \times B_2)}.
		\end{gather*}

		Note that $d_i \leq \dim(B_1)$ and $e_i \leq \dim(B_2)$ for all $i \in [r]$. It follows that $\dim(\varphi(x_1,x_2)) = \dim(B_1 \times B_2) = \dim(B_1) + \dim(B_2)$ if and only if $c = \dim(B_1) + \dim(B_2)$, if and only if $I = \{i : d_i = \dim(B_1) \land e_i = \dim(B_2) \}$. Hence, by the calculation above, $\mu_{B_1 \times B_2} (\varphi(x_1,x_2)) = \mu_{B_1} \otimes \mu_{B_2} (\varphi(x_1,x_2))$.
		
		Repeating the argument changing the order of integration and taking projection onto the first coordinate, we similarly get $\mu_{B_1 \times B_2} (x_1, x_2) = \mu_{B_2}(x_2) \otimes \mu_{B_1}(x_1)$, hence $\mu_{B_1} \otimes \mu_{B_2} = \mu_{B_2} \otimes \mu_{B_1}$.
		
		The claim follows by induction for arbitrary $k$ using that $\otimes$ is associative on definable measures in arbitrary theories by Fact \ref{fac: otimes props}.
\end{proof}

First we consider definable families of subsets of maximal dimension and measure bounded away from zero:
\begin{lem}
\label{lem: MS FHP max dim}Let $M$ be MS-measurable. For any partitioned formulas $\varphi\left(x,y\right)$, $\psi(x,z) \in L$ there is some $\alpha = \alpha(\varphi, \psi)>0$ such that the following holds.

Assume $e \in \M^z$ and $B \subseteq \M^y$ is definable (with parameters) so that, letting $X := \psi(\M,e)$, for all $b \in B$ we have:  $\dim\left(\varphi\left(\M,b\right) \cap X\right)=\dim(X)$ and $\varphi\left(\M,b\right) \cap X$ is non-empty. Then there is some $a\in X$ such that 
\[
\dim\left(\varphi\left(a,\M\right)\cap B\right)=\dim\left(B\right)\mbox{ and }\meas\left(\varphi\left(a, \M\right)\cap B\right)\geq\alpha \cdot\meas\left(B\right).
\]
\end{lem}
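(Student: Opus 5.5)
The plan is a Fubini-type double counting on the set
\[
E := \{(a,b) \in X \times B : \models \varphi(a,b)\},
\]
combined with the uniform definability of dimension and measure (Fact \ref{fact: basic properties of MS measurable structures-1}(1)) to make the lower bound uniform.

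\textbf{Measures of fibers over $b$.} For each $b \in B$, the fiber $E_b = \varphi(\M,b)\cap X$ has full dimension $\dim(X)$. Apply Fact \ref{fact: basic properties of MS measurable structures-1}(1) to the formula $\varphi(x,y)\land\psi(x,z)$, with parameter variables $(y,z)$. This gives a finite set $D$ of possible pairs $(\dim,\meas)$ for the fibers. Let
\[
m_0 := \min\{\mu : (d,\mu) \in D,\ \mu>0\}.
\]
Since dimension is normalized and bounded, the finitely many values $\meas(X)$ for $X=\psi(\M,e)$ come from a finite set as well. So $\meas(E_b)\ge m_0$ and $\meas(X)\le M_0$ for constants $m_0, M_0$ depending only on $\varphi,\psi$. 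Set $\alpha := m_0/(2M_0)$, adjusted later. Using the definable partition of $B$ by the value of $(\dim,\meas)(E_b)$, Fubini (Definition \ref{def: MS meas structure}(4) applied piece by piece, plus (3)) gives
\[
\dim(E)=\dim(X)+\dim(B), \qquad \meas(E)\ge m_0\,\meas(B).
\]

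\textbf{Fibers over $a$.} Partition $X$ definably according to the value of $(\dim,\meas)(E^a)$, where $E^a=\varphi(a,\M)\cap B$. There are finitely many pieces $X_j$ with values $(d_j,\mu_j)$. By Fubini, $\dim(E)=\max_j(\dim X_j + d_j)$, and $\meas(E)$ is the sum of $\meas(X_j)\mu_j$ over those $j$ attaining the max. Since $d_j\le\dim B$ and $\dim X_j\le\dim X$, the maximum $\dim X+\dim B$ is attained only by pieces with $d_j=\dim B$ and $\dim X_j=\dim X$. Hence
\[
m_0\,\meas(B)\le \meas(E)=\sum_{j\in I}\meas(X_j)\,\mu_j \le \meas(X)\cdot\max_{j\in I}\mu_j .
\]
So some $j\in I$ has $\mu_j\ge (m_0/M_0)\meas(B)$. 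Any $a\in X_j$ then has $\dim(E^a)=\dim B$ and $\meas(E^a)\ge\alpha\meas(B)$.

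\textbf{Main obstacle.} The step I expect to be hardest is making the constants uniform: bounding $\meas(X)$ from above independently of $e$, and bounding $\meas(E_b)$ from below independently of $b$ and $e$. I will get both from the finiteness of $D_\varphi$ in Fact \ref{fact: basic properties of MS measurable structures-1}(1) applied to the combined formula. A second point to check is that $\meas(B)$ does not need a uniform bound, since it cancels in the inequality above. I also need to confirm that Fubini applies to non-constant fiber data, which is handled by partitioning into the finitely many definable pieces as in the proof of Lemma \ref{lem: MS meas Keisler meas}.
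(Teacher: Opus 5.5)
Your proof is correct and follows the paper's argument: the same Fubini double count on $Z=\{(a,b)\in X\times B:\ \models\varphi(a,b)\}$, first over $B$ using the least positive measure $m_0$ in $D_{\varphi\wedge\psi}$, then over the $x$-projection, keeping only the pieces with $\dim X_j=\dim X$ and $d_j=\dim B$.

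The one step where you differ is actually a repair of the paper's argument. The paper bounds $\sum_{i\in I}\meas(A_i)\le 1$ by appealing to $\meas(\M^x)=1$, which is only valid when $\dim X=\dim\M^x$. You instead use $\sum_{j\in I}\meas(X_j)\le\meas(X)\le M_0$. Here $M_0$ comes from the finiteness of $D_\psi$, i.e.\ Fact~\ref{fact: basic properties of MS measurable structures-1}(1) applied to $\psi(x,z)$ itself, not from normalization as your first paragraph suggests.

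This difference matters. In a pseudofinite field of characteristic $\neq 2$, take $\psi(x_1,x_2):=(x_1^2=x_2^2)$, so $\meas(X)=2$. Take also $\varphi(x_1,x_2;y):=\exists t\,(x_1-y=t^2)$ and $B=\M$. Then every $\varphi(\M,b)\cap X$ has $(\dim,\meas)=(1,1)$, so the paper's choice gives $\alpha=1$. Yet every $a\in X$ has $\meas(\varphi(a,\M))=\frac12$, which your constant $m_0/M_0=\frac12$ correctly accommodates.

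A cosmetic point: partition $\pi_x(Z)$ rather than all of $X$, so that the surjectivity hypothesis in ``Fubini'' holds.
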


\begin{proof}
Let $\varphi\left(x,y\right), \psi(x,z)$ be given. Let $\theta(x; y,z) := \varphi(x,y) \land \psi(x,z)$, and let $n \in \mathbb{N}$ and a finite set $D_{ \theta} = \{(d_i, \mu_i) : i \in [n] \}$ be as given by Fact \ref{fact: basic properties of MS measurable structures-1}(1) for $\theta(x;y,z)$. Let $\alpha := \min \{ \mu_i : i \in [n], \mu_i > 0\} > 0$ (depends only on $\varphi, \psi$). Then, for all $b \in B$, as $\varphi\left(\M,b\right) \cap X \neq \emptyset$ we have $\meas(\varphi(\M,b) \cap X) \geq \alpha$ (by Definition \ref{def: MS meas structure}(1),(2)).

Let $d:=\dim\left(X\right)$. Consider the definable set 
\[
Z := \left\{ \left(a,b\right):a\in X,b\in B,\models\varphi\left(a,b\right)\right\} \subseteq X \times B \mbox{.}
\]

Let $\pi_y:Z\to B$ be the projection of $Z$ onto the $y$-coordinate.
Then $\pi_y$ is a definable surjection from $Z$ onto $B$ (as for all $b \in B$, $\varphi(\M,b) \cap X \neq \emptyset$ by assumption),
$\dim\left(\pi_y^{-1}\left(b\right)\right)=\dim\left(\varphi\left(\M,b\right) \cap X \right)=d$ by assumption, 
and $\meas\left(\pi_y^{-1}\left(b\right)\right)=\meas\left(\varphi\left(\M,b\right) \cap X \right)\geq\alpha$.
It follows by ``Fubini'' that $\dim\left(Z\right)=\dim\left(B\right)+d$
and $\meas\left(Z\right)\geq\alpha\cdot\meas\left(B\right)$.

On the other hand, let $A := \pi_x(Z) = \left\{ a\in X : \models\exists b  \,\varphi\left(a,b\right)\right\}$, where $\pi_x:Z\to A$ is the projection onto the $x$-coordinate. Again,
$\pi_x$ is a definable surjection onto $A$, and for any $a \in A$,  $\pi_x^{-1}\left(a\right)=B\cap\varphi\left(a,\M\right)$. By MS-measurability,
there is $m \in \mathbb{N}$, $(d_i, \mu_i) \in \mathbb{N} \times \mathbb{R}_{>0} \cup \{(0,0)\}$ for $i \in [m]$, and a partition $A=A_{1} \sqcup \ldots \sqcup A_{m}$ where each
$$A_{i}=\left\{ a\in A:(\dim,\meas)\left(\pi_x^{-1}\left(a\right)\right)=(d_{i},\mu_i)\right\}$$
is a non-empty definable set. 
 Let $c := \max\left\{ \dim\left(A_{i}\right)+d_{i}:i \in [m]\right\} $.
Then by ``Fubini'' we have  $\dim\left(Z\right)=c$ and $\meas\left(Z\right)=\sum_{i\in I}\mu_{i} \cdot \meas\left(A_{i}\right)$,
where $I := \left\{ i \in [m]:\dim\left(A_{i}\right)+d_{i}=c\right\} $. By the previous paragraph we have $\dim\left(Z\right)=d+ \dim(B)$, and also for all $i \in [m]$ we have $\dim\left(A_{i}\right)\leq\dim\left(X\right)=d$ (as $A \subseteq X$)
and $d_{i}\leq \dim(B)$ (as $\pi_x^{-1}\left(a\right)\subseteq B$ for all $a \in A$). Hence 
$$I=\left\{ i \in [m]:\dim\left(A_{i}\right)=d \, \land  \, d_{i}=\dim(B)\right\} .$$ 

Now assume towards contradiction that $\meas\left(\varphi\left(a,\M\right)\cap B\right)<\alpha \cdot \meas\left(B\right)$
for all $a\in A$ with $\dim\left(\varphi\left(a,M\right)\cap B\right)=\dim\left(B\right)$. In particular  $\mu_{i}<\alpha \cdot \meas\left(B\right)$ for all $i\in I$. Also,
as $\meas\left(\M^{x}\right)=1$ and $(A_{i} : i\in I)$ are disjoint non-empty 
subsets of $\M^x$ of the same dimension $\dim\left(A_{i}\right)=d$,
it follows (see e.g. \cite[Lemma 3.3]{elwes2008survey}) that $\sum_{i\in I}\meas\left(A_{i}\right) = \meas(\bigsqcup_{i \in I} A_i) \leq 1$. Hence $\meas\left(Z\right)=\sum_{i\in I}\mu_{i} \cdot \meas\left(A_{i}\right) < \alpha \cdot \meas(B) \cdot \sum_{i\in I}\meas\left(A_{i}\right) \leq \alpha \cdot \meas(B)$. But by
the first paragraph we had $\meas\left(Z\right)\geq\alpha\cdot\meas\left(B\right)$
--- a contradiction.
\end{proof}

\begin{thm}\label{thm: FHP in MS meas}
	Let $M$ be an MS-measurable structure. Then every partitioned formula $\varphi(x,y) \in L$ with $|x| \leq d$ satisfies $\FHP_{d+1}$  with respect to the class of definable measures $\mathfrak{M}_y := \{\mu_{B}(y) : B \subseteq \M^y \textrm{ definable with parameters}\}$ (see Definition \ref{def: FHP for a class of measures}). In particular, $\varphi(x,y)$ satisfies $\FHP_{d+1}$.
\end{thm}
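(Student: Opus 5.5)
I would prove the statement by induction on $\dim$ of the ambient set $X$ in which the intersection points are sought, using Lemma \ref{lem: MS FHP max dim} as the base mechanism. Since $|x|\le d$ and $\meas$ is normalized, $\dim(\M^x)\le d$ after rescaling the dimension; more precisely, I only use that $\dim$ of a definable subset of $\M^x$ takes at most $d+1$ values $0,\dots,d$ once we normalize so that $\dim(\M)=1$ (if $\dim(\M)=e$, divide by $e$ as in Fact \ref{fact: basic properties of MS measurable structures-1}(4)).

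Fix $\alpha>0$ and $\mu=\mu_B\in\mathfrak{M}_y$ with $\mu_B^{\otimes(d+1)}\bigl(\exists x\bigwedge_{i\in[d+1]}\varphi(x,y_i)\bigr)>\alpha$. By Lemma \ref{lem: MS meas Keisler meas}, $\mu_B^{\otimes(d+1)}=\mu_{B^{d+1}}$, so the set $W\subseteq B^{d+1}$ of tuples $(b_1,\dots,b_{d+1})$ with $\bigcap_i\varphi(\M,b_i)\neq\emptyset$ has full dimension and measure $>\alpha\meas(B)^{d+1}$.

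The key step is a descending-chain argument. For a tuple $\bar b=(b_1,\dots,b_{d+1})\in W$, consider the chain $X_0:=\M^x\supseteq X_1:=\varphi(\M,b_1)\supseteq X_2:=\varphi(\M,b_1)\cap\varphi(\M,b_2)\supseteq\dots\supseteq X_{d+1}\neq\emptyset$. These are $d+2$ nonempty sets with dimensions in $\{0,\dots,d\}$, so there is some $j\in[d+1]$ with $\dim X_j=\dim X_{j-1}$. This condition is definable, with finitely many possible dimension values by Fact \ref{fact: basic properties of MS measurable structures-1}(1). Hence, by pigeonhole over $j$ and Fubini (Lemma \ref{lem: MS meas Keisler meas}), there are $j$ and a set of prefixes $(b_1,\dots,b_{j-1})$ of $\mu_{B^{j-1}}$-measure bounded below in terms of $\alpha$ and $d$ (say $\ge\alpha/(d+1)$) such that each such prefix $\bar c$ has a fiber $B_{\bar c}\subseteq B$ of $b_j$'s with $\mu_B(B_{\bar c})\ge\alpha/(d+1)$ and $\dim(X_{j-1}(\bar c)\cap\varphi(\M,b))=\dim X_{j-1}(\bar c)$, $X_{j-1}(\bar c)\cap\varphi(\M,b)\ne\emptyset$ for $b\in B_{\bar c}$.

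Pick one such prefix $\bar c$. Apply Lemma \ref{lem: MS FHP max dim} with $\psi(x,z):=\bigwedge_{i<j}\varphi(x,z_i)$, $e:=\bar c$, so $X=X_{j-1}(\bar c)$, and with the definable set $B_{\bar c}$ in place of $B$. Since $\dim B_{\bar c}=\dim B$, $\mu_B(B_{\bar c})$ is a fixed positive proportion. The lemma yields $a$ with $\meas(\varphi(a,\M)\cap B_{\bar c})\ge\alpha_0\meas(B_{\bar c})$ of full dimension, where $\alpha_0=\alpha(\varphi,\psi)$ depends only on $\varphi$ and $j\le d+1$. Therefore $\mu_B(\varphi(a,y))\ge\alpha_0\cdot\alpha/(d+1)=:\beta$, which depends only on $\varphi$ and $\alpha$ after taking the minimum over $j$.

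The main obstacle is making the pigeonhole/Fubini step rigorous: one must check that the condition ``$\dim X_j=\dim X_{j-1}$'' is definable in $\bar b$, and that a set of positive $\mu_{B^{d+1}}$-measure yields a prefix whose fiber has positive $\mu_B$-measure. For the latter I would use that $\mu_{B^{j}}=\mu_{B^{j-1}}\otimes\mu_B$ is computed by integrating fiber measures, so the set of prefixes with fiber of measure $\ge\alpha/(2(d+1))$ has positive measure and in particular is nonempty. Finally, the ``in particular'' clause follows from Proposition \ref{rem: FHP iff FHP for finite measures}. A finite tuple $b_1,\dots,b_n$ gives a finitely supported measure $\mu=\Av_{\bar b}$, and finitely supported measures are of the form $\mu_B$ for $B$ the finite set $\{b_i\}$ by Fact \ref{fact: basic properties of MS measurable structures-1}(5), since finite sets have counting measure. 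The remaining issue is repetitions and weights, which I would handle by moving to a power: code multiplicities by replacing $y$ with $(y,w)$ and $B$ by a finite set of distinct pairs with the repeated $b_i$'s, using Remark \ref{rem: FHP pres by func/1-var conj} with $\varphi'(x;y,w):=\varphi(x,y)$.
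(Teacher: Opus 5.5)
Your proof is correct, but it is organized differently from the paper's. The paper proves a stronger claim by induction on $\dim X$, for an ambient set $X=\psi(\M,e)$. In one case, the set $B_d$ of $b\in B$ with $\dim(\varphi(\M,b)\cap X)=\dim X$ has $\mu_B$-measure $\ge\alpha/2$, and Lemma~\ref{lem: MS FHP max dim} is applied to $B_d$. Otherwise, Fubini yields a single $b^*\notin B_d$ such that the $d$-tuples completing $b^*$ still meet inside $X$ with measure $\ge\alpha/2$. One then recurses with the lower-dimensional $X\cap\varphi(\M,b^*)$ and $\psi\wedge\varphi$ in place of $X$ and $\psi$, and the base case is a finite $X$ of bounded size.

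Your pigeonhole on $\dim X_0\ge\dots\ge\dim X_{d+1}$ instead finds directly an index $j$ where the dimension does not drop. You then condition once on a whole prefix $\bar c$ and apply Lemma~\ref{lem: MS FHP max dim} once, with $\psi_j=\bigwedge_{i<j}\varphi(x,z_i)$; in effect you have unrolled the paper's induction. What this buys:
\begin{itemize}
\item you do not carry an inductive hypothesis over varying $\psi$;
\item you need no separate base case, since the lemma already covers $\dim X=0$;
\item you get $\beta$ of order $\alpha\cdot\min_{j\le d+1}\alpha(\varphi,\psi_j)/(d+1)$, linear in $\alpha$, whereas the paper's recursion halves $\alpha$ at each of up to $d$ levels.
\end{itemize}
The paper's route is the more standard dimension induction, fixing one parameter at a time and stated for an arbitrary ambient $X$; your argument handles an arbitrary $X$ verbatim as well.

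A few minor corrections.
\begin{itemize}
\item You do not need a large set of good prefixes. Since $\int\mu_B(B_{\bar c})\,d\mu_{B^{j-1}}(\bar c)>\alpha/(d+1)$ and $\mu_{B^{j-1}}$ is a probability measure concentrated on $B^{j-1}$, a single $\bar c\in B^{j-1}$ with $\mu_B(B_{\bar c})>\alpha/(d+1)$ exists, and one prefix suffices.
\item Fact~\ref{fact: basic properties of MS measurable structures-1}(4) rescales $\meas$, not $\dim$, and dividing dimensions by $\dim(\M)$ need not give integers. What you actually use is the paper's standing convention $\dim(\M^x)\le|x|$.
\item In the ``in particular'' part you are right that $\mathfrak{M}_y$ contains only the uniform measures on finite sets, so repetitions need an extra step, which the paper's one-line justification skips. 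Your dummy-variable trick works, but it is just the theorem applied directly to $\varphi'(x;y,w)$, which still has $|x|\le d$. Remark~\ref{rem: FHP pres by func/1-var conj} goes in the opposite direction and is not needed.
\end{itemize}
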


\begin{proof}
	The result will follow from a more precise claim:
	\begin{claim}\label{cla: claim MS meas FHP}
		For any partitioned formulas $\varphi\left(x,y\right)$, $\psi(x,z) \in L$ and $\alpha  \in \mathbb{R}_{>0}, d \in \mathbb{N}$ there is some $\beta = \beta(\varphi, \psi, d, \alpha)>0$ such that the following holds. 
		
Assume $e \in \M^z$ and $B \subseteq \M^y$ is definable (with parameters) so that, letting $X := \psi(\M,e)$, $\dim(X) \leq d$ and $\left(\mu_{B}\right)^{\otimes (d+1)}_{y_1, \ldots, y_{d+1}}\left( \exists x \in X  \bigwedge_{i \in [d+1]}\varphi(x,y_i)  \right) \geq \alpha$. Then there exists some $a \in X$ so that $\mu_{B}(\varphi(a,y)) \geq \beta$.
\end{claim}
\begin{proof}[Proof of Claim \ref{cla: claim MS meas FHP}]	
We will prove the claim by induction on $d \in \mathbb{N}$. Let $\varphi(x,y)$, $\psi(x,z)$, $d$, $\alpha$, $X = \psi(\M,e)$ with $\dim(X) = d$ and $B$ be given.
	

	\noindent \textbf{Base case $d=0$.}
	
	The only possibility is $\dim(X) = 0$, hence $X$ is finite and $|X| \leq k$ for some $k = k(\psi) \in \mathbb{N}$ by Fact \ref{fact: basic properties of MS measurable structures-1}(3), write $X = \{a_1, \ldots, a_{k'}\}$ for some $k' \leq k$. By assumption 
	\begin{gather*}
		\alpha \leq (\mu_{B})_{y_1}(\exists x \in X \, \varphi(x,y_1)) \leq (\mu_{B})_{y_1}\left( \bigvee_{t \in [k']} \varphi(a_i,y_1) \right),
	\end{gather*}
hence $(\mu_{B})_{y_1}\left( \varphi(a_t,y_1) \right) \geq \frac{\alpha}{k}$ for at least one $t \in [k']$ --- and we can take $\beta := \frac{\alpha}{k}$.

		\noindent \textbf{Inductive step $d>0$.}
	
	Assume $d \geq 1$, $\dim(X) \leq d$ and the claim holds for all $0 \leq d' < d$. Let $\alpha_d := \frac{\alpha}{2}$.
	
	\textbf{Case 1.} $\mu_{B} \left( B_d \right) \geq \alpha_d$ for $B_d := \{ b \in B : \dim(\varphi(x,b) \land X(x)) = d  \} $.
	
	As $0 < \alpha_d \leq \mu_{B} \left( B_d \right)$, we have in particular $\dim(B_d) = \dim(B)$ (by Fact \ref{fact: basic properties of MS measurable structures-1}(5)) and $\meas(B_d) \geq \alpha_d \cdot \meas(B)$. As $d >0$ we have $\varphi(\M,b) \cap X \neq \emptyset$ for all $b \in B_d$.

	By Lemma \ref{lem: MS FHP max dim} there is some $\gamma = \gamma(\varphi, \psi) > 0$ and some $a \in X$ so that 
	\begin{gather*}
		\dim\left(\varphi\left(a, \M\right)\cap B_d\right)=\dim\left(B_d\right) = \dim(B) \textrm{ and}\\
		\meas\left(\varphi\left(a,\M\right)\cap B_d\right)\geq \gamma \cdot\meas\left(B_d\right) \geq  \alpha_d \cdot \gamma \cdot \meas (B), 
	\end{gather*}
	hence $\mu_{B}(\varphi(a,y)) \geq \beta_d := \alpha_d \cdot \gamma > 0$, and $\beta_d = \beta_d(\varphi,\psi, \alpha)$.
	
	\textbf{Case 2.} $\mu_{B} \left( B_d \right) < \alpha_d$.
	
	By assumption $\left(\mu_{B}\right)^{\otimes (d+1)}\left( C  \right) \geq \alpha$ for 
	$$C := \{(b_1, \ldots, b_{d+1}) \in B^{d+1} : \exists x \in X  \bigwedge_{i \in [d+1]}\varphi(x,b_i)\}.$$
	
	And $\left(\mu_{B}\right)^{\otimes (d+1)} = \mu_{B^{d+1}}$ by Lemma \ref{lem: MS meas Keisler meas}.

	Let $B'_{d} := \{(b_1, \ldots, b_{d+1}) \in B^{d+1} : \dim(\varphi(\M,b_{d+1}) \cap X) = d \}$. As $\left(\mu_{B}\right)^{\otimes (d+1)}$ extends the product measure we have $\left(\mu_{B}\right)^{\otimes (d+1)}(B'_{d}) = \mu_{B}(B_d) < \alpha_d$, hence $\left(\mu_{B}\right)^{\otimes (d+1)} \left( C \setminus B'_d \right) \geq \alpha - \alpha_d = \frac{\alpha}{2}$. Then by Fubini there exists some $b^{\ast} \in B \setminus B_d$ so that $\left(\mu_{B}\right)^{\otimes d} \left( \{(b_1, \ldots, b_{d}) \in  B^d :  (b_1, \ldots, b_d, b^{\ast}) \in C \setminus B'_d \}\right) \geq \frac{\alpha}{2}$.

	Let $\psi'(x; z,y) := \psi(x,z) \land \varphi(x, y)$, $X' := X \cap \varphi(\M, b^{\ast}) = \psi'(\M;e,b^{\ast})$. Then $\dim(X') \leq d-1$, and we have $\left(\mu_{B}\right)^{\otimes d}_{y_1, \ldots, y_{d}}\left(  \exists x \in X'  \bigwedge_{i \in [d]}\varphi(x,y_i) \right) \geq \frac{\alpha}{2}$. 
	By the inductive assumption there exists some $\beta_{<d} = \beta_{<d}(\varphi, \psi', d-1, \frac{\alpha}{2}) > 0 $ and some $a \in X' \subseteq X$ so that $\mu_{B}(\varphi(a,y)) \geq \beta_{<d}$. Then $\beta := \min \{ \beta_{<d}, \beta_d \} > 0$ is chosen depending only on $\varphi, \psi, d, \alpha$ and satisfies the requirement. This concludes the proof of the claim.
\end{proof}

	Now the theorem follows: given $\varphi(x,y)$ with $|x| \leq d$ and $\alpha > 0$, we can choose  $\beta = \beta(\varphi, \alpha) > 0$ satisfying $\FHP_{d+1}$ with respect to $\mathfrak{M}_y$ by applying the claim to  $\varphi(x,y), \psi (x_1, \ldots, x_d; z) := \left(\bigwedge_{i \in [d]} x_i = x_i\right), d, \alpha$. As all finite sets are definable, $\mathfrak{M}_y$ contains all finitely supported probability measures on $\M^y$, so the ``in particular'' part follows by Remark \ref{rem: FHP iff FHP for finite measures}.
\end{proof}

\begin{rem}\label{rem: indep from MS-meas M}
	In fact, the proof of Theorem \ref{thm: FHP in MS meas} shows that given any finite set $D \subseteq \mathbb{N} \times \mathbb{R}_{>0} \cup \{(0,0)\}$ and $\alpha \in \mathbb{R}_{>0}$ there exists $\beta = \beta(D,\alpha) \in \mathbb{R}_{>0}$ satisfying the following. Let  $M$ be any MS-measurable $L$-structure and $\varphi(x,y) \in L$ is an $L$-formula with $|x| \leq d$ so that, letting $\left(\bigwedge_t \varphi \right)(x;y_1, \ldots, y_t) := \bigwedge_{i \in [t]} \varphi(x,y_i)$,  we have $D_{\bigwedge_{t}\varphi} \subseteq D$  for all $t \in [d]$ (where $D_{\bigwedge_{t}\varphi}$ is the set given by Fact \ref{fact: basic properties of MS measurable structures-1}(1) for the formula $\left(\bigwedge_t \varphi \right)(x;y_1, \ldots, y_t) $ in the MS-measurable structure $M$).  \emph{Then} $\varphi(x,y)$ satisfies $\FHP(d+1,\alpha,\beta)$ with respect to the class of measures $\mathfrak{M}_y$ in $M$ (see Definition \ref{def: FHP for a class of measures}).
\end{rem}

As a corollary of Theorem \ref{thm: FHP in MS meas}, we get that definable families of sets of bounded description complexity in large finite fields  satisfy the fractional Helly property, in the following sense. Given an $L$-structure $M$, we say that a family $\mathcal{F}$ of subsets of $M^d$ is \emph{definable} if there exist partitioned $L$-formulas $\varphi(x,y), \psi(y,z)$, with $|x|=d$ and $y,z$ finite tuples of variables, and $e \in M^z$ so that $\mathcal{F} = \{\varphi(M,b) : b \in \psi(M,e) \}$. We say that the \emph{description complexity} of a definable family $\mathcal{F}$ is $\leq (D_1,D_2)$ if there exist some $\varphi$ of length $\leq D_1$ and some $\psi$ of length $\leq D_2$  satisfying this for some $e$ (see e.g.~\cite[Section 1.2]{tao2015expanding} for a discussion and examples). We view fields as structures in the ring language $L_{\ring} = (+, \times, 0,1)$.
\begin{cor} \label{cor: FHP in finite fields}
 For every $(D_1,D_2) \in \mathbb{N}$ and $\alpha > 0$ there exist $\beta = \beta(D_1, \alpha) >0$ and $N = N(D_1, D_2, \alpha) \in \mathbb{N}$ satisfying the following.

Let $d \in \mathbb{N}$, $F$ a finite field with $|F| \geq N$ and $\mathcal{F} \subseteq F^d$ a definable family of sets of description complexity $\leq (D_1,D_2)$ so that
$$\left \lvert \left\{ I \subseteq \mathcal{F} : |I| = d+1 \  \land \  \bigcap_{S \in I} S \neq \emptyset \right\} \right \rvert \geq \alpha {|\mathcal{F}| \choose d+1}.$$
Then $\bigcap_{S \in J} S \neq \emptyset$ for some $J \subseteq \mathcal{F}$ with $|J| \geq \beta |\mathcal{F}|$.
\end{cor}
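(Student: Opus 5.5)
The plan is to argue by contradiction with an ultraproduct, transferring the quantitative statement through Theorem \ref{thm: FHP in MS meas} together with the uniformity of Remark \ref{rem: indep from MS-meas M}, and then using Proposition \ref{rem: FHP iff FHP for finite measures} to return from measures to counting. There are only finitely many formulas of length $\leq D_1$ (and $\leq D_2$); since $|x|=d$ is bounded by the length of $\varphi$, $d$ also ranges over a finite set. It therefore suffices to fix one pair $\varphi(x,y)$, $\psi(y,z)$ and produce $\beta$ and $N$ for it, then take the minimum of the $\beta$'s and the maximum of the $N$'s. The family is a set $\mathcal{F}$ of distinct sets, so we pick representatives $b$ for the distinct sets $\varphi(F,b)$ with $b \in \psi(F,e)$. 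This gives a finite tuple and reduces to the tuple version of $\FHP(d+1,\alpha,\beta)$ from Definition \ref{def: FHP}.

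To make $\beta$ depend only on $D_1$ and not on the particular field, use Fact \ref{fac: psf are MS-meas}. For each $t \in [d]$, the formula $\bigwedge_t \varphi$ has a finite set $H$ of possible pairs $(\dim,\meas)$, uniform over all finite fields. Hence in every infinite ultraproduct of finite fields the sets $D_{\bigwedge_t\varphi}$ lie in one finite set $D$ determined by $\varphi$. We must use the normalized measure: since $\meas(F)=1$ here, normalization leaves these values unchanged. By Remark \ref{rem: indep from MS-meas M}, there is then $\beta_0 = \beta_0(D,\alpha')$, with $\alpha'$ the parameter produced in the proof of Proposition \ref{rem: FHP iff FHP for finite measures}, such that $\varphi$ satisfies $\FHP(d+1,\alpha',\beta_0)$ relative to $\mathfrak{M}_y$ in every such ultraproduct. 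In particular it holds for finitely supported measures, and that proposition converts this into the counting version $\FHP(d+1,\alpha,\beta)$ with $\beta$ depending only on $\alpha$ and $\varphi$.

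The main obstacle is passing from ultraproducts to large finite fields, because the FHP statement quantifies over arbitrary $n$ and is not a single first-order sentence. Suppose the corollary fails for this $\beta$. Then there are finite fields $F_i$ with $|F_i|\to\infty$ and counterexample families of sizes $n_i$. The idea is to encode each counterexample inside the ultraproduct $F=\prod F_i/\mathcal{U}$ as an internal (pseudofinite) family: the tuple $(b_{i,1},\ldots,b_{i,n_i})$ defines an internal finite set $\bar b$ in $F$. For each fixed $n$, the statement ``there exist $n$ parameters with $\geq\alpha{n\choose d+1}$ intersecting $(d+1)$-subsets but no intersecting subfamily of size $\geq\beta n$'' is first-order. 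So if the $n_i$ are bounded, some fixed $n$ recurs, the property transfers by Łoś to $F$, and this contradicts the FHP for $F$ obtained above.

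If the $n_i$ are unbounded this transfer is unavailable, and I would instead pass to the internal counting measure $\mu=\lim_{\mathcal U}\Av_{\bar b_i}$ on $F^y$. This $\mu$ is not finitely supported, but its products are computed coordinatewise as ultralimits of counting measures. The remaining work is to verify that the measure-theoretic proof of Theorem \ref{thm: FHP in MS meas} only uses the finite data $D$ together with Lang--Weil estimates valid at each finite level. I would check this by rerunning the induction of Claim \ref{cla: claim MS meas FHP} directly in each $F_i$ with the counting measure on $\bar b_i$, which Fact \ref{fac: psf are MS-meas} controls up to error $O(|F_i|^{-1/2})$; choosing $N$ large enough to absorb these errors yields the contradiction. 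This step is where I expect the real difficulty to lie.
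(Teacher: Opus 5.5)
\textbf{There is a genuine gap in the unbounded-$n_i$ case.} Your reductions are fine: finitely many formulas, passing to representatives, and the uniform $\beta$ in every infinite ultraproduct via Remark~\ref{rem: indep from MS-meas M} and Proposition~\ref{rem: FHP iff FHP for finite measures}. The bounded-$n_i$ case is also fine. But the unbounded case, which is the essential one, is not proved, and the justification you sketch for it is wrong.

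Once you replace the definable family by a tuple $\bar b_i$ of representatives, $\bar b_i$ is no longer a definable set of bounded complexity. So Fact~\ref{fac: psf are MS-meas} gives no control over the counting measure on $\bar b_i$. Its ultralimit $\lim_{\mathcal U}\Av_{\bar b_i}$ is not of the form $\mu_B$, so Theorem~\ref{thm: FHP in MS meas} does not apply to it. What you did establish in the ultraproduct, the counting version of $\FHP(d+1,\alpha,\beta)$, is a separate first-order sentence for each standard $n$, and \L o\'s cannot transfer these to nonstandard $n_i$. So the argument stops exactly at the decisive step.

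\textbf{How the paper avoids this.} The paper never leaves the definable world. It works with the parameter set $B_i=\psi(F_i,e_i)$ itself, phrasing hypothesis and conclusion as $|C_i|\ge\alpha|B_i|^{d+1}$ and $|\varphi(a,F_i)\cap B_i|<\frac{\beta}{2}|B_i|$. In the ultraproduct, $B=\prod B_i/\mathcal U$ is definable. The normalized counting measures on $B_i$ and $B_i^{d+1}$ become $\mu_B$ and $\mu_{B^{d+1}}=\mu_B^{\otimes(d+1)}$ (Lemma~\ref{lem: MS meas Keisler meas}). Theorem~\ref{thm: FHP in MS meas} is then applied directly to $\mu_B\in\mathfrak M_y$; this is why it is stated relative to $\mathfrak M_y$ rather than only for finitely supported measures. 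Finally, $\mu_B(\varphi(a,y))\ge\beta$ is transferred back to the $F_i$ by Fact~\ref{fac: psf are MS-meas}. No finitely supported measures and no case split on $n_i$ are needed.

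\textbf{Your rerun can be completed, but for a different reason.} Rerunning Claim~\ref{cla: claim MS meas FHP} inside each large $F_i$ does work, but not because Fact~\ref{fac: psf are MS-meas} controls the counting measure on $\bar b_i$. In that induction the parameters enter only through finite averaging:
\begin{itemize}
\item the ``Fubini'' step for $Z$ in Lemma~\ref{lem: MS FHP max dim} becomes double counting over $X\times\bar b$;
\item Case 2 becomes averaging over the last coordinate;
\item the base case is pigeonhole.
\end{itemize}
The estimates of Fact~\ref{fac: psf are MS-meas} are needed only for the uniformly definable sets $X$ and $X\cap\varphi(F,b)$ on the $x$-side. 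Concretely, you need an upper bound $|X|\le Cq^{\dim X}$, a uniform lower bound $\alpha_0 q^{\dim X}$ for fibres of full dimension, and a uniform bound on $|X|$ when $\dim X=0$. These hold for $q$ large enough to absorb the error terms for the finitely many formulas involved.

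Carried out this way, the argument needs no ultraproduct at all. It gives $\FHP(d+1,\alpha,\beta)$ for arbitrary tuples of parameters, so your passage to distinct representatives becomes harmless. But this verification is the actual content of the proof, and it is absent from your proposal.
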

\begin{proof}
Fix $(D_1,D_2)$ and $\alpha > 0$.
Let $\Delta$  be the set of all $L_{\ring}$-formulas of length $\leq D_1$, then $\Delta$ is finite. For each $\varphi(x,y) \in L_{\ring}$ and $t \in \mathbb{N}$, let finite set $H_{\bigwedge_t \varphi}$ be as given by Fact \ref{fac: psf are MS-meas} for $\bigwedge_t \varphi(x; y_1, \ldots, y_t)$, and let $D := \bigcup \{H_{\bigwedge_t \varphi} : \varphi(x,y) \in \Delta, t \in [D_1] \}$, then $D$ is still finite. It follows by Fact \ref{fac: psf are MS-meas} that if $F := \prod_{i \in \mathbb{N}} F_i / \mathcal{U}$ is an \emph{arbitrary}  infinite ultraproduct of finite fields, $\varphi(x,y) \in L_{\ring}$ is any formula of length $\leq D_1$ and $t \in [D_1]$, then $F$ is MS-measurable and $D_{\bigwedge_{t} \varphi} \subseteq D$ (where  $D_{\bigwedge_{t} \varphi}$ is given by Fact \ref{fact: basic properties of MS measurable structures-1}(1) for the formula $\left(\bigwedge_t \varphi \right)(x;y_1, \ldots, y_t) $ in  $F$). It follows by Theorem \ref{thm: FHP in MS meas} and Remark \ref{rem: indep from MS-meas M} that there is $\beta = \beta(D, \alpha) = \beta(D_1, \alpha) > 0$ so that $\varphi(x,y)$ satisfies $\FHP(d+1, \alpha, \beta)$ with respect to $\mathfrak{M}_y$ in $F$.

Now assume towards contradiction that no $N \in \mathbb{N}$ satisfies the conclusion of the corollary with respect to $\beta/2$.  Then for every $i \in \mathbb{N}$  there exists a finite field $F_i$ with $|F_i| \geq i$ and a definable family $\mathcal{F}_i \subseteq F_i^d$ of description complexity $\leq (D_1,D_2)$, witnessed by some $\varphi_i(x,y_i), \psi_i(y_i,z_i), e_i \in F_i^{|z_i|}$, so that taking $B_i := \psi_i(F_i, e_i)$ and $C_i := \{(b_1, \ldots, b_{d+1}) \in B_i^{d+1} : F_i \models \exists x \bigwedge_{t \in [d+1]} \varphi_i(x,b_i) \}$, we have $|C_i| \geq \alpha |B_i^{d+1}|$, but  $|\varphi_i(a,F_i) \cap B_i| < \frac{\beta}{2} |B_i|$ for all $a \in F_i^{|x|}$.
 As there are only finitely many formulas of length at most $\max\{D_1, D_2\}$, passing to a subsequence we may assume $\varphi_i(x,y_i) = \varphi(x,y), \psi_i(y_i,z_i) = \psi(y,z)$ for all $i \in \mathbb{N}$. Let $\mathcal{U}$ be a non-principal ultrafilter on $\mathbb{N}$ and $F := \prod_{i \in \mathbb{N}} F_i / \mathcal{U}$, $B := \prod_{i \in \mathbb{N}} B_i / \mathcal{U}$, $C := \prod_{i \in \mathbb{N}} C_i / \mathcal{U}$. Then  $F$ is an infinite  MS-measurable structure, and by the definition of $(\dim, \meas)$ from Fact \ref{fac: psf are MS-meas} we have $\dim(C) \geq \dim(B^{d+1})$ (hence equal to it as $C \subseteq B^{d+1}$) and $\meas(C) \geq \alpha \meas(B^{d+1})$, so using \L o\'s's theorem, Fact \ref{fact: basic properties of MS measurable structures-1}(5) and Lemma \ref{lem: MS meas Keisler meas}, we have 
$$(\mu_{B})^{\otimes (d+1)} \left( \left\{(b_1, \ldots, b_{d+1}) \in B^{d+1} : F \models  \exists x \bigwedge_{t \in [d+1]} \varphi_i(x,b_i) \right\} \right) \geq \alpha. $$ 
Then, by the choice of $\beta$ in the first paragraph, there is some  $a \in F^{|x|}$, $a = (a_i)_{i \in \mathbb{N}}/\mathcal{U}$ with $a_i \in F_i^{|x|}$, so that $\mu_{B}(\varphi(a, F)) \geq \beta$, i.e.~$\dim(\varphi(a,F) \cap B) = \dim(B)$ and $\meas(\varphi(a,F) \cap B) \geq \beta  \cdot \meas(B)$. Again, by the  definition of $(\dim, \meas)$ in $F$ from Fact \ref{fac: psf are MS-meas} this implies that $|\varphi_i(a_i,F_i) \cap B_i| \geq \beta |B_i|$ for a $\mathcal{U}$-large set of $i \in \mathbb{N}$ --- a contradiction.
\end{proof}

\begin{cor}
Assume $M$ is an MS-measurable structure. Then $M^x$ with $|x| \leq d$ satisfies colorful $\FHP_{d+1}$ with respect to $\mathfrak{M}$.
\end{cor}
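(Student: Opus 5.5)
The plan is to derive this from Proposition \ref{prop: colorful FHP meas bdd by burden}, applied with $k := d+1$, all $\varphi_i := \varphi$, and all classes $\mathfrak{M}_i := \mathfrak{M}_y = \{\mu_B : B \subseteq \M^y \text{ definable}\}$. Here ``colorful $\FHP_{d+1}$ with respect to $\mathfrak{M}$'' is read as the conclusion of that proposition. Three hypotheses then need checking:
\begin{enumerate}
\item $\bdn(\M^x) < d+1$ whenever $|x| \leq d$;
\item each $\varphi(x,y)$ with $|x|\le d$ satisfies $\FHP$ relative to $\mathfrak{M}_y$;
\item any $\mu_{B_1},\ldots,\mu_{B_{d+1}} \in \mathfrak{M}_y$ are definable and pairwise commuting.
\end{enumerate}

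Conditions (2) and (3) come directly from earlier results. Theorem \ref{thm: FHP in MS meas} gives (2), in the form $\FHP_{d+1}$ relative to $\mathfrak{M}_y$. Lemma \ref{lem: MS meas Keisler meas} gives (3): each $\mu_B$ is definable, and $\mu_{B_1}\otimes\mu_{B_2} = \mu_{B_1\times B_2} = \mu_{B_2}\otimes\mu_{B_1}$, including the case $B_1 = B_2$ where a measure commutes with itself. For the iterated products used inside the proposition and in Lemma \ref{lem: Erdos-Stone}, I would also use the identification $\mu_{B_1}\otimes\cdots\otimes\mu_{B_m} = \mu_{B_1\times\cdots\times B_m}$ from the same lemma.

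The main obstacle is the burden bound (1). I would show $\bdn(\M^x) \leq \dim(\M^x)$, using the normalization $\dim(M)=1$ for the home sort; otherwise the bound becomes $\dim(\M^x)$, and I would state the corollary with $d$ replaced by $\dim(\M^x)$.

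To prove the bound, suppose there is an inp-pattern $(\varphi_i(x,y_i), (a_{i,j})_j, k_i)_{i<m}$ with $m > \dim(\M^x)$. I would argue by induction on $\dim$, along the lines of Case 2 in the proof of Theorem \ref{thm: FHP in MS meas}. Consider the first row, $\{\varphi_0(x,a_{0,j})\}_j$, which is $k_0$-inconsistent. Each of its sets meets a consistent path through the remaining rows. If infinitely many of the sets $\varphi_0(\M,a_{0,j})$ had full dimension inside an ambient set $X$ of dimension $e$ with $\meas$ bounded below uniformly (finitely many values by Definition \ref{def: MS meas structure}(1)), additivity would force some $k_0$ of them to intersect. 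The argument is the one behind Lemma \ref{lem: FHP for finite sets}, or equivalently Fact \ref{fact: Kelley}(1) applied to the normalized measure $\mu_X$. That contradicts $k_0$-inconsistency. So, after passing to a subsequence, the first row consists of sets of dimension $<e$.

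Next, fix $j$ and intersect the remaining rows with $X' := X\cap\varphi_0(\M,a_{0,j})$. They form an inp-pattern of depth $m-1$ in $X'$, where $\dim(X') \leq e-1$, and induction applies. In the base case $\dim = 0$ the set $X$ is finite, so it admits no inp-pattern of positive depth, since by Fact \ref{fact: basic properties of MS measurable structures-1}(3) its size is bounded. This gives $\bdn(\M^x) \leq |x| \le d < d+1$, and the proposition finishes the proof.
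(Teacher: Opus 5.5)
Your proposal is correct and follows the paper's proof, which likewise combines Theorem \ref{thm: FHP in MS meas}, Proposition \ref{prop: colorful FHP meas bdd by burden} (with $k=d+1$) and Lemma \ref{lem: MS meas Keisler meas} for definability and pairwise commutation of the measures $\mu_B$. The one addition is that you explicitly check the hypothesis $\bdn(\M^x)\le |x| < d+1$, which the paper leaves implicit; your induction on $\dim$ is sound, since Fact \ref{fact: Kelley}(1) applied to $\mu_X$ shows only finitely many sets in a $k_0$-inconsistent row can have full dimension in $X$, and it could alternatively be obtained from supersimplicity of MS-measurable structures with $\SU$-rank bounded by $\dim$, as the paper does for $T_{\Sqf}$.
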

\begin{proof}
	By Theorem \ref{thm: FHP in MS meas} and Proposition \ref{prop: colorful FHP meas bdd by burden} (noting that the measures in $\mathfrak{M}$ are definable and pairwise commuting by Lemma \ref{lem: MS meas Keisler meas}).
\end{proof}

Similarly to Corollary \ref{cor: FHP in finite fields}, this implies colorful fractional Helly number $d+1$ for definable families of subsets of $\mathbb{F}_q^d$ of bounded description complexity.

\section{Ultraproducts of the $p$-adics have $\protect\FHP$}\label{sec: ultraprod of Qp}

In this section we prove an Ax-Kochen-Ershov style result for the FHP property in  henselian valued fields (Theorem \ref{thm: AKE for FHP}). We only consider the equi-characteristic $0$ case here (sufficient for the intended application to ultraproducts of the $p$-adics, Corollary \ref{cor: FHP in ultraprod Qp}) and leave the obvious more general questions for the future (see Problem \ref{prob: FHP in valued fields}).

\subsection{Quantifier elimination in henselian valued fields}\label{sec:QE hens val}

We recall some basic facts about the RV language for valued fields (we use \cite{flenner2011relative} as a reference, see also \cite[Section 5.1]{zbMATH07542897}). Fix a valued field $K$, with valuation $\val: K^{\times} \to \Gamma$, value
group $\Gamma$, residue field $k$ and valuation ring $\mathcal{O}$. Let $\RV$ be the quotient
group $K^{\times}/\left(1+\mathfrak{m}\right)$ where $\mathfrak{m}=\left\{ x\in K:\val\left(x\right)>0\right\} $
is the maximal ideal of $\mathcal{O}$. We have a short exact
sequence $1\to k^{\times}\to\RV \overset{\val_{\rv}}{\to}\Gamma\to 0$, where $\val_{\rv}$ is induced by the valuation map and, for $a \in \mathcal{O} \setminus \mathfrak{m}$, the embedding $k^{\times} \to \RV$ sends $a + \mathfrak{m} \in k^{\times}$ to $a(1+ \mathfrak{m}) \in \RV^{\times}$. 
\begin{defn}
	We consider the two-sorted structure $M=\left(K,\RV,\rv\right)$
in the language $L$ (fixed for the rest of the section) consisting of: 
\begin{itemize}
\item the quotient map $\rv:K\to\RV$,
\item on the sort $K$, the ring structure,
\item on the sort $\RV$, the structure $\cdot,1$ of a multiplicative group,
a symbol $0$, a symbol $\infty$ and a ternary relation $\oplus$.

\end{itemize}
The multiplicative group structure on $\RV$ is interpreted as the group structure
induced from $K^{\times}$ and $0\cdot x=x\cdot0=0$, $\infty = \rv(0)$. The relation $\oplus$ is interpreted as the partially defined addition inherited from $K$: $\oplus(a,b,c) \iff \exists x,y,z \in K \left( a =\rv(x) \land b = \rv(y) \land c = \rv(z) \land x+y = z \right)$.  We let $\RV_{\ast}$ denote the structure $(\RV, 0,1,\cdot, \oplus)$, and denote its language by $L_{\RV}$.
\end{defn}

\begin{rem}\label{rem: WD}
\begin{enumerate}

\item Let $\WD(x,y)$ be the (definable in $\RV_{\ast}$) set of pairs of elements in $\RV$ for which the sum is well-defined as 
$$\forall z, z' (\oplus(x,y,z) \land \oplus(x,y,z') \Rightarrow z=z').$$
 Given a pair of elements $x,y \in \RV$ such that $\WD(x,y)$ holds,  $x+y$ denotes the unique element $z \in \RV$ satisfying $\oplus(x,y,z)$. 
\item For $a,b \in K$, we have $\WD(\rv(a),\rv(b)) \Leftrightarrow \val(a+b) = \min \left\{\val(a), \val(b) \right\}$, in which case $\rv(a+b) = \rv(a) + \rv(b)$ (see \cite[Proposition 2.4]{flenner2011relative}).

\item The relation $\val_{\rv}(x) \leq \val_{\rv}(y)$  on $\RV$ is definable in $\RV_{\ast}$ \cite[Proposition 2.8(1)]{flenner2011relative}. Hence the multiplicative group $k^{\times} \cong \ker \val_{\rv}$ is definable in $\RV_{\ast}$, thus the ordered abelian group $\Gamma$ is interpretable in $\RV_{\ast}$, so using $\oplus$ the field $k$ is also interpretable in $\RV_{\ast}$.
\end{enumerate}
\end{rem}

We will use the following Ax-Kochen-Ershov style relative quantifier elimination result: 
\begin{fact}
\label{fac: Flenner's cell decomposition}\cite[Propositions 4.3 and 5.1]{flenner2011relative}
Let $K$ be a henselian valued field with $\ch\left(k\right)=0$ and $M = (K, \RV, \rv)$.
\begin{enumerate}
\item Suppose $S\subseteq K$ is $A$-definable in $M$, for some set of parameters $A$ in $M$. Then there are $n \in \mathbb{N}$, $\alpha_{1},\ldots,\alpha_{n}  \in K \cap \acl(A)$ and an $\acl(A)$-definable subset $D\subseteq\RV^{n}$ such that 
\[
S=\left\{ x\in K:\left(\rv\left(x-\alpha_{1}\right),\ldots,\rv\left(x-\alpha_{n}\right)\right)\in D\right\} \mbox{.}
\]

\item $\RV_{\ast}$ is fully stably embedded (i.e.~the structure on $\RV_{\ast}$ 
induced from $M$, with parameters, is precisely the one described above).
\end{enumerate}
Moreover, this holds with arbitrary additional structure on $\RV_{\ast}$ (see the discussion before \cite[Propositions 4.3]{flenner2011relative}.
\end{fact}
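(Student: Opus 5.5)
Since this is quoted from \cite{flenner2011relative}, I would not reprove it from scratch; the following is the route I would take to recover it. The core is a \emph{relative embedding lemma}. Let $F\subseteq K$ be a henselian subfield, and let $x,x'$ be elements of two saturated models. Suppose there is an $L_{\RV}$-elementary map between the $\RV_\ast$-structures generated by $\rv(F)$ and the elements $\rv(x-c)$, $c\in F$. Suppose further that this map sends $\rv(x-c)\mapsto\rv(x'-c)$ for every $c\in F$. Then $F(x)\to F(x')$, $x\mapsto x'$, should extend to a partial elementary map of the whole two-sorted structure.

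Given the embedding lemma, I would derive (1) by a standard compactness argument. Take $F=\acl(A)\cap K$, which is a henselian subfield since $\acl$ is closed under henselization and $\ch(k)=0$. The lemma shows that $\tp(x/A)$ is implied by the $\acl(A)$-type, in $\RV_\ast$, of the tuple $(\rv(x-c))_{c\in F}$. Compactness then gives finitely many centres $\alpha_1,\ldots,\alpha_n$ and an $\acl(A)$-definable $D\subseteq\RV^n$ such that $S$ is cut out by $(\rv(x-\alpha_i))_i\in D$.

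For (2), I would again use the embedding lemma, now with $x$ ranging over the $\RV$-sort. A back-and-forth shows that any automorphism of $\RV_\ast$ fixing the relevant parameters lifts to the full structure. This yields full stable embeddedness, and the argument is insensitive to extra structure on $\RV_\ast$, which gives the ``moreover'' clause.

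The main step, and the expected obstacle, is proving the embedding lemma itself. I would split into the three classical cases:
\begin{enumerate}
\item $x$ is residual over $F$ (some $\rv(x-c)$ has value group element in $\val(F)$ but new residue);
\item $x$ is value-transcendental over $F$ (some $\val(x-c)\notin\val(F)$);
\item $F(x)/F$ is immediate.
\end{enumerate}
In every case I must show that for each $f\in F[X]$, $\rv(f(x))$ is determined in $\RV_\ast$ by finitely many $\rv(x-c)$. The tool here is the Taylor expansion
\[
f(x)=\sum_i \frac{f^{(i)}(c)}{i!}(x-c)^i,
\]
where the factors $1/i!$ are harmless because $\ch(k)=0$. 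Choosing $c\in F$ as a best approximation to $x$ relative to $f$ makes exactly one term dominant. This works in cases 1 and 2, and in case 3 when the pseudo-Cauchy sequence approaching $x$ is transcendental. In that situation $\rv(f(x))=\rv\bigl(\frac{f^{(i)}(c)}{i!}\bigr)\cdot\rv(x-c)^i$, which is determined by the $\RV$-data. The algebraic immediate case cannot arise: $F$ is henselian with $\ch(k)=0$, so $F$ is algebraically maximal, and a standard Kaplansky-type argument rules it out. Once $\rv$ is preserved on $F[x]$, I would pass to $F(x)$, then to its henselization using uniqueness of henselizations, and finally iterate in a back-and-forth using saturation.
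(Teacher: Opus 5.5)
The paper gives no proof of this statement; it quotes it from \cite{flenner2011relative}. The real question is therefore whether your reconstruction of relative quantifier elimination holds up. The template (an embedding lemma, then compactness with $F=\acl(A)\cap K$) is the standard one. However, the embedding lemma as you state it is false, because your trichotomy omits algebraic extensions that are not immediate.

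Here is a counterexample. Let $F$ be the henselization of $\mathbb{Q}(t)$ for the $t$-adic valuation, so $k_F=\mathbb{Q}$ and $\val(F)=\mathbb{Z}$. Let $F$ sit inside a henselian $K$ containing $s$ with $s^2=2$.
\begin{itemize}
\item For every $c\in F$ we have $\val(s-c)\le 0$, since otherwise $\mathrm{res}(c)=\mathrm{res}(s)=\sqrt{2}\in\mathbb{Q}$.
\item Hence $x:=s$ and $x':=s+t$ satisfy $\rv(x-c)=\rv(x'-c)$ for all $c\in F$, so the identity map meets your hypothesis.
\item Yet $x^2=2\neq x'^2$, so the conclusion fails.
\end{itemize}

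Algebraic maximality only rules out \emph{immediate} algebraic extensions. It says nothing about a new residue algebraic over $k_F$ (this $x$ lies in your case 1) or a new value that is torsion over $\val(F)$ (e.g.\ $x^2=t$, which lies in your case 2). In these cases several Taylor terms tie for the minimum valuation at every centre and cancel. For instance, with $f=X^2-2$ and $x=s$, every $c\in F$ gives at least two dominant terms, and they cancel since $f(s)=0$. This is the same phenomenon that forces the centres in (1) into $\acl(A)$ rather than $\dcl(A)$.

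This does not affect your first step over $F=\acl(A)\cap K$, which is relatively algebraically closed in $K$. It does break the iteration you need to make the map elementary, since $F(x)^h$ is typically not relatively algebraically closed, and the next element may be of exactly this kind.

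Separately, even for a transcendental residue, ``exactly one dominant term'' is false. Take $f=(X-c_0)^2+d^2$ with $\val(d)=\val(x-c_0)$: every $c\in F$ yields at least two dominant terms. What is true there is that their $\RV$-sum is well defined, since there is no cancellation when the residue is transcendental. So $\rv(f(x))$ is still computed by $\oplus$ inside $\RV_\ast$, but by a different mechanism than the one you describe.

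The missing ingredient is to run the back-and-forth on pairs $(F,R)$. Here $R\supseteq\rv(F)$ is a small $\RV_\ast$-substructure and the given map on $R$ is $L_{\RV}$-elementary. Before adjoining transcendental field elements, you first absorb into the field those elements of $R$ that are algebraic over $\rv(F)$, namely new residues algebraic over $k_F$ and values torsion over $\val(F)$. Hensel's lemma in residue characteristic $0$ supplies canonical algebraic lifts $y$ with prescribed $\rv(y)$ on both sides. Elementarity of the map on $R$ is what lets you choose the matching image. A new field element $x$ is then handled by first adding the relevant values $\rv(x-c)$ to $R$ and absorbing them, so that only your three transcendental cases remain (with the residual case treated by non-cancellation).

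This formulation also yields (2) directly, since adjoining an $\RV$-sort element merely enlarges $R$. Your version speaks only about $\rv(x-c)$ for a new field element $x$, so it does not cover that step.
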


\begin{rem}\label{rem: add unif in Flenner}
	\begin{enumerate}
		\item By compactness we get that the stable embeddedness in Fact \ref{fac: Flenner's cell decomposition}(2) is \emph{uniform}. I.e.~for every $\varphi(x,y) \in L$ with $x$ a tuple of sort $\RV$ and $y$ a tuple of arbitrary sorts there is some $\psi(x,z) \in L_{\RV}$ satisfying the following: if $M = (K,\RV,\rv)$ is a  henselian valued field of equicharacteristic $0$ and $b$ is a tuple in $M$ corresponding to $y$, then there is some $c$ a tuple in $\RV(M)$ corresponding to $z$ so that for all tuples $a$ in $\RV(M)$ corresponding $x$ we have 
		\begin{gather*}
		M \models \varphi(a,b) \Leftrightarrow \RV_{\ast}(M) \models \psi(a; c).
	\end{gather*} 
	
	\item Using (1), Fact \ref{fac: Flenner's cell decomposition}(1) and compactness, we have the following additional uniformity in Fact \ref{fac: Flenner's cell decomposition}(1). For every $\varphi(x;y) \in L$, with $x$ of sort $K$, $|x|=1$, and $y$ any finite tuple of variables of any sorts, we can choose $\psi(x_1, \ldots, x_n; z) \in L_{\RV}$ depending only on $\varphi$, with $|x_i|=1$ of sort $\RV$ and $z$ a finite tuple of variables of sort $\RV$, 
	so that: if $M = (K,\RV,\rv)$ is a  henselian valued field of equicharacteristic $0$ and $b$ is a tuple in $M$ corresponding to $y$, then there is some $c$ a tuple in $\RV(M)$ corresponding to $z$ and some $\alpha_1, \ldots, \alpha_n \in K(M)$ so that for any $a \in K(M)$,
	\begin{gather*}
		M \models \varphi(a,b) \Leftrightarrow \RV_{\ast}(M) \models \psi(\rv(a - \alpha_1), \ldots, \rv(a - \alpha_n); c).
	\end{gather*} 

	\end{enumerate}
\end{rem}

\subsection{Ax-Kochen-Ershov for FHP}
In this section we prove the following theorem:
\begin{thm}\label{thm: AKE for FHP}
	Let $M=\left(K,\RV,\rv\right)$ be an equi-characteristic $0$ henselian valued field (viewed as an $L$-structure, see Section \ref{sec:QE hens val}). Then $M$ satisfies $\FHP$ if and only if both the residue field $k$ and the (ordered) value group $\Gamma$ satisfy $\FHP$.
\end{thm}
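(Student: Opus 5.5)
The forward direction is immediate: $k$ and $\Gamma$ are interpretable in $M$ (Remark \ref{rem: WD}(3)), so by Lemma \ref{lem: Basic operations preserving FH}(4) (FHP passes to $T^{\eq}$ and reducts) both inherit $\FHP$. For the converse, the plan has two stages. In the first stage, we show that $\RV_{\ast}$ is $\FHP$ whenever $k$ and $\Gamma$ are; in the second, we lift from $\RV_{\ast}$ to $M$ using Fact \ref{fac: Flenner's cell decomposition}.

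\textbf{Stage 1 ($\RV$ from $k$ and $\Gamma$).} By Lemma \ref{lem: Basic operations preserving FH}(3) it suffices to treat $\varphi(x,y)$ with $x$ a single $\RV$-variable. The short exact sequence $1\to k^{\times}\to \RV\to\Gamma\to 0$ makes $\RV$ resemble a product of $k^\times$ and $\Gamma$, and I would use a relative quantifier elimination for $\RV_{\ast}$ to organize this. In equicharacteristic $0$ the structure is controlled by the fibers $\val_{\rv}^{-1}(\gamma)$, which are $k^\times$-torsors, together with $\Gamma$. Given $\alpha$-many intersecting $d$-tuples of sets $\varphi(\RV,b_i)$, I would first apply $\FHP$ in $\Gamma$ to the projected family $\{\val_{\rv}(\varphi(\RV,b_i))\}$. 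Since projections commute with intersections of witnesses, this yields a $\gamma$ lying in the projections of $\beta n$ of the sets. Then, inside the fiber over $\gamma$, which is a $k^\times$-torsor definable with parameters, I would apply $\FHP$ for $k$ to the fibers $\varphi(\RV,b_i)\cap \val_{\rv}^{-1}(\gamma)$.

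The difficulty is that, after fixing $\gamma$, the fibers need not have a positive fraction of intersecting $d$-tuples. To handle this, I would run the two-step argument of Lemma \ref{lem: Basic operations preserving FH}(3) with $k=k_1k_2$. The auxiliary formula $\exists x_1 \bigwedge_{j\le k_2}\varphi$ is here $\exists u\in \val_{\rv}^{-1}(g)\bigwedge\varphi$, viewed as a formula in the $\Gamma$-variable $g$. Concretely, we treat $\RV$ as the total space of a fibration $x\mapsto(\val_{\rv}(x), \text{fiber coordinate})$, which behaves like a two-variable formula. The fiber coordinate ranges over a torsor rather than over $k$, so I would code it by choosing a representative $r_\gamma$ with $\val_{\rv}(r_\gamma)=\gamma$. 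This is allowed since Remark \ref{rem: FHP pres by func/1-var conj} permits adding parameters, and the induced structure on a fiber is interpretable in $k$ with parameters; this last point uses stable embeddedness and orthogonality-type facts from \cite{flenner2011relative}, or rather their $\RV$-analogue. This stage is where I expect the main obstacle: justifying that the induced structure on fibers and on $\Gamma$ is no richer than $k$ and $\Gamma$, uniformly in parameters, which is needed to invoke $\FHP$ with $\beta$ independent of the parameters.

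\textbf{Stage 2 ($M$ from $\RV_{\ast}$).} Now take $\varphi(x,y)$ with $x$ a single $K$-variable. The $\RV$-variable case is already covered by Stage 1 together with uniform stable embeddedness, Remark \ref{rem: add unif in Flenner}(1). By Remark \ref{rem: add unif in Flenner}(2), each $\varphi(K,b_i)$ has the form $\{a:\psi(\rv(a-\alpha_{i,1}),\ldots,\rv(a-\alpha_{i,n});c_i)\}$ with $\psi$ fixed. The centers $\alpha_{i,j}$ vary with $i$, so we cannot directly transfer to a single $\RV^n$ family. To handle this, I would use that the map $a\mapsto (\rv(a-\alpha))_{\alpha\in F}$, for a finite set of centers $F$, factors through the finite "tree" of balls determined by $F$. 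Given a finite sequence of sets with $\alpha\binom{N}{d}$ intersecting $d$-tuples, let $F=\{\alpha_{i,j}\}$. For $a\in K$ and $\alpha\in F$, the value $\rv(a-\alpha)$ is determined by $\rv(a-\alpha^\ast)$ together with $\rv(\alpha^\ast-\alpha)$, where $\alpha^\ast$ is a center closest to $a$ (this uses Remark \ref{rem: WD}(2)). Consequently each $\varphi(K,b_i)$, restricted to points $a$ whose closest center is $\alpha^\ast$, is the pullback under $a\mapsto \rv(a-\alpha^\ast)$ of an $\RV$-definable set with parameters $c_i$ and $\rv(\alpha^\ast-\alpha_{i,j})$, drawn from a uniform family.

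The obstacle here is that "closest center" depends on the whole set $F$, which grows with $N$. I would instead pigeonhole on which of the boundedly many $\alpha_{i,j}$ is closest to a witness $a$ for each intersecting $d$-tuple. I would then apply Fact \ref{fact: furedi}/Lemma \ref{lem: Furedi repetitions}, as in the proof of Theorem \ref{thm: Sqf is FHP}, to align centers, and conclude by $\FHP$ for the resulting uniform $\RV$-formula. A witness in $\RV$ lifts to $K$ because $\rv$ is surjective on each fiber.
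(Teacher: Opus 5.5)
Your forward direction is fine, but each stage of the converse has a gap.

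\textbf{Stage 1.} You identify, but do not supply, the one input this stage needs. That input is: the structure induced by $\RV_{\ast}$ on $\Gamma$, and on a fibre $\val_{\rv}^{-1}(\gamma)$ transported to $k^{\times}$, is uniformly (in the parameters) just the pure ordered group and the pure field. Without it, neither of your two applications of $\FHP$ is licensed. The sets $\{g : \exists u\,(\val_{\rv}(u)=g\land\bigwedge_j\varphi(u,b_j))\}$ and $\{v\in k^{\times}:\varphi(r_\gamma v,b_i)\}$ are a priori only $\RV_{\ast}$-definable, and your appeal to ``orthogonality-type facts'' is exactly the missing content. Once this input is available, your two-step scheme from Lemma \ref{lem: Basic operations preserving FH}(3) does go through. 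No uniform choice of representatives is needed, since only one $\gamma$ is ever fixed and $r_\gamma$ is just a parameter. The paper obtains the input structurally:
\begin{itemize}
\item in the $\aleph_1$-saturated $\M$ the sequence $1\to k^{\times}\to\RV\to\Gamma\to 0$ splits;
\item $\RV_{\ast}$ is bi-interpretable with this sequence;
\item the sequence expanded by a section is bi-interpretable with the two-sorted disjoint union of $k$ and $\Gamma$, with no extra structure.
\end{itemize}
That union is $\FHP$ by Lemma \ref{lem: Basic operations preserving FH}(3), and $\FHP$ passes to $\RV_{\ast}$ by interpretability and reducts.

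\textbf{Stage 2.} Your closest-center observation is correct. If $\alpha^{\ast}$ is closest to $a$ among a finite set of centers containing $\alpha$, then $\rv(a-\alpha)$ is $\rv(\alpha^{\ast}-\alpha)$, $\rv(a-\alpha^{\ast})$, or their sum, according to how $\val(a-\alpha^{\ast})$ compares with $\val(\alpha^{\ast}-\alpha)$; in the equal case, closeness forces $\WD$.

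The alignment step, however, fails. Pigeonholing over the boundedly many \emph{positions} of the closest center inside an intersecting $d$-set $S$ does not make that center the same element of $K$ for different $S$. Lemma \ref{lem: Furedi repetitions} cannot produce a common element either: it only makes the sets transversal to disjoint classes, which is how it is used in the $\Sqf$ proof. Without a single $\alpha^{\ast}$, the witnesses $\rv(a_S-\alpha^{\ast}_S)$ live in different coordinates, so there is no single $\RV$-family to which $\FHP$ can be applied.

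The missing trick is the one the paper uses in Cases 1.1, 1.2 and 3.3 of Lemma \ref{lem: reduction to RV, key lemma}. Suppose at least $\delta{N\choose d}$ of the $d$-sets are consistent.
\begin{itemize}
\item Pigeonhole over all $Nn$ pairs $(i_S,j_S)$, namely the index owning the closest center and which of its centers it is. This loses a factor $N$.
\item Recover that factor by deleting $i^{\ast}$. The sets $S\setminus\{i^{\ast}\}$ are distinct and at least $\frac{\delta}{Nn}{N\choose d}\geq\delta'{N\choose d-1}$ in number.
\item Each of them is consistent for the $\RV$-family $\psi'(u;c_i,(\rv(\alpha^{\ast}-\alpha_{i,j}))_j)$. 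Here $\psi'$ also contains the conjuncts $\val_{\rv}(u)\neq\val_{\rv}(w_j)\lor\WD(u,w_j)$.
\item Apply $\FHP_{d-1}$ to $\psi'$, taking $d$ one more than its Helly number, and lift $u$ to any $a$ with $\rv(a-\alpha^{\ast})=u$.
\end{itemize}
The lift works because those conjuncts make $\rv(a-\alpha_{i,j})$ computable from $u$, with no closeness of $a$ required.

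Thus repaired, your Stage 2 is a genuinely shorter route than the paper's, because it treats all centers at once. It replaces two pieces of the paper's argument:
\begin{itemize}
\item the induction on the number of centers, with its four-case split, in Proposition \ref{prop: FHP for K from RV};
\item the indiscernible-sequence (pseudo-convergent/fan) analysis in Lemma \ref{lem: reduction to RV, key lemma}.
\end{itemize}
In the paper, the ``last element'' $b_{i_S}$ plays the role of your closest center, but only after that analysis.
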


In the rest of the section we prove this theorem (we do not try to optimize the bounds). Throughout, we let $M=\left(K,\RV,\rv\right)$ be an equi-characteristic $0$ henselian valued field, viewed as an $L$-structure, and $\M \succ M$ a monster model.  By $\aleph_1$-saturation, we may always
assume that $\M$ admits a cross-section map $\ac: K(\M) \to k^{\times}(\M)$,
so we can view $\M$ also as a structure in the language
$L_{\ac}$ with the angular component map added to the language.

The distinction of the cases below is inspired by the analysis in \cite[Section 7.2]{chernikov2014theories}, \cite{chernikov2016henselian} and \cite{zbMATH07542897}. The following two lemmas are easy to verify by the basic properties of valuations (see the proofs of \cite[Lemma 6]{chernikov2010indiscernible} and \cite[Lemma 7]{chernikov2010indiscernible}, replacing $c_{-\infty}, c_{+ \infty}$ by $c_0, c_n$ and restricting to $1 \leq i<j \leq n-1$).
\begin{lem}
\label{lem: val on indisc seq}There is a \emph{finite} set of formulas
$\Delta\subseteq L$ such that the following holds for any $n \geq 3$. Let $\left(b_{i}: 0 \leq i \leq n \right)$ be a $\Delta$-indiscernible sequence
of singletons in $\mathbb{K}$, and consider the function $\left(i,j\right)\mapsto\val\left(b_{j}-b_{i}\right)$
for $1 \leq i<j \leq n-1$ (so we ignore the first and the last elements of the sequence). Then one of the following cases occurs:

\begin{enumerate}
\item it is strictly increasing depending only on $i$ (in which case we
say that the sequence $(b_i : 1 \leq i \leq n-1)$ is \emph{pseudo-convergent}),
\item it is strictly decreasing depending only on $j$ (so the sequence $(b_i : 1 \leq i \leq n-1)$ taken in the reverse direction is pseudo-convergent),
\item it is constant (we refer to such a sequence $(b_i : 1 \leq i \leq n-1)$ as a ``fan'').
\end{enumerate}
\end{lem}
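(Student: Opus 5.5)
The plan is to reduce the statement to a finite configuration of ultrametric facts and then let indiscernibility do the rest. Fix an arbitrary $\Delta$-indiscernible $(b_i : 0 \leq i \leq n)$ and write $v(i,j) := \val(b_j - b_i)$. I would take $\Delta$ to consist of the finitely many $L$-formulas in four variables $x_1,x_2,x_3,x_4$ (of the field sort) expressing the comparisons $\val(x_{s}-x_{t}) \mathrel{\square} \val(x_{s'}-x_{t'})$ with $\square \in \{<,=,>\}$, over all index pairs $s<t$, $s'<t'$ from $[4]$. Then $\Delta$-indiscernibility says that the relative order of the values $v(i,j)$ on any increasing quadruple of indices is independent of the chosen quadruple. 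This $\Delta$ is finite and independent of $n$.

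The first key step is to use only the ultrametric inequality on triangles. For $i<j<l$, among $v(i,j), v(j,l), v(i,l)$ the minimum is attained at least twice. By indiscernibility on triples, exactly one of three patterns occurs uniformly:
\[ v(i,j) < v(j,l) = v(i,l), \quad v(i,j) > v(j,l) = v(i,l), \quad v(i,j) = v(j,l) \leq v(i,l). \]
Here I use that the pair $(v(i,j),v(j,l))$ comparison type is constant. In the first pattern, $v(i,l) = v(j,l)$ for all $i<j<l$, so $v(i,l)$ depends only on $l$. It is then strictly decreasing in $l$: from $v(i,j)<v(j,l)$ we get $v(\cdot, j) < v(\cdot, l)$ for $j<l$. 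This gives case (2) after checking the direction against the statement's convention; the mirror pattern gives case (1), with dependence only on $i$.

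The main obstacle is the third pattern, where I must rule out the strict inequality $v(i,j)=v(j,l)<v(i,l)$. This is where quadruples and the dropped endpoints enter. Suppose $v(i,j) = v(j,l) < v(i,l)$ for all $i<j<l$, and take $i<j<l<m$. Applying the pattern to $(i,l,m)$ gives $v(i,l)=v(l,m)$. Applying it to $(j,l,m)$ gives $v(j,l)=v(l,m)$. Hence $v(i,l)=v(j,l)$, contradicting $v(j,l)<v(i,l)$. So the sequence is a fan (case (3)).

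Using indices inside $1 \leq i<j \leq n-1$ together with the outer elements $b_0, b_n$ guarantees that every relevant pair has an index to its left and right. This is exactly why the endpoints are discarded, and it explains the hypothesis $n \geq 3$. I expect this index bookkeeping, deciding which triples and quadruples are available near the ends, to be the only delicate point.
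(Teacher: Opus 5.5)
\textbf{There is an error in the case analysis.} Your overall strategy is the standard verification that the paper delegates to the cited reference: uniform triangle patterns from $\Delta$-indiscernibility, then a quadruple $i<j<l<m$ to exclude the non-constant fan. Your treatment of the third pattern, which you flagged as the main obstacle, is correct. The problem is the first pattern, and which pattern yields which case.

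For $i<j<l$ we have $b_l-b_i=(b_l-b_j)+(b_j-b_i)$. So when $v(i,j)\neq v(j,l)$, the value $v(i,l)$ equals the \emph{smaller} of the two. Your first pattern $v(i,j)<v(j,l)=v(i,l)$ therefore never occurs: its minimum is attained only once, contradicting the rule you quoted one line earlier. What you deduce from it is dependence only on the second index with $v(\cdot,j)<v(\cdot,l)$ for $j<l$. That is strictly \emph{increasing} in the second index (which you then call decreasing), and it is not among (1)--(3). So, as written, the case analysis does not establish the trichotomy.

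The correct uniform patterns are
\[
v(i,j)=v(i,l)<v(j,l),\qquad v(j,l)=v(i,l)<v(i,j),\qquad v(i,j)=v(j,l)\leq v(i,l).
\]
\begin{enumerate}
\item In the first, $w(i):=v(i,j)$ depends only on $i$, and $w(i)=v(i,j)<v(j,l)=w(j)$ for $i<j<l$. This is case (1), pseudo-convergence.
\item Your second listed pattern is the one giving case (2): $u(l):=v(i,l)$ depends only on $l$, and $u(j)=v(i,j)>v(j,l)=u(l)$ for $i<j<l$.
\end{enumerate}
With this correction, together with your quadruple argument for the third pattern, the proof goes through.

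Your closing explanation of why the endpoints are discarded does not match your own argument. With the three-variable comparisons in $\Delta$, both the triangle patterns and the quadruple step work on all pairs $0\leq i<j\leq n$. The hypothesis $n\geq 3$ is used only to guarantee that some quadruple exists. The restriction to $1\leq i<j\leq n-1$ comes from the paper's adaptation of the cited proof (with $b_0,b_n$ in place of $c_{\pm\infty}$); your proof does not need it.
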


\begin{lem}
\label{lem: val phase change} For any $n \geq 3$, let $\left(b_{i}:0 \leq i \leq n\right)$ be a pseudo-convergent sequence of singletons from $\mathbb{K}$. Then for any $d\in\mathbb{K}$ there
is some $0 \leq j \leq n$ such that:

\begin{enumerate}
\item for all $0 \leq i<j$: $\val\left(b_{n}-b_{i}\right)<\val\left(d-b_{n}\right)$,
which implies $\val\left(d-b_{i}\right)=\val\left(b_{n}-b_{i}\right)$
and $\ac\left(d-b_{i}\right)=\ac\left(b_{n}-b_{i}\right)$,
\item for all $n-1 \geq i>j$: $\val\left(b_{n}-b_{i}\right)>\val\left(d-b_{n}\right)$,
which implies $\val\left(d-b_{i}\right)=\val\left(d-b_{n}\right)$
and $\ac\left(d-b_{i}\right)=\ac\left(d-b_{n}\right)$.
\end{enumerate}
\end{lem}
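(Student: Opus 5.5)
The statement to prove is Lemma \ref{lem: val phase change}: for a pseudo-convergent sequence $(b_i : 0 \leq i \leq n)$ and any $d \in \mathbb{K}$, there is a threshold $j$ below which $\val(b_n - b_i) < \val(d - b_n)$ and above which $\val(b_n - b_i) > \val(d - b_n)$, with the stated consequences for $\val$ and $\ac$ of $d - b_i$. The plan is a direct ultrametric argument.

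\textbf{Step 1: monotonicity of $\val(b_n - b_i)$.} Pseudo-convergence (in the sense of Lemma \ref{lem: val on indisc seq}(1)) says that $\gamma_i := \val(b_j - b_i)$, for $i < j$, depends only on $i$ and is strictly increasing in $i$. In particular $\gamma_i = \val(b_n - b_i)$ for every $i < n$, so $i \mapsto \val(b_n - b_i)$ is strictly increasing on $\{0, \ldots, n-1\}$.

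\textbf{Step 2: choice of $j$.} Put $\delta := \val(d - b_n)$, which may be $\infty$. Because the $\gamma_i$ are strictly increasing, at most one index $i$ satisfies $\gamma_i = \delta$. Define $j$ as the least $i \leq n-1$ with $\gamma_i \geq \delta$, and set $j := n$ if there is no such $i$. Then every $i < j$ has $\gamma_i < \delta$. Every $i$ with $j < i \leq n-1$ has $\gamma_i > \gamma_j \geq \delta$, so $\gamma_i > \delta$. If $\delta = \infty$, i.e.\ $d = b_n$, then $j = n$, the second clause is vacuous, and the first clause is trivial.

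\textbf{Step 3: the consequences for $\val$ and $\ac$.} Write $d - b_i = (d - b_n) + (b_n - b_i)$.
\begin{enumerate}
\item For $i < j$, $\val(b_n - b_i) < \val(d - b_n)$. So $d - b_i = (b_n - b_i)(1 + \epsilon)$ with $\val(\epsilon) > 0$, which gives $\val(d - b_i) = \val(b_n - b_i)$ and $\rv(d - b_i) = \rv(b_n - b_i)$.
\item For $i > j$, the strict inequality is reversed, and the same factorization with $d - b_n$ as the dominant term gives $\val(d - b_i) = \val(d - b_n)$ and $\rv(d - b_i) = \rv(d - b_n)$.
\end{enumerate}
The $\ac$ statements follow from the $\rv$ statements, since an angular component map is multiplicative and satisfies $\ac(1 + \epsilon) = 1$ whenever $\val(\epsilon) > 0$.

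The argument is routine. The only point requiring care is the boundary behaviour: the case $\delta = \infty$, the possibility that $\gamma_j = \delta$ at the single index $i = j$ (which the lemma deliberately leaves uncovered), and the inclusion of the endpoint $b_0$, which is handled by assuming pseudo-convergence on the whole indexed range.
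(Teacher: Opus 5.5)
Your proof is correct and is the routine ultrametric verification the paper intends; the paper writes out no argument of its own and only refers to Lemmas 6 and 7 of the cited work. One negligible slip: if $b_{n-1}=b_n$ (which the literal definition of pseudo-convergence allows) and $d=b_n$, then your rule gives $j=n-1$ rather than $j=n$, but both clauses of the lemma still hold for that $j$, so the argument is unaffected.
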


First we consider the key special case:
\begin{lem}
\label{lem: reduction to RV, key lemma} Assume $\RV_{\ast}$ has $\FHP$. Then every   partitioned formula $\psi\left(x;y,z\right)\in L$
of the form $\varphi\left(\rv\left(x-y\right),z\right)$, with $\varphi\left(x',z\right)$ an $L_{\RV}$-formula and $x,y$ singleton variables of sort $K$ and $z$ an arbitrary finite tuple of variables of sort $\RV$, has $\FHP$.
\end{lem}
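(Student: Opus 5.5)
We work with a finite tuple $(b_i,c_i)_{i\in[n]}$, where $b_i\in K$ and $c_i$ is a tuple from $\RV$, and with the sets $S_i:=\{a\in K:\varphi(\rv(a-b_i),c_i)\}$. Assume $|\Cons_k(\bar S)|\ge\alpha\binom nk$, with $k$ to be fixed at the end. The aim is a $\beta n$-sized subfamily with a common point. The plan is to pass to a large, highly structured subsequence of the $b_i$'s on which the map $a\mapsto(\rv(a-b_i))_i$ is controlled by a bounded amount of $\RV$-data. The intersection problem on $K$ then turns into an intersection problem for an $\RV_\ast$-definable family, and we apply $\FHP$ of $\RV_\ast$ there.

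\textbf{Step 1: structuring the centres.} We want to reduce to the case where the centres $(b_i)$, suitably ordered, form one of the three configurations of Lemma~\ref{lem: val on indisc seq}, namely a pseudo-convergent sequence or a fan. Ramsey alone gives only logarithmic-size homogeneous sets, not a positive proportion, so we will instead use the tree structure of the balls. Consider the ultrametric tree of the finite set $\{b_i\}$ (with repetitions), with branching at the values $\val(b_i-b_j)$. A density argument on this tree (descend into a child ball while it still contains a $k$-tuple-density-relevant fraction of the intersecting $k$-sets, otherwise split among children) should yield one of two outcomes. Either a single node whose children receive the mass, which is the fan case: all $\val(b_i-b_j)=\gamma$ constant, with $\rv(b_i-b_j)$ varying. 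Or a chain, which is the pseudo-convergent case.

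\textbf{Step 2: fan case.} For $a\in K$, at most one child ball (residue class) of the node can give $\val(a-b_i)>\gamma$. For every $i$ outside that class, $\rv(a-b_i)=\rv(a-b_0)\oplus\rv(b_0-b_i)$ is well defined, and in fact $\rv(a-b_i)$ is determined by $r:=\rv(a-b_0)$ and $e_i:=\rv(b_0-b_i)$ through a fixed $L_{\RV}$-term: it equals $r+e_i$ when $\WD(r,e_i)$ holds, and is otherwise determined by $\val$-comparisons. So $a\in S_i$ iff $\theta(r;e_i,c_i)$ for a fixed $L_{\RV}$-formula $\theta$. Distributing the $k$-sets by how many of their members lie in the exceptional class and pigeonholing, we get positive density of $k$-sets satisfied by such $r$. $\FHP$ for $\theta$ in $\RV_\ast$, together with Lemma~\ref{lem: Basic operations preserving FH}(2) to handle the at most boundedly many disjuncts, gives a $\beta n$-subfamily with a common $r$. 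Lifting $r$ to some $a$ with $\rv(a-b_0)=r$, avoiding the single exceptional ball, gives a common point.

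\textbf{Step 3: pseudo-convergent case.} By Lemma~\ref{lem: val phase change}, for any $a$ there is a cut $j$. Below it, $\rv(a-b_i)=\rv(b_n-b_i)$, which is independent of $a$. Above it, $\rv(a-b_i)=\rv(a-b_n)$, which is the same for all $i$. Thus $a\in S_i$ iff either $i<j$ and $\varphi(\rv(b_n-b_i),c_i)$ holds (a fixed property of $i$), or $i>j$ and $\varphi(r,c_i)$ holds with $r=\rv(a-b_n)$. If a positive fraction of indices satisfy the fixed property and lie on an initial segment, we are done by choosing $a$ with a large cut. Otherwise the intersecting $k$-sets are mostly witnessed on the tail, which gives positive density for the family $\{\varphi(\RV,c_i)\}$ in $\RV_\ast$; $\FHP$ yields a common $r$, and choosing $a$ with $\rv(a-b_n)=r$ at an appropriate cut finishes the argument. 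Here $k$ is taken as the $\FHP$ number of the relevant $\RV$-formulas plus a bounded overhead.

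\textbf{Main obstacle.} I expect the main difficulty to be Step 1, namely extracting a positive-proportion structured subfamily while keeping control of the $\Cons_k$-density, since Ramsey does not give linear size. If the tree-descent argument fails, my first fallback is to encode all of $(\rv(b_i-b_j))$ and the tree position as additional $\RV$-parameters. This would directly exhibit $S_i$ as $\{a:\theta(\rv(a-b^\ast),\dots)\}$ relative to a few base points chosen with positive probability, and then Lemma~\ref{lem: Furedi repetitions} could be used to separate the roles of those base points.
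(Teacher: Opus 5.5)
There is a genuine gap in Step~1, and Steps~2 and~3 depend on it: a linear-size subfamily of centres forming a fan or a pseudo-convergent sequence need not exist.

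Here is an example. In $\mathbb{C}((t))$ take units $u_1,\dots,u_B\in\mathcal{O}$ with pairwise distinct residues, and let $b_{s,s'}:=u_s+u_{s'}t$ for $s,s'\in[B]$, so $n=B^2$. Then $\val(b_{s,s'}-b_{r,r'})$ is $0$ if $s\neq r$, and $1$ if $s=r$, $s'\neq r'$. So every fan has at most $B=\sqrt n$ elements, and every pseudo-convergent sequence (in either direction) has at most $3$. Such a subfamily carries at most $\binom{\sqrt n}{k}=o\bigl(\binom{n}{k}\bigr)$ of the $k$-sets. Yet $|\Cons_k(\bar S)|\ge\alpha\binom nk$ can hold, e.g.\ for $\varphi$ trivially true.

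Consequences for your argument:
\begin{itemize}
\item Your tree descent stops at the root. There the $B$ children each contain $B$ centres, so the pairwise valuations are not constant.
\item Step~2 would then need control over how many indices of each consistent $k$-set fall into the exceptional ball of its witness. The proposal gives none.
\item Your fallback does not say how the base points are chosen, and that is precisely the difficulty.
\end{itemize}

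The missing idea is to impose structure only inside each consistent $k$-set. Ramsey applies there because the size is bounded, and density is transferred by double counting.
\begin{enumerate}
\item Every consistent $k$-set contains a consistent $k_1$-subset on which $(b_i)$ is indiscernible for the finite $\Delta$ of Lemma~\ref{lem: val on indisc seq}. Each $k_1$-set lies in at most $\binom{n-k_1}{k-k_1}$ of the $k$-sets, so there are at least $\alpha_1\binom{n}{k_1}$ such $k_1$-sets.
\item Each small set has one of the three types, and you split relative to its own witness $a_S$. In the pseudo-convergent case, pass to $k_2$-subsets on one side of the cut from Lemma~\ref{lem: val phase change}. For fans, pass to $k_2$-subsets on which $\val(a_S-b_i)$ is uniformly $<$, $=$, or $>$ the common value $\gamma_S$.
\item The anchor comes from pigeonholing on the last index $i_S$. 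There are at most $n$ choices, so some $i^\ast$ works for at least $\alpha_3\binom{n}{k_2-1}$ of the $(k_2-1)$-sets obtained by deleting $i_S$.
\item With $i^\ast$ fixed, $d_i:=\rv(b_{i^\ast}-b_i)$ depends only on $i$. If the retained indices lie before the cut, then $b_{i^\ast}$ itself lies in every $S_i$ for $i$ in the union of these sets, and that union has linear size.
\item Otherwise, apply $\FHP$ of $\RV_\ast$ to $\varphi(x',c_i)\land\val_{\rv}(x')<\val_{\rv}(d_i)$, or to $\varphi(x'+d_i,c_i)\land\val_{\rv}(x')=\val_{\rv}(d_i)\land\WD(x',d_i)$. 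Lift a common solution $e$ to some $a$ with $\rv(a-b_{i^\ast})=e$.
\end{enumerate}

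Your local $\rv$-computations in Steps~2--3 are the right ones. But the valuation side conditions in these formulas are what force the lift onto the correct side of every $b_i$ in the output subfamily. In your Step~3, a common $r$ from $\FHP$ for $\{\varphi(x',c_i)\}$ alone may have valuation that puts many indices of that subfamily on the wrong side of the cut. So an ``appropriate cut'' need not exist.
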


\begin{proof}
Let $k = k(\varphi) \in\mathbb{N}$ be sufficiently large, to be determined in the proof. Let $\alpha>0$ be arbitrary, and we find $\beta=\beta\left(\varphi,\alpha,k\right)>0$
such that the following holds. 
For any $n \in \mathbb{N}$, let $\left(b_{i}:i \in [n]\right)$ with $b_i \in K(\M)$ and $\left(c_{i} : i \in [n]\right)$ with $c_i \in \RV(\M)^{z}$ be arbitrary. Let $\mathcal{F} := \left\{ \psi\left(\M; b_{i},c_{i}\right):i \in [n]\right\} $.
Let $\mathcal{C}_{0} := \Cons_{k}\left(\mathcal{F}\right)$ (we are using the notation from Section \ref{sec: basic FHP and Shelah classification}). Assume that
$\left|\mathcal{C}_{0}\right|\geq\alpha{n \choose k}$ (and without loss of generality $n$ is arbitrarily large with respect to $k$ and $\alpha$), then there
is some $I\subseteq\left[n\right]$ such that $\left|I\right|\geq\beta n$
and $\left\{ \psi\left(M,b_{i}c_{i}\right):i\in I\right\} $ is consistent.

Let finite $\Delta\subseteq L$ be as given by Lemma \ref{lem: val on indisc seq}. Let $k_{1}\in\mathbb{N}$ be arbitrary. Assuming that $k = k(k_1, \Delta)$ is sufficiently large, by Ramsey's theorem every sequence of elements from $K(\M)$ of length $k$ contains a $\Delta$-indiscernible subsequence of length
$k_{1}$. Let $\mathcal{C}_{1} := \left\{ S\in\Cons_{k_{1}}\left(\mathcal{F}\right):\left(b_{i}:i\in S\right)\mbox{ is }\Delta\mbox{-indiscernible}\right\} $, where indiscernibility is with respect to the  order induced from the natural order on $\left[n\right]$. 
Then for every $S\in\mathcal{C}_{0}$ there is some $S'\subseteq S$
such that $S'\in\mathcal{C}_{1}$. On the other hand, each $S'\in\mathcal{C}_{1}$
can be contained in at most ${n-k_{1} \choose k-k_{1}}$ sets $S\in\mathcal{C}_{0}$.
Hence 
\[
\left|\mathcal{C}_{1}\right|\geq\frac{\alpha{n \choose k}}{{n-k_{1} \choose k-k_{1}}}\geq\alpha_{1}{n \choose k_{1}}\mbox{,}
\]
for some $\alpha_{1}=\alpha_{1}\left(\alpha,k,k_{1}\right)>0$.

Now let 
\begin{gather*}
	\mathcal{D}_{1} := \left\{ S\in\mathcal{C}_{1}:\left(b_{i}:i\in  S^{-}\right)\mbox{ is pseudo-convergent}\right\} \mbox{,}\\
	\mathcal{D}_{2} := \left\{ S\in\mathcal{C}_{1}:\mbox{ the reverse of }\left(b_{i}:i\in S^{-}\right)\mbox{ is pseudo-convergent}\right\} \mbox{,}\\
	\mbox{and }\mathcal{D}_{3}=\left\{ S\in\mathcal{C}_{1}:\left(b_{i}:i\in S^{-}\right)\mbox{ is a fan}\right\},
\end{gather*}
where $S^{-}$ is the set obtained from $S$ by removing the minimal and the maximal elements (with respect to the order induced from $[n]$). By Lemma \ref{lem: val on indisc seq}, $\mathcal{C}_{1}=\mathcal{D}_{1}\cup\mathcal{D}_{2}\cup\mathcal{D}_{3}$,
hence $\left|\mathcal{D}_{i}\right|\geq\frac{\alpha_{1}}{3}{n \choose k_{1}}$
must hold for at least one $i\in\left\{ 1,2,3\right\} $.

~

\noindent \textbf{Case 1}. $\left|\mathcal{D}_{1}\right|\geq\frac{\alpha_{1}}{3}{n \choose k_{1}}$.

For each $S\in\mathcal{D}_{1}$, fix some $a_{S}\in K(\M)$ such
that $a_{S}\models\left\{ \psi\left(x;b_{i},c_{i}\right):i\in S\right\} $.
Let $j_{S}\in S^{-}$ be as given by Lemma \ref{lem: val phase change}
for $a_{S}$ and the sequence $\left(b_{i}:i\in S^{-}\right)$. By pigeon-hole,
for every $S\in\mathcal{D}_{1}$ at least one of the sets $L_{S} := \left\{ i\in (S^{-})^{-}:i<j_{S}\right\}$,
$R_{S} := \left\{ i\in (S^{-})^{-}:j_{S}<i\right\}$ must have size
$\geq k_2 := \lfloor \frac{(k_{1}-4)}{2} \rfloor$. Let 
\begin{gather*}
	\mathcal{E}_{1} := \left\{ S\in\Cons_{k_{2}}\left(\mathcal{F}\right):\exists S'\in\mathcal{D}_{1}\left(S\subseteq L_{S'}\right)\right\} \mbox{,}\\
	\mathcal{E}_{2} := \left\{ S\in\Cons_{k_{2}}\left(\mathcal{F}\right):\exists S'\in\mathcal{D}_{1}\left(S\subseteq R_{S'}\right)\right\} \mbox{,}\\
	\mbox{and }\mathcal{E}=\mathcal{E}_{1}\cup\mathcal{E}_{2}\mbox{.}
\end{gather*}

As every $S\in\mathcal{D}_{1}$ contains some $S'\in\mathcal{E}$,
by double counting as above we get 
\[
\left|\mathcal{E}\right|\geq\frac{\frac{\alpha_{1}}{3}{n \choose k_{1}}}{{n-k_{2} \choose k_{1}-k_{2}}}\geq\alpha_{2}{n \choose k_{2}}
\]
 for some $\alpha_{2}=\alpha_{2}\left(\alpha_{1},k_{1}\right)>0$.
Hence $\left|\mathcal{E}_{i}\right|\geq\frac{\alpha_{2}}{2}{n \choose k_{2}}$
for at least one $i\in\left\{ 1,2\right\} $.
\medskip

\noindent \textbf{Case 1.1. }$\left|\mathcal{E}_{1}\right|\geq\frac{\alpha_{2}}{2}{n \choose k_{2}}$.

Unwinding the definition, for every $S\in\mathcal{E}_{1}$ there is
some $a_{S}\in K(\M)$ such that $a_{S}\models\left\{ \psi\left(x; b_{i},c_{i}\right):i\in S\right\} $
and, denoting by $i_{S}$ the last element of $S$ in the order induced
from $[n]$, $\val\left(b_{i_{S}}-b_{i}\right)<\val\left(a_{S}-b_{i_{S}}\right)$, and hence $\rv\left(a_{S}-b_{i}\right)=\rv\left(b_{i_{S}}-b_{i}\right)$, 
for all $i\in S,i<i_{S}$ (see Lemma \ref{lem: val phase change}).

As there are at most $n$ choices for $i_{S}$ as $S$ varies over
$\mathcal{E}_{1}$, it follows that for some $i^{*} \in [n]$, the set 
$$\mathcal{G}_1 := \left\{ S\in\Cons_{k_{2}-1}\left(\mathcal{F}\right):\exists S'\in\mathcal{E}_{1}\left(S=S'\setminus\left\{ i_{S'}\right\} \land i_{S'}=i^{*}\right)\right\} $$
has size at least $\frac{\left|\mathcal{E}_{1}\right|}{n}\geq\alpha_{3}{n \choose k_{2}-1}$
for some $\alpha_{3}=\alpha_{3}\left(\alpha_{2},k_{2}\right)>0$.
Note that, assuming $k_{2}-1\geq 1$, for $\mathcal{H} := \bigcup\mathcal{G}_1$
we have $\left|\mathcal{H}\right|\geq\left(\alpha_{3}{n \choose k_{2}-1}\right)^{\frac{1}{k_{2}-1}}\geq\alpha_{4}n$
for some $\alpha_{4}=\alpha_{4}\left(\alpha_{3},k_{2}\right)>0$.
Note that for every $i\in\mathcal{H}$ there is some $a_{S}\in K(\mathbb{M})$
such that $\models\psi\left(a_{S};b_{i},c_{i}\right)$, that is $\models\varphi\left(\rv\left(a_{S}-b_{i}\right),c_{i}\right)$, 
and $\rv\left(a_{S}-b_{i}\right)=\rv\left(b_{i_{*}}-b_{i}\right)$.
Combining, we get that $b_{i^{*}}\models\left\{ \psi\left(x; b_{i}, c_{i}\right):i\in\mathcal{H}\right\} $.
Hence we have found a consistent subfamily $\mathcal{F}_{0}$ of $\mathcal{F}$ 
of size $\geq\beta_{1}n$ for $\beta_{1} := \alpha_{4}>0$.
\medskip

\noindent\textbf{Case 1.2.} $\left|\mathcal{E}_{2}\right|\geq\frac{\alpha_{2}}{2}{n \choose k_{2}}$.

Unwinding the definition, for every $S\in\mathcal{E}_{2}$ there is
some $a_{S}\in K(\mathbb{M})$ such that $a_{S}\models\left\{ \psi\left(x; b_{i}, c_{i}\right):i\in S\right\} $
and, denoting by $i_{S}$ the last element of $S$ in the order induced
from $[n]$, $\val\left(a_{S}-b_{i}\right)<\val\left(b_{i_{S}}-b_{i}\right)$, and hence $\rv\left(a_{S}-b_{i}\right)=\rv\left(a_{S}-b_{i_{S}}\right)$, 
for all $i\in S$, $i<i_{S}$ (see Lemma \ref{lem: val phase change}).

Again, as there are at most $n$ choices for $i_{S}$ as $S$ varies
over $\mathcal{E}_{2}$, it follows that for some $i^{*} \in [n]$, the
set 
$$\mathcal{G}_2 := \left\{ S\in\Cons_{k_{2}-1}\left(\mathcal{F}\right):\exists S'\in\mathcal{E}_{2}\left(S=S'\setminus\left\{ i_{S'}\right\} \land i_{S'}=i^{*}\right)\right\} $$
has size at least $\frac{\left|\mathcal{E}_{2}\right|}{n}\geq\alpha_{3}{n \choose k_{2}-1}$
for some $\alpha_{3}=\alpha_{3}\left(\alpha_{2},k_{2}\right)>0$.

Let $\psi'\left(x';z,z'\right)$ be the $L_{\RV}$-formula $\varphi\left(x',z\right)\land\left(\val_{\rv}(x')<\val_{\rv} \left(z'\right)\right)$.
Consider the family 
$$\mathcal{F}' := \left\{ \varphi\left(x',c_{i}\right)\land\left(\val_{\rv}(x')<\val_{\rv}\left(b_{i^{*}}-b_{i}\right)\right):i \in [n]\right\} =\left\{ \psi'\left(\M;c_{i},d_{i}\right):i \in [n] \right\} $$
of subsets of $\RV(\M)$, where $d_{i} := \rv\left(b_{i^{*}}-b_{i}\right)$.
Note that $\mathcal{G}_2 \subseteq \Cons_{k_{2}-1}\left(\mathcal{F}'\right)$,
hence $\left|\Cons_{k_{2}-1}\left(\mathcal{F}'\right)\right|\geq\alpha_{3}{n  \choose k_{2}-1}$.

As $\psi'\left(x';z,z'\right)$ is an $L_{\RV}$-formula, by assumption (and Remark \ref{rem: add unif in Flenner}(1)) 
it has FHP. Assuming $k_{2}-1$ is larger than the fractional Helly number
of $\psi'$, there is some $\beta_{2}=\beta_{2}\left(\psi',\alpha_{3}\right)>0$
and $I\subseteq\left[n\right]$ such that $\left|I\right|\geq\beta_{2}n$
and 
$$\left\{ \varphi\left(x',c_{i}\right)\land\left(\val_{\rv}\left(x'\right)< \val \left(b_{i^{*}}-b_{i}\right)\right):i\in I\right\} $$
is consistent, say realized by $e\in\RV(\M)$ (where $\val \left(b_{i^{*}}-b_{i}\right)$
is a shortcut for $\val_{\rv}\left(\rv\left(b_{i^{*}}-b_{i}\right)\right)$
in our language). Let $a\in K(\M)$ be such that $\rv\left(a-b_{i^{*}}\right)=e$.
Then $\val\left(a-b_{i^{*}}\right)=\val_{\RV}\left(e\right)<\val\left(b_{i^{*}}-b_{i}\right)$
for all $i\in I$, hence $\rv\left(a-b_{i}\right)=\rv\left(a-b_{i^{*}}\right)=e$
for all $i\in I$. And so $a\models\left\{ \varphi\left(\rv\left(x-b_{i}\right),c_{i}\right):i\in I\right\} $.
Hence we have found a consistent subfamily $\mathcal{F}_{0}$ of $\mathcal{F}$
of size $\geq\beta_{2}n$.

~

\noindent \textbf{Case 2}. $\left|\mathcal{D}_{2}\right|\geq\frac{\alpha_{1}}{3}{n \choose k_{1}}$.

Reduces to Case 1 by working with the reverse order on $[n]$.

~

\noindent\textbf{Case 3.} $\left|\mathcal{D}_{3}\right|\geq\frac{\alpha_{1}}{3}{n \choose k_{1}}$.

For each $S\in\mathcal{D}_{3}$, the $\Delta$-indiscernible sequence
$\left(b_{i}:i\in S^{-} \right)$ is a fan, that is there is some $\gamma_{S}\in\Gamma$
such that $\val\left(b_{j}-b_{i}\right)=\gamma_{S}$ for all $i<j\in S^{-}$.
We fix some $a_{S}\in K(\M)$ such that $a_{S}\models\left\{ \psi\left(x;b_{i},c_{i}\right):i\in S\right\} $.

For each $S\in\mathcal{D}_{3}$, let 
\begin{gather*}
	S_{<} := \left\{ i\in S^-:\val\left(a_{S}-b_{i}\right)<\gamma_{S}\right\}, S_{>} := \left\{ i\in S^-:\val\left(a_{S}-b_{i}\right)>\gamma_{S}\right\} \\
	S_{=} := \left\{ i\in S^- : \val\left(a_{S}-b_{i}\right)=\gamma_{S}\right\}.
\end{gather*}
Then $S^-$ is a disjoint union of $S_{<},S_{>},S_{=}$. Let $k_{2} := \left\lfloor \frac{k_{1}-2}{3}\right\rfloor $.
Let 
\begin{gather*}
	\mathcal{K}_{1} := \left\{ S'\in\Cons_{k_{2}}\left(\mathcal{F}\right):\exists S\in\mathcal{D}_{3}\left(S'\subseteq S_{<}\right)\right\} \mbox{,}\\
	\mathcal{K}_{2} := \left\{ S'\in\Cons_{k_{2}}\left(\mathcal{F}\right):\exists S\in\mathcal{D}_{3}\left(S'\subseteq S_{>}\right)\right\} \mbox{,}\\
	\mathcal{K}_{3} := \left\{ S'\in\Cons_{k_{3}}\left(\mathcal{F}\right):\exists S\in\mathcal{D}_{3}\left(S'\subseteq S_{=}\right)\right\} \mbox{,}\\
	\mathcal{K} := \mathcal{K}_{1}\cup\mathcal{K}_{2}\cup\mathcal{K}_{3}\mbox{.}
\end{gather*}

As every $S\in\mathcal{D}_{3}$ contains some $S'\in\mathcal{K}$,
by double counting we get 
\[
\left|\mathcal{K}\right|\geq\frac{\frac{\alpha_{1}}{3}{n \choose k_{1}}}{{n-k_{2} \choose k_{1}-k_{2}}}\geq\alpha_{2}{n \choose k_{2}}
\]
 for some $\alpha_{2}=\alpha_{2}\left(\alpha_{1},k_{1}\right)>0$.
Hence $\left|\mathcal{K}_{i}\right|\geq\frac{\alpha_{2}}{3}{n \choose k_{2}}$
for some $i\in\left\{ 1,2,3\right\} $.
\medskip

\noindent \textbf{Case 3.1. }$\left|\mathcal{K}_{1}\right|\geq\frac{\alpha_{2}}{3}{n \choose k_{2}}$.

If $S\in\mathcal{K}_{1}$, then in particular $\val\left(a_{S}-b_{i}\right)< \gamma_S = \val\left(b_{i_{S}}-b_{i}\right)$
for all $i\in S$, where $i_{S}$ is the last element of $S$ in the
 order induced from $[n]$. Hence $\rv\left(a_{S}-b_{i}\right)=\rv\left(a-b_{i_{S}}\right)$
for all $i\in S,i<i_{S}$. Then we can conclude exactly as in Case
1.2. 
\medskip

\noindent \textbf{Case 3.2.} $\left|\mathcal{K}_{2}\right|\geq\frac{\alpha_{2}}{3}{n \choose k_{2}}$.

Note that if $S\in\mathcal{K}_{2}$, then $\left|S\right|\leq1$.
Indeed, if $\val\left(a_{S}-b_{i}\right)> \gamma_S = \val\left(b_{i}-b_{j}\right)$
then $\val\left(b_{i}-b_{j}\right)=\val\left(a_{S}-b_{j}\right) > \gamma_S$, a contradiction.
Hence, assuming $k_{2}\geq2$, this case does not occur.
\medskip

\noindent \textbf{Case 3.3.} $\left|\mathcal{K}_{3}\right|\geq\frac{\alpha_{2}}{3}{n \choose k_{2}}$.

In this case for every $S\in\mathcal{K}_{3}$ we have $\val\left(a_{S}-b_{i}\right)=\val\left(b_{j}-b_{i}\right)=\gamma_{S}$
for all $i<j\in S$. Let $i_{S}$ denote the last element of $S$
in the order induced from $[n]$. It follows that $\rv\left(a_{S}-b_{i}\right)=\rv\left(a_{S}-b_{i_{S}}\right)+\rv\left(b_{i_{S}}-b_{i}\right)$
for all $i\in S,i<i_{S}$.

There are at most $n$ choices for $i_{S}$ when $S$ varies over $\mathcal{K}_{3}$. 
It follows that for some $i^{*} \in [n]$, the set 
$$\mathcal{L} := \left\{ S\in\Cons_{k_{2}-1}\left(\mathcal{F}\right):\exists S'\in\mathcal{K}_{3}\left(S=S'\setminus\left\{ i_{S'}\right\} \land i_{S'}=i^{*}\right)\right\} $$
has size at least $\frac{\left|\mathcal{K}_{3}\right|}{n}\geq\alpha_{3}{n \choose k_{2}-1}$
for some $\alpha_{3}=\alpha_{3}\left(\alpha_{2},k_{2}\right)>0$.

Let $\psi'\left(x';z, z'\right)$ be the $L_{\RV}$-formula 
$$\varphi\left(x'+z',z\right)\land\left(\val_{\rv}\left(x'\right)=\val_{\rv}\left(z'\right)\right)\land\WD\left(x',z'\right)$$
(see Remark \ref{rem: WD}), and let $\mathcal{F}'$ be the family 
\begin{gather*}
	 \left\{ \varphi\left(x'+\rv\left(b_{i^{*}}-b_{i}\right),c_{i}\right)\land\left(\val_{\rv}\left(x'\right)=\val\left(b_{i^{*}}-b_{i}\right)\right)\land\mbox{\ensuremath{\WD}}\left(x',\rv\left(b_{i^{*}}-b_{i}\right)\right):i \in [n]\right\} \\
	=\left\{ \psi'\left(x'; c_{i},d_{i}\right):i \in [n]\right\} 
\end{gather*}
 of definable subsets of $\RV(\M)$, with $d_{i} := \rv\left(b_{i^{*}}-b_{i}\right)$.
Note that $\mathcal{L}\subseteq\Cons_{k_{2}-1}\left(\mathcal{F}'\right)$,
hence $\left|\Cons_{k_{2}-1}\left(\mathcal{F}'\right)\right|\geq\alpha_{3}{n  \choose k_{2}-1}$.

As $\psi'\left(x';z,z'\right)$ is an $L_{\RV}$-formula, by assumption
it has FHP. If $k_{2}-1$ is larger than the fractional Helly number
of $\psi'$, then there is some $\beta_{3}=\beta_{3}\left(\psi',\alpha_{3}\right)>0$
and $I\subseteq\left[n\right]$ such that $\left|I\right|\geq\beta_{3}n$
and $\left\{ \psi'\left(x';c_{i},d_{i}\right):i\in I\right\} $ is
consistent, realized by some $e\in\RV(\M)$.

Let $a\in K(\M)$ be such that $\rv\left(a-b_{i^{*}}\right)=e$.
Then in particular $\val\left(b_{i^{*}}-b_{i}\right)=\val_{\rv}\left(e\right)=\val\left(a-b_{i^{*}}\right)$
for all $i\in I$ and, as $\WD\left(e,\rv\left(b_{i^{*}}-b_{i}\right)\right)$
holds, we have $\rv\left(a-b_{i}\right)=\rv\left(a-b_{i^{*}}\right)+\rv\left(b_{i^{*}}-b_{i}\right)=e+\rv\left(b_{i^{*}}-b_{i}\right)$
for all $i\in I$. Hence $a\models\left\{ \varphi\left(\rv\left(x-b_{i}\right),c_{i}\right):i\in I\right\} $,
and we have found a consistent subfamily $\mathcal{F}_{0}$ of $\mathcal{F}$
of size $\geq\beta_{3}n$.

~

Finally, choosing $k$ sufficiently large --- satisfying all of the assumptions
in each of the cases, which can be done depending only on the formula $\varphi$ --- and taking $\beta := \min\left\{ \beta_{i}\right\}$ with $\beta_i$ appearing in each of the cases, we have demonstrated the claim.
\end{proof}
\begin{prop}\label{prop: FHP for K from RV}
Assume $\RV_{\ast}$ satisfies $\FHP$. Then every partitioned formula $\varphi(x,y) \in L$ satisfies $\FHP$.
\end{prop}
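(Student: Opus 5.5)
The plan is to reduce to formulas in a single field variable, normalize them with the relative quantifier elimination, and then combine Lemma \ref{lem: reduction to RV, key lemma} with the closure properties from Lemma \ref{lem: Basic operations preserving FH}.

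First, the reduction to one variable. The structure is two-sorted, so $x$ may contain variables of sort $K$ as well as of sort $\RV$. The inductive argument in Lemma \ref{lem: Basic operations preserving FH}(3) only ever splits off one variable at a time. So it suffices to show that every $\varphi(x,y)$ with $x$ a single variable, of either sort, has $\FHP$. If $x$ is of sort $\RV$, then by uniform stable embeddedness (Remark \ref{rem: add unif in Flenner}(1)) there is an $L_{\RV}$-formula $\psi(x,z)$, depending only on $\varphi$, such that each $\varphi(\M,b)$ equals $\psi(\RV(\M),c)$ for some $c$. Hence $\mathcal{F}_\varphi\subseteq\mathcal{F}_\psi$, and $\FHP$ for $\psi$, which holds since $\RV_\ast$ is $\FHP$, passes to the subfamily.

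Now let $x$ be a single variable of sort $K$. By Remark \ref{rem: add unif in Flenner}(2) there are a fixed $n$ and a fixed $\psi(x_1,\dots,x_n;z)\in L_{\RV}$ such that every instance $\varphi(\M,b)$ has the form
\[
\{a : \RV_\ast\models \psi(\rv(a-\alpha_1),\dots,\rv(a-\alpha_n);c)\}.
\]
So it suffices to prove $\FHP$ for $\theta(x;y_1,\dots,y_n,z):=\psi(\rv(x-y_1),\dots,\rv(x-y_n);z)$, whose family of instances contains $\mathcal{F}_\varphi$. Lemma \ref{lem: reduction to RV, key lemma} handles exactly the case $n=1$. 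To reach general $n$, I will decompose $\psi$ into a finite disjunction of conjunctions, each of which reduces to a single $\rv(x-y_j)$, and then apply Lemma \ref{lem: Basic operations preserving FH}(2).

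The decomposition runs as follows. For fixed parameters $\alpha_1,\dots,\alpha_n$ and a given $a$, let $j$ be an index with $\val(a-\alpha_j)$ maximal, taking the least such index. For every $i$, $\val(\alpha_i-\alpha_j)\ge \val(a-\alpha_j)$ fails only in the favourable direction. More precisely, by the ultrametric inequality one of three things holds:
\begin{itemize}
\item[(i)] $\val(\alpha_j-\alpha_i)<\val(a-\alpha_j)$, in which case $\rv(a-\alpha_i)=\rv(\alpha_j-\alpha_i)$;
\item[(ii)] $\val(\alpha_j-\alpha_i)>\val(a-\alpha_j)$, in which case $\rv(a-\alpha_i)=\rv(a-\alpha_j)$;
\item[(iii)] the two valuations are equal and the sum is well defined, in which case $\rv(a-\alpha_i)=\rv(a-\alpha_j)+\rv(\alpha_j-\alpha_i)$.
\end{itemize}
The maximality of $j$ excludes cancellation in case (iii), since cancellation would force $\val(a-\alpha_i)>\val(a-\alpha_j)$. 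Thus $\theta(x;\bar y,z)$ is equivalent to a finite disjunction, over $j$ and over the case pattern $\sigma\in\{\text{i},\text{ii},\text{iii}\}^n$, of formulas of the form
\[
\chi_{j,\sigma}\bigl(\rv(x-y_j);\ z,\ (\rv(y_j-y_i))_{i\le n}\bigr).
\]
Here $\chi_{j,\sigma}$ is an $L_{\RV}$-formula expressing, in terms of $x':=\rv(x-y_j)$, the valuation comparisons that define pattern $\sigma$, the maximality of $j$ (which is also expressible through these comparisons and $\WD$), and $\psi$ with each argument substituted by the corresponding expression in $x'$ and $\rv(y_j-y_i)$. Each disjunct is therefore of the form $\chi(\rv(x-y'),z')$ with $y'=y_j$ and with $z'$ an $\RV$-tuple that is a definable function of $(\bar y,z)$. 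Lemma \ref{lem: reduction to RV, key lemma} gives $\FHP$ for $\chi(\rv(x-y'),z')$ in the variables $(y',z')$. Remark \ref{rem: FHP pres by func/1-var conj} transfers this through the definable reparametrization $(\bar y,z)\mapsto(y_j,z,(\rv(y_j-y_i))_i)$. Finally, Lemma \ref{lem: Basic operations preserving FH}(2) closes up under the finite disjunction.

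I expect the main obstacle to be the bookkeeping in this case analysis. I have to make sure that each disjunct really depends on $x$ only through $\rv(x-y_j)$, including the side conditions that select $j$ and $\sigma$. The point is that every condition of the form $\val(a-\alpha_i)\lessgtr\val(a-\alpha_j)$ is determined by $x'$ together with $\rv(\alpha_j-\alpha_i)$, by the same three-case analysis, so all of them can be written as $L_{\RV}$-conditions on $x'$ with those parameters. Compared with that, the fact that $\RV_\ast$ is $\FHP$ and that reducts inherit $\FHP$ is routine.
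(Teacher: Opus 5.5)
Your proof is correct, but it handles the passage from $n$ centers to one center differently from the paper.

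\textbf{The paper's route.} It inducts on the number $m$ of $\rv$-terms and argues combinatorially at each step. For every consistent $k$-subfamily $S$ it fixes a witness $a_S$. It sorts the indices of $S$ into four classes according to how $\val(a_S-b_{i,0})$, $\val(a_S-b_{i,m-1})$ and $\val(b_{i,m-1}-b_{i,0})$ compare. It then double counts to find one class carrying a positive proportion of consistent $k_2$-sets. Finally it applies the induction hypothesis to a formula with one fewer $\rv$-term and the extra $\RV$-parameter $\rv(b_{i,m-1}-b_{i,0})$.

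\textbf{Your route.} You observe that the case distinction is a definable condition on a single point. Relative to a center $\alpha_j$ maximizing $\val(a-\alpha_j)$, each $\rv(a-\alpha_i)$ is an $L_{\RV}$-definable function of $\rv(a-\alpha_j)$ and $\rv(\alpha_j-\alpha_i)$. Maximality is what excludes cancellation in your case (iii). So $\theta$ is pointwise equivalent to a finite disjunction of one-center formulas composed with a definable reparametrization. Lemma \ref{lem: Basic operations preserving FH}(2) and Remark \ref{rem: FHP pres by func/1-var conj} then finish in one step, with no induction on the number of centers and no double counting. Maximality of $j$ is only needed to show that some disjunct holds; soundness of each disjunct uses only the $\sigma$-conditions and $\WD$.

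\textbf{What each approach buys.} Your route makes it visible that all the genuine combinatorics sits in Lemma \ref{lem: reduction to RV, key lemma}. There the auxiliary center $b_{i^*}$ depends on the whole family and cannot be chosen pointwise. It also shows that the paper's four-case split could equally have been absorbed into closure under disjunctions. The paper's route keeps everything inside one counting template parallel to the key lemma. Neither route tracks the fractional Helly number carefully; both give bounds exponential in the number of centers. This is harmless, since the explicit bounds in Corollary \ref{cor: FHP in ultraprod Qp} come from burden via Corollary \ref{cor: FHPk bounded by burden}.

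\textbf{A minor point.} The degenerate points $a=\alpha_j$, where $\rv(x-y_j)=\rv(0)$, are covered by the conventions for $\rv(0)$. Alternatively, they can be split off as a family of sets of size at most $n$, which is $\FHP_2$ by Lemma \ref{lem: FHP for finite sets}.
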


\begin{proof}
By Fact \ref{lem: Basic operations preserving FH}(3) is suffices to show $\FHP$ for every partitioned formula $\varphi(x,y) \in L$ with $|x|=1$. If $x$ is a singleton of sort $\RV$, this holds by Remark \ref{rem: add unif in Flenner}(1) and the assumption that $\RV_{\ast}$ satisfies $\FHP$. So assume that $x$ is of sort $K$. By Fact \ref{fac: Flenner's cell decomposition} and Remark \ref{rem: add unif in Flenner}(2), it suffices to show that $\FHP$ holds  for every partitioned formula of the form 
$$\psi\left(x;y_{0}, \ldots, y_{m-1}, z\right)=\varphi\left(\rv\left(x-y_{0}\right),\ldots,\rv\left(x-y_{m-1}\right),z\right),$$
where $x$ is a singleton of sort $K$ and $\varphi\left(x_{0},\ldots,x_{m-1},z\right)$ is an $L_{\RV}$-formula (with $x_i$ singletons and $z$ an arbitrary finite tuple). We prove this by induction on $m$, the
base case $m=1$ given by Lemma \ref{lem: reduction to RV, key lemma}. 

Let $k = k(\varphi)$ be sufficiently large (to be determined in the proof) and let $\alpha >0$
be arbitrary. Let  $n \in \mathbb{N}$, $\left(b_{i, t}:i \in [n], t < m \right)$ with $b_{i,t} \in K(\M)$ and $\left(c_{i} : i \in [n]\right)$ with $c_i \in \RV(\M)^{z}$ be arbitrary (without loss of generality $n$ is arbitrarily large with respect to $k$ and $\alpha$), and let $\mathcal{F} := \left\{ \psi\left(x;b_{i,0}, \ldots, b_{i,m-1}, c_{i}\right):i<n\right\} $
be such that $\left|\Cons_{k}\left(\mathcal{F}\right)\right|\geq\alpha{n \choose k}$.
Let $\mathcal{C} := \Cons_{k}\left(\mathcal{F}\right)$. For each $S\in\mathcal{C}$,
fix some $a_{S}\in K(\M)$ realizing $\left\{ \psi\left(x;b_{i}, c_{i}\right):i\in S\right\} $, where $b_{i} := (b_{i,0}, \ldots, b_{i,m-1})$. Let 

\begin{gather*}
	S_{1} := \left\{ i\in S:\val\left(a_{S}-b_{i,0}\right)<\val\left(b_{i,m-1}-b_{i,0}\right)\right\} = \\ \left\{ i\in S:\val\left(a_{S}-b_{i,m-1}\right)<\val\left(b_{i,m-1}-b_{i,0}\right)\right\} \mbox{,}\\
	S_{2} := \left\{ i\in S:\val\left(a_{S}-b_{i,0}\right)>\val\left(b_{i,m-1}-b_{i,0}\right)\right\}, \\
	S_{3} := \left\{ i\in S:\val\left(a_{S}-b_{i,m-1}\right)>\val\left(b_{i,m-1}-b_{i,0}\right)\right\} \textrm{ and}\\
	S_{4} := \left\{ i\in S:\val\left(a_{S}-b_{i,0}\right)=\val\left(a_{S}-b_{i,m-1}\right)=\val\left(b_{i,m-1}-b_{i,0}\right)\right\} \mbox{.}
\end{gather*}

Then $S$ is the disjoint union of $S_{1},S_{2},S_{3},S_{4}$. Let $k_{2}:= \left\lfloor \frac{k}{4}\right\rfloor $, 
\[
\mathcal{D}_{i} := \left\{ S'\in\Cons_{k_{2}}\left(\mathcal{F}\right):\exists S\in\mathcal{C}\left(S'\subseteq S_{i}\right)\right\} 
\]
 for $i\in\left\{ 1,2,3,4\right\} $, and $\mathcal{D} := \bigcup_{i=1}^{4}\mathcal{D}_{i}$.
As every $S\in\mathcal{C}$ contains some $S'\in\mathcal{D}$, by
double counting we get 
\[
\left|\mathcal{D}\right|\geq\frac{\alpha{n \choose k}}{{n-k_{2} \choose k-k_{2}}}\geq\alpha_{2}{n \choose k_{2}}
\]
for some $\alpha_{2}=\alpha_{2}\left(\alpha,k\right)>0$. Hence $\left|\mathcal{D}_{i}\right|\geq\frac{\alpha_{2}}{4}{n \choose k_{2}}$
for some $i\in\left\{ 1,2,3,4\right\} $.

\noindent \textbf{Case 1.} $\left|\mathcal{D}_{1}\right|\geq\frac{\alpha_{2}}{4}{n \choose k_{2}}$.

For every $S\in\mathcal{D}_{1}$, we have $\val\left(a_{S}-b_{i,0}\right)<\val\left(b_{i,m-1}-b_{i,0}\right)$, which implies $\rv\left(a_{S}-b_{i,0}\right)=\rv\left(a_{S}-b_{i,m-1}\right)$, 
for all $i\in S$. We let 
\begin{gather*}
	\psi'\left(x; y_{0}, \ldots, y_{m-2}, z, z'\right) := \\
	\varphi\left(\rv\left(x-y_{0}\right),\ldots,\rv\left(x-y_{m-2}\right),\rv\left(x-y_{0}\right),z\right)\land\left(\val\left(x-y_{0}\right)<\val_{\rv}\left(z'\right)\right).
\end{gather*} 

\noindent \textbf{Case 2. }$\left|\mathcal{D}_{2}\right|\geq\frac{\alpha_{2}}{4}{n \choose k_{2}}$.

For every $S\in\mathcal{D}_{2}$ we have $\val\left(a_{S}-b_{i,0}\right)>\val\left(b_{i,m-1}-b_{i,0}\right)$, so  $\rv\left(a_{S}-b_{i,m-1}\right)=\rv\left(b_{i,m-1}-b_{i,0}\right)$, for all $i\in S$. 
We define 
\begin{gather*}
	\psi'\left(x; y_{0}, \ldots, y_{m-2}, z, z'\right) := \\
	\varphi\left(\rv\left(x-y_{0}\right),\ldots,\rv\left(x-y_{m-2}\right),z',z\right)\land\left(\val\left(x-y_{0}\right)>\val_{\rv}\left(z'\right)\right).
\end{gather*}

\noindent \textbf{Case 3.} $\left|\mathcal{D}_{3}\right|\geq\frac{\alpha_{2}}{4}{n \choose k_{2}}$.

Symmetric to Case 2. We define 
\begin{gather*}
	\psi'\left(x; y_{1}, \ldots, y_{m-1}, z, z'\right) := \\
	\varphi\left(z',\rv\left(x-y_{1}\right),\ldots,\rv\left(x-y_{m-1}\right),z\right)\land\left(\val\left(x-y_{m-1}\right)>\val_{\rv}\left(z'\right)\right).
\end{gather*}

\noindent \textbf{Case 4.} $\left|\mathcal{D}_{4}\right|\geq\frac{\alpha_{2}}{4}{n \choose k_{2}}$.

For every $S\in\mathcal{D}_{4}$ and $i \in S$ we have 
\begin{gather*}
	\val\left(a_{S}-b_{i,0}\right)=\val\left(a_{S}-b_{i,m-1}\right)=\val\left(b_{i,m-1}-b_{i,0}\right), \textrm{ hence}\\
	\rv\left(a_{S}-b_{i,0}\right)=\rv\left(a_{S}-b_{i,m-1}\right)+\rv\left(b_{i,m-1}-b_{i,0}\right).
\end{gather*}
We define  (see Remark \ref{rem: WD}) 
\begin{gather*}
	\psi'\left(x; y_{1}, \ldots, y_{m-1}, z, z'\right) := \\
	\varphi\left(\rv\left(x-y_{m-1}\right)+z',\rv\left(x-y_{1}\right),\ldots,\rv\left(x-y_{m-1}\right),z\right)\land\WD\left(\rv\left(x-y_{m-1}\right),z'\right).
\end{gather*}

~

We let $d_{i} := \rv\left(b_{i,m-1}-b_{i,0}\right)$ for $i \in [n]$. In each of the cases, we consider the family $\mathcal{F}' := \left\{ \psi'\left(x; b_{i}, c_{i}, d_{i}\right):i \in [n]\right\} $. By the inductive assumption for $m-1$, the formula $\psi'$ has FHP.  Assuming that $k_{2}$
is larger than the fractional Helly number of $\psi'$, it follows
that there is some $\beta=\beta\left(\frac{\alpha_{2}}{4},\psi',k_{2}\right)>0$
and $I\subseteq\left[n\right]$ such that $\left|I\right|\geq\beta n$
and $\left\{ \psi'\left(x; b_{i}, c_{i}, d_{i} \right):i\in I\right\} $
is consistent, realized by some $a\in K(\M)$. In each of the
four cases, it is easy to see that then also $a\models\left\{ \psi\left(x; b_{i,0}, \ldots, b_{i,m-1}, c_{i}\right):i\in I\right\} $.
For example, this holds in Case 4 as by the choice of $a$ we have that $\WD\left(\rv\left(a-b_{i,m-1}\right),\rv\left(b_{i,m-1}-b_{i,0}\right)\right)$
holds for all $i\in I$, and so $\rv\left(a-b_{i,0}\right)=\rv\left(a-b_{i,m-1}\right)+\rv\left(b_{i,m-1}-b_{i,0}\right)$
for all $i\in I$.

Finally, choosing $k$ sufficiently large --- satisfying all of the assumptions
in each of the cases, which can be done depending only on the formula $\varphi$ --- and taking the minimal $\beta$ appearing in each of the cases, we have demonstrated the claim.
\end{proof}

\begin{prop}\label{prop: FHP for RV from k and Gamma}
	Assume $k$ and $\Gamma$ satisfy $\FHP$. Then $\RV_{\ast}$ also satisfies $\FHP$.
\end{prop}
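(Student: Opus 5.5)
We will run the same strategy as for Proposition \ref{prop: FHP for K from RV}, one level down: use the short exact sequence $1\to k^{\times}\to\RV\to\Gamma\to 0$ and reduce $\FHP$ for $\RV_{\ast}$ to $\FHP$ for $k$ and $\Gamma$. By Lemma \ref{lem: Basic operations preserving FH}(3) it suffices to treat formulas $\varphi(x,y)\in L_{\RV}$ with $x$ a single $\RV$-variable. Passing to the monster model, we may assume (by $\aleph_1$-saturation, as in the beginning of this section) that there is an angular component map. Then $\RV^{\times}$ splits as $k^{\times}\times\Gamma$ via $r\mapsto(\ac(r),\val_{\rv}(r))$, and $\oplus$ is computed coordinatewise from the field structure of $k$ and the order on $\Gamma$.

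The key input is a relative quantifier elimination for $\RV_{\ast}$ with the angular component, in the spirit of Fact \ref{fac: Flenner's cell decomposition}. Since $\FHP$ passes to reducts, it is enough to prove $\FHP$ in the expansion $(k,\Gamma,\ac,\val_{\rv})$. In that expansion the sorts $k$ and $\Gamma$ are orthogonal and stably embedded. Every formula in the single variable $x\in\RV$ (over parameters) is then, up to finitely many exceptional points such as $0$ and $\infty$, a finite disjunction of sets of the form
\[
\{x:\ \theta(\ac(x),c)\land\chi(\val_{\rv}(x),\gamma)\},
\]
with $\theta$ a formula of $k$ and $\chi$ a formula of $\Gamma$. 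The parameters $c,\gamma$ here are definable functions of the original parameters, and we will need this uniformly in $\varphi$ via compactness, as in Remark \ref{rem: add unif in Flenner}. \textbf{This step is the main obstacle}: one must check that orthogonality and pure decomposition of the induced structure on $k\times\Gamma$ really hold in equicharacteristic $0$ after naming $\ac$. If they fail, we would instead quantify over the fibres $\val_{\rv}^{-1}(\gamma)$, which are $k^{\times}$-torsors.

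Granting the decomposition, we use closure under disjunctions (Lemma \ref{lem: Basic operations preserving FH}(2)) and Remark \ref{rem: FHP pres by func/1-var conj} to reduce to formulas $\psi(x;y,w)=\theta(\ac(x),y)\land\chi(\val_{\rv}(x),w)$. Finite exceptional sets are handled by Lemma \ref{lem: FHP for finite sets}. For such a product formula we show that if $\theta$ satisfies $\FHP_{k_1}$ and $\chi$ satisfies $\FHP_{k_2}$, then $\psi$ satisfies $\FHP_{k_1k_2}$, mirroring the proof of Lemma \ref{lem: Basic operations preserving FH}(3). Given many consistent $k_1k_2$-subfamilies, split each into $k_1$ blocks of size $k_2$. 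Apply $\FHP$ for the family $\{\exists u\bigwedge_{j\in s}\chi(u,w_j)\}$ indexed by $k_2$-blocks, which is a formula in $\Gamma$ alone. This is just the $\FHP$ of a product set in a product structure: consistency of $\psi$-instances is the conjunction of consistency of the $\theta$-instances in $k$ and of the $\chi$-instances in $\Gamma$, because any $a\in k^{\times}$ and $g\in\Gamma$ are jointly realized by some $x\in\RV$. Double counting gives a positive proportion of consistent $k_1$- and $k_2$-tuples for $\theta$ and $\chi$ separately.

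A cleaner route to the same conclusion is to take $d:=\max(k_1,k_2)$ and a $d$-tuple $S$ consistent for $\psi$; then $S$ is consistent for both $\theta$ and $\chi$. First apply $\FHP_d$ for $\chi$ to get a set $J_1$ with $|J_1|\geq\beta_1 n$ on which the $\chi$-instances are consistent. Restricted to $J_1$, a positive proportion of $d$-tuples remain $\theta$-consistent, because $\FHP$ for $\chi$ is applied to the subfamily of indices carrying a proportion of the consistent tuples. This iteration step is delicate, and we would instead pass through the measure formulation (Proposition \ref{rem: FHP iff FHP for finite measures}) combined with the colorful/burden-style Lemma \ref{lem: Erdos-Stone} to control it. Finally, apply $\FHP$ for $\theta$ inside $J_1$ to obtain $J\subseteq J_1$ with $|J|\geq\beta n$. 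Choosing $a\in k^{\times}$ witnessing the $\theta$-instances on $J$ and $g\in\Gamma$ witnessing the $\chi$-instances on $J_1$, any $x$ with $\ac(x)=a$ and $\val_{\rv}(x)=g$ witnesses consistency of $\{\psi(x;c_i,\gamma_i):i\in J\}$.
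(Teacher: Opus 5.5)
Your overall strategy matches the paper's: name a section or angular component, split $\RV^{\times}\cong k^{\times}\times\Gamma$, reduce to $k$ and $\Gamma$, and pass back to the reduct $\RV_\ast$. The gap is the step where you combine $\FHP$ for $\theta$ and for $\chi$ into $\FHP$ for $\psi=\theta(\ac(x),y)\land\chi(\val_{\rv}(x),w)$; neither of your versions works as written.

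In the first version, $\exists u\bigwedge_{j\in s}\chi(u,w_j)$ has no free variable. Moreover, knowing that $\theta$ and $\chi$ \emph{separately} have a positive proportion of consistent tuples only gives two large consistent index sets $J_\theta$, $J_\chi$, which need not meet.

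In the ``cleaner route'', the claim that a positive proportion of $d$-subsets of $J_1$ stay $\theta$-consistent is unjustified, because $\FHP$ only asserts that \emph{some} large consistent subfamily exists. For example, split $[n]=A\sqcup B$ into halves. Let every $\chi$-instance be all of $\Gamma$, and let the $\theta$-instances be distinct singletons for $i\in A$ and all of $k^{\times}$ for $i\in B$. About $3/4$ of all pairs are then $\psi$-consistent. Yet $J_1=A$ is a legitimate output of $\FHP$ for $\chi$ with no $\theta$-consistent pair, and $J_\theta=B$, $J_\chi=A$ are disjoint. The appeal to measures and Lemma \ref{lem: Erdos-Stone} does not address this.

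The fix is the block trick of Lemma \ref{lem: Basic operations preserving FH}(3), with $\theta$-consistency built into the outer formula. Let $m$ be the Helly number of $\theta$, and set $\Psi(v;(y_j,w_j)_{j<m}):=\exists u\bigwedge_{j<m}\theta(u,y_j)\land\bigwedge_{j<m}\chi(v,w_j)$. This is a $\Gamma$-formula in $v$ conjoined with a condition on the parameters, so it is $\FHP_{k'}$ for some $k'$ by Remark \ref{rem: FHP pres by func/1-var conj}. Note that $k'$ depends on the conjunction $\bigwedge_{j<m}\chi$, not just on $\chi$, so your claimed bound $\FHP_{k_1k_2}$ is not justified; this is harmless qualitatively.

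Put $k:=k'm$ and split each consistent $k$-set into $k'$ blocks of size $m$. $\FHP$ for $\Psi$ then yields one $g\in\Gamma$ common to a $\beta'$-fraction of all $m$-blocks. Hence $\{\theta(u,y_i)\land\chi(g,w_i):i\in[n]\}$ has at least $\beta'\binom{n}{m}$ consistent $m$-subsets. $\FHP_m$ then gives $c$ and $J$ with $|J|\geq\beta n$, and any $x$ with $\ac(x)=c$, $\val_{\rv}(x)=g$ realizes $\psi$ on $J$.

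The paper avoids redoing this by applying Lemma \ref{lem: Basic operations preserving FH}(3) directly in the two-sorted disjoint union of $k$ and $\Gamma$. Also, your ``main obstacle'' is not one. As you yourself note, once a section is named, $\oplus$ is definable from the field operations on $k$ and the order on $\Gamma$. So $(\RV_\ast,\ac)$ is interpretable in that disjoint union, with no relative quantifier elimination or characteristic hypothesis needed. This is the bi-interpretability the paper cites, after which Lemma \ref{lem: Basic operations preserving FH}(4) transfers $\FHP$ to $\RV_\ast$.
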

\begin{proof}
 Working in $\M$, which is in particular $\aleph_1$-saturated, we have that the short exact sequence $1\to k^{\times}\to\RV \overset{\val_{\rv}}{\to}\Gamma\to 0$ splits (see e.g.~\cite[Corollary 3.3.38]{aschenbrenner2017asymptotic}), so $\RV$  is the direct sum of $k^{\ast}$ and $\Gamma$.
We have that $\RV_{\ast}$ is bi-interpretable with this short exact sequence (see \cite[Lemma 5.17]{zbMATH07542897}), and this short exact sequence expanded by a function symbol for a right inverse map for $\val_{\rv}$ is bi-interpretable with a structure consisting of two disjoint sorts given by $k$ and $\Gamma$ with their induced structure (and no additional structure).
It is then clear, using Fact \ref{lem: Basic operations preserving FH}(3), that if both $k$ and $\Gamma$ are $\FHP$, then such a structure is also $\FHP$. 
 By Lemma \ref{lem: Basic operations preserving FH}, any theory interpretable in an $\FHP$ theory is also $\FHP$, so we conclude that $\RV_{\ast}$ is $\FHP$.
\end{proof}

Combining Proposition \ref{prop: FHP for RV from k and Gamma} and Proposition \ref{prop: FHP for K from RV} finishes the proof of Theorem \ref{thm: AKE for FHP}.

\subsection{FHP in ultraproducts of $\mathbb{Q}_p$ and explicit bounds}

Burden in henselian valued fields is studied extensively  \cite{chernikov2010indiscernible, chernikov2014theories, chernikov2014valued, chernikov2016henselian, sinclair2022burden, touchard2023burden}. In particular we have:
\begin{fact}\label{fac: bdn in hens val fields}
	\cite{chernikov2016henselian} If $M = (K, \RV, \rv)$ is a henselian valued field of equicharacteristic $0$, $\bdn(k) = \bdn(\Gamma) \leq 1$ and $k^{\times} / (k^{\times})^p$ is finite for all prime $p$, then $\bdn(M) = 1$. In particular, $\bdn(M) = 1$ for any ultraproduct $K := \prod_{i \in \mathbb{N}} \mathbb{Q}_{p_i}/\mathcal{U}$ with $p_i$ prime.
 \end{fact}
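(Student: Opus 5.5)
The plan is to compute the burden of the home sort $K$ by transferring inp-patterns down to $\RV$, and then from $\RV$ down to $k$ and $\Gamma$. This follows the strategy already used in the proofs of Lemma \ref{lem: reduction to RV, key lemma} and Proposition \ref{prop: FHP for K from RV}, with "large consistent subfamily" replaced by "row of an inp-pattern".

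\textbf{Step 1: the $\RV$ sort.} Work in $\M$ with a cross-section, so that, as in the proof of Proposition \ref{prop: FHP for RV from k and Gamma}, $\RV_{\ast}$ is bi-interpretable with the two-sorted structure $(k,\Gamma)$ carrying no extra structure. The hypothesis that $k^{\times}/(k^{\times})^{p}$ is finite for every prime $p$ serves exactly here. It guarantees that the splitting does not create new definable sets in a single $\RV$-variable beyond finite unions of products of sets definable in $k^{\times}$ and in $\Gamma$, using cosets of $n$-th powers. So one-variable formulas on $\RV$ decompose into Boolean combinations of a $k$-part and a $\Gamma$-part.

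An inp-pattern of depth $2$ in one $\RV$-variable would then have to split its two rows between the coordinates. The two rows cannot both sit in the $\Gamma$-coordinate, since $\bdn(\Gamma)\le 1$. They cannot both sit in the $k$-coordinate, since $\bdn(k)\le 1$ and the fibres of $\val_{\rv}$ are torsors of $k^{\times}$. This excludes the case of both rows in one coordinate. The real content of Step 1 is the remaining mixed case, one row in each coordinate, and I would address it directly from the fibration structure rather than from the coordinate bounds alone. I expect to conclude that $\bdn(\RV)\le 1$ in one variable.

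\textbf{Step 2: the main step, reducing $K$ to $\RV$.} Suppose towards contradiction that there is an inp-pattern of depth $2$ in one $K$-variable, with rows $\varphi_t(x,b_{t,j})$ for $t=1,2$. By Fact \ref{fac: Flenner's cell decomposition} and Remark \ref{rem: add unif in Flenner}(2), we may take each $\varphi_t$ to be of the form $\theta_t(\rv(x-\alpha_{1}),\dots,\rv(x-\alpha_{m}),c)$. We may also assume the rows are mutually indiscernible. The centres along each row form indiscernible sequences of field elements, so Lemma \ref{lem: val on indisc seq} applies: each row is pseudo-convergent, reverse pseudo-convergent, or a fan.

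Now take a realization $a$ of a path. Lemma \ref{lem: val phase change} and the case analysis of Lemma \ref{lem: reduction to RV, key lemma} (the cases $<,=,>$ relative to $\gamma_S$) show the following. Away from a single cut point in each row, $\rv(a-b_{t,j})$ is determined either by $\rv(a-b_{t,j^*})$ or by $\rv(b_{t,j^*}-b_{t,j})$, where $j^*$ is the cut point. Consequently, both rows can be rewritten as an inp-pattern in a single $\RV$-variable $\rv(a-b)$ for a suitable fixed centre $b$. Alternatively, one row becomes consistent, which is a contradiction. In the first case Step 1 gives the contradiction.

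The main obstacle is the interaction between two rows with different centre behaviours, for instance one pseudo-convergent and one fan. To handle it I would induct on the number of centres, exactly as in Proposition \ref{prop: FHP for K from RV}. I would use the array's mutual indiscernibility to choose a common centre in a row-independent way, and pass to an infinite subsequence in each row to discard the finitely many exceptional indices.

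\textbf{Step 3: the ultraproduct case.} If $p_i$ is eventually constant modulo $\mathcal U$, then $K\equiv\mathbb{Q}_p$, which is dp-minimal, and we are done. Otherwise $K$ has residue field $k=\prod\mathbb F_{p_i}/\mathcal U$, a pseudofinite field of characteristic $0$. It is supersimple of SU-rank $1$, so $\bdn(k)=1$ by Fact \ref{fct: burden in simple and NIP}(2). Moreover $[k^{\times}:(k^{\times})^{n}]=\gcd(n,p_i-1)\le n$ for almost all $i$, so this index is finite by Łoś. The value group is $\prod\mathbb Z/\mathcal U$, a $\mathbb Z$-group, which is dp-minimal, so $\bdn(\Gamma)=1$. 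Since $K$ is henselian of residue characteristic $0$, it is equicharacteristic $0$, and the general statement applies, giving $\bdn(K)=1$.
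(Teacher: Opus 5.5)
The paper does not prove this Fact; it quotes it from \cite{chernikov2016henselian}. Your architecture is the natural one: inp-minimality of $\RV$ from $k$, $\Gamma$ and the power indices, then transfer from $\RV$ to $K$, then the ultraproduct computation. Step~3 is fine. Steps~1 and~2, however, are the entire content of the cited theorem, and both are asserted rather than proved.

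In Step~1, passing to a cross-section cannot give the bound. In that expansion, the rows $\val_{\rv}(x)=\gamma_j$ and ``the $k^{\times}$-coordinate of $x$ equals $e_j$'' (distinct $\gamma_j$, distinct $e_j$) already form an inp-pattern of depth $2$ in one $\RV$-variable whenever $k$ is infinite, so there $\bdn(\RV)\geq 2$. The product decomposition you describe holds in the expansion with no hypothesis at all, so that is not where finiteness of $k^{\times}/(k^{\times})^{p}$ is used. It is used exactly to exclude the mixed case in the reduct $\RV_{\ast}$. Indeed, suppose $k^{\times}/(k^{\times})^{2}$ were infinite. Take the cosets $d_j\cdot\RV^{2}$ with $d_j\in k^{\times}$ in distinct square classes, together with the row $\val_{\rv}(x)=2\delta_j$. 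These would give a depth-$2$ pattern already in $\RV_{\ast}$. What is needed is a description of the one-variable definable sets of $\RV_{\ast}$ itself, for instance via a relative quantifier elimination for $1\to k^{\times}\to\RV\to\Gamma\to 0$ in which the finite quotients $k^{\times}/(k^{\times})^{n}$ appear. That must be followed by an argument that rows built from conditions on $\val_{\rv}(x)$, finitely many coset conditions and subsets of single fibres cannot form a mixed pattern. You call this ``the real content'' and then only say you expect it to work.

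In Step~2, the reduction to a \emph{single} $\RV$-variable is unjustified, and it is indispensable: an inp-pattern of depth $2$ in two $\RV$-variables is no contradiction, since $\bdn(\RV^{2})\geq 2$. The cut point $j^{*}$ and the relevant case ($<,=,>$) are determined by the realization $a$, so they vary with the path. An inp-pattern in $\RV$, by contrast, needs fixed formulas $\psi_t(x',e_{t,j})$ with path-independent parameters. A ``suitable fixed centre'' need not exist either. Suppose row~$1$ consists of conditions $\theta_1(\rv(x-c_j))$ inside the pairwise disjoint balls $\val(x-c_j)>\gamma$ around a fan $(c_j)$. Then for every $b\in K$ the value $\rv(x-b)$ is constant on all but at most one of these balls, so no $\rv(x-b)$ can encode $\rv(x-c_j)$ along the row. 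The FHP arguments you are imitating (Lemma~\ref{lem: reduction to RV, key lemma} and Proposition~\ref{prop: FHP for K from RV}) avoid this by pigeonholing one $i^{*}$ over a positive proportion of subfamilies. That has no analogue when all paths must be realized simultaneously. The interaction of rows with different centre behaviour, which you yourself single out as the main obstacle, is precisely what the transfer from $\RV$ to $K$ has to handle, and it is left open.
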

\begin{rem}
	If $k$ is inp-minimal, then, by the proof of \cite[Corollary 4.6]{chernikov2015groups}, $k^{\times} / (k^{\times})^p$ is finite for all but at most one prime $p$. It is open if it has to be finite for all $p$ \cite[Problem 25]{chernikov2016henselian} (true when $k$ is also NIP by \cite{johnson2018canonical}).
\end{rem}

\begin{cor}\label{cor: FHP in ultraprod Qp}
Let $K$ be $\prod_{i \in \mathbb{N}} \mathbb{Q}_{p_i} / \mathcal{U}$ or $\prod_{i \in \mathbb{N}} \mathbb{F}_{p_i}((t)) / \mathcal{U}$   for some prime $p_i$ and a non-principal ultrafilter $\mathcal{U}$ on $\mathbb{N}$. Let  $M = (K, \RV, \rv)$. Then every partitioned formula $\varphi(x,y) \in L$  with $|x| \leq d$ satisfies $\FHP_{2^d}$.
\end{cor}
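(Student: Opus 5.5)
The plan is to combine three ingredients already available: the Ax--Kochen--Ershov result for FHP (Theorem \ref{thm: AKE for FHP}), the bound of the fractional Helly number by burden in FHP theories (Corollary \ref{cor: FHPk bounded by burden}), and the burden computation of Fact \ref{fac: bdn in hens val fields} combined with sub-multiplicativity of burden (Fact \ref{fac: submult of burden}(1)).

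\textbf{Step 1: $M$ is FHP.} By \L o\'s, $K$ is a henselian valued field of equicharacteristic $0$, since $p_i \to \infty$ along the non-principal ultrafilter $\mathcal{U}$ (I assume, as is implicit, that $\mathcal{U}$ contains no set on which $p_i$ is bounded). The residue field is $k = \prod \mathbb{F}_{p_i}/\mathcal{U}$, an infinite ultraproduct of finite fields. Hence $k$ is MS-measurable by Fact \ref{fac: psf are MS-meas}, and so $k$ is FHP by Theorem \ref{thm: FHP in MS meas}. The value group is $\Gamma = \mathbb{Z}^{\mathcal{U}}$, a $\mathbb{Z}$-group. This is an ordered abelian group elementarily equivalent to $(\mathbb{Z},+,<)$, hence NIP and therefore FHP by Fact \ref{fac: NIP implies FHP}. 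Theorem \ref{thm: AKE for FHP} then gives that $M$ is FHP. Both choices of $K$ (the $\mathbb{Q}_{p_i}$ and the $\mathbb{F}_{p_i}((t))$ versions) have the same residue field and value group, so they are handled uniformly.

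\textbf{Step 2: burden of singletons.} I want $\bdn(\M^1) \le 1$ for each single sort. For the field sort $K$ this is Fact \ref{fac: bdn in hens val fields}. Its hypotheses need checking for the second case as well: pseudofinite fields have burden $1$ (they are supersimple of SU-rank $1$), and $k^\times/(k^\times)^p$ has size at most $p$ by \L o\'s. The value group $\Gamma$, being a $\mathbb{Z}$-group, is dp-minimal. For the sort $\RV$ I would argue separately. Under the splitting $\RV \cong k^\times \times \Gamma$ in $\M$ (as in the proof of Proposition \ref{prop: FHP for RV from k and Gamma}), burden $1$ for $\RV$ does not follow directly, since sub-multiplicativity would give only $\bdn<4$. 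So I would either invoke the burden computation in \cite{chernikov2016henselian}, which covers the $\RV$ sort as well, or restrict attention to field-sort variables. The latter is presumably what is intended, since $M$ is determined by its field sort.

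\textbf{Step 3: burden of $d$-tuples and conclusion.} Induct on $d$ using Fact \ref{fac: submult of burden}(1). From $\bdn(a)<2$ and $\bdn(b/a)<2$ one gets $\bdn(ab)<4$, and inductively $\bdn(\M^x)<2^d$ for $|x|=d$, so $\bdn(\M^x) \le 2^d-1$. Corollary \ref{cor: FHPk bounded by burden}, applicable because $M$ is FHP by Step 1, then bounds the fractional Helly number by $\bdn(\M^x)+1 \le 2^d$. Hence every $\varphi(x,y)$ with $|x| \le d$ satisfies $\FHP_{2^d}$, using Lemma \ref{lem: Basic operations preserving FH}(1) to pass from the Helly number to any larger index.

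The main obstacle is Step 2. One has to verify carefully that Fact \ref{fac: bdn in hens val fields} applies in both cases, and that the singleton bound holds for every sort appearing in $x$, in particular $\RV$. If $\RV$-variables turn out to cause trouble, the fallback is to reduce to field-sort variables via $\rv$ and Remark \ref{rem: FHP pres by func/1-var conj}, since each $\RV$ element is $\rv$ of a field element.
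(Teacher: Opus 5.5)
Your proposal is correct and is essentially the paper's argument. $M$ is $\FHP$ by Theorem \ref{thm: AKE for FHP}, using Theorem \ref{thm: FHP in MS meas} for the pseudofinite residue field and NIP for the value group; then $\bdn=1$ from Fact \ref{fac: bdn in hens val fields}, sub-multiplicativity, and Corollary \ref{cor: FHPk bounded by burden} give $\FHP_{2^d}$.

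The paper passes over $\RV$-variables, and your concern there has a direct fix: pulling back an inp-pattern along the surjection $\rv$ gives $\bdn(\RV)\le\bdn(K)=1$. For your fallback, note that the remark on definable functions you cite concerns parameter variables, but the analogous pullback for object variables is immediate from surjectivity of $\rv$.
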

\begin{proof}

	The residue field $k = \prod_{i \in \mathbb{N}} \mathbb{F}_{p_i} / \mathcal{U} $ is pseudo-finite, hence satisfies $\FHP$ by Theorem \ref{thm: FHP in MS meas} (and Fact \ref{fac: psf are MS-meas}). The (ordered abelian) value group $\Gamma$ is NIP by \cite{gurevich1984theory}, hence satisfies $\FHP$ by Fact \ref{fac: NIP implies FHP}. Hence $M = (K, \RV, \rv)$ satisfies $
	\FHP$ by Theorem \ref{thm: AKE for FHP}.

	By Fact \ref{fac: bdn in hens val fields} we have $\bdn(M) = 1$. By Corollary \ref{cor: FHPk bounded by burden} we know that, assuming $M$ satisfies $\FHP$, any formula $\varphi(x,y) \in L$ satisfies $\FHP_{\bdn(M^x) + 1}$. By Fact \ref{fac: submult of burden} this implies that any formula $\varphi(x,y) \in L$ satisfies $\FHP_{(\bdn(M)+1)^{|x|}}$ (and if $M$ is NIP, satisfies $\FHP_{|x| \cdot \bdn(M) + 1}$). Hence any formula $\varphi(x,y) \in L$ satisfies $\FHP_{2^{|x|}}$.
\end{proof}

\noindent We note that each individual field $\mathbb{F}_{p}\left(\left(t\right)\right)$ has $\TP_{2}$ by \cite[Corollary 3.3]{chernikov2015groups}, hence cannot satisfy $\FHP$.

\begin{conjecture}\label{conj: FHP in ultra p-adics}
	The bound $2^d$ in Corollary \ref{cor: FHP in ultraprod Qp} can be improved to $d+1$.
\end{conjecture}


\begin{problem}\label{prob: FHP in valued fields}
\begin{enumerate}
\item Which (ordered, valued, etc.) fields satisfy $\FHP$? 
\item Do bounded PAC, pRC, pPC fields satisfy $\FHP$? They are known to be $\NTP_2$ (see \cite{montenegro2025pseudo}).
	\item Does $\VFA_0$ satisfy $\FHP$? Again, it is known to be $\NTP_2$ by \cite{chernikov2014valued}. In particular, does $\ACFA$ satisfy $\FHP$?
\end{enumerate}
	
\end{problem}

\section{ULCFS and counting partial types over finite sets}\label{sec: ULCFS and counting over finite sets}

\subsection{ULCFS}

Let $T$ be a complete first-order theory with infinite models in a language $L$. First we consider finitary counterparts of some notions, properties and facts around forking and dividing. We assume some familiarity with the basic definitions and properties of dividing (see e.g.~\cite{chernikov2012forking}).
\begin{defn}
\label{def: (Delta,n,k)-dividing of a formula}Let $\Delta$ be a
set of formulas and $n,k\in\mathbb{N}$. Given a formula $\varphi(x,b)$ with $\varphi(x,y) \in L$ and $b$ a tuple in $\M$, we say that it 
\emph{$\left(\Delta,n,k\right)$-divides} over a set $C \subseteq \M$ if there
is a $\Delta\left(C\right)$-indiscernible sequence $\left(b_{i}:i<n\right)$ in $\M$ 
so that $b_{0}=b$ and the set of formulas $\left\{ \varphi\left(x,b_{i}\right):i<n\right\} $
is $k$-inconsistent.
\end{defn}

Note that, by compactness, $\varphi\left(x,b\right)$ is $k$-dividing over
$C$ if and only if it $\left(\Delta,n,k\right)$-dividing over $C$
for all finite sets of formulas $\Delta\subseteq L$ and all $n\in\omega$.
\begin{defn}
Let $\pi\left(x\right)$ be a partial type over a set of parameters
$B$, and let $C\subseteq B$.

\begin{enumerate}
\item For a formula $\varphi\left(x,y\right)\in L$, we say that $\pi$ is \emph{$\left(\varphi,\Delta,n,k\right)$-dividing}
over $C$ if there is an instance $\varphi\left(x,b\right)\in\pi$ such
that $\varphi\left(x,b\right)$ is $\left(\Delta,n,k\right)$-dividing over
$C$.
\item We say that $\pi$ \emph{internally $\left(\varphi,\Delta,n,k\right)$-divides}
over $C$ if it $\left(\varphi,\Delta,n,k\right)$-divides over $C$,
and moreover we can choose a $\Delta\left(C\right)$-indiscernible
sequence $\left(b_{i}:i<n\right)$ witnessing $\left(\Delta,n,k\right)$-dividing over $C$ as in Definition
\ref{def: (Delta,n,k)-dividing of a formula} \emph{inside $B=\dom\left(\pi\right)$}.
\end{enumerate}
\end{defn}

\begin{rem}
Assume that $p\in S_x\left(M\right)$ is a type over a model $M$ and
$C$ is a subset of $M$. If $M$ is $|C|^+$-saturated,
then dividing over $C$ is equivalent to internal dividing over $C$
for $p$ (given an arbitrary sequence witnessing dividing, by saturation
we can find a copy of it inside $M$). For example, if $C \subseteq M$ is finite,
then for any finite $\Delta,n,k$ we have $p\in S\left(M\right)$ is 
$\left(\Delta,n,k\right)$-dividing over $C$ if and only if it internally
$\left(\Delta,n,k\right)$-dividing over $C$. We will be mostly interested
in types over finite sets here, where these two notions may  be
different.
\end{rem}

\begin{defn}
\label{def: ULCFS}

\begin{enumerate}
\item We say that dividing in $T$ satisfies the \emph{Uniform Local Character
over Finite Sets} (or that \emph{$T$ satisfies $\ULCFS$}) if every partitioned 
formula $\varphi\left(x,y\right)\in L$ satisfies ULCFS, i.e.~for every $k\in\omega$ there is some
finite set of formulas $\Delta \subseteq L$ and some $n\in\omega$ so that: 
for every \emph{finite} set $B$ and every $p\left(x\right)\in S_{\varphi}\left(B\right)$
there is some $C\subseteq B$ with $\left|C\right|\leq n$ such that
$p$ does not \emph{internally} $\left(\varphi,\Delta,n,k\right)$-divide
over $C$.
\item We say that $T$ satisfies \emph{strong $\ULCFS$} if in (1) instead
of internal non-dividing we require non-dividing.
\end{enumerate}
\end{defn}

Obviously, strong ULCFS implies ULCFS.

\begin{problem}
	Is it true that (strong) $\ULCFS$ holds in $T$ only assuming that Definition \ref{def: ULCFS} is satisfied for $k=2$? Only assuming that  Definition \ref{def: ULCFS} holds for all formulas $\varphi(x,y)$ with $|x|=1$?
\end{problem}

\subsection{NIP implies strong ULCFS}\label{sec: UCLFS in NIP}

The following is the so-called \emph{UDTFS} property (i.e.~\emph{Uniform Definability of Types over Finite Sets}). It was conjectured by Laskowski, and, after some partial results \cite{johnson2010compression, guingona2012uniform}, established in \cite{chernikov2013externally, chernikov2015externally} for formulas in NIP theories (and more recently for NIP formulas in arbitrary theories \cite{eshel2021uniform}):

\begin{fact}\label{fac: UDTFS}
	Let $\varphi(x,y) \in \mathcal{L}$ be a partitioned NIP formula. Then there exists a formula $\theta(y;\bar{y}) \in \mathcal{L}$, with $\bar{y} = (y_1, \ldots, y_d)$ for some $d \in \mathbb{N}$, satisfying the following: for every finite $A \subseteq \mathbb{M}^y$ with $|A| \geq 2$ and every $b \in \mathbb{M}^x$, there exists some tuple $c  \in A^d$ such that for every $a \in A$ we have $\models \varphi(b,a) \iff \models \theta(a, c)$.
\end{fact}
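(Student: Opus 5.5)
The plan is to derive the statement from \emph{uniform honest definitions} together with the $(p,q)$-theorem, as in \cite{chernikov2013externally, chernikov2015externally}.

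\textbf{Step 1: honest definitions in a pair.} Fix a finite $A \subseteq \M^y$ with $|A| \geq 2$ and some $b \in \M^x$. Work in the expansion of $\M$ by a new unary predicate $P$ naming $A$.
\begin{itemize}
\item Since $\varphi$ is NIP, Shelah's theorem on externally definable sets gives that the trace $\varphi(b,\M) \cap A$ has an honest definition.
\item That is, in an elementary extension of the pair $(\M,A)$ there is a tuple $c^{\ast}$ and a formula $\psi(y,z)$ such that $\psi(A,c^{\ast}) = \varphi(b,A)$. Moreover, on the (saturated) image of $P$, the set $\psi(P,c^{\ast})$ is contained in $\varphi(b,\cdot)$.
\end{itemize}

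\textbf{Step 2: make $\psi$ uniform.} The goal is to find a single formula $\theta(y;\bar y)$, depending only on $\varphi$, for which parameters can be taken from $A^d$.
\begin{itemize}
\item Argue by contradiction. For each candidate $\theta(y;y_1,\ldots,y_d)$ (for instance a Boolean combination of instances of $\varphi$ and of the dual formula), fix finite $A_\theta$ and $b_\theta$ witnessing its failure.
\item In the dual family $\{\varphi(x,a) : a \in A\}$, each $\varphi$-type over $A$ is the intersection of the sets $\varphi(x,a)^{\epsilon_a}$. So the data can be encoded as follows: the set system of "$d$-tuples from $A$ that fail to define the type of $b$" satisfies a $(p,q)$-property, where $q$ is bounded by the dual VC-dimension.
\item Matou\v{s}ek's $(p,q)$-theorem (Fact~\ref{fac: pq Matousek}) then produces a transversal of bounded size $N$. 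This is the combinatorial heart of the argument.
\item Concretely, I would consider the family of subsets of $A^{d}$ (or of $\M^x$) determined by the instances $\varphi(b',a)$. The aim is to show that the family of all $\varphi$-types over $A$ consistent with a candidate definition has a bounded transversal. That bounded transversal is what yields boundedly many parameters from $A$.
\end{itemize}

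\textbf{Step 3: compactness and Boolean combinations.} Use compactness over all finite $A$ to bound $d$ and the shape of $\theta$ uniformly.
\begin{itemize}
\item Pass to an ultraproduct of the counterexamples $(A_\theta, b_\theta)$. The theory of pairs $(\M,P)$ with $P$ a "finite-like" (pseudofinite) set remains NIP for $\varphi$-traces.
\item This makes the honest definition of Step 1 apply uniformly, and a contradiction results.
\item Finally, convert the honest definition, which gives containment, into an exact definition $\theta(a,c) \leftrightarrow \varphi(b,a)$ on $A$. Do this by taking a bounded Boolean combination: apply the honest definition to both $\varphi$ and $\neg\varphi$ and intersect with $A$.
\end{itemize}

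\textbf{Main obstacle.} The hardest step is Step 2: forcing the defining parameters to lie \emph{inside} $A$ rather than in an elementary extension of the pair. A naive compactness argument only gives parameters in $P$ of a saturated pair. To pass back to actual elements of the finite set $A$, I would first try the $(p,q)$/$\varepsilon$-net machinery, which extracts a bounded subset of $A$ that already determines the type.
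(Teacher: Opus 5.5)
The paper does not prove this Fact. It quotes it from \cite{chernikov2013externally, chernikov2015externally} for NIP theories, and from \cite{eshel2021uniform} for a single NIP formula. Your outline names the ingredients of the Chernikov--Simon route: honest definitions, Matou\v{s}ek's $(p,q)$-theorem, and an ultraproduct of counterexamples. But there is a genuine gap exactly at the step you call the combinatorial heart, and three specific problems.

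\textbf{Step~1 is vacuous where you put it.} For finite $A$, every elementary extension of $(\M,A)$ interprets $P$ as $A$ itself. Moreover, $\varphi(b,\M)\cap A$ is trivially defined over $A$ by a disjunction of equalities. Honest definitions only carry content for the pseudofinite set $A^{\ast}$ in an ultraproduct $(M^{\ast},A^{\ast},b^{\ast})$ of counterexamples.

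\textbf{The set system in Step~2 is not one that satisfies a $(p,q)$-property.} Your family of ``$d$-tuples that fail to define the type of $b$'' has no reason to satisfy any $(p,q)$-property, and you never say what the transversal is a transversal of.

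\textbf{The ultraproduct is indexed incorrectly.} You need every fixed candidate formula to fail on a $\mathcal{U}$-large set of factors, and one counterexample per candidate does not give this. Instead, take the $n$-th counterexample to defeat a single formula coding the first $n$ candidates; this coding is where $|A|\ge 2$ is used.

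\textbf{The missing idea.} Let $\theta(y;z)$ be an honest definition of $\varphi(b^{\ast},y)$ over $A^{\ast}$. That is, for every finite $A_0\subseteq A^{\ast}$ there is a tuple $e$ from $A^{\ast}$ with $\theta(A_0,e)=\varphi(b^{\ast},A_0)$ and $\theta(A^{\ast},e)\subseteq\varphi(b^{\ast},A^{\ast})$. For $a\in\varphi(b^{\ast},A^{\ast})$ put
\[
F_a:=\bigl\{e \text{ a tuple from } A^{\ast}:\ \models\theta(a,e)\ \text{ and }\ \theta(A^{\ast},e)\subseteq\varphi(b^{\ast},A^{\ast})\bigr\}.
\]
The argument then runs as follows.
\begin{enumerate}
\item Applying honesty with $A_0$ equal to finitely many such $a$'s shows that those $F_a$ have a common element.
\item For each $q$, "any $q$ of the $F_a$ intersect" is a first-order property of the pair $(M^{\ast},A^{\ast},b^{\ast})$. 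By \L o\'s it holds in $\mathcal{U}$-most of the finite factors $(M_n,A_n,b_n)$.
\item In such a factor the corresponding family is finite. Its dual shatter function is polynomially bounded in terms of $\theta$ alone, because $\theta$ is NIP.
\item Fact~\ref{fac: pq Matousek} then gives $e_1,\ldots,e_N$ from $A_n$, with $N$ depending only on $\theta$, such that $\varphi(b_n,A_n)=\bigcup_{i\le N}\theta(A_n,e_i)$. The inclusion $\subseteq$ is the transversal property, and $\supseteq$ holds because each $e_i$ is honest.
\item So $\bigvee_{i\le N}\theta(y;z_i)$ succeeds on $\mathcal{U}$-most factors, which is a contradiction.
\end{enumerate}

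Note that the containment in an honest definition is not a defect to be repaired by combining honest definitions of $\varphi$ and $\neg\varphi$, as in your Step~3. It is exactly what keeps the union from overshooting. No NIP-ness of the pair is needed either, so that part of Step~3 can be dropped.

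\textbf{Scope.} Even repaired, this route only proves the Fact when the whole theory is NIP. The honest definitions of \cite{chernikov2013externally} are established for NIP theories. Also, Matou\v{s}ek's theorem is applied to a family defined by the $L$-formula $\theta$, whose finite VC-dimension comes from $T$ being NIP, not from $\varphi$ being NIP. The Fact as stated, for one NIP formula in an arbitrary theory, is credited in the paper to \cite{eshel2021uniform}. That proof goes through learning-theoretic compression arguments rather than honest definitions; compare the ``majority vote of boundedly many $k$-compressible types'' form used in the proof of Proposition~\ref{prop: NIP implies ULCFS}.
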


It is immediate that if $\varphi(x,y)$ satisfies UDTFS then it also satisfies ULCFS. Indeed, given $\theta(y, \bar{y})$ as in Fact \ref{fac: UDTFS} and finite set $A$, every $p(x) \in S_{\varphi}(A)$ clearly does not internally $(\varphi, \theta, n, k)$-divide over a subset of $A$ size at most $d$ for any $n,k \in \mathbb{N}$, namely over $c$ where $c$ is so that $\theta(y,c)$ is a definition for $p$ ($\varphi(x,a) \in p \Rightarrow \models \theta(a,c)$, so if $a_i \in A$ for $i < n$ with $a = a_0$ and $(a_i : i < n)$ is $\theta$-indiscernible over $c$, then also $\models \theta(a_i,c)$ for all $i<n$, hence $\varphi(x,a_i) \in p$).

We show that strong ULCFS follows from the more explicit form of UDTFS established for NIP formulas in \cite{eshel2021uniform, bays2024density}:

\begin{prop}\label{prop: NIP implies ULCFS}
If a partitioned formula $\varphi(x,y)$ is NIP then it satisfies strong $\ULCFS$.
\end{prop}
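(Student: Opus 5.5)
The plan is to run the argument given just above for ``UDTFS implies ULCFS'' again, but to drop the word ``internally''. What allows this is a stronger, explicit form of UDTFS for NIP formulas, taken from \cite{eshel2021uniform, bays2024density}. In that form the defining formula controls consistency not only on the finite parameter set, but on everything it defines in $\M$.

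Concretely, I would extract the following from those papers. For every $k$ there are a formula $\theta(y;\bar{y})$ with $\bar{y}=(y_1,\ldots,y_d)$, and formulas $\theta^{+}(y;\bar{y}),\theta^{-}(y;\bar{y})$ that are Boolean combinations of instances of $\theta$, with the following property. For every finite $B\subseteq\M^y$ and every $p\in S_\varphi(B)$ there is $c\in B^d$ such that:
\begin{enumerate}
\item for all $a\in B$: $\varphi(x,a)\in p\iff\models\theta^{+}(a,c)$, and $\neg\varphi(x,a)\in p\iff\models\theta^{-}(a,c)$;
\item (honesty) for any $a_1,\ldots,a_k\in\M^y$ and any signs $t_j$ with $\models\theta^{t_j}(a_j,c)$, the set $\{\varphi(x,a_j)^{t_j}:j\le k\}$ is consistent.
\end{enumerate}
The natural source of such $\theta$ is the majority-vote definitions arising from $\varepsilon$-approximations in the proof of UDTFS: $c$ lists elements of $B$, and $\theta(a,c)$ asserts that a suitable proportion of the witnesses $x_j$ coded by $c$ satisfy $\varphi(x_j,a)$. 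Honesty should then follow by pigeonhole: if each of $k$ instances is satisfied by more than a $(1-1/k)$-fraction of the witnesses, some single witness satisfies all $k$. With this in hand I set $\Delta:=\{\theta(y;\bar{y})\}$ and $n:=\max(d,k+1)$; for fixed $k$ both depend only on $\varphi$.

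The verification is then short. Fix a finite $B$ and $p\in S_\varphi(B)$, and let $C\subseteq B$ be the set of coordinates of the $c$ given above, so $|C|\le d\le n$. Suppose some instance $\varphi(x,a)^{t}\in p$ $(\Delta,n,k)$-divides over $C$. Then there is a $\Delta(C)$-indiscernible sequence $(a_i:i<n)$ in $\M$, with no requirement that it lie in $B$, such that $a_0=a$ and $\{\varphi(x,a_i)^t:i<n\}$ is $k$-inconsistent. By (1) we have $\models\theta^{t}(a_0,c)$. Since $\theta^t(y,c)$ is a Boolean combination of $\theta$-instances with parameters from $C$, $\Delta(C)$-indiscernibility gives $\models\theta^{t}(a_i,c)$ for every $i<n$. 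Now (2) says every $k$ of the $\varphi(x,a_i)^t$ are jointly consistent, which contradicts $k$-inconsistency. So $p$ does not $(\varphi,\Delta,n,k)$-divide over $C$ in the strong sense.

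The main obstacle is establishing (2): it must hold for all parameters $a_j$ in the monster model, and with mixed signs, since $S_\varphi(B)$ contains negative instances. My first attempt would be to read it off directly from the explicit definitions in \cite{bays2024density}, with $\theta^{+}$ and $\theta^{-}$ as ``strict majority'' thresholds chosen relative to $k$ (for example, agreement with at least a $(1-\frac{1}{2k})$-fraction of the witnesses). If the published statement only gives (2) for $a_j\in B$, I would upgrade it by compactness. To do that, I would add the honesty requirement for $k$-tuples as a first-order condition on $c$ that holds in every finite configuration, and then apply the uniform bound on $d$.
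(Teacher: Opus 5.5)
\textbf{There is a genuine gap.} Your verification step is fine once (1) and (2) are given. The gap is the honesty property (2), and neither of the two ways you suggest for obtaining it works.

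What \cite{eshel2021uniform, bays2024density} actually provide, and what the paper uses, is a \emph{rounded} average. There are $r,m$, depending only on $\varphi$, such that every $p\in S_\varphi(B)$ is the majority vote of $r$-compressible types $p_1,\ldots,p_m\in S_\varphi(B)$, with compressing sets $A_i\subseteq B$, $|A_i|\le r$. Each instance of $p$ lies in \emph{more than half} of the $p_i$, and nothing stronger is guaranteed. The natural definition is therefore: $\theta^t(a,c)$ holds iff more than $m/2$ of the indices $i$ satisfy $p_i\restriction_{A_i}\vdash\varphi^t(x,a)$. This satisfies (1), and for $k=2$ also (2), because two strict majorities of $[m]$ intersect.

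For $k\ge 3$, (2) fails for this definition. Take parameters $a_1,a_2,a_3\in\M^y\setminus B$ approved by the index sets $\{1,2\}$, $\{2,3\}$, $\{1,3\}$. Each satisfies $\theta^+$, yet nothing forces $\{\varphi(x,a_j):j\le 3\}$ to be consistent, since off $B$ the approvals are not constrained by $p$ at all. Raising the threshold to $1-\frac{1}{2k}$ restores the pigeonhole argument but destroys (1), because nothing says that instances of $p$ are approved by that large a fraction.

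The compactness upgrade is not an argument either. Honesty ``for $a_j\in B$'' is a property of the particular $c$ chosen for the particular $B$. Enlarging $B$ to include the offending $a_j$ changes $c$, so there is no uniform configuration to which compactness applies.

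\textbf{The repair.} The problem is that $\Delta=\{\theta\}$ collapses the definition to a single majority formula. Indiscernibility then only preserves ``approved by some majority'', and the information about which compressible type approves is lost. Instead:
\begin{itemize}
\item Put the compression formulas themselves into $\Delta$, namely $\forall x\big(\bigwedge_{j}\varphi^{t_j}(x,y_j)\to\varphi^t(x,y)\big)$.
\item Take $C:=\bigcup_i A_i$, so $|C|\le rm$.
\item Given $\varphi^t(x,a)\in p$, fix one $i$ with $\varphi^t(x,a)\in p_i$, so that $p_i\restriction_{A_i}\vdash\varphi^t(x,a)$.
\item $\Delta(C)$-indiscernibility transfers this implication to every member $a^\ell$ of the sequence.
\item Any realization of $p_i$ then satisfies all the $\varphi^t(x,a^\ell)$.
\end{itemize}
No $k$-wise honesty of a majority vote is needed, and one gets consistency of the whole sequence simultaneously for all $n$ and $k$. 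This is exactly the paper's proof.
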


\begin{proof}

Assume that $\varphi\left(x,y\right)$ is NIP. By \cite{eshel2021uniform, bays2024density} there exists $k,m \in \mathbb{N}$ depending only on $\varphi$ (more precisely, only on the VC-dimension $\VC\left(\varphi\right)$) 
satisfying the following: for every finite set $A\subseteq\mathbb{M}^{y}$
and every $p\in S_{\varphi}\left(A\right)$, there exist some types
$p_{1},\ldots,p_{m}\in S_{\varphi}\left(A\right)$ so that:
\begin{enumerate}
\item each $p_{i}$ is \emph{$k$-compressible}, i.e.~for some $A_{i}\subseteq A$
with $\left|A_{i}\right|\leq k$ we have $p_{i}\restriction_{A_{i}}\vdash p_{i}$,
\item and $p$ is the rounded average of the $p_{i}$'s, i.e.~for any $a\in A$
and $t\in\left\{ 0,1\right\} $, $\varphi^{t}\left(x,a\right)\in p\iff\left|\left\{ i\in\left[m\right]:\varphi^{t}\left(x,a\right)\in p_{i}\right\} >\frac{m}{2}\right|$.
\end{enumerate}
Let $\Delta\left(y,\left(y_{j}:j\in\left[k\right]\right)\right)$
be the finite set of formulas 
\[
\left\{ \forall x\left(\left(\bigwedge_{j\in\left[k\right]}\varphi^{t_{j}}\left(x,y_{j}\right)\right)\rightarrow\varphi^{t}\left(x,y\right)\right):\left(t_{j}:j\in\left[k\right]\right)\in\left\{ 0,1\right\} ^{\left[k\right]},t\in\left\{ 0,1\right\} \right\} .
\]

Now let $A\subseteq\mathbb{\mathbb{M}}^{y}$ be an arbitrary finite
set and $b\in\mathbb{M}^{x}$, and let $p:=\tp_{\varphi}\left(b/A\right)$.
Let $A_{i}\subseteq A$ and $p_{i}\in S_{\varphi}\left(A\right)$
for $i\in\left[m\right]$ satisfy (1) and (2) above with respect to
$p$, and let $A':=\bigcup_{i\in\left[m\right]}A_{i}$. Then $\left|A'\right|\leq km$.
We claim that $p$ does not $\left(\varphi,\Delta,n,k\right)$-divide
over $A'$ for any $n, k\geq2$.

For each $i\in\left[m\right]$, fix an arbitrary enumeration $A_{i}=\left(a_{i,j}:j\in\left[k\right]\right)$.
Let $a\in A$ be arbitrary, and let $t\in\left\{ 0,1\right\} $ be
such that $\varphi^{t}\left(x,a\right)\in p$. Then by (2) there exists
some $s\subseteq\left[m\right]$ with $\left|s\right|>\frac{m}{2}$
so that $\varphi^{t}\left(x,a\right)\in p_{i}$ for each $i\in s$.
Fix an arbitrary $i\in s$, and for $j\in\left[k\right]$ let $t_{j}$
be such that $\varphi^{t_{j}}\left(x,a_{i,j}\right)\in p_{i}$. Then
by (1) we have 
\[
\models\forall x\left(\left(\bigwedge_{j\in\left[k\right]}\varphi^{t_{j}}\left(x,a_{i,j}\right)\right)\rightarrow\varphi^{t}\left(x,a\right)\right).
\]

Let $\left(a^{\ell}:\ell\in\left[n\right]\right)$ be an arbitrary
$\Delta\left(A'\right)$-indiscernible sequence in $\mathbb{M}^{y}$
with $a^{1}=a$. In particular $\left(a^{\ell}:\ell\in\left[n\right]\right)$
is $\Delta\left(a_{i,j}:j\in\left[k\right]\right)$-indiscernible,
hence by the choice of $\Delta$ for any $\ell\in\left[n\right]$
we also have 
\[
\models\forall x\left(\left(\bigwedge_{j\in\left[k\right]}\varphi^{t_{j}}\left(x,a_{i,j}\right)\right)\rightarrow\varphi^{t}\left(x,a^{\ell}\right)\right).
\]

Let $b_{i}\in\mathbb{M}^{x}$ be an arbitrary realization of $p_{i}$,
in particular $b_{i}\models\bigwedge_{j\in\left[k\right]}\varphi^{t_{j}}\left(x,a_{i,j}\right)$,
but then also $b_{i}\models\bigwedge_{\ell\in\left[n\right]}\varphi^{t}\left(x,a^{\ell}\right)$.
\end{proof}

\subsection{Simple theories satisfy strong ULCFS}\label{sec: simple implies ULCFS}

From the definition of $\NTP$ (Definition \ref{def: NTP2}) and compactness we have:
\begin{lem}
\label{lem: bounded rank in a simple theory}Assume that $\varphi\left(x,y\right)$
is $\NTP$  and $k \in \mathbb{N}$. Then there is some $K\in\omega$ such that there
do not exist $\left(a_{\eta}:\eta\in K^{K}\right)$ in $\M^y$ such that $\left\{ \varphi\left(x,a_{\eta\restriction i}\right):i<K\right\} $
is consistent for every $\eta\in K^{K}$ and $\left\{ \varphi\left(x,a_{\eta i}\right):i<K\right\} $
is $k$-inconsistent for every $\eta\in K^{<K}$.
\end{lem}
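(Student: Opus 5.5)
We argue by contradiction and compactness. Fix $\varphi(x,y)$, which is $\NTP$, and $k\in\mathbb{N}$. Suppose that for every $K\in\omega$ there is an array $(a^K_\eta:\eta\in K^{\le K})$ in $\M^y$ witnessing the failure for $K$: every branch $\{\varphi(x,a^K_{\eta\restriction i}):i<K\}$ with $\eta\in K^K$ is consistent, and every sibling set $\{\varphi(x,a^K_{\eta\frown\langle i\rangle}):i<K\}$ with $\eta\in K^{<K}$ is $k$-inconsistent. From these finite configurations we will build an infinite tree $(a_\eta:\eta\in\omega^{<\omega})$ witnessing $k$-$\TP$ for $\varphi$, which contradicts $\NTP$ (Definition \ref{def: NTP2}(1)).

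Introduce new constants $c_\eta$ for $\eta\in\omega^{<\omega}$ and let $\Sigma$ be the set of $L(\M)$-sentences (in fact $L$-sentences) expressing the following.
\begin{enumerate}
\item For every $\eta\in\omega^{<\omega}$ and every $m\in\omega$, the formula $\exists x\bigwedge_{i\le m}\varphi(x,c_{\eta\restriction i})$ holds, where $\eta\restriction i$ ranges over the initial segments of $\eta$ of length $\le m$.
\item For every $\eta\in\omega^{<\omega}$ and every $i_1<\ldots<i_k$, the formula $\neg\exists x\bigwedge_{j\le k}\varphi(x,c_{\eta\frown\langle i_j\rangle})$ holds.
\end{enumerate}
Once $\Sigma$ is realized in $\M$, we get consistency of each infinite branch, since every finite subset of a branch sits inside some finite initial segment, and $k$-inconsistency of each sibling set. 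This is exactly $k$-$\TP$.

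For finite satisfiability, a finite $\Sigma_0\subseteq\Sigma$ mentions only nodes $\eta$ with length $<K$ and entries $<K$ for some $K$. We interpret $c_\eta:=a^K_\eta$ for these nodes. Note that in (1) the conditions only concern initial segments of a single $\eta$ of length $<K$. Every such $\eta$ extends to a branch in $K^K$, and consistency of that full branch gives consistency of the initial segment. The conditions in (2) follow directly from $k$-inconsistency of the siblings of $\eta$ in the $K$-array. Hence $\Sigma_0$ is satisfiable, and by compactness and saturation of $\M$ the whole $\Sigma$ is realized.

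The only delicate point is the indexing: the hypothesis concerns trees of height and width exactly $K$, and we must check that any finite fragment of the $\omega^{<\omega}$ conditions embeds into a $K$-tree with the same node names. This is immediate, since nodes with length $<K$ and entries $<K$ are literally elements of $K^{<K}$ and branches extend within $K^K$. No real obstacle arises.
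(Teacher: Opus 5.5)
Your proposal is correct and is the same argument the paper intends: the paper derives this lemma directly from the definition of $\NTP$ and compactness without writing out details. You have filled in exactly that compactness argument, extracting a $k$-$\TP$ tree indexed by $\omega^{<\omega}$ from the finite $K$-trees.
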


\begin{prop}
\label{lem: every type does not fork over a small set} If a partitioned formula $\varphi\left(x,y\right)$ is $\NTP$ then it satisfies strong ULCFS.
\end{prop}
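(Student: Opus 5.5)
The plan is to argue by contraposition, building a dividing tree and then invoking Lemma~\ref{lem: bounded rank in a simple theory}. Fix $k$ and let $K$ be as given by Lemma~\ref{lem: bounded rank in a simple theory} for $\varphi$ and $k$. I would take $n := K$ and let $\Delta$ be a finite set of formulas large enough to transfer the relevant consistency and inconsistency information. Concretely, $\Delta$ should contain the formulas $\exists x \bigwedge_{j \in s}\varphi(x,y_j)$ for $s \subseteq [k]$, together with $\varphi$-equalities, arranged so that $\Delta(C)$-indiscernibility of a sequence starting at $b$ lets us move a witness by an automorphism-like step over $C$.

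Now suppose $p \in S_\varphi(B)$ with $B$ finite, and suppose that for every $C \subseteq B$ with $|C| \leq n$, $p$ $(\varphi,\Delta,n,k)$-divides over $C$. Realize $p$ by some $e$ and build the tree inductively. Start with $C_0 = \emptyset$. There is $\varphi(x,b_0) \in p$ that $(\Delta,n,k)$-divides over $\emptyset$, witnessed by a sequence $(b_{0,i} : i<n)$ with $b_{0,0} = b_0$. Next put $C_1 = \{b_0\}$ and find $\varphi(x,b_1) \in p$ that divides over $C_1$, and so on, giving $C_{j+1} = C_j \cup \{b_j\}$ with $|C_j| \leq K$ for $j < K$.

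Along the branch realized by $e$ the instances $\varphi(x,b_0), \ldots, \varphi(x,b_{K-1})$ are consistent, since they all lie in $p$. The task is to reproduce this branch at each sibling so as to get a full tree $(a_\eta : \eta \in K^{\leq K})$. This is the standard simple-theory argument that dividing chains give the tree property. At level $j$ the sequence $(b_{j,i})_i$ is indiscernible over $C_j = \{b_0,\ldots,b_{j-1}\}$. So for each $i$ the tuple $b_0 \ldots b_{j-1} b_{j,i}$ has the same $\Delta$-type as $b_0 \ldots b_{j-1} b_j$. The later part of the branch then has to be copied over each $b_{j,i}$.

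The main obstacle is that we only have finite $\Delta$-indiscernibility and no automorphisms, so copying a whole subtree requires $\Delta$ to encode the consistency of the entire future branch. I would handle this by building the tree top-down, ordering the dividing steps in reverse. Let $\Delta$ contain formulas of the form $\exists x (\bigwedge_{l \leq K} \varphi(x,y_l))$ in which the parameters are the dividing witnesses. Alternatively, one can argue by compactness: if strong ULCFS failed for this $k$ with every finite $\Delta$ and every $n$, then an ultraproduct or compactness argument gives arbitrarily long genuine dividing chains $\varphi(x,b_j)$, each dividing over $\{b_0,\ldots,b_{j-1}\}$, in one consistent type. The classical argument (as in Kim's proof that simplicity gives local character) then yields the $k$-tree of height $K$, contradicting Lemma~\ref{lem: bounded rank in a simple theory}. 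I would pursue the compactness route first, since it avoids the bookkeeping with $\Delta$.
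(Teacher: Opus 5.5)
Your final route, the compactness argument, is correct, and it differs from the paper's proof.

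\textbf{What the paper does.} The paper remarks that the statement follows by compactness, but it deliberately gives an explicit finitization instead. It fixes $n:=K$ from Lemma~\ref{lem: bounded rank in a simple theory}. It then builds $\Delta=\bigcup_{i<K}\Delta_i$ by reverse induction, where $\Delta_i(y_0,\dots,y_{i-1};y_i)$ asserts that there is a continuation $y_{i+1},\dots,y_{K-1}$ with consistent branch in which each $y_j$ is $(\Delta_j,K,k)$-dividing over $y_{<j}$. Indiscernibility with respect to $\Delta_i$ guarantees that every sibling in a dividing sequence also admits such a continuation; this replaces the automorphisms of the classical argument. A greedy search inside $B$ must therefore halt after fewer than $K$ steps.

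\textbf{Your ``top-down'' alternative.} This is the option you mention in passing, but your description of it (formulas $\exists x\bigwedge_l\varphi(x,y_l)$) falls short. The formulas must recursively encode definable \emph{dividing} of the later steps, not only their consistency.

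\textbf{Completing your compactness route.} The route avoids this bookkeeping. To make it airtight, note two facts about ``$\varphi(x,y)$ is $(\Delta,n,k)$-dividing over $z$'' for a finite tuple $z$:
\begin{itemize}
\item it is an $\emptyset$-definable condition $\Xi_{\varphi,\Delta,n}(y;z)$;
\item it is monotone in $\Delta$ and $n$.
\end{itemize}
Fix $m$ and consider
\[
\Bigl\{\exists x\textstyle\bigwedge_{j<m}\varphi(x,y_j)\Bigr\}\cup\bigl\{\Xi_{\varphi,\Delta,n}(y_j;y_{<j}):j<m\bigr\},
\]
ranging over all finite $\Delta$ and all $n$. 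Every finite part of this set is realized by the greedy chain you extract, as in your second paragraph. That chain comes from a single counterexample $(B,p)$ at one sufficiently large $(\Delta,n)$ with $n\geq m$. A realization of the whole set is a chain of genuine $k$-dividing instances with consistent union. The usual automorphism argument then gives a $k$-tree of height $m$. Since $m$ is arbitrary, you get $k$-$\TP$ outright, so the bounded-rank lemma is not even needed.

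\textbf{Trade-off.} Your argument is shorter, but it only yields the existence of some $\Delta$ and $n$. The paper's argument produces an explicit $\Delta$ and the explicit value $n=K$, the tree-rank bound of $\varphi$.
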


\begin{proof}

This follows by compactness from the local character of dividing in simple theories, but we give an explicit finitization of the argument.

Let $\varphi(x,y)$ and $k \in \omega$ be given.

For any $\psi\left(x,y\right) \in L$, finite $\Delta \subseteq L$, $n \in \omega$ and tuple of variables $z$, the set 
$$\Xi_{\psi,\Delta,n}\left(y;z\right) := \left\{ \left(y,z\right):\psi\left(x,y\right)\mbox{is \ensuremath{\left(\Delta,n, k\right)}-dividing over }z\right\} $$
is $\emptyset$-definable. Now let $K$ be as given by Lemma \ref{lem: bounded rank in a simple theory} for $\varphi(x,y)$.

Let $\Delta_{K} := \emptyset$, and by reverse induction on $i = K-1, \ldots, 0$ we define 
\begin{gather*}
	\Delta_i(y_0, \ldots, y_{i-1}; y_i) := \\
	\exists y_{i+1} \ldots y_{K-1} \left( \exists x \bigwedge_{j=0}^{K-1} \varphi(x,y_j) \  \land  \ \bigwedge_{j = i+1}^{K-1} \Xi_{\varphi,\Delta_j,K}\left(y_j; y_0, \ldots, y_{j-1} \right)\right).
\end{gather*}
%
%
%
%
%
and let $\Delta := \bigcup_{i<K}\Delta_{i}$.

Let $A$ be a set and $p\left(x\right) \in S_{\varphi}(A)$. We try to choose
by induction on $i<K$ elements $a_{i}$ from $A$ such that $\varphi\left(x,a_{i+1}\right)\in p$ and $\varphi\left(x,a_{i+1}\right)$
is $\left(\Delta,K, k\right)$-dividing over $A_{i} := \left(a_{j}:j<i\right)$.

Suppose that we succeded. Now we build a $K^{<K}$-tree for $\varphi\left(x,y\right)$ as follows.
As $\Delta_i \subseteq \Delta$, in particular $\varphi\left(x,a_{i+1}\right)$ is $\left(\Delta_{i},K, k\right)$-dividing 
over $\left(a_{j}:j<i\right)$ for all $i<K$. Let $b_{\emptyset} := a_{0}$.
Then let $b_{0} := a_{1}$. By assumption, $\varphi\left(x,a_{1}\right)$ is 
$\left(\Delta,K, k\right)$-dividing over $a_{0}$,  let $\left(b_{j}:j<K\right)$
be a $\Delta_1(a_{0})$-indiscernible sequence  witnessing this.
By the choice of $\Delta_1$ it follows that for each $j<K$ there are 
some $\left(b_{j 0^{i}}\right)_{i<(K-1)}$ such that each $\varphi\left(x,b_{j0^{i}}\right)$ is 
$\left(\Delta_{i},K, k\right)$-dividing over $\left(b_{j0^{h}}:h<i\right)$
and such that $\left\{ \varphi\left(x,b_{j0^{i}}\right):i<K-1\right\} $
is consistent. This gives the first level of the tree. Continuing in the same way we end up with a tree $\left(b_{\eta}:\eta\in K^{<K}\right)$
contradicting the choice of $K$. 

Thus we had to get stuck at some $i<K$, which means that $p$ does
not $\left(\varphi, \Delta,K, k\right)$-divide over $\left(a_{j}:j<i\right)$, 
as required.
\end{proof}

\subsection{ULCFS, resilience, $\NTP_2$}\label{sec: resilience}

\begin{defn}\label{def: resilience}
\cite[Definition 4.8]{yaacov2014independence} A theory  $T$ is \emph{resilient} if every formula $\varphi\left(x,y\right)\in L$ is \emph{resilient}, i.e.~there do not exist an indiscernible sequence $\left(a_{i}:i\in\mathbb{Z}\right)$ and an $\left\{ a_{i}:i\in\mathbb{Z}\setminus\left\{ 0\right\} \right\} $-indiscernible
sequence $\left(b_{i}:i\in\omega\right)$ so that $a_{0}=b_{0}$,
$\left\{ \varphi\left(x,a_{i}\right):i\in\mathbb{Z}\right\} $ is consistent
and $\left\{ \varphi\left(x,b_{i}\right):i\in\omega\right\} $ is inconsistent (hence $k$-inconsistent for some $k \in \omega$ by compactness and indiscernibility).
\end{defn}

\begin{fact}
	If $T$ is either simple or NIP, then it is resilient. And if $T$ is resilient, then it is $\NTP_2$ \cite[Proposition 4.11]{yaacov2014independence}. It is open if $\NTP_2$ is equivalent to  resilience \cite[Question 4.14]{yaacov2014independence}. 
\end{fact}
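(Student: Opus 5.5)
The statement has three parts: simple implies resilient, NIP implies resilient, and resilient implies $\NTP_2$. I would prove them separately, from the easiest to the one I expect to be hardest.

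\textbf{Resilient implies $\NTP_2$.} Assume $T$ has $\TP_2$. By Fact \ref{fac: 1-var 2-incons for TP_2} some single formula $\varphi(x,y)$ has $k$-$\TP_2$, witnessed by an array $(c_{i,j})_{i,j}$. By Ramsey and compactness I would replace it by an indiscernible array, indexed by $\mathbb{Z}\times\omega$. That is, the rows are mutually indiscernible, and the sequence of rows is itself indiscernible. The witnessing properties (every path is consistent, every row is $k$-inconsistent) are part of the EM-type, so they survive this step. Then set $a_i:=c_{i,0}$ for $i\in\mathbb{Z}$ and $b_j:=c_{0,j}$ for $j\in\omega$.
\begin{itemize}
\item $(a_i)$ is indiscernible, because the sequence of rows is.
\item $\{\varphi(x,a_i)\}$ is consistent, because $(a_i)$ is a path.
\item $(b_j)$ is indiscernible over $\{a_i:i\neq0\}$, by mutual indiscernibility of the rows.
\item $a_0=b_0$, and $\{\varphi(x,b_j)\}$ is $k$-inconsistent.
\end{itemize}
This is exactly a failure of resilience for $\varphi$.

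\textbf{Simple implies resilient.} Let $(a_i)_{i\in\mathbb{Z}}$ be indiscernible with $\{\varphi(x,a_i)\}$ consistent, and put $A:=\{a_i:i\neq0\}$. The goal is to show that $\varphi(x,a_0)$ does not divide over $A$. Granting this, any $A$-indiscernible $(b_i)$ with $b_0=a_0$ gives a consistent family $\{\varphi(x,b_i)\}$, which is resilience. The steps are as follows.
\begin{enumerate}
\item Choose $c\models\{\varphi(x,a_i):i\in\mathbb{Z}\}$. By Ramsey and compactness, find a sequence $(a'_i)$ realizing the EM-type of $(a_i)$ over $c$ that is indiscernible over $c$. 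Since $(a_i)$ is indiscernible, $(a'_i)\equiv(a_i)$. Applying an automorphism taking $(a'_i)$ to $(a_i)$, we may assume $(a_i)$ is indiscernible over $c$.
\item Stretch the sequence to be indexed by $\mathbb{Q}$. Any formula over $Ac$ satisfied by $a_0$ involves only finitely many $a_i$, so by indiscernibility over $c$ it is also satisfied by some $a_q$ with $q\in\mathbb{Q}\setminus\mathbb{Z}$ lying in the same gap. Hence $\tp(a_0/Ac)$ is finitely satisfiable in the stretched parameter set, and so $a_0\ind_A c$ (working over the stretched $A$, which is harmless).
\item By symmetry of forking in simple theories, $c\ind_A a_0$. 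Therefore $\varphi(x,a_0)\in\tp(c/Aa_0)$ does not divide over $A$.
\end{enumerate}

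\textbf{NIP implies resilient.} This is the step I expect to be the main obstacle, and I would attack it by building an alternation pattern. Suppose $(a_i)_{i\in\mathbb{Z}}$ and $(b_j)$ witness non-resilience, with $\{\varphi(x,b_j)\}$ $k$-inconsistent. Stretch $(a_i)$ to $\mathbb{Q}$. Since $b_j\equiv_A a_0$, each sequence $a_{<0}\,b_j\,a_{>0}$ has the same type as $(a_i)$. So inside every gap of the stretched sequence I can insert, by compactness, a block of $A$-indiscernible copies $b$ that jointly remain $k$-inconsistent. The aim is a single indiscernible sequence in which consistent instances of $\varphi$ alternate with $k$-inconsistent blocks. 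Passing to $\psi(x;y_1,\dots,y_k):=\bigwedge\varphi(x,y_i)$, applied to consecutive $k$-tuples of this sequence, this alternation should yield infinitely many alternations of truth value, and hence $\IP$, contradicting NIP. The delicate point, where I expect the most work, is keeping the combined sequence indiscernible while preserving both the consistency and the $k$-inconsistency patterns. If this direct construction does not go through, I would instead use that dividing in NIP theories is witnessed along any indiscernible sequence whose type is finitely satisfiable over the base, and run an argument parallel to the simple case.
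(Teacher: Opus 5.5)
The paper gives no proof of this Fact; it is quoted from Ben Yaacov--Chernikov. Your argument that resilience implies $\NTP_2$ is correct: you take the first column and row $0$ of a strongly indiscernible $\TP_2$-array, which is the same device the paper uses in Proposition \ref{prop: str ULCFS implies resilient}(1).

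Your simple case is correct in substance, but ``working over the stretched $A$, which is harmless'' goes the wrong way. Non-dividing over $A'=\{a_q:q\in\mathbb{Q}\setminus\{0\}\}$ does not give non-dividing over the smaller set $A$, and your $(b_j)$ need not be indiscernible over $A'$. Stretch the counterexample rather than the sequence. Consider the configuration: $(a_q)_{q\in\mathbb{Q}}$ indiscernible, $\{\varphi(x,a_q)\}$ consistent, $(b_j)$ indiscernible over $A'$ with $b_0=a_0$, and $\{\varphi(x,b_j)\}$ $k$-inconsistent. Every finite fragment of it is realized in the $\mathbb{Z}$-witness via an order-preserving map fixing $0$. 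So by compactness such a configuration exists, and your coheir-plus-symmetry argument then contradicts it directly.

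The genuine gap is the NIP case, for which you give no argument. The construction you outline cannot exist. In an indiscernible sequence, either all $k$-element subfamilies of $\{\varphi(x,d_t)\}$ are consistent or all are inconsistent, so consistent instances cannot alternate with $k$-inconsistent blocks. Also, you never fix a realization along which a truth value is supposed to alternate. The fallback is only a pointer: the base $a_{\neq 0}$ is not a model, and you would have to state and prove the Kim-type lemma you have in mind.

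The missing idea is to fix one $c\models\{\varphi(x,a_i):i\in\mathbb{Z}\}$ and change the sequence rather than the consistency pattern.
\begin{enumerate}
\item Let $I=(d_i)_{i\in\mathbb{Z}}$ have the same type as $(a_i)_{i\in\mathbb{Z}}$ and let $r\in\mathbb{Z}$. Take $\tau\in\Aut(\M)$ with $\tau(a_i)=d_{i+r}$.
\item Then $(\tau(b_j))_j$ is indiscernible over $d_{\neq r}$, starts at $d_r$, and $\{\varphi(x,\tau(b_j))\}$ is inconsistent. Hence some $e=\tau(b_j)$ satisfies $\neg\varphi(c,e)$ and $e\equiv_{d_{\neq r}}d_r$.
\item Replacing $d_r$ by $e$ therefore preserves the type of the whole sequence, and changes the truth value of $\varphi(c,-)$ only at position $r$.
\item Do this successively at positions $0,2,\dots,2n-2$. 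The result is an indiscernible sequence along which $\varphi(c,-)$ alternates at least $2n$ times.
\end{enumerate}
This contradicts the bound on alternations of the NIP formula $\varphi$ along indiscernible sequences. The argument in fact shows that every NIP formula is resilient.
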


\begin{problem}
Assume that $T$ is not resilient. Is it possible to find a witness to it as in Definition \ref{def: resilience}, such that moreover $\left\{ \varphi\left(x,b_{i}\right):i\in\omega\right\} $
is $2$-inconsistent?
\end{problem}

This following lemma and corollary appeared first in a preliminary version of \cite{chernikov2014valued}, but were removed in the final version due to an error in its intended application.

\begin{lem}
\label{lem: resilience equivalent to rotation}The following are equivalent for a theory $T$.
\begin{enumerate}
\item Every formula $\varphi\left(x,y\right)$ with $\left|x\right|\leq n$
is resilient.
\item For every small set $D$, every $c$ with $\left|c\right|\leq n$ and every
sequence $\bar{a}=\left(a_{i}\right)_{i\in\mathbb{Q}}$ indiscernible
over $cD$, if $\bar{b}=\left(b_{i}\right)_{i\in\omega}$ is indiscernible
over $a_{\neq0}D$ and $b_{0}=a_{0}$, then there exists $\bar{b}'$ indiscernible
over $ca_{\neq0}D$ such that $\bar{b}'\equiv_{\bar{a}D}\bar{b}$.
\item For every small set $D$, every $c$ with $\left|c\right|\leq n$ and every
sequence $\bar{a}=\left(a_{i}\right)_{i\in\mathbb{Q}}$ indiscernible
over $cD$, if $\bar{b}=\left(b_{i}\right)_{i\in\omega}$ is indiscernible
over $a_{\neq0}D$ and $b_{0}=a_{0}$, then there exists $\bar{b}'$ indiscernible
over $ca_{\neq0}D$ such that $\bar{b}'\equiv_{a_{0}D}\bar{b}$.
\end{enumerate}
\end{lem}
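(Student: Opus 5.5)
I would prove the cycle (1)$\Rightarrow$(2)$\Rightarrow$(3)$\Rightarrow$(1). The step (2)$\Rightarrow$(3) is immediate, because $\bar{b}'\equiv_{\bar{a}D}\bar{b}$ implies $\bar{b}'\equiv_{a_0D}\bar{b}$. So the work is in (3)$\Rightarrow$(1), which is a contrapositive extraction of a witness, and in (1)$\Rightarrow$(2), which is a compactness argument.

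\textbf{(3)$\Rightarrow$(1).} Suppose $\varphi(x,y)$ with $|x|\leq n$ is not resilient. Then there are an indiscernible $(a_i)_{i\in\mathbb{Z}}$, with $\{\varphi(x,a_i)\}$ consistent, and a sequence $(b_i)_{i\in\omega}$ that is indiscernible over $a_{\neq 0}$, has $b_0=a_0$, and is $k$-inconsistent.

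First I stretch the first sequence to $(a_i)_{i\in\mathbb{Q}}$. This keeps consistency, and by Ramsey and compactness it keeps an automorphic copy of $\bar{b}$ indiscernible over the new $a_{\neq0}$.

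Next I take $c\models\{\varphi(x,a_i):i\in\mathbb{Q}\}$ and, again by Ramsey and compactness, make $\bar{a}$ indiscernible over $c$. Here the base is $D=\emptyset$.

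Now (3) applies and gives $\bar{b}'\equiv_{a_0}\bar{b}$ that is indiscernible over $ca_{\neq0}$. Since $b'_0=a_0$ and $\models\varphi(c,a_0)$, indiscernibility of $\bar{b}'$ over $c$ gives $\models\varphi(c,b'_i)$ for all $i$. But $\bar{b}'\equiv\bar{b}$, so $\{\varphi(x,b'_i)\}$ is $k$-inconsistent. This is a contradiction.

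\textbf{(1)$\Rightarrow$(2).} By compactness it suffices to realize
\[
\{\bar{y}\equiv_{\bar{a}D}\bar{b}\}\cup\{\text{$\bar{y}$ indiscernible over }ca_{\neq0}D\}.
\]
Suppose this set is inconsistent. Then finitely many indiscernibility conditions fail. Concretely, there are a formula $\theta(x;\bar{y},\bar{a}',d)$ with $x$ of length $|c|$, parameters $\bar{a}'$ from $a_{\neq0}$ and $d$ from $D$, and two increasing tuples of indices, such that for every $\bar{y}\equiv_{\bar{a}D}\bar{b}$ the formula $\theta(c,\cdot)$ distinguishes the two index tuples. By finiteness and pigeonhole one can fix a sign and a single pair of tuples that works uniformly.

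I then convert this into a resilience failure for the formula
\[
\psi(x;\,y_{\text{block}}) = \theta(x;\,y_{\text{block}},\bar{a}',d)\wedge\neg\theta(x;\,\text{shifted block},\bar{a}',d),
\]
with $\bar{a}'$ and $d$ absorbed into the parameter variable. The new $a$-sequence consists of blocks of $\bar{a}$ around $0$, using the $\mathbb{Q}$-indexing to make room. The new $b$-sequence is built from blocks of $\bar{b}$, and by indiscernibility of $\bar{b}$ over $a_{\neq0}D$ the same contradiction transfers to every automorphic image. On the $a$-side, indiscernibility of $\bar{a}$ over $cD$ shows that $c$ realizes all the $\psi$-instances, so they are consistent. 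On the $b$-side, the inconsistency of the type above gives inconsistency. Since $x$ still has length $|c|\leq n$, this contradicts (1).

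The main obstacle is this re-blocking step. I need the new $b$-sequence to be indiscernible over the new $a_{\neq0}$ and to start with the new $a_0$, while keeping the parameters $\bar{a}'$ among the $a_{\neq0}$. I would handle this by choosing, via density of $\mathbb{Q}$, the indices of $\bar{a}'$ away from a small neighbourhood of $0$, and packing the relevant tuple of $b$-indices into a single element of an infinite sequence of finite tuples.
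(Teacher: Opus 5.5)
Your (2)$\Rightarrow$(3) and (3)$\Rightarrow$(1) are fine and follow the paper's argument. In (3)$\Rightarrow$(1), when you re-extract $\bar a$ to be indiscernible over $c$, carry $\bar b$ along by an automorphism; this exists since $\bar a$ was already indiscernible.

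\textbf{The gap is in (1)$\Rightarrow$(2).} You start from the inconsistency of a type in the variable $\bar y$ with $c$ held fixed. A failure of resilience instead needs one formula in the variable $x$ whose instances are consistent along an indiscernible sequence and inconsistent along a sequence that shares its $0$-th term and is indiscernible over the rest. Your $\psi$ does not provide this.
\begin{itemize}
\item \emph{The $a$-side fails.} Suppose the blocks sit in a neighbourhood of $0$ and the indices of $\bar a'$ lie outside it, as you intend. Then indiscernibility of $\bar a$ over $cD$ gives $\models\theta(c;\text{block},\bar a',d)\leftrightarrow\theta(c;\text{shifted block},\bar a',d)$. So $c$ satisfies \emph{none} of the $a$-side instances of $\theta\wedge\neg\theta$, the opposite of what you claim.
\item \emph{The $b$-side does not follow.} ``No copy of $\bar b$ over $\bar aD$ is indiscernible over $c$'' is a statement about $\bar y$ for this particular $c$. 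It does not say that no $x$ satisfies $\psi(x,b^*_i)$ for all $i$. It is also unclear how blocks of $\bar b$ of length $>1$ could satisfy $b^*_0=a^*_0$.
\item \emph{The pigeonhole step is unjustified.} You want a single formula and pair of index tuples that works uniformly for all $\bar y\equiv_{\bar aD}\bar b$. But these $\bar y$ are conjugate over $\bar aD$, not over $c\bar aD$, so different ones may violate different conditions.
\end{itemize}

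\textbf{The missing idea is to dualize before using compactness.} Put $p(x,a_0):=\tp(c/a_0a_{\neq0}D)$.
\begin{itemize}
\item \emph{Consistency suffices.} If $c'$ realizes $\bigcup_{i<\omega}p(x,b_i)$, Ramsey and compactness give $\bar b''\equiv_{a_{\neq0}D}\bar b$ indiscernible over $c'a_{\neq0}D$ with $c'b''_0\equiv_{a_{\neq0}D}ca_0$. An automorphism over $a_{\neq0}D$ taking $c'b''_0$ to $ca_0$ then yields the required $\bar b'$.
\item \emph{Reduction to $p_\varepsilon$.} By compactness it suffices to show that $\bigcup_i p_\varepsilon(x,b_i)$ is consistent for each rational $\varepsilon>0$, where $p_\varepsilon(x,a_0):=\tp(c/a_0a_{\notin(-\varepsilon,\varepsilon)}D)$.
\item \emph{Contradiction with (1).} If $\bigcup_ip_\varepsilon(x,b_i)$ were inconsistent, a single $\varphi(x,a_0a'd)\in p_\varepsilon$ would have $\{\varphi(x,b_ia'd)\}_{i}$ inconsistent. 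Meanwhile $c$ realizes $\{\varphi(x,a_ia'd):i\in(-\varepsilon,\varepsilon)\}$ by indiscernibility of $\bar a$ over $cD$. So $(a_ia'd)_{i\in(-\varepsilon,\varepsilon)}$ and $(b_ia'd)_{i<\omega}$ witness that $\varphi$ is not resilient, with $|x|=|c|\leq n$.
\end{itemize}
Your density-of-$\mathbb{Q}$ idea is exactly this $\varepsilon$. What must change is that the formula comes from $\tp(c/\bar aD)$ with a single parameter slot for $a_0$, not from the violated indiscernibility conditions.
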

\begin{proof}
(1) implies (2): Let $\varepsilon>0$ from $\mathbb{Q}$ be arbitrary. Consider the type $p_{\varepsilon}\left(x,a_{0}\right)=\tp\left(c/a_{0}a_{\notin\left(-\varepsilon,\varepsilon\right)}D\right)$.
By indiscernibility of $\bar{a}$ over $cD$ it follows that $\bigcup_{i\in\left(-\varepsilon,\varepsilon\right)}p_{\varepsilon}\left(x,a_{i}\right)$
is consistent (realised by $c$). We claim that $\bigcup_{i\in\mathbb{\omega}}p_{\varepsilon}\left(x,b_{i}\right)$
is consistent. Otherwise there is some $\varphi\left(x,a_{0}a'd\right)\in p_{\varepsilon}\left(x,a_{0}\right)$
with $a'\subseteq a_{\notin\left(-\varepsilon,\varepsilon\right)}$
and $d\subseteq D$ such that $\left\{ \varphi\left(x,b_{i}a'd\right)\right\} _{i\in\omega}$
is inconsistent. Let $a_{i}'=a_{i}a'd$ and $b_{i}'=b_{i}a'd$. Then
$\bar{a}'=\left(a_{i}'\right)_{i\in\left(-\varepsilon,\varepsilon\right)}$
is indiscernible, $\bar{b}'=\left(b_{i}'\right)_{i\in\omega}$ is
indiscernible over $a_{\neq0}'$, $a_{0}'=b_{0}'$ and they witness
that $\varphi\left(x,y\right)$ is not resilient --- a contradiction.

As $\varepsilon>0$ was arbitrary, it follows by compactness that,
letting $p\left(x,a_{0}\right)=\tp\left(c/a_{0}a_{\neq0}D\right)=\bigcup_{\varepsilon>0}p_{\varepsilon}\left(x,a_{0}\right)$,
we have that $\bigcup_{i\in\mathbb{\omega}}p\left(x,b_{i}\right)$
is consistent. But this means that there is $\bar{b}'\equiv_{\bar{a}D}\bar{b}$
which is indiscernible over $ca_{\neq0}D$ (see \cite[Lemma 3.5(1)]{chernikov2014valued}).

(2) implies (3) is obvious.

(3) implies (1): Assume that $\varphi\left(x,y\right)$ is not resilient, where $\left|x\right|\leq n$, 
and let $\bar{a}=\left(a_{i}\right)_{i\in\mathbb{Q}},\bar{b}=\left(b_{i}\right)_{i\in\omega}$
witness this. Then by Ramsey, indiscernibility and compactness we may find
$c\models\left\{ \varphi\left(x,a_{i}\right)\right\} _{i\in\mathbb{Q}}$
such that in addition $\bar{a}$ is indiscernible over $c$. But as
$c\models\varphi\left(x,b_{0}\right)$ and $\left\{ \varphi\left(x,b_{i}\right)\right\} _{i\in\omega}$
is inconsistent, there is no way to make it $c$-indiscernible keeping
the type of $\bar{b}$ even just over the first element.\end{proof}

\begin{cor}\label{C:Resilience-One-Variable}
If $T$ is not resilient, then there is some $\varphi\left(x,y\right)$
with $\left|x\right|=1$ which is not resilient.\end{cor}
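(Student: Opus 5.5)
We argue by contraposition, using Lemma \ref{lem: resilience equivalent to rotation}: we show that if every formula $\varphi(x,y)$ with $|x|=1$ is resilient, then every formula with $|x|\leq n$ is resilient, by induction on $n$. By the lemma, it suffices to establish condition (2) (or the weaker (3)) for tuples $c$ of length $n$, given that it holds for all tuples of length $<n$. Condition (2) is preferable to carry through the induction, since it preserves the type of $\bar b$ over the whole of $\bar a D$ and not just over $a_0 D$.

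So fix a small set $D$, a tuple $c = c_1c_2$ with $|c_1| = n-1$ and $|c_2|=1$, a sequence $\bar a = (a_i)_{i\in\mathbb{Q}}$ indiscernible over $cD$, and a sequence $\bar b$ indiscernible over $a_{\neq 0}D$ with $b_0=a_0$. We proceed in two steps.

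\emph{First step.} Since $\bar a$ is indiscernible over $c_1D$, the inductive hypothesis for length $n-1$ (condition (2) with parameter set $D$) gives $\bar b'\equiv_{\bar aD}\bar b$ with $\bar b'$ indiscernible over $c_1a_{\neq0}D$.

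\emph{Second step.} Put $D' := Dc_1$. Then $\bar a$ is indiscernible over $c_2D'$ and $\bar b'$ is indiscernible over $a_{\neq0}D'$ with $b'_0 = a_0$. Condition (2) for singletons, which holds by the case $n=1$ of the lemma applied to resilience of one-variable formulas, now yields $\bar b''\equiv_{\bar aD'}\bar b'$ indiscernible over $c_2a_{\neq0}D' = ca_{\neq0}D$. Since $\bar b''\equiv_{\bar aD}\bar b'\equiv_{\bar aD}\bar b$, this $\bar b''$ witnesses (2) for $c$.

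Hence every formula with $|x|\leq n$ is resilient for all $n$, that is, $T$ is resilient. Contrapositively, if $T$ is not resilient, some $\varphi(x,y)$ with $|x|=1$ is not resilient.

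The main obstacle is making the lemma usable with the parameter set enlarged by $c_1$. This is fine because the lemma quantifies over all small $D$, and (2)$\Leftrightarrow$(1) holds at each length for arbitrary $D$. One must also check that the one-variable resilience hypothesis refers to formulas over $\emptyset$; this matches what the lemma's proof of (1)$\Rightarrow$(2) uses, since $D$ is absorbed into the parameters $a'd$ there.
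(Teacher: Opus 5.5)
Your proof is correct and follows the paper's argument. Both use condition (2) of Lemma \ref{lem: resilience equivalent to rotation} and induct on $|c|$: first apply the inductive hypothesis to $c_{<n-1}$, then apply the singleton case with the parameter set enlarged by $c_{<n-1}$. The only difference is cosmetic: you keep an arbitrary $D$ throughout, whereas the paper takes $D=\emptyset$ in the first step.
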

\begin{proof}
We show by induction that if $T$ satisfies \ref{lem: resilience equivalent to rotation}(2)
for $n-1\geq1$, then it satisfies it for $n$. So let $c=\left(c_{i}\right)_{i<n}$,
$\bar{a}$ and $\bar{b}$ be given, with $\bar{a}$ indiscernible
over $c$ and $\bar{b}$ indiscernible over $a_{\neq0}$. Then by
the induction hypothesis and Lemma \ref{lem: resilience equivalent to rotation}
with $D=\emptyset$ we find $\bar{b}'\equiv_{\bar{a}}\bar{b}$ and
indiscernible over $c_{<n-1}a_{\neq0}$. Again by the inductive assumption
and Lemma \ref{lem: resilience equivalent to rotation} with $D=c_{<n-1}$
we find a $c_{n-1}c_{<n-1}a_{\neq0}$-indiscernible $\bar{b}''\equiv_{\bar{a}}\bar{b}'\equiv_{\bar{a}}\bar{b}$
--- as wanted.
\end{proof}

\begin{prop}\label{prop: str ULCFS implies resilient}
\begin{enumerate}
\item If $\varphi(x,y)$ satisfies ULCFS, then $\varphi(x,y)$ is $\NTP_2$. In particular, if $T$ satisfies ULCFS for $1$-types then $T$ is $\NTP_2$.
	\item If $\varphi(x,y)$ satisfies strong ULCFS then $\varphi(x,y)$ is resilient. In particular, if $T$ satisfies strong UCLFS for $1$-types then $T$ is resilient.
\end{enumerate}
	
\end{prop}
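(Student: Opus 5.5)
Both parts go by contraposition. From a witness to $\TP_2$ (resp.\ to non-resilience) for $\varphi$ we build, for any finite $\Delta$ and $n$, finite parameter sets $B$ of unbounded size and a $\varphi$-type over $B$ that (internally) $(\varphi,\Delta,n,k)$-divides over every $C\subseteq B$ with $|C|\leq n$. The "in particular" clauses then follow from Fact~\ref{fac: 1-var 2-incons for TP_2}(2) (a $\TP_2$ formula can be taken with $|x|=1$) and from Corollary~\ref{C:Resilience-One-Variable}.

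For (1), assume $\varphi(x,y)$ has $k$-$\TP_2$. By Fact~\ref{fact: indiscernible witness to TP2}, together with Ramsey and compactness, we take a mutually indiscernible array $(a_{i,j}: i,j\in\omega)$: each row is indiscernible over the other rows, each row is $k$-inconsistent, and every path is consistent. Given $\Delta$ and $n$ from ULCFS for this $k$, we set $B:=\{a_{i,j} : i< n+1,\ j< N\}$ with $N\gg n$. We choose a path $f$ whose values avoid the first and last $n$ columns, realize it by $e$, and let $p:=\tp_\varphi(e/B)$, so that $\varphi(x,a_{i,f(i)})\in p$ for all $i\leq n$. If $C\subseteq B$ has $|C|\leq n$, then some row $i$ is disjoint from $C$. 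The key step is to find, inside row $i$, $n$ consecutive-in-order elements starting at $a_{i,f(i)}$ that are $\Delta(C)$-indiscernible. Mutual indiscernibility makes row $i$ indiscernible over the other rows, hence over $C$, but only when $C$ avoids row $i$, which we arranged. Such a sequence is $k$-inconsistent, so $p$ internally $(\varphi,\Delta,n,k)$-divides over $C$, contradicting ULCFS. The main delicacy is that $f(i)$ must leave room for $n$ subsequent elements of the row; choosing $N\geq 2n+1$ with $f(i)$ in the middle, and using the order $j\geq f(i)$, handles this.

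For (2), assume $\varphi$ is not resilient, witnessed by $(a_i: i\in\mathbb{Z})$ and $(b_i:i\in\omega)$ with $b_0=a_0$, $\{\varphi(x,b_i)\}$ $k$-inconsistent, $(b_i)$ indiscernible over $a_{\neq 0}$, and $\{\varphi(x,a_i)\}$ consistent. A single instance does not suffice, because we need dividing over every small $C$. So we first stretch the configuration: using indiscernibility of $\bar a$, for each $m$ we find a copy of the $\bar b$-witness at every $a_i$ with $|i|\leq m$. Concretely, for each $i$, applying an automorphism fixing $a_{\neq 0}$-style shifts gives a sequence $\bar b^{(i)}$ starting at $a_i$, indiscernible over $a_{\neq i}$, and $k$-inconsistent. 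We then take $B:=\{a_i: |i|\leq m\}$ with $m\gg n$ and $p:=\tp_\varphi(e/B)$ for some $e\models\{\varphi(x,a_i)\}$. Given $C\subseteq B$ with $|C|\leq n$, pick $i$ with $a_i\notin C$. Then $C\subseteq a_{\neq i}$, so $\bar b^{(i)}$ is $C$-indiscernible, starts at $a_i$, and is $k$-inconsistent, and hence $\varphi(x,a_i)\in p$ $k$-divides over $C$. This is exactly non-internal dividing, which strong ULCFS forbids.

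The step I expect to be the main obstacle is justifying $\bar b^{(i)}$ for each $i$. Indiscernibility of $(a_j)_{j\in\mathbb{Z}}$ alone does not give an automorphism sending $a_0\mapsto a_i$ that maps $a_{\neq 0}$ onto $a_{\neq i}$, since the order type around $0$ and around $i$ in a finite window differ. I would handle this by replacing $\mathbb{Z}$ with a dense order (e.g.\ $\mathbb{Q}$, as in Lemma~\ref{lem: resilience equivalent to rotation}) via compactness, and working over $a_{\neq i}$ restricted to the relevant finite window. An order automorphism of $\mathbb{Q}$ sending $0$ to any given $q$ then exists, and by indiscernibility it is induced by an automorphism of $\M$.
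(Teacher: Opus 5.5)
Your proof is correct and follows the paper's argument. In (1) you use a strongly (mutually) indiscernible $k$-$\TP_2$ array with $n+1$ rows and a type realizing a path, and a row disjoint from $C$ is the internal dividing witness; in (2) you transport the non-resilience witness to each $a_i$ and pick $a_i\notin C$.

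The obstacle you anticipate in (2) does not arise. The shift $j\mapsto j+i$ is an order automorphism of $\mathbb{Z}$, so indiscernibility of $(a_j)_{j\in\mathbb{Z}}$ directly gives an automorphism of $\M$ sending $a_0\mapsto a_i$ and $a_{\neq 0}$ onto $a_{\neq i}$, which is exactly what the paper uses. The passage to $\mathbb{Q}$ is therefore unnecessary, though harmless.
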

\begin{proof}

(1) Assume $\varphi(x,y)$ has $k$-$\TP_2$ for some $k \in \omega$, witnessed by a strongly indiscernible array $(a_{i,j})_{i,j \in \omega}$ (i.e.~the rows $\bar{a}_i := (a_{i,j} : j \in \omega), i \in \omega$ are mutually indiscernible, and the sequence of rows $(\bar{a}_i : i \in \omega)$ is indiscernible, see \cite[Definition 1.1]{chernikov2014theories}). Let $\Delta, n$ be arbitrary. Let $A := \{a_{i,j} : i,j < n+1\}$, $b \models \{ \varphi(x,a_{0,j}) : j < n+1\}$ and $p := \tp_{\varphi}(b/A)$. Let $C \subseteq A$ be arbitrary with $|C| \leq n$. Then there is some $i < n+1$ so that $\{a_{i,j} : j < n+1\} \cap C = \emptyset$, $(a_{i,j} : j < n+1)$ is indiscernible over $C$ (by strong indiscernibility of the array) and $\{\varphi(x,a_{i,j}) : j < n+1\} $ is $k$-inconsistent. This shows that $p$ is internally $\left(\varphi, \Delta, n, k  \right)$-dividing over $C$. For the ``in particular'' part, if $T$ is not $\NTP_2$, there is a formula $\varphi(x,y)$ with $|x|=1$  with $\TP_2$ (by Fact \ref{fac: 1-var 2-incons for TP_2}(2)).

	(2) Assume $\varphi(x,y)$ is not resilient (if $T$ is not resilient, by Corollary \ref{C:Resilience-One-Variable}  there is some non-resilient $\varphi\left(x,y\right)\in L$ with $|x|=1$), then there exist some indiscernible sequence $\bar{b}=\left(b_{i}:i\in\mathbb{Z}\right)$
and some $\left(b_{i}:i\in\mathbb{Z}\setminus\left\{ 0\right\} \right)$-indiscernible
sequence $\bar{b}'=\left(b'_{i}:i\in\omega\right)$ with $b_{0}=b_{0}'$
so that $\left\{ \varphi\left(x,b_{i}\right):i\in\mathbb{Z}\right\} $
is consistent and $\left\{ \varphi\left(x,b_{i}'\right):i\in\omega\right\} $
is $k$-inconsistent for some $k\in\omega$. Let
now $\Delta$ and $n\in\omega$ be arbitrary. Let $B_{n} := \left(b_{i}:0\leq i<n+1\right)$
and let $p_{n}\in S_{\varphi}\left(B_{n}\right)$ be arbitrary so that we have 
$\left\{ \varphi\left(x,b_{i}\right):0\leq i<n+1\right\} \subseteq p_{n}$.
We claim that $p_{n}$ is $\left(\varphi,\Delta,n,k\right)$-dividing over
every subset $C$ of $B_{n}$ with $\left|C\right|\leq n$. Indeed, given such $C$, 
there is some $0\leq j<n+1$ with $b_{j}\in B_{n}\setminus C$. By
indiscernibility of $\bar{b}$, taking an automorphism $\sigma$ of
$\mathbb{M}$ sending $\left(\ldots,b_{-1},b_{0},b_{1},\ldots\right)$
to $\left(\ldots,b_{j-1},b_{j},b_{j+1},\ldots\right)$ we consider
the sequence $\bar{b}'' := \sigma\left(\bar{b}'\right)$. In particular
it is indiscernible over $\left\{ b_{i}:i\in\mathbb{Z}\setminus\left\{ j\right\} \right\} $
and witnesses that $\varphi\left(x,b_{j}\right)$ is $\left(\Delta,n,k\right)$-dividing over $C$.
\end{proof}

\begin{rem}
	We note a weak converse to Proposition \ref{prop: str ULCFS implies resilient}(2): given $\varphi(x,y)$, if there is $k \in \omega$ so that for all $\Delta, n$ there is a finite set $A$ with $|A| = n+1$ and $p \in S_{\varphi}(A)$ so that $p$ is  $(\varphi, \Delta, n, k)$-dividing over every $C \subseteq A$ with $|C| \geq n$, then $\varphi(x,y)$ is not resilient. Indeed, in this case we can find a witness to $\varphi$ being not resilient by Ramsey and compactness.
\end{rem}

The above observations motivate the following conjecture:
\begin{conjecture}\label{conj: NTP2 implies ULCFS}
	If $\varphi(x,y)$ is $\NTP_2$ (or even $T$ is $\NTP_2$), then $\varphi(x,y)$ satisfies ULCFS. If $\varphi(x,y)$ is resilient (or even $T$ is resilient), then $\varphi(x,y)$ satisfies strong ULCFS.
\end{conjecture}

\subsection{Counting partial types over finite sets}\label{sec: fin type count}

In this section we consider some finitary versions of the two-cardinal function $f_{T}(\kappa, \lambda)$ counting partial types from Section \ref{sec: two card count part types}, which also generalizes the shatter function from VC-theory.
In the same way as UDTFS is a strengthening of the Sauer-Shelah lemma, we view ULCFS as a strengthening of this type-counting function being polynomially bounded.

\begin{defn}\label{def: f_phi(k,l) and poly bdd}
Let $\varphi\left(x,y\right)$ be a partitioned formula.

\begin{enumerate}
\item For a set $A \subseteq \M^y$ and $k\in\omega$, we let $S_{\varphi,k}\left(A\right)$
be the set of \emph{positive} partial $\varphi$-types of size $\leq k$ over $A$,
i.e.~consistent sets of formulas of the form $\{ \varphi\left(x,a_{i} \right) : i < k\}$
with $a_{0},\ldots,a_{k-1}\in A$. 
\item For $p,q \in S_{\varphi,k}\left(A\right)$ and $m \in \omega$, we say that $p$ and $q$ are \emph{$m$-inconsistent} if  there are some $p_0 \subseteq p, q_0 \subseteq q$ with $|p_0|,|q_0| \leq m$ so that $p_0 \cup q_0$ is inconsistent.

\item For $m \leq k\leq l < \omega$, we define 
\begin{gather*}
	f_{\varphi}\left(m, k,l\right) := 
\max\Big\{ \left|S\right|: A \subseteq \M^y, S\subseteq S_{\varphi,k}\left(A\right),\left|A\right|=l, \\
p\neq q\in S \Rightarrow p\left(x\right), q\left(x\right)\mbox{ are $m$-inconsistent}\Big\} \mbox{.}
\end{gather*}

\item We let $f_{\varphi}(k,l) := f_{\varphi}(1,k,l)$ and $f'_{\varphi}(k,l) := f_{\varphi}(k,k,l)$.

\item We will say that $f_{\varphi}\left(k,l\right)$ (or $f'_{\varphi}(k,l)$) is \emph{polynomially
bounded} if there is $d$ such that: for every fixed $k$ there is some $C_k \in \omega$ with $f_{\varphi}\left(k,l\right) \leq C_k l^{d}$
for all $l$. 
\item For a formula $\varphi\left(x,y\right)$ and $t \in \omega$, we let 
$$\left( \bigwedge_{t}\varphi \right)(x;y_0, \ldots, y_{t-1}) := \bigwedge_{i < t} \varphi(x,y_i).$$
\end{enumerate}
\end{defn}

\begin{rem}\label{rem: positive types}
	While the restriction to positive types is natural for our considerations here, we note that for any $\varphi(x,y) \in L$, taking $\psi(x;y_1,y_2) := \varphi(x,y_1) \land \neg \varphi(x,y_2) \in L$,  each $\varphi$-type of size $ \leq k$ over a set $A \subseteq \M$ is equivalent to a positive $\psi$-type of size $\leq k$ over $A \times A$ (more generally, see e.g.~\cite[Lemma II.2.1]{MR513226}) --- hence many type counting questions transfer to general types.
\end{rem}

\begin{rem}\label{rem: fin type count easy rem}
	\begin{enumerate}
		\item $f_{\varphi}\left(m,k,l\right)\leq f_{\varphi}\left(m',k',l'\right)$ and $f_{ \varphi}(m, k, l) \leq f_{\bigwedge_2 \varphi}(m, k, l)$
for all $m' \geq m, k'\geq k,l'\geq l$; in particular $f_{\varphi}(k,l) \leq f'_{\varphi}(k,l) \leq \sum_{i \leq k}{l \choose i} \leq \left(\frac{e}{k}\right)^k l^k$ for all $k,l$.
		\item $f_{\bigwedge_2 \varphi}(m, k, l) \leq f_{\varphi}(2m, 2k, 2l)$  and $f_{ \varphi}(m, k, l) \leq f_{\bigwedge_m \varphi}(1, k^m, l^m)$ for all $m,k,l$.
	\end{enumerate}
\end{rem}
\begin{proof}
(1) is immediate from the definition. For the first inequality in (2), let $A \subseteq \M^{y_1,y_2}$ with $|A| = l$ and $S\subseteq S_{\bigwedge_2 \varphi,k}\left(A\right)$ a set of pairwise $m$-inconsistent positive $\bigwedge_2 \varphi$-types. Let $A' \subseteq \M^y$ be the set $\pi_1(A) \cup \pi_2(A)$, where $\pi_i$ is the projection onto $i$th coordinate, then $|A'| \leq 2l$. And for $p \in S$, we let $p' \in S_{\varphi, 2k}(A')$ be defined via:  $\varphi(x,a) \in p'$ if and only if there is some $a'$ so that either $\varphi(x,a) \land \varphi(x,a') \in p$ or $\varphi(x,a') \land \varphi(x,a) \in p$. It follows that for any $p_1 \neq p_2 \in S$, $p'_1$ and $p'_2$ are $2m$-inconsistent. For the second inequality, given a $\varphi$-type $p$ of size $k$ over $A \subseteq \M^y$, consider the $\bigwedge_m\varphi$-type $(\bigwedge_m p)(x)$ of size $k^m$ over $A^m$ given by $\{ \bigwedge_{i < m} \varphi(x,a_i) : (a_1, \ldots, a_m) \in A^m, \varphi(x,a_i) \in p \textrm{ for all } i<m \}$ --- if $p_1, p_2$ are $m$-inconsistent, then $p'_1, p'_2$ are $1$-inconsistent.
\end{proof}

\begin{prop}
\label{prop: f poly bdd implies NTP2 and low}

\begin{enumerate}
\item Assume that $f_{\bigwedge_t \varphi}\left(k,l\right)$ is polynomially bounded for all $t \in \omega$, then
$\varphi\left(x,y\right)$ is $\NTP_{2}$. In particular, if $f_{\varphi}$ is polynomially bounded for all partitioned formulas $\varphi(x,y) \in L$ with $|x|=1$ in $T$, then $T$ is $\NTP_2$.
\item Assume that $f'_{\varphi}\left(k,l\right)$ is polynomially bounded. Then $f'_{\bigwedge_t \varphi}\left(k,l\right)$ is also polynomially bounded for all $t \in \omega$, and 
$\varphi\left(x,y\right)$ is low (see Definition \ref{def: low formula}). 
\end{enumerate}
\end{prop}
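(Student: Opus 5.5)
Both parts are proved by contraposition, exhibiting explicit families of pairwise inconsistent partial types whose number grows faster than any fixed polynomial.

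\textbf{Part (1).} Assume $\varphi(x,y)$ has $\TP_2$. By Fact \ref{fac: 1-var 2-incons for TP_2}(2) there is $t$ such that $\psi := \bigwedge_t\varphi$ has $2$-$\TP_2$, witnessed by an array $(a_{i,j})_{i,j\in\omega}$. So each row $\{\psi(x,a_{i,j}) : j\}$ is $2$-inconsistent, while every path $\{\psi(x,a_{i,f(i)}) : i\}$ is consistent.

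Fix $k$ and $m$. Let $A := \{a_{i,j} : i<k, j<m\}$, so $|A| = l := km$. For each $f:[k]\to[m]$ let $p_f := \{\psi(x,a_{i,f(i)}) : i<k\}$. Each $p_f$ lies in $S_{\psi,k}(A)$ by path consistency. If $f\neq g$, choose $i$ with $f(i)\neq g(i)$; then $\psi(x,a_{i,f(i)})\wedge\psi(x,a_{i,g(i)})$ is inconsistent, so $p_f$ and $p_g$ are $1$-inconsistent. Hence
\[
f_\psi(k,km) \geq m^k = (l/k)^k .
\]
If $f_\psi(k,l)\leq C_k l^d$ held with $d$ independent of $k$, taking $k=d+1$ and letting $m\to\infty$ gives a contradiction.

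For the ``in particular'' clause, use Fact \ref{fac: 1-var 2-incons for TP_2}(2) to reduce to $|x|=1$. The formula $\bigwedge_t\varphi$ is again a formula in a single variable $x$, so the hypothesis applies to it.

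\textbf{Part (2), closure under conjunctions.} I would use Remark \ref{rem: fin type count easy rem}(2): $f_{\bigwedge_2\varphi}(m,k,l)\leq f_\varphi(2m,2k,2l)$. With $m=k$ this gives
\[
f'_{\bigwedge_2\varphi}(k,l) \leq f_\varphi(2k,2k,2l) = f'_\varphi(2k,2l) \leq C_{2k}2^d l^d ,
\]
which is polynomial in $l$ with the same degree $d$. Iterating (conjunctions of $2^s\geq t$ copies, with $\bigwedge_t\varphi$ a specialization via Remark \ref{rem: fin type count easy rem}(1)) handles arbitrary $t$.

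\textbf{Part (2), lowness.} Suppose $\varphi$ is not low. Then for every $K$ there is an indiscernible sequence $(a_i)$ with $\{\varphi(x,a_i)\}$ $K$-consistent but inconsistent; let $N>K$ be minimal such that it is $N$-inconsistent. The plan is to build, for $k$ large relative to $d$, a tree-like or combinatorial family of many pairwise $k$-inconsistent positive types of size $\leq k$ over a finite segment of the sequence.

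Concretely, I would take $K\geq k$ and the segment $A = (a_i : i<l)$, and take as types the sets $p_s := \{\varphi(x,a_i) : i\in s\}$ for $s$ ranging over certain subsets of $A$ of size $\leq k$. All such $p_s$ are consistent because $k\leq K$. The difficulty is arranging pairwise $k$-inconsistency: I need $p_s\cup p_{s'}$ to contain an inconsistent subset of size $\leq 2k$. That only happens if $N\leq 2k$, i.e.\ the consistency-inconsistency gap must fall in the range $(k,2k]$.

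So the main obstacle is choosing $K$ and $N$ compatibly. For this I would first replace $\varphi$ by $\bigwedge_r\varphi$ for a suitable $r$, which is allowed by the closure just proved, so that the threshold $N$ relative to the consistency bound $K$ becomes at most $2\times$. Then I would take $S$ to be a family of $k$-subsets of $[l]$ whose pairwise unions have size $\geq N$. A packing family of $k$-sets with small pairwise intersections has size about $l^{\Omega(k)}$. By indiscernibility, every $N$-subset of the sequence is inconsistent, so each such union is inconsistent. This gives $f'(k,l)\geq l^{ck}$, contradicting a degree bound $d$ independent of $k$.
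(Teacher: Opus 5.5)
Your part (1), including the reduction to $|x|=1$, and the closure of polynomial boundedness of $f'$ under $\bigwedge_t$ in part (2), are the paper's arguments. The lowness argument, however, does not go through as written.

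\textbf{Why your route fails as stated.} You fix the type size $k$ before choosing the sequence, and then look for a packing family. Take $k$-subsets of $\{0,\dots,l-1\}$ with pairwise unions of size $\geq N$, where $k<N\leq 2k$. Their pairwise intersections have size $\leq 2k-N$, so each $(2k-N+1)$-set lies in at most one member. Hence the family has size $O(l^{2k-N+1})$. At $N=2k$ the sets must be pairwise disjoint, so there are at most $l/k$ of them. Thus ``$N\leq 2k$'' does not give $l^{\Omega(k)}$, and your ``suitable $r$'' is never specified.

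Note also what passing to length-$r$ blocks for $\bigwedge_r\varphi$ actually does. It produces consistency threshold $\lfloor (N-1)/r\rfloor$ and inconsistency threshold $\lceil N/r\rceil=\lfloor (N-1)/r\rfloor+1$. The gap is exactly one, just as for the original sequence. So the obstacle you identified is illusory.

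\textbf{The missing observation.} Since $N$ is minimal and the sequence is indiscernible, $\{\varphi(x,a_i)\}$ is already $(N-1)$-consistent and $N$-inconsistent. So set $k:=N-1\geq K$. This is allowed because refuting polynomial boundedness only requires, for the given degree $d$, \emph{some} $k>d$ with $f'_\varphi(k,l)\neq O(l^d)$. That $k$ may depend on the sequence; just start with $K>d$.

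Now any distinct $k$-subsets $s\neq s'$ of $\{0,\dots,l-1\}$ satisfy $|s\cup s'|\geq k+1=N$. So the $\binom{l}{k}$ types $p_s=\{\varphi(x,a_i):i\in s\}$ are consistent and pairwise $k$-inconsistent. Hence $f'_\varphi(k,l)\geq\binom{l}{k}\geq (l/k)^k$, contradicting $f'_\varphi(k,l)\leq C_k l^d$. This is exactly the paper's proof; no conjunctions and no packing are needed.

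Your detour can be repaired: take $K\geq k(k+1)$, pick $r$ with $\lfloor (N-1)/r\rfloor=k$, and use that your closure bound keeps the degree $d$ uniform in $r$. But this only reproduces the same count, for $\bigwedge_r\varphi$ instead of $\varphi$.
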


\begin{proof}
(1) Assume that $\varphi\left(x,y\right)$ has $\TP_{2}$. Then, by Fact \ref{fac: 1-var 2-incons for TP_2}(2), for some $t = t(\varphi)$ the formula $\psi(x; y_0, \ldots, y_{t-1}) := \bigwedge_{i < t} \varphi(x,y_i)$ has $2$-$\TP_2$, i.e.~there is an array $\left(a_{i,j}\right)_{i,j<\omega}$ such that $\left\{ \psi\left(x,a_{i,j}\right):j<\omega\right\} $
is $2$-inconsistent for every $i<\omega$ and $\left\{ \psi\left(x,a_{i,f\left(i\right)}\right):i<\omega\right\} $
is consistent for every $f:\omega\to\omega$. 

Let $k$ and $l$ be
arbitrary, and let $A := \left\{ a_{i,j}\right\} _{i<k,j<l}$. For $f:k\to l$,
let $p_{f}(x) := \left\{ \psi\left(x,a_{i,f\left(i\right)}\right):i<k\right\} $.
Then any two types in the set $S := \left\{ p_{f}:f\in l^{k}\right\} \subseteq S_{\psi,k}\left(A\right)$ are $1$-inconsistent. It follows that $\left|S\right|\geq l^{k}$
while $\left|A\right|=k\times l$. That is, $\left|S\right|\geq c\left|A\right|^{k}$
with $c=\frac{1}{k^{k}}$. It follows that for $k$ fixed and $l\gg k$, $f_{\psi} \left(k,l\right)$
grows faster than any polynomial of degree $\leq k-1$. Thus $f_{\psi} = f_{\bigwedge_t \varphi}$ 
is not polynomially bounded.

(2) By Remark \ref{rem: fin type count easy rem}, $f'_{\bigwedge_2\varphi}\left(k,l\right) \leq f'_{\varphi}\left(2k,2l\right)$, so if the latter is polynomially bounded by $C_k l^d$ then the former is also polynomially bounded by $2^d C_{2k} l^d$. 

Assume $\varphi\left(x,y\right)$ is not low. Then for an arbitrary
large $k\in\omega$ we can find an indiscernible sequence $\left(a_{i}:i\in\omega\right)$ in $\M^y$
so that $\left\{ \varphi\left(x,a_{i}\right):i\in\omega\right\} $
is $k$-consistent, but $\left(k+1\right)$-inconsistent. But then
for every $k \leq l \in\omega$, 
$$\left\{ \{\varphi(x,a_{i_{t_j}}) : j < k\} : t_0 < \ldots < t_{k-1} < l  \right\}$$
is a family of $\geq {l \choose k}$ pairwise-inconsistent positive $\varphi$-types of size $k$ over $\left(a_{i}:i<l\right)$. Hence $f'_{\varphi}(k,l) \geq \frac{1}{k^k} l^k$, so $f'_{\varphi}$ is not polynomially bounded.
\end{proof}

\begin{rem}
	Since there are non-low (super-)simple theories (see \cite{casanovas1998supersimple}), and simplicity of $T$ implies $f_{\varphi}$ is polynomially bounded for all formulas (by Propositions \ref{lem: every type does not fork over a small set} and \ref{prop: ULCF implies f is poly bdd}), we see that $f'_{\varphi}(k,l)$ is polynomially bounded is a strictly stronger condition then $f_{\varphi}(k,l)$ is polynomially bounded.
\end{rem}

Next we observe that, closing under conjunctions, ULCFS implies polynomial boundedness of $f_{\varphi}$.

\begin{prop}\label{prop: ULCF implies f is poly bdd}
For a partitioned formula $\varphi(x,y)$, if $\bigwedge_{2} \varphi$ satisfies ULCFS, then $f_{\varphi}(k,l)$ 
 is polynomially bounded.
\end{prop}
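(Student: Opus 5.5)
The plan is to fix a realization of each type, compress it to a bounded-size "code" using $\mathrm{ULCFS}$ for $\psi := \bigwedge_2\varphi$, and show that each code is shared by at most $C_k$ types of the family. The number of codes will be polynomial in $l$ with degree independent of $k$, which gives the bound.

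\textbf{Setup.} Write $\psi(x;y_0,y_1) := \bigwedge_2 \varphi$. Apply $\mathrm{ULCFS}$ for $\psi$ with inconsistency parameter $2$. This yields a finite $\Delta$ and an $n$ such that for every finite $B \subseteq (\M^y)^2$ and every $r \in S_\psi(B)$ there is $C \subseteq B$ with $|C| \leq n$ over which $r$ does not internally $(\psi,\Delta,n,2)$-divide. Enlarge $\Delta$ if necessary by the formula $\exists x\,(\varphi(x,y_0)\land\varphi(x,y_1'))$, so that Ramsey arguments can also track pairwise consistency.

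\textbf{Coding.} Fix $A \subseteq \M^y$ with $|A| = l$, and a set $S \subseteq S_{\varphi,k}(A)$ of pairwise $1$-inconsistent positive types. For each $p \in S$, choose a realization $b_p$ and set $r_p := \tp_\psi(b_p/A^2)$. Let $C_p \subseteq A^2$, with $|C_p| \leq n$, be given by $\mathrm{ULCFS}$. The code of $p$ is the pair $(C_p, \tp_\psi(b_p/C_p))$. There are at most $(l^2+1)^n \cdot 2^n$ codes, a polynomial in $l$ of degree $2n$, and $n$ does not depend on $k$.

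\textbf{Bounding a code class.} Fix one code $(C, r_0)$ and let $S_C \subseteq S$ be the types with this code. Enumerate each $p \in S_C$ as a $k$-tuple $\bar a^p$ of parameters, repeating entries if $|p|<k$, and color pairs $p<q$ (in some fixed order on $S_C$) by the $\Delta(C)$-type of $(\bar a^p,\bar a^q)$. The number of colors is bounded by a function of $\Delta$, $n$ and $k$ only, since $|C| \leq n$.

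Suppose $|S_C|$ exceeds the corresponding Ramsey number $R_k$. Ramsey's theorem then gives $p_0 < \dots < p_{n-1}$ in $S_C$ whose enumerations form a $\Delta(C)$-indiscernible sequence. By pairwise $1$-inconsistency, and because the inconsistency formula was added to $\Delta$, indiscernibility yields fixed coordinates $s,t$ such that $\varphi(x,a^{p_i}_s)\land\varphi(x,a^{p_j}_t)$ is inconsistent for all $i<j$.

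Now set $c_i := (a^{p_i}_s, a^{p_i}_t) \in A^2$. Then $(c_i : i<n)$ is a $\Delta(C)$-indiscernible sequence inside $A^2 = \dom(r_{p_0})$, and $\{\psi(x,c_i)\}$ is $2$-inconsistent. Moreover $\psi(x,c_0) \in r_{p_0}$, since both $\varphi(x,a^{p_0}_s)$ and $\varphi(x,a^{p_0}_t)$ lie in $p_0$. So $r_{p_0}$ internally $(\psi,\Delta,n,2)$-divides over $C = C_{p_0}$, which contradicts the choice of $C_{p_0}$. Hence $|S_C| \leq R_k$, and
\[
f_\varphi(k,l) \leq R_k \cdot 2^n (l^2+1)^n.
\]
This is polynomial boundedness with degree $d = 2n$, independent of $k$.

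\textbf{Main obstacle.} The delicate point is the Ramsey extraction. The indiscernibility has to be over the fixed small set $C$, and it has to control simultaneously the formulas in $\Delta$ and the pattern of inconsistent coordinate pairs. The number of colors must depend on $k$ but not on $l$. A further subtlety is that $C$ may share elements with the tuples $\bar a^{p}$. I expect this to be harmless, because "internal dividing" only requires the witnessing sequence to lie inside $A^2$ and to be $\Delta(C)$-indiscernible. If the internal condition forces $c_0$ to be literally the chosen parameter, this is also fine, since $\psi(x,c_0)\in r_{p_0}$ by construction.
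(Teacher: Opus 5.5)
Your proposal is correct and is essentially the paper's proof: you code each type by a base $C\subseteq A^2$ of size at most $n$ over which its $\bigwedge_2\varphi$-type does not internally divide, which gives polynomially many codes in $l$ with degree independent of $k$. You then use Ramsey together with pairwise $1$-inconsistency to show that a large code class yields a $\Delta(C)$-indiscernible, $2$-inconsistent sequence of pairs inside $A^2$, contradicting the choice of $C$.

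Three minor points. Applying ULCFS to the complete type $\tp_{\psi}(b_p/A^2)$ is slightly cleaner than the paper's use of the partial type $\bigwedge_2 p$. The extra datum $\tp_{\psi}(b_p/C_p)$ in your code is never used. If $\Delta$ contains formulas in more than two sequence variables, you should colour $m$-element subsets (hypergraph Ramsey) rather than pairs to obtain $\Delta(C)$-indiscernibility.
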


\begin{proof}
 As $\bigwedge_{2} \varphi$ satisfies ULCFS, there is some
finite set of formulas $\Delta \subseteq L$ and some $d\in\omega$ so that: 
for every \emph{finite} set $B \subseteq \M^{y_1, y_2}$ and every $p'\left(x\right)\in S_{\bigwedge_2 \varphi}\left(B\right)$
there is some $C\subseteq B$ with $\left|C\right|\leq d$ such that
$p'$ does not internally $\left(\bigwedge_2 \varphi,\Delta,d,2\right)$-divide
over $C$.

Let now $A \subseteq \M^y$ be a finite set of size $l$, and $S \subseteq S_{\varphi,k}\left(A\right)$. Given $p \in S$, we consider $\bigwedge_2 p \in S_{\bigwedge_2 \varphi,k^2}\left(A^2\right)$ as in Remark \ref{rem: fin type count easy rem}. By the previous paragraph, there is some $A_p \subseteq A^2$ with $|A_p| \leq d$ so that $\bigwedge_2 p$ does not internally $\left(\bigwedge_2 \varphi,\Delta,d,2\right)$-divide over $A_p$.

\begin{claim}
	There is some $K = K (d, k, \Delta) \in \omega$ satisfying the following. For any $C \subseteq A^2$ with $|C| = d$, if  $(p_i(x) : i < n)$ with $p_i \in S_{\varphi,k}\left(A\right)$ are pairwise $1$-inconsistent and $\bigwedge_2 p_i$ does not internally $\left(\bigwedge_2 \varphi,\Delta,d,2\right)$-divide over $C$ for all $i<n$, then $n \leq K$.
\end{claim}
\begin{proof}
For $i < n$, let $p_i = \{\varphi(x,a_{i,t}) : t < k \}$ for some $a_{i,t} \in A$, and let $\bar{a}_i := (a_{i,t} : t < k)$. By Ramsey's theorem, using pairwise $1$-inconsistency of the $p_i$'s, there is some $K = K (d, k, \Delta)$ satisfying the following. If $n > K$ then there
is a subsequence $\left(\bar{a}_{i_{j}}:j<d\right)$ of $\left(\bar{a}_{i}:i< n \right)$
which is $\Delta$-indiscernible over $C$ and some $\alpha,\beta<k$ so that  for all $j<j'<d$ we have: $\varphi\left(x,a_{i_{j},\alpha}\right)\in p_{i_{j}},\varphi\left(x,a_{i_{j'},\beta}\right)\in p_{i_{j'}}$
and $\left\{\varphi\left(x,a_{i_{j},\alpha}\right),\varphi\left(x,a_{i_{j'},\beta}\right)\right\}$ is inconsistent. This shows that $\varphi(x,a_{i_j, \alpha}) \land \varphi(x,a_{i_j, \beta}) \in \bigwedge_2 \left( p_{i_{0}} \right)$ internally $\left(\Delta,d,2\right)$-divides
over $C$ --- a contradiction.
\end{proof}

As there are at most $(|A^2|)^d \leq  l^{2d}$ choices for $A_p$, it follows from the claim that for any $k$, any family $S \subseteq S_{\varphi,k}\left(A\right)$ of pairwise $1$-inconsistent types has size at most $K l^{2d}$, with $K$ depending only on $\varphi$ and $k$ --- hence $f_{\varphi}(k,l)$ is polynomially bounded.
\end{proof}

\begin{conjecture}\label{conj: NTP2 iff counting types}
	The following are equivalent for a partitioned formula $\varphi(x,y)$:
	\begin{enumerate}
		\item $\bigwedge_t \varphi$ is $\NTP_2$ for all $t \in \omega$;
		\item $f_{\bigwedge_t \varphi}(k,l)$ is polynomially bounded for all $t \in \omega$.
	\end{enumerate}
	Or at least: $f_{\varphi}(k,l)$ is polynomially bounded for every formula $\varphi(x,y)$ in an $\NTP_2$ theory $T$.
\end{conjecture}

We note that it is necessary to consider conjunctions in Conjecture \ref{conj: NTP2 iff counting types}:
\begin{example}
Let $T$ the theory of an infinite triangle-free random graph. Then the edge relation $R\left(x,y\right)$
is $\NTP$ (hence satisfies strong ULCFS by Proposition \ref{lem: every type does not fork over a small set}), while the formula  $ R\left(x,y_{1}\right)\land R\left(x,y_{2}\right)$
has $2$-$\TP_{2}$ (see \cite[Example 3.13]{chernikov2014theories}) --- hence does not satisfy ULCFS by Proposition \ref{prop: str ULCFS implies resilient}, and $f_{\bigwedge_2 R}(k,l)$ is not polynomially bounded by (the proof of) Proposition \ref {prop: f poly bdd implies NTP2 and low}(1).
Note that $f_{R}(k,l) = f'_{R}(k,l)$ (since in a triangle-free random graph, two partial types $\{R(x,a_i) : i < m\}$ and $\{R(x,b_j) : j < n\}$ are inconsistent if and only if $\models R(a_i,b_j)$ for some $i,j$, in which case $\{R(x,a_i), R(x,b_j)\}$ is inconsistent), hence $f_{R}(k,l)$ is also not polynomially bounded by Proposition \ref{prop: f poly bdd implies NTP2 and low}(2).
\end{example}

Next we observe that at least $\TP_2$ is characterized by the maximality of the function $f_{\varphi}(k,l)$ (unlike in the infinitary version of the problem, see Corollary \ref{cor: counterex to Adler conj}). We use a classical result of Erd\H os:
\begin{fact}\cite{erdos1964extremal}
\label{fac: Zarankiewicz bound} Let $G=\left(V,E\right)$ be a uniform
$k$-hypergraph (i.e.~$E \subseteq {V \choose k}$) with $\left|V\right|=l$ such that it does not contain
a complete $k$-partite uniform $k$-hypergraph  $K_{d, \ldots, d}$ with each part of size $d$.
Then the number of hyper-edges for all $l \gg k, d$ satisfies $\left|E\right| \leq l^{k-\frac{1}{d^{k-1}}}$.
\end{fact}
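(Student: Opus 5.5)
The plan is the classical Kővári--Sós--Turán double-counting argument, run by induction on $k$, after first reducing to the $k$-partite case. For the reduction I would use Fact \ref{fact: furedi}: given $G=(V,E)$, it yields pairwise disjoint $X_1,\ldots,X_k\subseteq V$ and $E'\subseteq E$ with $|E'|\geq \frac{k!}{k^k}|E|$, every edge of $E'$ meeting each $X_i$ in exactly one point. The $k$-partite hypergraph $(X_1\sqcup\ldots\sqcup X_k, E')$ still contains no $K_{d,\ldots,d}$. So it suffices to bound the number of edges of a $k$-partite $k$-hypergraph with parts of size at most $l$ and no $K_{d,\ldots,d}$ by $C_{k,d}\, l^{k-1/d^{k-1}}$.

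For the base case $k=2$ this is Kővári--Sós--Turán. For $v\in X_2$ let $N(v)\subseteq X_1$ be its neighbourhood. Count pairs $(D,v)$ with $D\in{N(v)\choose d}$. Each $d$-set $D\subseteq X_1$ occurs for at most $d-1$ vertices $v$, since otherwise we get a $K_{d,d}$. Hence
\[\sum_{v\in X_2}{|N(v)|\choose d}\leq (d-1){l\choose d}.\]
The function $x\mapsto {x\choose d}$, extended by $0$ for $x<d$, is convex. By Jensen the left side is at least $l{|E|/l\choose d}$, which gives $|E|\leq C\, l^{2-1/d}$.

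For the inductive step $k\geq 3$, let $V'=X_2\times\ldots\times X_k$ and, for $T\in V'$, let $N(T)\subseteq X_1$ be the set of $v$ with $\{v\}\cup T\in E$. For each $d$-set $D\subseteq X_1$, consider the $(k-1)$-partite hypergraph $H_D:=\{T\in V' : D\subseteq N(T)\}$. If $H_D$ contained a $K^{(k-1)}_{d,\ldots,d}$, then together with $D$ we would get a $K^{(k)}_{d,\ldots,d}$ in $G$. So by the inductive hypothesis
\[|H_D|\leq C\, l^{k-1-1/d^{k-2}}.\]
Double counting pairs $(D,T)$ with $D\subseteq N(T)$ gives
\[\sum_{T\in V'}{|N(T)|\choose d}=\sum_D |H_D|\leq {l\choose d}\, C\, l^{k-1-1/d^{k-2}}.\]
Since $|V'|\leq l^{k-1}$ and $\sum_T |N(T)|=|E|$, Jensen bounds the left side below by roughly $l^{k-1}\big(|E|/l^{k-1}\big)^d/d!$, provided $|E|/l^{k-1}\geq d$; otherwise the desired bound holds trivially. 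Rearranging,
\[|E|^d\lesssim l^{d(k-1)}\cdot l^{d-1/d^{k-2}},\qquad\text{so}\qquad |E|\lesssim l^{k-1/d^{k-1}}.\]
Multiplying back by the loss factor $k^k/k!$ from the reduction gives the bound for $G$.

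The main obstacle is the last step. This argument produces $C_{k,d}\, l^{k-1/d^{k-1}}$, whereas the Fact as stated has no multiplicative constant. I would first check that for the paper's uses (showing that $f_\varphi$ is $\varepsilon$-power saving) any bound of the form $C\, l^{k-\varepsilon}$ suffices, in which case the constant is harmless. To obtain literally $l^{k-1/d^{k-1}}$ for $l\gg k,d$, I would carry explicit constants through the induction. These come from $d!$, from the ${l\choose d}$ versus $l^d/d!$ comparison, from the partite reduction, and from the $d-1$ multiplicity in the base case. The aim is to show they multiply to a quantity that is absorbed for large $l$, either by a slightly sharper estimate at each Jensen step (as in Erdős's original bookkeeping) or by citing \cite{erdos1964extremal} directly for the exact exponent form.
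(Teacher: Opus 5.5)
\textbf{Gap: you prove the bound only up to a multiplicative constant $C_{k,d}$, and your plan for removing it does not work.}

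The paper does not prove this Fact; it cites \cite{erdos1964extremal}. Your scheme (Kővári--Sós--Turán double counting plus induction on the uniformity) is Erdős's method. As written it correctly proves $|E|\le C_{k,d}\, l^{k-1/d^{k-1}}$ for large $l$. That weaker form is all that Proposition \ref{prop: NTP2 iff power saving} needs: enlarge $C$ there, or replace $\varepsilon$ by $\varepsilon/2$.

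A multiplicative constant in front of a fixed power of $l$ is never absorbed by letting $l\to\infty$; only lower-order additive terms are. The loss also does not come from the Jensen steps. It comes from the partite reduction. Fact \ref{fact: furedi} costs the factor $k^k/k!$, and you then bound a $k$-partite hypergraph using only $|X_i|\le l$ for each part, so your bound is at least a Zarankiewicz number. For $k=d=2$ this number is $z(l,l;2,2)\sim l^{3/2}$. Your scheme can therefore never certify better than about $2l^{3/2}$, while the statement requires $l^{3/2}$ (the truth is about $\frac12 l^{3/2}$).

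\textbf{Fix: drop the partite reduction and run the double counting over all $(k-1)$-subsets of $V$, as Erdős does.}

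Argue contrapositively and assume $|E|\ge l^{k-1/d^{k-1}}$. For $T\in\binom{V}{k-1}$ let $N(T)=\{v\notin T: T\cup\{v\}\in E\}$. Then $\sum_T|N(T)|=k|E|$, so the average link size is $\bar d\ge k!\,l^{1-1/d^{k-1}}$. Jensen and pigeonhole over the $d$-subsets of $V$ give a $d$-set $D$ contained in $N(T)$ for at least
\[
\binom{l}{k-1}\binom{\bar d}{d}\Big/\binom{l}{d}\;\ge\;(1-o(1))\,\frac{(k!)^d}{(k-1)!}\;l^{\,k-1-1/d^{k-2}}\;\ge\; l^{\,k-1-1/d^{k-2}}
\]
sets $T$; the last inequality holds because $(k!)^d/(k-1)!=k\,(k!)^{d-1}\ge 2$.

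Each such $T$ is automatically disjoint from $D$, since $T\cup\{v\}$ must be a $k$-set for every $v\in D$. So these $T$ form a $(k-1)$-graph on $V\setminus D$ with at least $(l-d)^{(k-1)-1/d^{k-2}}$ edges. By induction it contains a $K^{(k-1)}_{d,\dots,d}$, and together with $D$ this is a $K^{(k)}_{d,\dots,d}$ in $G$. The base case $k=2$ is Kővári--Sós--Turán:
\[
\tfrac12(d-1)^{1/d}\,l^{2-1/d}+\tfrac12(d-1)\,l \;<\; l^{2-1/d}
\]
for large $l$, because $(d-1)^{1/d}<2$. The induction then closes with constant exactly $1$, and no partite structure is needed.

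If you prefer to keep the partite reduction, you would have to exploit $\sum_i|X_i|\le l$ rather than $|X_i|\le l$ in order to beat the $k^k/k!$ loss. That requires noticeably more bookkeeping.
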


\begin{prop}\label{prop: NTP2 iff power saving}
The following are equivalent.

\begin{enumerate}
\item The partitioned formula $\varphi\left(x,y\right)$ does not have $2$-$\TP_{2}$.
\item  The function $f_{\varphi}(k,l)$ has \emph{power saving}, i.e.~there exists some $\varepsilon = \varepsilon(\varphi) > 0$ and $k_0 \in \mathbb{N}$ so that: for each $k \geq k_0$  there is some $C = C(\varphi, k) \in \mathbb{N}$ so that $f_{\varphi}\left(k,l\right) \leq C l^{k-\varepsilon}$ for all $l$.
\end{enumerate}
\end{prop}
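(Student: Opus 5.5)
For (2) $\Rightarrow$ (1) we argue by contrapositive, as in the proof of Proposition \ref{prop: f poly bdd implies NTP2 and low}(1). Suppose $\varphi(x,y)$ has $2$-$\TP_2$, witnessed by an array $(a_{i,j})_{i,j<\omega}$. For $k$ fixed and $l$ a multiple of $k$, put $A:=\{a_{i,j}: i<k, j<l/k\}$, so $|A|=l$. Consider the types $p_f=\{\varphi(x,a_{i,f(i)}):i<k\}$ for $f:k\to l/k$.
\begin{itemize}
\item Each $p_f$ is consistent, since the array witnesses $\TP_2$.
\item Two distinct types $p_f,p_g$ are $1$-inconsistent, by $2$-inconsistency of the row where $f$ and $g$ differ.
\end{itemize}
Hence $f_\varphi(k,l)\ge (l/k)^k = k^{-k}l^k$. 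By monotonicity in $l$ (Remark \ref{rem: fin type count easy rem}(1)) this lower bound extends to all $l$, up to a constant. This rules out any bound $Cl^{k-\varepsilon}$.

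For (1) $\Rightarrow$ (2), assume $\varphi$ has no $2$-$\TP_2$. By compactness there is $d=d(\varphi)$ such that no array $(a_{i,j})_{i<d,j<d}$ has consistent paths and $2$-inconsistent rows. (Restricting to $d\times d$ arrays is harmless by compactness, taking $d$ large enough for both dimensions.)

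Fix $k$, a set $A$ with $|A|=l$, and a family $S\subseteq S_{\varphi,k}(A)$ of pairwise $1$-inconsistent positive types. We may assume every $p\in S$ has exactly $k$ elements. Types of smaller size $k'<k$ contribute at most $\sum_{k'<k} f_\varphi(k',l)\le \sum_{k'<k}\binom{l}{k'}=O(l^{k-1})$, which is acceptable once $\varepsilon\le 1$. Each $p\in S$ is then a $k$-element subset $e_p$ of $A$. The map $p\mapsto e_p$ is injective, so $S$ gives a $k$-uniform hypergraph $E=\{e_p: p\in S\}$ on $l$ vertices.

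The key step is to show that $E$ contains no copy of a complete $k$-partite $K_{D,\dots,D}$ for suitable $D=D(d,k)$. Suppose parts $V_1,\dots,V_k$ of size $D$ have every transversal in $E$. The tuple $(c_1,\dots,c_k)$ with $c_i\in V_i$ is then a consistent type $p_{\bar c}\in S$. Two transversals differing somewhere give distinct, hence $1$-inconsistent, types. So for every pair of distinct transversals some $\varphi(x,c_i)\wedge\varphi(x,c'_j)$ is inconsistent, with $c_i,c'_j$ taken from the two transversals.

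We want to extract from this a $d\times d$ array with consistent paths and $2$-inconsistent rows. Paths are automatically consistent, since every transversal is consistent. For the rows, colour pairs of vertices $(u,v)\in V_i\times V_j$ (including $i=j$) by whether $\{\varphi(x,u),\varphi(x,v)\}$ is consistent, and apply Ramsey/product-Ramsey with $D\gg d,k$.

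The aim is homogeneous subsets $V'_i$ of size $d$ in which one of two things happens:
\begin{itemize}
\item Some row $V'_i$ has all its pairs inconsistent. Then that row, together with the remaining $V'_j$, gives the forbidden array; indeed $k$ rows suffice, since by the compactness bound we may take $k\ge d$ rows, and this is where $k_0:=d$ enters.
\item All pairs within every $V'_i$ and across parts are consistent. This contradicts the $1$-inconsistency requirement for two transversals differing in a single coordinate $i$: there the only new pairs are within $V_i$ or across, and all such pairs are consistent.
\end{itemize}
Making this dichotomy rigorous is the main obstacle. Cross-part inconsistencies cannot occur, since they would make some transversal inconsistent. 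So inconsistent pairs lie within parts. For two transversals differing only at coordinate $i$, the pair $c_i,c'_i$ in $V_i$ must be inconsistent. Hence each $V_i$ is pairwise $2$-inconsistent, and the $V_i$ form exactly a $k\times D$ array witnessing $2$-$\TP_2$ once $k,D\ge d$.

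Then Fact \ref{fac: Zarankiewicz bound} gives $|S|\le l^{k-1/D^{k-1}}$ for $l\gg k$, and we adjust the constant $C$ for small $l$. The exponent saving $1/d^{k-1}$ depends on $k$, however. To get $\varepsilon$ independent of $k$, note that the argument only needs $K_{d,\dots,d}$ to be forbidden with $d$ rows, using the first $d$ coordinates. We would therefore apply Erdős's bound through the standard reduction: the number of edges is at most $l^{k-d}$ times the bound for the $d$-uniform link hypergraphs. This yields $\varepsilon=1/d^{d-1}$ with $k_0=d$.
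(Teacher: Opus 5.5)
Your proposal is correct and follows essentially the paper's proof: the same $k^{-k}l^{k}$ lower bound from a $2$-$\TP_2$ array, and for the converse the same discarding of types of size $<k$, the same observation that two types differing in a single coordinate force that coordinate's pair to be inconsistent (so a complete $d$-partite configuration yields a forbidden $d\times d$ array), and the same passage to $d$-uniform links of $(k-d)$-sets before applying Erd\H{o}s's bound, giving $\varepsilon=1/d^{d-1}$ and $k_0=d$. The Ramsey colouring detour is superfluous, and its first bullet is not right as stated, since the one-coordinate argument already makes every part pairwise inconsistent; otherwise the paper differs only in phrasing the converse as a proof by contradiction.
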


\begin{proof}
(2)$\Rightarrow$(1). The proof of Proposition \ref{prop: f poly bdd implies NTP2 and low}(1) shows
that if $\varphi\left(x,y\right)$ has $2$-$\TP_{2}$ then for any $k\in\omega$ and $l \geq k$, $f_{\varphi}\left(k,l\right)\geq\left(\frac{1}{k}\right)^{k}l^{k}$, which grows  with $l$ faster  than $C l^{k-\varepsilon}$ for any $C$ and $\varepsilon > 0$.

(1)$\Rightarrow$(2): If $\varphi\left(x,y\right)$ does not have $2$-$\TP_2$, 
then by compactness there exist some $1 < d\in\omega$ such that there
are no $(a_{i,j} : 1 \leq i,j  \leq d)$ so that $\{\varphi(x,a_{i,j}) : 1 \leq j \leq  d\}$ is $2$-inconsistent for all  $ 1 \leq i \leq d$ and $\{\varphi(x,a_{i,f(i)}) : 1 \leq i \leq  d\}$ is consistent for every $f:[d] \to [d]$.

Suppose that no $\varepsilon >0$ and $k_0$ satisfy (2). In particular, taking $\varepsilon := \frac{1}{d^{d-1}}$, there exists some $k \geq d$ so that: for any $C \in \mathbb{N}$ there is an arbitrarily large $l \in \mathbb{N}$ with $f_{\varphi}(k,l) > C l^{k - \varepsilon}$.  Let $C := 2$, and, as $\varepsilon<1$, choose $l$ sufficiently large so that $\left(\frac{e}{k-1}\right)^{k-1} l^{k-1} \leq l^{k-\varepsilon}$. 

Let $S\subseteq S_{\varphi,k}\left(A\right)$ with $|S| > C l^{k - \varepsilon}$ and $A \subseteq \M^y, \left|A\right|=l$ be a family of pairwise $1$-inconsistent types witnessing this. Note that the number of partial types $p \in S$ of size $< k$ is at most $\sum_{i \leq k-1}{l \choose i} \leq \left(\frac{e}{k-1}\right)^{k-1} l^{k-1} \leq l^{k-\varepsilon}$ by assumption on $l$.   

Hence there is $S' \subseteq S$ with $|S'| > l^{k - \varepsilon}$ so that all $p \in S'$ are of size \emph{exactly} $k$.

Consider the uniform $k$-hypergraph
$G$ with 
$$V\left(G\right) := A, E\left(G\right) := \left\{ \dom\left(p\right) : p\in S'\right\} \textrm{ (so }|E| \geq l^{k - \varepsilon} \textrm{)}.$$

Assume that for every $\bar{a} = \{a_{d+1}, \ldots, a_{k}\} \in {V \choose k-d}$ we have $|E_{\bar{a}}| \leq l^{d - \varepsilon}$, where $E_{\bar{a}} = \{\{a_1, \ldots, a_k\} \in {V \choose k} : \{a_1, \ldots, a_k\} \cup \bar{a} \in E \}$. As there are ${l \choose k-d} \leq l^{k-d}$ choices for $\bar{a}$, this would imply that $|E| \leq l^{k - \varepsilon}$ --- contradicting the assumption. Hence there is some $\bar{a} \in {V \choose k-d}$ so that the $d$-hypergraph $G_{\bar{a}} := (V, E_{\bar{a}})$ has $> l^{d-\varepsilon}$ hyperedges. By Fact \ref{fac: Zarankiewicz bound}, $G_{\bar{a}}$ contains a complete $d$-partite $d$-hypergraph $H$ 
with each part of size $d$. For $1 \leq i \leq d$, let $\bar{a}_{i}:=\left(a_{i,j}: 1 \leq j \leq d\right)$ with $a_{i,j} \in A$ enumerate
the $i$'s part of the partition of $H$.

As each hyperedge of $E$ corresponds to a domain of some $p\in S'$, it follows that 
$$\left\{ \varphi\left(x,a_{i,f\left(i\right)}\right): 1 \leq i \leq d\right\} \cup \{ \varphi(x, a_{d+1}), \ldots, \varphi(x,a_{k}) \} $$
is consistent for each $f: [d] \to [d]$. On the other hand, let $ 1 \leq i' \leq d$ and $1 \leq j \neq j' \leq d$ be arbitrary. As $H$ is complete, it follows from its
definition that 
\begin{gather*}
	\left\{ \varphi\left(x,a_{i,1}\right):1 \leq i \leq d,i\neq i'\right\} \cup\left\{ \varphi\left(x,a_{i',j}\right)\right\} \cup \{ \varphi(x, a_{d+1}), \ldots, \varphi(x,a_{k}) \}  \in S',\\
	\left\{ \varphi\left(x,a_{i,1}\right):1 \leq i \leq d,i\neq i'\right\} \cup\left\{ \varphi\left(x,a_{i',j'}\right)\right\} \cup \{ \varphi(x, a_{d+1}), \ldots, \varphi(x,a_{k}) \}  \in S'.
\end{gather*}
As any two types in $S'$ are $1$-inconsistent, the only possibility is that 
$$\{ \varphi\left(x,a_{i',j}\right), \varphi\left(x,a_{i',j'}\right)\} $$
 is  inconsistent. It follows that $\left\{ \varphi\left(x,a_{i,j}\right): 1 \leq j \leq d\right\} $
is $2$-inconsistent for every $1 \leq i \leq d$. But then $\left(a_{i,j}: 1 \leq i, j \leq d\right)$ contradicts the choice of $d$
\end{proof}

Hence we have the following weak partial result towards Conjecture \ref{conj: NTP2 iff counting types}: 
\begin{cor}
	The following are equivalent for a partitioned formula $\varphi(x,y)$: 
	\begin{enumerate}
	\item $\bigwedge_t\varphi$ is $\NTP_2$ for all $t \in \mathbb{N}$;
		\item $f_{\bigwedge_t \varphi}(k,l)$ satisfies power saving for all $t \in \mathbb{N}$.
	\end{enumerate}
\end{cor}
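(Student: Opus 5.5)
The plan is to reduce the corollary directly to Proposition \ref{prop: NTP2 iff power saving}, applied to each formula $\bigwedge_t \varphi$ in turn. The one extra ingredient is the observation that, once we quantify over all $t$, ``$\NTP_2$'' and ``not $2$-$\TP_2$'' give the same condition.

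For (2)$\Rightarrow$(1), suppose $\bigwedge_t \varphi$ has $\TP_2$ for some $t$. By Fact \ref{fac: 1-var 2-incons for TP_2}(2) there is $t'$ such that $\bigwedge_{t'}\left(\bigwedge_t \varphi\right)$ has $2$-$\TP_2$. This formula is $\bigwedge_{tt'}\varphi$ up to renaming and regrouping the parameter variables. Proposition \ref{prop: NTP2 iff power saving} (the direction (2)$\Rightarrow$(1), read contrapositively) then says that $f_{\bigwedge_{tt'}\varphi}(k,l)$ fails power saving, which contradicts (2) for $tt'$.

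For (1)$\Rightarrow$(2), fix $t$. By (1) the formula $\bigwedge_t\varphi$ is $\NTP_2$, so in particular it does not have $2$-$\TP_2$. Proposition \ref{prop: NTP2 iff power saving} (direction (1)$\Rightarrow$(2)) then gives power saving for $f_{\bigwedge_t\varphi}(k,l)$.

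The only point needing care is the identification $\bigwedge_{t'}\bigwedge_t\varphi \equiv \bigwedge_{tt'}\varphi$. The function $f_\psi(k,l)$ depends on the parameter tuples only through the sets of instances, so this relies on power saving being invariant under regrouping the variables $y$. That holds because the family of instance sets $\psi(\M, b)$ is literally the same family, so the counting functions coincide. I expect no real obstacle beyond this bookkeeping.
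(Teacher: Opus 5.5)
Your proposal is correct and is essentially the paper's proof: the paper derives the corollary from Proposition~\ref{prop: NTP2 iff power saving} together with Fact~\ref{fac: 1-var 2-incons for TP_2}(2), using the latter exactly as you do to pass from $\TP_2$ of $\bigwedge_t\varphi$ to $2$-$\TP_2$ of $\bigwedge_{t'}\bigwedge_t\varphi=\bigwedge_{tt'}\varphi$. Your check that regrouping the parameter variables leaves $f_\psi(k,l)$ unchanged is the right bookkeeping, which the paper leaves implicit.
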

\begin{proof}
	By Proposition \ref{prop: NTP2 iff power saving} and Fact \ref{fac: 1-var 2-incons for TP_2}(2).
\end{proof}

\begin{problem}
	\begin{enumerate}
		\item Is there an implication in either direction between ``$\varphi(x,y)$ is FHP'' and ``$f'_{\varphi}(k,l)$ is polynomially bounded''? At least at the level of the theory $T$ as opposed to individual formula?
		Matousek's theorem (Fact \ref{fact: FHP for counting}), combined with Remark \ref{rem: positive types}, shows that if there is some $C,d$ so that $f'_{\varphi}(l,l) \leq C l^d$ for all $l$ then $\varphi(x,y)$ is FHP --- so part of the question is if this assumption could be relaxed to polynomial boundedness of $f'_{\varphi}(k,l)$.
		\item  Does FHP imply that $f_{\varphi}(k,l)$ is polynomially bounded? By Proposition \ref{prop: FHP implies NTP2} this would follow from Conjecture \ref{conj: NTP2 iff counting types}.
	\end{enumerate}
\end{problem}

\begin{problem}
	We can also ask a more general version of Conjecture \ref{conj: NTP2 iff counting types}, corresponding to a finitary version of Conjecture 
	\ref{conj: Adler}: what are the possible functions $f_{\varphi}(k,l)$?
\end{problem}

\section{Counting partial types in the infinitary setting}\label{sec: two card count part types}

\begin{defn}
For a complete first-order theory $T$, $1 \leq n \in \omega$ and an infinite cardinal $\kappa$, we let $f^n_{T}\left(\kappa\right) := \sup \{|S_n(M)| : M \models T, |M| = \kappa\}$ and $f_{T}\left(\kappa\right) := \sup_{n \in \omega} f^n_{T}\left(\kappa\right)$.
\end{defn}

A celebrated result of Keisler \cite{keisler1974number, keisler1978stability} (see also \cite{keisler1976six}), refining earlier work of \cite{shelah1971stability} and Morley \cite{morley1965categoricity}, demonstrates that there are exactly six possibilities for $f_T$ when $T$ is a complete countable theory:

\begin{fact}
\label{fac: classification of usual spectra}
Let $T$ be a complete countable first-order theory with an infinite model (we will assume this throughout the section). Then for all
$\kappa\geq\aleph_{0}$ we have $f_T(\kappa) = f^1_T(\kappa)$ and:

\begin{enumerate}
\item $T$ is $\omega$-stable $\Rightarrow$ $f_{T}\left(\kappa\right)=\kappa$;
\item $T$ is superstable, not $\omega$-stable $\Rightarrow$ $f_{T}\left(\kappa\right)=\kappa+2^{\aleph_{0}}$;
\item $T$ is stable, not superstable $\Rightarrow$ $f_{T}\left(\kappa\right)=\kappa^{\aleph_{0}}$;
\item $T$ is not multiorder, unstable $\Rightarrow$ $f_{T}\left(\kappa\right)=\ded\kappa$;
\item $T$ is $\NIP$, multiorder $\Rightarrow$ $f_{T}\left(\kappa\right)=\left(\ded\kappa\right)^{\aleph_{0}}$;
\item $T$ is not $\NIP$ $\Rightarrow$ $f_{T}\left(\kappa\right)=2^{\kappa}$.
\end{enumerate}
\end{fact}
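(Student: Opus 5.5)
This is a classical result that the paper cites as a Fact, so the plan is to assemble it from standard stability-theoretic ingredients rather than prove anything new. I would treat it in two parts. First, reduce to $1$-types. Then, for each class, prove matching lower and upper bounds on $\sup\{|S_1(M)|: |M|=\kappa\}$.

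\textbf{Reduction to $n=1$.} I would follow Shelah's approach via local ranks, or Keisler's argument by induction on $n$. A type in $S_{n+1}(M)$ is a pair $(p,q)$ with $p\in S_n(M)$ and $q$ a $1$-type over $M\cup\{a\}$, where $a\models p$. Bounding the number of extensions requires counting $1$-types over a set of size $\kappa$ rather than over a model. Each of the six bounds listed is stable under products, and under passing from models to arbitrary sets of the same size by taking a model of size $\kappa$ that contains the set. For example, $\ded\kappa\cdot\ded\kappa=\ded\kappa$ and $(\kappa^{\aleph_0})^2=\kappa^{\aleph_0}$. From this, $f^n_T=f^1_T$ follows by induction.

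\textbf{Upper bounds.}
\begin{itemize}
\item $\omega$-stable: use Morley rank. Each type over $M$ is determined by a formula of the same rank and degree, which gives $\le\kappa$ types.
\item Superstable: each type does not fork over a finite subset. Count types over finite sets, at most $2^{\aleph_0}$ per set, together with their definable extensions.
\item Stable: each type is definable, and its local $\varphi$-definitions can be taken over countable sets. This gives $\le\kappa^{\aleph_0}$.
\item NIP without the multi-order property: $\varphi$-types over $M$ are coded by a bounded number of cuts. Via Shelah's result that in NIP each type is determined by boundedly many cuts in definable orders, this gives $\le\ded\kappa$ per formula, hence overall since $T$ is countable.
\item NIP in general: a type is a countable sequence of $\varphi$-types, each one of $\le\ded\kappa$, which gives $(\ded\kappa)^{\aleph_0}$.
\item Otherwise the trivial bound $2^\kappa$ applies.
\end{itemize}

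\textbf{Lower bounds.}
\begin{itemize}
\item Realize the relevant configuration by compactness and build a model of size $\kappa$ over which many types are realized.
\item Non-$\omega$-stable: a binary tree of formulas gives $2^{\aleph_0}$ types.
\item Non-superstable: an $\omega$-branching tree of consistent, dividing chains gives $\kappa^{\aleph_0}$ via branches.
\item Unstable: an order property over a dense order of size $\kappa$ gives $\ded\kappa$ cuts.
\item Multi-order: independent orders give countably many independent cut choices, hence $(\ded\kappa)^{\aleph_0}$.
\item IP: an independent family of size $\kappa$ gives $2^\kappa$.
\end{itemize}

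\textbf{Main obstacle.} I expect the hardest step to be the upper bound $\ded\kappa$ in the NIP, non-multi-order case. It requires Shelah's analysis that types are essentially determined by finitely many cuts, which is combinatorially delicate. I would cite Keisler's and Shelah's structure theorem for it rather than redo it. The separation between $\ded\kappa$ and $(\ded\kappa)^{\aleph_0}$, witnessed by multi-order, is the subtle dividing line.
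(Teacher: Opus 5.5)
The paper gives no proof here; it cites Keisler, Shelah and Morley. So the question is whether your assembly holds up. Your reduction to $1$-types is fine. In fact $f^{n+1}_T(\kappa)\le f^n_T(\kappa)\cdot f^1_T(\kappa)$ gives it without using the classification. Most of your other bounds are correct as sketches. The genuine gap is the upper bound $f_T(\kappa)\le\ded\kappa$ in case (4), which is the one step separating (4) from (5).

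You bound $|S_\varphi(A)|\le\ded\kappa$ for each single formula and then conclude the same bound for complete types \emph{since $T$ is countable}. But a complete type has countably many $\varphi$-parts. Per-formula bounds therefore only yield $\prod_\varphi|S_\varphi(A)|\le(\ded\kappa)^{\aleph_0}$, which is exactly your own computation for case (5).

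The hypothesis that $T$ is not multi-order is never used, and it cannot be dispensable. Your argument would give $f_T(\kappa)\le\ded\kappa$ for every $\NIP$ theory. That contradicts (5) whenever $\ded\kappa<(\ded\kappa)^{\aleph_0}$, which is consistent with ZFC (as the paper notes, citing \cite{chernikov2016non}). For the same reason, the result you attribute to Shelah is false for complete types over arbitrary sets. That result says that in $\NIP$ every type is determined by boundedly many cuts in definable orders. Bounded alternation only controls a single $\varphi$-type over an indiscernible sequence.

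What must be supplied, or explicitly cited, is Keisler's theorem itself. Suppose there is no infinite array of independent orders. Then the cut choices a complete type over $A$ makes across the countably many formulas cannot all be independent, and $S(A)$ injects into a set of size at most $\ded|A|$. Intuitively, only finitely many genuinely independent cuts occur, and the remaining definable data contribute at most $|A|^{\aleph_0}\le\ded|A|$. Citing Keisler here is fine, as the paper does, but then the citation should replace the argument you wrote, since that argument is incorrect.

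Two smaller points.
\begin{itemize}
\item For the non-superstable lower bound you need a $\kappa$-branching tree of height $\omega$; an $\omega$-branching tree gives only $2^{\aleph_0}$ branches. You also need the standard count that a complete type contains at most $2^{\aleph_0}$ branch types when siblings are merely $k_n$-inconsistent.
\item In the multi-order lower bound, $\ded\kappa$ is a supremum that need not be attained. You should split on $\cof(\ded\kappa)$, as the paper does for $\lambda^{\kappa,\tr}$ in its multi-$\TP_1$ lower bound. Otherwise you reach only $\sup_{\mu<\ded\kappa}\mu^{\aleph_0}$ rather than $(\ded\kappa)^{\aleph_0}$.
\end{itemize}
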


Here $\ded \kappa$ is the supremum over all cardinals $\lambda$ so that there exists a linear order of size $\kappa$ with $\lambda$ many Dedekind cuts (equivalently, the supremum of the sizes of linear orders with a dense subset of size at most $\kappa$), and for the definition of the multiorder property see Definition \ref{def: multiorder}. We always have $\kappa < \ded \kappa \leq 2^{\kappa}$, hence under the generalized continuum hypothesis (GCH), $\ded \kappa = 2^{\kappa}$ for all infinite $\kappa$, in which case the functions in (4)--(6) are equal. It is also consistent with ZFC that the functions in (5) and (6) are distinct \cite{mitchell1972aronszajn}; it is also consistent that the functions in (4) and (5) are distinct \cite{chernikov2016non}. But it is not known if it is consistent that all three functions in (4), (5), (6) are pairwise distinct simultaneously. Yet, in ZFC, for every infinite $\kappa$, we have $2^{\kappa} \leq \ded \ded \ded \ded \kappa$ \cite{chernikov2016number}. We refer to \cite{chernikov2016non, chernikov2016number} for a further discussion of this function.

Here we consider the following two-cardinal  refinement of $f_T$ introduced in \cite{shelah1980simple} in order to characterize simple theories, and further studied in \cite{casanovas1999number, lessmann2000counting, casanovas2003dividing} (where it is denoted as $\NT(\kappa, \lambda)$).
\begin{defn}\label{def: inf two-card counting function}
For a complete first-order theory $T$ and infinite cardinals $\kappa\leq\lambda$, we let $f^n_{T}\left(\kappa,\lambda\right)$
be the supremum of the cardinalities $\left|P\right|$, where $P$ is a
family of pairwise inconsistent partial $n$-types each of cardinality $\leq\kappa$, all 
over the same fixed set of parameters of size $\lambda$. We let $f_T(\kappa, \lambda) := \sup_{n \in \omega} f^n_{T}(\kappa, \lambda)$.
\end{defn}

The following is immediate from the definitions: 
\begin{rem}
\label{rem: obvious properties of f_T(k,l)}In any theory $T$ we
have:

\begin{enumerate}
\item $f^n_{T}\left(\lambda\right)=f^n_{T}\left(\lambda,\lambda\right)$ and $f^{n}_T(\kappa, \lambda) \leq f_T^{n+1}(\kappa, \lambda)$ for $\kappa \leq \lambda \geq |T|$ and $n \in \omega$;
\item $f_{T}^1\left(1,\lambda\right)\geq\lambda$ for all $\lambda\geq\aleph_{0}$;
\item $f_{T}^n\left(\kappa,\lambda\right)\leq f_{T}^n\left(\kappa',\lambda'\right)$
for any $\lambda'\geq\lambda,\kappa'\geq\kappa$;
\item $f_{T}\left(\kappa,\lambda\right)\leq\lambda^{\kappa}$ for all $\lambda\geq\kappa \geq \aleph_0$.
\end{enumerate}
\end{rem}

This two-cardinal function is known to characterize simplicity and supersimplicity of $T$ as follows (restricting to countable theories): 
\begin{fact}
\label{fac: known results about f_T(kappa,lambda)}

Let $T$ be a complete countable theory with infinite models, and $\kappa \leq \lambda$ infinite cardinals.

\begin{enumerate}
\item The following are equivalent:

\begin{enumerate}
\item $T$ is simple;
\item $f_{T}\left(\kappa,\lambda\right)\leq\lambda^{\aleph_{0}}+2^{\kappa}$
for all $\kappa$, $\lambda$;
\item $f^1_{T}\left(\kappa,\lambda\right)<\lambda^{\kappa}$ for some 
$\lambda=\lambda^{<\kappa}$.
\end{enumerate}
\item The following are equivalent:

\begin{enumerate}
\item $T$ is supersimple;
\item $f_{T}\left(\kappa,\lambda\right)\leq\lambda+2^{\kappa}$ for all
$\kappa,\lambda$;
\item $f_{T}^1 \left(\kappa,\lambda\right)<\lambda^{\aleph_{0}}$ for some
$\kappa,\lambda$. 
\end{enumerate}
%
%
%
\end{enumerate}
\end{fact}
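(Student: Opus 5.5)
Since this statement is quoted from the literature (Shelah's characterization of simplicity via counting types, refined by Casanovas), I plan to reconstruct its proof from the local character of dividing in one direction and the tree property in the other. Both parts (1) and (2) have the same shape, so I would run the two arguments in parallel, with ``countable'' replaced by ``finite'' in the supersimple case.

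For the bounds (a)$\Rightarrow$(b), fix a set $B$ with $|B|=\lambda$ and a family $P$ of pairwise inconsistent partial types over $B$, each of size $\leq\kappa$. For each $p\in P$, let $D_p\subseteq B$ be its (small) domain, and extend $p$ to a complete type $q_p\in S(D_p)$. By local character of dividing in simple theories, there is a countable $C_p\subseteq D_p$ (finite if $T$ is supersimple) such that $q_p$ does not divide over $C_p$.
\begin{itemize}
\item[(i)] There are at most $\lambda^{\aleph_0}$ (respectively $\lambda$) choices for $C_p$.
\item[(ii)] There are at most $2^{\aleph_0}\leq 2^{\kappa}$ choices for $q_p|_{C_p}$.
\end{itemize}
So it suffices to show that each fiber of $p\mapsto (C_p,q_p|_{C_p})$ has size $\leq 2^{\kappa}$. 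Suppose some fiber is larger. By Erd\H{o}s--Rado applied to the domains, I would extract from it an infinite sequence $(p_i)$ with a common base $C$ and a common type over $C$, whose domains $D_{p_i}$ form a $\Delta$-system with root $R\supseteq C$ and on which $q_{p_i}|_R$ is constant. After passing to a $CR$-indiscernible sequence and using compactness, pairwise inconsistency produces a formula witnessing that $q_{p_0}$ divides over $C$, a contradiction. Turning pairwise inconsistency into a dividing sequence of a single formula is the step I expect to be the main obstacle. I would try first to reduce to the case $D_{p_i}\cap D_{p_j}=R$ and use that the $D_{p_i}$ realize the same type over $R$, so that the $D_{p_i}$ themselves form the indiscernible sequence.

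(b)$\Rightarrow$(c) is immediate: in (1) choose $\lambda=\lambda^{<\kappa}$ with $\lambda^{\aleph_0}+2^{\kappa}<\lambda^{\kappa}$ (e.g.\ $\kappa=\aleph_1$, $\lambda=\beth_{\omega_1}$-like cofinality $\omega_1$ choices). In (2) take $\lambda$ with $\operatorname{cf}\lambda=\aleph_0$ and $\lambda>2^{\kappa}$, so that $\lambda+2^{\kappa}<\lambda^{\aleph_0}$.

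For (c)$\Rightarrow$(a), I argue by contraposition. If $T$ is not simple, then by Fact \ref{fac: 1-var 2-incons for TP_2}(1) some $\varphi(x,y)$ with $|x|=1$ has $2$-$\TP$. By compactness there is a tree $(a_\eta)_{\eta\in\lambda^{<\kappa}}$ with consistent branches and $2$-inconsistent siblings. For $\eta\in\lambda^{\kappa}$, the branch types $\{\varphi(x,a_{\eta|\alpha}):\alpha<\kappa\}$ are pairwise inconsistent, each has size $\kappa$, and all live over $\lambda^{<\kappa}=\lambda$ parameters, giving $f^1_T(\kappa,\lambda)=\lambda^{\kappa}$. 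If $T$ is simple but not supersimple, there is a ``dividing chain'' tree of height $\omega$ and branching $\lambda$ (standard from failure of finite local character), which likewise gives $\lambda^{\aleph_0}$ pairwise inconsistent countable types over $\lambda$ parameters, so $f^1_T(\kappa,\lambda)\geq\lambda^{\aleph_0}$ for all $\kappa,\lambda$.
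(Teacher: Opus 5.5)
The paper does not prove this Fact; it only cites Shelah and Casanovas. Your upper bounds and your lower bound for non-simple $T$ follow the classical arguments. However, (2)(c)$\Rightarrow$(a) has a genuine gap in the simple, non-supersimple case.

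Failure of finite local character gives formulas $\varphi_n(x,y_n)$, numbers $k_n$, and a tree whose branches are consistent. But its siblings at level $n$ are only $k_n$-inconsistent. If $k_n\geq 3$, two branches splitting at level $n$ contribute only two formulas from that sibling family. So nothing forces $\pi_\eta\cup\pi_\nu$ to be inconsistent, and the branch types need not form a family counted by $f^1_T$.

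In part (1) you avoided exactly this by quoting $2$-$\TP$. For dividing trees there is no such free reduction. Replacing $\varphi_n$ by a conjunction of several siblings needs several siblings of one level to be consistent together with a continuation of the branch, and a dividing tree does not give this.

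One repair:
\begin{enumerate}
\item Take the dividing tree with $|x|=1$. This is possible since some $1$-type has $\SU$-rank $\infty$.
\item Use the analysis behind Fact \ref{fac: TP dichotomy} to get either an sct-pattern of depth $\omega$ or an inp-pattern of depth $\omega$.
\item In the sct case, incomparable nodes are already $2$-inconsistent, so the branches of a $\lambda^{<\omega}$-indexed copy work.
\item In the inp case, make the rows mutually indiscernible. Row by row, replace row $n$ by conjunctions of blocks of $m_n$ consecutive entries. Here $m_n$ is maximal such that $m_n$ entries of row $n$, together with a path through the other (current) rows, are consistent.
\end{enumerate}
After step 4, paths remain consistent, and any two paths that differ in some row become inconsistent. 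This yields $\lambda^{\aleph_0}$ pairwise inconsistent countable $1$-types over $\lambda$ parameters. You should also say explicitly that the non-simple case of (2) follows from your $2$-$\TP$ tree with $\kappa=\aleph_0$, since $\lambda^{<\aleph_0}=\lambda$.

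The fiber step in (a)$\Rightarrow$(b), which you leave open, does go through, and the $\Delta$-system is unnecessary. Enumerate $C$ as $c$. Among $(2^\kappa)^+$ members of one fiber, at most $2^\kappa$ ``templates'' occur. So you may assume $D_{p_i}=C\cup e_i$ with $\tp(ce_i)$ constant and $q_{p_i}=Q(x,c,e_i)$ for a fixed $Q$.

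Colour each pair $i<j$ by a pair of formulas of $Q$ witnessing that $Q(x,c,e_i)\cup Q(x,c,e_j)$ is inconsistent; there are at most $\kappa$ colours. Erd\H{o}s--Rado $(2^\kappa)^+\to(\omega)^2_\kappa$ gives one $\chi\in Q$ with $\{\chi(x,c,e_i)\}$ pairwise inconsistent along an infinite homogeneous set. A $C$-indiscernible sequence realizing its EM-type over $C$ then shows that $\chi(x,c,e_{i_0})\in q_{p_{i_0}}$ $2$-divides over $C$. This contradicts the choice of $C$.
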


\begin{proof}

(1) The equivalence of (a), (b) and (c) is from  \cite[Theorem 0.2]{shelah1980simple} (see also \cite[Theorem III.7.7]{MR513226} or \cite[Proposition 2.20]{kim1996simple}) and  \cite[Theorem 2.8]{casanovas1999number}. Item (2) is from \cite[Theorem 3.2]{casanovas1999number}. 
\end{proof}

Motivated by Facts \ref{fac: classification of usual spectra} and \ref{fac: known results about f_T(kappa,lambda)}, Adler made the following conjecture:

\begin{conjecture} \cite{AdlerBanff}\label{conj: Adler}
	\begin{enumerate}
		\item  There are only finitely many possible functions $f_{T}(\kappa, \lambda)$ when $T$ is a countable theory $T$.
		\item The property ``$T$ is $\NTP_2$'' can be detected from $f_{T}(\kappa, \lambda)$ (note that if $T$ is $\TP_2$, then $f_{T}(\kappa, \lambda)$ is maximal, i.e.~equal to $\lambda^{\kappa}$ for all $\aleph_0 \leq \kappa \leq  \lambda$).
	\end{enumerate}
\end{conjecture}

In what follows, we will refute Conjecture \ref{conj: Adler}(2) and confirm Conjecture \ref{conj: Adler}(1) assuming GCH. 

\begin{defn}
\begin{enumerate}
	\item As usual, by a tree we mean a partial order $(\mathcal{T}, \leq )$ so that for every $t \in \mathcal{T}$, $\{s \in \mathcal{T} : s \leq t\}$ is linearly well-ordered by $\leq$. By a branch in a tree, we mean a maximal linearly ordered subset of the nodes. The length of a branch is its order type. The level $l(t)$ of $t \in T$ is the order type of $\{s \in \mathcal{T} : s < t\}$. The height of $\mathcal{T}$ is $\sup\{l(t) +1 : t \in \mathcal{T}\}$.
	\item For two infinite cardinals $\kappa, \lambda$, we let $\lambda^{\kappa, \tr}$ denote the supremum of cardinals $\mu$ so that there exists a tree with $\leq \lambda$ many nodes and $\geq \mu$ many branches of length $ \leq \kappa$. 
\end{enumerate}
\end{defn}

\begin{rem}\label{rem: props of tree exp}

\begin{enumerate}
	\item Equivalently, $\lambda^{\kappa, \tr}$ can be defined as the supremum of cardinals $\mu$ so that there exists a tree of height $\leq \kappa$  and size $\leq \lambda$ with $\geq \mu$ branches (by restricting to the  subtree given by the union of all branches of length $\leq \kappa$ with the induced ordering).
	\item Note that $\lambda \leq \lambda^{\kappa, \tr} \leq \lambda^{\kappa}$ and if $\lambda = \lambda^{<\kappa}$, then $\lambda^{\kappa, \tr} = \lambda^{\kappa}$ (witnessed by the tree $\lambda^{< \kappa}$ with the usual tree ordering: for $\eta_i \in \lambda^{< \kappa}$, say $\eta_i \in \lambda^{\alpha_i}$ for ordinals $\alpha_1 \leq \alpha_2 < \kappa$, $\eta_1  \leq  \eta_2$ if $\eta_2 \restriction _{\alpha_1} = \eta_1$). 
	\item We have $\kappa^{\kappa, \tr} = \ded \kappa$, by (1) and \cite[Theorem 2.1(b)]{baumgartner1976almost}	(see also \cite[Section 6.1]{chernikov2016non} for a more detailed discussion). 
\end{enumerate}

\end{rem}

\begin{prop}
\label{prop:tree exponent lemma}Given infinite cardinals $\kappa \leq \lambda$, if $\lambda^{\kappa}>\lambda+2^{\kappa}$
then for some regular $\theta\leq\kappa$ there is a tree with $\leq\lambda$
nodes, $\theta$ levels and $\lambda^{\kappa}$ branches. 

In particular, if $\lambda^{\kappa} > 2^{\kappa}$ then $\lambda^{\kappa, \tr} = \lambda^{\kappa}$.
\end{prop}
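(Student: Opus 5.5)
Let $\mu := \operatorname{cf}(\kappa)$-style analysis: I would take the least cardinal $\theta \leq \kappa$ with $\lambda^{\theta} = \lambda^{\kappa}$. First I note that $\theta$ is infinite: if $\lambda^{\kappa} > \lambda$ this is automatic. Moreover, if $\theta$ were singular, write $\theta = \sup_{i<\operatorname{cf}\theta}\theta_i$ with $\theta_i<\theta$. Then $\lambda^{\theta} = (\lambda^{<\theta})^{\operatorname{cf}\theta}$. By minimality $\lambda^{<\theta} = \sup_i \lambda^{\theta_i}$, and each $\lambda^{\theta_i} < \lambda^{\kappa}$. One must then argue that $(\lambda^{<\theta})^{\operatorname{cf}\theta}$ equals $\lambda^{\operatorname{cf}\theta}$ or some $\lambda^{\theta_i}$, contradicting minimality. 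The cleaner route is the standard one: $\lambda^{\kappa} = \lambda^{\theta}$ and $\theta$ regular can be arranged by replacing $\theta$ with $\operatorname{cf}(\theta)$ after first establishing $\lambda^{<\theta} \le \lambda$, which is the real content. So the plan is to split into two cases.

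\emph{Case A: $\lambda^{<\theta}=\lambda$ for the chosen $\theta$.} Then the tree $\lambda^{<\theta}$ (as in Remark \ref{rem: props of tree exp}(2)) has $\lambda$ nodes, $\theta$ levels and $\lambda^{\theta}=\lambda^{\kappa}$ branches; if $\theta$ is singular, I pass to a cofinal subsequence of levels of length $\operatorname{cf}\theta$: the subtree consisting of nodes at levels $\theta_i$ has the same branches, giving a tree with $\operatorname{cf}(\theta)$ levels, regular.

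\emph{Case B: $\lambda^{<\theta}>\lambda$.} Then some $\sigma<\theta$ has $\lambda^{\sigma}>\lambda$. Let $\sigma_0 \le \sigma$ be least with $\lambda^{\sigma_0}>\lambda$. The key step is Hausdorff/Bukovsky-type reasoning: for $\sigma_0$ least with $\lambda^{\sigma_0} > \lambda$ we get $\lambda^{<\sigma_0}=\lambda$, and I claim $\sigma_0$ is regular and that $\lambda^{\kappa} = \lambda^{\sigma_0}\cdot 2^{\kappa}$ fails to exceed $\lambda + 2^\kappa$ unless the tree $\lambda^{<\sigma_0}$ already works. Concretely, I would use the identity $\lambda^{\kappa}=(\lambda^{\sigma_0})^{\kappa}$ and try to show $\lambda^{\kappa} \le \max(\lambda^{\sigma_0}, 2^{\kappa})$ by induction on $\lambda$ (for $\lambda$ with $\operatorname{cf}\lambda > \kappa$, $\lambda^{\kappa} = \lambda\cdot\sup_{\nu<\lambda}\nu^{\kappa}$; for $\operatorname{cf}\lambda\le\kappa$, use the tree of initial segments along a cofinal sequence). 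The target intermediate statement is: \emph{either} $\lambda^{\kappa}\le \lambda+2^{\kappa}$, \emph{or} there is $\nu\le\lambda$ and regular $\theta\le\kappa$ with $\nu^{<\theta}\le\lambda$ and $\nu^{\theta}=\lambda^{\kappa}$; the tree $\nu^{<\theta}$ then witnesses the proposition. This dichotomy is proven by induction on $\lambda$ using the two cofinality cases above, with $2^\kappa$ absorbing the small cases $\nu \le 2^{\kappa}$.

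The main obstacle I expect is the singular-cofinality step ($\operatorname{cf}\lambda\le\kappa<\lambda$), where $\lambda^{\kappa}$ may jump; there I would build the tree whose level $i<\operatorname{cf}\lambda$ consists of $\lambda_i^{<\theta}$-type nodes for a cofinal sequence $\lambda_i$, combining inductive trees for the $\lambda_i$ (each of size $\le\lambda$) via the standard product argument $\lambda^{\kappa}=(\sup_i\lambda_i^{\kappa})^{\operatorname{cf}\lambda}$. The ``in particular'' clause then follows: if $\lambda^\kappa>2^\kappa$ then also $\lambda^\kappa>\lambda+2^\kappa$ (as $\lambda^\kappa > \lambda$ trivially would need care when $\lambda^\kappa=\lambda$, but then $\lambda^{\kappa,\tr}\ge\lambda=\lambda^\kappa$ by Remark \ref{rem: props of tree exp}(2)), and the tree obtained has $\le\lambda$ nodes and branches of length $\theta\le\kappa$, giving $\lambda^{\kappa,\tr}\ge\lambda^{\kappa}$, with equality from Remark \ref{rem: props of tree exp}(2).
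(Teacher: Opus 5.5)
Your Case A is fine (including thinning a singular $\theta$ to a cofinal set of $\operatorname{cf}\theta$ levels), and so is your derivation of the ``in particular'' clause. The gap is Case B ($\lambda^{<\theta}>\lambda$). It carries all the content, and there you give only a plan whose concrete ingredients do not work.

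\begin{itemize}
\item \emph{The proposed inequality is circular.} Under the hypothesis $\lambda^{\kappa}>\lambda+2^{\kappa}$, the inequality $\lambda^{\kappa}\le\max(\lambda^{\sigma_0},2^{\kappa})$ that you want to prove by induction is equivalent to Case B never occurring. Indeed, in Case B we have $\sigma_0<\theta$, so $\lambda^{\sigma_0}<\lambda^{\kappa}$ by minimality of $\theta$. So it is not a step toward handling Case B, and you offer no argument for it.
\item \emph{No tree built on $\lambda$ works in Case B.} Consider any tree of the form $\lambda^{<\theta'}$. For $\theta'<\theta$ it has only $\lambda^{\theta'}<\lambda^{\kappa}$ branches. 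For $\theta'\ge\theta$, its level $\sigma_0$ already has $\lambda^{\sigma_0}>\lambda$ nodes. So the witnessing tree must be built on a different cardinal.
\item \emph{Combining trees breaks the node bound.} Your singular step glues the inductively obtained trees for a cofinal sequence $(\lambda_i)$. Concatenating or multiplying trees multiplies the node count by the branch counts of the earlier factors. That exceeds $\lambda$ as soon as some $\lambda_i^{\kappa}>\lambda$, which is exactly when $\lambda^{\kappa}$ jumps.
\end{itemize}

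The missing idea is to descend rather than combine. If $\nu^{\kappa}\ge\lambda$ for some $\nu<\lambda$, then $\nu^{\kappa}=(\nu^{\kappa})^{\kappa}\ge\lambda^{\kappa}$. Hence $\nu^{\kappa}=\lambda^{\kappa}$, and you may simply replace $\lambda$ by $\nu$.

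Doing this once and for all, let $\mu\le\lambda$ be least with $\mu^{\kappa}\ge\lambda$. Then:
\begin{itemize}
\item $\mu^{\kappa}=\lambda^{\kappa}$ and $\mu>2^{\kappa}$.
\item $\nu^{\kappa}<\mu$ for all $\nu<\mu$; otherwise $\nu^{\kappa}=(\nu^{\kappa})^{\kappa}\ge\mu^{\kappa}\ge\lambda$, contradicting minimality of $\mu$.
\item $\theta:=\operatorname{cf}\mu\le\kappa$; otherwise $\mu^{\kappa}=\mu\cdot\sup_{\nu<\mu}\nu^{\kappa}=\mu\le\lambda<\lambda^{\kappa}$.
\item $\mu^{|\beta|}=\mu$ for $0<\beta<\theta$, so the tree $\mu^{<\theta}$ has at most $\lambda$ nodes.
\item The tree has $\mu^{\theta}=(\sup_{\nu<\mu}\nu^{\kappa})^{\operatorname{cf}\mu}=\mu^{\kappa}=\lambda^{\kappa}$ branches.
\end{itemize}

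This is the paper's argument. Your induction on $\lambda$ reduces to it once you make the dichotomy explicit: descend if some $\nu<\lambda$ has $\nu^{\kappa}\ge\lambda$; otherwise $\operatorname{cf}\lambda\le\kappa$ is forced and the tree $\lambda^{<\operatorname{cf}\lambda}$ works directly. No combination of trees is ever needed.
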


\begin{proof}
Let $\mu := \min\left\{ \mu:\mu^{\kappa}\geq\lambda\right\} \leq \lambda$. Note that $\mu > 2^{\kappa}$ (as otherwise $\mu^{\kappa} \leq 2^{\kappa}$, hence $\lambda \leq 2^{\kappa}$, so $\lambda^{\kappa} \leq 2^{\kappa}$ ---   contradicting the assumption). Then 
$\mu^{\kappa}=\left(\mu^{\kappa}\right)^{\kappa}\geq\lambda^{\kappa}>\lambda+2^{\kappa}$. Also for a cardinal $\nu$, $\nu<\mu\Rightarrow \nu^{\kappa}  <\lambda$ by minimality,
hence also $\nu<\mu\Rightarrow \nu^{\kappa}<\mu$ (for finite $\nu$ as $\mu > 2^{\kappa}$, and for infinite $\nu$ as otherwise $\nu^{\kappa} = (\nu^{\kappa})^{\kappa} \geq \mu^{\kappa} > \lambda$). Let
$\theta := \cof\left(\mu\right)$ --- regular. We know that $\mu^{\kappa}>\mu$
(as $\mu\leq\lambda$ and $\mu^{\kappa}>\lambda$). On the other hand,
if $\kappa<\theta = \cof(\mu) \leq \mu$ then using the above observations we would  get $\mu^{\kappa}= \mu$ (see e.g.~\cite[Theorem 5.20(iii)(a)]{kunen2014set}),
a contradiction. So necessarily $\theta\leq\kappa$.

Consider the tree $\mu^{<\theta}$, it has $\theta$ levels and $\mu^{\theta}$-many branches of length $\theta$. It also has $\sum_{\beta < \theta \textrm{ ordinal}} \mu^{|\beta|}$ nodes. Note that for each $\beta < \theta = \cof(\mu)$ we have $\mu^{|\beta|} \leq  \mu \cdot \sup \{ \nu^{|\beta|} : \nu < \mu \textrm{ cardinal}\}$ (see e.g.~\cite[Lemma 1.6.15(c)]{holz2009introduction}), and $\nu^{|\beta|} \leq \nu^{\theta} \leq \nu^{\kappa} < \lambda$ by minimality of $\mu$. Hence $\mu^{|\beta|} \leq \mu \cdot \lambda = \lambda$, so the number of nodes  in the tree $\mu^{<\theta}$ is at most $\theta \cdot \lambda = \lambda$.

 And as $\kappa \geq \theta = \cof(\mu)$ we have (see e.g.~\cite[Lemma 1.6.15(b)]{holz2009introduction}) $\mu^{\kappa}=\left(\sup\left\{ \nu^{\kappa}:\nu<\mu \textrm{ cardinal}\right\} \right)^{\cof\left(\mu\right)=\theta} \leq \mu^{\theta} \leq\lambda^{\theta}\leq\lambda^{\kappa}\leq (\mu^{\kappa})^{\kappa} = \mu^{\kappa}$.
So $\mu^{\theta}=\lambda^{\kappa}$, and we are done. 

For the ``in particular'' part, note that if $\lambda^{\kappa} = \lambda$ then clearly $\lambda^{\kappa} = \lambda^{\kappa, \tr}$; and otherwise $\lambda^{\kappa} > \lambda + 2^{\kappa}$ and the result above applies.
\end{proof}

We recall the multi-order property from \cite{keisler1976six} (which is equivalent to the theory $T$ not admitting \emph{ird-patterns} of infinite height, or $\kappa_{\ird}(T) > \aleph_0$ in the sense of  \cite[Definition III.7.1]{MR513226}, see also \cite[Section 5]{adler2007strong}):

\begin{defn}\label{def: multiorder}
A theory $T$ has the \emph{multi-order property} if there exist formulas $\varphi_i(x,y_i) \in L$ for $i \in \omega$ with $x$ fixed and $y_i$ arbitrary tuples of variables, and tuples $(a_{i,j} : i,j \in \omega)$ so that: for every $f: \omega \to \omega$ there exists some $b_f$ with $\models \varphi(b_f, a_{i,j}) \iff j < f(i)$ for all $i,j \in \omega$.
\end{defn}

\begin{prop}\label{prop: multiorder 1 var}
	The multi-order property is witnessed in one variable, i.e.~if $T$ is multi-order then we can choose the formulas $\varphi_i(x,y_i)$ in Definition \ref{def: multiorder} with $|x|=1$.
\end{prop}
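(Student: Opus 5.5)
We reduce the number of free variables one coordinate at a time, in the same spirit as the reductions to one variable for $\TP_2$ and $\TP_1$ (Fact \ref{fac: 1-var 2-incons for TP_2}) and for resilience (Corollary \ref{C:Resilience-One-Variable}). The first step is to reformulate the multi-order property, via Ramsey and compactness, as the existence of an ird-pattern of depth $\omega$. That is, we want formulas $\varphi_i(x,y_i)$ and a mutually indiscernible array of rows $(a_{i,j})_{j\in\mathbb{Q}}$ (or indexed by $\omega$) such that for every $f$ the set $\{\varphi_i(x,a_{i,j})^{[j<f(i)]}\}$ is consistent. By compactness, it suffices to obtain, for every finite $N$, a witness of depth $N$ with $|x|=1$ and with formulas drawn uniformly from some fixed countable list. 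Since the $\varphi_i$ may vary with $i$, only the depth matters, so this uniformity is not a real restriction.

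The key tool we aim for is the following. Suppose $(\bar a_i)_{i<\omega}$ is a mutually indiscernible array of rows, and a tuple $b=b_1b_2$ has a type that is ``cut'' by each row, meaning $\tp(b/\bar a_i)$ is not realized by any element making $\bar a_i$ indiscernible over $b$. Then for all but finitely many rows the cut is already witnessed by $b_1$ alone, or else the cut is witnessed by $b_2$ over the array with $b_1$ added as a parameter. The concrete statement we would prove is an analogue of the standard dp-rank argument: if $T$ has no ird-pattern of depth $\omega$ in one variable, then $\kappa_{\ird}$ in one variable is $\le\aleph_0$. We would then show by induction on $|x|$ that every tuple $b$ and every mutually indiscernible array of infinitely many rows admit a row that remains indiscernible over $b$.

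The argument proceeds as follows. Take $b=(b_1,b')$. By the inductive hypothesis for the single variable $b_1$ (after first extracting, by compactness, an array indexed by $\omega$ with rows indexed by $\mathbb{Q}$), all but finitely many rows are indiscernible over $b_1$. Removing those rows, the remaining rows are still mutually indiscernible over $b_1$, because mutual indiscernibility over a parameter is preserved after ``finitely many bad rows'' are discarded; this uses the standard fact for ird, namely that the absence of infinite ird-patterns in one variable gives exactly this uniform finiteness. We then apply the inductive hypothesis for $b'$ over the base $b_1$, since the hypothesis is stated over arbitrary parameter sets. This yields a row indiscernible over $b_1b'$.

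Finally, we translate back. The multi-order property for $x$ gives, by compactness and Ramsey, an array and a $b_f$ with $f$ cutting each row strictly inside the row. Such a $b_f$ has no row indiscernible over it, which contradicts the previous paragraph. Hence the property must already occur in one variable.

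The main obstacle is the step that discards finitely many rows while keeping mutual indiscernibility over $b_1$. The ird setting is weaker than the inp setting, because the condition is not preserved under adding parameters in the same way. We would try first to prove it directly: if infinitely many rows failed to be indiscernible over $b_1$, then the witnessing formulas, after thinning and shifting indices, would produce an ird-pattern of infinite depth in $b_1$.
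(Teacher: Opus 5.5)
There is a genuine gap. The invariant you track, whether a row stays indiscernible over $b$, is the wrong one, and the lemma your induction rests on is false already for singletons.

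Consider the theory of $\omega$ many independent equivalence relations $E_k$, each with infinitely many classes. It eliminates quantifiers and is stable. So it has no ird-pattern even of depth $1$: such a pattern for $\varphi_0$ is an instance of the order property for $\varphi_0$. Choose an array $(a_{i,j})_{i,j<\omega}$ of elements that are pairwise $E_k$-inequivalent for every $k$. This is an indiscernible set, so the rows are mutually indiscernible. By independence there is a singleton $b$ with $b\,E_i\,a_{i,0}$ for all $i$. Then $b\,E_i\,a_{i,0}$ together with $\neg\, b\,E_i\,a_{i,1}$ shows that no row is indiscernible over $b$.

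So both your inductive claim (some row remains indiscernible over $b$) and the stronger claim you actually invoke (all but finitely many rows are indiscernible over $b_1$) fail in a theory that is not multi-order at all. Row-indiscernibility over finite tuples is the ict/dp-rank notion, capturing strong dependence, not the ird notion. A parameter can destroy indiscernibility by singling out points without inducing any cut. Your ``translate back'' step only uses the easy direction, that a cut destroys indiscernibility. The induction needs the converse, and that is exactly what fails.

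A correct argument along your lines has to track genuine cuts: $b$ splitting a dense row into two infinite convex pieces on which some formula is constantly true, respectively false. You then need to show that this notion is sub-additive. The obstacle you flagged appears in a stronger form than you expect. After discarding the rows cut by $b_1$, the remaining rows need not be indiscernible over $b_1$ at all. Even if each row were indiscernible over $b_1$, that would not give mutual indiscernibility over $b_1$. So you cannot simply move to base $b_1$. Instead you must shrink each row to a convex piece around the cut of $b$ that is indiscernible over $b_1$, while preserving mutual indiscernibility and the cut itself. This is the content of sub-additivity of op-dimension, which is what the paper invokes from \cite{simon2022linear}. Multi-order in $n$ variables is equivalent to $\opD(a/\emptyset)\not<\aleph_0$ for some $n$-tuple $a$. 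Simon's result gives that $\opD(a/\emptyset),\opD(b/\emptyset)<\aleph_0$ implies $\opD(ab/\emptyset)<\aleph_0$, and the reduction to singletons follows by induction.

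Separately, your remark that uniformity of the formulas across depths ``is not a real restriction'' is wrong. Patterns of every finite depth built from unrelated formulas do not yield a pattern of depth $\omega$ by compactness. Your proof by contradiction does not actually need this step, though.
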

\begin{proof}
	This follows from \cite[Proposition 4.4]{simon2022linear} (see also \cite{guingona2015common} for the analogous sub-additivity statement in the case of finite cardinals). Namely, by Ramsey and compactness it is easy to see that $T$ has the multi-order property witnessed by some formulas $\varphi(x,y_i), i \in \omega$ with $|x|=n$ if and only if it is not the case that $\opD(x=x) < \aleph_0$ in the sense of \cite[Definition 4.1]{simon2022linear}, if and only if there is an $n$-tuple $a$ so that $\opD(a/\emptyset) < \aleph_0$ does not hold. But \cite[Proposition 4.4]{simon2022linear} implies that for any finite tuples $a,b$, if $\opD(a/\emptyset), \opD(b/\emptyset) < \aleph_0$, then $\opD(ab/\emptyset) < \aleph_0$ --- hence the multi-order property reduces to singletons.
\end{proof}

By analogy with the multi-order property, we define the multi-$\TP_1$ property: 
\begin{defn}
We say that $T$ has the \emph{multi-$\TP_1$} property if there exist formulas $\varphi_i(x,y_i) \in L$ with $x$ fixed and $y_i$ arbitrary finite tuples of variables and trees of tuples $(a^i_{\eta})_{\eta \in \omega^{<\omega}}$ for $i \in \omega$ in $\M$ such that:
\begin{enumerate}
\item for any choice of branches $(\eta_i : i \in \omega)$ with $\eta_i \in \omega^{\omega}$, the set of formulas $\bigcup_{i \in \omega} \{\varphi_i(x;a^i_{\eta_i | \alpha}) : \alpha < \omega\}$ is consistent,
\item for all $i \in \omega$ and all $\eta \perp \nu$ in $\omega^{<\omega}$, $\{\varphi_i(x;a^i_{\eta}),\varphi_i(x;a^i_{\nu})\}$ is inconsistent. 
\end{enumerate}

\end{defn}

\begin{prop}\label{prop: NIP multi-order implies multi TP1}
	If $T$ is NIP and multi-order then $T$ is mutli-$\TP_1$, witnessed by formulas $(\varphi_i(x,y_i))_{i \in \omega}$ with $|x|=1$.
\end{prop}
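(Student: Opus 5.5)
By Proposition \ref{prop: multiorder 1 var}, the multi-order property is already witnessed by formulas $\varphi_i(x,y_i)$, $i\in\omega$, with $|x|=1$ and an array $(a_{i,j})$ as in Definition \ref{def: multiorder}. The plan is to turn each row $(a_{i,j}:j\in\omega)$, which codes a cut in a ``linear order'' via $j<f(i)$, into a $\TP_1$-tree. NIP lets us replace the positive/negative pattern by a genuinely tree-like one, using conjunctions of the form $\varphi_i(x,y)\land\neg\varphi_i(x,y')$, i.e.\ intervals.

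First I will normalize the array. By Ramsey and compactness I will take it mutually indiscernible, with rows indexed by $\mathbb{Q}$ instead of $\omega$, and require that for every choice of cuts $(c_i)_{i\in\omega}$ in $\mathbb{Q}$ there is some $b$ with $\models\varphi_i(b,a_{i,q})\iff q<c_i$. Because $T$ is NIP, in each row $i$ the traces $\varphi_i(x,a_{i,q})$ are eventually monotone along the indiscernible sequence. Having all cuts realized already forces them to be decreasing in $q$ on the relevant types, after possibly replacing $\varphi_i$ by a suitable Boolean combination.

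Next comes the tree. For each row I fix an embedding of $\omega^{<\omega}$ into pairwise disjoint open intervals of $\mathbb{Q}$, so that comparable nodes give nested intervals and incomparable nodes give disjoint ones. This is the standard Cantor-type coding of a binary, and then $\omega$-branching, tree by nested rational intervals $(p_\eta,q_\eta)$. I then set
\[
\psi_i(x;y,y') := \varphi_i(x,y)\land\neg\varphi_i(x,y'), \qquad a^i_\eta := (a_{i,p_\eta},a_{i,q_\eta}),
\]
so that $\psi_i(x,a^i_\eta)$ says the cut of $x$ in row $i$ lies in $(p_\eta,q_\eta]$. This gives the two conditions:
\begin{itemize}
\item \textbf{Incomparable nodes.} Disjoint intervals give inconsistent formulas, because monotonicity makes the cut position of $x$ in row $i$ well defined.
\item \textbf{Branches.} For a branch $\eta_i$ in each row, the nested intervals shrink toward some cut $c_i$, chosen irrational and in the interior of all of them. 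A realization $b_f$ of these cuts satisfies every $\psi_i(x,a^i_{\eta_i|\alpha})$, and compactness over all rows simultaneously yields the required consistency.
\end{itemize}

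The main obstacle is the claim that the cut position of an arbitrary element $x$ in row $i$ is well defined: that $\varphi_i(x,a_{i,q})$ is monotone in $q$ for every $x$, and not merely for the $b_f$. Without this, $\psi_i$ for disjoint intervals could be consistent. I would handle it using NIP. Each $x$ alternates only boundedly often along the indiscernible row, with the number of alternations at most $N_i$. I would therefore replace $\varphi_i$ by the formula saying ``$x$ is in the first block of $\varphi_i$-truth'', or equivalently pass to $\varphi_i(x,y)\land\neg\varphi_i(x,y')$ applied at spaced-out indices, and then use the finite-alternation bound to make disjoint intervals $2$-inconsistent. If this fails, the fallback is to take $\psi_i$ to say that $\varphi_i$ is true at $p_\eta$ and false at all of finitely many auxiliary points beyond $q_\eta$. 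Chosen with the uniform bound $N_i$, those auxiliary points rule out a second cut.
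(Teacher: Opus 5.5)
\textbf{There is a genuine gap in your proposal.} Your last step is the same as the paper's: you code $\omega^{<\omega}$ by nested and disjoint rational intervals, and use the formulas $\varphi_i(x,y)\land\neg\varphi_i(x,y')$. Branch consistency also works as you describe. The gap is the step you flag yourself. As long as you keep the original $\varphi_i$, you never obtain that $\{\psi_i(x,a^i_\eta),\psi_i(x,a^i_\nu)\}$ is inconsistent for incomparable $\eta,\nu$, and none of your fixes can produce it:
\begin{itemize}
\item \emph{Spacing out indices.} By indiscernibility of the row, consistency of $\varphi_i(x,a_{i,p})\land\neg\varphi_i(x,a_{i,q})\land\varphi_i(x,a_{i,p'})\land\neg\varphi_i(x,a_{i,q'})$ depends only on the order type of $p<q<p'<q'$. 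So spacing changes nothing.
\item \emph{First block of truth.} ``$x$ lies in the first block of $\varphi_i$-truth'' is not expressible by a formula with finitely many parameters.
\item \emph{The alternation bound, including your fallback.} $N_i$ only rules out traces with more than $N_i$ sign changes. A conjunction of two of your formulas has at most three sign changes, also in the fallback, since each instance is one positive value followed by negative ones. NIP does not forbid an element whose trace on the row has three sign changes, so this conjunction can be consistent. At best the bound makes any $k>(N_i+1)/2$ pairwise incomparable nodes inconsistent, not the $2$-inconsistency that multi-$\TP_1$ requires.
\item \emph{All cuts realized.} This constrains only the $b_f$'s, whereas you need monotonicity for every $x$.
\end{itemize}

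\textbf{The missing idea} is to change the formula using Shelah's argument that NIP plus unstable gives SOP, which is what the paper does.
\begin{enumerate}
\item By NIP some finite pattern $\bigwedge_{j<k_\alpha}\varphi_\alpha(x,a_{\alpha,j})^{\eta(j)}$ is inconsistent.
\item Swap adjacent $\neg\varphi_\alpha\land\varphi_\alpha$ into $\varphi_\alpha\land\neg\varphi_\alpha$ one at a time until you reach a single-cut pattern, which is consistent.
\item At some step the remaining conjuncts form a context $\chi$ with $\chi\land\varphi_\alpha(x,a_{\alpha,j_0})\land\neg\varphi_\alpha(x,a_{\alpha,j_0+1})$ consistent but $\chi\land\neg\varphi_\alpha(x,a_{\alpha,j_0})\land\varphi_\alpha(x,a_{\alpha,j_0+1})$ inconsistent.
\item Re-index the row inside $(j_0-1,j_0+2)$ and set $\varphi'_\alpha:=\chi\land\varphi_\alpha$. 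Indiscernibility then gives $\varphi'_\alpha(x,a'_{\alpha,j_2})\vdash\varphi'_\alpha(x,a'_{\alpha,j_1})$ for all $j_1<j_2$ and every $x$.
\end{enumerate}
After this, your interval formulas are $2$-inconsistent on disjoint intervals.

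There is a second issue your proposal does not address: modifying each row separately can destroy the joint consistency across rows along branches. The paper handles this as follows:
\begin{itemize}
\item It measures the consistency flip relative to the cut-types of all other rows: the already modified rows $<\alpha$ and the original rows $>\alpha$.
\item It absorbs the finite piece of that context witnessing the inconsistency into $\varphi'_\alpha$. These parameters sit at indices outside $\mathbb{Q}$, after all rows are extended by $\omega$ and $\omega^{*}$ on either side.
\item It proceeds by induction on $\alpha$ while keeping the rows mutually indiscernible.
\end{itemize}
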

\begin{proof}

By Proposition \ref{prop: multiorder 1 var}, let $\varphi_i(x,y_i) \in L$ for $i \in \omega$ with $|x|=1$ witness the multi-order property. 

We will use an elaboration of the trick in Shelah's proof that NIP plus unstable implies the strict order property SOP. By Ramsey and compactness, we can choose tuples  $(a_{i,j} : i \in \omega, j \in \mathbb{Q})$ so that for every $f: \omega \to \mathbb{Q}$ there exists some $b_f$ with $\models \varphi(b_f, a_{i,j}) \iff j < f(i)$ for all $i  \in \omega, j \in \mathbb{Q}$; and so that the sequences $\{ \bar{a}_i : i \in \omega \}$ with $\bar{a}_i = (a_{i,j} : j \in \mathbb{Q})$ are mutually indiscernible.
		
	By induction on $\alpha < \omega$ we will choose formulas $\varphi'_\alpha(x,y'_\alpha) \in L$ and sequences $\bar{a}'_\alpha = (a'_{\alpha,j} : j \in \mathbb{Q})$ so that:
	\begin{enumerate}
		\item $(\bar{a}'_{i} : i \leq \alpha) \cup (\bar{a}_i : \alpha < i < \omega)$ are mutually indiscernible;
		\item the set of formulas $\pi'_{\leq \alpha}(x) \cup \pi_{>\alpha}(x)$ is consistent, where \begin{gather*}
			\pi'_{\leq \alpha}(x) = \bigcup_{i \leq \alpha} \left(\{ \varphi'_i(x,a'_{i,j}) : j \in \mathbb{Q}, j < 0 \} \cup \{ \neg  \varphi'_i(x,a'_{i,j}) : j \in \mathbb{Q}, j \geq 0 \} \right),\\
		\pi_{>\alpha}(x) = \bigcup_{\alpha < i < \omega} \left(\{  \varphi_i(x,a_{i,j}) : j \in \mathbb{Q}, j < 0 \} \cup \{\neg \varphi_i(x,a_{i,j}) : j \in \mathbb{Q}, j \geq 0 \} \right);
		\end{gather*}
		\item for all $i \leq \alpha$ and $j_2 > j_1 \in \mathbb{Q}$, $\varphi'_i(x,a'_{i, j_2}) \vdash \varphi'_i(x,a'_{i, j_1})$.
	\end{enumerate}

	Fix $\alpha \in \omega$, and assume we have already chosen $(\varphi'_i(x,y'_\alpha) : i < \alpha)$ and $(\bar{a}'_i : i < \alpha)$ satisfying (1)--(3).

	 Since the formula $\varphi_\alpha(x,y_{\alpha})$ is NIP, there is some $k_\alpha \in \omega$ and $\eta: k_\alpha \to \{0,1\}$ so that $\bigwedge_{j \in k_{\alpha}} \varphi_\alpha(x,a_{\alpha,j})^{\eta(j)}$ is inconsistent (where $\varphi^1 = \varphi$ and $\varphi^0 = \neg \varphi$). Starting with this formula, we change one by one instances of $\neg \varphi_{\alpha}(x, a_{\alpha,j}) \land \varphi_{\alpha}(x, a_{\alpha,j+1})$ to $ \varphi_{\alpha}(x, a_{\alpha,j}) \land \neg \varphi_{\alpha}(x, a_{\alpha,j+1})$. After finitely many steps, we arrive to a formula of the form $\bigwedge_{j \in \omega, 0 \leq j < \ell } \varphi_{\alpha} (x, a_{\alpha,j}) \land \bigwedge_{j \in \omega, \ell \leq j < k_{\alpha}}  \neg \varphi_{\alpha} (x, a_{\alpha,j})$ for some $0 \leq \ell < k_{\alpha}$, and by assumption ((2) for $\alpha-1$) and mutual indiscernibility this latter formula is consistent with  $\pi'_{\leq \alpha-1}(x) \cup \pi_{> \alpha}(x)$. Therefore there is a step in the process in which we pass from a formula inconsistent with $\pi'_{\leq \alpha-1}(x) \cup \pi_{> \alpha}(x)$ to a formula consistent with $\pi'_{\leq \alpha-1}(x) \cup \pi_{> \alpha}(x)$. Namely, there is some $j_0 \in k_{\alpha}$ and $\eta_0: k_{\alpha} \to \{0,1\}$ so that
	 \begin{gather*}
	 	\bigwedge_{0 \leq j < j_0 } \varphi_{\alpha}(x, a_{\alpha, j})^{\eta_0(j)} \land \varphi_{\alpha}(x,a_{\alpha, j_0}) \land \neg \varphi_{\alpha}(x,a_{\alpha, j_0+1}) \land \bigwedge_{ j_0+1 < j < k_{\alpha} } \varphi_{\alpha}(x, a_{\alpha, j})^{\eta_0(j)} 
	 \end{gather*} 
 is consistent with $\pi'_{\leq \alpha-1}(x) \cup \pi_{> \alpha}(x)$, but 
	 \begin{gather*}
	 	\bigwedge_{0 \leq j < j_0 } \varphi_{\alpha}(x, a_{\alpha, j})^{\eta_0(j)} \land \neg \varphi_{\alpha}(x,a_{\alpha, j_0}) \land \varphi_{\alpha}(x,a_{\alpha, j_0+1}) \land \bigwedge_{ j_0+1 < j < k_{\alpha} } \varphi_{\alpha}(x, a_{\alpha, j})^{\eta_0(j)} 
	 \end{gather*} 
	 is inconsistent with $\pi'_{\leq \alpha-1}(x) \cup \pi_{> \alpha}(x)$, hence already with some finite subtype.

 By compactness we can extend (without changing the already given elements)  our sequences $\bar{a}'_i = (a'_{i,j} : j \in \mathbb{Q})$  to $(a'_{i,j} : j \in J_{-} + \mathbb{Q} + J_{+})$ for $i < \alpha$ and $\bar{a}_i = (a_{i,j} : j \in \mathbb{Q})$  to $(a_{i,j} : j \in J_{-} + \mathbb{Q} +J_{+})$  for $\alpha \leq i < \omega$, maintaining mutual indiscernibility, where $J_{-} = \omega, J_{+}=\omega^{\ast}$.  
 
Then, by the above and mutual indiscernibility, there are some finite sets $J_{0} \subseteq J_{-}, J_{1} \subseteq J_{+}$ and $I \subseteq \omega, \alpha < I$ so that, taking 
\begin{gather*}
	\widetilde{\pi}'_{\leq \alpha-1}(x; \bar{a}_{\leq \alpha-1, J_0, J_1}) := \{\varphi'_{i}(x,a'_{i,j}) : i \leq  \alpha-1, j \in J_0 \} \cup\\ \{\neg \varphi'_{i}(x,a'_{i,j}) : i \leq  \alpha-1, j \in J_1 \},\\
	\widetilde{\pi}_{> \alpha}(x; \bar{a}_{I, J_0,J_1}) := \{\varphi_{i}(x,a_{i,j}) : i  \in I, j \in J_0 \} \cup \{\neg \varphi_{i}(x,a_{i,j}) : i \in I, j \in J_1 \},
\end{gather*}
the first formula is consistent with $\pi'_{\leq \alpha-1}(x) \cup \pi_{> \alpha}(x) \cup  \widetilde{\pi}'_{\leq \alpha-1}(x) \cup \widetilde{\pi}_{> \alpha}(x)$, while the second formula is inconsistent with $\widetilde{\pi}'_{\leq \alpha-1}(x) \cup \widetilde{\pi}_{> \alpha}(x)$. Fix an order preserving bijection $\gamma: \mathbb{Q} \to (j_0-1, j_0+2) \cap \mathbb{Q}$ with $\gamma(j_0)=j_0, \gamma(j_0 + 1) = j_0 + 1$, and for $j \in \mathbb{Q}$ we consider the finite tuple
\begin{gather*}
	a'_{\alpha,j} := a_{\alpha, \gamma(j)} \cup (a_{\alpha, j'}: j' \in \omega, 0 \leq j' < j_0 \lor j_0+1 < j < k_{\alpha}) \cup \\ (a'_{i,j'}: i \leq \alpha-1, j' \in J_0 \cup J_1) \cup (a_{i,j'}: i \in I, j' \in J_0 \cup J_1) 
\end{gather*}
and formula 
\begin{gather*}
	\varphi'_{\alpha}(x, y'_{\alpha}) :=  \varphi_{\alpha}(x,y_{\alpha, j_0}) \land  \bigwedge_{0 \leq j < j_0 } \varphi_{\alpha}(x, y_{\alpha, j})^{\eta_0(j)} \land \bigwedge_{ j_0+1 < j < k_{\alpha} } \varphi_{\alpha}(x, y_{\alpha, j})^{\eta_0(j)} \land  \\
	\widetilde{\pi}'_{\leq \alpha-1}(x; y_{\leq \alpha-1, J_0, J_1}) \land \widetilde{\pi}_{> \alpha}(x; y_{I, J_0,J_1}).
\end{gather*}
By construction, the sequences $\bar{a}'_{\alpha}, (\bar{a}'_i)_{i < \alpha}, (\bar{a}_i)_{\alpha < i < \omega}$ are still mutually indiscernible (so (1) holds). Using indiscernibility, for all $j_2 > j_1 \in \mathbb{Q}$ we have $\varphi'_{\alpha}(x,a'_{\alpha, j_2}) \vdash \varphi'_{\alpha}(x,a'_{\alpha, j_1})$ (so (3) holds), and as $\varphi'_{\alpha}(x,a'_{\alpha, j_0}) \land \neg \varphi'_{\alpha}(x,a'_{\alpha, j_0+1})$ is consistent with $\pi'_{\leq \alpha-1}(x) \cup \pi_{> \alpha}(x)$ implies that (2) also holds.
		
		We can think of $(\varphi'_i, \bar{a}'_i)_{i \in \omega}$ as witnessing that $T$ satisfies ``multi-SOP''. From this we can produce multi-$\TP_1$ similarly to the standard argument that SOP implies $\TP_1$.
		Namely, by induction on $|\eta|$, we can choose a tree of non-empty closed intervals $[i_{\eta}, i'_{\eta}]$ in $\mathbb{Q}$ with $\eta \in \omega^{< \omega}$ so that:
		\begin{enumerate}[(a)]
			\item for all $\nu \leq \eta$ in $\omega^{<\omega}$, $\emptyset \neq [i_{\eta}, i'_{\eta}] \subseteq  [i_{\nu}, i'_{\nu}]$;
\item for all  $\eta \perp \nu$ in $\omega^{<\omega}$, $[i_{\eta}, i'_{\eta}] \cap [i_{\nu}, i'_{\nu}] = \emptyset $.
		\end{enumerate}
		For $\alpha \in \omega$ and $\eta \in \omega^{< \omega}$, let $b^{\alpha}_{\eta} := (a'_{\alpha, i_{\eta}}, a'_{\alpha, i'_{\eta}})$, and let $\psi_{\alpha}(x, b^{\alpha}_{\eta}) := \varphi'_{\alpha}(x, a'_{\alpha, i_{\eta}}) \land \neg \varphi'_{\alpha}(x, a'_{\alpha, i'_{\eta}})$. It follows that for any  $\alpha \in \omega$ and $\eta \perp \nu$ in $\omega^{<\omega}$, $\{\psi_\alpha(x;b^\alpha_{\eta}),\psi_\alpha(x;b^\alpha_{\nu})\}$ is inconsistent by (3) and (b). And for every choice of branches $(\eta_\alpha : \alpha \in \omega)$ with $\eta_\alpha \in \omega^{\omega}$,  $\bigcup_{\alpha \in \omega} \{\psi_\alpha(x;b^\alpha_{\eta_\alpha | t}) : t < \omega\}$ is consistent by (a), (2)+(1) and compactness.
\end{proof}

The following is a generalization of \cite[Lemma 4, (ii)$\Rightarrow$(iii)]{keisler1978stability}.
\begin{prop}\label{prop: multi TP1 lower bound}
	\begin{enumerate}
		\item If $T$ has $\TP_1$ then $f^1_T(\kappa, \lambda) \geq \lambda^{\kappa,\tr}$ for all $\aleph_0 \leq \kappa \leq \lambda$.
		\item If $T$ has multi-$\TP_1$, witnessed by formulas $(\varphi_\alpha(x,y_{\alpha}))_{\alpha \in \omega}$ with $|x|=1$, then $f^1_T(\kappa, \lambda) \geq \left( \lambda^{\kappa,\tr} \right)^{\aleph_0}$ for all $\aleph_0 \leq \kappa \leq \lambda$.
	\end{enumerate}
\end{prop}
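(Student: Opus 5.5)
For (1), we transfer a $\TP_1$ tree onto a tree $\mathcal{T}$ witnessing $\lambda^{\kappa,\tr}$. Fix $\mu<\lambda^{\kappa,\tr}$ (or $\mu$ equal to it when the supremum is attained), and a tree $\mathcal{T}$ with $\leq\lambda$ nodes, height $\leq\kappa$, and $\geq\mu$ branches, using Remark \ref{rem: props of tree exp}(1). By Fact \ref{fac: 1-var 2-incons for TP_2}(3) there is $\varphi(x,y)$ with $|x|=1$ having $\TP_1$.

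We use compactness to replace the index tree $\omega^{<\omega}$ by $\mathcal{T}$. The type in variables $(y_t)_{t\in\mathcal{T}}$ asserts two things. First, for $s,t$ incomparable in $\mathcal{T}$, $\neg\exists x(\varphi(x,y_s)\land\varphi(x,y_t))$. Second, for every finite chain $t_1<\dots<t_m$ in $\mathcal{T}$, $\exists x\bigwedge\varphi(x,y_{t_i})$. This type is finitely satisfiable. A finite subset of $\mathcal{T}$, with the induced tree order, embeds into $\omega^{<\omega}$ preserving both comparability and incomparability: map each node to a sequence by induction on the finite tree, giving distinct immediate successors distinct extensions. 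The $\TP_1$ tree for $\varphi$ then provides a realization.

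Take a realization $(a_t)_{t\in\mathcal{T}}$ in $\M$ and let $A=\{a_t\}$, so $|A|\leq\lambda$. For each branch $B$, let $p_B=\{\varphi(x,a_t):t\in B\}$. This is consistent by compactness, since every finite subset lies on a chain, and it has size $\leq\kappa$ because branches have length $\leq\kappa$. Distinct branches $B\neq B'$ contain incomparable nodes $s\in B$, $t\in B'$: otherwise their union would be a chain, contradicting maximality. Hence $p_B\cup p_{B'}$ is inconsistent. This gives $\geq\mu$ pairwise inconsistent $1$-types over a set of size $\leq\lambda$. (If the paper's convention requires parameter sets of size exactly $\lambda$, pad $A$.)

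For (2), we run the same construction for all $\alpha\in\omega$ simultaneously, with separate copies $\mathcal{T}_\alpha=\mathcal{T}$ and parameters $(a^\alpha_t)$. The compactness type now says two things. Within each $\alpha$, incomparable nodes give $\varphi_\alpha$-inconsistency. For any finitely many $\alpha$ and finite chains $C_\alpha\subseteq\mathcal{T}_\alpha$, $\exists x\bigwedge_\alpha\bigwedge_{t\in C_\alpha}\varphi_\alpha(x,y^\alpha_t)$ holds. This is finitely satisfiable by the same embedding argument applied to each coordinate, using the multi-$\TP_1$ witnesses.

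For a sequence of branches $(B_\alpha)_{\alpha\in\omega}$, let $p=\bigcup_\alpha\{\varphi_\alpha(x,a^\alpha_t):t\in B_\alpha\}$. This has size $\leq\kappa\cdot\aleph_0=\kappa$ and is consistent. Distinct sequences differ at some $\alpha$, and are then inconsistent via $\varphi_\alpha$. The parameter set has size $\lambda\cdot\aleph_0=\lambda$, and the number of types is $\mu^{\aleph_0}$. Taking the supremum over $\mu$ yields $(\lambda^{\kappa,\tr})^{\aleph_0}$.

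The main subtle step is the supremum when $\lambda^{\kappa,\tr}$ is not attained. We need $\sup_\mu \mu^{\aleph_0}\geq(\sup\mu)^{\aleph_0}$. Since $f^1_T$ is defined as a supremum in $\card$, I would argue as follows. When the supremum has cofinality $>\aleph_0$, it is fine. Otherwise, take a disjoint union of countably many trees witnessing cofinal $\mu_n$; this is still a tree with $\leq\lambda$ nodes, height $\leq\kappa$, and $\prod\mu_n$ branches. Then assign to different $\alpha$'s different trees, or simply observe that the supremum in (1) is attained by such unions.
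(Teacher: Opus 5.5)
Your proposal is correct and follows the paper's proof. You use compactness to transfer the (countably many) $\TP_1$ trees onto a tree of height $\leq\kappa$ and size $\leq\lambda$, read off one pairwise inconsistent type per branch (or per $\omega$-sequence of branches), and handle the supremum by splitting on whether $\cof(\lambda^{\kappa,\tr})>\aleph_0$.

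There is one slip: a disjoint union of the $\mathcal{T}_n$ has (at least) $\sum_n\mu_n=\lambda^{\kappa,\tr}$ branches, not $\prod_n\mu_n$, since every branch lies in a single component. That count is exactly what the paper uses: the union attains the supremum, and running the $\aleph_0$-fold construction on it gives $(\lambda^{\kappa,\tr})^{\aleph_0}$. Your other option, using $\mathcal{T}_n$ for $\varphi_n$, also works, because $\prod_n\mu_n=(\sup_n\mu_n)^{\aleph_0}$ for a nondecreasing sequence of nonzero cardinals.
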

\begin{proof}

(1) By Fact \ref{fac: 1-var 2-incons for TP_2} some formula $\varphi(x,y)$ with $|x|=1$ has $\TP_1$, witnessed by $A := (a_{\eta} : \eta \in \omega^{<\omega})$ (as in Definition \ref{def: NTP2}(2)).
Let $(\mathcal{T}, \leq)$ be an arbitrary tree. By compactness we can then find  $A' := \left(a'_{\eta}\right)_{\eta \in \mathcal{T}}$ with $a'_{\eta}$ tuples in $\M$ such that: $\pi_{B}(x) := \{\varphi(x,a'_{\eta}) : \eta \in B\}$ is consistent for every branch $B \subseteq \mathcal{T}$; and for any incomparable nodes $\eta \perp \nu$ in $\mathcal{T}$, $\{\varphi(x,a'_{\eta}), \varphi(x,a'_{\nu})\}$ is inconsistent (using that every finite induced subtree of $\mathcal{T}$ is isomorphic to an induced subtree of $\omega^{<\omega}$, and that $A$ satisfied these conditions). Note that $\{\pi_{B}(x) : B \subseteq \mathcal{T} \textrm{ a branch}\}$ is a family of pairwise inconsistent partial types each of size at most the height of $\mathcal{T}$, all over the set of parameters $A'$ with $|A'|$ at most the size of $\mathcal{T}$.  Hence $f_{T}\left(\kappa,\lambda\right)\geq\lambda^{\kappa,\tr}$ (using Remark \ref{rem: props of tree exp}(1)).
	
	(2) As in (1), by multi-$\TP_1$ and compactness, given any tree $(\mathcal{T}, \leq)$ we can find trees of parameters $(a^{\alpha}_{\eta} : \eta \in \mathcal{T})$ in $\M$ for $\alpha \in \omega$ so that: 
	\begin{itemize}
	\item for any choice of branches $B_{\alpha} \subseteq \mathcal{T}, \alpha \in \omega$, the partial type	$\bigcup_{\alpha \in \omega}\{\varphi_{\alpha}(x, a^{\alpha}_{\eta}) : \eta \in B_{\alpha} \}$ is consistent;
	\item for every $\alpha \in \omega$ and $\eta \perp \nu$ in $\mathcal{T}$, $\{\varphi_{\alpha}(x,a^{\alpha}_{\eta}), \varphi(x,a^{\alpha}_{\nu})\}$ is inconsistent.
	\end{itemize}
	Hence, if the tree $\mathcal{T}$ has height $\leq \kappa$, size $\leq \lambda$ and $\geq \nu$ branches, this gives a set of parameters $A := \bigcup \{a^{\alpha}_{\eta} : \alpha \in \omega, \eta \in \mathcal{T}\}$ with $|A| \leq \aleph_0 \cdot \lambda = \lambda$ with $\geq \nu^{\aleph_0}$ pairwise-inconsistent partial types over it, each of size $ \leq  \aleph_0 \cdot \kappa  = \kappa$.

	Let $\aleph_0 \leq \kappa \leq \lambda$ be arbitrary. Assume first  that $\cof(\lambda^{\kappa, \tr}) > \aleph_0$. Then $\left(\lambda^{\kappa, \tr} \right)^{\aleph_0} = \lambda^{\kappa, \tr} \cdot \sup\{ \nu^{\aleph_0} : \nu < \lambda^{\kappa, \tr} \textrm{ cardinal} \}$ (see e.g.~\cite[Lemma 1.6.15(c)]{holz2009introduction}). It follows (using Remark \ref{rem: props of tree exp}(1)) that for every $\nu < \left(\lambda^{\kappa, \tr} \right)^{\aleph_0} $, there is a tree $\mathcal{T}$ of height $\leq \kappa$ and size $\leq \lambda$ with $\geq \nu_0$ branches so that $\nu_0^{\aleph_0} \geq \nu$. 	Then, by the first paragraph, $f_{T}^1(\kappa, \lambda) \geq \nu_0^{\aleph_0} \geq \nu$. Hence  $f_{T}^1(\kappa, \lambda) \geq \nu_0^{\aleph_0} \geq \left( \lambda^{\kappa, \tr} \right)^{\aleph_0}$.

	Otherwise $\cof(\lambda^{\kappa, \tr}) \leq \aleph_0$. In this case we can find a tree $\mathcal{T}$ of height $\leq \kappa$ and size $\leq \lambda$ with $\geq \lambda^{\kappa, \tr}$ branches (i.e.~the supremum in the definition of $\lambda^{\kappa, \tr}$ is achieved). Namely, we can write $\lambda^{\kappa, \tr} = \sup_{i \in \omega} \nu_i$ for some cardinals $\nu_i < \lambda^{\kappa, \tr}$. By definition of $\lambda^{\kappa, \tr}$, for each $i \in \omega$ there is a tree $\mathcal{T}_i$ of height $\leq \kappa$ and size $\leq \lambda$ with $\geq \nu_i$ branches. We define the tree $\mathcal{T}$ by adding a new root $r$ and, for each $i < \omega$, placing the tree $\mathcal{T}_i$ so that its root $r_i$ is one of the $\aleph_0$-many immediate descendants of $r$. Then the size $|\mathcal{T}|$ of $\mathcal{T}$ is $1 + \sum_{i < \omega} |\mathcal{T}_i| \leq 1+ \aleph_0 \cdot \lambda = \lambda$, the height of $\mathcal{T}$ is at most $1 + \kappa  = \kappa$ and the number of branches of $\mathcal{T}$ is $\geq \sum_{i < \omega} \nu_i = \lambda^{\kappa, \tr}$. By the first paragraph again, we thus get $f^1_{T}(\kappa, \lambda) \geq \left( \lambda^{\kappa, \tr} \right)^{\aleph_0}$.
\end{proof}

Using these observations, we can significantly narrow down the possibilities for $f_{T}(\kappa, 
\lambda)$:
\begin{prop}\label{prop: almost all counting functions} Let $T$ be a countable theory with infinite models. 

\begin{enumerate}
\item $T$ is $\omega$-stable $\Rightarrow$ $f_{T}\left(\kappa,\lambda\right)= f^1_{T}\left(\kappa,\lambda\right) = \lambda$
for all $\lambda\geq\kappa\geq\aleph_{0}$.
\item $T$ is superstable, not $\omega$-stable $\Rightarrow$ $f_{T}\left(\kappa,\lambda\right)= f^1_{T}\left(\kappa,\lambda\right) = \lambda+2^{\aleph_{0}}$
for all $\lambda\geq\kappa\geq\aleph_{0}$.
\item $T$ is stable, not superstable $\Rightarrow$ $f_{T}\left(\kappa,\lambda\right)= f^1_{T}\left(\kappa,\lambda\right) = \lambda^{\aleph_{0}}$
for all $\lambda\geq\kappa\geq\aleph_{0}$.
\item $T$ is supersimple, unstable $\Rightarrow$ $f_{T}\left(\kappa,\lambda\right)= f^1_{T}\left(\kappa,\lambda\right) = \lambda+2^{\kappa}$
for all $\lambda\geq\kappa\geq\aleph_{0}$.
\item $T$ is simple, not supersimple, unstable $\Rightarrow$ $f_{T}\left(\kappa,\lambda\right)= f^1_{T}\left(\kappa,\lambda\right) =  \lambda^{\aleph_{0}}+2^{\kappa}$
for all $\lambda\geq\kappa\geq\aleph_{0}$.
\item $T$ is not simple $\Rightarrow$ $f_{T}^1 \left(\kappa,\lambda\right)\geq\lambda^{\kappa,\tr}$
for all $\lambda \geq \kappa \geq \aleph_0$.
\item $T$ is not simple, not $\NIP$ $\Rightarrow$ $f^1_{T}(\kappa, \lambda) = f_T \left(\kappa,\lambda\right)=\lambda^{\kappa}$
for all $\lambda\geq\kappa\geq\aleph_{0}$.
\item $T$ is NIP,  not stable (= not simple), not multi-order $\Rightarrow$ 
\begin{enumerate}
	\item $f_T^1(\kappa, \lambda) \geq \lambda^{\kappa,\tr}$ for all $\lambda \geq \kappa \geq \aleph_0$;
	\item $f_{T}\left(\kappa,\lambda\right)= f_T^{1}(\kappa, \lambda) = \lambda^{\kappa}$ for all $\lambda^{\kappa}>2^{\kappa}$;
	\item $f_T(\kappa, \lambda) \leq \lambda^{\kappa,\tr} 
	 = \ded \kappa$ for all $\kappa = \lambda$.
\end{enumerate}

\item $T$ is NIP,  multi-order $\Rightarrow$ 
\begin{enumerate}
	\item $f_T^1(\kappa, \lambda) \geq \left( \lambda^{\kappa,\tr} \right)^{\aleph_0}$ for all $\lambda \geq \kappa \geq \aleph_0$;
	\item $f_{T}\left(\kappa,\lambda\right)= f_T^{1}(\kappa, \lambda) = \lambda^{\kappa}$ for all $\lambda^{\kappa}>2^{\kappa}$;
	\item $f_T(\kappa, \lambda) \leq \left( \lambda^{\kappa,\tr}  \right)^{\aleph_0}
	 = \left( \ded \kappa \right)^{\aleph_0}$ for all $\kappa = \lambda$.
\end{enumerate}
\end{enumerate}
\end{prop}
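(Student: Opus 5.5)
The plan is to handle the nine items in three groups, leaning on Facts \ref{fac: classification of usual spectra} and \ref{fac: known results about f_T(kappa,lambda)} together with Remark \ref{rem: obvious properties of f_T(k,l)} and Propositions \ref{prop:tree exponent lemma}, \ref{prop: NIP multi-order implies multi TP1} and \ref{prop: multi TP1 lower bound}.

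\textbf{Stable and simple items (1)--(5).} For the lower bounds I use $f^1_T(\kappa,\lambda)\geq f^1_T(\aleph_0,\lambda)$ together with Fact \ref{fac: classification of usual spectra}. For the matching upper bounds I use that every partial type of size $\leq\kappa$ over $A$ extends to a complete type. Pairwise inconsistent partial types then have distinct completions, so $f_T(\kappa,\lambda)\leq f_T(\lambda)$. This already gives (1)--(3): $f_T(\lambda)$ equals $\lambda$, $\lambda+2^{\aleph_0}$ or $\lambda^{\aleph_0}$ respectively. In (3) the lower bound $\lambda^{\aleph_0}$ comes from a branch of length $\omega$ in a witness to non-superstability, realized with one variable.

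For (4) and (5), Fact \ref{fac: known results about f_T(kappa,lambda)}(2b) and (1b) give upper bounds $\lambda+2^\kappa$ and $\lambda^{\aleph_0}+2^\kappa$. The lower bound $\lambda$ is Remark \ref{rem: obvious properties of f_T(k,l)}(2). The lower bound $2^\kappa$ comes from instability: an unstable formula in one variable gives an order of size $\kappa$ with $2^\kappa$ pairwise inconsistent $\kappa$-types over $\kappa$ parameters, via the independence or order property, indexed by subsets or cuts. Here I need care, since $\ded\kappa$ may be less than $2^\kappa$. So I will instead use that simple unstable means either IP or SOP-free instability. Simple plus unstable implies IP (simple NIP is stable), and IP gives $2^\kappa$ types over $\kappa$ parameters. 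In (5) the $\lambda^{\aleph_0}$ part comes from non-supersimplicity, using Fact \ref{fac: known results about f_T(kappa,lambda)}(2c) failing.

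\textbf{Non-simple items (6)--(7).} For (6), non-simplicity means $\TP$, so by Fact \ref{fac: TP dichotomy} we have $\TP_1$ or $\TP_2$. Under $\TP_1$, Proposition \ref{prop: multi TP1 lower bound}(1) applies directly. Under $\TP_2$ with one variable (Fact \ref{fac: 1-var 2-incons for TP_2}), an array of $\kappa$ rows and $\lambda$ columns gives $\lambda^\kappa\geq\lambda^{\kappa,\tr}$ pairwise inconsistent types. For (7), IP in one variable gives over $\lambda$ parameters, arranged as a $\kappa\times\lambda$ grid of independent instances, the types indexed by functions $\kappa\to\lambda$. Each such type says that row $i$ picks out exactly column $f(i)$; this uses finitely many instances per row but stays consistent by independence. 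That yields $\lambda^\kappa$, and the upper bound is Remark \ref{rem: obvious properties of f_T(k,l)}(4).

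\textbf{NIP items (8)--(9).}
\begin{itemize}
\item Part (a): in (8), unstable NIP means SOP, hence $\TP_1$, so we apply Proposition \ref{prop: multi TP1 lower bound}(1). In (9), combine Propositions \ref{prop: NIP multi-order implies multi TP1} and \ref{prop: multi TP1 lower bound}(2).
\item Part (b): by Proposition \ref{prop:tree exponent lemma}, $\lambda^{\kappa,\tr}=\lambda^\kappa$ whenever $\lambda^\kappa>2^\kappa$. Combined with (a) and Remark \ref{rem: obvious properties of f_T(k,l)}(4), this gives equality.
\item Part (c): when $\kappa=\lambda$, $f_T(\lambda,\lambda)=f_T(\lambda)$ by Remark \ref{rem: obvious properties of f_T(k,l)}(1). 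Fact \ref{fac: classification of usual spectra}(4),(5) evaluates this as $\ded\kappa$ or $(\ded\kappa)^{\aleph_0}$, and $\kappa^{\kappa,\tr}=\ded\kappa$ by Remark \ref{rem: props of tree exp}(3).
\end{itemize}

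\textbf{Main obstacle.} I expect the hardest step to be the $2^\kappa$ lower bound in (4), together with a clean general argument that $f_T(\kappa,\lambda)\leq f_T(\lambda)$ in (1)--(3). For the latter, the concern is that completions might live over a larger parameter set. To handle it, I restrict to types over $A$ itself with $|A|=\lambda$ and use $f_T(\lambda)$ as defined via sets of that size; models and sets of size $\lambda$ give the same sup for countable $T$.
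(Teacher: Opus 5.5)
\textbf{The gap is in item (7).} Items (1)--(6), (8) and (9) are essentially the paper's argument. The one difference is that in (3) the paper takes the $\lambda^{\aleph_0}$ lower bound directly from Fact \ref{fac: known results about f_T(kappa,lambda)}(2)(c). That avoids having to build a non-superstability witness whose siblings are pairwise inconsistent, not merely $k$-inconsistent.

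Your construction for (7) uses only $\IP$ and never uses non-simplicity, and it cannot work. Over an independent family of instances $\varphi(x,b_{i,j})$, two sets of literals are inconsistent only if they contain a complementary pair. So for row $i$ to pick out \emph{exactly} column $f(i)$, $\pi_f$ must negate the other $\lambda$ columns of that row, which gives $|\pi_f|=\lambda$, not $\leq\kappa$. If you keep only finitely many literals per row, pairwise inconsistency fails. In your row-by-row scheme, the row-$i$ literal sets would have to form $\lambda$ pairwise incompatible finite partial functions into $2$. By the $\Delta$-system lemma there are at most countably many such functions.

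More decisively, your argument applies verbatim to the random graph. Any set of distinct vertices is independent for $xRy$ there, yet the theory is supersimple. By Fact \ref{fac: known results about f_T(kappa,lambda)}(2)(b) it satisfies $f_T(\aleph_1,\beth_{\omega_1})\leq\beth_{\omega_1}+2^{\aleph_1}=\beth_{\omega_1}$. By K\"onig, since $\cof(\beth_{\omega_1})=\aleph_1$, this is $<\beth_{\omega_1}^{\aleph_1}$. So no argument from $\IP$ alone can give $\lambda^\kappa$ in general.

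The fix is to split on cardinal arithmetic, as the paper does:
\begin{enumerate}
\item If $\lambda^\kappa>2^\kappa$, Proposition \ref{prop:tree exponent lemma} gives $\lambda^{\kappa,\tr}=\lambda^\kappa$. Then item (6) gives $f^1_T(\kappa,\lambda)\geq\lambda^\kappa$; this is where non-simplicity is used.
\item If $\lambda^\kappa\leq 2^\kappa$, $\IP$ gives $f^1_T(\kappa,\lambda)\geq f^1_T(\kappa,\kappa)\geq 2^\kappa\geq\lambda^\kappa$.
\end{enumerate}
The upper bound is Remark \ref{rem: obvious properties of f_T(k,l)}(4), as you say.
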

\begin{proof}

\begin{enumerate}
\item By  Remark \ref{rem: obvious properties of f_T(k,l)} we have $f_{T}^1(\kappa, \lambda) \geq \lambda$ for all $\aleph_0 \leq \kappa \leq \lambda$. And if $T$ is $\omega$-stable we have $f_{T}(\kappa, \lambda) \leq f_{T}(\lambda, \lambda) \leq \lambda$ by Fact \ref{fac: classification of usual spectra}(1).
\item By Fact \ref{fac: classification of usual spectra}(2) we have $f_{T}\left(\kappa,\lambda\right)\leq f_{T}\left(\lambda\right)\leq\lambda+2^{\aleph_{0}}$, and $f_{T}^{1}(\aleph_0, \aleph_0) = 2^{\aleph_0}$. Hence, by Remark \ref{rem: obvious properties of f_T(k,l)}, $f_{T}^1\left(\kappa,\lambda\right)\geq\lambda+2^{\aleph_{0}}$ for all $\aleph_0 \leq \kappa \leq \lambda$.

\item Again we have $f_{T}\left(\kappa,\lambda\right)\leq f_{T}\left(\lambda\right)\leq\lambda^{\aleph_{0}}$
for all $\lambda\geq\kappa\geq\aleph_{0}$ by Fact \ref{fac: classification of usual spectra}(3).
On the other hand, assume that $T$ is not superstable. Then it is
not supersimple, so by Fact \ref{fac: known results about f_T(kappa,lambda)}(2)(c) 
we have $f_{T}^{1}(\kappa, \lambda) \geq \lambda^{\aleph_0}$ for all $\aleph_0 \leq \kappa \leq \lambda$. See also \cite[Proposition 4.2]{casanovas1999number} in relation to (1), (2) and (3).

\item As $T$ is unstable and (super) simple (so $\NSOP$), by Shelah's theorem it has $\IP$. Then $f_{T}^1\left(\kappa,\kappa\right)\geq2^{\kappa}$ for all
$\kappa\geq\aleph_{0}$, by Fact \ref{fac: classification of usual spectra}(6) (and  $f_{T}^1(\kappa, \lambda) \geq \lambda$ by Remark \ref{rem: obvious properties of f_T(k,l)}).
On the other hand, by Fact \ref{fac: known results about f_T(kappa,lambda)}(2)(b) 
we have that $f_{T}\left(\kappa,\lambda\right)\leq\lambda+2^{\kappa}$ for all $\kappa, \lambda$.  
By Remark \ref{rem: obvious properties of f_T(k,l)}(2),(3) we conclude
that $f_{T}\left(\kappa,\lambda\right)=\lambda+2^{\kappa}$ for all $\aleph_0 \leq \kappa \leq \lambda$.
\item Again $f_{T}^1\left(\kappa,\kappa\right)\geq2^{\kappa}$ for all $\kappa\geq\aleph_{0}$
as $T$ has $\IP$. On the other hand, as $T$ is simple, by Fact
\ref{fac: known results about f_T(kappa,lambda)}(1)(b) we have $f_{T}\left(\kappa,\lambda\right)\leq\lambda^{\aleph_{0}}+2^{\kappa}$ for all $\kappa, \lambda$.
As $T$ is not supersimple then again by Fact \ref{fac: known results about f_T(kappa,lambda)}(2)(c) 
we have $f_{T}^1\left(\kappa,\lambda\right)\geq\lambda^{\aleph_{0}}$ for all $\aleph_0 \leq \kappa \leq \lambda$.

\item 
Let $T$ be not simple, and $\aleph_0 \leq \kappa \leq \lambda$ arbitrary. Assume first that $T$ has $\TP_2$, then there is a formula $\varphi(x,y)$ with $|x|=1$ that has $2$-$\TP_2$ (Fact \ref{fac: 1-var 2-incons for TP_2}). By compactness we can then find an array $A := (a_{i,j}: (i,j) \in \kappa \times \lambda)$ so that $\{\varphi(x,a_{i,j}) : j \in \lambda\}$ is $2$-inconsistent for each $i \in \kappa$, and $\pi_{f}(x) := \{ \varphi(x,a_{i,f(i)}) : i \in \kappa\}$ is consistent for every $f \in \lambda^{\kappa}$. Then $\{\pi_f(x) : f \in \lambda^{\kappa} \}$ is a family a pairwise inconsistent partial types each of size $\kappa$, all over the set of parameters $A$ with $|A| \leq \lambda$, so $f_{T}(\kappa, \lambda) \geq \lambda^{\kappa}$.

Otherwise, by Fact \ref{fac: TP dichotomy}, $T$ has $\TP_1$, so we get $f_{T}^1 \left(\kappa,\lambda\right)\geq\lambda^{\kappa,\tr}$ by Proposition \ref{prop: multi TP1 lower bound}(1).

\item Let $\lambda\geq\kappa\geq\aleph_{0}$ be arbitrary. If $\lambda^{\kappa}>2^{\kappa}$,
then by Proposition \ref{prop:tree exponent lemma} we have $\lambda^{\kappa}=\lambda^{\kappa,\tr}$,
and as $T$ is not simple by (6) we get $f_{T}^1\left(\kappa,\lambda\right)\geq\lambda^{\kappa,\tr}=\lambda^{\kappa}$.
So assume that $\lambda^{\kappa}\leq 2^{\kappa}$. As $T$ has $\IP$, we
have that $f_{T}^1\left(\kappa,\lambda\right)\geq f_{T}^1\left(\kappa,\kappa\right)\geq2^{\kappa}\geq\lambda^{\kappa}$ by Fact \ref{fac: classification of usual spectra}(6).

\item As $T$ is $\NIP$ unstable, it is also not simple, hence  $f_{T}^1\left(\kappa,\kappa\right)\geq \lambda^{\kappa, \tr}$ by (6). Then (b) holds by Proposition  \ref{prop:tree exponent lemma}.  And (c) holds by Fact \ref{fac: classification of usual spectra}(5) (and Remark \ref{rem: props of tree exp}(3)).

\item As $T$ is $\NIP$ and multi-order, by  Proposition \ref{prop: NIP multi-order implies multi TP1},  $T$ is multi-$\TP_1$ witnessed by some formulas $(\varphi_i(x,y_i))_{i \in \omega}$ with $|x|=1$. Then, by Proposition \ref{prop: multi TP1 lower bound}(2),  $f^1_T(\kappa, \lambda) \geq \left( \lambda^{\kappa,\tr} \right)^{\aleph_0}$ for all $\aleph_0 \leq \kappa \leq \lambda$. Again,  (b) holds by Proposition  \ref{prop:tree exponent lemma}. And (c) holds by Fact \ref{fac: classification of usual spectra}(4).
\end{enumerate}
\end{proof}

By Proposition \ref{prop: almost all counting functions}, Conjecture \ref{conj: Adler}(1) follows from the following (in which case the explicit list of possible functions $f_{T}(\kappa, \lambda)$ is given in Proposition \ref{prop: almost all counting functions}):
\begin{conjecture}\label{conj: numb types NIP}
Let $T$ be a countable theory and $\aleph_0 \leq \kappa \leq \lambda$.
\begin{enumerate}
	\item  If $T$ is NIP, then $f_{T}(\kappa, \lambda) \leq \left( \lambda^{\kappa, \tr} \right)^{\aleph_0}$.
	\item If $T$ is not multi-order, then $f_{T}(\kappa, \lambda) \leq  \lambda^{\kappa, \tr}$.
\end{enumerate}
	(Without loss of generality we may restrict to the case $\lambda^{\kappa} \leq 2^{\kappa}$ --- as for $\lambda^{\kappa} > 2^{\kappa}$ this holds trivially by Proposition \ref{prop:tree exponent lemma}.)
\end{conjecture}


\begin{cor}\label{cor: Adler under GCH}
Conjecture \ref{conj: numb types NIP}, and hence Conjecture \ref{conj: Adler}(1),  holds assuming GCH.
\end{cor}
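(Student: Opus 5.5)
Under GCH, $\lambda^{\kappa}=\lambda$ whenever $\operatorname{cf}(\lambda)>\kappa$, and $\lambda^{\kappa}=\lambda^{+}$ otherwise. In particular $\lambda^{\kappa}\le 2^{\kappa}$ forces $\lambda^{\kappa}=2^{\kappa}=\kappa^{+}$ and $\lambda=\kappa$, since $\lambda>\kappa$ would give $\lambda^\kappa\ge\lambda\ge\kappa^+=2^\kappa$ with equality only if $\lambda=\kappa^+$, and then $(\kappa^+)^\kappa=\kappa^+=2^\kappa$. So the plan is to reduce both parts of Conjecture \ref{conj: numb types NIP} to the case $\lambda^{\kappa}\le 2^{\kappa}$, as already noted after its statement via Proposition \ref{prop:tree exponent lemma}, and then compute the relevant cardinals explicitly under GCH.

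\textbf{Step 1.} Suppose $\lambda^\kappa\le 2^\kappa$. Under GCH this forces either $\lambda=\kappa$, or $\lambda=\kappa^+$ with $(\kappa^+)^\kappa=\kappa^+$. In both subcases the target bound is $\lambda^{\kappa}=2^{\kappa}$: $f_T(\kappa,\lambda)\le\lambda^\kappa$ holds trivially by Remark \ref{rem: obvious properties of f_T(k,l)}(4).

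\textbf{Step 2.} It remains to show that $\lambda^{\kappa,\tr}\ge\lambda^\kappa$ in this range. That also gives $(\lambda^{\kappa,\tr})^{\aleph_0}\ge\lambda^\kappa$, which settles both (1) and (2).
\begin{itemize}
\item For $\lambda=\kappa$, Remark \ref{rem: props of tree exp}(3) gives $\kappa^{\kappa,\tr}=\ded\kappa$. Under GCH, $\ded\kappa=2^\kappa=\kappa^\kappa$, which is stated in the text after Fact \ref{fac: classification of usual spectra}.
\item For $\lambda=\kappa^+$, monotonicity gives $\lambda^{\kappa,\tr}\ge\kappa^{\kappa,\tr}=2^\kappa=\lambda^\kappa$.
\end{itemize}
Hence in all cases $\lambda^{\kappa,\tr}=\lambda^\kappa\ge f_T(\kappa,\lambda)$, giving both parts of the conjecture regardless of NIP or multi-order.

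\textbf{Step 3.} Derive Conjecture \ref{conj: Adler}(1). Combine this with Proposition \ref{prop: almost all counting functions}.
\begin{itemize}
\item In cases (6)--(9), the lower and upper bounds now coincide, giving $f_T(\kappa,\lambda)=\lambda^\kappa$.
\item This uses in particular (8)(a) and (9)(a) for the lower bounds. It also uses that every non-simple theory is covered: either it is non-NIP, handled by (7), or NIP, handled by (6) together with the equality $\lambda^{\kappa,\tr}=\lambda^\kappa$.
\end{itemize}
So only finitely many functions occur, namely the six listed values collapsing to $\lambda^\kappa$ in the non-simple case.

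The only delicate point I expect is the cardinal arithmetic in Step 1: checking that under GCH no case with $\lambda>\kappa^+$ has $\lambda^\kappa\le2^\kappa$. This holds because $\lambda^{\kappa}\ge\lambda>\kappa^+=2^\kappa$, so such $\lambda$ falls under Proposition \ref{prop:tree exponent lemma}.
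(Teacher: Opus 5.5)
Your proposal is correct and follows essentially the paper's argument: reduce to $\lambda^{\kappa}\le 2^{\kappa}$ via Proposition \ref{prop:tree exponent lemma}, then use $\kappa^{\kappa,\tr}=\ded\kappa$, monotonicity of $\lambda^{\kappa,\tr}$ in $\lambda$, and GCH to get $\lambda^{\kappa,\tr}=\lambda^{\kappa}\ge f_T(\kappa,\lambda)$. The paper avoids your case split $\lambda\in\{\kappa,\kappa^{+}\}$ by noting that the chain $\kappa<\ded\kappa=\kappa^{\kappa,\tr}\le\lambda^{\kappa,\tr}\le\lambda^{\kappa}\le 2^{\kappa}=\kappa^{+}$ collapses; also, your opening sentence wrongly says $\lambda=\kappa$ is forced, though your Step 1 states the correct dichotomy.
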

\begin{proof}
	For any infinite $\kappa \leq \lambda$ with $\lambda^{\kappa} \leq 2^{\kappa}$  we  have $\kappa < \ded \kappa = \kappa^{\kappa, \tr} \leq \lambda^{\kappa, \tr} \leq \lambda ^{\kappa} \leq  2^{\kappa}$. Hence, assuming GCH, $\lambda ^{\kappa} = \lambda^{\kappa, \tr}$, so the bound in Conjecture \ref{conj: numb types NIP} holds trivially.
\end{proof}

\begin{cor}\label{cor: counterex to Adler conj}
Conjecture \ref{conj: Adler}(2) is false.
\end{cor}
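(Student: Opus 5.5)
To refute Conjecture \ref{conj: Adler}(2), the plan is to exhibit two countable theories with the same counting function $f_T(\kappa,\lambda)$, one $\NTP_2$ and one $\TP_2$. The obvious candidates come from Proposition \ref{prop: almost all counting functions}(7). Any non-simple theory with $\IP$ has $f_T(\kappa,\lambda)=\lambda^{\kappa}$ for all $\aleph_0\leq\kappa\leq\lambda$, in ZFC and with no further set-theoretic assumptions. This is exactly the maximal value, which the conjecture itself attributes to all $\TP_2$ theories.

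It therefore suffices to find a countable theory that is $\NTP_2$, not simple, and not $\NIP$. Two standard choices are available.

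The first is the theory of a dense linear order expanded by an independent random graph (the generic structure $(\mathbb{Q},<,R)$ with $R$ a generic binary relation). It is not simple because of the order, and it has $\IP$ because of the random graph. It is $\NTP_2$ because its burden is $1$, or alternatively because it is the free amalgam of an NIP and a simple theory, which is known to be $\NTP_2$ (see \cite{chernikov2014theories}).

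The second is an ultraproduct of the $p$-adics. Such a field is non-simple because of the value group order, not $\NIP$ because its residue field is pseudofinite, and $\NTP_2$ by Fact \ref{fac: bdn in hens val fields} since its burden is $1$. Even more directly, it is $\FHP$ by Corollary \ref{cor: FHP in ultraprod Qp}, hence $\NTP_2$ by Proposition \ref{prop: FHP implies NTP2}. I would use the $p$-adic example, since it relies only on results already proved in the paper.

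For the comparison theory, take any theory with $\TP_2$, for instance a single field $\mathbb{F}_p((t))$, which has $\TP_2$ by the remark after Corollary \ref{cor: FHP in ultraprod Qp}. The paper notes that for such a theory $f_T(\kappa,\lambda)=\lambda^{\kappa}$. This also follows from the first paragraph of the proof of Proposition \ref{prop: almost all counting functions}(6), using Fact \ref{fac: 1-var 2-incons for TP_2} to get a $2$-$\TP_2$ formula in one variable.

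Both theories therefore have $f_T(\kappa,\lambda)=\lambda^{\kappa}$ for all $\aleph_0\leq\kappa\leq\lambda$. Since one is $\NTP_2$ and the other is not, $\NTP_2$ cannot be detected from $f_T$.

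The only delicate step is checking that the chosen $\NTP_2$ example genuinely satisfies the hypotheses of (7): it must be countable, not simple, and have $\IP$. For the ultraproduct of $\mathbb{Q}_p$ in the language $L$, non-simplicity comes from the definable order on $\Gamma$, and $\IP$ comes from the pseudofinite residue field. Both are standard, so I do not expect a real obstacle.
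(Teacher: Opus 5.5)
Your proposal is correct and is essentially the paper's argument: Proposition~\ref{prop: almost all counting functions}(7) forces $f_T(\kappa,\lambda)=\lambda^{\kappa}$ for every countable non-simple theory with $\IP$, and such theories occur both with and without $\NTP_2$, so $f_T$ cannot detect $\NTP_2$. The paper leaves the witnesses implicit, and you make them explicit: for the $\NTP_2$ side, a nonprincipal ultraproduct of the $\mathbb{Q}_{p}$ with $p$ unbounded along the ultrafilter, which is needed for the residue field to be pseudofinite; for the $\TP_2$ side, $\mathbb{F}_p((t))$.
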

\begin{proof}
	By Proposition \ref{prop: almost all counting functions}, if a countable theory $T$ is neither simple, nor NIP, then the function $f_{T}(\kappa, \lambda) = \lambda^{\kappa}$ is maximal for all infinite $\kappa \leq \lambda$. But such a $T$ can be taken to have $\NTP_2$ or not.
\end{proof}
We also ask if $f_{T}(\kappa, \lambda) = f^1_{T}(\kappa, \lambda)$ for all $T, \kappa, \lambda$ (unlike that case of complete types and $f_{T}(\kappa)$, this is not automatic for counting partial types; note that Conjecture \ref{conj: numb types NIP} combined with Proposition \ref{prop: almost all counting functions} implies that this holds for countable $T$). We also mention that a different two-cardinal invariant of first-order theories generalizing $f_T(\kappa)$, namely a function of two
cardinals $\kappa$ and $\lambda$ giving the supremum of the possible number of types over a model of size $\lambda$ that do not fork over a sub-model of size $\kappa$, is considered in \cite{chernikov2016non}.
\bibliographystyle{alpha}
\bibliography{pqNTP2}

\end{document}